\pgfplotsset{compat=1.18}
\DeclareFontFamily{OT1}{pzc}{}
\DeclareFontShape{OT1}{pzc}{m}{it}{<-> s * [1.10] pzcmi7t}{}
\DeclareMathAlphabet{\mathpzc}{OT1}{pzc}{m}{it}
\DeclareMathAlphabet{\dcal}{U}{dutchcal}{m}{n}
\SetMathAlphabet{\dcal}{bold}{U}{dutchcal}{b}{n}
\DeclareMathAlphabet{\dbcal}{U}{dutchcal}{b}{n}
\newcommand{\col}[2]{\begin{pmatrix} #1 \\ #2 \end{pmatrix}}
\newcommand{\spaced}[2][\quad]{#1\mbox{#2}#1}
\newcommand{\Bsp}{\begin{aligned}{\vphantom{\int}}}
\newcommand{\Msp}{{\vphantom{\int}}}
\newcommand{\Esp}{{\vphantom{\int}}\end{aligned}}
\newcommand{\Lcm}{{{\,\vphantom{|}}_,}}
\newcommand{\R}{\mathbb{R}}
\newcommand{\C}{\mathbb{C}}
\newcommand{\Z}{\mathbb{Z}}
\newcommand{\X}{\mathbb{X}}
\newcommand{\cA}{\mathcal{A}}
\newcommand{\cD}{\mathcal{D}}
\newcommand{\cE}{\mathcal{E}}
\newcommand{\cF}{\mathcal{F}}
\newcommand{\cG}{\mathcal{G}}
\newcommand{\cH}{\mathcal{H}}
\newcommand{\cI}{\mathcal{I}}
\newcommand{\cK}{\mathcal{K}}
\newcommand{\cN}{\mathcal{N}}
\newcommand{\cO}{\mathcal{O}}
\newcommand{\cP}{\mathcal{P}}
\newcommand{\cQ}{\mathcal{Q}}
\newcommand{\cR}{\mathcal{R}}
\newcommand{\cS}{\mathcal{S}}
\newcommand{\cT}{\mathcal{T}}
\newcommand{\cU}{\mathcal{U}}
\newcommand{\cV}{\mathcal{V}}
\newcommand{\cW}{\mathcal{W}}
\newcommand{\cY}{\mathcal{Y}}
\newcommand{\sB}{\mathscr{B}}
\newcommand{\sC}{\mathscr{C}}
\newcommand{\sH}{\mathscr{H}}
\newcommand{\sJ}{\mathscr{J}}
\newcommand{\sM}{\mathscr{M}}
\newcommand{\sN}{\mathscr{N}}
\newcommand{\sP}{\mathscr{P}}
\newcommand{\sR}{\mathscr{R}}
\newcommand{\sS}{\mathscr{S}}
\newcommand{\sU}{\mathscr{U}}
\newcommand{\sV}{\mathscr{V}}
\newcommand{\sZ}{\mathscr{Z}}
\newcommand{\slp}{\underline{\dcal{S}}}
\newcommand{\Slp}{\dcal{S}}
\newcommand{\Dlp}{\dcal{D}}
\newcommand{\lV}{\lVert}
\newcommand{\rV}{\rVert}
\newcommand{\del}{\partial}
\newcommand{\half}{\frac{1}{2}}
\newcommand{\grad}{\nabla}
\newcommand{\pv}{\operatorname{pv}}
\newcommand{\supp}{\operatorname{supp}}
\newcommand{\cof}{\operatorname{cof}}
\newcommand{\init}{{\scriptscriptstyle{\mathit{init}}}}
\newcommand{\gp}{{\mathrm{g\kern 0.1ex p}}}
\newcommand{\ca}{{\mathrm{c\kern 0.1ex a}}}
\newcommand{\res}{{\operatorname{res}}}
\newcommand{\nres}{\bign_{\operatorname{res}}}
\newcommand{\Nres}{\mathcal{N}_{\operatorname{res}}}
\newcommand{\dcF}{\dcal{F}}
\newcommand{\dcX}{\dcal{X}}
\newcommand{\splash}{{\scriptscriptstyle{\mathrm{S}}}}
\newcommand{\uX}{\underline{X}}
\newcommand{\uxi}{\underline{\xi}}
\newcommand{\utheta}{\underline{\theta}}
\newcommand{\play}{{\hspace{.07em}\triangleright}}
\newcommand{\revplay}{{\hspace{.05em}\triangleleft}}
\newcommand{\tosplash}{{\scriptstyle{\mathrm{ts}}}}
\newcommand{\skipto}{{\play}}
\newcommand{\wbox}{{\scalebox{0.5}{$\,\square$}}}
\newcommand{\bbox}{{\scalebox{0.5}{$\,\blacksquare$}}}
\newcommand{\ol}{\@ifnextchar\bgroup{\overline}{\overline}}
\newcommand{\cXbar}{{
  \mathord{
    \ooalign{
      $\dcX$\cr
      \hidewidth\raisebox{0.7ex}{\rule[0pt]{0.4em}{0.3pt}}\hidewidth
    }
  }}_\gp
}
\DeclareSymbolFont{extraup}{U}{zavm}{m}{n}
\DeclareMathSymbol{\vardiamond}{\mathalpha}{extraup}{83}
\newcommand{\bigh}{\displaystyle{\mathlarger{\dcal{h}}}}
\newcommand{\bign}{\dcal{N}}
\newcommand{\bigtau}{\mathrm{T}}
\newcommand{\bX}{\bm{X}}
\newcommand{\bU}{\bm{U}}
\newcommand{\bB}{\bm{B}}
\newcommand{\bP}{\bm{P}}
\newcommand{\bM}{\mathbf{M}}
\newcommand{\bD}{{\mathrm{D}}}
\newcommand{\bE}{{\mathrm{E}}}
\newcommand{\bI}{{\mathrm{I}}}
\newcommand{\bJ}{\mathrm{J}}
\newcommand{\bj}{\bm J}
\newcommand{\bR}{\mathrm{R}}
\newcommand{\bV}{\bm{V}}
\newcommand{\bW}{\bm{W}}
\newcommand{\bom}{\displaystyle{\mathlarger{\bm{\omega}}}}
\newcommand{\bvarphi}{\mathlarger{\bm{\upvarphi}}}
\newcommand{\bn}{{n_*}}
\newcommand{\HTW}{{H^2(S^1)}}
\newcommand{\sHTO}{{\sH^1}}
\newcommand{\sHTT}{{\sH^2}}
\newcommand{\npo}{{n+1}}
\newcommand{\nmo}{{n-1}}
\newtheorem{theorem}{Theorem}[section]
\newtheorem{proposition}[theorem]{Proposition}
\newtheorem{lemma}[theorem]{Lemma}
\newtheorem{corollary}[theorem]{Corollary}
\theoremstyle{definition}
\newtheorem{definition}[theorem]{Definition}
\theoremstyle{remark}
\newtheorem{remark}[theorem]{Remark}
\newenvironment{myprop}[1]
  {\innercustomthm}
  {\endinnercustomthm}
\numberwithin{equation}{section}
\author{Diego C\'{o}rdoba, Alberto Enciso, and Matthew Hernandez}
\title{Splash--squeeze singularities and analytic breakdown\\in ideal incompressible MHD}
\begin{document}

\maketitle

\begin{abstract}
We construct \emph{splash--squeeze} singularities for the free boundary ideal incompressible plasma--vacuum system, in which two arcs of the plasma boundary come together to form a smooth, glancing self-intersection. As the interface self-intersects, Sobolev norms remain bounded, although analyticity is necessarily lost. This contrasts classical splash singularities, in which solutions remain analytic up to the time of self-intersection.

The narrowing gap bounded by these arcs is not occupied by plasma, as squeezing the plasma itself would cause blow-up in Sobolev norms. Instead, the gap represents the region outside the plasma, a vacuum carrying a nontrivial magnetic field. The plasma on either side pinches the field as the gap closes, and, in response, the field vanishes to infinite order at the intersection point (and nowhere else), thereby forming an analytic singularity. This gives the first example of analytic breakdown without Sobolev blow-up in a locally well-posed free-boundary incompressible fluid system, and can be viewed as the first rigorous construction of a \emph{squeeze}-type singularity, in which we study and quantify the precise behavior of an active, incompressible vector field as it is completely pinched off by a free-boundary in finite time.

The proof combines a magnetically-aligned Lagrangian formulation of ideal MHD together with weighted elliptic estimates in the vacuum that remain uniform as the width of the gap tends to zero. Our framework may provide a starting point for the analysis of squeeze-type singularities in other incompressible fluid models.
\end{abstract}

\setcounter{tocdepth}{2}
\tableofcontents

\section{Introduction}

The only known scenario of singularity formation for incompressible fluids with a smooth free boundary is the {\em splash singularity},\footnote{We would be remiss not to mention the ``splat'' variant, in which a (non-analytic) interface intersects itself along an arc, rather than a single point.} in which the boundary develops a self-intersection at some positive time~$t_\splash$, as depicted in Figures~\ref{pre_intersect}--\ref{post_intersect}. Classical splash singularities, which have been constructed for models such as Euler, Navier--Stokes, Muskat, and viscous MHD, are characterized by two essential features. First, the fluid preserves its regularity: when the initial data is Sobolev (say, in $H^k$) or analytic, the solution remains bounded in $H^k$ or analytic norms up to the time~$t_\splash$. Second, nothing exists in the region which is pinched in two as the arcs of the interface come together. An inverted scenario, in which an incompressible fluid becomes completely pinched by a pair of smooth arcs, is typically impossible.\footnote{See our discussion of literature ruling out the formation of smooth fluid pinch-offs in Section \ref{litreviewsection}.} In particular, splash singularities cannot occur between two fluids, and in the single-fluid case, they may describe the collision of two smooth waves but not the formation of a water droplet.

Our objective is to introduce a new type of singularity for incompressible fluids by constructing what we call \emph{splash--squeeze} singularities. In these, a splash forms in such a way that the fluid squeezes a nontrivial, active, incompressible vector field living in the complementary region. The model we consider is the free-boundary ideal incompressible MHD system, describing the evolution of a plasma (understood as an electrically conducting incompressible fluid) and its surrounding magnetic field.

%

We are able to demonstrate the complete pinching of an active, incompressible vector field because it is only the magnetic field in the vacuum that we squeeze, as opposed to a substance made up of material particles, like a fluid. While the vacuum magnetic field is incompressible, it carries no particles, and for this reason one expects the magnetic field in the narrowing gap to resist the formation of a ``nice'' squeeze in some sense less than a fluid (but more than an empty vacuum) would. We make this intuition precise by proving the following: our splash--squeeze constructions remain bounded in $H^k$ up to the time of self-intersection, but, in contrast with the classical splash singularity, they encounter a fundamental obstruction to analyticity. Due to this, we are able to construct splash--squeeze solutions which blow up in any analytic norm in finite time, starting from analytic initial data.

In the rest of the introduction, we shall provide a more detailed explanation of the structure of these splash--squeeze singularities in ideal MHD and of the associated phenomenon of breakdown of analyticity

\begin{figure}[h]
\begin{center}
\begin{minipage}{0.45\textwidth}
    \centering
    \includegraphics[width=\textwidth]{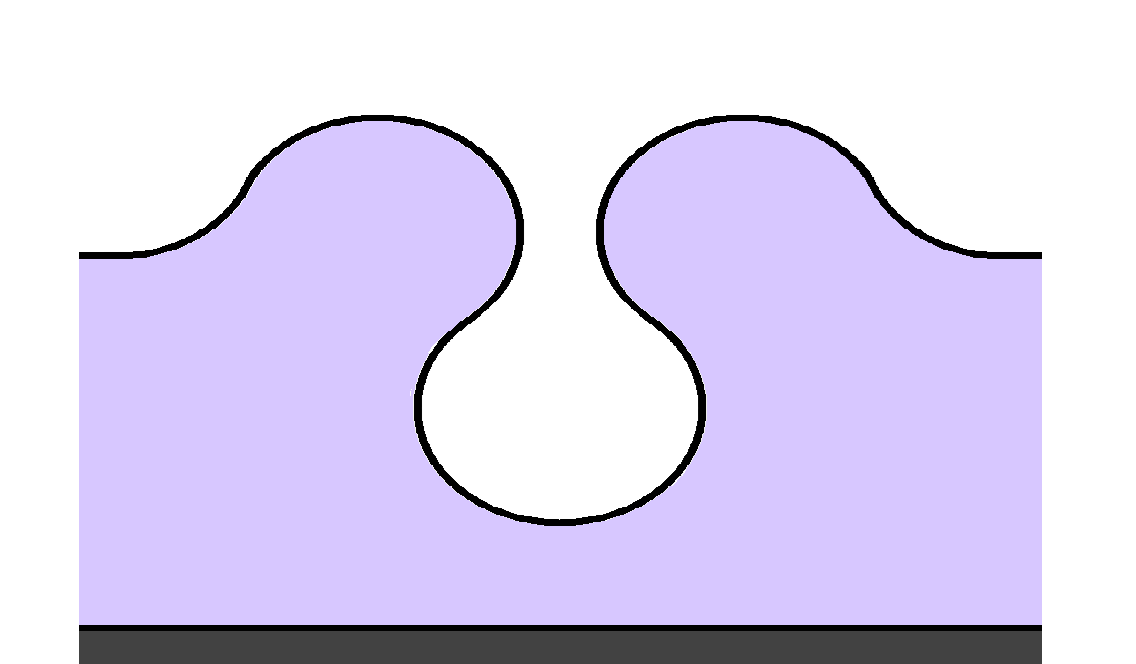}
    \begin{tikzpicture}[overlay, remember picture]
    \end{tikzpicture}
    \captionof{figure}{Before a splash}\label{pre_intersect}
\end{minipage}
\hfill 
\begin{minipage}{0.45\textwidth}
    \centering
    \includegraphics[width=\textwidth]{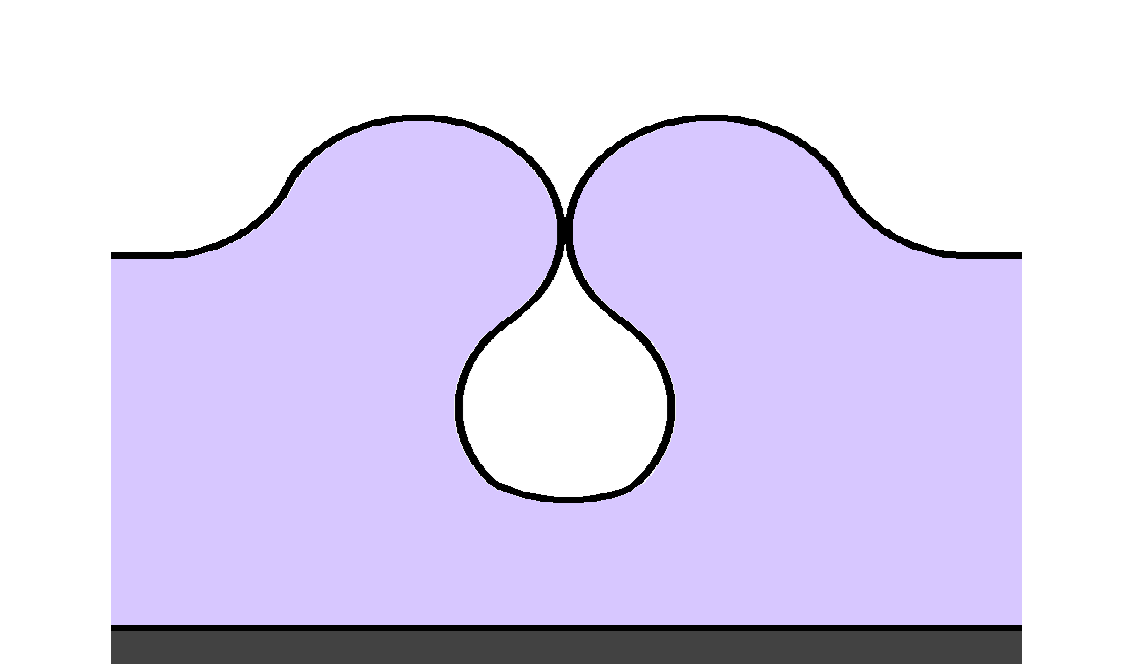}
    \begin{tikzpicture}[overlay, remember picture]
    \end{tikzpicture}
    \captionof{figure}{Splash}\label{post_intersect}
\end{minipage}
\end{center}
\end{figure}

\subsection{The ideal MHD system}

The 2D ideal incompressible MHD equations model the macroscopic behavior of incompressible plasmas, such as those found in astrophysical environments or fusion devices, when dissipative effects such as viscosity and resistivity are negligible. 

In the geometric configuration considered in this paper, the physical system (assumed periodic in $x_1$) is confined to the planar plasma chamber $\mathcal{C}$. In each period, the boundary $\del\mathcal{C}$ consists of three fixed components: the plasma floor $\{x_2 = 0\}$, an upper vacuum wall $\mathcal W_1$, 
and a circular wall $\mathcal W_2$.\footnote{We may take $\cW_1$ to be $\{x_2 = 10\}$ and $\cW_2$ to be $\{|x-(0,1)|= 10^{-1}\}$, for example.} We denote by $\Omega(t)$ the region of the chamber occupied by the plasma at time $t$, and by $\mathcal{V}(t) = \mathcal{C} \setminus \overline{\Omega(t)}$ the remaining vacuum region. At least for small times, the boundary of the plasma region, $\partial\Omega(t)$, has two connected components: the floor $\{x_2=0\}$ and the interface $\Gamma(t)= \overline{\Omega(t)} \cap \overline{\mathcal{V}(t)}$, which does not touch the vacuum wall $\mathcal{W}=\mathcal W_1\cup\mathcal W_2$.
\begin{figure}[h]
\begin{center}
\begin{minipage}{0.45\textwidth}
    \centering
        \includegraphics[width=\textwidth]{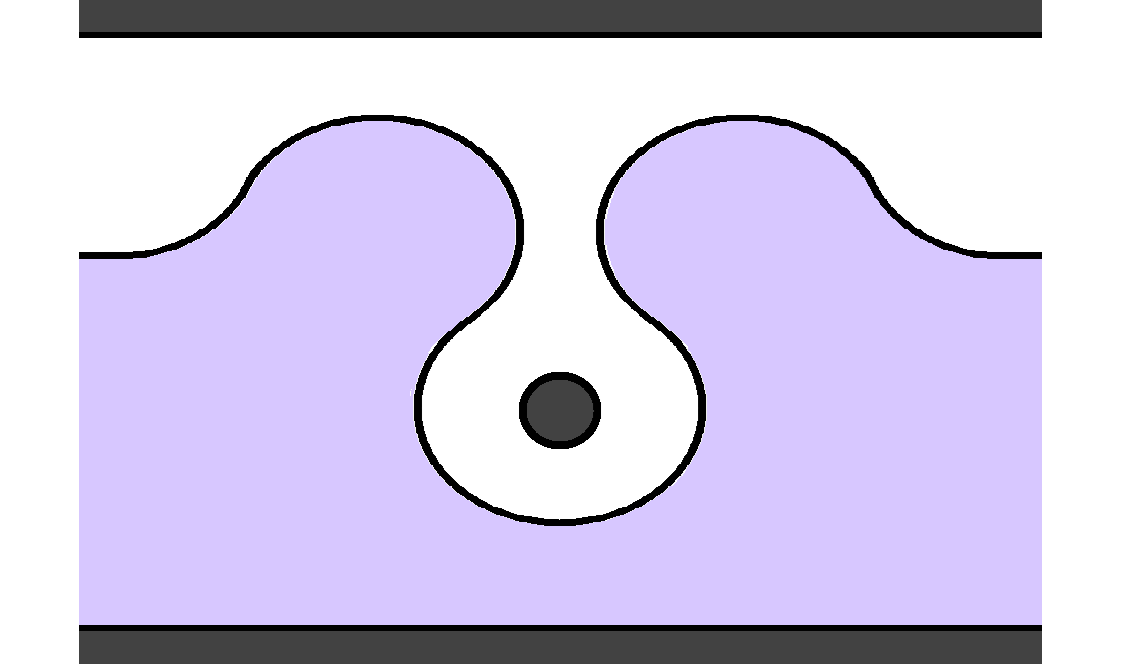}

    \begin{tikzpicture}[overlay, remember picture]

        \node at (0.48,2.05) {$\mathcal{W}_2$};
        \node at (3.4,4.45) {$\mathcal{W}_1$};
        \node at (2.1,2) {$\Omega(t)$};
        \node at (0,4) {$\mathcal{V}(t)$};
        \node at (-3.5,3) {$\Gamma(t)$};
        \node at (3.8,0.65) {$\{x_2=0\}$};
    \end{tikzpicture}
    
    \caption{Plasma chamber before splash}\label{OpenedSplash1}
\end{minipage}
\hfill 
\begin{minipage}{0.45\textwidth}
    \centering
            \includegraphics[width=\textwidth]{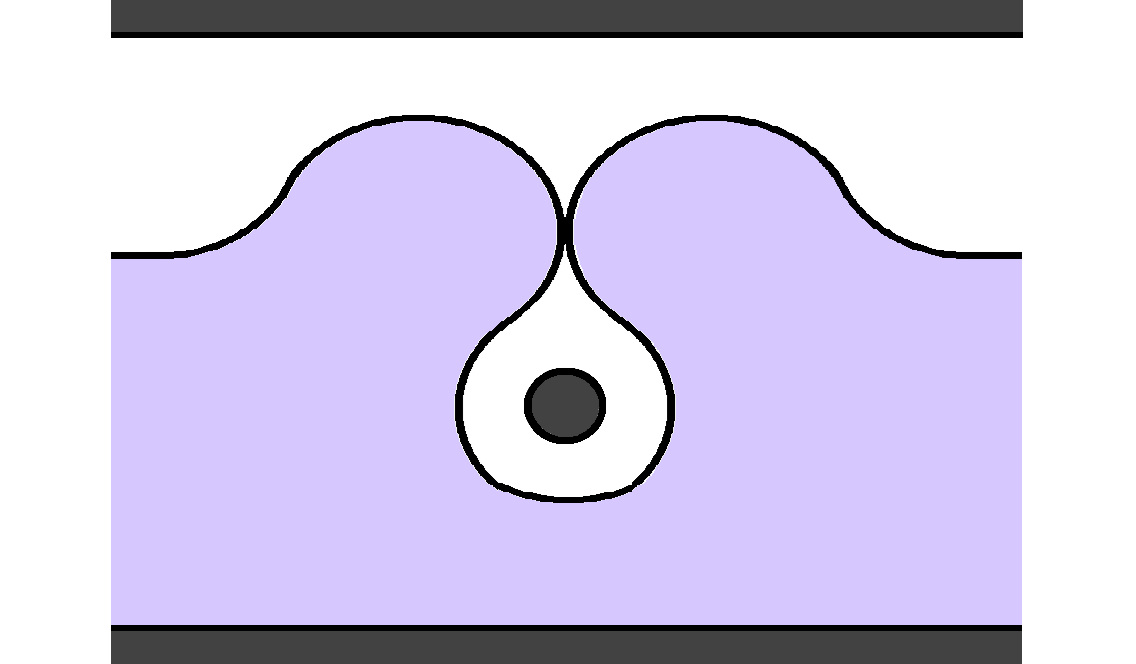}

    \begin{tikzpicture}[overlay, remember picture]

        \node at (2.1,2) {$\Omega_0$};
        \node at (-3.2,3) {$\Gamma_0$};
    \end{tikzpicture}
    \captionof{figure}{Plasma chamber at splash time}\label{ClosedSplash1}
\end{minipage}
\end{center}
\end{figure}

\begin{figure}[!h]
    \centering

\end{figure}

For plasma velocity field $u(t,x)$, magnetic field in the plasma $b(t,x)$, and magnetic field in the vacuum $h(t,x)$, the ideal MHD systems in the plasma and the vacuum are given by
\begin{align}
(x \in \Omega(t)) \qquad
& \left\{
    \begin{aligned}
    & \left.
        \begin{aligned}
        \del_t u + u \cdot \grad u &= b \cdot \grad b - \grad p \\
        \del_t b + u \cdot \grad b &= u \cdot \grad b \, ,
        \end{aligned}
        \right. \\[1ex]
    &  \left.
    \begin{aligned}
        \grad \cdot u &= 0 \ \\
        \grad \cdot b &= 0 \, ,
    \end{aligned}
    \right.
    \end{aligned}
    \right.
\label{IdealMHD1intro} \\[1ex]
(x \in \cV(t) ) \qquad
&   \left\{
        \left.
        \begin{aligned}
            \grad \cdot h &= 0 \ \\
            \grad^\perp \cdot h & = 0 \, .
        \end{aligned}
        \right.
    \right.
\label{IdealMHD2intro} 
\end{align}
To specify the dynamics of the plasma with a given initial datum $(u_0(x),b_0(x),h_0(x),\Gamma_0)$, the system must be supplemented with appropriate boundary conditions. The moving plasma--vacuum boundary $\Gamma(t)$ is transported by the fluid, magnetic flux does not cross the interface, and the pressure is continuous across $\Gamma(t)$. The boundary of the chamber $\del \mathcal{C} = \{x_2=0\}\cup \mathcal W$ is perfectly conductive, so that both $u$ and $b$ must be tangent to the floor $\{x_2=0\}$ and $h$ must be tangent to the vacuum wall $\cW$. We may also prescribe the circulation of $h$ around each wall $\mathcal{W}_i$, given by a constant~$\eta_i$. The boundary conditions for the MHD system are thus\footnote{Our constructions can also be adapted to more general circulation $\eta_i(t)$ and normal prescriptions $n\cdot h=f(t,x)$ for $h$ at the vacuum wall $\cW$, such as continuously differentiable $\eta_i(t)$ and $f(t,x)$.}
\begin{align}
(x \in \Gamma(t)) \qquad
& \left\{
    \begin{aligned}
        n \cdot b &= 0 \\
        n \cdot h &= 0 \\
        p = &\half |h|^2 \, ,
    \end{aligned}
\right.
\label{IdealMHD3intro} \\[1ex]
(x_2 = 0) \qquad
& \left\{
    \begin{aligned}
        n \cdot u &= 0 \\
        n \cdot b &= 0\, ,
    \end{aligned}
    \right.\label{IdealMHD4intro}\\[1ex]
(x \in \mathcal{W}) \qquad
& \left\{
    \begin{aligned}
        n \cdot h &= 0\, ,
    \end{aligned}
\right. \\
(i=1,2) \qquad
&  \int_{\mathcal W_i} h \cdot d \vec{r} = \eta_i \, .
\label{IdealMHD5intro}
\end{align}

Under the assumption that the interface is initially free of self-intersections, the free-boundary MHD system is locally well-posed in Sobolev spaces~\cite{SunWangZhang2019}. If, additionally, the initial data are analytic, classical methods prove~\cite{Nirenberg72,safonov} that the system is also locally well-posed in analytic function spaces, as we show in Proposition~\ref{analyticlwp} in the main text.

As the singularity forms, the interface develops a glancing self-intersection at exactly one point, transitioning from the picture in Figure~\ref{OpenedSplash1} to that in Figure~\ref{ClosedSplash1}. This disconnects the vacuum region, pinching off a drop-shaped component. The inclusion of the two fixed walls $\mathcal W_1,\mathcal W_2$ in our geometric setting, with well-chosen boundary conditions, ensures that $h$ remains nontrivial at the moment our splash--squeeze forms.

To gain some intuition, let us consider the moments leading up to a splash. As the two nearly glancing arcs of $\Gamma(t)$ approach one another, the tangency condition $n\cdot h = 0$ requires the magnetic field lines to become compressed in the shrinking neck. One might expect this to require (or generate) large forces as the external field undergoes a topological change when the critical moment of pinch-off occurs. 

Despite this, we discover that the formation of the glancing arcs at the moment of splash generically creates a rapid decay of the magnetic field near the self-intersection, causing it to vanish to infinite order precisely at the singular point and nowhere else. Thus, the somewhat surprising conclusion of our analysis is that the system develops splash--squeeze singularities without losing smoothness, but it cannot develop a splash--squeeze without losing analyticity.

\subsection{Main result and strategy of the proof}

Our main result can be (somewhat informally) stated as follows. For a detailed statement, see 
Theorem~\ref{maintheorem2} in the main text.

\begin{theorem}\label{T.intro}
    Fix any positive integer~$k$. There exists analytic initial data $(u_\init,b_\init,h_\init,\Gamma_\init)$ 
    such that the (unique) corresponding solution $(u,b,h,\Gamma)$ to the system \eqref{IdealMHD1intro}--\eqref{IdealMHD5intro} has the following properties:
    \begin{enumerate}
        \item {\em Persistence of smoothness:} There is some $t_\splash>0$ for which solution is well-defined for time~$t\in[0,t_\splash]$, and remains bounded in $H^k$ up to time~$t_\splash$.
        \item {\em Breakdown of analyticity:} There is some $t_\star \in(0,t_\splash]$ such that the solution $(u,b, h,\Gamma)$ is analytic for $t\in[0,t_\star)$, but not up to time~$t_\star$.
        \item {\em Singularity formation:} The interface develops a single glancing self-intersection at time~$t_\splash$. For $t\in[0,t_\splash)$, the interface curve $\Gamma(t)$ does not intersect itself.
        \item {\em Magnetic squeezing:} At time~$t_\splash$, the magnetic field~$h$ is non-vanishing in each pinched-off region of the vacuum.
    \end{enumerate}
\end{theorem}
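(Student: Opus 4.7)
My plan is a backward-in-time construction from a designed terminal splash state, in the spirit of the classical splash constructions of Castro--C\'ordoba--Fefferman--Gancedo--G\'omez-Serrano, combined with two new ingredients specific to the MHD setting: uniform weighted elliptic estimates for the vacuum field as the gap closes, and a cusp-domain rigidity argument forcing the breakdown of analyticity.

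\textbf{Terminal state, backward construction, and persistence in $H^k$.} I would first fix an analytic interface $\Gamma_\splash$ with a single glancing self-intersection at a point $x_\splash$, positioned so that the pinched-off component of $\cV(t_\splash)$ encloses the wall $\cW_2$; plasma data $(u_\splash,b_\splash)$ satisfying \eqref{IdealMHD3intro}--\eqref{IdealMHD4intro}; and the unique vacuum field $h_\splash$ solving \eqref{IdealMHD2intro} and \eqref{IdealMHD5intro} with nonzero circulation $\eta_2$. Running this terminal configuration backward for a short time by the time-reversed MHD flow, after regularizing the terminal singularity via an opening auxiliary velocity field (or, equivalently, a conformal ``uncrossing'' map as in the classical splash argument), yields embedded analytic initial data $(u_\init,b_\init,h_\init,\Gamma_\init)$ at $t=0$. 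Proposition~\ref{analyticlwp} delivers a short-time analytic forward solution, and the main task is to extend it to the full interval $[0,t_\splash]$ with bounded $H^k$ norm. On the plasma side, the magnetically-aligned Lagrangian formulation freezes $b\cdot\grad$ and yields a quasilinear symmetric system whose $H^k$ energy estimates close in terms of tangential quantities on $\Gamma(t)$. The harder step, which I expect to be the main analytic obstacle, is solving \eqref{IdealMHD2intro} on the degenerating vacuum $\cV(t)$: I would establish weighted elliptic estimates for the stream function $\psi$ (with $h=\grad^\perp\psi$, $\Delta\psi=0$) uniform as the neck width $d(t)\to 0$, by working in conformal-type coordinates adapted to the closing gap and exploiting that the Dirichlet data of $\psi$ on the two nearly glancing arcs coincide up to $O(d(t))$, which forces $|h|$ to be small in the neck and prevents geometric blow-up. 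A continuation argument then extends the solution to $[0,t_\splash]$ with uniform $H^k$ bounds, proving items (1) and (3); item (4) is immediate from the preserved nonzero circulation $\eta_2$ together with Stokes' theorem.

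\textbf{Breakdown of analyticity.} For item (2) I would first show that at $t=t_\splash$ the stream function $\psi$ in the pinched-off component must vanish to infinite order at $x_\splash$, in the sense $\psi(x)-c=O(|x-x_\splash|^N)$ for every $N$, where $c$ is the common constant value of $\psi$ on the two arcs meeting tangentially at $x_\splash$. This is a Phragm\'en--Lindel\"of-type estimate in the cusp bounded by these arcs: a harmonic function with zero Dirichlet data on the two nearly parallel walls of the cusp decays exponentially in natural cusp coordinates, hence to infinite polynomial order at $x_\splash$. If the solution were analytic up to $t_\splash$ in a neighborhood of $x_\splash$, the identity principle would then force $\psi\equiv c$ throughout the pinched component, giving $h\equiv 0$ and contradicting $\oint_{\cW_2}h\cdot d\vec r=\eta_2\neq 0$. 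Since the solution is analytic on $[0,t_\splash)$ by Proposition~\ref{analyticlwp} together with the uniform $H^k$ bound, there must exist a first time $t_\star\in(0,t_\splash]$ at which some analytic norm blows up, establishing item (2).
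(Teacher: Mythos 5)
Your proposal has a genuine gap at the backward-evolution step, and it is precisely the pitfall the paper flags and then works hard to avoid. You propose to fix an analytic terminal splash state $(u_\splash,b_\splash,h_\splash,\Gamma_\splash)$ and "run this terminal configuration backward for a short time… yielding embedded analytic initial data at $t=0$." But by the very rigidity argument you yourself invoke in the final paragraph, $h_\splash$ cannot be analytic at the splash point $x_\splash$ (it vanishes to infinite order there without being identically zero), and hence the pressure boundary datum $\tfrac12|h|^2$ on $\Gamma_\splash$ is non-analytic at $x_\splash$. So the terminal state is not an analytic initial condition for the backward problem, and you cannot appeal to analytic local well-posedness (Proposition~\ref{analyticlwp} or any other) to conclude that $\Gamma(0),u(0),b(0),h(0)$ are analytic. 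The Sobolev local-existence machinery you otherwise rely on produces only $H^k$ data after evolving backward; there is no gain of analyticity in time. Moreover, the conformal "uncrossing" map you mention as an alternative regularization acts on the plasma (water) side; the paper explicitly notes that a square-root-type map tears the vacuum apart and therefore cannot desingularize the operators ($\cN_+$, $\Nres$, $h$, $p_+$) that blow up with the pinch, so it does not salvage the construction either.

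The paper resolves this by inverting the direction of time in Step~3: it starts from a bona fide analytic datum $\xi_\init$ with a small positive pinch $\delta_\init$ and velocities pointing inward, proves Sobolev local existence \emph{forward} on a time interval $[0,T]$ with $T$ independent of $\delta_\init$, and shows the interface must splash at some $t_\splash\le T/2$. The key technical difficulty this creates — and the part your proposal omits entirely — is that the iterates in the existence scheme no longer share a common splash time or splash labels, so the fixed-point argument has to be run on splash-centered, reparametrized states (Definitions~\ref{splashcenteringDefn}, \ref{conjugatedmapdefns}, and the stability lemma Proposition~\ref{splashparamStabilityProp} controlling how the splash parameters drift between iterates). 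Analytic local well-posedness (Proposition~\ref{analyticlwp}) is then applied \emph{at $t=0$}, not at the terminal time, giving an analytic window $[0,t_a)$, and Theorem~\ref{nonexist2Dtheorem} forces $t_\star\in(0,t_\splash]$. Your Phragmén–Lindelöf / identity-principle version of the analyticity obstruction is a reasonable alternative to the paper's equal-angle normal-form argument for Theorem~\ref{nonexist2Dtheorem}, but it does not address the structural gap above: you must have analyticity at \emph{some} $t_a<t_\splash$ to get a contradiction, and a backward run from a non-analytic terminal state with Sobolev machinery cannot supply it.
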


Let us now discuss the main ideas behind the proof of this theorem. Given any solution to ideal MHD which forms a splash--squeeze singularity, there is a conceptual reason that analyticity must fail at the self-intersection point: the key is that the glancing-arc shape of a splash curve can never be realized as the zero-set of a nontrivial harmonic function.\footnote{
Details are given in Section~7.} What is much more involved is to \emph{construct} solutions to the MHD equations with the properties we are seeking. 

Our first idea is to restrict our attention to initial data for which the magnetic lines of~$b_0$ nicely foliate the plasma region. This allows us to introduce magnetically-aligned Lagrangian coordinates, resulting in a semilinear wave-equation form in the bulk. Although this simplifies the interior evolution significantly, the equations are not yet ready for analysis because they involve a loss of derivatives. To compensate for this, we introduce good unknowns ($\dot U^*$, $\dot B^*$) at the surface, thereby reducing the entire free-boundary MHD evolution to a closed, coupled system of wave equations.

The remaining difficulty then lies not in the local existence structure itself, but in bounding the vacuum-side quantities that directly ``feel the pinch'', notably Dirichlet-to-Neumann maps that blow up as the pinch-width $\delta$ tends to zero.
To overcome this, we establish $\delta$-weighted elliptic estimates with constants independent of $\delta$ to quantify and exploit the decay of $h$ near the pinch. Additionally, we prove a weighted analogue of a vital Dirichlet-to-Neumann cancellation discovered in previous studies (see \cite{currentvortexsheet,mhdexistence}), but only understood in the non-splash setting. Together, these yield uniform-in-$\delta$ control of the combined pinch-sensitive forcing terms, allowing us to prove local existence in standard Sobolev spaces with a basic iteration method.

With this result in hand, we may readily show with a time-reversal argument\footnote{This type of argument was first introduced in \cite{splash2} to construct splash singularities for water waves.} that there exist smooth initial data which develop splash--squeeze singularities. This is what we establish in Theorem~\ref{maintheorem}. To demonstrate this, we first construct an initial splash configuration which opens up as time advances, exhibiting a separated interface at some time $T>0$. We then essentially evolve the system backward in time, so that the smooth solution at time $t=T$ gives us a ``pre-splash'' initial datum, and the splash which was previously the initial state now becomes the final state of the system.

However, generic pre-splash initial data produced with the above method is not analytic, so demonstrating breakdown of analyticity, as we do in Theorem~\ref{maintheorem2}, requires further work. In summary, the central idea is to convert the argument of Section \ref{constructionsection} to a forward-in-time construction, starting from analytic initial data with a non-self-intersecting interface. This requires us to control and compare solution states for which the future splash times and splash points vary in a neighborhood of our original splash parameters. By realigning these states with special splash-dependent changes of variables, we are able to close a modified iteration scheme. This forward-in-time construction allows us simply to consider analytic initial data close to a splash, say with interface separation $\delta$, verify the solutions exist forward in time on a time interval independent of $\delta$, and ultimately verify they realize splash--squeeze singularities before the end of the time interval of existence is reached.\footnote{After the time of self-intersection, our solutions cease to be physically meaningful.} In fact, the construction is not sensitive to small changes in the initial data, implying the splash--squeeze and resulting phenomenon of analytic breakdown are robust.

\subsection{Literature review}\label{litreviewsection}

In the original paper on splash singularities \cite{splash2}, the authors constructed both Sobolev-regular and analytic splash solutions. Their construction relies on a conformal map similar to $z\mapsto\sqrt z$ to ``open'' the space between the converging arcs of the splash, while leaving the fluid region unbroken. This turns the splash curve into a chord–arc curve for which boundary singular integrals (like the Hilbert transform) are controlled. The system is evolved in those desingularized coordinates, and then the solution is pulled back to obtain a splash, with the help of the same time-reversal argument sketched earlier. It is important to keep in mind that the ``square-root trick'', prominently featured in the literature, is of quite limited use in the analysis of  squeeze singularities. This is because a square-root transform tears the vacuum apart and therefore cannot address the key issues at the heart of the construction: controlling a nontrivial field in the exterior and the operators that blow up as the squeeze forms.

Other past work exhibits splashes for water waves with surface tension \cite{splashtension} and proves stability of splash singularities \cite{splashstability}. Sobolev and analytic splash solutions were constructed for the Muskat equation in \cite{muskatsplash}. Splash constructions without time-reversibility were carried out for the free-boundary Navier--Stokes equations in \cite{naviersplash1,naviersplash2} and for viscoelastic fluids in \cite{oldroydsplash,viscosplash}. Splash singularity formation in three dimensions, where conformal maps cannot be used to simplify the analysis, was first established for the water wave system in~\cite{shkoller1}.   In general, across a range of fluid models, all the known splash singularities share the same basic features: the solution preserves the Sobolev or analytic regularity of the initial datum up to the splash time, and the pinched-off region does not contain any active quantity, just a field-free vacuum.

Closely related to this principle are several results in the literature ruling out the dynamical formation of smooth fluid pinch-offs in various free-boundary models. In particular, the works \cite{nosplash4,nosquirt,CordobaPernasCastano2017,nosplash3,nosplash2,nosplash1, JeonZlatos2024, KiselevLuo2023} rule out the formation of interface self-intersections that pinch off a region occupied by fluid under the assumption that the solution remains smooth up to the splash time. 

Among other splash-type variants for free-boundary models is the {splat singularity}, in which self-contact of the interface occurs along an arc, rather than a point. These have been constructed for models in \cite{splash2,shkoller1}, for example. 
A second splash-related solution type is the {stationary splash}, where stationary solutions to free-boundary models exhibit a splash profile, constructed in \cite{stationarysplash1,stationarysplash2}.

Regarding loss of smoothness, in \cite{CastroCordobaFeffermanGancedo2013Breakdown} it was shown that there exist analytic initial data in the stable regime for the Muskat problem such that the solution evolves into the unstable regime and later breaks down, i.e. it ceases to be in $C^4$. In the context of smooth interfaces that touch a fixed boundary, there are influential results on singularity formation for modified SQG~\cite{KiselevRyzhikYaoZlatos2016} (see also \cite{GancedoPatel},  \cite{jeon2025wellposednessfinitetimesingularity}, and \cite{zlatos2023localregularityfinitetime}), Euler~\cite{Coutand2019ARMA}, and Muskat~\cite{Zlatos2024MuskatII}. A  somewhat related recent result~\cite{Agrawal2023UniformGravity} proves singularity formation for water waves with angled crests without requiring a boundary. Based on different ideas, another scenario for singularity formation in angled-crested water waves was recently established in~\cite{CordobaEncisoGrubic2023AngledCrested}, relying  on the analysis of the asymptotic dynamics of the fluid near the corner points.

To conclude, let us discuss the existing results on singularity formation for plasma models. The only available results are~\cite{haoMHDsplash,hongMHDsplash}, which construct splash singularities for free-boundary viscous MHD (with external magnetic field identically zero), from the Eulerian perspective, by building on ideas of Coutand and Shkoller~\cite{CoutandShkoller2019NS}. These share the fundamental features of all previous splash singularities: no loss of Sobolev or analytic regularity up to the splash time, and no active quantity in the vacuum. No prior study of splash singularities treats a vacuum region with a nonzero exterior magnetic field. This regime is physically natural for astrophysical plasmas, such as the Sun, where the ambient magnetic field is observed to be nonzero.

As we outlined in the previous section, a key starting point of the present paper is to reformulate the equations in a novel way which makes their structure more transparent and amenable to analysis. Even beyond splash formation, ours appears to be the first Lagrangian local existence proof for a free-boundary MHD model with a nonzero exterior field. Moreover, earlier Lagrangian local existence constructions either involved the use of viscous models or introduced artificial smoothing in the ideal (i.e. inviscid, nonresistive) setting to recover regularity lost at the iteration step (see \cite{GuMHDtension,LeeLagrViscLimMHD,XieLuoALE}). By using magnetic Lagrangian coordinates and good surface unknowns $(\dot U^*,\dot B^*)$, which recast the free-boundary ideal MHD system as a coupled system of wave equations, we avoid this regularity loss and prove local existence with a basic iteration method for standard nonlinear wave equations. We believe that this formulation will be of independent interest for future studies of the free-boundary ideal MHD system.

\subsection{Organization of the paper}

Section 2 sets out the geometric framework used to analyze the free-boundary ideal MHD equations, formulating the problem in terms of magnetic Lagrangian variables. Section 3 derives the Lagrangian wave system for the “good unknowns” that underpins our analysis, together with the plasma-centric and vacuum-centric operator maps needed later on. Section 4 develops uniform estimates in an almost-pinched vacuum region. Using the time-reversibility of the equations, Section 5 establishes the dynamical formation of splash–squeeze singularities from classically smooth, i.e. Sobolev data, providing fairly general bounds later used also to construct the example demonstrating analytic breakdown. Section 6 collects several auxiliary results used throughout the proofs. Finally, Section 7 proves that splash--squeeze singularities cannot be analytic and, building on the previous uniform pinch estimates, establishes the existence of analytic data that lead to analytic breakdown and splash--squeeze singularity formation.

\subsection*{Acknowledgements}

This work has received funding from the European Research Council (ERC) under the European Union's Horizon 2020 research and innovation programme through the grant agreement~862342 (A.E.~and M.H.). The authors are partially supported by the grants CEX2023-001347-S, RED2022-134301-T,  PID2023-152878NB-I00 (D.C.) and PID2022-136795NB-I00 (A.E.) funded by MCIN/AEI/10.13039/501100011033.

\section{General setup}\label{generalsetup}

\subsection{The ideal free-boundary MHD equations}

\subsubsection*{The interior velocity, interior magnetic field, and exterior magnetic field system.}


We have the velocity field $u(t,x)$ and interior magnetic field $b(t,x)$ in $\Omega(t)$, and an external magnetic field $h(t,x)$ defined in the vacuum $\cV(t)$. The ideal free-boundary MHD equations consist of the following conditions, imposed for all time $t$ in some interval $[0,T]$.
\begin{align}
(x \in \Omega(t)) \qquad
& \left\{
    \begin{aligned}
    & \left.
        \begin{aligned}
        \del_t u + u \cdot \grad u &= b \cdot \grad b - \grad p \\
        \del_t b + u \cdot \grad b &= u \cdot \grad b \, ,
        \end{aligned}
        \right.  \\[1ex]
    &  \left.
    \begin{aligned}
        \grad \cdot u &= 0 \ \\
        \grad \cdot b &= 0 \, ,
    \end{aligned}
    \right.
    \end{aligned}
    \right.
\label{IdealMHD1} \\[1ex]
(x \in \cV(t) ) \qquad
&   \left\{
        \left.
        \begin{aligned}
            \grad \cdot h &= 0 \ \\
            \grad^\perp \cdot h &= 0 \, ,
        \end{aligned}
        \right.
    \right.
\label{IdealMHD2} \\[2ex]
(x \in \Gamma(t)) \qquad
& \left\{
    \begin{aligned}
        n \cdot b &= 0 \\
        n \cdot h &= 0 \\
        p = &\half |h|^2 \, ,
    \end{aligned}
\right.
\label{IdealMHD3} \\[1ex]
(x_2 = 0) \qquad
& \left\{
    \begin{aligned}
        n \cdot u &= 0 \\
        n \cdot b &= 0\,,
    \end{aligned}
    \right.
\label{IdealMHD4} \\
(x \in \mathcal{W}) \qquad
& \left\{
    \begin{aligned}
        n \cdot h &= 0\, ,
    \end{aligned}
\right. \label{IdealMHD5} \\
(i=1,2) \qquad
&  \int_{\mathcal W_i} h \cdot d \vec{r} = \eta_i \, .
\label{IdealMHD6}
\end{align}
Here and in all that follows, we use the notation~$\grad^\perp \cdot h = -\del_{x_2} h_1 + \del_{x_1} h_2$.

Additional boundary conditions are required for uniqueness, as outlined in the introduction and discussed in detail later. There is also the kinematic condition, which states that each particle on the free surface must move with the boundary. Consider the standard particle trajectory map $\X(t,x)$, defined by
\begin{align*}
(x \in \Omega_0) \qquad
\left\{
\begin{aligned}
\frac{d\X}{dt}(t,x) &= u(t,\X (t,x)) \\
    \X(0,x) &= x \phantom{|}_,
\end{aligned}
\right.
\end{align*}
where $\Omega_0=\Omega(0)$ and $\Gamma_0=\Gamma(0)$. The kinematic condition is then equivalent to
\begin{align}
\label{KinematicCondition}
\Gamma(t) = \X(t, \Gamma_0) .
\end{align}

\begin{remark}
As explained earlier, we include the two vacuum walls, $\mathcal W_1$ and $\mathcal W_2$, rather than treating an unrestricted vacuum above the interface,\footnote{It is trivial to adapt our construction of a splash to the setting without walls.} to exhibit external magnetic fields that get squeezed without being completely extinguished. If instead we had a picture like in Figure~\ref{pre_intersect}, with an unrestricted vacuum, any finite energy external field would be identically zero. Two vacuum walls are necessary to allow for nonzero fields in the two separate connected components of the vacuum that form during a splash. We discuss this further in Section~\ref{squeezedmagfieldssection}.
\end{remark}

\subsubsection*{The pressure system}

Taking the divergence of the first line in \eqref{IdealMHD1} and using the divergence-free conditions on $u$ and $b$ leads to the Poisson system for the pressure, which is
\begin{align}
&&   \Delta p &= \sum^2_{i,\, j=1} \del_{x_i} u_j \del_{x_j} u_i -\del_{x_i} b_j \del_{x_j} b_i
                                & (x \in \Omega(t) ) , \quad \label{PressureSystem} \\
&&          p &= \half |h|^2    & (x \in \Gamma(t) ) , \quad \notag \\[1ex]
&& \del_{x_2} p &= 0            & (x_2 = 0) .          \quad \notag
\end{align}
The determination of the pressure $p$ from the fields $u$, $b$, and $h$ as the unique solution to the above closes the system \eqref{IdealMHD1}--\eqref{IdealMHD6}.

\subsection{Proving the dynamic formation of splash--squeeze singularities}

Before continuing, let us give a slightly more detailed overview of the structure of the paper and of our constructions of splash--squeeze singularities which ultimately lead to a proof of analytic breakdown for free-boundary ideal MHD.

Our results are essentially built in three major steps:
\begin{itemize}
 \item {\em Step~1.}\/ Formulate a solvable system in Lagrangian coordinates, with well-defined terms and operators in appropriate spaces. In particular develop weighted estimates for the external magnetic field which hold across a range of vacuum regions with arbitrarily small pinch-widths.

\item {\em Step~2.}\/ 
Use the first step to establish the existence of splash--squeeze singularities arising from Sobolev initial data. This leads to Theorem~\ref{maintheorem}.

\item {\em Step~3.}\/ 
Upgrade the result of the second step by augmenting the approach at certain key points to prove the existence of splash--squeeze singularities arising from \emph{analytic} initial data. Verify these solutions lose analyticity at or before the time of splash, thus demonstrating analytic breakdown (Theorem~\ref{maintheorem2}).
\end{itemize}

The first step will occupy multiple dedicated sections; for the moment, we examine the second and third steps. Though the proof of Theorem~\ref{maintheorem2} technically does not require the second step, in which we construct singularities starting from Sobolev initial data, Step~2 provides a more transparent construction which offloads many technical difficulties. This allows us to focus on the core challenges of constructing splash--squeezes, providing a guide for later understanding the construction in the third step. The greater transparency of the proof in Step~2 is afforded by time-reversibility, which allows us to know a priori the precise parametrization of the self-intersecting interface at the time of splash and to effectively build the argument around it.

Let us give an outline below of the strategy we use up to the end of Step~2, paying special attention to the use of a backward-in-time approach (although the use of time-reversibility is abandoned in the third step) which is inspired by constructions of splash singularities in past work. Despite this, many new ideas are required. One particular new idea is that we may consider the external field $h$ as a nonlocal function of the interface itself, so that it becomes a kind of source term in the system. Plugging this into the well-known ``backward-in-time approach'' for splash singularities, we do the following to complete Step~2:
\begin{enumerate}[(i)]
    \item Construct good external fields $h=h(\Gamma)$ for general classes of splash and near-splash interfaces $\Gamma$ whose vacuum domains $\cV$ are of pinch-width \( \delta \in [0,\delta_0] \). This is done in Definition~\ref{formalhdefn}. The main Sobolev bound for controlling the $h$ produced is given in Proposition~\ref{hEstimateProp}.
    \item Substitute $h=h(\Gamma(t))$, as defined in (i), into the MHD system.
    \item Define suitable initial splash data $(u_0,b_0,h_0,\Gamma_0)$ with $H^k$ regularity, and interface $\Gamma_0$ exhibiting a single self-intersection as in Figure~\ref{ClosedSplash1}, with initial velocity near the self-intersection point directed in such a way that the glancing arcs in a neighborhood of the splash point will move apart in positive time. This is done in Definition~\ref{initialData}.
    \item Prove that in the resulting system, a solution with the initial data discussed above exists on a time interval $[0,T]$, maintaining $H^k$ regularity. This is the main focus of Section~\ref{solvinglagwavesystemsection}. The solution itself is provided by Proposition~\ref{maintheoremforward}.
    \item Observe that the interface $\Gamma(t)$ separates, resulting in an interface as in Figure~\ref{OpenedSplash1}. This is verified in Proposition~\ref{maintheoremforward}.
    \item Produce a new solution by ``running time backwards'', taking the initial state of the solution (at $t=0$) to be the old solution at $t=T$ (but with flipped velocity everywhere) so that we start with a picture as in Figure~\ref{OpenedSplash1}. Due to time-reversibility of the system, at time $t=T$ the corresponding interface evolves into the configuration which we used previously as initial data, depicted in Figure~\ref{ClosedSplash1}, thus demonstrating a splash at time $t_\splash=T$. This is done in detail in the proof of Theorem~\ref{maintheorem}.
\end{enumerate}
Note the splash--squeeze solution produced by reversing time at the end of Step~2 is thus in $H^k$ at all times from the initial moment up to the moment of splash.

Regarding Step~3, where we actually construct splash--squeeze singularities from analytic initial data, leading to solutions which lose analyticity, we must deviate from the strategy above. We still use (i) and (ii), which are really more a part of Step~1. However, due to reasons explained at the beginning of Section~\ref{analyticbreakdownsubsection}, to prove analytic breakdown, we adapt the tools of Steps 1 and 2 to a forward-in-time existence argument which shows the formation of a splash--squeeze from analytic initial data.

Step~3, the objective of Section~\ref{analyticbreakdownsection}, breaks down into two major insights, which we summarize below.

\begin{enumerate}[(a)]
\item There is a fundamental obstruction to analyticity for any free-boundary MHD splash--squeeze singularity. Since the zero set of a harmonic function cannot consist of a pair of glancing arcs, any $h$ that is not identically zero cannot be analytic at the moment of splash; this is proved in Theorem~\ref{nonexist2Dtheorem}.

\item There exists an analytic initial datum (with a non-self-intersecting interface) and a corresponding solution which is analytic for a short time and which realizes a splash singularity at some time $t=t_\splash$, with $h$ not identically zero at time $t=t_\splash$, so that we have a splash--squeeze. Moreover, the solution maintains $H^k$ regularity over the interval $[0,t_\splash]$. This is proved in Proposition~\ref{mainproposition}.

\end{enumerate}

Theorem~\ref{nonexist2Dtheorem} of (a) has a succinct, conceptual proof, whereas Proposition~\ref{mainproposition} of (b) takes a bit more work. By combining these two ingredients, we see that the solution produced in (b) necessarily encounters an analytic singularity, required by the splash--squeeze, all while the solution remains smooth. The content of (b) involves a forward-in-time local existence argument in Sobolev norms, going ``through the time of splash''. While the iteration argument is similar to that of (iv) in Step~2, the new scheme requires technical modifications since the individual iterates do not all realize the same splash states, as in a backward-in-time approach starting from a splash.

Another critical insight is that many of our most key estimates, such as those involving $h$, are purely built around the geometry of the vacuum at each fixed moment in time, so that the material established in Step~1 is still useful regardless of the direction of time and is robust enough to apply to a variety of splash states.

Additionally, in (b) we prove local existence in analytic spaces for analytic data with separated interfaces, thus demonstrating that the solution is in fact analytic for a short time before an analytic singularity is eventually induced by a splash--squeeze. For more discussion of the ideas and details of Step~3, we refer the reader to Section~\ref{analyticbreakdownsubsection}.

\subsection{Magnetic Lagrangian coordinates}\label{maglagvariablessection}

There are many reasons to use Lagrangian variables in studies of fluid dynamics, as the properties of the equations are often more transparent when written in such a form. For the ideal MHD equations, however, it is also known to be convenient to work in what is sometimes referred to as a \emph{magnetic coordinate system}. This is a curvilinear coordinate system in which one of the spatial variables is aligned with the magnetic field lines. Inspired by this, for our problem we work in a coordinate system which is simultaneously a Lagrangian coordinate system and a magnetic coordinate system, for suitably chosen initial data.

\subsubsection*{Setting and initial data}

We first discuss the initial plasma region $\Omega_0$, with corresponding initial interface $\Gamma_0$ and a convenient choice for our initial magnetic field $b_0$, in particular. The general shape of $\Omega_0$ is as depicted in Figure~\ref{ClosedSplash1}, with the corresponding interface $\Gamma_0$ intersecting itself at a single point $p_\splash$, the splash point.

We denote by $S^1$ the interval $[-\pi,\pi]$ with endpoints identified. For our Lagrangian domain we take $\Sigma = S^1 \times [-1,0]$, typically describing such points in $\Sigma$ with the notation $a=(\theta,\psi)$, whereas points in the Eulerian picture are typically written as $x=(x_1,x_2)$.

\begin{definition}\label{initialData}
\hfill
\begin{enumerate}[(i)]
\item For the initial Lagrangian labeling system, we choose a conformal map $\bX_0$, where
\begin{align}\label{bXinitIntroduction}
    \bX_0: \Sigma \to S^1 \times \R ,
\end{align}
such that the image $\Omega_0 = \bX_0(\Sigma)$ has the qualitative shape shown in Figure~\ref{ClosedSplash1}, symmetric about $x_1 = 0$, with a flat, lower boundary at $x_2 = 0$. We arrange that $\bX(\theta,-1)$ and $\bX(\theta,0)$ travel from left to right along $\{x_2=0\}$ and $\Gamma_0$, respectively, as we increase $\theta$. We also arrange that $\bX_0(0,-1)$ is the origin, $\bX_0(0,0)$ is the bottom of the trough of the splash, with $|\bX_0(0,0)-(0,1/2)|\leq 10^{-1}$, and for the splash point we have $p_\splash = \bX_0(\pm\frac{\pi}{2},0)$ and \(|p_\splash - (0, 2)|\leq 10^{-2} \). We denote by $\Gamma_0$ the upper bounding curve of the region $\Omega_0$. Additionally, denoting the trace of $\bX_0$ at $\psi=0$ by
\begin{align}
&& X_0(\theta) &= \bX_0(\theta,0) & (\theta\in S^1),
\end{align}
we arrange for all $\theta$ in $S^1$ that, where $\kappa_0(\theta)$ denotes the curvature of $\Gamma_0$ at $X_0(\theta)$,
\begin{align}\label{Xzerobds}
&&
\begin{aligned}
|\del_\theta X_0(\theta)| &\geq \epsilon_1 ,\\
\kappa_0(\theta) &\leq 10 ,
\end{aligned}
&&(\theta \in S^1).
\end{align}

\item To define our initial interior velocity field in $\Omega_0$, we define a velocity stream function on the Lagrangian side by
\begin{align}
&& \bvarphi_0(\theta,\psi) &= \underline\nu(1+\psi)\sin(2\theta) &( (\theta,\psi) \in \Sigma), \\ \intertext{for a constant $\underline \nu$. 
We then define the Eulerian stream function for the velocity field by}
&& \varphi_0(x) &= \bvarphi_0( \bX^{-1}_0(x)) & (x\in \Omega_0), \\
\intertext{and then define the initial velocity field to be}
&& u_0(x) &= \grad^\perp \varphi_0(x) & (x\in\Omega_0), \\ \intertext{with Lagrangian version}
&& \bU_0(a) &= u_0(\bX_0(a)) & (a\in\Sigma), \\ \intertext{with trace at $\psi=0$ denoted by}
&& U_0(\theta) &= \bU_0(\theta,0) & (\theta \in S^1), \\
\intertext{
\item We define the initial interior magnetic field in $\Omega_0$ by}
&&    b_0(x) &= \del_\theta \bX_0({\bm{X}}^{-1}_0(x)) & (x\in\Omega_0), \\ \intertext{and for the Lagrangian version we have}
&&    \bB_0(a) &= \del_\theta \bX_0(a) & (a\in\Sigma), \\ \intertext{with trace at $\psi=0$}
&&    B_0(\theta) &= \bB_0(\theta,0) & (\theta\in S^1).
\end{align}
\end{enumerate}
\end{definition}
\begin{remark}
To show that there exists a region $\Omega_0$ with the qualitative shape in Figure~\ref{ClosedSplash1} along with a conformal map $\bX_0$ as described in Definition~\ref{initialData}, one may begin by considering a variety of candidate regions for $\Omega$, say $\tilde\Omega_0$, and solving the Dirichlet problem for some function $\Phi$ in $\tilde \Omega_0$, where $\Phi=0$ on the top boundary and $\Phi=-1$ on the bottom boundary. With the use of the harmonic conjugate $\Psi$, we get a map $\Phi+i\Psi$ from $\tilde\Omega_0$ to a periodic rectangle $\tilde \Sigma$. By inverting this, we get a map from $\tilde \Sigma$ to $\tilde\Omega_0$. One can use the intermediate value theorem to verify that fine-tuned adjustments on $\tilde\Omega_0$ lead to $\tilde \Sigma=\Sigma$ and a map $\bX_0$ with the desired properties.
\end{remark}

Note that we must have that $u_0$ and $b_0$ are divergence-free.
\begin{lemma}
    For $u_0$ and $b_0$ as in Definition~\ref{initialData},
    \begin{align*}
        \grad \cdot u_0(x) = \grad \cdot b_0(x) = 0 \quad \quad (x\in\Omega_0).
    \end{align*}
\end{lemma}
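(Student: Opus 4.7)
The statement splits into two independent claims. The first, $\grad \cdot u_0 = 0$, is essentially automatic from the definition: since $u_0 = \grad^\perp \varphi_0$ for the scalar stream function $\varphi_0 = \bvarphi_0 \circ \bX_0^{-1}$ on $\Omega_0$, and $\varphi_0$ is at least $C^2$ (because $\bvarphi_0$ is real-analytic on $\Sigma$ and $\bX_0^{-1}$ is a smooth diffeomorphism, being the inverse of the nondegenerate conformal map $\bX_0$), equality of mixed partials yields
\[
\grad \cdot u_0 \,=\, -\del_{x_1}\del_{x_2}\varphi_0 + \del_{x_2}\del_{x_1}\varphi_0 \,=\, 0.
\]
Beyond the invertibility of $\bX_0$, no further geometric information is needed for this part.

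The second claim, $\grad \cdot b_0 = 0$, is where the conformality of $\bX_0$ really enters. My plan is to exhibit a scalar stream function $\chi_0$ on $\Omega_0$ satisfying $b_0 = \grad^\perp \chi_0$; the identity $\grad \cdot \grad^\perp \chi_0 = 0$ then closes the proof at once. Writing $\bX_0^{-1}(x) = (\alpha(x), \beta(x))$, conformality of $\bX_0$ (equivalently, of $\bX_0^{-1}$) produces Cauchy--Riemann relations, e.g.\ $\del_{x_1}\alpha = \del_{x_2}\beta$ and $\del_{x_2}\alpha = -\del_{x_1}\beta$ in the orientation-preserving case, so that $\alpha$ and $\beta$ are a pair of conjugate harmonic functions on $\Omega_0$. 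Substituting these into the chain-rule expansion of $b_0(x) = (\del_\theta X^1_0, \del_\theta X^2_0)(\alpha(x), \beta(x))$ and using the harmonicity of each component of $\bX_0$ (itself a consequence of the Cauchy--Riemann equations) should let one recognize the two components of $b_0$ as $-\del_{x_2}\chi_0$ and $\del_{x_1}\chi_0$ for a scalar $\chi_0$ built from the coordinates of $\bX_0^{-1}$ together with the conformal Jacobian.

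The main obstacle is algebraic bookkeeping: one has to handle the conformal Jacobian $|\del_\theta \bX_0|^2$ that appears when inverting $D\bX_0$ via the inverse function theorem, and carefully track sign conventions (orientation of the conformal map, and the paper's convention $\grad^\perp \cdot h = -\del_{x_2}h_1 + \del_{x_1}h_2$). Rather than computing the divergence directly and watching all the terms cancel, I would first guess the stream function $\chi_0$---a natural candidate being a scalar function built from $\beta$, possibly rescaled by a power of $|\del_\theta \bX_0|^2$---and then verify $b_0 = \grad^\perp \chi_0$ pointwise by a short chain-rule computation. Once this identification is in place, the divergence-free property of $b_0$ follows immediately from $\grad \cdot \grad^\perp \equiv 0$, and the lemma is complete.
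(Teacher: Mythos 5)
Your treatment of $u_0$ is the same as the paper's and is fine.

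For $b_0$ you take a different route from the paper, which concludes directly from the observation that $b_0$ is the pushforward of the constant divergence-free field $e_1$ under the conformal map $\bX_0$; you instead want to exhibit a stream function $\chi_0$ with $b_0=\grad^\perp\chi_0$. The problem is that the ``short chain-rule verification'' you anticipate does not close. Writing $\bX_0^{-1}=(\alpha,\beta)$, the inverse function theorem together with the Cauchy--Riemann relations gives
\[
b_0 \;=\; \frac{\grad\alpha}{|\grad\alpha|^2}\;=\;-\,\frac{\grad^\perp\beta}{|\grad\alpha|^2},
\qquad\text{whence}\qquad
\grad\cdot b_0 \;=\; -\,\frac{\grad\alpha\cdot\grad|\grad\alpha|^2}{|\grad\alpha|^4},
\]
and any stream function must satisfy $\grad\chi_0 = -\,\grad\beta/|\grad\alpha|^2$, whose mixed-partials integrability condition is precisely $\grad\alpha\cdot\grad|\grad\alpha|^2=0$. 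Both the divergence-free claim and the existence of $\chi_0$ therefore reduce to the statement that the conformal Jacobian $\det\grad\bX_0 = 1/|\grad\alpha|^2$ is constant along the level curves of $\beta$, equivalently independent of $\theta$. That is not a formal consequence of conformality, and no candidate of the form ``a function of $\beta$ rescaled by a power of the Jacobian'' will pass the verification without it. Within the paper, this $\theta$-independence is exactly what Proposition~\ref{JacobianIdent} establishes --- and its proof already invokes $\grad\cdot b_0=0$ --- so you cannot borrow it here. Your outline is therefore missing the key ingredient: a separate argument for why $\det\grad\bX_0$ depends on $\psi$ alone. Absent that, the guess-and-verify plan does not establish the lemma, and the route the paper takes (arguing about the pushforward directly rather than through a stream function) is not merely a bookkeeping shortcut but a genuinely different mechanism.
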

\begin{proof}
The fact that $u_0(x)$ is divergence-free simply follows from the fact that it is given by $\grad^\perp \varphi_0(x)$. On the other hand, because $\bX_0$ satisfies
\begin{align*}
\del_\theta \bX_0(\theta,\psi) = b_0(\bX_0(\theta,\psi)),
\end{align*}
we have $b_0(x)$ is the pushforward by the conformal map $\bX_0$ of the constant vector field attaching $e_1$ to each point $(\theta,\psi)$ in $\Sigma$. Since the constant vector field is trivially divergence-free, $b_0(x)$ is also divergence-free.
\end{proof}

We now record some simple but important bounds for $U_0$, which cause the initial splash curve to separate as time goes forward.
\begin{lemma}\label{separationPrepLem}
For $U_0$ as in Definition~\ref{initialData}, we have the following, in which $p_\splash$ is the splash point of $\Gamma_0$ and $\angle(v_1,v_2)$ represents the angle in $[0,2\pi)$ between the two vectors $v_1,v_2$.
\begin{align}
&& |U_0(\theta)| & \geq 1 &(\theta \in S^1, \ |X(\theta)-p_\splash|\leq 10^{-1}),\\
&& \angle(U_0(\theta),\pm e_1) &\leq 10^{-1} &(\theta \in S^1, \  \theta\gtrless 0,\ |X(\theta)-p_\splash|\leq 10^{-1}).
\end{align}
\end{lemma}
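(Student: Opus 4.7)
The plan is to write $U_0$ explicitly in terms of $\bX_0$ via the conformal structure, and then to read the two bounds off the resulting formula. Using the Cauchy--Riemann identity $\del_\psi \bX_0 = (\del_\theta \bX_0)^\perp$ (with $|\del_\psi \bX_0| = |\del_\theta \bX_0|$), the chain rule applied to $u_0 = \grad^\perp(\bvarphi_0 \circ \bX_0^{-1})$, and the explicit form $\bvarphi_0(\theta, \psi) = \underline{\nu}(1+\psi)\sin(2\theta)$ evaluated at $\psi = 0$, a direct calculation produces the tangential--normal decomposition
\begin{equation*}
U_0(\theta) \;=\; \frac{2\underline{\nu}\cos(2\theta)}{|\del_\theta \bX_0(\theta, 0)|^{2}}\,(\del_\theta \bX_0(\theta, 0))^\perp \;-\; \frac{\underline{\nu}\sin(2\theta)}{|\del_\theta \bX_0(\theta, 0)|^{2}}\,\del_\theta \bX_0(\theta, 0).
\end{equation*}

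For the magnitude bound, orthogonality of tangent and normal (with equal magnitudes) immediately gives
\begin{equation*}
|U_0(\theta)|^2 \;=\; \frac{\underline{\nu}^2\,\bigl(1 + 3\cos^2(2\theta)\bigr)}{|\del_\theta \bX_0(\theta, 0)|^2},
\end{equation*}
which near the splash parameters $\theta = \pm \pi/2$ is comparable to $4\underline{\nu}^2/|\del_\theta \bX_0|^2$. The smoothness of the conformal map on the compact strip $\Sigma$ provides an upper bound $|\del_\theta \bX_0| \leq C$, and then $|\underline{\nu}|$ is chosen sufficiently large (depending on $C$) so that $|U_0(\theta)| \geq 1$ on the spatial neighborhood $\{|X_0(\theta) - p_\splash| \leq 10^{-1}\}$.

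For the angle bound, near $\theta = \pm \pi/2$ the $\cos$-term in the decomposition dominates and the $\sin$-term vanishes to first order, so $U_0$ is approximately parallel to the outward normal $(\del_\theta \bX_0)^\perp$. Combining the glancing condition at the self-intersection (which forces $\del_\theta \bX_0(\pi/2, 0)$ to be parallel to $\del_\theta \bX_0(-\pi/2, 0)$) with the mirror symmetry $\bX_0(-\theta, \psi) = R\bX_0(\theta, \psi)$ across $\{x_1 = 0\}$, where $R$ denotes reflection in $x_1$, the two tangents at the splash parameters are constrained to be either both horizontal with the same orientation or both vertical with opposite orientations. The construction in Definition~\ref{initialData} selects the latter alternative, so $(\del_\theta \bX_0)^\perp$ at $p_\splash$ is horizontal, with sign flipping between $\theta = \pi/2$ and $\theta = -\pi/2$. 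This yields the claimed $\theta \gtrless 0$ dichotomy, and the curvature bound $\kappa_0 \leq 10$ combined with $|\del_\theta X_0| \geq \epsilon_1$ and the continuity of $\del_\theta \bX_0$ propagates the near-horizontal alignment to the full spatial neighborhood $\{|X_0(\theta) - p_\splash| \leq 10^{-1}\}$ (after a harmless shrinkage if needed).

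The main obstacle is the angle bound: the magnitude piece is a direct algebraic consequence of the explicit formula for $U_0$, whereas the angle piece hinges on the geometric identification of the tangent of $\Gamma_0$ at $p_\splash$ as vertical, which in turn relies on the glancing and mirror-symmetry features of the construction of $\bX_0$ in Definition~\ref{initialData}, followed by a continuity estimate using the curvature bound.
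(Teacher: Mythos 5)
Your approach is the one the paper defers to: it computes $U_0$ explicitly from the definitions and reads off the two bounds, and the paper's own one-line proof (``this can be verified from the definitions\ldots'') gestures at exactly this. Your explicit formula for $U_0$ is correct: using the conformal relation $\del_\psi\bX_0=(\del_\theta\bX_0)^\perp$ and $\grad_x\varphi_0 = ((\grad\bX_0)^T)^{-1}\grad_a\bvarphi_0$ one indeed obtains
\[
U_0(\theta) = \frac{2\underline{\nu}\cos(2\theta)}{|\del_\theta \bX_0(\theta,0)|^{2}}\,(\del_\theta\bX_0(\theta,0))^\perp - \frac{\underline{\nu}\sin(2\theta)}{|\del_\theta\bX_0(\theta,0)|^{2}}\,\del_\theta\bX_0(\theta,0),
\]
and the magnitude bound follows immediately by choosing $\underline\nu$ large relative to $\sup_{S^1}|\del_\theta\bX_0(\cdot,0)|$. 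Your derivation of the dichotomy (vertical tangent with opposite orientations vs.\ horizontal with same orientation) from the mirror symmetry and the glancing condition is also right.

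Two soft spots are worth naming. First, the selection of the vertical alternative rests on the qualitative shape of Figure~\ref{ClosedSplash1} (the drop is pinched from the sides), not on any constraint written symbolically into Definition~\ref{initialData}; this is the same implicit appeal the paper makes, so it is not a gap relative to the paper, but it is genuinely the only place the $e_1$-direction enters and is worth making explicit rather than reading silently off the figure. Second, and more substantively, the phrase ``after a harmless shrinkage if needed'' is a red flag: the lemma fixes the neighborhood radius at $10^{-1}$, so a shrinkage is not available to you. If one only uses the stated constraints $|\del_\theta X_0|\ge\epsilon_1$ and $\kappa_0\le 10$, a continuity argument does not reach the required precision --- for example, $\kappa_0\le 10$ allows the unit tangent to rotate by as much as one radian over a spatial arc of length $10^{-1}$, which is far larger than the asserted $10^{-1}$ angle bound, and the coefficient ratio $\sin(2\theta)/\cos(2\theta)$ near $\theta=\pm\pi/2$ only stays small if $|\theta\mp\pi/2|$ is small, which the stated lower bound $|\del_\theta X_0|\ge\epsilon_1$ does not yield quantitatively without knowing $\epsilon_1$. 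In short, the stated explicit constraints of Definition~\ref{initialData} are too weak to propagate the angle estimate over the full $10^{-1}$-ball, and the actual justification must invoke the (implicit) smallness of the curvature near $p_\splash$ and the size of $|\del_\theta X_0|$ there, i.e.\ sharper geometric information about the specific $\bX_0$ that is being constructed. That is a gap that should be closed by stating the needed quantitative constraints on $\bX_0$ near $\theta=\pm\pi/2$ and arranging them at the construction stage, rather than by hedging with a shrinkage that the lemma does not allow.
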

\begin{proof}
This can be verified from the definitions of $u_0$, $\bvarphi_0(x)$, and $\bX_0$, given that $\underline \nu$ is large enough, which we ensure.
\end{proof}
\subsubsection*{Using a field-aligned Lagrangian label}

A magnetic field line is an integral curve of the magnetic field. In view of the construction above, noting that the magnetic field $b_0$ is (i) never zero and (ii) tangent to the boundary of $\Omega_0$, we find that the initial magnetic field lines foliate $\Omega_0$.

Consider again the parametrization $\bX_0(\theta,\psi): \Sigma \to \Omega_0$ of our initial domain. Rather than associating to the particles in $\Omega(t)$ their initial positions as Lagrangian labels, we associate to a particle the label $a = (\theta, \psi) \in \Sigma$  when its initial position is the point $\bX_0(a) \in \Omega_0$. Note that for each fixed $\psi$, as we vary $\theta$, $\bX_0(\theta,\psi)$ traces out a field line of the magnetic field $b_0$, essentially by construction.


Now we define the Lagrangian coordinate system $\bX(t,a)$ mapping $\Sigma$ into $\Omega(t)$ by imposing
\begin{align}\label{lagrSysOrig}
&&
\begin{aligned}
    \frac{d \bX}{d t}(t,a) &= u(t, \bX(t,a)) ,\\
    \bX(0,a) &= \bX_0(a) ,
\end{aligned}
&&(a\in\Sigma).
\end{align}
For the velocity and magnetic field in Lagrangian variables, we denote
\begin{align}\label{lagrUandB}
\bU(t,a) &= u(t,\bX(t,a) ,   & \bB(t,a) &= b(t,\bX(t,a)) .
\end{align}
The above choice of label helps simplify the MHD system. For a solution to ideal MHD, the second equation of \eqref{IdealMHD1} together with the fact
\begin{align*}
    \bB(0,a) = \bB_0(a) = \del_\theta \bX_0 (a) ,
\end{align*}
implies that as long as the solution continues to exist,
\begin{align}\label{bXbBorigIdent}
    \bB(t,a) = \del_\theta\bX (t,a).
\end{align}
One way to see this is simply to check that for any fixed $a$ in $\Sigma$, both $\bB(t,a)$ and $\del_\theta \bX (t,a)$ satisfy the ordinary differential equation for $V(t)$ below:
\begin{align*}
& &    \frac{d V_i}{d t}(t) &= \sum^2_{j=1}(\del_{x_j} u_i) (t, \bm X(t,a))V_j (t)    &   (i=1, \ 2),\\
& &    V(0) &= \bB_0(a) .   &
\end{align*}
In the lemma below, we record this and assert another interesting property of the coordinate system.
\begin{proposition}
\label{JacobianIdent}
Consider $u_0$, $b_0$, $\Omega_0$, etc. as in Definition~\ref{initialData}. Let us also define
\begin{align*}
& &    \bm{\sigma}(\psi) &= |\bB_0(0,\psi)|^2 & (\psi \in [-1,0]).
\end{align*}
    Given a solution $u$ and $b$ on the time interval $[0,T]$ to the ideal MHD system with initial data $u_0$, $b_0$, $\Omega_0$, consider the Lagrangian coordinate system with corresponding map $\bX$ satisfying \eqref{lagrSysOrig}, and $\bU$ and $\bB$ as defined by \eqref{lagrUandB}. Then for all $t$ in $[0,T]$, we have
\begin{align}
& &    \bU(t,a) &= \del_t\bX (t,a)      &   (a &\in \Sigma ), \\[1ex]
& &    \bB(t,a) &= \del_\theta \bX (t,a)&   (a &\in \Sigma), \\[1ex]
& &    \det (\grad \bX(t,a) ) &= \bm{\sigma}(\psi) & ( a &= (\theta,\psi) \in \Sigma ) .
\end{align}
\end{proposition}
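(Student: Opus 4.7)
The plan is to establish the three identities in sequence, with the first being tautological, the second a consequence of ODE uniqueness for a linear equation, and the third combining the divergence-free conditions for both $u$ and $b$ with the conformality built into the initial labeling $\bX_0$. The identity $\bU(t,a) = \del_t \bX(t,a)$ is immediate: by \eqref{lagrSysOrig} the right-hand side equals $u(t,\bX(t,a))$, which is precisely $\bU(t,a)$ by \eqref{lagrUandB}. For the magnetic identity, I would follow the hint already given in the text. Fix $a \in \Sigma$, differentiate $\bB(t,a) = b(t,\bX(t,a))$ in $t$, and apply the evolution equation for $b$ (the second line of \eqref{IdealMHD1}, read with the correct right-hand side $b \cdot \grad u$) together with part (1). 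This produces the linear ODE $\dot V_i = \sum_j (\del_{x_j} u_i)(t,\bX)\, V_j$. Differentiating the trajectory equation \eqref{lagrSysOrig} in $\theta$ instead yields the same linear ODE for $\del_\theta \bX(t,a)$. Since $\bB(0,a) = \bB_0(a) = \del_\theta \bX_0(a)$ by construction, uniqueness for linear ODEs gives $\bB(t,a) = \del_\theta \bX(t,a)$ for every $t$ in the existence interval.

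For the Jacobian identity, set $J(t,a) := \det(\grad \bX(t,a))$; the strategy is to show separately that $J$ is independent of $t$ and of $\theta$, and then evaluate $J(0,0,\psi)$. Time-independence is a classical Liouville-type calculation: Jacobi's formula, combined with the chain-rule relation $\grad_a \bU = (\grad_x u)|_{\bX} \cdot \grad \bX$ and part (1), gives $\del_t J = J\cdot (\grad_x \cdot u)|_{\bX} = 0$. For $\theta$-independence, the natural tool is the Piola identity, which states that if $b = \bB \circ \bX^{-1}$ then $J \cdot (\grad_x \cdot b)|_{\bX} = \grad_a \cdot \bigl(J (\grad \bX)^{-1} \bB\bigr)$. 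Substituting $\bB = \del_\theta \bX$ from part (2) and using the explicit 2D formula $J(\grad \bX)^{-1} = \begin{pmatrix} \del_\psi X_2 & -\del_\psi X_1 \\ -\del_\theta X_2 & \del_\theta X_1 \end{pmatrix}$, a short calculation collapses $J(\grad \bX)^{-1}\bB$ to the column $(J, 0)^T$, so the Piola identity reduces to $\del_\theta J = 0$ (using $\grad \cdot b = 0$). Hence $J(t,\theta,\psi) = J(0,0,\psi) = \det(\grad \bX_0(0,\psi))$, and since $\bX_0$ is conformal the Cauchy--Riemann equations yield $\det(\grad \bX_0) = |\del_\theta \bX_0|^2 = |\bB_0|^2$, so that $J(0,0,\psi) = \bm{\sigma}(\psi)$ as desired.

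The only genuine subtlety is the $\theta$-independence of $J$. Intuitively, since $b$ is frozen-in-fluid and the Lagrangian foliation $\{\psi = \mathrm{const}\}$ is initially comprised of magnetic field lines, every magnetic flux tube must retain its Eulerian area under the incompressible flow, and the Piola calculation simply quantifies this. Once part (2) is known, the collapse of $J(\grad\bX)^{-1}\bB$ to $(J,0)^T$ is essentially the identity $\bB \wedge \del_\psi \bX = J$ restated, and the rest of the argument reduces to bookkeeping. Notably, no information about the free boundary or the exterior system is needed at this stage, since all three assertions are purely interior identities propagated by the bulk evolution.
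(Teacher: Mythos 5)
Your proof is correct and follows essentially the same route as the paper: the first identity is definitional, the second uses the same linear-ODE-uniqueness argument (you are also right that the right-hand side of the induction equation should read $b\cdot\grad u$, as the paper has a typo there), and the third combines $\theta$-independence of the Jacobian from $\grad\cdot b=0$, $t$-independence from $\grad\cdot u=0$, and the conformality of $\bX_0$ to evaluate the constant. The only cosmetic difference is that you establish $\del_\theta J=0$ at all times via the Piola identity together with part (2), whereas the paper needs it only at $t=0$ via the Jacobi/Liouville formula applied to the $\theta$-parameter family using $\del_\theta\bX_0 = b_0\circ\bX_0$; both are equivalent computations.
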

\begin{proof}[Proof of Proposition~\ref{JacobianIdent}]
 The first two identities are immediate from the definition of $\bX$ and the comments just before the statement of the lemma. To begin the proof of the third, we verify
\begin{align}
\label{JacobianIdent1}
& &        \det(\grad \bX_0(a)) &= \bm{\sigma}(\psi) & ( a=(\theta,\psi) \in \Sigma ) .
\end{align}
To see this, we check the left-hand side is independent of $\theta$ by observing
    \begin{align*}
        \del_\theta \left( \det(\grad \bX_0(\theta, \psi)) \right) = \det(\grad \bX_0(\theta, \psi)) (\grad \cdot b_0)(\bX_0(\theta, \psi)) = 0,
    \end{align*}
    where we have used that $b_0$ is divergence-free. Meanwhile, since $\bX_0$ is conformal, we have
    \begin{align*}
        \grad \bX_0 = \begin{pmatrix}
            (\del_\theta \bX_0)_1   & (\del_\psi \bX_0)_1  \\
            (\del_\theta \bX_0)_2   & (\del_\psi \bX_0)_2
        \end{pmatrix}
        = \begin{pmatrix}
            (\del_\theta \bX_0)_1   & -(\del_\theta \bX_0)_2 \\
            (\del_\theta \bX_0)_2   & (\del_\theta \bX_0)_1
        \end{pmatrix}.
    \end{align*}
    Computing the determinant, applying $\bB_0 = \del_\theta \bX_0$, and plugging in $\theta=0$ then gives \eqref{JacobianIdent1}.
    Now observe from the divergence-free property of $u$ that we have
    \begin{align*}
        \del_t (\det (\grad \bX(t,a))) = \det (\grad \bX(t,a)) (\grad \cdot u)(t, \bX(t,a)) = 0,
    \end{align*}
    so the determinant is preserved in time.
\end{proof}
\begin{remark}
We may index the initial field lines $\gamma_\psi$ by $\psi$, resulting in a foliation of $\Omega_0$ with the collection $\{\gamma_\psi\}_{\psi \in [-1,0]}$. Recall the label $\psi$ tells us the field line on which the particle labeled $a=(\theta,\psi)$ initially lies. The above lemma tells us that the Jacobian determinant depends only on the associated magnetic field line.
\end{remark}
With the above lemma, we find that the interior evolution equations of ideal MHD, the first two equations of \eqref{IdealMHD1}, become the following simple-looking system in Lagrangian coordinates:
\begin{align}
\label{OriginalLagrangianSystem}
& &
\begin{aligned}
    \bU_t &= \bB_\theta - \bP_\grad \\
    \bB_t &= \bU_\theta
\end{aligned}
&   & ( a \in \Sigma, t \in [0,T] ),
\end{align}
where $\bP_\grad(t,a) = (\grad p)(t,\bX(t,a))$.
We note one more interesting property of the MHD system related to our choice of coordinate system.
\begin{remark}
\hspace{1mm}
\\
If a collection of particles forms a magnetic field line at time $t$, this will remain to be the case as it is carried by the flow. This is sometimes\footnote{Alfven's theorem actually asserts a property known as flux conservation for ideal MHD, which is technically slightly stronger than the one stated above.} referred to as the ``frozen-in condition'' of the field, asserted by Alfven's theorem. This is easy to see in our setting, as
\begin{align*}
    \del_\theta \bX (t, \theta, \psi) &= b(t, \bX(t, \theta, \psi))
\end{align*}
shows us that at any time $t$ all the integral curves of $b(t)$ are simply given by $\bX(t, S^1, \psi)$ for some $\psi$.\footnote{For each fixed $\psi$, by identifying $\gamma_\psi$ with the set of particles lying on it, one may think of the magnetic field line as being ``transported by the flow''.}

\end{remark}

In what follows, $\omega$ denotes the vorticity in the plasma, and $j$ denotes the magnetic current in the plasma:
\begin{align}
&&\omega(t,x) &= \grad^\perp \cdot u(t,x) ,\qquad & j(t,x)&=\grad^\perp\cdot b(t,x) &&(x\in \Omega(t)).\\
\intertext{We define their counterparts in Lagrangian variables by}
&&\bom(t,a) &= \omega(t,\bX(t,a)), &
\bj(t,a) &=j(t,\bX(t,a)) && (a\in \Sigma).
\end{align}
Moreover, we use $U$, $B$, and $X$ to represent the traces of $\bU$, $\bB$, and $\bX$ at the surface:
\begin{align}
&& U(t,\theta)=\bU(t,\theta,0), \qquad B(t,\theta) = \bB(t,\theta,0), \qquad X(t,\theta) = \bX(t,\theta,0) &&(\theta \in S^1).
\end{align}

\subsection{Decomposing the system into good surface and vorticity evolutions}
Let us give a very rough roadmap of how we finally arrive at a solvable Lagrangian system from the formulation \eqref{OriginalLagrangianSystem}. A more complete treatment is given in Sections \ref{refiningsurfsys} and \ref{basicsystemsection}.

In the original Lagrangian system \eqref{OriginalLagrangianSystem} there is too high a level of derivative of the unknowns arising in the tangential part of $\bP_\grad$ at $\psi=0$, i.e. the evaluation of $\grad p$ along the surface field line $\Gamma(t)$. To handle this, we introduce the operator $\bE$, which essentially projects out the tangential harmonic part of a vector field at the surface (see Proposition~\ref{bEDefns}). 
Using this, the next step is to decompose \eqref{OriginalLagrangianSystem} into a surface evolution system for the velocity and magnetic field traces $U$ and $B$ and an interior vorticity evolution. Thanks to the properties of $\bE$ there are no longer dangerous levels of derivatives appearing in the surface system thus produced, which is the following:
\begin{align}
(\theta \in S^1) \qquad
& \left\{
        \begin{aligned}
        U_t &= \bE^{-1}\bD \bE B_\theta + \mathit{l.o.t.} \\
        B_t &=  U_\theta \, ,
        \end{aligned}
        \right. 
        \label{step1lagrsurfsystem} \\[1ex]
( a \in \Sigma ) \qquad
&   \left\{
        \begin{aligned}
        \bom_t &= \bj_\theta \\
        \bj_t  &= \bom_\theta + \mathit{l.o.t.} \, ,
        \end{aligned}
        \right.
        \label{step1lagrsurfsystem2}
\end{align}
where $\mathit{l.o.t.}$ indicates lower order terms (see \eqref{BasicSurfSys1}--\eqref{BasicSysDefns} for details), and $\bD$ is given by
\[
\bD=
\begin{pmatrix}
1 & 0 \\
0 & 1+\frac{|H|^2}{|B|^2}
\end{pmatrix} .
\] 

Note \eqref{step1lagrsurfsystem2} forms a wave system pair. The pair of equations \eqref{step1lagrsurfsystem} for $U$ and $B$ has some similarities, but since the matrix operator $\bE^{-1}\bD \bE$ is not diagonal, it is not a wave system. Furthermore, $\bE^{-1}\bD \bE$ is not symmetrizable, so energy methods for general hyperbolic systems cannot be directly applied to solve \eqref{step1lagrsurfsystem}--\eqref{step1lagrsurfsystem2}.

To develop an equivalent system for which such methods do apply, we switch from using $U$ and $B$ to the good unknowns below, derived from the time derivatives of $U$ and $B$:
\begin{align}
&& \dot U^* &= \bE  U_t, \qquad \dot B^* = \bE B_t&& (\theta \in S^1).
\end{align}
In Section \ref{basicsystemsection}, we convert \eqref{step1lagrsurfsystem} to a system for $\dot U^*$ and $\dot B^*$ to derive the desired Lagrangian surface and vorticity evolution system with good unknowns, which we give below.
\begin{align}
(\theta \in S^1) \qquad
& \left\{
        \begin{aligned}
        \dot U^*_t &= \bD \dot B^*_\theta + \mathit{l.o.t.} \\
        \dot B^*_t &= \dot U^*_\theta + \mathit{l.o.t.} \, ,
        \end{aligned}
        \right. 
        \label{step2lagrsurfsystem} \\[1ex]
( a \in \Sigma ) \qquad
&   \left\{
        \begin{aligned}
        \bom_t &= \bj_\theta \\
        \bj_t  &= \bom_\theta + \mathit{l.o.t.} \, ,
        \end{aligned}
        \right.
        \label{step2lagrsurfsystemb}
\end{align}
See \eqref{symmetrizable2}--\eqref{symmetrizable1} for the full system. Due to the diagonal form of $\bD$, \eqref{step2lagrsurfsystem} indeed forms a wave sub-system (modulo couplings with $\bom$ and $\bj$ in the lower order terms). We are thus able to solve the resulting quasilinear Lagrangian wave system \eqref{step2lagrsurfsystem}--\eqref{step2lagrsurfsystemb} by standard methods.


\subsection{The external magnetic field}\label{squeezedmagfieldssection}

For ideal MHD, the exterior magnetic field $h$ is solenoidal and irrotational in the vacuum. In the case of perfectly conductive chamber walls, it is tangent to $\del \mathcal{V} = \Gamma \cup\cW$.
\begin{align}
\begin{aligned}
(x \in \cV ) \qquad
&   \left\{
        \left.
        \begin{aligned}
            \grad \cdot h &= 0 \ \\
            \grad^\perp \cdot h & = 0 \, ,
        \end{aligned}
        \right.
    \right.\\
(x \in \del \cV)  \qquad & \left\{n \cdot h = 0 \right. .
\end{aligned}
\label{ExternalFieldEq1}
\end{align}

In this section, we consider the picture at fixed moments in time and construct appropriate $h$ induced by the shape of the vacuum. Time is not relevant at this stage, and so we will omit the parameter $t$ from the objects we discuss for now. Let us consider the role of the boundary of vacuum domain, which consists of the vacuum walls $\cW_1,\cW_1$ and an interface $\Gamma$ in a near-splash or splash configuration.

\subsubsection{Vacuum chamber walls}\label{vacuumchamberwallssection}
If we were not to include any vacuum chamber wall $\cW$ in our picture, instead dealing with a setting as in Figure~\ref{pre_intersect} as opposed to that in Figure~\ref{OpenedSplash1}, any physically realistic\footnote{To assert this we need only assume the magnetic field $h$ decays at infinity.} vacuum magnetic field $h$ would have to be zero at all times.

To illustrate this, consider the system without any wall. Then \eqref{ExternalFieldEq1} is satisfied if and only if we have $h=\grad \phi + \grad^\perp \uppsi$ for some $\phi,\uppsi:\cV\to\R$ satisfying the following, where $\tau = -n^\perp$ gives the tangent to $\del\cV$.
\begin{align*}
\Delta \phi &= 0, &  \Delta \uppsi &= 0 ,     &       &(x \in \cV) , \\
\del_n \phi &= 0, &  \del_\tau \uppsi &= 0 ,     &      &(x \in \del \cV) .
\end{align*}
A calculation based on the divergence theorem implies $\phi$ must be trivial and thus $h=\grad^\perp\uppsi$. Meanwhile, the boundary condition implies $\uppsi$ is constant on each component of $\del\cV$. In the absence of a vacuum wall, $\cV$ forms a single simply-connected region (or two, in the event of a splash), in which $\uppsi$ can only be constant, and therefore $h=0$ in $\cV$.

Thus, incorporating a wall is necessary for nontrivial $h$. Due to the topology, however, a splash--squeeze requires a wall with more than one component. For example, if one takes a wall such as $\cW_1$ without an additional component like $\cW_2$, the argument above shows $h$ must vanish throughout the drop-shaped vacuum region that splits off as a splash forms. Each wall component $\cW_i$ serves the purpose of allowing $h$ to be nonzero in the corresponding region of the vacuum at the moment of splash.

\subsubsection*{Admissible glancing-permitted interfaces}

In general, we will need to work with a class of curves that look like the $\Gamma$ shown in Figure~\ref{OpenedSplash1}, with ``pinch'' that can be arbitrarily small or even zero.

\begin{definition}\label{PinchDefns}
We define the pinch of a closed curve $\Gamma$ parametrized by $X : S^1 \to \R^2$ and satisfying the bound $\lV X-X_0\rV_{C^1(S^2)}\leq r_0$ to be
\begin{align}
    \delta_\Gamma=\min\{|X(\theta)-X(\vartheta)|:|\theta -\vartheta|\geq 10^{-1}\} .
\end{align}
\end{definition}

Now we define the following, which, when bounded below, ensures a curve satisfies the chord-arc condition away from $X(\pm \pi/2)$, but allows for a single \emph{optional} self-glancing of the curve, i.e. a splash point, at $X(\pm \pi/2)$.
\begin{align}\label{glancingfunction}
\cG(X) = \left(\min_{\theta, \vartheta \in S^1}\frac{|X(\theta)-X(\vartheta)|}{|\theta-\vartheta|}+\frac{|X(\theta)-X(\vartheta)|}{(\theta-\pi/2)^2+(\vartheta+\pi/2)^2} \right) .
\end{align}
We may sometimes refer to curves for which $\cG(X)$ is uniformly bounded below as ``glancing-permitted~curves'', a generalization of chord-arc~curves which allows for splash and near-splash configurations.

\begin{remark}
\hfill
\begin{enumerate}[(i)]
\item The general heuristic behind $\cG(X)$ is that it gives a hybrid measurement between the tendency of a curve to obey the chord-arc condition and the tendency of a curve to sharply ``pull~away'' out of a potential point of chord-arc breakdown near $X(\pm \pi/2)$.

\item A simple computation for the initial interface of Definition~\ref{initialData} yields a positive lower bound on $\cG(X_0)$.
\end{enumerate}
\end{remark}

Let us use the above to specify some classes of glancing-permitted curves we consider.
\begin{definition}\label{HsgpDef}
We define the constant
\begin{align}
C^0_\gp = \cG(X_0).
\end{align}
We then define the family of \emph{admissible glancing-permitted curves} by
\begin{align}
\cXbar &= \{\Gamma \mbox{ parametrized by } X\in C^2(S^1) \mbox{ such that } \cG(X) \geq \half C^0_\gp \}, \label{cXbarDef} \\ \intertext{and for $s\ge 0$, we define}
H^s_\gp  &= H^s(S^1) \cap \{X\in C^2(S^1):\Gamma\in\cXbar, \ \lV X-X_0\rV_{C^1(S^2)}\leq r_0 \}.
\end{align}
\end{definition}

Now we select a fixed integer $k \geq 4$, which remains fixed in the work below.\footnote{Except in the statements of Theorems~\ref{maintheorem} and~\ref{maintheorem2}, which assert the existence of splash--squeeze singularities for interfaces with $H^{k+2}(S^1)$ regularity for any $k\geq 4$} The splash and near-splash curves we consider are parametrized by $X$ belonging to $H^{k+2}_\gp$. Regarding the general shape of the curves considered, by taking a good choice of the constant $r_0$ and working with a subset of $H^{k+2}_\gp$ satisfying
\begin{align}\label{closenesscondition}
\lV X-X_0 \rV_{H^{k+1}(S^1)} \leq r_0 ,
\end{align}
we ensure that the corresponding curves resemble $\Gamma$ as depicted in Figures~\ref{OpenedSplash1} or~\ref{ClosedSplash1}, with similar distance to the walls, curvature, and overall qualitative behavior.


\subsubsection*{Construction of the exterior magnetic field}
We now construct exterior magnetic fields for vacuum domains with admissible glancing-permitted interfaces.

\begin{definition}\label{formalhdefn}
Consider an interface $\Gamma \in \cXbar$ parametrized by $X$ in $H^{k+2}_\gp$, with vacuum region $\cV$ bounded between $\Gamma$ and $\mathcal{W}$. We then define the associated exterior magnetic field $h(\Gamma):\overline \cV \to \R^2$ to be the unique solution $h$ to the system
\begin{align}
(x \in \cV ) \qquad
&   \left\{
        \left.
        \begin{aligned}
            \grad \cdot h &= 0 \ \\
            \grad^\perp \cdot h & = 0 \, ,
        \end{aligned}
        \right.
    \right. \label{ExternalFieldEqDef1} \\
(x \in \Gamma\cup\mathcal{W} )  \qquad & \ \big\{  \, n \cdot h = 0 , \label{ExternalFieldEqDef2} \\
(i=1,2) \qquad & \int_{\cW_i} h \cdot d \vec r = 1 , \label{ExternalFieldEqDef3}
\end{align}
where in the directed line integral in \eqref{ExternalFieldEqDef3}, the path runs left to right along $\cW_1$ for $i=1$, and the path runs counter-clockwise around $\cW_2$ for $i=2$.
\end{definition}

\begin{remark}\label{hdefnremark}
(i) We comment that the conditions \eqref{ExternalFieldEqDef3} involving line integrals along the walls $\cW_1$ and $\cW_2$ are necessary for uniqueness of solutions to the system of Definition~\ref{formalhdefn} since the vacuum is not simply-connected. Note the above definition will yield a solution specifically in the case of $\eta_1=\eta_2=1$ in the boundary condition \eqref{IdealMHD6}, though our splash construction extends to arbitrary constant values of $\eta_1,\eta_2$ as well as to continuously differentiable $\eta_i(t)$. Taking $\eta_1,\eta_2$ to be constant models an experimental setting in which a time-independent line current density is supplied externally along the vacuum walls.

Regarding existence of solutions, one can produce an $h$ solving \eqref{ExternalFieldEqDef1}--\eqref{ExternalFieldEqDef3} by taking
\begin{align}\label{hDefn}
& & h(x) &= \grad^\perp  \uppsi (x)  &  (x  \in \cV ) ,
\end{align}
where $\uppsi$ solves the Dirichlet problem below for a particular choice of $\uplambda_1,\uplambda_2\in\R$,
\begin{align}
&& \Delta  \uppsi &= 0 & (x \in \cV),\\
&& \uppsi &= 0 &  (x \in \Gamma), \\
&&  \uppsi &= \uplambda_i &  (x \in \mathcal W_i, \ i=1,2).
\end{align}
We find the choice of $1$ as the circulation constants in \eqref{ExternalFieldEqDef3} results in $\uplambda_1<0$ and $\uplambda_2<0$. This implies that $h$ vanishes nowhere in the vacuum, as required for a splash--squeeze singularity.
\end{remark}

Let us record the following lemma, in which we discuss a few key properties of vacuum magnetic field constructed for a given interface.
\begin{lemma}\label{hTanDirLemma}
For a given interface $\Gamma$ which is parametrized by $X$ in $H^{k+2}_\gp$, consider $h(\Gamma)$ as in Definition~\ref{formalhdefn}. If $\delta_\Gamma>0$, $h(\Gamma)$ points left to right along $\Gamma$. If $\delta_\Gamma=0$, $h(\Gamma)$ points left to right along $\Gamma$ except at the splash point, which is the unique point at which $h(\Gamma)$ vanishes in $\overline \cV$.
\end{lemma}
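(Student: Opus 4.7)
The plan is to exploit the stream-function representation $h=\grad^\perp\uppsi$ given in Remark~\ref{hdefnremark}, together with the maximum principle and Hopf's lemma applied to $\uppsi$. A key input from Remark~\ref{hdefnremark} is that the unit-circulation normalization in Definition~\ref{formalhdefn} forces $\uplambda_1,\uplambda_2<0$; combined with $\uppsi=0$ on $\Gamma$, the strong maximum principle applied in each connected component of $\cV$ then yields $\uppsi<0$ strictly throughout the open vacuum.

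First I would establish the direction statement. Since $\uppsi\equiv 0$ on $\Gamma$, the gradient $\grad\uppsi$ is normal to $\Gamma$; and since $\uppsi<0$ just inside $\cV$, it must point out of $\cV$ (into $\Omega$), i.e., in the direction of the outward normal $\nu$ to $\cV$ at $\Gamma$. At every point where $\Gamma$ is locally $C^2$, the interior ball condition holds and Hopf's lemma strengthens this to $|\grad\uppsi|>0$ strictly; this is everywhere on $\Gamma$ when $\delta_\Gamma>0$, and on $\Gamma\setminus\{p_\splash\}$ when $\delta_\Gamma=0$. With the left-to-right tangent $\tau$ along $\Gamma$, the outward normal is $\nu=(\tau_2,-\tau_1)$, and a short computation gives
\begin{align}
h = \grad^\perp\uppsi = |\grad\uppsi|\,\nu^\perp = |\grad\uppsi|\,\tau,
\end{align}
which is the desired left-to-right direction.

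Next, for the splash case $\delta_\Gamma=0$, I would verify that $p_\splash$ is the unique zero of $h$ in $\overline\cV$. The vacuum decomposes into two doubly-connected components: an upper region $A$ bounded by the outer arcs of $\Gamma$ and by $\cW_1$, and a droplet $B$ bounded by the inner arcs of $\Gamma$ and by $\cW_2$. In each component, $\uppsi$ is harmonic with constant value on each of its two boundary curves ($0$ on the $\Gamma$-part and $\uplambda_i$ on the wall). Uniformizing each component to a standard annulus $\{r_1<|z|<r_2\}$ by a Koebe conformal map, the pullback of $\uppsi$ is harmonic with constant values on the two circles, hence of the form $a\log|z|+b$ with $a\ne 0$, which has no interior critical points. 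Since the uniformization is a biholomorphism of interiors, $\uppsi$ has no interior critical points in $\cV$ either, so $h$ has no interior zeros. Together with the strict positivity of $|\grad\uppsi|$ on $\Gamma\setminus\{p_\splash\}$ and on the $\cW_i$ from Hopf's lemma, this leaves $p_\splash$ as the only candidate zero in $\overline\cV$. That $h(p_\splash)=0$ indeed holds follows from the fact that $\uppsi$ vanishes on both glancing arcs meeting at $p_\splash$: a narrow-wedge barrier argument forces $\uppsi\to 0$ and $\grad\uppsi\to 0$ as $x\to p_\splash$.

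The main obstacle is the boundary behavior at the cusp $p_\splash$. The interior no-critical-points argument is clean, and the conformal-map step does not require boundary regularity of the uniformization at $p_\splash$—only biholomorphism of interiors matters for the critical-point conclusion. The local analysis at $p_\splash$ itself is more delicate, since the vacuum develops a cuspidal point where the two bounding arcs of $\Gamma$ are tangent. I would handle this by comparing $\uppsi$ with explicit harmonic barriers built from a conformal map that ``opens up'' the cusp into a half-strip, which squeeze $\uppsi$ and its gradient to zero at $p_\splash$, in fact to infinite order as anticipated by the introduction.
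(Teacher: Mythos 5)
Your proof follows essentially the same path as the paper's: introduce the stream function $\uppsi$ from Remark~\ref{hdefnremark}, apply the maximum principle to get $\uppsi<0$ in $\cV$, and invoke a Hopf-type boundary point lemma (the paper cites Zaremba--Hopf--Oleinik) wherever the interior/tangent ball condition holds, i.e., on $\Gamma\setminus\{p_\splash\}$, to conclude $\grad\uppsi\neq 0$ there with $\grad\uppsi$ aligned with the outward normal to $\cV$; rotating by $\pi/2$ gives the left-to-right direction of $h$. The paper then declares the remaining claims ``easy to verify from these observations,'' whereas you supply the two missing pieces explicitly: (a) interior non-vanishing of $\grad\uppsi$ in each pinched-off component by uniformizing the doubly-connected component to an annulus, where a harmonic function constant on each boundary circle must be $a\log|z|+b$ with $a\neq 0$ (the point $p_\splash$ being a cusp does not obstruct the interior conclusion since only the biholomorphism of interiors is used); and (b) $h(p_\splash)=0$ via a barrier argument in the cusp. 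Both additions are correct and make the proof self-contained. Your orientation computation $h=\grad^\perp\uppsi=|\grad\uppsi|\,\tau$ also checks out with the paper's sign conventions $\grad^\perp=(-\del_{x_2},\del_{x_1})$ and $\nu=(\tau_2,-\tau_1)$.
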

\begin{proof}
Consider the $\uppsi$ corresponding to $\Gamma$ as discussed in Remark~\ref{hdefnremark}, noting $\uppsi$ is zero on $\Gamma$ and negative on $\mathcal{W}$. By the maximum principle, $\uppsi$ is negative throughout $\cV$. In particular, at every point in $\Gamma$ satisfying the tangent ball condition on the $\cV$ side of $\Gamma$ (i.e. everywhere on $\Gamma$ besides the single possible point of self-intersection), the Zaremba-Hopf-Oleinik Lemma implies $\grad \uppsi$ is nonzero. Moreover one finds that $\grad \uppsi$ gives a normal to $\Gamma$ pointing to the $\Omega$ side. The properties of $h(\Gamma)$ asserted in the statement of the lemma are easy to verify from these observations.
\end{proof}

\subsubsection*{Role of the squeezed external field in the analysis}

An important insight for our analysis is that the $h$ given above does not cause a repulsive force preventing the two approaching arcs of the nearly-self-intersecting interface from colliding. In other words, the presence of such an $h$ does not produce any physical resistance to the formation of the splash, although it brings forth significant mathematical obstacles.
Introducing any nontrivial external magnetic field gives rise to analytically delicate terms, such as $h$ itself and the vacuum-side Dirichlet-to-Neumann operator $\cN_+$, highly sensitive to the pinch.

In contrast to quantities that live in the plasma domain, neither $h$ nor the exterior pressure $p_+$ (defined in Proposition~\ref{heuristicPropPG}) can be smoothly transferred to a chord-arc domain with a square-root type map. Their behavior is unavoidably tied to the degenerating geometry. This requires us to prove estimates that remain valid across families of vacuum regions with varying degrees of pinch, which is the main topic of Section~\ref{vacuuminterfaceestimatessection}. We prove elliptic estimates with uniform constants for Sobolev spaces with weights amplifying the behavior of functions near the pinch (e.g. with Proposition~\ref{1stWeightedEstimate}). It is essential that $h$ becomes small in the pinching neck as the splash forms. Proposition~\ref{hEstimateProp} makes this precise with a weighted norm bound, which in fact implies that $h$ vanishes to infinite order at the splash point.\footnote{This reflects the necessary incompatibility of a splash--squeeze with analyticity, discussed in Section~\ref{analyticbreakdownsection}.}

Moreover, certain applications of the vacuum-side Dirichlet-to-Neumann operator $\cN_+$, which appears due to $h$, blow up as the pinch closes, in contrast with the plasma-side Dirichlet-to-Neumann operator $\cN_-$. To see this, observe that a harmonic function on $\cV$ equal to $1$ on the arc to one side of the narrowing gap in the vacuum and $0$ on the other must blow up in normal derivative as the two arcs approach each other. This apparent singularity is ultimately reconciled by the fact that whenever $\cN_+$ appears in our estimates, it acts on some quantity dependent on $h$ which, importantly, decays rapidly in the pinch. The related operator $\Nres = \cN_+ + \cN_-$ must likewise be carefully controlled to ensure its cancellation property, that it does not take a derivative overall (see Proposition~\ref{DtoNCancellation}). Using our weighted elliptic framework, we can show that all resulting terms remain uniformly bounded in standard Sobolev spaces. Lemma~\ref{sNilemma} can be thought of as summarizing how the various ``dangerous operators'' are ultimately balanced by the behavior of the magnetic field.

\section{Lagrangian wave system}\label{lagrangianwavesystemsection}

\subsection{Formal derivation}\label{formalderivationsubsection}
In this section, we explain how to derive a coupled system of nonlinear wave equations in Lagrangian coordinates which determines the evolution of the plasma and all the unknowns in the original problem. Instead of jumping straight to the end result, which will be amenable to well-understood solution methods, we first must explain how to decompose the original Lagrangian system into a coupled set of vorticity and interface systems with the ``correct number of derivatives'' in the right-hand side.

\subsubsection*{Decomposition into vorticity and interface systems}

As stated earlier, there is a dangerous level of derivative in the right-hand side in the system \eqref{OriginalLagrangianSystem} arising in the tangential part of $\bP_\grad$ at $\psi=0$. Here we show how to replace the equations for $\bU$ and $\bB$ in the interior with a system for the Lagrangian curl $\bom(t,a)$ and Lagrangian current $\bj(t,a)$ to separate the interior dynamics cleanly from the trickier interface dynamics. We will focus on the interface dynamics afterwards, adding in equations for the traces of $\bU$ and $\bB$ at $\psi=0$, which we denote by $U$ and $B$, respectively.

\subsubsection*{The Lagrangian vorticity evolution}
Let us derive the the vorticity evolution system for the Lagrangian vorticity $\bom$ and and current $\bj$.

Starting from \eqref{IdealMHD1} in vector form and applying the scalar curl, one finds for the Eulerian vorticity evolution we have
\begin{align}
&&&
\begin{aligned}
\del_t \omega + u\cdot\grad \omega &= b \cdot \grad j, \\
\del_t j + u \cdot \grad j &= b \cdot \grad \omega + \grad^\perp \cdot ((\grad u)^t b -  (\grad b)^t u ),
\end{aligned}
&
(x\in\Omega(t)).
\end{align}
where we use the convention that $(\grad u)_{ij} = \del_{x_j} u_i$ and $(\grad b)_{ij} = \del_{x_j} b_i$.

By using properties such as $\bX_\theta = \bB$, one is then able to show that in our magnetic Lagrangian coordinate system, these become equivalent to the following, in which the Lagrangian gradient is simply $\grad = (\del_\theta,\del_\psi)$, and as stated earlier, $\bm{\sigma}(\psi) = |\bB_0(0, \psi)|^2$.
\begin{align}
&&
\begin{aligned}
\bom_t &=\bj_\theta, \\
\bj_t &=\bom_\theta + \bm{\sigma}^{-1}\grad^\perp \cdot ((\grad \bU)^t \bB -  (\grad \bB)^t \bU ),
\end{aligned}
&& (a\in \Sigma).
\end{align}
Making a few additional computations, we find the system takes the simple form below.
\begin{align}\label{vortEvoSystem}
& &
    \col{\bom}{\bj}_t &=
        \begin{pmatrix}   0 & 1 \\
                          1 & 0 \end{pmatrix}   \col{\bom}{\bj}_\theta
        + 2\bm{\sigma}^{-1}\col{0}{\bU_\theta \cdot \bB_\psi - \bU_\psi \cdot \bB_\theta} & ( a \in \Sigma ).
\end{align}
Let us note the above gives a nice, semilinear wave system for the Lagrangian vorticity and current, as the additional terms in the right-hand side involving $\bU$ and $\bB$ are lower order. To close the system, we need to explain how $\bU$ and $\bB$ can be retrieved from $\bom$ and $\bj$, which we discuss next.

\subsubsection*{The Lagrangian div-curl systems}
Given $\bom(t,a)$, $\bj(t,a)$, and the restrictions of $\bU$ and $\bX$ to $\psi = 0$, $U(t,\theta)$ and $X(t,\theta)$, respectively, one may recover the Lagrangian flow $\bX(t,a)$ extended to the interior and the corresponding velocity $\bU(t,a)$ and magnetic field $\bB(t,a)$ by imposing the combined Lagrangian div-curl systems
\begin{align}
\label{OriginalLagrDivCurlSys}
\begin{aligned}
(a \in \Sigma)
          &&  \left\{ \Bsp \\ \Esp \right. & \\
(\psi = 0)
          &&  \Big\{\, & \\
(\psi = -1) 
          &&  \left\{ \Bsp \\ \Esp \right. &
\end{aligned} & 
    \Bsp \grad \cdot (\, \cof (\bM)  \bU )  &= 0                  & \grad \cdot (\, \cof (\bM)  \bB )      &= 0 \\
    \Msp \grad^\perp \cdot (\, \bM \bU )    &= \bm{\sigma} \bom \Lcm   & \grad^\perp \cdot (\, \bM  \bB )  &= \bm{\sigma} \bj \Lcm \\
    \Msp N \cdot \bU                          &= N \cdot U\,,       & N \cdot \bB                              &= 0 \,,\\
    \Msp \bm{N}\cdot \bU                      &= 0                  & \bm{N} \cdot \bB                         &= 0 \\
    \Msp \int^\pi_{-\pi}\bU \cdot \bX_\theta\, d\theta   &= \upalpha \,,       & \int^\pi_{-\pi} \bB \cdot \bX_\theta \, d\theta &= \upbeta \,,
    \Esp
\end{align}
\begin{align*}
& & \bX(t) &= \bX_0 + \int^t_0 \bU(\tau)\, d\tau &    &(a \in \Sigma ) ,
\end{align*}
where \(\bm{N}(t,\theta,-1)=(0,-1)\), $\upalpha$ and $\upbeta$ are the corresponding integrals computed from our initial data,\footnote{The extra conditions involving $\upalpha$ and $\upbeta$ are necessary for uniqueness, since $\Omega(t)$ is not simply connected.} and
\begin{align*}
& & \bM    &= \grad \bX^t  &                  &(a \in \Sigma ) .
\end{align*}
Above, we use the conventions that
\begin{align}
\label{GradXConvention}
    \grad \bX &= \begin{pmatrix}
\del_\theta \bX_1 & \del_\psi \bX_1\\
\del_\theta \bX_2 & \del_\psi \bX_2
\end{pmatrix} ,
\end{align}
and, for a $2\times 2$ matrix $\mathbf{A}$,
\begin{align}
    \cof(\mathbf{A}) = \begin{pmatrix}
\mathbf{A}_{22} & - \mathbf{A}_{21}\\
-\mathbf{A}_{12} & \mathbf{A}_{11}
\end{pmatrix} .
\end{align}
By applying Proposition~\ref{SolnMainDivCurlProp}, one is able to recover $\bU$, $\bB$, and $\bX$ as the solution to the system \eqref{OriginalLagrDivCurlSys} from $\bom$, $\bj$, and the boundary data.
\begin{remark}
We note that because Proposition~\ref{SolnMainDivCurlProp} is catered towards iterates solving a linearized MHD system, rather than an exact solution to ideal MHD, the div-curl system it solves directly, \eqref{AugmentedInteriorSystemL}, includes some additional error terms. One can verify that \eqref{AugmentedInteriorSystemL} reduces to \eqref{OriginalLagrDivCurlSys} for an exact solution to the Eulerian system \eqref{IdealMHD1}--\eqref{IdealMHD6}.
\end{remark}

\subsubsection{Refining the surface system by introducing the $\bE$ operator}\label{refiningsurfsys}
Now we explain how the original Lagrangian system \eqref{OriginalLagrangianSystem} evaluated at the boundary features a dangerous level of derivative in the right-hand side. This is ultimately resolved with the use of an invertible operator we denote by $\bE$. This linear operator acts on restrictions of vector fields to $\Gamma$ (viewed in the Lagrangian perspective), killing off the tangential component \emph{specifically} of the restriction to $\Gamma$ of a divergence-free, curl-free vector field, and leaving the normal component of a \emph{general} vector field unchanged. We will explain this in more detail momentarily.

Returning to the Lagrangian system, we obtain a \emph{surface system} by evaluating \eqref{OriginalLagrangianSystem} at $\psi=0$, with
\begin{align}
\label{UnmodSurfaceSystem}
& &
\begin{aligned}
    U_t &= B_\theta - P_\grad ,\\
    B_t &= U_\theta,
\end{aligned}
&   & ( \theta \in S^1),
\end{align}
where $P_\grad = (\grad p)\circ X$. Observe that if it were not for this $P_\grad$ term, we would have a trivial one-dimensional wave system for $(U,B)$. Let us consider the tangential component. Corresponding to the Eulerian tangent vector field $\tau$ pointing left-to-right along $\Gamma$, we have the Lagrangian tangent,
\begin{align}
\bigtau(t,\theta) = \tau(t,X(t,\theta)) = \frac{\del_\theta X(t,\theta)}{|\del_\theta X(t,\theta)|} .
\end{align}
The boundary condition $p=\half |h|^2$ at the interface implies
\begin{align}\label{problemtanglterm}
\bigtau \cdot P_\grad = \half(\tau \cdot \grad |h|^2)\circ X = H \cdot (\del_\tau h)\circ X,
\end{align}
where $H$ is at the regularity level of $U$ and $B$, so that, in addition to the term $B_\theta$ in the right-hand side of \eqref{UnmodSurfaceSystem}, we have another top order term arising from $P_\grad$. This obfuscates the wave equation structure we expect to see and causes problems for energy estimates.

 As it turns out, the above problem only arises in the tangential component of the equation for $U$. We claim the normal component, however, has a desirable structure. Let us take for granted the following identity, which we derive soon in Section \ref{goodformNPgradsection}:
\begin{align}\label{NPgradlot}
N\cdot P_\grad = -\frac{|H|^2}{|B|^2}N\cdot B_\theta + \mathit{l.o.t.}
\end{align}
Discarding the tangential component from the first equation in \eqref{UnmodSurfaceSystem} then results in
\begin{align*}
N\cdot U_t &= \left( 1+\frac{|H|^2}{|B|^2} \right) N \cdot B_\theta + \mathit{l.o.t.}\, ,\\
B_t &= U_\theta.
\end{align*}
If the tangential component of the first equation in \eqref{UnmodSurfaceSystem} had a similar structure to that in the normal component, we would thus get a system similar to the wave system \eqref{vortEvoSystem} for $\bom$ and $\bj$. In the present case, the tangential component behaves differently. While it may seem natural to drop it from the system, doing so appears to sacrifice control of tangential regularity in the energy estimates.\footnote{We believe this is related to difficulties encountered in past studies in developing a Lagrangian formulation of free-boundary ideal MHD which does not lose regularity at the iteration step, noted in \cite{GuMHDtension,XieLuoALE}, for instance.} Instead, by employing an appropriate projection operator, $\bE$, we get rid of the problematic term \eqref{problemtanglterm} while retaining critical information from the tangential component of the equation. Moreover, we will see this projection operator eventually yields a genuine wave system for our full set of good unknowns in Lagrangian coordinates.

The key insight for resolving the issue is the following: tangential components of harmonic vector fields in free-boundary fluid mechanics problems sometimes generate dangerous terms, but these can be sidestepped with the use of relations which hold among the tangential and normal components of such fields. For a vector field that is both divergence and curl free, the tangential part is determined by the normal part and shares its regularity. This is the motivation for our projection operator $\bE$, to be defined shortly.

The definition of $\bE$ involves harmonic extension off of $\Gamma$. Let us first introduce the harmonic extension operators $\bigh_\pm$ associated to a given interface.
\begin{definition}\label{harmonicextdefn}
Fix $\Gamma$ in $\cXbar$. We denote by $\Omega_-$ the unbounded region below $\Gamma$ and by $\Omega_+=\Omega_-^c \setminus \Gamma $ the region above $\Gamma$. Let us define $\bigh_+$ and $\bigh_-$ to be the harmonic extension operators off of $\Gamma$, each correspondingly acting on a function $f$ in $L^2(\Gamma)$ by
\begin{align}
\label{bighDefn1}
\bigh_\pm  : f \mapsto \phi_\pm ,
\end{align}
where $\phi_\pm$ is the unique solution which remains bounded as $x_2$ tends to $\pm \infty$ to the Dirichlet problem 
\begin{align}
& & \Delta \phi_\pm &= 0  &  &(x \in \Omega_\pm ),  \notag \\
& & \phi_\pm &= f         &  &(x \in \Gamma ). \label{bighDefn2}
\end{align}
\end{definition}

Let us also define the projection to mean-zero functions on $\Gamma$ by
\begin{align}
&&    \dcal{p} f &= f -  \frac{1}{|\Gamma|} \int_\Gamma f \, dS.
    &(f: \Gamma \to \R).
\end{align}
Using $\dcal{p}$, we are able to define the operator $\mathfrak H$, representing the Hilbert transform associated to the curve~$\Gamma$. Below, $\bign_-$ is the 
Dirichlet-to-Neumann operator into the unbounded region $\Omega_-$ below $\Gamma$ (see Definition~\ref{DirichToNeumDefs}). We define
\begin{align*}
    \mathfrak H(f) &= -\del_\tau g \qquad \mbox{for the unique } g \mbox{ on } \Gamma \mbox{ such that} \qquad \bign_- g = \dcal{p} f .
\end{align*}
Given this, we then define the Lagrangian version $\cH$ of the operator $\mathfrak H$ by the following:
\begin{align*}
    \cH F (\theta) &= (\mathfrak{H} f)(X (\theta) ) \qquad (F: S^1 \to \R, \  f = F \circ X^{-1} ) .
\end{align*}
We remark that the operators $\mathfrak H$ and $\cH$ preserve the Sobolev regularity of the fields to which they are applied. Due to the definition of $\cH$, for a harmonic vector field $v$, denoting $V = v\circ X$, we have
\begin{align*}
 & &   \bigtau \cdot V + \cH(N \cdot V) &= 0   & (\theta \in S^1).
\end{align*}

The operator $V\mapsto \bigtau \cdot V + \cH ( N\cdot V)$, which we may represent by writing $\bigtau + \cH \circ N$, gives the tangential projection trace of the orthogonal complement to the harmonic part of a vector field. 

 Let us apply $\bigtau + \cH \circ N$ to both sides of the evolution equation for $U$ in \eqref{UnmodSurfaceSystem}. For
 \begin{align*}
&& \bP^-_\grad &= \bP_\grad  - \half \left(\grad \bigh_- |h|^2\right) \circ \bX & (a\in\Sigma),
\end{align*}
and $P^-_\grad$ the trace at $\psi=0$ of $\bP^-_\grad$, we find
\begin{align*}
    (\bigtau + \cH \circ N) U_t   = (\bigtau + \cH \circ N)(B_\theta - P_\grad) = (\bigtau + \cH \circ N)(B_\theta - P^-_\grad).
\end{align*}
The term $P^-_\grad$ is lower order, at the level of $U$ and $B$,\footnote{This can be verified from the system \eqref{InteriorPressureGradSystem} solved by $\bP^-_\grad$.} unlike $P_\grad$, which behaves like a derivative of these quantities. The above identity provides a useful equation we may use to replace the tangential part of the first equation in \eqref{UnmodSurfaceSystem}. We thus arrive at a natural surface system for us to solve for $U$ and $B$:
\begin{align}
\begin{aligned}
    (\bigtau + \cH \circ N) U_t &= (\bigtau + \cH  \circ N)B_\theta +\mathit{l.o.t.}\, , \\
    N \cdot U_t &= \left( 1 + \frac{|H|^2}{|B|^2} \right) N \cdot B_\theta +\mathit{l.o.t.} \, ,\\
    B_t &= U_\theta . 
    \end{aligned}\label{naturalsurfsystem1}
\end{align}
The lower order terms are given explicitly in \eqref{ModSurfaceSystem2}. To write this more compactly, we introduce $\bE$ and $\bD$, defined by the following.\footnote{See also Proposition~\ref{bEDefns} for the rigorous definition of $\bE$.}
\begin{align}\label{bEopintro}
\bE V   = \begin{pmatrix} 1 & \cH \\ 0 & 1 \end{pmatrix} \col{\bigtau \cdot V}{N \cdot V} = \begin{pmatrix} 1 & \cH \\ 0 & 1 \end{pmatrix} \left[\col{\bigtau}{N} V \right]  \qquad (  \theta \in S^1 , \ V:S^1 \to \R^2  ) ,
\end{align}
\begin{align}
& &    \bD &= \begin{pmatrix} 1 & 0 \\ 0 & 1 + \frac{|H|^2}{|B|^2} \end{pmatrix} & (\theta &\in S^1). \label{bDfirstRef}
\end{align}

Using this, we thus find that we can ``project away''\footnote{To the system \eqref{UnmodSurfaceSystem} as a whole, one applies the invertible operator matrix \(\begin{pmatrix}
\bE & 0 \\ 0 & \bI \end{pmatrix} \), losing no information.} 
 the bad tangential part of the original surface system \eqref{UnmodSurfaceSystem} to produce the following \emph{basic surface system}:
\begin{align}
\label{ModSurfaceSystem0}
& &
\begin{aligned}
    \bE U_t &= \bD \bE B_\theta + \mathit{l.o.t.}\,, \\
    B_t &= U_\theta,
\end{aligned}
&   & ( \theta \in S^1).
\end{align}

Combining the basic surface system \eqref{ModSurfaceSystem0} with our interior vorticity systems yields a closed system involving only good levels of derivatives on the surface unknowns $X$, $U$, and $B$ and on the interior unknowns $\bom$ and $\bj$. While the levels of derivatives are as they should be, the system is still not quite in ``wave form'', which we address in Section \ref{basicsystemsection}. Before doing so, we now derive expressions for the lower order terms appearing in \eqref{NPgradlot}, and thus those in \eqref{naturalsurfsystem1} and \eqref{ModSurfaceSystem0}.

\subsubsection{Derivation of $N\cdot P_\grad$ and lower order terms in the basic surface system}\label{goodformNPgradsection}
Now that we have motivated the definition of the operator $\bE$, let us return to the claim that $N\cdot P_\grad$ can be given by an expression of the form \eqref{NPgradlot}. The explicit identity for $N\cdot P_\grad$ with lower order terms included is given by \eqref{HBIdent0} in the following proposition.
\begin{proposition}\label{heuristicPropPG}
    Consider the described initial data, and an evolving surface $\Gamma(t)$ traced out by $X(t, \theta)$ with corresponding $h=h(\Gamma(t))$ as in Definition~\ref{formalhdefn} and interior magnetic field $b$ pointing left to right along $\Gamma(t)$. Define $p_+(t,x)$ and $P^+_\grad(t,\theta)$ at each time $t$ by
\begin{align}
& &    p_+ (t,x) &= \half |h(t,x)|^2 - \half \bigh_+ \left( |h|^2 \right)(t,x) &     (x &\in \cV ), \label{Pplusdefn1}\\
& &    P^+_\grad (t,\theta) &= (\grad p_+) (t,X(t,\theta ))  &   (\theta & \in S^1 ), \label{Pplusdefn2}
\end{align}
and define $P^-_\grad(t,\theta) = \bP^-_\grad(t,\theta,0)$, for $\bP^-_\grad$ as defined in \eqref{InteriorPressureGradSystem}.

Suppose that $p$ satisfies the original pressure system \eqref{PressureSystem} and both $\bU = \bX_t$ and $\bB = \bX_\theta$ hold. Consider the exterior and interior Dirichlet-to-Neumann operators $\cN_\pm$ (see Definition~\ref{DirichToNeumDefs}). Then for $\Nres = \cN_+ + \cN_-$, we have
\begin{align}\label{HBIdent0}
    N \cdot P_\grad &= -\frac{|H|^2}{|B|^2} N \cdot B_\theta + N \cdot P^-_\grad + N \cdot P^+_\grad + \half \Nres |H|^2.
\end{align}
\end{proposition}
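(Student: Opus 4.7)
The plan is to decompose $p$ using harmonic extensions of its boundary data from both sides of $\Gamma$, and then extract the term $-\frac{|H|^2}{|B|^2} N\cdot B_\theta$ from a geometric computation exploiting the vacuum equations for $h$ and its tangency to $\Gamma$. The three contributions $N\cdot P^-_\grad$, $N\cdot P^+_\grad$, and $\half\Nres|H|^2$ will arise by distributing the pieces of $p$ cleanly between the plasma, the vacuum, and the Dirichlet-to-Neumann traces.

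As a first step, unpacking the definition $\bP^-_\grad = \bP_\grad - \half(\grad\bigh_-|h|^2)\circ\bX$ at $\psi=0$ and pairing with $N$ yields, via the definition of $\cN_-$,
\[
N\cdot P_\grad \;=\; N\cdot P^-_\grad \;+\; \half\,\cN_-|H|^2.
\]
Applying the analogous manipulation to $p_+ = \half|h|^2 - \half\bigh_+(|h|^2)$ in \eqref{Pplusdefn1}, with the sign convention for $\cN_+$ that produces $\Nres = \cN_+ + \cN_-$ (both Dirichlet-to-Neumann maps taken along the same direction $N$, so that the sum has the cancellation structure invoked later via Proposition~\ref{DtoNCancellation}), gives
\[
N\cdot P^+_\grad \;=\; \half\,N\cdot\grad|h|^2\circ X \;-\; \half\,\cN_+|H|^2.
\]

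The heart of the argument is the geometric identity $\half N\cdot\grad|h|^2\circ X = \frac{|H|^2}{|B|^2}\,N\cdot B_\theta$. To prove it, I would use that $\grad h$ is symmetric in $\cV$ by the curl-free condition, so $\half\grad|h|^2 = h\cdot\grad h$ pointwise. Taking the trace on $\Gamma$, where Lemma~\ref{hTanDirLemma} ensures that $h$ is aligned with $\bigtau = B/|B|$ and $H = (|H|/|B|)\,B$, the operator $h\cdot\grad$ reduces on $\Gamma$ to the tangential derivative $(|H|/|B|)\del_\theta$ acting on $H$, so $(h\cdot\grad h)|_\Gamma = (|H|/|B|)\,H_\theta$. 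Differentiating the relation $H = (|H|/|B|)\,B$ in $\theta$ and using $N\cdot B=0$ kills the contribution from the scalar factor, yielding $N\cdot H_\theta = (|H|/|B|)\,N\cdot B_\theta$, whence the desired identity.

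Substituting this key identity into the formula for $N\cdot P^+_\grad$ and combining with the first display, the $N\cdot\grad|h|^2\circ X$ term is eliminated and the $\cN_\pm|H|^2$ terms recombine as $\half\Nres|H|^2$, producing \eqref{HBIdent0}. The main obstacle is bookkeeping in the geometric step: one must verify the orientation conventions so that $\cN_+$ and $\cN_-$ are both taken with respect to the same $N$ (yielding $\Nres$ rather than a jump of normal derivatives), and check that the left-to-right alignment of $h$ and $B$ along $\Gamma$ from Lemma~\ref{hTanDirLemma} produces the correct sign in $N\cdot H_\theta$. The pressure system \eqref{PressureSystem} enters only through the boundary condition $p|_\Gamma = \half|H|^2$, which is precisely what makes $p-\half\bigh_-(|h|^2)$ vanish on $\Gamma$ and thus cleanly separates $P^-_\grad$ from the boundary-data contribution at the interface.
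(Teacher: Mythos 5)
Your proposal is correct and follows essentially the same route as the paper's proof: decompose $N\cdot P_\grad$ and $N\cdot P^+_\grad$ via the Dirichlet-to-Neumann operators $\cN_\mp$, establish the geometric identity $\half N\cdot(\grad|h|^2)\circ X = \frac{|H|^2}{|B|^2}\,N\cdot B_\theta$ using curl-freeness of $h$ (your "$\grad h$ is symmetric" is the same step as the paper's index swap $n_i\,\partial_i h_j\,h_j = n_i\,\partial_j h_i\,h_j$) together with tangency of $h$ and $b$ along $\Gamma$ and $N\cdot B = 0$, and recombine. The remarks about the orientation convention for $\cN_\pm$ (both with the same outward normal $N$, so their sum is $\Nres$ rather than a jump) and about where the boundary condition $p|_\Gamma = \half|h|^2$ enters are exactly the right sanity checks and align with the paper's definitions.
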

\begin{proof}
From the system for $\bP^+_\grad$,
\begin{align}\label{npg1}
N\cdot P^+_\grad =\half N\cdot \left(\grad|h|^2\right)\circ X - \half \cN_+ |H|^2.
\end{align}
From the systems for $\bP_\grad$ and $\bP^-_\grad$ we find
\begin{align}\label{npg2}
N\cdot(P_\grad- P^-_\grad) = \half \cN_- |H|^2.
\end{align}
Now we verify
\begin{align}\label{HBIdent}
    \frac{|H|^2}{|B|^2} N \cdot B_\theta = \frac{1}{2} N \cdot \left(\grad |h|^2\right) \circ X .
\end{align}
To see this, note $b$ and $h$ point point left to right along $\Gamma$, except possibly at one point where $h$ may vanish, by Lemma~\ref{hTanDirLemma}. Using the fact that $\grad^\perp \cdot h=0$ together with this, we find
\begin{align}
\half \del_n |h|^2 = \sum^2_{i,j=1} n_i \del_i h_j h_j = n_i \del_j h_i h_j = n \cdot (h \cdot \grad h) = \frac{|h|}{|b|} n \cdot (b \cdot \grad h).
\end{align}
Thus, using that $B$ is tangent to the interface in the last step,
\begin{align}
\half N \cdot (\grad|h|^2)\circ X = \frac{|H|}{|B|}N \cdot H_\theta = \frac{|H|}{|B|} N \cdot \left(\frac{|H|}{|B|} B\right)_\theta =\frac{|H|^2}{|B|^2} N\cdot B_\theta ,
\end{align}
as claimed. Combining \eqref{npg1}, \eqref{npg2}, and \eqref{HBIdent}, one arrives at \eqref{HBIdent0}.
\end{proof}

 In light of Proposition \ref{heuristicPropPG}, we use the identity \eqref{HBIdent0} to replace the tangential component of the equation \eqref{UnmodSurfaceSystem} for $U$. We thus arrive at \eqref{naturalsurfsystem1}, with lower order terms made explicit as follows.
\begin{align}
\begin{aligned}
    (\bigtau + \cH \circ N) U_t &= (\bigtau + \cH  \circ N)B_\theta - (\bigtau + \cH  \circ N)P^-_\grad , \\
    N \cdot U_t &= \left( 1 + \frac{|H|^2}{|B|^2} \right) N \cdot B_\theta - N \cdot P^-_\grad - N \cdot P^+_\grad - \half \Nres |H|^2 , \\
    B_t &= U_\theta . 
    \end{aligned}\label{ModSurfaceSystem2}
\end{align}

Regarding the level of the term $\Nres|H|^2$ in comparison with the unknowns $U$ and $B$, for now let us take for granted an important concept to be verified later with Proposition~\ref{DtoNCancellation}, which is that the operator $\Nres=\cN_++\cN_-$ is an order zero operator, despite the fact that $\cN_+$ and $\cN_-$ differentiate once. Combined with the facts that $H$ is at the level of $U$ and $B$, as are $P^-_\grad$ (which can be verified from \eqref{InteriorPressureGradSystem}) and $P^+_\grad$ (similarly verified), we find that the lower order terms in \eqref{naturalsurfsystem1} are indeed lower order. Using \eqref{ModSurfaceSystem2}, we may derive the complete form of the system \eqref{ModSurfaceSystem0} with lower order terms made explicit. We thus find
\begin{align}
\label{ModSurfaceSystem}
& &
\begin{aligned}
    \bE U_t &= \bD \bE B_\theta + \cF, \\
    B_t &= U_\theta,
\end{aligned}
&   & ( \theta \in S^1),
\end{align}
where we define $\cF$ by
\begin{align}
&& \cF  &= - \bE P^-_\grad - \col{0}{N \cdot P^+_\grad + \half \Nres |H|^2}   & (\theta &\in S^1). \label{cFdefn2}
\end{align}
\subsubsection{The non-symmetrizable basic system}\label{basicsystemsection}

Using the projection operator $\bE$ and combining the evolution system for $U$ and $B$ given by \eqref{ModSurfaceSystem} with that for $\bom$ and $\bj$ given by \eqref{vortEvoSystem}, we get the system below.
\begin{align}
(\theta \in S^1) \qquad
& \left\{
        \begin{aligned}
        U_t &= \bE^{-1}\bD \bE B_\theta + \bE^{-1} \cF \\
        B_t &=  U_\theta \\ 
        X_t &= U \, ,
        \end{aligned}
        \right. 
        \label{BasicSurfSys1} \\[1ex]
( a \in \Sigma ) \qquad
&   \left\{
        \begin{aligned}
        \bom_t &= \bj_\theta \\
        \bj_t  &= \bom_\theta + \dcF^2_2 \, ,
        \end{aligned}
        \right. 
        \label{BasicSysDefns} 
\end{align}
where
\begin{align}
& & \dcF^2_2 &= 2\bm{\sigma}^{-1} (\bU_\theta \cdot \bB_\psi - \bU_\psi \cdot \bB_\theta) & & (a\in \Sigma).\label{earlyF22defn}
\end{align}

\begin{remark}
Let us note from \eqref{bEopintro} that $\bE^{-1}$ is given by a relatively simple matrix operator via
\begin{align}\label{bEinverse}
& & \bE^{-1} V &= {\col{\bigtau}{N}}^{-1}\begin{pmatrix} 1 & -\cH \\ 0 & 1 \end{pmatrix} V & (V : S^1 \to \R^2 ).
\end{align}
\end{remark}
Thanks to our use of $\bE$ to get rid of the dangerous tangential derivative of the pressure in the original surface system \eqref{UnmodSurfaceSystem}, the remaining source term $\bE^{-1}\cF$ in the reformulated version \eqref{BasicSurfSys1} is lower order, so that the principal term in the first equation is $\bE^{-1} \bD \bE B_\theta$.


However, $\bE^{-1}\bD \bE$ \emph{is not symmetric}, and one can show that the above system is not symmetrizable (in the Lax-Friedrichs sense). This means that \emph{we do not yet have the structure of a wave system}, which is desirable for good energy estimates. Moreover, this means the basic system \eqref{BasicSurfSys1}--\eqref{BasicSysDefns} does not fall under the more general category of systems of conservation laws, and the standard, well-known techniques used to prove local existence for such systems are not directly applicable. Next we explain how to exchange our surface unknowns $(U,B)$ for the good unknowns $(\dot U^*,\dot B^*)$ to get a true system of wave equations.

\subsubsection{The good Lagrangian wave system}\label{thewavesystemsubsection}

The Lagrangian wave equation structure at the surface is only revealed when one takes one more time derivative of the equations for $U$ and $B$ in the basic non-symmetrizable system. For $\bE$ defined by \eqref{bEopintro}, we introduce the good surface unknowns $(\dot U^*,\dot B^*)$ by defining
\begin{align}
    \dot{ U }^*(t,\theta) &= \bE \,\del_t U(t,\theta) , \\
    \dot{ B }^*(t,\theta) &= \bE \,\del_t B(t,\theta) .
\end{align}
Using these together with our interior vorticity evolution, we now derive our good Lagrangian wave system, which takes the form
\begin{align}
(\theta \in S^1) \qquad
& \left\{
        \begin{aligned}
        \dot U^*_t &= \bD \dot B^*_\theta + \mathit{l.o.t.} \\
       \dot B^*_t &= \dot U^*_\theta + \mathit{l.o.t.}\,,
        \end{aligned}
        \right.  \\[1ex]
( a \in \Sigma ) \qquad
&   \left\{
        \begin{aligned}
        \bom_t &= \bj_\theta \\
        \bj_t  &= \bom_\theta + \mathit{l.o.t.}\, .
        \end{aligned}
        \right.
\end{align}
The system is given in full detail below in \eqref{symmetrizable2}--\eqref{symmetrizable1}.

Many lower order terms appearing in the system depend on $X$, for example, through the shape of the interface. With the goal of transitioning to the more rigorous formulation of the system as a whole in Section \ref{rigorousfmltnsection}, let us take on the task of writing the various quantities to appear in \eqref{BasicSurfSys1}--\eqref{BasicSysDefns} as maps acting on the complete \emph{state vector} of unknowns $(\dot{ U }^*, \dot{ B }^*, \bom, \bj , X, U)$. To account for the evolution of the unknowns $X$ and $U$, we take
\begin{align*}
(\theta \in S^1) \qquad
& \left\{
        \begin{aligned}
        X_t &= U \\
        U_t &= \bE^{-1} \dot{ U }^*\, ,
        \end{aligned}
        \right.
\end{align*}
This is more of an auxiliary part of the system to be formed, with right-hand side not involving derivatives of the state vector.

Now let us derive the evolution equations for $\dot{ U }^*$ and $\dot{ B }^*$. Applying $\bE$ to \eqref{ModSurfaceSystem}, we have
\begin{align}\label{dotUstarfmla}
\dot U^* = \bD \bE B_\theta + \cF.
\end{align}
Differentiating this equation in time and manipulating commutators, we get
\begin{align}
\dot U^*_t &= \bD\bE B_{\theta t} +[\del_t,\bD\bE]B_\theta + \cF_t \\
&= \bD\,\big( \dot B^*_\theta-[\del_\theta,\bE]B_t\big)+
[\del_t,\bD\bE]B_\theta + \cF_t .
\end{align}
Now by substituting in expressions for $B_t$ and $B_\theta$ by using $B_t = \bE^{-1} \dot B^*$ and \eqref{dotUstarfmla}, we arrive at
\begin{align}
\dot U^*_t &= \bD \dot B^*_\theta +[\del_t,\bD\bE](\bD\bE)^{-1}\dot U^* - \bD[\del_\theta,\bE]\bE^{-1}\dot B^* + \cF_t  - [\del_t,\bD\bE](\bD\bE)^{-1}\cF.
\end{align}
Similarly, one can use $B_t=U_\theta$ to derive an equation for $\dot B^*_t$, which we provide below. Let us write
\begin{align}\label{bRderivation2}
\bR^{11} &= \bD_t \bD^{-1} + \bD [\del_t,\bE] \bE^{-1} \bD^{-1} ,\\
\bR^{12} &= -\bD[\del_\theta, \bE] \bE^{-1} ,\\
\bR^{21} &= -[\del_\theta, \bE] \bE^{-1} ,\\
\bR^{22} &= [\del_t, \bE] \bE^{-1} .
\end{align}
We note that the expression for $\bR^{11}$ comes from the identity \[[\del_t, \bD \bE] (\bD \bE)^{-1}=\bD_t \bD^{-1} + \bD [\del_t,\bE] \bE^{-1} \bD^{-1} .\]
Writing $\mathring \cF = \cF_t - \bR^{11} \cF$, the evolution equations for $\dot U^*$ and $\dot B^*$ become
\begin{align}
    \dot{ U }^*_t &= \bD\, \dot{ B }^*_\theta + \bR^{11} \dot{ U }^* + \bR^{12} \dot{ B }^* + \mathring{\cF} , \label{SymizableSurfSys1}\\
    \dot{ B }^*_t &= \dot{ U }^*_\theta + \bR^{21} \dot{ U }^* + \bR^{22} \dot{ B }^* . \label{SymizableSurfSys2}
\end{align}
Due to the special form of the matrix $\bD$, a two-by-two off-diagonal matrix, the above gives a wave system for $\dot U^*$ and $\dot B^*$.

Let us describe the above in terms of $4\times 4$ matrices and matrix operators. We define
\begin{align}
    \bJ_1 = \begin{pmatrix} 0 & \bD \\ \bI_{(2 \times 2)} & 0 \end{pmatrix} ,\qquad \bR = \begin{pmatrix} \bR^{11} & \bR^{12} \\ \bR^{21} & \bR^{22} \end{pmatrix} , \qquad \dcF_1 = \begin{pmatrix}
        \mathring \cF \\
        0_{(2\times 1)} 
            \end{pmatrix}.
\end{align}
Then the system \eqref{SymizableSurfSys1}--\eqref{SymizableSurfSys2} becomes
\begin{align*}
& &
    \col{\dot{ U }^*}{\dot{ B }^*}_t &= \bJ_1 \col{\dot{ U }^*}{\dot{ B }^*}_\theta + \bR \col{\dot{ U }^*}{\dot{ B }^*} + \dcF_1 .
    & 
\end{align*}
Recall the interior system for $\bom$ and $\bj$, \eqref{BasicSysDefns}. Let us write the following
\begin{align}
    \bJ_2 &= \begin{pmatrix} 0 & 1 \\ 1 & 0 \end{pmatrix} , \qquad 
    \dcF_2(X_\theta,U,\bom, \bj) = 2\bm{\sigma}^{-1} \col{0}{\bU_\theta \cdot \bB_\psi - \bU_\psi \cdot \bB_\theta} \label{SurfEqnTerm2} .
\end{align}
Note $\dcF_2$ as written above depends on $X_\theta$ in particular due to the presence of $N=X^\perp_\theta/|X_\theta|$ in \eqref{OriginalLagrDivCurlSys}. Collecting our equations together gives the complete system for $\dot U^*$, $\dot B^*$, $\bom$, $\bj$, $X$, and $U$:
\begin{align}
& &    \col{\dot{ U }^*}{\dot{ B }^*}_t &= \bJ_1 \col{\dot{ U }^*}{\dot{ B }^*}_\theta + \bR \col{\dot{ U }^*}{\dot{ B }^*} + \dcF_1   &   (\theta &\in S^1 ),
        \label{symmetrizable2} \\[1ex]
& &    \col{\bom}{\bj}_t &= \bJ_2 \col{\bom}{\bj}_\theta + \dcF_2  &   (a &\in \Sigma ),
        \label{symmetrizable3} \\[1ex]
& &    \col{X}{U}_t &= \begin{pmatrix} \bI & 0 \\
                                  0 & \bE^{-1}
                              \end{pmatrix}
    \col{U}{\dot{ U }^*}    &   (\theta &\in S^1).
        \label{symmetrizable1}
\end{align}

Due to the off-diagonal forms of $\bJ_1$ and $\bJ_2$, the result is a coupled system of quasilinear wave equations, essentially formed by the pair \eqref{symmetrizable2} and \eqref{symmetrizable3}. The auxiliary, lower-order part \eqref{symmetrizable1} is included to close the system, since certain lower order terms depend on $X$ and $U$. The apparent wave structure means the system obeys good energy estimates and can be solved by traditional means, provided the terms $\bJ_1$, $\bJ_2$, $\bR$, $\dcF_1$, and $\dcF_2$ are all in fact lower order. While these are all indeed lower order, it is not obvious at the moment why this is so. In our derivation of the Lagrangian wave system \eqref{symmetrizable2}--\eqref{symmetrizable1}, we have glossed over the details of many dependencies hidden in these terms. Below, we provide one possible description of these quantities in terms of objects we have previously introduced, without going into explicit detail.
\begin{align}
\begin{aligned}
\bJ_1 &= \bJ_1(X_\theta,H) ,\\
\bR &= \bR(\bE,[\del_t,\bE],[\del_\theta,\bE],X_\theta,U_\theta, H, H_t) ,\\
\dcF_1 &=  \dcF_1(\bE,\Nres,[\del_t,\bE],[\del_t,\Nres],X_\theta,U_\theta,H,H_t,P^\pm_\grad,\del_t P^\pm_\grad) ,\\
\dcF_2 &= \dcF_2(X_\theta,U,\bom,\bj) ,
\end{aligned}
\label{dependencies}
\end{align}
where
\begin{itemize}
\item $H$, $P^+_\grad$, $\bE$, and $\Nres$ depend on $X$ (through the shape of $\Gamma$),
\item $P^-_\grad$ depends on $\bU$, $\bB$, and $X$ (through the shape of $\Gamma$),
\item $\bU$ and $\bB$ depend on $\bom$, $\bj$, $U$, and $X$ (through the shape of $\Gamma$).
\end{itemize}

To make things less notationally burdensome but still keep track of the dependencies, we collect the various unknowns into a single state vector, which we call $\xi$, taking values in $\R^{10}$. We define it (typically as a column vector) by
\begin{align}
\xi = (\dot U^*, \dot B^*, \bom, \bj, X,U).
\end{align}
We then find the Lagrangian wave equation system \eqref{symmetrizable2}--\eqref{symmetrizable1} can be written in the form
\begin{align}
\del_t \xi = \bJ(\xi) \, \del_\theta \xi + \cR(\xi) \, \xi + \dcF(\xi)  ,
\label{XiSystem}
\end{align}
where
\begin{align}
\dcF = (\dcF_1,\dcF_2,0,0,0,0),
\end{align}
and we have the $10\times 10$ matrices $\cR(\xi)$ given by \eqref{bRderivation} and $\bJ(\xi)$ by
\begin{align} \label{bJdefn1} 
    \bJ = \begin{pmatrix} \bJ_1 & 0     & 0 \\
                            0    & \bJ_2 & 0 \\
                            0    &  0    & 0_{(4 \times 4)} \end{pmatrix} .
\end{align}
 It should be apparent that the terms $\bJ$, $\cR$, and $\dcF$, though lower order, depend on $\xi$ in complicated ways. We have yet to rigorously define many of the terms appearing in \eqref{dependencies} as maps acting on $\xi$, taking values in certain spaces (sometimes operator spaces). This task mostly boils down to carefully defining objects such as the following as maps acting on $\xi$.
\begin{align}\label{genximaps}
\begin{aligned}
H &=  H(\xi), & H_t &= \dot H(\xi),\\
P^\pm_\grad &=  P^\pm_\grad(\xi), & \del_t P^\pm_\grad &= \dot{P}^\pm_\grad(\xi), \\
\bU &=   \bU(\xi), & \bB &=   \bB(\xi), \\
\bE &= \bE(\xi), & \Nres &= \Nres(\xi), \\
[\del_t,\bE] &= [\del_t;\bE](\xi), & [\del_t,\Nres] &= [\del_t;\Nres](\xi), \quad \mbox{etc.}
 \end{aligned}
\end{align}
 The overhead dot notation used, for example, in $\dot H(\xi)$, emphasizes that we express the object in terms of $\xi$, \emph{without actually taking a time derivative}, in such a way that it coincides with $\del_t(H(\xi))$. The notation $[\del_t ; \bE](\xi)$, $[\del_t ; \Nres](\xi)$, etc. is used to play the analogous role for commutators of operators with $\del_t$.
 
 Rigorous definitions of some of the maps in \eqref{genximaps} become especially subtle when we consider a nearly self-intersecting or self-intersecting interface $\Gamma$. For example, the exterior Dirichlet-to-Neumann operator $\cN_+(\xi)$ (notably tangled up in $[\del_t;\Nres](\xi)$, which is defined in Definition~\ref{DtNCommDefns}) becomes unbounded in standard operator spaces as the pinch tends to zero, as discussed near the end of Section~\ref{squeezedmagfieldssection}. Similarly, the rigorous definition of $[\del_t;\bE](\xi)$ (given in Proposition~\ref{bEcommBounds}) must be handled carefully. The remainder of this section focuses on precisely asserting that all the maps constituting the terms $\bJ(\xi)$, $\cR(\xi)$, and $\dcF(\xi)$ of \eqref{XiSystem} are well-defined, allowing us to put the Lagrangian wave system \eqref{symmetrizable2}--\eqref{symmetrizable1} and its compact form \eqref{XiSystem} on firm footing.

\subsection{Rigorous formulation}\label{rigorousfmltnsection}
To form precise definitions of the terms $\bJ(\xi)$, $\cR(\xi)$, and $\dcF(\xi)$ as maps acting on $\xi$, we first define appropriate classes for $\xi$ to serve as domains.

We begin the main definitions for quantities that live in the plasma region (e.g. $\bU$, $\bB$, $\bX$) in Section~\ref{plasmacentricmapssection} by constructing solutions to div-curl type systems, such as \eqref{OriginalLagrDivCurlSys}. Proposition~\ref{SolnMainDivCurlProp} in particular defines $\bU(\xi)$, $\bB(\xi)$, and $\bX(\xi)$.

In Section~\ref{vacandopmapssection}, we carefully define the essential vacuum quantities, such as $H(\xi)$ and the exterior pressure gradient $P^+_\grad(\xi)$, as maps acting on $\xi$. Then, we focus on operator valued maps, such as $\cN_\pm(\xi)$, $\Nres(\xi)$, and $\bE(\xi)$. 

Since the definitions of some of the ``time derivative maps'', $\dot H(\xi)$ and $[\del_t ;\bE](\xi)$ for example, are more technical than conceptually important, their comprehensive treatment is left till Section~\ref{timederivdefnsection}.
\subsubsection*{State spaces and classes for the state vector $\xi$}
Let us introduce a natural function space in which our state vector $\xi(t)$ will reside, for each $t$ in $[0,T]$. We define the $\sH^s$ norm and the corresponding space by
\begin{align}
    \lV \xi \rV_{\sH^s} = \lV \dot{ U }^* \rV_{H^s(S^1)} + \lV \dot{ B }^* \rV_{H^s(S^1)}+ \lV \bom \rV_{H^{s+1/2}(\Sigma)} + \lV \bj \rV_{H^{s+1/2}(\Sigma)} + \lV X \rV_{H^{s+2}(S^1)} + \lV U \rV_{H^{s+1}(S^1)} ,
\end{align}
\begin{align}
    \sH^s &= (H^s(S^1))^2 \times (H^{s+1/2}(\Sigma))^2 \times H^{s+2}(S^1) \times H^{s+1}(S^1) , \\ \intertext{along with the space}
    \sH^s_\dagger &= (H^s(S^1))^2 \times (H^{s+1/2}(\Sigma))^2,
\end{align}
for the truncated state vector $\xi_\dagger$, defined by
\begin{align}
\xi=(\xi_\dagger,X,U),\qquad \mbox{i.e.} \qquad \xi_\dagger=(\dot U^*, \dot B^*, \bom,\bj),
\end{align}
meant for situations in which we need not pay attention to $(X,U)$, the ``lower order'' components of $\xi$.

Recall from \eqref{HsgpDef} the definition of the family $H^s_\gp$ for parametrizations of glancing-permitted curves, or splash and near-splash curves, we would like to consider. We will often need to consider compatible families of $\xi$. For example, we define
\begin{align}\label{sHsgp}
    \sH^s_\gp = \{\xi\in\sH^s: X \in H^{s+2}_\gp\} .
\end{align}

In order to define another important class for our state vector, we must first properly define the initial splash state $\xi_0$.
\begin{definition}\label{initialxiData}
Recalling the terms given in Definition~\ref{initialData}, let us define
\begin{align}
&& \bom_0(a) &= (\grad^\perp \cdot u_0)(\bX_0(a)) & (a\in \Sigma), \\
&& \bj_0(a) &= (\grad^\perp \cdot b_0)(\bX_0(a)) & (a\in\Sigma),
\end{align}
and define $\bE_0$ to be the operator associated to the interface $\Gamma_0$ analogous to the $\bE(\xi)$ operator determined by \eqref{projecdefn}--\eqref{bEdefnFmla} of Proposition~\ref{bEDefns}. Correspondingly, we define initial $\cN_\pm$, $\Nres$ operators associated to $\Gamma_0$ as in Definitions~\ref{DirichToNeumDefs} and~\ref{Nresdefn}. We also take the corresponding initial external magnetic field trace $H_0$ associated to $\Gamma_0$ (see Definition~\ref{prelimHmapDefn}). Together with the obvious initial versions of $P^+_\grad$ (see Definition~\ref{ExtPressureDefnBd}) and $P^-_\grad$ associated to $\bU_0$ and $\bB_0$ (see \eqref{InteriorPressureGradSystem}), we define the analogous term $\cF_0$ as in \eqref{cFdefn2}. With these, we define the following functions of $\theta$ in $S^1$:
\begin{align}
\bD_0 &=\begin{pmatrix}1 & 0 \\ 0 & 1 + \frac{|H_0|^2}{|B_0|^2}\end{pmatrix} , \\
\dot U^*_0 &=\bD_0 \bE_0 \del_\theta B_0 +  \cF_0, \\
\dot B^*_0&= \bE_0 \del_\theta U_0.
\end{align}
We then define our initial state to be $\xi_0 = (\dot U^*_0, \dot B^*_0 , \bom_0, \bj_0, X_0, U_0)$ and for the truncated version, we denote $\xi_{\dagger,0} = (\dot U^*_0, \dot B^*_0 , \bom_0, \bj_0)$.

Let us also define the families of functions below.
\begin{align}
    \sB_\dagger &=\{\xi_\dagger\in C^0([0,T];\sH^k_\dagger) : \sup_f \lV \xi_\dagger \rV_{\sH^k_\dagger} \leq M, \ \sup_{t} \lV \xi_\dagger - \xi_{\dagger,0} \rV_{\sH^{k-1}_\dagger} \leq r_0\} , \label{sBdagdefn}\\
    \sB &= \{\xi \in C^0([0,T]; \sH^k) : \sup_{t} \lV \xi \rV_{\sH^k} \leq M, \ \xi_\dagger \in \sB_\dagger, \ \xi(0) = \xi_0,  \ X_t = U , \  \lV U\rV_{C^1_{t,\theta}}\leq M\} ,\label{sBdefn}
\end{align}
where $\lV U\rV_{C^1_{t,\theta}}\leq M$ is taken to mean $U(t,\theta)$ is in $C^1([0,T]\times S^1)$, with $\lV U\rV_{C^1([0,T]\times S^1)}\leq M$.
\end{definition}
Some of the criteria in the definitions of $\sB$ and $\sB_\dagger$ are motivated by especially technical details of our construction. On the other hand, for now we can at least verify that for $\xi$ in $\sB$, the corresponding evolving interface $\Gamma(t)$ satisfies the following: the interface opens up for $t>0$ after beginning in a splash at the initial time.

\begin{proposition}\label{dcXlemma}
Suppose that $\xi$ is in $\sB$. Then it follows that the corresponding $X$ and $\Gamma$, $X(t)$ is in $H^{k+2}_\gp$ for each $t$ in $[0,T]$, and
\begin{align}
\lV X-X_0 \rV_{H^{k+1}(S^1)}\leq Ct ,\label{Xshapetbound}\\
c t \leq \delta_\Gamma(t) \leq C t. \label{pinchtbound}
\end{align}
\end{proposition}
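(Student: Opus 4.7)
The proposition bundles together several claims: the Sobolev closeness \eqref{Xshapetbound}, membership $X(t)\in H^{k+2}_\gp$, and the two-sided pinch estimate \eqref{pinchtbound}. I would dispatch the regularity bookkeeping and the upper pinch bound first, reserving the lower bound $\delta_\Gamma(t)\geq ct$ as the genuine obstacle.

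For \eqref{Xshapetbound}, I integrate the constraint $X_t=U$ built into the definition of $\sB$, using the initial condition $X(0)=X_0$ (also built in), to obtain $X(t)-X_0=\int_0^t U(s)\,ds$. Since $\lV U(s)\rV_{H^{k+1}(S^1)} \leq \lV\xi(s)\rV_{\sH^k}\leq M$, this gives $\lV X(t)-X_0\rV_{H^{k+1}(S^1)} \leq Mt$. Sobolev embedding then yields $\lV X(t)-X_0\rV_{C^2(S^1)}\leq Ct$, which for $T$ chosen small in terms of $r_0$ both secures the $C^1$-closeness required by $H^{k+2}_\gp$ and preserves the $H^{k+2}$ regularity supplied by $\xi\in\sB$. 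The upper pinch estimate $\delta_\Gamma(t)\leq Ct$ is then immediate, since $p_\splash = X_0(\pi/2)=X_0(-\pi/2)$ implies $|X(t,\pi/2)-X(t,-\pi/2)| \leq \int_0^t |U(s,\pi/2)-U(s,-\pi/2)|\,ds \leq 2Mt$, and $|\pi/2-(-\pi/2)|=\pi\geq 10^{-1}$ makes this pair admissible in the minimum defining $\delta_\Gamma$.

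The lower pinch estimate is the heart of the matter. I would split the minimization into two regimes: (A) pairs $(\theta,\vartheta)$ with $|\theta-\vartheta|\geq 10^{-1}$ that are bounded away from $(\pm\pi/2,\mp\pi/2)$, and (B) pairs close to $(\pm\pi/2,\mp\pi/2)$. In regime (A), the glancing-permitted bound $\cG(X_0)\geq C^0_\gp$ combined with the $10^{-1}$ separation implies $|X_0(\theta)-X_0(\vartheta)|\geq c_0>0$, and uniform $C^0$-closeness from the previous paragraph propagates this to $|X(t,\theta)-X(t,\vartheta)|\geq c_0/2$, which dominates $ct$ for $t$ small. In regime (B), Lemma~\ref{separationPrepLem} supplies $|U_0(\pm\pi/2)|\geq 1$ with angular defect at most $10^{-1}$ from $\pm e_1$, so the $e_1$-component of $U_0(\pi/2+\alpha)-U_0(-\pi/2+\beta)$ exceeds $2\cos(10^{-1})-o(1)\geq 1.9$ uniformly in $(\alpha,\beta)$ near $0$. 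Expanding
\[
X(t,\pi/2+\alpha)-X(t,-\pi/2+\beta) = [X_0(\pi/2+\alpha)-X_0(-\pi/2+\beta)] + \int_0^t [U(s,\pi/2+\alpha) - U(s,-\pi/2+\beta)]\,ds
\]
and using the $C^1_{t,\theta}$-bound on $U$ together with $U(0)=U_0$ to control the difference $U(s,\cdot)-U_0(\cdot)$ by $Ms$, the integral has $e_1$-component at least $1.8\,t$ for $t$ small, and the $X_0$-difference contributes nonnegatively to the leading-order separation. This yields $|X(t,\pi/2+\alpha)-X(t,-\pi/2+\beta)|\geq ct$ in regime (B), and combining with (A) gives $\delta_\Gamma(t)\geq ct$.

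Finally, $\Gamma(t)\in\cXbar$, i.e. $\cG(X(t))\geq C^0_\gp/2$, is handled by the same dichotomy: in regime (A) the first fraction in $\cG(X(t))$ is $C^0$-close to that of $\cG(X_0)$ and so remains above half its initial value, while in regime (B) the parabolic second fraction only improves because the arcs physically separate. Together with the regularity and $C^1$-closeness already established, this yields $X(t)\in H^{k+2}_\gp$ on the whole interval $[0,T]$, provided $T$ is chosen compatibly with the fixed constants $M$ and $r_0$, which is the only place that smallness of $T$ enters the argument.
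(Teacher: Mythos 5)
Your proof is correct and follows the same strategy as the paper's: integrate $X_t=U$ for \eqref{Xshapetbound}, invoke Lemma~\ref{separationPrepLem} together with the $C^1_{t,\theta}$ bound on $U$ to show the arcs separate linearly, and propagate the glancing-permitted bound. The one place where you are genuinely more careful is the preservation of $\cG(X(t))$: the paper asserts tersely that "the change in $\cG(X(t))$ is $O(t)$," which is not literally true near the splash pair where the denominator $(\theta-\pi/2)^2+(\vartheta+\pi/2)^2$ degenerates; your dichotomy fixes this by observing that in regime (B) the parabolic fraction \emph{increases} (since the arcs physically separate) while in regime (A) the denominators are bounded below so the perturbation is benign. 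Your regime-(B) lower bound does rest on the implicit geometric fact that $(X_0(\theta)-X_0(\vartheta))\cdot e_1\geq 0$ for $\theta$ near $\pi/2$ and $\vartheta$ near $-\pi/2$ — i.e.\ that near $p_\splash$ the two glancing arcs open away from each other so the right arc lies to the right of the left arc — but this is the same implicit appeal to the shape of $\Gamma_0$ that the paper makes when it says "the particles on the left and right arcs can only move farther to the left and farther to the right, respectively," so you are at the same level of rigor.
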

\begin{proof}
The bound \eqref{Xshapetbound} is easy to verify due to the fact that $\lV U \rV_{H^{k+1}(S^1)}$ remains uniformly bounded for $\xi$ in $\sB$.

Lemma~\ref{separationPrepLem} verifies the initial velocity $u_0$ is both bounded below in magnitude and essentially pointing left and right along the left and right arcs of $\Gamma_0$ near the splash point. Since the acceleration $U_t$ is bounded above in magnitude, the maximum change in angle of the velocities is $O(t)$. Thus, as long as $T$ is small enough, the particles on the left and right arcs can only move farther to the left and farther to the right, respectively, and we are able to deduce that $\delta_\Gamma(t)\geq ct$. Similarly, one can use the upper bound on the acceleration to verify $\delta_\Gamma(t)\leq Ct$.

The bound on the acceleration together with \eqref{Xshapetbound} can be used to show that the change in $\cG(X(t))$, defined by \eqref{glancingfunction}, is also $O(t)$. Thus for small enough $T$, we can guarantee that $\cG(X(t))\geq \half \cG(X_0)$ for all $t$ in $[0,T]$. In view of Definition~\ref{HsgpDef}, we find $X(t)$ stays in $H^{k+2}_\gp$.\end{proof}

We now make the following remark regarding a notation to be used in the statements of many bounds which are relevant to the rigorous definitions for various $\xi$-dependent quantities.

\begin{remark}\label{PowerDep}
Occasionally we need to keep track of the algebraic dependence of a bounding constant on some quantity, perhaps a norm of $X$ or the state vector $\xi$. Whenever we write $C(\mbox{quantity})$, $C$ clearly being used to denote a function, with a single, isolated quantity inside, either the norm of some object, say $\lV\xi\rV$, or a constant like $M$, we mean
\begin{align}
C(\lV \xi \rV) = c(1+\lV \xi \rV^p),\qquad\qquad\mbox{or}\qquad\qquad
C(M) = c(1+M^p),
\end{align}
where in the right-hand side, we have some universal multiplicative constant $c$ and power $p$, which do not depend on $M$, in particular. We often use the same notation in different instances to refer to a different pair of universal constants $c$ and $p$.

Regarding upper bounds featuring $C(M)$ in the right-hand side, throughout the corresponding proof, if other constants are simply written as $C$, $C'$, $C_1$, etc., (as opposed to $C(M)$), they generally do not depend on $M$. However, we make an exception for Lipschitz type bounds: when stating these, we need not keep track of the dependence of constants on $M$, and so any time we write an expression like
\begin{align}
\lV\dcF(\xi)-\dcF(\uxi)\rV \leq C \lV \xi - \uxi\rV,
\end{align}
either in the statement of a result or in a proof, the constant $C$ may in fact depend on $M$.
\end{remark}

\subsubsection{Plasma-centric maps}\label{plasmacentricmapssection}

Now we prepare to define the quantities dependent on the state vector $\xi$ which are primarily associated with the plasma domain, such as the interior Lagrangian velocity $\bU$ and magnetic field $\bB$, for example.

Rather than considering a situation in which a state vector $\xi$ solves the nonlinear system \eqref{XiSystem}, in this section, we think of $\xi$ as an example of a single iterate $\xi=\xi_\npo$ arising in our main iteration scheme, which is the focus of Section~\ref{solvinglagwavesystemsection}:
\begin{align}
\label{IterationSchemeMainEq}
  \del_t \xi_{n+1} = \bJ(\xi_n) \, \del_\theta \xi_{n+1} + \cR(\xi_n) \, \xi_{n+1} + \dcF(\xi_n) .
\end{align}

Since $\xi=\xi_\npo$ does not necessarily solve the exact system \eqref{XiSystem}, we must compensate by adding error terms to the Lagrangian div-curl system \eqref{OriginalLagrDivCurlSys} in which the Lagrangian vorticity ${\bom}$ and current ${\bj}$ appear in the right-hand side. 

Now we state and solve an appropriate \emph{augmented} Lagrangian div-curl system. In particular, this allows us to define the second component of $\dcF_2$ (see \eqref{SurfEqnTerm2}) as a map acting on ${\xi}$, an important step in clarifying the precise definition of the map $\dcF({\xi})$.

\paragraph{The augmented Lagrangian div-curl system.}

Consider \({\xi} = ({\dot{ U }^*},{\dot{ B }^*},{\bom},{\bj},{X},{U})\) in $\sB$. Below, we take a corresponding auxiliary vector field $w_\Omega(t,x)$ in $\Omega(t)$ as a kind of error term whose definition is not essential at the moment but can be found in \eqref{ulwformula}. The \emph{augmented Lagrangian div-curl system} we associate to the given $\xi$ and solve to produce $\bU(\xi)$, $\bB(\xi)$, and $\bX(\xi)$ is
\begin{align}
 &  \label{AugmentedInteriorSystemL} \\
\begin{aligned}
(a \in \Sigma)
          &&  \left\{ \Bsp \\ \Esp \right. & \\
(\psi = 0)
          &&  \left\{ \Msp \right. & \\
(\psi = -1) 
          &&  \left\{ \Bsp \\ \Esp \right. &
\end{aligned} & 
    \Bsp \grad \cdot (\, \cof (\bM)  \bU )         &= 0                                         & \grad \cdot (\, \cof (\bM)  \bB )   &= 0 \\
    \Msp \grad^\perp \cdot (\, \bM \bU )           &= \bm{\sigma} {\bom} \Lcm              & \grad^\perp \cdot (\, \bM  \bB )    &= \bm{\sigma} {\bj} \Lcm \\
    \Msp \bm{N} \cdot \bU=\bm{N} \cdot \underline{u} & (\bX ) -\bm{N} \cdot w_\Omega (\bX)\,, &\bm{N} \cdot \bB                       &= 0 \,,\\
    \Msp \bm{N} \cdot \bU                            &= 0                                         &\bm{N} \cdot \bB                       &= 0 \\
    \Msp \int^\pi_{-\pi}\bU\cdot\bX_\theta \, d\theta           &= \upalpha \,,                              &\int^\pi_{-\pi}\bB \cdot \bX_\theta \, d\theta &= \upbeta \,,
    \Esp \notag
\end{align}
\begin{align*}
\bX(t) &= \bX_0 + \int^t_0 \bU(\tau)+w_\Omega (\tau,\bX(\tau))\, d\tau &    &(a \in \Sigma ) , \\
\shortintertext{where}
\bM_{ij}(t,a) &=\del_{a_i} \bX_j(t,a)  &                  &(a \in \Sigma, \ i,j = 1,2) , \\
\bm{N}(t,\theta,\psi) &= n(t,\bX(t,\theta,\psi)) &      &(\theta \in S^1, \ \psi = 0, \ -1) ,\notag \\
\underline{u}(t,x) &= {U}(t,{X}^{-1}(t,x)) &      &(x \in {\Gamma}(t) ) ,\notag \\
 \mbox{and } &w_\Omega(t,x) \mbox{ is defined by \eqref{ulwformula}.} &      & \notag 
\end{align*}

For the proposition below, we remind the reader of our notational convention discussed in Remark~\ref{PowerDep}. We also comment here that before we give the actual proof of the proposition (which follows after \eqref{compatConditExpl}), we explain the introduction of the auxiliary field $w_\Omega$ and its relation to boundary condition compatibility.

\begin{proposition}
\label{SolnMainDivCurlProp}
Suppose $\xi$ is in $\sB$. 
Then there exists a unique solution $(\bU, \bB,\bX)$ to the augmented Lagrangian div-curl system \eqref{AugmentedInteriorSystemL} on the time interval $[0,T]$, and the following bounds hold:
\begin{align}
\label{InteriorSysLBound}
\begin{aligned}
    \sup_{t}\lV \bU \rV_{H^{k+3/2}(\Sigma)} +
    \sup_{t}\lV \bB \rV_{H^{k+3/2}(\Sigma)} +
    \sup_{t}\lV \bX \rV_{H^{k+3/2}(\Sigma)}
    \leq C(M),
\end{aligned}
\end{align}
\begin{align}\label{gradInvBd}
&&\sup_{t}\lV(\grad \bX)^{-1} v\rV_{L^\infty(\Sigma)} &\leq C\left(\sup_{t} \lV \xi \rV_{\sH^{k-1}}\right) |v|&(v\in\R^2).
\end{align}
We also have the bound \eqref{InteriorSysLBound} in the case that $k$ in the left-hand side is replaced by $k-1$ and $M$ in the right-hand side is replaced by $\sup_{t} \lV \xi \rV_{\sH^{k-1}}$.
\end{proposition}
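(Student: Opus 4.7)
The plan is to construct $(\bU, \bB, \bX)$ by Picard iteration in $\bX$, exploiting the observation that, \emph{once $\bX$ is fixed}, the left and right columns of the augmented system~\eqref{AugmentedInteriorSystemL} decouple into two independent elliptic div--curl boundary value problems for $\bU$ and $\bB$ on $\Sigma$. Fix $\xi \in \sB$; the inputs are $\bom, \bj \in H^{k+1/2}(\Sigma)$, the surface trace $U \in H^{k+1}(S^1)$, and the parametrization $X \in H^{k+2}_\gp$, the latter giving (together with $\bX_0$) a bilipschitz reference map. The coupling in the full system enters only through the ODE $\del_t \bX = \bU + w_\Omega \circ \bX$, which will be closed by a contraction argument.

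For the elliptic step, I would push the Lagrangian div--curl equations forward to Eulerian coordinates via $x = \bX(t,a)$. A direct computation shows $\grad_a \cdot (\cof(\bM) V) = \det(\bM)\,(\grad_x \cdot v)$ and $\grad^\perp_a \cdot (\bM V) = \det(\bM)\,(\grad^\perp_x \cdot v)$, where $v = V \circ \bX^{-1}$; consequently the two Lagrangian systems are equivalent to two standard Eulerian div--curl systems on $\Omega(t) = \bX(t,\Sigma)$ with prescribed curl, vanishing divergence, prescribed normal boundary data, and one line-integral constraint each. Such problems are uniquely solvable whenever the Neumann compatibility condition $\int_{\partial \Omega} v \cdot n\, dS = 0$ holds: for $\bB$ this is automatic since $n \cdot \bB = 0$ on $\partial \Omega$, while for $\bU$ this is precisely what the auxiliary field $w_\Omega$ is designed to enforce (the prescribed $U$ on $\Gamma$ coming from $\xi$ need not be trace-compatible with a divergence-free field since $\xi$ is only an iterate). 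One chooses $w_\Omega$ to be any convenient extension into $\Omega$ with normal trace on $\Gamma$ absorbing the defect $\int_\Gamma n\cdot\underline u\, dS$, with $H^{k+3/2}$ regularity depending polynomially on $\lV \xi \rV_{\sH^k_\gp}$. Classical $H^s$ elliptic regularity for div--curl on $\Omega(t)$ (whose boundary is controlled in $H^{k+2}$) then yields $v \in H^{k+3/2}(\Omega(t))$ with norm $\le C(M)$; pulling back via $\bX$ and using the bilipschitz estimate \eqref{gradInvBd} gives $\bU, \bB \in H^{k+3/2}(\Sigma)$.

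Next, I would iterate on $\bX$: given $\bX^{(n)}$, the elliptic step produces $\bU^{(n)}$, and I set $\bX^{(n+1)}(t) = \bX_0 + \int_0^t \bigl(\bU^{(n)}(\tau) + w_\Omega(\tau, \bX^{(n)}(\tau))\bigr)\, d\tau$. Standard perturbation theory for elliptic BVPs under Lipschitz perturbation of the domain shows the map $\bX \mapsto \bU$ is Lipschitz from $H^{k+1/2}$ to $H^{k+1/2}$, so for $T$ small (absorbed into the definition of $\sB$) the iteration contracts in $C^0([0,T]; H^{k+1/2}(\Sigma))$. The resulting fixed point inherits $H^{k+3/2}$ regularity by bootstrapping the high-norm elliptic bound along the sequence; uniqueness follows from the same Lipschitz estimate in the low norm. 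Bound \eqref{gradInvBd} is obtained from Sobolev embedding $H^{k-1}(\Sigma) \hookrightarrow C^1(\Sigma)$ (valid since $k \ge 4$) combined with the initial lower bound $\det(\grad \bX_0) = \bm{\sigma}(\psi) \ge c_0 > 0$ and a continuity argument on $[0,T]$ controlled by $\sup_t \lV \xi \rV_{\sH^{k-1}}$. The low-regularity version of \eqref{InteriorSysLBound} (with $k-1$ in place of $k$) follows by running the identical argument one derivative lower.

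The principal obstacle will be the interplay between the compatibility-correcting field $w_\Omega$ and the iteration: $w_\Omega$ must have the right normal flux on $\Gamma$ (an integral functional of $\xi$), be $H^{k+3/2}$-regular, and depend on $\xi$ with a polynomial Lipschitz modulus so that neither the elliptic estimate nor the contraction breaks down. A second delicate point is that the pulled-back elliptic coefficients involve $(\grad \bX)^{-1}$, so the uniformity of the ellipticity constants along the iteration hinges on preserving \eqref{gradInvBd} at every step; this is what forces $T$ in the definition of $\sB$ to be chosen small relative to $\min_\psi \bm{\sigma}(\psi)$ and the initial curvature bounds in~\eqref{Xzerobds}.
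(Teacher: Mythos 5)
Your proposal follows the same overall strategy as the paper: reformulate the Lagrangian div--curl system in Eulerian coordinates, observe that $\bU$ and $\bB$ decouple into two separate elliptic problems once $\bX$ is fixed, use $w_\Omega$ to satisfy the Neumann compatibility condition, iterate on $\bX$, prove contraction in a weak norm, bootstrap the strong bound, and obtain \eqref{gradInvBd} from $H^{k-1}\hookrightarrow C^1$ together with invertibility of $\grad\bX_0$. This is a faithful account of the main ideas.

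There is, however, a subtle but real gap in the update step for $\bX$. You propose the explicit Picard formula
\[
\bX^{(n+1)}(t) = \bX_0 + \int_0^t \bigl(\bU^{(n)}(\tau) + w_\Omega(\tau, \bX^{(n)}(\tau))\bigr)\, d\tau,
\]
with the \emph{old} iterate $\bX^{(n)}$ in the integrand. This does not preserve the constraint $\bX^{(n+1)}(t,\Sigma)=\Omega(t)$, and in particular there is no reason why $\bX^{(n+1)}(t,S^1\times\{0\})$ should trace out $\Gamma(t)$. That is a problem for the next elliptic step: the boundary datum $\underline u = U\circ X^{-1}$ lives only on $\Gamma(t)$, so the composition $\underline u\circ\bX^{(n+1)}|_{\psi=0}$ is not defined if the image leaves $\Gamma(t)$, and the pull-backs $\bom\circ(\bX^{(n+1)})^{-1}$, $\bj\circ(\bX^{(n+1)})^{-1}$ become ill-defined on all of $\Omega(t)$. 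The paper avoids this by solving, at each stage $m$, the \emph{implicit} flow ODE
\[
\bX_{m+1}(t) = \bX_0 + \int_0^t \bigl(u_{m+1} + w_\Omega\bigr)(\tau, \bX_{m+1}(\tau))\, d\tau
\]
with $\bX_{m+1}$ on both sides; because $u_{m+1}+w_\Omega$ transports $\Omega(t)$ (this is exactly what the compatibility computation \eqref{compatConditExpl} and the choice of boundary data $n\cdot u_{m+1}=n\cdot\underline u - n\cdot w_\Omega$ guarantee), the solution of the flow ODE automatically maps $\Sigma$ onto $\Omega(t)$ and $\{\psi=0\}$ onto $\Gamma(t)$ for every $m$. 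That invariant keeps every elliptic problem posed on the \emph{fixed} domain $\Omega(t)$ determined by $\xi$, which is what makes the whole scheme well-defined. Your explicit Picard step could be patched, for instance by extending $\underline u$ to a tubular neighborhood of $\Gamma(t)$ and controlling the $O(T)$ drift of $\bX^{(n)}(\Sigma)$ from $\Omega(t)$, but as written you have skipped this, and it is precisely the point where the auxiliary field $w_\Omega$ does the most work.
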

\begin{definition}\label{UBXdefn}
We define the maps \(\bU(\xi), \ \bB(\xi), \ \bX(\xi)\) on $\sB$ by
\begin{align}
(\bU(\xi),\bB(\xi),\bX(\xi)) = (\bU,\bB,\bX) \spaced{solving 
 the system \eqref{AugmentedInteriorSystemL},}
\end{align}
for \({\xi} = ({\dot{ U }^*},{\dot{ B }^*},{\bom},{\bj},{X},{U})\) together with the corresponding interface $\Gamma(t)$ and plasma domain $\Omega(t)$.
\end{definition}
\begin{remark}
It is worth noting that one finds that an $\bX$ solving the augmented problem \eqref{AugmentedInteriorSystemL} (which does not require that $\bX_\theta = \bB$, in spite of the identity \eqref{bXbBorigIdent} for exact solutions to the ideal MHD system) is only guaranteed to be up to the same regularity level as $\bU$ and $\bB$, though one might expect it to be one degree of regularity better. On the other hand, although the iterate $\xi=\xi_n$ we consider does not solve the exact ideal MHD system, it \emph{does} satisfy the relations $X_t=U$, $X_\theta = B$, and $B_t=U_\theta$, for $B$ defined by $B_t=E^{-1}\dot B^*$, \emph{which imply $X$ has one more derivative} than $U$ and $B$. This is a crucial observation in the proof of Proposition~\ref{MainInductiveBoundProp}, the main inductive bound for our iteration scheme.

To close our iteration, it turns out that we only need the extra degree of regularity for the \emph{surface} iterate $X_n$ at each stage of the main iteration scheme for solving the $\xi$ system, as opposed to the corresponding $\bX_n$, which \emph{need not satisfy $\bX_n(\theta,0)=X_n(\theta)$ and thus need not have the analogous extra regularity}.

Nevertheless, after arriving at the exact solution to \eqref{XiSystem} by taking $\lim_{n\to\infty} \xi_n$, one finds that the limiting trajectory map $\lim_{n\to\infty} \bX_n$ does indeed have the expected extra degree of regularity.
\end{remark}

Before proving Proposition~\ref{SolnMainDivCurlProp}, let us explain the auxiliary term $w_\Omega$ introduced in the augmented div-curl system \eqref{AugmentedInteriorSystemL}, absent in the original version \eqref{OriginalLagrDivCurlSys}.

Compatibility issues can arise in imposing certain boundary conditions for solutions to div-curl systems. Note, for instance, that a quantity such as $U$ will not generally correspond to a divergence-free flow, since (i) it arises from the evolution equation $U_t=\bE^{-1}\dot U^*$ rather than a div-curl system with zero divergence and (ii) it does not solve the exact original nonlinear system. Thus, given $U$, if one were to attempt to produce $\bU$ and naively impose that the normal component be \( U \cdot X^\perp_\theta/|X_\theta|\), this would generally conflict with the first equation of \eqref{AugmentedInteriorSystemL}, i.e. the divergence-free property of $\bU$.

Let us explain how introducing the vector field $w_\Omega$ defined by \eqref{ulwformula} deals with this discrepancy. It will make the following discussion slightly more transparent to refer to the equivalent Eulerian version of the augmented Lagrangian div-curl system \eqref{AugmentedInteriorSystemL}. Here, we seek $u,b$ defined in $\Omega(t)$ at each time $t$ which satisfy
\begin{align}
\label{AugmentedInteriorSystemE}
\begin{aligned}
(x \in {\Omega}(t))
          &&  \left\{ \Bsp \\ \Esp \right. & \\
(x \in {\Gamma}(t))
          &&  \ \big\{ \, & \\
(x_2 = 0) 
          &&  \left\{ \Bsp \\ \Esp \right. &
\end{aligned} & 
    \Bsp \grad \cdot  u         &= 0                                                 & \grad \cdot b       &= 0 \\
    \Msp \grad^\perp \cdot u    &= \sigma^\flat {\bom} \circ \bX^{-1} \Lcm & \grad^\perp \cdot b &= \sigma^\flat {\bj} \circ \bX^{-1} \Lcm \\
    \Msp n \cdot u              &= n \cdot \underline{u} - n \cdot w_\Omega \,,  & n \cdot b           &= 0 \,,\\
    \Msp n \cdot u              &= 0                                                 & n \cdot b           &= 0 \\
    \Msp \int^\pi_{-\pi} u\cdot \tau \,dx_1&= \upalpha \,,                                      &\int^\pi_{-\pi} b\cdot\tau\,dx_1&= \upbeta \,,
    \Esp
\end{align}
\begin{align*}
\bX(t) &= \bX_0 + \int^t_0 u(\tau,\bX(\tau))+w_\Omega (\tau,\bX(\tau))\, d\tau &    &(a \in \Sigma ) , \\
\intertext{where}
\underline{u}(t,x) &= {U}(t,{X}^{-1}(t,x))&           &(x \in {\Gamma}(t) ) ,\notag \\
\sigma^\flat(t,x)  &= \frac{\bm{\sigma} (\bX^{-1}(t,x))}{\det \grad \bX (t,\bX^{-1}(t,x))} &           &(x \in {\Omega}(t) ) ,\notag \\
{\Omega}(t) &= \bX(t,\Sigma),\ \ \mbox{and} \ \ w_\Omega(t,x) \mbox{ is defined by \eqref{ulwformula}.}&&
\end{align*}
\begin{remark}
For $\xi$ corresponding to an exact solution to ideal MHD, in addition to finding the corresponding $w_\Omega$ reduces to zero, one also finds $\sigma^\flat = 1$, since in that case $\det \grad \bX=\bm{\sigma}$, as verified by Proposition~\ref{JacobianIdent}.
\end{remark}

We define the field $w_\Omega$ so that its divergence has a specific constant value at each time $t$, with
    \begin{align*}
& &     \grad \cdot w_\Omega (t,x) &= \frac{{\Phi}(t)}{|{\Omega}(t)|} & (x \in {\Omega}(t) ) ,
    \end{align*}
where $|{\Omega}(t)|$ is the area of ${\Omega}(t)$, and the quantity ${\Phi}(t)$, defined below in \eqref{Phiformula}, represents the flux of ${U}$ across ${\Gamma}$. In an exact solution to the nonlinear problem, one finds that ${\Phi}(t)=0$, and thus $w_\Omega$ is zero, as expected. To be precise, we define the field $w_\Omega$ corresponding to $\Omega(t)$ and $U$ by
\begin{align}
& &   w_\Omega (t, x) &= \frac{{\Phi}(t)}{|{\Omega}(t)|} \int_{{\Omega}(t)} K_R(x, y) \, dy  &    (x \in {\Omega}(t) ) , 
\label{ulwformula}\\
& &    {\Phi}(t) &= \int^\pi_{-\pi} X^\perp_\theta(t,\theta) \cdot {U}(t,\theta)\, d\theta ,  \label{Phiformula}\\
& &    K_R(x, y) &= K(x - y) + K(x - \bar{y}),
\end{align}
where \(\bar{y}=(y_1,-y_2)\) and $K(x)$ is the convolution kernel of $\grad \Delta^{-1}$ for the periodic domain $S^1 \times \R$, i.e., 
\begin{align}
    K(x) &= \grad G(x) , \\
    G(x) &= \frac{1}{\pi} \log \left| \sin \left( \frac{x_1 + i x_2}{2} \right) \right| . \label{Gdefn}
\end{align}
We choose the kernel $K_R(x)$ over $K(x)$ to arrange \(n\cdot w_\Omega = 0\) at $x_2=0$, simplifying calculations.

        Let us verify that the boundary prescription for $u$ is compatible with the div-curl prescription in the system \eqref{AugmentedInteriorSystemE}. Due to the definition of $w_\Omega$, one can show that the problem \eqref{AugmentedInteriorSystemE} is solvable for $u$ if and only if the problem below is solvable for $u_{\mathrm{aug}}=u+w_\Omega$.
        \begin{align*}
            & & \grad \cdot (u+w_\Omega ) &= \frac{{\Phi}(t)}{|{\Omega}(t)|} & &(x \in {\Omega}(t) ) ,  \\
            & & n \cdot (u+w_\Omega )&= n \cdot \underline u & &(x\in\Gamma(t)),\\
            & &  n \cdot (u+w_\Omega ) &= 0 & &(x_2 =0 ) .
        \end{align*}
        It is easy to verify solvability of the above problem for $u_{\mathrm{aug}}=u+w_\Omega$ essentially by checking compatibility with the divergence theorem via the following calculation, which only uses the definitions of $\Phi(t)$ and $\underline u$.
        \begin{align}
        \int_{\Omega(t)} \frac{{\Phi}(t)}{|\Omega(t)|} \, dx = \int^\pi_{-\pi} {X}^\perp_\theta \cdot {U}  \, d\theta  = \int_{{\Gamma}(t)} n \cdot \underline u \, dS. \label{compatConditExpl}
        \end{align}
This justifies our boundary condition for $\bm{N} \cdot \bU$ in the augmented Lagrangian div-curl problem \eqref{AugmentedInteriorSystemL}.

Now we give the proof of Proposition~\ref{SolnMainDivCurlProp}, describing the solution of the Lagrangian div-curl system \eqref{AugmentedInteriorSystemL} via an iteration scheme (not to be confused with the iteration scheme of Section~\ref{solvinglagwavesystemsection} for the solution to the $\xi$ system \eqref{XiSystem} itself).

\begin{proof}[Proof of Proposition~\ref{SolnMainDivCurlProp}]
For clarity, we present the construction of the solution to the Lagrangian div-curl system \eqref{AugmentedInteriorSystemL} partially from the point of view of the equivalent Eulerian system \eqref{AugmentedInteriorSystemE}, although it is relatively easy to adapt this to a purely Lagrangian proof.

Consider $\xi$ in $\sB$, with corresponding $X,U,\bom,\bj,\Omega(t)$, etc. Let $w_\Omega$ and $\underline u$ be defined as in \eqref{ulwformula}. Let us form iterates to find $\bX$ and $u$ solving \eqref{AugmentedInteriorSystemE}.

Note that $\lV X - X_0\rV_{H^{k+1}(S^1)}\leq C T$ and $X\in H^{k+2}_\gp$, by Proposition~\ref{dcXlemma}, so that $\Omega(t)$ has the overall shape depicted in Figures ~\ref{OpenedSplash1} or~\ref{ClosedSplash1}. For the first iteration of the Lagrangian map, for each fixed $t$ in $[0,T]$ we shall sweep out a parametrization of $\Omega(t)$ with a deformation (smoothly depending on the parameter $\psi$ in $[-1,0]$) from the bottom $\theta \mapsto (\theta,-1)$ to the top $\theta\mapsto X(t,\theta)$. This may be done in such a way that it defines a bijection from $\Sigma$ to $\Omega(t)$ for each $t$, say $\bX_1(t,\theta,\psi):\Sigma\to\Omega(t)$, satisfying $\bX_1(t,\theta,-1) = (\theta,-1)$ and $\bX_1(t,\theta,0) = X(t,\theta)$. Moreover, we may arrange that $\bX_1$ is in $C^0([0,T];H^{k+3/2}(\Sigma))$.

For $m\geq 1$, assume we have an iterate $\bX_m$ in $C^0([0,T];H^{k+3/2}(\Sigma))$ with invertible $\bX_m(t,\cdot)$ for each $t$ satisfying 
\(\bX_m(t,\Sigma) = {\Omega}(t)\). We now solve the following system for \( u_{m+1}(t):{\Omega}(t) \to \R^2 \).
\begin{align}
\label{AugmentedInteriorSystemIt}
(x \in {\Omega}(t)) \quad &
\left\{
    \begin{aligned}
        \grad \cdot u_{m+1}       &= 0 \\
        \grad^\perp \cdot u_{m+1} &= \sigma^\flat_m {\bom} \circ \bX^{-1}_m \Lcm
    \end{aligned}
\right. \\
(x \in {\Gamma}(t)) \quad & \ \big\{\, n \cdot u_{m+1}  = n \cdot \underline{u} - n \cdot {w}\,, \\
(x_2 = 0) \quad &
\left\{
    \begin{aligned}
                  n\cdot u_{m+1}             &= 0 \\
       \quad  \int^\pi_{-\pi} u_{m+1} \cdot\tau \, dx_1 &= \upalpha \,,
    \end{aligned}
\right.
\end{align}
where we define \(\bm{\sigma}^\flat_m  = \bm{\sigma} (\bX^{-1}_m)/\det \grad \bX_m (t,\bX^{-1}_m)\) for $x$ in $\Omega(t)$.
A similar computation to \eqref{compatConditExpl} verifies compatibility of the boundary conditions, ensuring the existence of $u_{m+1}$. Regularity results for solutions to such div-curl systems in domains such as ${\Omega}(t)$ are standard, though they can also be deduced from the estimates of Section~\ref{basicEstSection}. Using such a method, one finds for either of the cases $j=k$ or $j=k-1$ that
\begin{align}\label{iteratebound1}
\lV u_{m+1}\rV_{H^{j+3/2}({\Omega}(t))} \leq C\left(\lV \xi \rV_{\sH^j}\right).
\end{align}

Now we construct a map $\bX_{m+1}(t) : \Sigma \to \R^2$ satisfying 
\begin{align*}
\bX_{m+1}(t) &= \bX_0 + \int^t_0 u_{m+1}(\tau,\bX_{m+1}(\tau))+w_\Omega(\tau,\bX_{m+1}(\tau))\, d\tau &    &(a \in \Sigma ) , \\
\bX_{m+1}(t,\Sigma) &= {\Omega}(t) . & &
\end{align*}
Note we have the same $\bX_{m+1}$ appearing in both the left and right-hand sides of the first equation. We leave it to the reader to verify that the existence of such a map $\bX_{m+1}$ is clear under the assumption that $u_{m+1}+w_\Omega$ transports ${\Omega}(t)$. This assumption is justified by recalling that $U=\frac{dX}{dt}$ and observing
\begin{align*}
& &    (n \cdot (u_{m+1}+w_\Omega))(t, {X} ) &= {N} \cdot {U} = {N} \cdot \frac{d{X}}{dt} & &(\theta \in S^1 ) .
\end{align*}
Furthermore, one finds
\begin{align}
\sup_{t}\lV \bX_{m+1} -\bX_0\rV_{H^{j+3/2}(\Sigma)}\leq C\left(\sup_{t}\lV \xi \rV_{\sH^j}\right) T,
\end{align}
which implies
\begin{align}
&&\sup_{t}\lV \bX_{m+1}\rV_{H^{j+3/2}(\Sigma)}&\leq C\left(\sup_{t}\lV \xi \rV_{\sH^j}\right), &\label{iteratebound2}\\
&&\sup_{t}\lV(\grad\bX_{m+1})^{-1}v\rV_{L^\infty(\Sigma)} &\leq C\left(\sup_{t}\lV \xi \rV_{\sH^j}\right)|v| &(v\in\R^2), \label{iteratebound3}
\end{align}
for small enough $T$.

A standard inductive argument for the above iteration proves $u_m$ and $\bX_m$ converge in $C^0([0,T];H^{k+3/2}(\Omega(t)))$ and $C^0([0,T];H^{k+3/2}(\Sigma))$, respectively, with limits $u$ and $\bX$ solving \eqref{AugmentedInteriorSystemE} and satisfying the bounds \eqref{iteratebound1}, \eqref{iteratebound2}, and \eqref{iteratebound3}.

Moreover, now that we have $\bX$, the div-curl system in \eqref{AugmentedInteriorSystemE} for $b$ is directly solvable, and one easily verifies the bound \eqref{iteratebound1} with $b$ in place of $u_{m+1}$.

Given $u$, $b$, and $\bX$, we are able to define
\begin{align*}
& & &
\begin{aligned}
    \bU(t,a) &= u(t,\bX(t,a)) \\
    \bB(t,a) &= b(t,\bX(t,a))
\end{aligned}
&   (a \in \Sigma, \ t \in [0,T]),
\end{align*}
which, in combination with $\bX(t,a)$, give a solution to the system \eqref{AugmentedInteriorSystemL}. The claimed bounds \eqref{InteriorSysLBound} (as well as the version with $k$ replaced by $k-1$) and \eqref{gradInvBd} then easily follow from the estimates deduced above for $u$, $b$, and $\bX$.
\end{proof}

We also will make use of the following proposition, which establishes Lipschitz bounds as well as continuity in time for the maps of Definition~\ref{UBXdefn}.

\begin{proposition}\label{LipBdMainDivCurlSysProp}
For the maps $\bU$, $\bB$, and $\bX$ of Definition~\ref{UBXdefn}, we have
\begin{align}
\bU , \  \bB, \  \bX  :  \sB \to C^0( [0,T];H^{k+3/2}(\Sigma) ) .
\end{align}
Moreover, for $\xi$ and $\uxi$ in $\sB$,
\begin{align}\label{bUbBbXlipBdStatement}
\sup_{t}\lV \bU (\xi) - \bU (\uxi) \rV_{H^{7/2}(\Sigma)} + \sup_{t}\lV  {\bB}(\xi) -  {\bB}(\uxi) \rV_{H^{7/2}(\Sigma)} + \sup_{t}\lV  {\bX}(\xi) -  {\bX}(\uxi) \rV_{H^{7/2}(\Sigma)} & \\
 \leq C \sup_{t} \lV \xi - \uxi \rV_{\sH^2}.&
\end{align}
\end{proposition}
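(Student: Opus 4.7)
The plan is to prove the two assertions separately, leaning heavily on the solution structure already built in Proposition~\ref{SolnMainDivCurlProp}. For continuity in time, I observe that for fixed $\xi \in \sB$ the data appearing in the augmented Lagrangian div-curl system \eqref{AugmentedInteriorSystemL} — namely $U(t)$, $\bom(t)$, $\bj(t)$, $X(t)$, and (through them) $\bm{N}(t)$, $\sigma^\flat(t)$, and $w_\Omega(t)$ — all vary continuously in time in the appropriate Sobolev spaces. Rerunning the contraction/iteration scheme that established existence in Proposition~\ref{SolnMainDivCurlProp}, but now comparing the solution at times $t$ and $t'$ as $t'\to t$, will yield $\bU(\xi),\bB(\xi),\bX(\xi)\in C^0([0,T];H^{k+3/2}(\Sigma))$.

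For the Lipschitz bound, I would subtract the systems \eqref{AugmentedInteriorSystemL} for $\xi$ and $\uxi$. Writing $\tilde\bU = \bU(\xi)-\bU(\uxi)$ and analogously for the remaining unknowns, the system becomes a linear div-curl problem for $(\tilde\bU,\tilde\bB)$ on the reference domain $\Sigma$, driven by source terms that pair differences of coefficients with the higher-regularity reference solutions. For example, the divergence equation yields
\begin{align}
\grad\cdot(\cof(\bM(\xi))\,\tilde\bU) = -\grad\cdot\bigl((\cof(\bM(\xi))-\cof(\bM(\uxi)))\,\bU(\uxi)\bigr),
\end{align}
and completely analogous identities hold for the curl equations, the boundary conditions involving $\bm{N}$, $\underline u$, $w_\Omega$, and the two circulation constraints $\upalpha$, $\upbeta$. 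The corresponding difference of trajectory maps satisfies
\begin{align}
\tilde\bX(t) = \int_0^t \tilde\bU(\tau) + \bigl[w_\Omega(\xi)(\tau,\bX(\xi)(\tau)) - w_\Omega(\uxi)(\tau,\bX(\uxi)(\tau))\bigr]\,d\tau.
\end{align}

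The main estimation step is then an elliptic bound for this linear div-curl system at the reduced regularity $H^{7/2}(\Sigma)$, which is a full derivative below the natural energy level $H^{k+3/2}$ provided by Proposition~\ref{SolnMainDivCurlProp}. This deliberate loss of one derivative is what allows the coefficient differences $\cof(\bM(\xi))-\cof(\bM(\uxi))$ and the boundary-normal differences $\bm{N}(\xi)-\bm{N}(\uxi)$ to be placed in Sobolev spaces of sufficiently high regularity to multiply the low-regularity difference fields without loss of control. The target estimate is
\begin{align}
\lV\tilde\bU\rV_{H^{7/2}(\Sigma)}+\lV\tilde\bB\rV_{H^{7/2}(\Sigma)} \leq C\bigl(\lV\tilde\bX\rV_{H^{7/2}(\Sigma)}+\sup_t\lV\xi-\uxi\rV_{\sH^2}\bigr),
\end{align}
where the high-regularity bounds from Proposition~\ref{SolnMainDivCurlProp} absorb all coefficient-dependent constants into $C$. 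Combining this elliptic estimate with the integral inequality for $\tilde\bX$ and applying Gr\"onwall in time then closes the Lipschitz bound.

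The main obstacle I anticipate is the comparison of the auxiliary fields $w_\Omega(\xi)$ and $w_\Omega(\uxi)$, which are defined by the convolution \eqref{ulwformula} against the kernel $K_R$ over the \emph{differing} domains $\Omega(\xi)(t)$ and $\Omega(\uxi)(t)$. To handle this I would pull back to the common reference domain $\Sigma$ via $\bX(\xi)$ and $\bX(\uxi)$ and estimate separately the differences of kernels, of Jacobian factors induced by the differing domains of integration, and of the fluxes $\Phi(\xi)-\Phi(\uxi)$, each of which is controlled by $\tilde\bX$ and by the surface components of $\xi-\uxi$. An analogous but much milder issue appears with the factor $\sigma^\flat$, whose denominator $\det\grad\bX$ must be kept uniformly bounded away from zero via \eqref{gradInvBd}. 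Once these nonlocal objects are shown to be Lipschitz in $\xi$ at the $H^{7/2}$ level, the remaining arguments reduce to standard linear div-curl theory on a smoothly parametrized domain followed by Gr\"onwall.
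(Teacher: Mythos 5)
Your proposal follows essentially the same route as the paper's (terse) proof: subtract the augmented div-curl systems, apply the elliptic estimate of Lemma~\ref{GeneralDivCurlSysLem} to the difference fields at the reduced regularity level, and close via the time-integrated equation for $\bX(\xi)-\bX(\uxi)$. You spell out several details the paper suppresses — in particular the comparison of $w_\Omega(\xi)$ and $w_\Omega(\uxi)$, the Gr\"onwall closure, and the role of the one-derivative drop in making the coefficient differences estimable — and these are precisely the right points to be careful about.
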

\begin{proof}
The Lipschitz bound \eqref{bUbBbXlipBdStatement} is readily shown by considering the evolution of $\bX(\xi)-\bX(\uxi)$, writing out the Lagrangian div-curl systems satisfied by $\bU(\xi)-\bU(\uxi)$ and $\bB(\xi)-\bB(\uxi)$, and applying the div-curl estimate of Lemma~\ref{GeneralDivCurlSysLem}. Following the proof of Proposition~\ref{SolnMainDivCurlProp}, it is not hard to show the maps $\bU(\xi)(t)$, $\bB(\xi)(t)$, and $\bX(\xi)(t)$ continuously take values in $H^{k+3/2}(\Sigma)$ on the interval $[0,T]$.
\end{proof}
Now that we have properly introduced the maps $\xi\mapsto\bU(\xi)$ and $\xi\mapsto\bB(\xi)$, we may define the map giving the $\dcF_2$ component of the source term $\dcF$ appearing in the $\xi$ system \eqref{XiSystem}.
\begin{definition}\label{dcF2Defn}
For $\xi$ in $\sB$, we define $\dcF_2(\xi):\Sigma\to\R$ by
\begin{align}
\dcF_2(\xi) = 2 \bm{\sigma}^{-1}\col{0}{\del_\theta(  \bU(\xi)) \cdot \del_\psi ( \bB(\xi)) - \del_\psi( \bU(\xi))\cdot \del_\theta( \bB(\xi))} .
\end{align}
\end{definition}
Propositions~\ref{SolnMainDivCurlProp} and~\ref{LipBdMainDivCurlSysProp} now directly imply the following.
\begin{proposition}\label{dcF2UnifBd}
We have
\begin{align}
\dcF_2 : \sB \to C^0([0,T]; H^{k+1/2}(\Sigma)) ,
\end{align}
and for $\xi$ in $\sB$,
\begin{align}
\sup_{t}\lV \dcF_2(\xi)\rV_{H^{k+1/2}(\Sigma)} \leq C(M).
\end{align}
\end{proposition}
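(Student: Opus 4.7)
The plan is to derive both the uniform bound and the time-continuity directly from the structure of $\dcF_2$ as a pointwise bilinear expression in the first derivatives of $\bU(\xi)$ and $\bB(\xi)$, invoking the interior div-curl estimates already established.

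First, I would apply Proposition~\ref{SolnMainDivCurlProp} to get $\sup_t \lV \bU(\xi)\rV_{H^{k+3/2}(\Sigma)} + \sup_t \lV \bB(\xi)\rV_{H^{k+3/2}(\Sigma)} \leq C(M)$. Taking one derivative in either the $\theta$ or $\psi$ direction costs one order of regularity, so
\begin{align}
\sup_t \bigl(\lV \del_\theta \bU(\xi) \rV_{H^{k+1/2}(\Sigma)} + \lV \del_\psi \bU(\xi) \rV_{H^{k+1/2}(\Sigma)} + \lV \del_\theta \bB(\xi) \rV_{H^{k+1/2}(\Sigma)} + \lV \del_\psi \bB(\xi) \rV_{H^{k+1/2}(\Sigma)}\bigr) \leq C(M).
\end{align}

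Next, since $\dim(\Sigma) = 2$ and $k \geq 4$, the threshold $s = k + 1/2 \geq 9/2 > 1 = \dim(\Sigma)/2$ is comfortably satisfied, so $H^{k+1/2}(\Sigma)$ is a Banach algebra (applied componentwise to dot products). Consequently each scalar product $\del_\theta \bU(\xi)\cdot \del_\psi \bB(\xi)$ and $\del_\psi \bU(\xi)\cdot \del_\theta \bB(\xi)$ lies in $H^{k+1/2}(\Sigma)$ with norm at most $C(M)^2$. Multiplication by $\bm{\sigma}^{-1}(\psi) = |\bB_0(0,\psi)|^{-2}$ is harmless: the function depends only on $\psi$, is smooth, and is bounded away from zero since it is determined by the fixed analytic initial data $\bX_0$, so multiplication by $\bm{\sigma}^{-1}$ preserves $H^{k+1/2}(\Sigma)$ with a universal constant. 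Collecting these estimates yields the advertised uniform bound $\sup_t \lV \dcF_2(\xi)\rV_{H^{k+1/2}(\Sigma)} \leq C(M)$.

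For the mapping property $\dcF_2:\sB \to C^0([0,T]; H^{k+1/2}(\Sigma))$, I would rely on the continuity $t \mapsto \bU(\xi)(t), \, \bB(\xi)(t)$ in $H^{k+3/2}(\Sigma)$ recorded in Proposition~\ref{LipBdMainDivCurlSysProp}. Writing
\begin{align}
\del_\theta \bU(t)\cdot \del_\psi \bB(t) - \del_\theta \bU(s)\cdot \del_\psi \bB(s) = \del_\theta(\bU(t){-}\bU(s))\cdot \del_\psi \bB(t) + \del_\theta \bU(s)\cdot \del_\psi(\bB(t){-}\bB(s)),
\end{align}
and similarly for the second product, the algebra property gives bounds of the form $C(M)\lV \bU(t){-}\bU(s)\rV_{H^{k+3/2}(\Sigma)} + C(M)\lV \bB(t){-}\bB(s)\rV_{H^{k+3/2}(\Sigma)}$, which tend to zero as $s\to t$. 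This delivers continuity of $t\mapsto \dcF_2(\xi)(t)$ in $H^{k+1/2}(\Sigma)$.

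There is no genuine obstacle in this proof: the proposition is essentially a packaging of the interior estimates of Proposition~\ref{SolnMainDivCurlProp}. The only place where one must be slightly careful is verifying the algebra threshold $k+1/2 > \dim(\Sigma)/2 = 1$, which holds robustly for our fixed $k\geq 4$; note that this is the reason the bound is stated at the half-integer level $H^{k+1/2}$, matching the regularity at which div-curl theory gives $\bU,\bB$ in $H^{k+3/2}(\Sigma)$.
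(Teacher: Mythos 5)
Your proof is correct and follows exactly the route the paper takes: the authors state that Propositions~\ref{SolnMainDivCurlProp} and~\ref{LipBdMainDivCurlSysProp} directly imply this, and your argument is simply that one-line deduction written out in full (uniform $H^{k+3/2}$ control of $\bU,\bB$ plus the Banach algebra property of $H^{k+1/2}(\Sigma)$ for the bilinear expression, and the $C^0([0,T];H^{k+3/2}(\Sigma))$ regularity from Proposition~\ref{LipBdMainDivCurlSysProp} for time continuity). Your observation that $\bm\sigma^{-1}$ is a fixed smooth, nonvanishing function of $\psi$ determined by the conformal $\bX_0$ is also the correct reason it is harmless.
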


\subsubsection*{The interior pressure gradient $\bP^-_\grad$}

Recall the original Lagrangian pressure gradient $\bP_\grad=(\grad p) \circ \bX$, where the system for $p$ is \eqref{PressureSystem}, and the modified version $\bP^-_\grad$ which we simply refer to as the \emph{interior pressure gradient}, represented by
\begin{align}\label{intPressureGradrep}
&& \bP^-_\grad &= \bP_\grad  - \half \left(\grad \bigh_- |h|^2\right) \circ \bX & (a\in\Sigma).
\end{align}
The interior pressure gradient system is as follows.
\begin{align}
\label{InteriorPressureGradSystem}
\begin{aligned}
(a \in \Sigma) \qquad &
\left\{
    \begin{aligned}
        \grad \cdot (\, \cof (\bM) \bP^-_\grad ) &= \grad \cdot (\, \cof (\grad \bU^t) \bU - \cof(\grad \bB^t) \bB )  \\
        \grad^\perp\cdot (\, \bM \bP^-_\grad )  &= 0 \, \Lcm
    \end{aligned}
\right. \\
(\psi = 0)  \qquad & \ \big\{ \, \bX_\theta \cdot \bP^-_\grad = 0 , \\
(\psi = -1) \qquad & \ \Big\{ \, \bm{N} \cdot \bP^-_\grad = \half \left(\del_{x_2}  \left[ \bigh_-  |h|^2  \right]\right)(\bX).
\end{aligned}
\end{align}
As before, $\bM_{ij} = \del_{a_i} \bX_j$, and $\bX$, $\bU$, $\bB$, and $h$ are those associated to a given $\xi$. This corresponds to a mixed Dirichlet-Neumann boundary problem for a function $p_-:\Omega\to\R$ for which $\bP^-_\grad = (\grad p_-)\circ \bX$. This problem is then found to be uniquely solvable by Lax-Milgram, due to the coercivity property below, which is holds for $\xi$ in $\sB$ by Proposition~\ref{SolnMainDivCurlProp}.
\begin{align}
&& (  \cof(\bM)\bM^{-1} v)\cdot v &\geq c |v|^2 & (v\in\R^2).
\end{align}
Let us define the map associating to $\xi$ the corresponding interior pressure gradient $\bP^-_\grad(\xi)$.

\begin{definition}\label{PminusDefns}

Consider $\xi$ in $\sB$, with corresponding interface $\Gamma(t)$. In the system \eqref{InteriorPressureGradSystem}, we take $\bU$, $\bB$, and $\bX$ to be $\bU(\xi)$, $\bB(\xi)$, and $\bX(\xi)$, respectively, of Definition~\ref{UBXdefn}. Additionally, at each fixed time $t$ in $[0,T]$ we take $h$ in \eqref{InteriorPressureGradSystem} to be the $h(\Gamma(t))$ given by Definition~\ref{formalhdefn}. 

We then define $\bP^-_\grad(\xi)(t,a)$ to be the corresponding solution to \eqref{InteriorPressureGradSystem} guaranteed by Lax-Milgram. We define the corresponding map for the trace at $\psi=0$ by $P^-_\grad(\xi)(t,\theta)=\bP^-_\grad(\xi)(t,\theta,0)$.
\end{definition}
\begin{proposition}\label{PminusBdProp}
Regarding $\bP^-_\grad$ and $P^-_\grad$ as maps acting on $\xi$, we have
\begin{align}\label{ctsvaluesPminus}
 {\bP}^-_\grad : \sB \to C^0([0,T];H^{k+3/2}(\Sigma)),\qquad P^-_\grad : \sB \to C^0([0,T];H^{k+1}(\Sigma))  .
\end{align}
Moreover, given $\xi$ in $\sB$,
\begin{align}\label{PminusBds2}
\sup_{t}\lV  {\bP}^-_\grad (\xi)\rV_{H^{k+3/2}(\Sigma)} \leq C (M),\qquad
\sup_{t}\lV  P^-_\grad (\xi)\rV_{H^{k+1}(S^1)} &\leq C(M).
\end{align}
Alternate versions of the bounds \eqref{PminusBds2} also hold in the case that one replaces $k$ with $k-1$ and $M$ with $\sup_{t}\lV \xi \rV_{\sH^{k-1}}$. In addition, for $\uxi$ in $\sB$, we have
\begin{align}\label{PminusLipBd}
\sup_{t}\lV \bP^-_\grad(\xi) - \bP^-_\grad(\uxi) \rV_{H^{5/2}(\Sigma)} +\sup_{t}\lV   P^-_\grad(\xi) -   P^-_\grad(\uxi) \rV_{H^2(S^1)} 
&\leq C \sup_{t} \lV \xi - \uxi \rV_{\sH^1} ,\\
\end{align}
\end{proposition}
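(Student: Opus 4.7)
The plan is to treat \eqref{InteriorPressureGradSystem} as the Lagrangian pull-back of a scalar mixed Dirichlet--Neumann elliptic problem for a potential $p_-:\Omega(t)\to\R$ with $\bP^-_\grad=(\grad p_-)\circ\bX$. In Eulerian form $p_-=p-\tfrac12\bigh_-|h|^2$ satisfies $\Delta p_-=\sum_{i,j}\del_i u_j\,\del_j u_i-\del_i b_j\,\del_j b_i$ in $\Omega(t)$, with zero Dirichlet data on $\Gamma(t)$ (since the pressure boundary condition $p=\tfrac12|h|^2$ is cancelled by the harmonic extension at $\Gamma$) and Neumann datum $-\tfrac12\del_{x_2}[\bigh_-|h|^2]$ at $\{x_2=0\}$ (since $\del_{x_2}p=0$ there). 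Existence and $H^1$-coercivity via Lax--Milgram has already been invoked in the text; the remaining work is an elliptic bootstrap to the stated Sobolev levels, performed uniformly in the pinch width.

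First I would bound the interior source. From Proposition~\ref{SolnMainDivCurlProp}, $\bU(\xi),\bB(\xi),\bX(\xi)\in H^{k+3/2}(\Sigma)$ with $C(M)$ bounds; since $H^{k+1/2}$ is a Banach algebra for $k\geq 4$, the bilinear expression $\sum\del u\,\del u-\del b\,\del b$ lies in $H^{k+1/2}(\Omega(t))$ with norm $C(M)$. For the Neumann datum I use the geometric observation that the bottom wall $\{x_2=0\}$ stays at a fixed positive distance from $\Gamma(t)$ (Proposition~\ref{dcXlemma} controls $\lV X-X_0\rV_{C^1}$), so $\bigh_-|h|^2$ is harmonic across a fixed gap and standard interior estimates for harmonic functions control every Sobolev norm of its $\del_{x_2}$-trace at $\{x_2=0\}$ by $\lV H(\xi)\rV_{L^\infty}^2$, which is $C(M)$ via Proposition~\ref{hEstimateProp}. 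Standard mixed Dirichlet--Neumann elliptic regularity in the $H^{k+3/2}$ curvilinear domain $\Omega(t)$---with no corners, since $\Gamma(t)$ and $\{x_2=0\}$ are disjoint closed curves---then gives $\grad p_-\in H^{k+3/2}(\Omega(t))$, which pulls back to $\bP^-_\grad(\xi)\in H^{k+3/2}(\Sigma)$ with the asserted bound, and the trace theorem yields $P^-_\grad(\xi)\in H^{k+1}(S^1)$. The $k-1$ variant is identical with all indices shifted down by one.

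For the Lipschitz bound \eqref{PminusLipBd} I would form the difference system satisfied by $\bP^-_\grad(\xi)-\bP^-_\grad(\uxi)$ directly on $\Sigma$: it is elliptic of the same structure, driven by differences of $\bU,\bB,\bX$ (in the interior source and in the coefficient $\cof(\bM)$) and of $H$ (in the Neumann datum). The Lipschitz control from Proposition~\ref{LipBdMainDivCurlSysProp}, together with the analogous bound one regularity level lower (obtained by rerunning its proof and interpolating with the uniform $H^{k+3/2}$ bounds), supplies all these source pieces in $H^{3/2}(\Sigma)$ with constant $C\sup_t\lV\xi-\uxi\rV_{\sH^1}$; the Neumann-data difference is handled by Lipschitz dependence of the $h$-map on $\Gamma$, again insulated from the pinch by the fixed distance to the bottom. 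Elliptic regularity at this reduced level then closes \eqref{PminusLipBd}. Continuity in time \eqref{ctsvaluesPminus} follows because $\bU(\xi),\bB(\xi),\bX(\xi),H(\xi)$ depend continuously on $t$ as functions of $\xi\in C^0([0,T];\sH^k)$ and the elliptic solution map is continuous in its data.

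The main obstacle is the pinch-sensitive appearance of the external field in the Neumann datum: $h$ and its associated operators generically degenerate as the pinch $\delta\to 0$, and a naive estimate for $\bigh_-|h|^2$ could pick up pinch-dependent constants. The saving point is purely geometric---the Neumann boundary $\{x_2=0\}$ lies a fixed positive distance below the interface, so $\bigh_-|h|^2$ is a harmonic function evaluated across a fixed gap, and its Neumann trace there depends only on the interior behavior of a harmonic function, which is controlled by $L^\infty$ norms of $|h|^2$ on $\Gamma$. The uniform-in-$\delta$ bound for $H$ supplied by Proposition~\ref{hEstimateProp} then closes the estimate without introducing any constants that blow up as the pinch closes.
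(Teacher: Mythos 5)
Your proposal is correct and matches the paper's approach: both formulate \eqref{InteriorPressureGradSystem} as a mixed Dirichlet--Neumann elliptic problem, control the interior source via Proposition~\ref{SolnMainDivCurlProp}/\ref{LipBdMainDivCurlSysProp}, exploit the fixed positive distance between $\{x_2=0\}$ and $\Gamma(t)$ plus the $L^\infty$ bound on $h|_{\Gamma}$ from Proposition~\ref{hEstimateProp} to get pinch-uniform Neumann data, bootstrap with elliptic regularity, and treat the Lipschitz bound via a difference system. You elaborate the paper's sketch---making the Eulerian reformulation $p_-=p-\tfrac12\bigh_-|h|^2$ explicit and spelling out the interior-estimate/maximum-principle mechanism for the Neumann trace---but the key insight and the lemmas invoked are the same.
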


\begin{proof}
In the interest of brevity we merely sketch the arguments here. Standard techniques using Lax-Milgram show that the solution $\bP^-_\grad$ to \eqref{InteriorPressureGradSystem} depends continuously on $\bM$, $X$, and the right-hand sides in \eqref{InteriorPressureGradSystem}, including the boundary data.

First observe that for $\xi$ in $\sB$, in particular $\bM(t)$ and the right-hand side of the divergence equation in \eqref{InteriorPressureGradSystem} continuously take values in $H^{k+1/2}(\Sigma)$ on $[0,T]$, in view of Proposition~\ref{LipBdMainDivCurlSysProp} and the Piola identity. Moreover, one finds a uniform bound on the $H^{k+1}(S^1)$ norm of $(\del_{x_2}\bigh_- |h|^2)|_{x_2=0}$ can be verified from elementary estimates for harmonic extensions coupled with a uniform bound on $\lV h \rV_{L^\infty(\Gamma(t))}$. Such a bound on $h|_{\Gamma(t)}$ is implied by Proposition~\ref{hEstimateProp}. With a bit more work, one verifies $t\mapsto(\del_{x_2}\bigh_- |h|^2)\circ\bX(t,\theta,-1)$ continuously takes values in $H^{k+1}(S^1)$. Using these ideas, one is able to verify \eqref{ctsvaluesPminus} in addition to both claimed versions of the bounds \eqref{PminusBds2}. Regarding \eqref{PminusLipBd}, one may derive a div-curl problem for the difference $\bP^-_\grad(\xi) - \bP^-_\grad(\uxi)$ and apply similar techniques to show the Lipschitz bound holds.
\end{proof}


\subsubsection{Vacuum-centric maps and operator-valued maps}\label{vacandopmapssection}

Now we properly define key objects appearing in the Lagrangian wave system as maps acting on $\xi$ which require more subtle analysis, due to either (i) being significantly affected by the degenerating pinch in the vacuum or (ii) taking values in operator spaces.

 For some of these definitions, it matters whether we are considering interfaces with positive pinch or interfaces with pinch zero. To prepare for this, we define classes of $\xi$ with interfaces whose pinches lie in certain ranges. 
 \subsubsection*{Classes for the state vector $\xi$ with specified pinch ranges}
 The classes below are defined specifically for $\xi$ at a frozen instant in time, subsets of the class $\sH^k_\gp$, defined in \eqref{sHsgp}. They are useful in the context of vacuum-centric maps, which are sensitive to the pinch, but dependent only on the shape of the vacuum at a single instant.
\begin{definition}\label{sBboxdefns}
Let us define
\begin{align}
    \sB^\bbox(\delta) &= \{\xi \in \sH^k_\gp :\lV \xi \rV_{\sH^k} \leq M, \ \lV X - X_0 \rV_{H^{k+1}(S^1)} \leq C\delta , \ c\delta \leq \delta_\Gamma \leq C\delta \}, \\
    \sB^\bbox &= \bigcup_{\delta \in [0,\delta_0]}\sB^\bbox(\delta) ,\\
    \sB^\wbox &= \bigcup_{\delta \in (0,\delta_0]}\sB^\bbox(\delta) .
\end{align}
\end{definition}
\begin{proposition}\label{xiInPinchRanget}
For $\xi$ in $\sB$, we have
\begin{align}
&& \xi(t)&\in \sB^\bbox(t) &(t\in[0,T]).
\end{align}
\end{proposition}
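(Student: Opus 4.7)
The plan is to observe that Proposition~\ref{xiInPinchRanget} is an essentially immediate corollary of Proposition~\ref{dcXlemma} combined with the defining conditions of $\sB$, so the main work is simply to check each defining property of $\sB^\bbox(t)$ one at a time. Recall that, for $\delta \in [0,\delta_0]$, membership of a configuration in $\sB^\bbox(\delta)$ amounts to three conditions: (i) $\xi \in \sH^k_\gp$ with $\lV \xi \rV_{\sH^k} \leq M$, (ii) $\lV X - X_0 \rV_{H^{k+1}(S^1)} \leq C\delta$, and (iii) $c\delta \leq \delta_\Gamma \leq C\delta$. I will verify (i)--(iii) with $\delta = t$ for each $t \in [0,T]$.

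First I would handle the norm bound in (i). For $\xi \in \sB$, the definition \eqref{sBdefn} directly gives $\sup_t \lV \xi(t) \rV_{\sH^k} \leq M$, so in particular $\lV \xi(t) \rV_{\sH^k} \leq M$ at every time. To show $\xi(t) \in \sH^k_\gp$, by definition \eqref{sHsgp} it suffices to check that $X(t) \in H^{k+2}_\gp$. This is exactly the conclusion of Proposition~\ref{dcXlemma}, which asserts that the corresponding interface parametrization lies in $H^{k+2}_\gp$ throughout $[0,T]$.

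For (ii), I would simply invoke the bound \eqref{Xshapetbound} from Proposition~\ref{dcXlemma}, which gives $\lV X(t) - X_0 \rV_{H^{k+1}(S^1)} \leq C t$, so taking $\delta = t$ yields the required inequality with the same constant $C$. For (iii), the two-sided pinch bound \eqref{pinchtbound}, namely $c t \leq \delta_\Gamma(t) \leq C t$, is also explicitly asserted in Proposition~\ref{dcXlemma}, and again matches the condition defining $\sB^\bbox(t)$ with $\delta = t$.

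There is really no main obstacle here; everything reduces to citing the preceding lemma. The only mild subtlety is a matter of constant-matching: one must make sure that the constants $c, C$ (implicitly universal) used in the definition of $\sB^\bbox(\delta)$ are chosen consistently with those produced by Proposition~\ref{dcXlemma}. Since both sets of constants arise from the same fixed background data (the initial splash configuration and the bound $M$ controlling $\lV U \rV_{C^1_{t,\theta}}$), one simply picks them large (resp.\ small) enough so that the bounds from Proposition~\ref{dcXlemma} fit inside the window $[c\delta, C\delta]$ for $\delta = t$, and the result follows.
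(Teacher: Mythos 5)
Your proof is correct and takes essentially the same route as the paper, which simply invokes Proposition~\ref{dcXlemma} (giving \eqref{Xshapetbound} and \eqref{pinchtbound}) and notes that the conclusion holds for $T$ sufficiently small. The ``sufficiently small $T$'' caveat in the paper covers exactly the constant-matching issue you raise at the end, and also ensures $t\le\delta_0$ so that $\sB^\bbox(t)$ is well-defined.
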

\begin{proof}
This can be ensured for sufficiently small $T$ by using Proposition~\ref{dcXlemma}.
\end{proof}
\subsubsection*{The exterior magnetic field and exterior pressure}
We now return to the exterior magnetic field as a map acting on $\xi$. After defining the external magnetic field trace map $\xi\mapsto H(\xi)$ below, we give an important basic bound which asserts $H(\xi)$ remains highly regular for vacuums with arbitrarily small pinches, including the case of pinch zero.
\begin{definition}\label{prelimHmapDefn}
For $\xi$ in $\sB^\bbox$, with corresponding interface $\Gamma$ parametrized by $X$, we define
\begin{align*}
&& H (\xi)(\theta) &=  h(\Gamma)(X (\theta)) &(\theta\in S^1),
\end{align*}
for $h(\Gamma)$ given by Definition~\ref{formalhdefn}.
\end{definition}
Later, in Section~\ref{vacuuminterfaceestimatessection}, we discuss the details of vacuum estimates for a nearly self-intersecting or self-intersecting interface, and show how the induced $h$ can be controlled in weighted Sobolev norms dominating the standard ones with Proposition~\ref{hEstimateProp}. We get the result below as an immediate consequence.
\begin{proposition}
Regarding $H$ as a map acting on $\xi$, we have
\begin{align}
 H  &: \sB^\bbox \to H^{k+1}(S^1) .
\end{align}
\end{proposition}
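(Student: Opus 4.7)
The plan is to deduce this as a direct consequence of Proposition~\ref{hEstimateProp}, together with standard composition estimates for Sobolev functions. The core difficulty is that for $\xi \in \sB^\bbox$ the interface $\Gamma$ may exhibit arbitrarily small pinch $\delta_\Gamma \in [0,\delta_0]$, so the relevant bound on $h(\Gamma)$ must be uniform in the pinch parameter. Proposition~\ref{hEstimateProp} is designed to supply precisely this type of control, expressed in a weighted Sobolev norm that accounts for the rapid vanishing of $h$ near the pinch point.

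First I would record the structural properties of the parametrization. For $\xi \in \sB^\bbox$, the defining conditions force $X \in H^{k+2}_\gp$ with $\lV X \rV_{H^{k+2}(S^1)} \leq C(M)$, together with $\lV X - X_0 \rV_{H^{k+1}(S^1)} \leq C\delta_0$ and $\cG(X) \geq \half C^0_\gp$. In particular, choosing $r_0$ small enough in the definition of $H^{k+2}_\gp$, these inherit from \eqref{Xzerobds} a uniform lower bound $|\del_\theta X| \geq c_1 > 0$, so $X : S^1 \to \Gamma$ is a bi-Lipschitz parametrization with uniformly controlled Jacobian.

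Second, I would invoke Proposition~\ref{hEstimateProp} to obtain a weighted bound on $h(\Gamma)$ of the form required to dominate the standard Sobolev norm of its trace along $\Gamma$. The key observation is that the weighted norm of $h(\Gamma)$ restricted to $\Gamma$ is at least as strong as the unweighted $H^{k+1}(\Gamma)$ norm, since the weight used is bounded below away from the pinch and, at the pinch, the infinite-order vanishing of $h$ compensates for the weight blow-up. This yields $\lV h(\Gamma)|_\Gamma \rV_{H^{k+1}(\Gamma)} \leq C(M)$, uniformly across $\delta_\Gamma \in [0,\delta_0]$.

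Finally, since $H(\xi)(\theta) = h(\Gamma)(X(\theta))$ is the composition of a trace in $H^{k+1}(\Gamma)$ with a parametrization $X \in H^{k+2}(S^1)$ whose derivative is bounded below, a standard composition estimate for Sobolev functions on one-dimensional manifolds produces $H(\xi) \in H^{k+1}(S^1)$ with the appropriate norm bound. The genuinely nontrivial step in this plan is the underlying weighted elliptic theory required to prove Proposition~\ref{hEstimateProp} itself, which must quantify the pinch-vanishing of $h$ in a way that survives the boundary trace; once that proposition is in hand, the present statement reduces to the composition and trace bounds described above.
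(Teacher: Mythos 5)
Your proposal is essentially correct and takes the same core route as the paper: cite Proposition~\ref{hEstimateProp} and observe that its weighted bound dominates the unweighted $H^{k+1}(S^1)$ norm. Two small remarks, one about efficiency and one about a slight logical slip.

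First, the detour is unnecessary. Proposition~\ref{hEstimateProp}~(i) already bounds $H(\xi)$ \emph{directly} in the weighted norm on $S^1$, namely $\lV H(\xi)\rV_{H^{k+1}(S^1,\delta,m)}\leq C(M)$, so there is no need to first bound $h(\Gamma)|_\Gamma$ in $H^{k+1}(\Gamma)$ and then perform a separate composition estimate through $X$. Your composition step is harmless, and the equivalence it uses is exactly Proposition~\ref{standardWeightedSobNormEstims}~(iii), but the paper regards the result as an ``immediate consequence'' precisely because the second estimate in \eqref{mainhestimates} is already phrased on $S^1$.

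Second, the justification you give for the norm domination conflates two distinct points. The inequality $\lV F\rV_{H^{k+1}(S^1)} \leq C\,\lV F\rV_{H^{k+1}(S^1,\delta,m)}$ for $m\geq 0$ holds purely because the weight $M_\delta(\theta) = (\delta + \cos^2\theta)^{-1}$ satisfies $M_\delta \geq (1+\delta_0)^{-1}$, so $M_\delta^m$ is uniformly bounded below by a positive constant; this has nothing to do with $h$ vanishing. The infinite-order vanishing of $h$ at the pinch is what makes the \emph{weighted} norm in \eqref{mainhestimates} finite and uniformly bounded in the first place — that is the content of Proposition~\ref{hEstimateProp} and is where the analysis lives, not in the trivial weight-comparison step. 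With that distinction made, your argument is complete.
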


Now we consider the vacuum counterpart to the interior pressure gradient $\bP^-_\grad$ introduced with the system \eqref{InteriorPressureGradSystem}, the \emph{exterior pressure gradient}, which we denote by $P^+_\grad$. Defined in terms of both $h$ and the harmonic extension operator $\bigh_+$, it is also sensitive to the pinch, and carefully estimated in Proposition~\ref{PplusbdProp}, which implies the proposition following the definition below.
\begin{definition}\label{ExtPressureDefnBd}
    Consider $\xi$ in $\sB^\bbox$ with associated $X$ and vacuum region $\cV$. For $h=h(\Gamma)$ and \(p_+ : \cV \to \R\) defined by
    \begin{align}
        && p_+(x) &= \half(1-\bigh_+) |h|^2(x) & (x\in\overline{\cV}),
    \end{align}
    we define $ P^+_\grad{}(\xi)$ by
    \begin{align}
         && P^+_\grad{}(\xi)(\theta) &= (\grad p_+) (X(\theta)) & (\theta\in S^1).
    \end{align}
\end{definition}
\begin{proposition}\label{ExtPressureDefnBd2}
    Regarding $P^+_\grad$ as a map acting on $\xi$, we have
    \begin{align}
     P^+_\grad : \sB^\bbox \to H^{k+1}(S^1) .
    \end{align}
\end{proposition}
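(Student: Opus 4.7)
The plan is to reduce the claim to a local elliptic regularity statement for the scalar potential $p_+$ inside $\cV$. The key observation is that $p_+$ vanishes on $\Gamma$ by definition of the harmonic extension $\bigh_+$, and that it satisfies a simple bulk equation there. Specifically, since $h=h(\Gamma)$ is a harmonic vector field in $\cV$ — each component being harmonic as a consequence of $\grad\cdot h=\grad^\perp\cdot h=0$ — one has $\Delta|h|^2=2|\grad h|^2$, while $\Delta\bigh_+|h|^2=0$. Therefore $p_+$ solves
\[
\Delta p_+=|\grad h|^2\quad\text{in }\cV,\qquad p_+\big|_\Gamma=0.
\]

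Next, I would leverage the regularity of $h$ already packaged by Proposition~\ref{hEstimateProp} — the same weighted estimate that yields the preceding statement $H:\sB^\bbox\to H^{k+1}(S^1)$. That proposition places $h$ in $H^{k+3/2}$ on a neighborhood of $\Gamma$ inside $\cV$, so by the Sobolev algebra property $|\grad h|^2\in H^{k+1/2}$ on the same neighborhood. Since the interface is parametrized by $X\in H^{k+2}_\gp$, classical elliptic regularity applied to the Poisson problem above (with zero Dirichlet data on $\Gamma$, whose parametrization has two full derivatives beyond the right-hand side) upgrades $p_+$ to $H^{k+5/2}$ on a slightly smaller neighborhood of $\Gamma$ inside $\cV$. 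Taking the trace then gives $\grad p_+|_\Gamma\in H^{k+1}(\Gamma)$, and composing with $X\in H^{k+2}(S^1)$ preserves this regularity via the standard Moser composition estimate (since $X$ is strictly smoother than $H^{k+1}$ and parametrizes $\Gamma$ as a bi-Lipschitz diffeomorphism). The conclusion $P^+_\grad(\xi)=(\grad p_+)\circ X\in H^{k+1}(S^1)$ follows.

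The main technical obstacle is the near-pinch regime $\delta_\Gamma\to 0$ (which is allowed in $\sB^\bbox$), where the vacuum nearly or actually disconnects and the classical interior elliptic estimates on $\cV$ degenerate. This is precisely the regime that the weighted elliptic framework of Section~\ref{vacuuminterfaceestimatessection} is built to handle: applied to $p_+$ with forcing $|\grad h|^2$, a weighted analogue in the spirit of Proposition~\ref{1stWeightedEstimate} — combined with the weighted control on $h$ from Proposition~\ref{hEstimateProp} — provides the required elliptic upgrade in the pinch-sensitive neck. Since the present proposition asks only for the qualitative mapping property and not a $\delta$-uniform bound, it suffices to patch this pinch-safe estimate near the narrowing neck together with standard boundary and interior regularity away from it; a smooth partition of unity adapted to the pinch location delivers $p_+\in H^{k+5/2}_{\mathrm{loc}}(\cV)$ near $\Gamma$ for every $\xi\in\sB^\bbox$.
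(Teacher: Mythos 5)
Your approach is essentially the paper's (Proposition~\ref{PplusbdProp}): observe $\Delta p_+ = |\grad h|^2$ in $\cV$, localize to a neighborhood of $\Gamma$ with a cutoff, invoke the weighted elliptic estimate of Proposition~\ref{1stWeightedEstimate} plus the weighted control on $h$ from Proposition~\ref{hEstimateProp}, then trace and compose with $X$. The one place you skip over is exactly the one nontrivial point in the paper's argument: local or cutoff elliptic regularity near $\Gamma$ produces error terms of the form $\Delta\chi\cdot p_+ + 2\grad\chi\cdot\grad p_+$, so you need an a~priori bound on $p_+$ in a lower-order (weighted) norm \emph{away} from $\Gamma$ before the estimate closes; equivalently, if you apply Proposition~\ref{1stWeightedEstimate} directly to $p_+$ on all of $\cV$, the boundary norm on $\del\cV = \Gamma\cup\cW$ picks up the nonzero Dirichlet data $p_+|_\cW = \tfrac12(1-\bigh_+)|h|^2$.

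The paper handles this via a second application of Proposition~\ref{1stWeightedEstimate}, now applied to $p_+$ on $\cV$ with the actual mixed boundary data ($0$ on $\Gamma$, $\tfrac12(1-\bigh_+)|h|^2$ on $\cW$), and controls the wall contribution with Lemma~\ref{harmExtLinftyBd}, which bounds $\lV\bigh_+ f\rV_{H^s(\cW)}\le C\lV f\rV_{L^\infty(\Gamma)}$ via the maximum principle. Without this (or some equivalent, e.g., an $L^\infty$ bound on $p_+$ that one iterates up), the localized estimate near $\Gamma$ that you invoke does not close. Everything else in your sketch — $\Delta|h|^2 = 2|\grad h|^2$, the regularity bookkeeping $h\in H^{k+3/2}\Rightarrow |\grad h|^2\in H^{k+1/2}$, the trace and Moser composition at the end — lines up with the paper, so this is a missing intermediate step rather than a wrong route.
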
 
\subsubsection*{Operator-valued maps acting on $\xi$}
Here, we present some basic definitions and propositions revolving around how Dirichlet-to-Neumann maps $\cN_\pm$ and the operator $\bE$, for example, are described as operator-valued maps acting on $\xi$, relying on bounds taken from Sections~\ref{vacuuminterfaceestimatessection} and~\ref{auxiliaryestimates}.

\begin{definition}\label{DirichToNeumDefs}
Consider $\xi$ in $\sB^\bbox$ with associated $\Gamma$. First consider the case that the pinch $\delta_\Gamma$ is positive. We use $\Omega_+$ to denote the unbounded region above the associated $\Gamma$, and we use $\Omega_-$ to denote the unbounded region below $\Gamma$. For $1\leq s \leq k+2$, and a function $f$ in $H^s(\Gamma)$, consider the unique solutions $\phi_\pm$ (which remain bounded in their domains) to the problems
\begin{align}
& & \Delta \phi_\pm &=0 &   &(x\in \Omega_\pm), \\
& & \phi_\pm &= f & &(x\in \Gamma).
\end{align}
We then define the interior Dirichlet-to-Neumann map, $\bign_-(\xi)$, and the exterior Dirichlet-to-Neumann map, $\bign_+(\xi)$, by the following, where we take $n$ to be the normal to $\Gamma$, outward-pointing relative to $\Omega$.
\begin{align}
& & \bign_\pm(\xi) f &= \del_n \phi_\pm & (x\in\Gamma).
\end{align}
We correspondingly define the ``Lagrangian versions'' of the Dirichlet-to-Neumann maps, $\cN_\pm(\xi)$, by the following.
\begin{align}\label{Npmdefns}
& & \cN_\pm(\xi) F(\theta) &= \big(\bign_\pm(\xi)(F\circ X^{-1})\big) (X(\theta)) & (\theta \in S^1, \ F\in H^s(S^1)).
\end{align}
Now consider the case that $\delta_\Gamma=0$. We define $\cN_-(\xi)F$ by replacing $\Gamma$ with an approximating sequence of non-self-intersecting curves and taking a limit.
\end{definition}
\begin{remark}\label{blowupreminder}
Note that in the case that $\delta_\Gamma=0$, using a sequence of non-self-intersecting curves to approximate $\Gamma$ only leads to a bounded interior Dirichlet-to-Neumann map $\cN_-(\xi)$. We do not define $\cN_+(\xi)$ when the pinch is zero. This is related to the discussion of blow up for the exterior Dirichlet-to-Neumann map at the end of Section~\ref{squeezedmagfieldssection}.
\end{remark}
The proof of the following proposition is standard.
\begin{proposition}
For integer $s$ with $1\leq s \leq k+2$ we have bounded maps:
\begin{align}
\cN_-(\cdot):\sB^\bbox \to B(H^s(S^1),H^{s-1}(S^1)),\\
\cN_+(\cdot):\sB^\wbox \to B(H^s(S^1),H^{s-1}(S^1)).
\end{align}
\end{proposition}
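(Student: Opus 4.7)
The plan is to identify $H^s(\Gamma)$ with $H^s(S^1)$ through the parametrization $X\in H^{k+2}(S^1)$ (which embeds into $C^{k+1}$ since $k\geq 4$) and then split into two regimes. For $\xi \in \sB^\bbox$ with $\delta_\Gamma > 0$, and for all $\xi \in \sB^\wbox$ (which handles $\cN_+$), the interface $\Gamma$ is a non-self-intersecting closed $C^{k+1}$ chord--arc curve; this is the standard regime, handled by classical elliptic regularity. For the remaining case $\cN_-(\xi)$ with $\delta_\Gamma = 0$, I would follow Definition~\ref{DirichToNeumDefs} and pass to the limit in the approximation by non-self-intersecting interfaces.

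In the chord--arc regime, both $\Omega_+$ and $\Omega_-$ are Lipschitz open subsets of $S^1\times\R$ with $C^{k+1}$ boundary, the unbounded direction being handled by solving for $\phi_\pm$ in the class of bounded harmonic functions as in Definition~\ref{harmonicextdefn}. Existence and uniqueness of $\phi_\pm$ are classical, and standard boundary regularity for the Dirichlet problem on $C^{k+1}$ curves (for instance via Verchota-type layer-potential theory, or via the Fredholm alternative after localization) shows that $\bign_\pm$ is an order-one bounded operator $H^s(\Gamma)\to H^{s-1}(\Gamma)$ for every integer $1\leq s\leq k+2$. Pulling back by $X$, which is a $C^{k+1}$ diffeomorphism preserving $H^s$ up to order $k+1$, transfers this to the required bound on $\cN_\pm(\xi):H^s(S^1)\to H^{s-1}(S^1)$. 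This covers the full $\cN_+$ claim on $\sB^\wbox$ and the portion of the $\cN_-$ claim with positive pinch.

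For the splash case $\delta_\Gamma=0$, fix a sequence $X^{(m)}\to X$ in $H^{k+2}(S^1)$ with positive pinches $\delta^{(m)}\downarrow 0$ and $\cG(X^{(m)})\geq \tfrac{1}{2}C^0_\gp$. The previous step gives bounded operators $\cN_-^{(m)}:H^s(S^1)\to H^{s-1}(S^1)$; the key point is that the quadratic separation enforced by the glancing condition means $\Omega_-^{(m)}$ has Lipschitz character and $C^{k+1}$ boundary regularity uniform in $m$, so the operator norms of $\cN_-^{(m)}$ remain uniformly bounded. Given $f\in H^s(S^1)$, the corresponding harmonic extensions $\phi_-^{(m)}$ converge on compact subsets of $\Omega_-$ to the unique bounded harmonic extension $\phi_-$ posited in Definition~\ref{DirichToNeumDefs}, and by weak compactness in $H^{s-1}(S^1)$ the sequence $\cN_-^{(m)}f$ has a limit $\cN_-(\xi)f$ which is linear and bounded in $f$ with the stated bound.

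The main obstacle is the uniform boundedness of the approximating operators $\cN_-^{(m)}$ as the pinch closes; one must verify that the elliptic constants for $\bign_-^{(m)}$ do not blow up. This is precisely where the glancing condition is critical: the splash point is seen from $\Omega_-$ as an outward glancing of two smooth arcs rather than an inward cusp, so standard quantitative regularity for the Dirichlet problem on Lipschitz domains applies with constants depending only on the uniform Lipschitz character and the uniform $C^{k+1}$ parametrization bound. This is the same pinch-uniform elliptic phenomenon whose more delicate vacuum-side version is the subject of Section~\ref{vacuuminterfaceestimatessection}, and is the reason $\cN_+$ is only defined for positive pinch, as already flagged in Remark~\ref{blowupreminder}.
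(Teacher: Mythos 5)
Your overall strategy (classical elliptic estimates for the positive-pinch cases, then a limiting argument for $\cN_-$ at pinch zero following Definition~\ref{DirichToNeumDefs}) is the right outline, and your instinct that $\cN_-$ survives the closing pinch because the plasma sits on the ``outward'' side of the glancing arcs is exactly the right intuition. But the step you yourself flag as the main obstacle --- uniform boundedness of $\cN_-^{(m)}$ as $\delta^{(m)}\downarrow 0$ --- is not justified by what you wrote, and the specific claim you lean on is false.

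You assert that ``$\Omega_-^{(m)}$ has Lipschitz character and $C^{k+1}$ boundary regularity uniform in $m$.'' That is not the case. Near the pinch, $\Omega_-^{(m)}$ locally looks like the region exterior to a narrow parabolic neck of width $\delta^{(m)}$: two ``fat'' lobes, one on each side of the two nearly-tangent arcs, separated by a strip of vacuum. At a boundary point at distance $\rho$ from the would-be splash point, the largest ball in which $\partial\Omega_-^{(m)}$ is a single Lipschitz graph has radius on the order of $\max(\delta^{(m)},\rho^2)$; in particular it shrinks to zero as $\delta^{(m)}\to 0$ and $\rho\to 0$. At $\delta^{(m)}=0$ the limiting domain is not Lipschitz at all at the splash point (no cone or graph condition holds there, because the boundary is a pair of tangent arcs and the domain sits on both outer sides). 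So ``standard quantitative regularity on Lipschitz domains with uniform character'' does not apply verbatim, and the argument has a genuine gap precisely where the content lies.

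What actually makes $\cN_-$ behave uniformly is that, in Lagrangian coordinates, the two nearly touching arcs of $\Gamma$ correspond to $\theta$-intervals near $\pm\pi/2$, which are at unit distance in $\Sigma_-$. The paper's route is to pull the Dirichlet/div-curl problem back to the fixed half-cylinder $\Sigma_-$ via the extension $\cE_-(X)$ of Proposition~\ref{LowExtensionProp} --- whose norms depend only on $\lV X\rV_{H^{k+1}(S^1)}$, hence are uniform over $\sB^\bbox$ --- and then apply the fixed-domain elliptic estimates of Lemma~\ref{basicellipticboundLagr} and Lemma~\ref{tanglDivCurlSys}. This is also what the remark after Proposition~\ref{bEDefns} is pointing at: ``$\cH$ involves only harmonic extension into $\Omega_-$, a region which does not get pinched,'' meaning precisely that after the Lagrangian pullback the pinch is invisible. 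An equivalent device, used elsewhere in the paper (Proposition~\ref{bEcommBounds}), is the ``square-root'' conformal map $\cO(z)$, which straightens the splash curve into a chord-arc curve while keeping the image of $\Omega_-$ in one piece. Either of these replaces your uniform-Lipschitz claim with an actual uniform estimate. Without one of them, the passage to the limit in $m$ --- and hence the definition of $\cN_-(\xi)$ for $\xi\in\sB^\bbox$ with $\delta_\Gamma=0$ --- is not justified.

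One smaller remark: for $\cN_+$ on $\sB^\wbox$ you invoke a ``chord-arc'' regime, but the chord-arc constant is not uniform over $\sB^\wbox$ (it degenerates as the pinch shrinks). This is fine for the statement as written, since for each fixed $\xi\in\sB^\wbox$ with $\delta_\Gamma>0$ the curve is chord-arc and $\cN_+(\xi)$ is a well-defined bounded operator (whose norm may blow up as $\delta_\Gamma\to 0$, consistent with Remark~\ref{blowupreminder}); just be aware that no uniformity in $\xi$ is being claimed or obtained there, in contrast to $\cN_-$.
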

Earlier, we alluded to the fact that a critical cancellation occurs upon adding the operators $\cN_+$ and $\cN_-$. The key regularity-preserving property of the corresponding operator $\Nres$ is demonstrated in Proposition~\ref{NresIsBoundedMap}, below.
\begin{definition}\label{Nresdefn}
Given $\xi$ in $\sB^\wbox$, for $\cN_\pm(\xi)$ given by Definition~\ref{DirichToNeumDefs}, we define
\begin{align}
\Nres(\xi) = \cN_+(\xi) + \cN_-(\xi) .
\end{align}
\end{definition}
\begin{proposition}\label{NresIsBoundedMap}
For integer $s$ with $1\leq s \leq k+1$ we have a bounded map:
\begin{align}
\Nres(\cdot) : \sB^\wbox \to B( H^s(S^1), H^s(S^1) ) .
\end{align}
\end{proposition}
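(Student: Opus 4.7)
The strategy rests on the classical principal-symbol cancellation for Dirichlet-to-Neumann operators: on a smooth interface, $\cN_+$ and $\cN_-$ share the same first-order tangential symbol $|D_\theta|$, so their sum $\Nres$ is formally of order zero.  What must be shown here is that this bound holds \emph{uniformly in the pinch} $\delta$, since by Remark~\ref{blowupreminder} the operator $\cN_+(\xi)$ individually blows up as $\delta\to 0$.  In essence, the proof reduces to invoking a weighted version of the Dirichlet-to-Neumann cancellation known in the current-vortex-sheet and MHD literature (to be established separately as Proposition~\ref{DtoNCancellation}) and then commuting tangential derivatives to reach all $1\le s\le k+1$.

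Concretely, the plan is as follows.  Fix $\xi\in\sB^\wbox$, $F\in H^s(S^1)$, and set $f=F\circ X^{-1}$ on $\Gamma$.  I would construct a two-sided auxiliary extension $\tilde f$ of $f$ into a tubular neighborhood of $\Gamma$ via a fixed flattening adapted to the glancing geometry and uniformly controlled in $\delta$.  Decomposing the harmonic extensions as $\phi_\pm = \tilde f + \psi_\pm$, where $\psi_\pm$ solves a Dirichlet problem with zero trace on $\Gamma$, one obtains
\[
\Nres F(\theta) \;=\; \bigl(2\,\del_n \tilde f \,+\, \del_n \psi_+ \,+\, \del_n \psi_-\bigr)(X(\theta)).
\]
The first term is controlled in $H^s(S^1)$ by $\lV F\rV_{H^s(S^1)}$ provided $\tilde f$ is chosen with $H^{s+3/2}$ regularity off $\Gamma$.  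For the pair $\del_n\psi_\pm$ I would use the weighted elliptic bounds of Section~\ref{vacuuminterfaceestimatessection}: on the vacuum side, a weighted Poincar\'e-type inequality (as in Proposition~\ref{1stWeightedEstimate}) gives $\psi_+\in H^{s+1}(\cV)$ with a $\delta$-independent constant, while standard elliptic estimates handle $\psi_-$ on the plasma side.  Taking traces then yields the claimed $H^s(S^1)$ bound on $\Nres F$, with a constant depending only on $M$.

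The hard part is engineering the cancellation of the leading-order singularities of $\del_n\psi_+$ and $\del_n\psi_-$ near the pinch: these individually grow without control as $\delta\to 0$, and only their sum can remain bounded.  The cancellation should be produced by arranging $\tilde f$ to be mirror-symmetric across $\Gamma$ in the local model near the near-self-intersection, so that the leading-order behaviors of $\psi_+$ and $\psi_-$ are reflections of one another and sum to zero up to a lower-order error.  This construction is enabled by the admissibility condition $\cG(X)\ge\tfrac12 C^0_\gp$ from Definition~\ref{HsgpDef}, which pins down the rate at which the two arcs approach tangency and keeps the relevant potential-theoretic kernels integrable uniformly in $\delta$.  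Classically, on a chord-arc curve without pinch one has the clean identity $\Nres = 2K^*S^{-1}$ with $K^*$ of order $-1$ and $S^{-1}$ of order $+1$; here the task is to prove the analogous zeroth-order bound in the near-splash setting by exploiting the glancing geometry to control $K^*$ uniformly in $\delta$.

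Once the case $s=1$ is in hand, the extension to $1\le s\le k+1$ follows by commuting tangential derivatives $\del_\theta$ past $\Nres$.  The commutators $[\del_\theta,\cN_\pm]$ produce coefficients involving derivatives of $X$ up to order $k+2$, which are uniformly bounded thanks to $X\in H^{k+2}_\gp$; these error terms can be absorbed by the weighted estimates already in hand, and an induction on $s$ delivers the desired bounded map $\Nres(\cdot):\sB^\wbox\to B(H^s(S^1),H^s(S^1))$ for all $1\le s\le k+1$.
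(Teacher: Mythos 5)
The paper does not prove this proposition directly; the remark immediately following it records that it is an easy corollary of Proposition~\ref{DtoNCancellation} (proved in Section~4 via layer potentials), since for fixed $\delta>0$ the weighted norm $\lV\cdot\rV_{H^s(S^1,\delta,m)}$ is finite and comparable (with a $\delta$-dependent constant) to $\lV\cdot\rV_{H^s(S^1)}$. Your proposal instead attempts a self-contained proof by splitting off a common extension, which is a genuinely different strategy, but it contains a gap that I do not see how to repair as written.

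The central problem is the claim that the term $2\,\del_n\tilde f$ is controlled in $H^s(S^1)$ by $\lV F\rV_{H^s(S^1)}$ ``provided $\tilde f$ is chosen with $H^{s+3/2}$ regularity off $\Gamma$.'' An extension $\tilde f$ to a tubular neighborhood with trace $f = F\circ X^{-1}\in H^s(\Gamma)$ has at most $H^{s+1/2}$ regularity near $\Gamma$; if $\tilde f$ were $H^{s+3/2}$ there, its trace would be in $H^{s+1}$, which is more regularity than $F$ supplies. Consequently $\del_n\tilde f|_\Gamma$ lies only in $H^{s-1}$, so this term is \emph{of the same order as $\cN_\pm$ individually}. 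The cancellation that makes $\Nres$ order-zero must involve all three of $2\del_n\tilde f$, $\del_n\psi_+$, and $\del_n\psi_-$ jointly; it is not located solely in the pair $\del_n\psi_\pm$. One can see this already on the flat model $\Gamma=\R$, $\Omega_\pm=\{\pm y>0\}$, taking $\tilde f(x,y)=f(x)$ (so $\del_n\tilde f\equiv 0$): then $\del_n\psi_\pm = \mp\,|D|f$ separately, each of order one, and only their sum vanishes. Any splitting that claims to bound the pieces one at a time cannot go through.

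The proposed ``mirror symmetry'' mechanism near the pinch is also not workable as stated. Near the pinch, $\psi_+$ lives in a thin sliver of vacuum and generically has normal derivative of size $\sim\delta^{-1}$, while $\psi_-$ lives in the unpinched plasma region and has normal derivative of size $\sim 1$. These are not reflections of one another, and there is no obvious sense in which their leading contributions sum to zero merely by a symmetric choice of $\tilde f$. The paper sidesteps this by an entirely algebraic route: it records the exact identity $\slp\,\nres = 2\dcal{T}$ (Lemma~\ref{keycancellationident}) and then inverts $\slp$ not on $\Gamma$ itself but on a tailored chord-arc deformation $\Gamma_\vardiamond$ (Definition~\ref{tailoredCA}), for which $\slp_\vardiamond^{-1}$ obeys classical uniform bounds, with the far-field pieces handled by the support-separation lemma~\ref{SuppsBddAwayLem}. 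This exploits the structure of layer potentials rather than a local geometric cancellation, and I recommend you look at the proof of Proposition~\ref{DtoNCancellation} before retooling your argument.

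Your concluding step (commuting tangential derivatives to climb from $s=1$ to general $s$) is fine in spirit, but it is downstream of the base case that is missing here.
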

\begin{remark}
The above proposition is an easy consequence of the stronger result proved in Section~\ref{vacuuminterfaceestimatessection}, Proposition~\ref{DtoNCancellation}, which provides a uniform bound for the operator $\Nres$ that holds even as the pinch of the interface shrinks to zero.
\end{remark}

Note that $\Nres$ is only defined for interfaces with nonzero pinch. Because our initial data starts in a splash, we must find another way to define related terms in our system for $\xi$ such that $\delta_\Gamma=0$. In Section~\ref{pinchzeroextsection} we address this subtlety.

Now we give a rigorous definition of the operator $\bE$ we motivated in Section~\ref{refiningsurfsys}.

\begin{proposition}\label{bEDefns}
Consider $\xi$ in $\sB^\bbox$, and let $\Gamma$ and $X$ be the associated interface and parametrization. For any $F\in L^1( S^1)$ let us define
\begin{align}\label{projecdefn}
    \langle F \rangle &= \frac{1}{|\Gamma|} \int^\pi_{-\pi} F |X_\theta| \, d\theta .
\end{align}
Then for integer $s$ with \(0\leq s \leq k+1\), for any real-valued function $F$ in $H^s(S^1)$, there is a unique $G$ in $H^{s+1}(S^1)$ such that
\begin{align}
\cN_-(\xi) G = F - \langle F \rangle .
\end{align}
We define $\cH(\xi)$ by
\begin{align}
&& \cH(\xi) F(\theta) &= -\frac{1}{|X_\theta|}\frac{dG}{d\theta} &(\theta\in S^1, \ F\in H^s(S^1)).
\end{align}
Let us define the operator $\bE(\xi)$ acting on $\R^2$-valued $V$ in $H^s(S^1)$ by the following, for $\bigtau=\tau\circ X:S^1\to \R^2$, where $\tau$ is the left-to-right pointing unit tangent along $\Gamma$:
\begin{align}\label{bEdefnFmla}
\bE(\xi) V   = \begin{pmatrix} 1 & \cH(\xi) \\ 0 & 1 \end{pmatrix} \left[\col{\bigtau}{N} V \right] = \col{\bigtau \cdot V + \cH(\xi)(N \cdot V)}{N \cdot V} .
\end{align}
We have that $\xi \mapsto \bE(\xi)$ defines a bounded map for $0\leq s \leq k+1$:
\begin{align}
\bE(\cdot) &: \sB^\bbox \to B( H^s(S^1), H^s(S^1) ) . \\ \intertext{Moreover, for $\bE^{-1}(\cdot)$ mapping $\xi$ to $(\bE(\xi))^{-1}$, given by the formula \eqref{bEinverse},}
\bE^{-1}(\cdot) &: \sB^\bbox \to B( H^s(S^1), H^s(S^1) ) .
\end{align}
\end{proposition}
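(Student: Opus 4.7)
The plan is to solve the equation $\cN_-(\xi) G = F - \langle F \rangle$ modulo constants, bootstrap to the claimed regularity, and then assemble $\bE(\xi)$ and its inverse from the resulting Hilbert-transform analog $\cH(\xi)$ together with the pointwise tangent-normal frame on $\Gamma$.

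\textbf{Solvability.} First I would analyze the kernel and cokernel of $\cN_-(\xi)$. Constants lie in the kernel, since they extend to constant harmonic functions on $\Omega_-$ with vanishing normal derivative; conversely, if $\cN_-(\xi) G = 0$ and $\phi$ is the bounded harmonic extension of $G \circ X^{-1}$, then the harmonic conjugate $\psi$ has vanishing tangential derivative along $\Gamma$, hence is constant on $\Gamma$ and (by boundedness in the unbounded domain $\Omega_-$) throughout $\Omega_-$, forcing $\phi$ itself to be constant. Dually, the Dirichlet-to-Neumann operator is formally self-adjoint under the measure $dS = |X_\theta|\,d\theta$, so its image lies in the subspace $\{\phi : \int_\Gamma \phi\, dS = 0\}$, and Fredholm theory for the first-order elliptic pseudodifferential operator $\cN_-(\xi)$ on $S^1$ upgrades this inclusion to equality. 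Since $F-\langle F\rangle$ is mean-zero by the definition of $\langle\cdot\rangle$ in~\eqref{projecdefn}, the equation is solvable, with $G$ unique up to an additive constant that does not affect $\tfrac{dG}{d\theta}$ and hence does not affect the definition of $\cH(\xi)F$.

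\textbf{Regularity and boundedness of $\cH(\xi)$.} Since $\cN_-(\xi)$ is a first-order elliptic pseudodifferential operator with principal symbol $|\xi|$ on the circle, its inverse on mean-zero classes gains one derivative, upgrading $F \in H^s(S^1)$ to $G \in H^{s+1}(S^1)$ for $0\le s\le k+1$; this is the quantitative dual of the bound $\cN_-(\cdot): \sB^\bbox \to B(H^{s+1},H^s)$ established just before the proposition. Since $|X_\theta|$ is bounded below by $\epsilon_1$ and belongs to $H^{k+1}(S^1)$ by Definition~\ref{HsgpDef} and~\eqref{Xzerobds}, division by $|X_\theta|$ preserves $H^s$ regularity in the stated range, so $\cH(\xi)F = -|X_\theta|^{-1}\,dG/d\theta \in H^s(S^1)$ with a bound depending on $\xi$.

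\textbf{Assembly of $\bE(\xi)$ and its inverse.} Boundedness of $\bE(\xi)$ from~\eqref{bEdefnFmla} is then immediate: the map $V \mapsto \col{\bigtau\cdot V}{N\cdot V}$ is pointwise multiplication by an orthogonal $2\times 2$ matrix whose entries lie in $H^{k+1}(S^1)$ (via $X \in H^{k+2}_\gp$), hence preserves $H^s$ for $0\le s \le k+1$ by standard Sobolev-multiplier arguments, and the upper-triangular matrix $\begin{pmatrix} 1 & \cH(\xi) \\ 0 & 1\end{pmatrix}$ is bounded by Step~2. For the inverse, using $\begin{pmatrix} 1 & \cH(\xi) \\ 0 & 1\end{pmatrix}^{-1} = \begin{pmatrix} 1 & -\cH(\xi) \\ 0 & 1\end{pmatrix}$ together with the fact that $\col{\bigtau}{N}$ has orthonormal rows (so its pointwise inverse is its transpose) yields formula~\eqref{bEinverse}, whose boundedness follows by the same multiplier argument.

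\textbf{Main obstacle.} The only nontrivial step is the solvability plus regularity gain of $\cN_-(\xi)$ for $\xi \in \sB^\bbox$ including the pinch-zero splash configurations. Unlike $\cN_+$, which blows up as $\delta_\Gamma \to 0$ (see Remark~\ref{blowupreminder}), the operator $\cN_-$ does not degenerate, since $\Omega_-$ remains topologically a perturbed half-plane with no thin neck; still, one must justify that the inverse on the mean-zero subspace is preserved in the splash limit. I would handle this by combining the uniform estimates for $\cN_-$ from Section~\ref{vacuuminterfaceestimatessection} with a stability argument: approximating a splash curve by non-self-intersecting curves, using the already-established Fredholm structure for the approximations, and passing to the limit in $H^{s+1}(S^1)$. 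All other pieces of the proposition reduce to bookkeeping with Sobolev multipliers and the $H^{k+2}_\gp$ regularity of $X$.
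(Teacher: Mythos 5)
Your proof is correct in substance and arrives at the same conclusion, but it takes a genuinely different technical route from the paper. The paper's proof of this proposition is essentially a pointer: it reduces everything to Proposition~\ref{bEbounds} in Section~6.3, which is proved constructively from the div-curl estimate Lemma~\ref{normalDivCurlSys} on the fixed Lagrangian domain $\Sigma_-$. Concretely, one writes $\cN_-(\xi)G$ in terms of a vector field $V$ solving the tangential div-curl system~\eqref{GeneralDivCurlSystem2} with boundary data $X_\theta\cdot V = dG/d\theta$, and then the regularity gain and Lipschitz continuity in $\xi$ follow from the elliptic estimate~\eqref{GenDivCurlSysBd}. You instead invoke abstract Fredholm theory for the first-order elliptic pseudodifferential operator $\cN_-(\xi)$ on $S^1$, together with self-adjointness with respect to $dS = |X_\theta|\,d\theta$ to identify the cokernel. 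Both routes share the single essential observation — that $\cH$ depends only on extensions into $\Omega_-$, which never pinches, so no $\delta$-uniformity issues arise (the paper flags exactly this in the remark following the proposition statement). Your approach is shorter and more modular, but it has one caveat you should make explicit: the curve $\Gamma$ is only of class $H^{k+2}$, not $C^\infty$, so the claim that $\cN_-$ is an elliptic pseudodifferential operator with the usual symbolic calculus requires a version of the calculus adapted to Sobolev-regular boundaries (or a paradifferential argument), and the Fredholm index computation must be carried out in that framework. The paper's div-curl approach avoids this by handling rough boundaries directly through localized elliptic estimates, which is also why that machinery is reused for the genuinely delicate operator $\cN_+$; your approach would not extend there. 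Your kernel/cokernel identification and the observation that $\int_\Gamma (F-\langle F\rangle)\,dS = 0$ matches the mean-zero constraint are both accurate, and your handling of the pinch-zero case by approximation is exactly what Definition~\ref{DirichToNeumDefs} does.
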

This proposition follows from a simple application of Proposition~\ref{bEbounds}. Let us remark that because $\cH$ involves only harmonic extension into $\Omega_-$, a region which does not get pinched, both $\bE$ and $\bE^{-1}$ are estimated by standard methods.

{\subsubsection*{Time derivative quantities}}

Note in the preceding definitions, our maps are defined for time-independent $\xi$, elements of $\sB^\bbox$ or $\sB^\wbox$.\footnote{Despite this we do in fact extend the maps of the previous section to $\sB$, the class for time-dependent $\xi$, in the obvious way, by plugging in $\xi(t)$ at each fixed time $t$ in $[0,T]$.} In our Lagrangian wave system, we often have expressions that represent derivatives in time of certain quantities, and yet we would like to consider them as maps acting on $\xi$ without explicitly having to take a time derivative or necessarily restrict our attention to evolving, time-dependent states.

As an example, suppose we would like to define a map $\xi\mapsto\dot N(\xi)$ for a state $\xi$ ``frozen in time'', say $\xi\in\sB^\bbox$, such that we get $\del_t(N(\uxi(t)))=\dot N(\uxi(t))$ when plugging in time-dependent $\uxi$ in $\sB$ for each fixed time $t$. Writing
\begin{align}
N= i \bigtau = i e^{i\arg X_\theta}
\end{align}
and assuming $U=X_t$, we calculate
\begin{align}
N_t = - \bigtau \del_t (\operatorname{Im}\log X_\theta) = -\bigtau \operatorname{Im}\left(\frac{U_\theta}{X_\theta}\right) = -\bigtau\frac{U_\theta\cdot N}{|X_\theta|}.
\end{align}
Therefore a good definition of $\dot N(\xi)$ for $\xi$ in $\sB^\bbox$ would be $\dot N(\xi) = -\bigtau (U_\theta\cdot N)/|X_\theta|$.



Generally speaking we typically use a dot to denote a corresponding ``time derivative mapping''. We briefly note some of these maps below, which we will soon need in order to finish defining $\cR(\xi)$ and $\dcF_1(\xi)$.
\begin{itemize}
\item
\(\dot H(\cdot):\sB^\bbox \to H^k(S^1)\) \quad ($\dot H(\xi)$ defined to coincide with $\del_t( H(\cdot))$)
\item
\(\dot P^\pm_\grad(\cdot): \sB^\bbox \to H^k(S^1)\) \quad (defined to coincide with $\del_t(  P^\pm_\grad(\cdot))$)
\item
\([\del_t;\bE](\cdot): \sB^\bbox \to B(H^s(S^1),H^s(S^1))\) \quad (defined to coincide with $[\del_t,\bE(\cdot)]$)
\item
\([\del_t;\Nres](\cdot): \sB^\wbox \to B(H^s(S^1),H^s(S^1))\) \quad (defined to coincide with $[\del_t,\Nres(\cdot)]$)
\end{itemize}
The definitions of the various time-derivative type maps we use are given in detail in Section~\ref{timederivdefnsection}.

\subsubsection*{Defining $\cR(\xi)$ and $\bJ(\xi)$}
Now, we are able to define $\cR(\xi)$ and $\bJ(\xi)$ (though not yet $\dcF(\xi)$) for $\xi$ in $\sB$, which includes situations in which the interface has pinch zero.
\begin{definition}\label{DEcommDefns}
For $\xi$ in $\sB^\bbox$, we define the $10\times 10$ matrix $\bJ(\xi)$, the $4\times 4$ matrix $\bJ_1(\xi)$, and the $2\times 2$ matrices $\bD(\xi)$ and $\dot \bD(\xi)$ by the following, for $\bJ_2 = \begin{pmatrix} 0 & 1 \\ 1 & 0 \end{pmatrix}$ and $\dot H(\xi)$ given by Definition ~\ref{dotHdefns}:
\begin{align}
 \bJ(\xi) &= \begin{pmatrix} \bJ_1(\xi) & 0_{(4\times 2)}     & 0 \\
                            0_{(2\times 4)}    & \bJ_2 & 0 \\
                            0    &  0    & 0_{(4 \times 4)} \end{pmatrix} ,
&
\bJ_1(\xi) &= \begin{pmatrix} 0 & \bD(\xi) \\ \bI_{(2\times 2)} & 0 \end{pmatrix}, \\
 \bD(\xi) &= \begin{pmatrix} 1 & 0 \\ 0 & 1 + \frac{| H (\xi)|^2}{|X_\theta|^2} \end{pmatrix}, & \dot \bD  (\xi) &= 
   \left(\frac{|X_\theta|^{2}\dot H(\xi) \cdot  H(\xi)-| H(\xi)|^2 U_\theta \cdot X_\theta}{|X_\theta|^4} \right)   \begin{pmatrix} 0 & 0 \\ 0 &  2  \end{pmatrix}.
\end{align}
Additionally, we define operator matrices $R(\xi)$, $\bR^{ij}(\xi)$ for $i,j=1,2$, and $\cR(\xi)$ by\footnote{In the matrices $R(\xi)$ and $\cR(\xi)$, each entry (besides $R(\xi)$ itself) represents a $2\times 2$ matrix.}
\begin{align}
R(\xi) &= \begin{pmatrix}
\bR^{11}(\xi) & \bR^{12}(\xi) & 0 \\
\bR^{21}(\xi) & \bR^{22}(\xi) & 0 \\
0 & 0 & 0_{(2\times 2)}
\end{pmatrix} ,\hspace{1.5 cm}
\begin{aligned}
\bR^{11} &= \dot \bD   \bD^{-1} + \bD [\del_t;\bE] \bE^{-1} \bD^{-1} ,\\
\bR^{12} &= -\bD[\del_\theta, \bE] \bE^{-1} ,\\
\bR^{21} &= -[\del_\theta, \bE] \bE^{-1} ,\\
\bR^{22} &= [\del_t;\bE] \bE^{-1},
\end{aligned}\label{bRderivation} \\
\cR(\xi) &= \begin{pmatrix}
                    &         &   & 0 & 0   \\
                    &  R(\xi) &   & 0 & 0   \\
                    &         &   & 0 & 0   \\
       0            &  0      & 0 & 0 & \bI \\
    (\bE(\xi))^{-1} &  0      & 0 & 0 & 0
    \end{pmatrix} ,
\end{align}
where $\bE(\xi)$ and $\bE^{-1}(\xi)$ are defined in Proposition~\ref{bEDefns} and $[\del_t;\bE](\xi)$ is defined in Proposition~\ref{bEcommBounds}.
\end{definition}
\begin{proposition}\label{cRbJBoundProp}
For the map $\bJ(\cdot)$ of Definition~\ref{DEcommDefns}, we have
\begin{align}
 (\bJ(\cdot))_{24} &: \sB^\bbox \to H^{k+1}(S^1) ,
\end{align}
and $(\bJ(\cdot))_{ij}$ is $0$ or $1$ for all other $i, j \in \{1,\ldots,10\}$.
\end{proposition}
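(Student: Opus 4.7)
The plan is to reduce the statement to a single scalar quantity and then invoke the Sobolev algebra structure together with results already established. Inspecting the block decomposition
\[
\bJ(\xi) = \begin{pmatrix} \bJ_1(\xi) & 0 & 0 \\ 0 & \bJ_2 & 0 \\ 0 & 0 & 0_{(4\times 4)} \end{pmatrix},
\qquad
\bJ_1(\xi) = \begin{pmatrix} 0 & \bD(\xi) \\ \bI_{(2\times 2)} & 0 \end{pmatrix},
\qquad
\bJ_2 = \begin{pmatrix} 0 & 1 \\ 1 & 0 \end{pmatrix},
\]
one sees that every entry of $\bJ(\xi)$ is either $0$, identically $1$ (from $\bI_{(2\times 2)}$, $\bJ_2$, and the $(1,1)$-entry of $\bD(\xi)$ placed at $(1,3)$), or else is the single scalar function
\[
(\bJ(\xi))_{24} \;=\; 1 + \frac{|H(\xi)|^2}{|X_\theta|^2}.
\]
Thus the entire content of the proposition is that $\xi \mapsto 1 + |H(\xi)|^2/|X_\theta|^2$ is a well-defined map $\sB^\bbox \to H^{k+1}(S^1)$.

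I would handle this in three short steps. First, by Definition~\ref{prelimHmapDefn} together with the Sobolev bound $H(\cdot):\sB^\bbox \to H^{k+1}(S^1)$ recorded immediately afterwards (which itself is an immediate consequence of Proposition~\ref{hEstimateProp} once it is proved in Section~\ref{vacuuminterfaceestimatessection}), I have $H(\xi) \in H^{k+1}(S^1)$. Second, because $\xi \in \sB^\bbox \subset \sH^k_\gp$, its $X$-component lies in $H^{k+2}_\gp \subset H^{k+2}(S^1)$, so $X_\theta \in H^{k+1}(S^1)$. Third, I need a pointwise lower bound on $|X_\theta|$: by Definition~\ref{HsgpDef} we have $\lV X - X_0 \rV_{C^1(S^1)} \le r_0$, and by \eqref{Xzerobds}, $|\partial_\theta X_0| \geq \epsilon_1$, so taking $r_0$ sufficiently small (as already arranged in \eqref{closenesscondition}) yields $|X_\theta(\theta)| \geq \tfrac{1}{2}\epsilon_1$ uniformly in $\theta$ and uniformly over $\sB^\bbox$.

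With these three ingredients, the rest is Sobolev arithmetic. Since $k \geq 4$ so that $H^{k+1}(S^1)$ is a Banach algebra that embeds in $C^0$, the positive function $|X_\theta|^2$ belongs to $H^{k+1}(S^1)$ and is bounded below by $\epsilon_1^2/4$; standard composition estimates (e.g.\ applied to $y \mapsto 1/y$ on $[\epsilon_1^2/4,\infty)$) then give $1/|X_\theta|^2 \in H^{k+1}(S^1)$. Multiplying by $|H(\xi)|^2 \in H^{k+1}(S^1)$ (again using the algebra property) and adding the constant $1$ concludes the proof.

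I do not foresee any real obstacle here; the one point that is not purely bookkeeping is the uniform lower bound on $|X_\theta|$, which is the reason the closeness condition $\lV X - X_0\rV_{C^1} \le r_0$ is built into the definition of $H^{k+2}_\gp$ in the first place. Everything else is an immediate consequence of the definitions and the already-stated regularity of $H(\xi)$.
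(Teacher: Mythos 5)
Your proof is correct and follows the same route the paper takes: the paper's own proof is the single sentence ``This follows from the bound on $H(\xi)$ of Proposition~\ref{hEstimateProp},'' and your argument simply spells out the bookkeeping that the paper leaves implicit (that $(\bJ)_{24}$ is the only nontrivial entry, the uniform lower bound $|X_\theta|\ge\epsilon_1/2$ coming from the closeness condition in $H^{k+2}_\gp$, and the Sobolev algebra/composition estimates at level $k+1$). The lower-bound observation you flag as the only non-bookkeeping step is indeed the right one to be careful about; the paper itself makes this same remark, though in the proof of Corollary~\ref{Jbounds} rather than here.
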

\begin{proof}
This follows from the bound on $H(\xi)$ of Proposition~\ref{hEstimateProp}.
\end{proof}

Let us collect some bounds which follow from the definitions of $\bE(\xi)$, the main bound on $\bE(\xi)$ provided by Proposition~\ref{bEbounds}, and the bound on $[\del_t;\bE](\xi)$ provided by Proposition~\ref{bEcommBounds}.
\begin{lemma}\label{bEboundLemma}
For $\xi$ in $\sB^\bbox$,
\begin{align*}
&\lV\bE(\xi) \rV_{B(H^k(S^1),H^k(S^1))} \leq C(\lV \xi \rV_{\sH^{k-1}}), \\
&\lV\bE(\xi) \rV_{B(H^{k+1}(S^1),H^{k+1}(S^1))}+\lV [\del_t;\bE](\xi) \rV_{B(H^k(S^1),H^k(S^1))} \leq C(M) ,
\end{align*}
and the same bounds hold with $(\bE(\xi))^{-1}$ in place of $\bE(\xi)$.
\end{lemma}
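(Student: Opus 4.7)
The plan is to treat the lemma as a bookkeeping consequence of the two input estimates cited just after the statement—Proposition~\ref{bEbounds} (for $\cH(\xi)$) and Proposition~\ref{bEcommBounds} (for $[\del_t;\cH](\xi)$)—combined with standard multiplier estimates for the frame vectors. I would first unpack the definition \eqref{bEdefnFmla}, which exhibits $\bE(\xi)$ as the composition of pointwise multiplication by $\bigtau = X_\theta/|X_\theta|$ and $N = X_\theta^\perp/|X_\theta|$ with the scalar operator $\cH(\xi)$ acting in the first slot. The condition $\xi\in\sB^\bbox$, together with the lower bound $|X_\theta|\ge\epsilon_1/2>0$ inherited from \eqref{Xzerobds} after using the closeness $\lV X-X_0\rV_{H^{k+1}}\le r_0$, makes $\bigtau$ and $N$ smooth rational functions of $X_\theta$, so the $H^s(S^1)$-algebra inequality at $s=k,k+1$ delivers
\begin{align*}
\lV\bigtau\rV_{H^k}+\lV N\rV_{H^k} &\le C(\lV\xi\rV_{\sH^{k-1}}), &
\lV\bigtau\rV_{H^{k+1}}+\lV N\rV_{H^{k+1}} &\le C(M),
\end{align*}
the first line using only the $X\in H^{k+1}$ regularity present in $\sH^{k-1}$, the second the full $X\in H^{k+2}$ regularity in $\sH^k$.

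Next, I would combine this with the two-tier bound on $\cH(\xi):H^s\to H^s$ from Proposition~\ref{bEbounds} (norm controlled by $C(\lV\xi\rV_{\sH^{k-1}})$ at $s=k$ and by $C(M)$ at $s=k+1$) and read off \eqref{bEdefnFmla} to obtain the first two norm bounds for $\bE(\xi)$ itself. For $\bE^{-1}(\xi)$ I would use the explicit formula \eqref{bEinverse}: since $N$ is $\bigtau$ rotated by $\pi/2$ and $|\bigtau|=1$, the matrix $\col{\bigtau}{N}$ is a pointwise rotation, so its inverse is simply its pointwise transpose and enjoys identical multiplier estimates. The same two bounds for $\bE^{-1}$ therefore follow verbatim, with no invertibility issues to address.

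For the commutator $[\del_t;\bE](\xi)$, I would differentiate \eqref{bEdefnFmla} in $t$: the contribution from $\bigtau_t$ and $N_t$ is an explicit rational expression in $U_\theta$, $X_\theta$, and $|X_\theta|$, which the $H^k$-algebra property bounds by $C(M)$ using $U\in H^{k+1}(S^1)$ (part of $\lV\xi\rV_{\sH^k}$); the contribution from $[\del_t;\cH](\xi)$ is bounded on $H^k$ by $C(M)$ directly by Proposition~\ref{bEcommBounds}. For the analogous bound on $[\del_t;\bE^{-1}](\xi)$, I would use the commutator identity $[\del_t;\bE^{-1}]=-\bE^{-1}[\del_t;\bE]\bE^{-1}$ and compose the bounds just established.

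The argument is essentially routine; the only point requiring care is confirming that the sharper $\lV\xi\rV_{\sH^{k-1}}$ dependence in the first bound is genuinely attainable—i.e., that the $H^k$-algebra estimate consumes exactly the amount of regularity of $X$ that is available in $\sH^{k-1}$, and that Proposition~\ref{bEbounds} provides the matching dependence for $\cH(\xi)$. This is more a design check on the Sobolev scales $\sH^s$ than a genuine obstacle.
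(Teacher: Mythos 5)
Your proposal is correct and follows essentially the same route the paper intends: the lemma is stated as an immediate consequence of Propositions~\ref{bEbounds} and~\ref{bEcommBounds} together with the algebra property of $H^s(S^1)$ for the frame vectors $\bigtau,N$, and your bookkeeping of the two Sobolev scales ($\lV\xi\rV_{\sH^{k-1}}$ versus $M$) matches the paper's conventions. The only minor difference is that you re-derive the $\bE$ and $[\del_t;\bE]$ bounds from the $\cH$ and $[\del_t;\cH]$ bounds via the explicit formulas \eqref{bEdefnFmla} and \eqref{CommDtbEDefn}, whereas Propositions~\ref{bEbounds} and~\ref{bEcommBounds} already state the $\bE$-level bounds directly, so this step is unnecessary but harmless.
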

Similarly, we are easily able to justify the following bounds in view of the definitions of the corresponding objects in Definition~\ref{DEcommDefns} and the bounds on $H(\xi)$ and $\dot H(\xi)$ provided by Propositions~\ref{hEstimateProp} and~\ref{dotHEstimateProp}.
\begin{lemma}\label{bDboundLemma}
For $\xi$ in $\sB^\bbox$,
\begin{align}\label{exampleExplBd}
&\lV\bD(\xi) \rV_{H^k(S^1)} \leq C(\lV \xi \rV_{\sH^{k-1}}),\\
&\lV\bD(\xi) \rV_{H^{k+1}(S^1)} + \lV \dot \bD(\xi) \rV_{H^k(S^1)}
\leq C(M),
\end{align}
and the same bounds hold with $(\bD(\xi))^{-1}$ in place of $\bD(\xi)$.
\end{lemma}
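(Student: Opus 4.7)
The strategy is to reduce everything to scalar $H^s(S^1)$ norm estimates via the algebra property of $H^s(S^1)$ for $s\ge 1$ (which is the relevant range since $k\ge 4$), and then combine the previously established bounds on $H(\xi)$, $\dot H(\xi)$, $X_\theta$, and $U_\theta$. The key observation is that $\bD(\xi)$ and $\dot\bD(\xi)$ are both $2\times 2$ diagonal matrices whose only nontrivial entries are scalar rational expressions in $H(\xi)$, $\dot H(\xi)$, $X_\theta$, $U_\theta$, so the matrix norm estimates reduce to estimating those rational expressions in the corresponding Sobolev space.

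First I would establish a uniform lower bound $|X_\theta(t,\theta)|\ge\tfrac12\epsilon_1$. By Definition~\ref{initialData} we have $|\partial_\theta X_0|\ge\epsilon_1$, and for $\xi\in\sB^\bbox$ the condition $\lV X-X_0\rV_{H^{k+1}(S^1)}\le C\delta\le C\delta_0$ together with Sobolev embedding (taking $\delta_0$ small enough) forces $|X_\theta|\ge\tfrac12\epsilon_1$ pointwise. This gives, by the standard algebra/reciprocal estimate in $H^s(S^1)$, bounds of the form $\lV |X_\theta|^{-2}\rV_{H^s}\le C(\lV X_\theta\rV_{H^s})$ for $s\ge 1$. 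Similarly $|X_\theta|^{-4}$ is controlled.

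Next, for the first estimate I would apply the algebra property to
\[
\frac{|H(\xi)|^2}{|X_\theta|^2}
\quad\Longrightarrow\quad
\Bigl\lV \tfrac{|H(\xi)|^2}{|X_\theta|^2}\Bigr\rV_{H^k(S^1)}
\le C\bigl(\lV H(\xi)\rV_{H^k},\lV X_\theta\rV_{H^k}\bigr),
\]
then invoke Proposition~\ref{hEstimateProp} (in the low-norm form which yields $\lV H(\xi)\rV_{H^k}\le C(\lV\xi\rV_{\sH^{k-1}})$, available from the weighted elliptic machinery of Section~\ref{vacuuminterfaceestimatessection}) together with $\lV X_\theta\rV_{H^k}\le\lV X\rV_{H^{k+1}}\le \lV\xi\rV_{\sH^{k-1}}$, giving $\lV\bD(\xi)\rV_{H^k}\le C(\lV\xi\rV_{\sH^{k-1}})$. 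The $H^{k+1}$ bound follows identically at one derivative higher, using $\lV H(\xi)\rV_{H^{k+1}}\le C(M)$ from Proposition~\ref{hEstimateProp} and $\lV X_\theta\rV_{H^{k+1}}\le\lV X\rV_{H^{k+2}}\le \lV\xi\rV_{\sH^k}\le M$.

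For $\dot\bD(\xi)$, the scalar factor is
\[
\frac{|X_\theta|^2\,\dot H(\xi)\cdot H(\xi)-|H(\xi)|^2\,U_\theta\cdot X_\theta}{|X_\theta|^4},
\]
and I would bound its $H^k(S^1)$ norm by the algebra property using $\lV \dot H(\xi)\rV_{H^k}\le C(M)$ from Proposition~\ref{dotHEstimateProp}, $\lV H(\xi)\rV_{H^{k+1}}\le C(M)$, $\lV U_\theta\rV_{H^k}\le\lV U\rV_{H^{k+1}}\le\lV\xi\rV_{\sH^k}\le M$, and $\lV X_\theta\rV_{H^{k+1}}\le M$. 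Finally, for $(\bD(\xi))^{-1}$, since $\bD(\xi)$ is diagonal with diagonal entries $1$ and $1+|H|^2/|X_\theta|^2\ge 1$, its inverse is diagonal with entries $1$ and $1/(1+|H|^2/|X_\theta|^2)$, and the reciprocal of a positive $H^s$ function bounded below by $1$ is controlled by $C(\lV\cdot\rV_{H^s})$, which propagates the same bounds.

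The only mild subtlety (and the main thing to be careful about) is the bookkeeping of which ingredients cost $C(M)$ versus $C(\lV\xi\rV_{\sH^{k-1}})$: the $H^k$ bound on $\bD(\xi)$ must avoid $\dot H$ and must use only the low-norm form of the $H(\xi)$ estimate, while the $H^{k+1}$ and $\dot\bD$ bounds are allowed to draw on the full top-norm estimates of $H(\xi)$ and $\dot H(\xi)$. There is no functional-analytic difficulty here beyond this bookkeeping.
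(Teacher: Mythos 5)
Your proof is correct and matches the paper's (terse) approach: reduce to scalar $H^s(S^1)$ algebra estimates for the rational expressions in $H(\xi)$, $\dot H(\xi)$, $X_\theta$, $U_\theta$ appearing in $\bD$ and $\dot\bD$, then feed in Propositions~\ref{hEstimateProp} and~\ref{dotHEstimateProp} with the bookkeeping of when $C(\lVert\xi\rVert_{\sH^{k-1}})$ versus $C(M)$ is needed. The only small wrinkle is the derivation of the pointwise lower bound on $|X_\theta|$: rather than relying on smallness of $C\delta_0$ (which requires coordinating $\delta_0$ with other constants), it is cleaner to invoke the constraint $\lVert X-X_0\rVert_{C^1(S^1)}\le r_0$ built into $H^{k+2}_\gp$ (hence into $\sB^\bbox\subset\sH^k_\gp$), which is the route the paper uses in the proof of Corollary~\ref{Jbounds}.
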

Using the above two lemmas, we easily establish the following.
\begin{proposition}\label{BasicRBound}
For the map $R(\cdot)$ of Definition~\ref{DEcommDefns}, for any integer $s$ with $0\leq s \leq k$ we have
\begin{align}
R(\cdot) &: \sB^\bbox \to B(\sH^s_\dagger,\sH^s_\dagger),
\end{align}
and for $\xi_\dagger$ in $\sH^s_\dagger$ and $\uxi$ in $\sB^\bbox$,
\begin{align}
\lV R(\uxi)\xi_\dagger \rV_{\sH^s_\dagger} &\leq C(M)\lV\xi_\dagger\rV_{\sH^s_\dagger} .
\end{align}
\end{proposition}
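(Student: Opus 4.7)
The plan is to exploit the sparse structure of $\cR$ restricted to $\sH^s_\dagger$, reducing the claim to bounds on the four operators $\bR^{ij}(\uxi)$, and then to control each of these by iterated composition estimates using Lemmas~\ref{bEboundLemma} and~\ref{bDboundLemma}. First I would observe that, by construction, $R(\xi)$ is block-diagonal with a $(2+2)\times(2+2)$ block acting on the surface components $(\dot U^*,\dot B^*)$ and a zero block on the interior components $(\bom,\bj)$. Consequently, if $\xi_\dagger = (\dot U^*,\dot B^*,\bom,\bj) \in \sH^s_\dagger$, then
\begin{align}
\lV R(\uxi)\xi_\dagger \rV_{\sH^s_\dagger} = \lV \bR^{11}(\uxi)\dot U^* + \bR^{12}(\uxi)\dot B^* \rV_{H^s(S^1)} + \lV \bR^{21}(\uxi)\dot U^* + \bR^{22}(\uxi)\dot B^* \rV_{H^s(S^1)},
\end{align}
so it suffices to prove that each $\bR^{ij}(\uxi)$ is bounded on $H^s(S^1)$ with operator norm $\leq C(M)$ for $0\leq s\leq k$.

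Next I would decompose each $\bR^{ij}(\uxi)$ into a composition of building blocks that are already controlled. Lemma~\ref{bEboundLemma} provides the $H^s \to H^s$ bounds on $\bE(\uxi)$ and $\bE^{-1}(\uxi)$ for $0\leq s\leq k+1$ with constant $C(M)$, and Lemma~\ref{bDboundLemma} gives analogous $H^s$ multiplier bounds on $\bD(\uxi)$, $\bD^{-1}(\uxi)$, and $\dot \bD(\uxi)$ (using the $H^s(S^1)$ algebra structure for $s\geq 1$ and Moser-type inequalities at $s=0$, noting $\bD$ and $\dot\bD$ are uniformly $L^\infty$-bounded). For the two commutators appearing in $\bR^{ij}$, Lemma~\ref{bEboundLemma} directly controls $[\del_t;\bE](\uxi)$ as a bounded operator on $H^s$ with bound $C(M)$. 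Composing these building blocks in $\bR^{11}=\dot\bD \bD^{-1} + \bD [\del_t;\bE]\bE^{-1}\bD^{-1}$ and $\bR^{22}=[\del_t;\bE]\bE^{-1}$ then gives the desired $H^s\to H^s$ bound with constant $C(M)$.

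The remaining step — and the place where I expect the main subtlety — is controlling the spatial commutator $[\del_\theta,\bE(\uxi)]$ that appears in $\bR^{12}$ and $\bR^{21}$. At first sight one loses a derivative since $\del_\theta$ is first order, but $\bE$ is in fact a pseudodifferential operator of order zero: in the explicit formula \eqref{bEdefnFmla} the only nontrivial piece is $\cH(\uxi)$, which is a Hilbert-type transform on $\Gamma$. To handle the commutator at the level of $H^s$, I would write out
\begin{align}
[\del_\theta,\bE(\uxi)]V = \begin{pmatrix} \del_\theta\bigtau \cdot V + [\del_\theta,\cH(\uxi)](N\cdot V) + \cH(\uxi)(\del_\theta N \cdot V) \\ \del_\theta N \cdot V \end{pmatrix},
\end{align}
so the problematic term is $[\del_\theta,\cH(\uxi)]$. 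I would unwind the definition of $\cH(\uxi)$ via the Dirichlet-to-Neumann operator $\cN_-(\uxi)$ and differentiate the defining relation $\cN_-(\uxi) G = \dcal{p} F$ in $\theta$; using the shape-regularity of $\Omega_-$ (which does not feel the pinch) together with the standard commutator calculus for Dirichlet-to-Neumann operators on smooth curves, $[\del_\theta,\cN_-(\uxi)]$ is of order zero in $H^s$, with bound controlled by $\lV X\rV_{H^{s+2}(S^1)} \leq C(M)$. From this one concludes $[\del_\theta,\cH(\uxi)]: H^s \to H^s$ boundedly, and then multiplication by $\del_\theta\bigtau,\del_\theta N \in H^k(S^1)$ is harmless in the algebra $H^s$ for $0\leq s\leq k$.

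Finally, I would assemble the bounds for $\bR^{ij}(\uxi)$ by composing the estimates above, obtaining $\lV \bR^{ij}(\uxi)\rV_{B(H^s,H^s)} \leq C(M)$ and hence $\lV R(\uxi)\xi_\dagger\rV_{\sH^s_\dagger} \leq C(M)\lV\xi_\dagger\rV_{\sH^s_\dagger}$. The boundedness and measurability in $\uxi$ of each building block (already provided by the lemmas) then yields the claimed $R(\cdot) : \sB^\bbox \to B(\sH^s_\dagger,\sH^s_\dagger)$. The only genuinely new work is the spatial-commutator analysis of the previous paragraph; everything else is assembly.
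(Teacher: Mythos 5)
Your proof is correct and follows essentially the same route as the paper: assemble the $\bR^{ij}$ bounds from the multiplication estimates for $\bD$, $\dot\bD$, $\bD^{-1}$ (Lemma~\ref{bDboundLemma}) and the operator bounds for $\bE$, $\bE^{-1}$, $[\del_t;\bE]$ (Lemma~\ref{bEboundLemma}), use the block structure of $R$ to reduce to the surface components, and compose. The one caveat worth flagging is that the ``genuinely new work'' you identify --- the $H^s$ bound on $[\del_\theta,\bE(\uxi)]$ --- is not actually new relative to the paper: it is precisely the last estimate in Proposition~\ref{bEbounds} (for $0\le s\le k$), which the paper treats as available background for Proposition~\ref{BasicRBound} even though it is stated in the auxiliary Section~\ref{unpinchedestimates}, and even though the paper's phrase ``using the above two lemmas'' does not literally name it. Your reconstruction of that estimate --- differentiating the defining relation $\cN_-(\uxi)G=\dcal{p}F$ in $\theta$ and using that $[\del_\theta,\cN_-]$ is zeroth-order since $\Omega_-$ is unaffected by the pinch --- is in the same spirit as the paper's own justification of Proposition~\ref{bEbounds} via the div-curl estimate of Lemma~\ref{normalDivCurlSys}, so no discrepancy there. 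Your handling of the low-$s$ multiplication bounds for $\bD$, $\dot\bD$ (algebra structure for $s\ge 1$, $L^\infty$ control at $s=0$ via Sobolev embedding of $H^k$, $k\ge 4$) is also correct and is the kind of detail the paper suppresses.
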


\subsubsection{Defining $\dcF(\xi)$ for pinches $\delta_\Gamma \geq 0$}\label{pinchzeroextsection}

We would like to define $\dcF(\xi)$, notably the component $\dcF_1(\xi)$, for $\xi$ in $\sB$. However, the associated interface $\Gamma(t)$ to $\xi$ in $\sB$ has pinch zero at $t=0$, and currently some of the constituent terms of $\dcF_1(\xi)$ are only defined if the pinch is positive. With the following remark, we break up $\dcF_1(\xi)$ to isolate the terms at the root of the problem.
\begin{remark}\label{dcF1forBcircDefn}
For appropriate $\xi$, we will take
\begin{align}\label{dcF1defn}
    \dcF_1(\xi) &= \begin{pmatrix}
    \mathring{\cF}(\xi) \\
    0_{(2\times 1)}
    \end{pmatrix} ,
\end{align}
where $\mathring{\cF}(\xi)$ is composed of three maps, to be explained:
\begin{align}\label{mathrcFdef}
\mathring \cF(\xi) &=  \mathring \cF_-(\xi) + \mathring \cF_+(\xi)+
\mathring \cF_\sN(\xi) .
\end{align}
We can immediately define $\cF_-(\xi)$ and $\cF_+(\xi)$ for $\xi$ in $\sB$, but we only define $\cF_\sN(\xi)$ at first for $\xi$ in $\sB^\wbox$. Recalling the nonrigorous derivation of $\dcF_1$ in Section~\ref{thewavesystemsubsection}, we take
\begin{align}
 \mathring \cF_+(\xi) &=\bR^{11}(\xi)\,N\cdot  P^+_\grad(\xi) e_2 + \left(\frac{N\cdot U_\theta}{|X_\theta|} \bigtau \cdot P^+_\grad(\xi)  - N \cdot \dot P^+_\grad(\xi) \right) e_2  &&(\xi\in\sB),\label{ringcFpmDef2}\\
 \mathring \cF_-(\xi) &=\bR^{11}(\xi) \bE(\xi)  P^-_\grad(\xi)- [\del_t;\bE](\xi)P^-_\grad(\xi) -\bE(\xi) \dot P^-_\grad(\xi)   &&(\xi\in\sB),\label{ringcFpmDef1}\\
  \mathring \cF_\sN(\xi) &=\bR^{11}(\xi) \,\sN(\xi) e_2 -
 \dot\sN(\xi)e_2   &&(\xi\in\sB^\wbox), \label{ringcFsNDef} \\
\shortintertext{where}
\sN(\xi) &= \half \Nres(\xi) | H(\xi)|^2    &&(\xi\in\sB^\wbox),\label{sNdefn1}\\
\dot\sN(\xi) &= \Nres(\xi) \, (H(\xi) \cdot \dot H(\xi))  +\half [\del_t;\Nres](\xi) | H(\xi)|^2    &&(\xi\in\sB^\wbox),\label{sNdefn2}
\end{align}
where $P^\pm_\grad(\xi)$, $R^{11}(\xi)$, $\bE(\xi)$, and $\Nres(\xi)$ are defined above, and $\dot P^+_\grad(\xi)$, $\dot P^-_\grad(\xi)$, $[\del_t;\bE](\xi)$, and $[\del_t;\Nres](\xi)$ are defined in Section~\ref{timederivdefnsection}. In particular, note we only defined $\sN(\xi)$ and $\dot\sN(\xi)$ for $\xi$ in $\sB^\wbox$, due to the limitations of $\Nres(\xi)$ (see Definition~\ref{Nresdefn} and Remark~\ref{blowupreminder}).
\end{remark}

Let us explain the rough idea behind the work-around which allows us to define $\mathring \cF_\sN(\xi)$, and thus $\dcF_1(\xi)$, for $\xi$ in $\sB$. Although $\Nres(\xi)$ itself is not uniformly bounded in standard operator norms as the pinch $\delta$ tends to zero, in \eqref{sNdefn1} and \eqref{sNdefn2}, it acts on quantities, such as $|H(\xi)|^2$, that vanish at the pinch, so the composition ultimately stays uniformly bounded. The required weighted bounds (in particular Proposition~\ref{hEstimateProp}) are proved in the next section. Using this uniform control, we approximate a given $\xi$ with zero pinch by $\xi_n$ with corresponding pinches $\delta_n>0$, pass to the limit, and thus extend $\mathring \cF_\sN(\xi)$ continuously from $\sB^\wbox$ to $\sB^\bbox(0)$, hence to all $\xi\in\sB^\bbox$.

Though the following lemma relies heavily on the machinery of Section~\ref{vacuuminterfaceestimatessection}, we present it earlier to motivate the technical work to come. It gives the finishing touch needed to define the source term $\dcF(\xi)$ driving the Lagrangian wave system.
\begin{lemma}\label{sNiExt}
For the map $\xi\mapsto\mathring \cF_\sN(\xi)$ defined for $\xi$ in $\sB^\wbox$ by \eqref{ringcFsNDef}, we have a continuous extension
\begin{align}
\mathring \cF_\sN : \sB^\bbox \to H^k(S^1) .
\end{align}
Additionally, the resulting map satisfies the bounds below for $\xi$ and $\uxi$ in $\sB^\bbox$:
\begin{align}
\lV \mathring \cF_\sN(\xi) \rV_{H^k(S^1)} &\leq C(M)   , \label{sNiUnifBdCopy}\\
 \lV \mathring \cF_\sN(\xi)-\mathring \cF_\sN(\uxi) \rV_{H^2(S^1)} &\leq C \lV \xi - \uxi \rV_{\sH^2} .\label{sNiLipBdCopy}
\end{align}
\end{lemma}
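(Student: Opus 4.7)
The plan is to extend $\mathring\cF_\sN$ from $\sB^\wbox$ to $\sB^\bbox$ by continuity, leveraging the uniform weighted estimates of Section~\ref{vacuuminterfaceestimatessection} to bridge the pinch-zero gap. Unpacking \eqref{ringcFsNDef}--\eqref{sNdefn2}, the multiplier $\bR^{11}(\xi)$ is already defined on all of $\sB^\bbox$, since it involves only $\bE$, $\bD$, $\dot\bD$, and $[\del_t;\bE]$, none of which are sensitive to the vacuum pinch. The only obstruction to evaluating at pinch zero comes from $\Nres(\xi)$ and $[\del_t;\Nres](\xi)$, which are applied to $|H(\xi)|^2$ and $H(\xi)\cdot\dot H(\xi)$.

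First I would establish the uniform bound \eqref{sNiUnifBdCopy} on $\sB^\wbox$. The cancellation property of Proposition~\ref{DtoNCancellation} says $\Nres=\cN_++\cN_-$ is order zero with constants independent of the pinch, and Proposition~\ref{hEstimateProp} gives weighted control on $H(\xi)$ whose amplification near the pinch point exactly counterbalances the apparent blow-up of $\cN_+$ acting on a general (non-vanishing) function. Consequently $|H(\xi)|^2$ and $H(\xi)\cdot\dot H(\xi)$ lie in the domain of a $\delta$-uniform version of $\Nres$ and $[\del_t;\Nres]$, yielding uniform $H^k(S^1)$ bounds on $\sN(\xi)$ and $\dot\sN(\xi)$. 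Multiplying by $\bR^{11}(\xi)$, whose operator norm is controlled by Lemmas~\ref{bEboundLemma} and~\ref{bDboundLemma}, closes \eqref{sNiUnifBdCopy}.

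The main obstacle is the Lipschitz bound \eqref{sNiLipBdCopy} for $\xi,\uxi\in\sB^\wbox$ with potentially distinct and arbitrarily small pinches. Decomposing
\[
\sN(\xi)-\sN(\uxi) = \half\,\Nres(\xi)\bigl(|H(\xi)|^2-|H(\uxi)|^2\bigr) + \half\bigl(\Nres(\xi)-\Nres(\uxi)\bigr)|H(\uxi)|^2,
\]
the first summand is treated by the previous step together with a Lipschitz bound on $\xi\mapsto H(\xi)$ in the relevant weighted norm. The hard summand is the second: individually $\cN_\pm(\xi)-\cN_\pm(\uxi)$ become unbounded as the pinches degenerate, but when composed with $|H(\uxi)|^2$, which vanishes rapidly at the pinch of $\uxi$, they combine into a term Lipschitz in $\xi-\uxi$ as measured in $\sH^2$. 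The strategy is to transport the two Dirichlet problems defining $\cN_\pm(\xi)$ and $\cN_\pm(\uxi)$ to a common reference domain via a change of variables depending on the interfaces, then estimate the resulting perturbation of the Laplace operator and the boundary data using the weighted framework of Section~\ref{vacuuminterfaceestimatessection}. The $\dot\sN$ contributions are handled analogously, using the time-derivative bounds for $\dot H$ and $[\del_t;\Nres]$.

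Granted \eqref{sNiUnifBdCopy} and \eqref{sNiLipBdCopy} on $\sB^\wbox$, the continuous extension to $\sB^\bbox$ is immediate. Given $\xi\in\sB^\bbox\setminus\sB^\wbox$, approximate by a sequence $\xi_n\in\sB^\wbox$ converging to $\xi$ in $\sH^2$ (e.g.\ by perturbing $X$ to separate the pinch slightly without leaving the ambient admissible class). The Lipschitz bound makes $\{\mathring\cF_\sN(\xi_n)\}$ Cauchy in $H^2(S^1)$, and the uniform $H^k$ bound together with weak-$\ast$ compactness upgrades the limit to an element of $H^k(S^1)$ independent of the chosen sequence. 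Finally \eqref{sNiUnifBdCopy} passes to the limit by lower semicontinuity of the $H^k$ norm, and \eqref{sNiLipBdCopy} follows on all of $\sB^\bbox$ by density.
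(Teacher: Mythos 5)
Your extension argument is the same as the paper's: approximate a pinch-zero state $\xi\in\sB^\bbox(0)$ by $\xi_n\in\sB^\wbox$, use the Lipschitz bound \eqref{sNiLipBdCopy} to get convergence in $H^2(S^1)$, and upgrade the limit to $H^k(S^1)$ via the uniform bound \eqref{sNiUnifBdCopy} and Banach--Alaoglu. The only difference is organizational: the paper isolates the two estimates on $\sB^\wbox$ into a standalone result (Lemma~\ref{sNilemma}, stated and proved in Section~\ref{vacuuminterfaceestimatessection}) and simply cites it here, whereas you sketch them inline; the ingredients you invoke -- the weighted magnetic-field estimates of Propositions~\ref{hEstimateProp} and~\ref{dotHEstimateProp}, the cancellation bounds of Propositions~\ref{DtoNCancellation} and~\ref{CommutatorCancelBd}, and the Lipschitz bounds of Propositions~\ref{NresLipBd} and~\ref{commNresLipBound} -- are exactly the ones the paper uses in the proof of Lemma~\ref{sNilemma}, so the substance agrees.
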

\begin{proof}
Let us begin by stating the key result of Lemma~\ref{sNilemma}, which is that
\begin{align}
\mathring \cF_\sN : \sB^\wbox \to H^k(S^1) ,
\end{align}
and that we have the estimates \eqref{sNiUnifBdCopy} and \eqref{sNiLipBdCopy} for arbitrary $\xi$ and $\uxi$ in $\sB^\wbox$. To extend $\mathring \cF_\sN$ to $\sB^\bbox$, we rely on the density of $\sB^\wbox$ in $\sB^\bbox$.

Given $\xi$ in $\sB^\bbox(0)$, let $\{\xi_n\}_{n\geq 1}$ be a sequence in $\sB^\wbox$ converging to $\xi$ in $\sH^k$ norm. From \eqref{sNiLipBdCopy} we find that the sequence \(\{\mathring \cF_\sN(\xi_n)\}_{n\geq1}\) converges to a limit $Y$ in $H^2(S^1)$. We define $\mathring \cF_\sN(\xi) = Y$ and proceed to show that in fact $Y$ is in $H^k(S^1)$.

Using \eqref{sNiUnifBdCopy}, we find by Banach--Alaoglu that a subsequence \(\{\mathring \cF_\sN(\xi_{n_m})\}_{m\geq1}\) converges weakly to a limit in $H^k(S^1)$. It follows that this limit can only be $Y$, and so $Y$ is in $H^k(S^1)$. Thus we have a well-defined extension $\mathring \cF_\sN : \sB^\bbox \to H^k(S^1)$. It is not hard to show from the construction that the extension inherits the bounds \eqref{sNiUnifBdCopy} and \eqref{sNiLipBdCopy}.
\end{proof}
\begin{definition}\label{finalDefdcF1}
Consider $\xi$ in $\sB$. We define $\mathring \cF(\xi)$ by \eqref{mathrcFdef}, using \eqref{ringcFpmDef2}, \eqref{ringcFpmDef1}, and Lemma~\ref{sNiExt} to define $\mathring \cF_+(\xi)$, $\mathring \cF_-(\xi)$, and $\mathring \cF_\sN(\xi)$, respectively. Here,  $P^\pm_\grad(\xi)$, $R^{11}(\xi)$, and $\bE(\xi)$ are defined earlier in this section, and we take $\dot P^+_\grad(\xi)$ from Definition~\ref{dotPplusdefn}, $\dot P^-_\grad(\xi)$ from Definition~\ref{dotPintdefn}, $[\del_t;\Nres](\xi)$ from Definition~\ref{DtNCommDefns}, and $[\del_t;\bE](\xi)$ as defined in Proposition~\ref{bEcommBounds}.

We then define $\dcF_1(\xi)$ by \eqref{dcF1defn}, and, for $\dcF_2(\xi)$ of Definition~\ref{dcF2Defn}, we define $\dcF(\xi)=(\dcF_1(\xi),\dcF_2(\xi),0,0,0,0)$, typically regarded as a column vector.
\end{definition}

This concludes the rigorous definition for $\xi$ in $\sB$ of the terms $\bJ(\xi)$, $\cR(\xi)$, and $\dcF(\xi)$ appearing in the Lagrangian wave system.


\section{Pinch domain and above-interface estimates}\label{vacuuminterfaceestimatessection}

Now we turn to the main technical challenge: to obtain uniform elliptic estimates for $\delta$-pinched domains which hold all the way to complete closure $(\delta=0)$.

This is necessary to tie up the loose thread of Lemma~\ref{sNilemma}, which is needed to define the source term $\dcF(\xi)$ in the Lagrangian wave system when the interface self-intersects; it is also essential for our local existence argument and construction of splash--squeeze solutions.

The main objectives for the section are as follows:
\begin{enumerate}[(1)]
\item introduce the comparison domain $\Sigma_+(\delta)$, weighted spaces, and general weighted elliptic estimates;
\item derive estimates for vacuum-centric maps, e.g. the weighted bound for $h(\Gamma)$ of Proposition~\ref{hEstimateProp};
\item establish bounds for Dirichlet-to-Neumann maps, including the cancellation bound for $\Nres(\xi)$ of Proposition~\ref{DtoNCancellation}.
\end{enumerate}

\subsubsection*{Auxiliary Lagrangian variable domain for the vacuum}

To carry out our analysis, we must estimate an external magnetic field $h(\Gamma)$ defined on a vacuum domain $\cV$. For the local existence argument, we also need to compare a field $h(\Gamma)$ to another field, say $h(\underline \Gamma)$ with vacuum domain $\underline \cV$. Since these fields have different domains, we must transfer them to a common one to make these comparisons. For unknowns like $u$ defined in the plasma region $\Omega$, this type of comparison is handled naturally on the fixed reference domain $\Sigma$ by using our Lagrangian coordinate system. Given that our formulation already uses Lagrangian coordinates for the plasma, it is natural to adopt a similar framework for the vacuum region, transferring external magnetic fields to a coordinate domain lying above $\Sigma$, which plays a kind of analogous Lagrangian role, but for the vacuum.

Consider
\begin{align}\label{SigmaPlusDefn}
    \Sigma_+(\delta) = \{(\theta,\psi) : \theta \in S^1, \ 0 < \psi < \delta + \cos^2 \theta  \}.
\end{align}
The set $\Sigma_+(\delta)$ serves as a vacuum counterpart to our Lagrangian domain $\Sigma$ for the plasma.

\begin{figure}[!htbp]
    \centering
    
    \begin{minipage}[t]{0.48\textwidth}
        \centering
        
        \begin{tikzpicture}
              \begin{axis}[
    height=4.5cm,
    width=10cm,
    axis lines=none,
    domain=-pi:pi,
    samples=200,
    clip=false,
    enlargelimits=0.2
  ]

  \begin{scope}
    \fill[gray!20] plot[domain=-pi:pi, samples=200] (\x,{0.2+2*(cos(deg(\x)))^2})
          -- (pi,0) -- (-pi,0) -- cycle;
  \end{scope}

  \addplot [thick, black, domain=-pi:pi, samples=200] {0.2+2*(cos(deg(x)))^2};

  \addplot [thick, black, domain=-pi:pi, samples=2] {0};
 
  \draw[thick, black] 
    (axis cs:-pi,-0.1) -- (axis cs:-pi,0.1)
    (axis cs:pi,-0.1) -- (axis cs:pi,0.1);

  \draw[thick, black, dashed]
    (axis cs:pi/2-0.4,0.18) -- (axis cs:pi/2+0.4,0.18);
    
  \node[left] at (axis cs:-pi,0) {$-\pi$};
  \node[right] at (axis cs:pi,0) {$\pi$};
  \node[above] at (axis cs:0,0) {$\Sigma_+(\delta)$};
  \node[above right] at (axis cs:pi/2+0.3,-0.05) {$\delta$}; 
  
  \end{axis}
        \end{tikzpicture}
        \caption{}
        \label{fig:Sigmadelta}
    \end{minipage}
    \begin{minipage}[t]{0.4\textwidth}
        \centering
        \includegraphics[width=\linewidth]{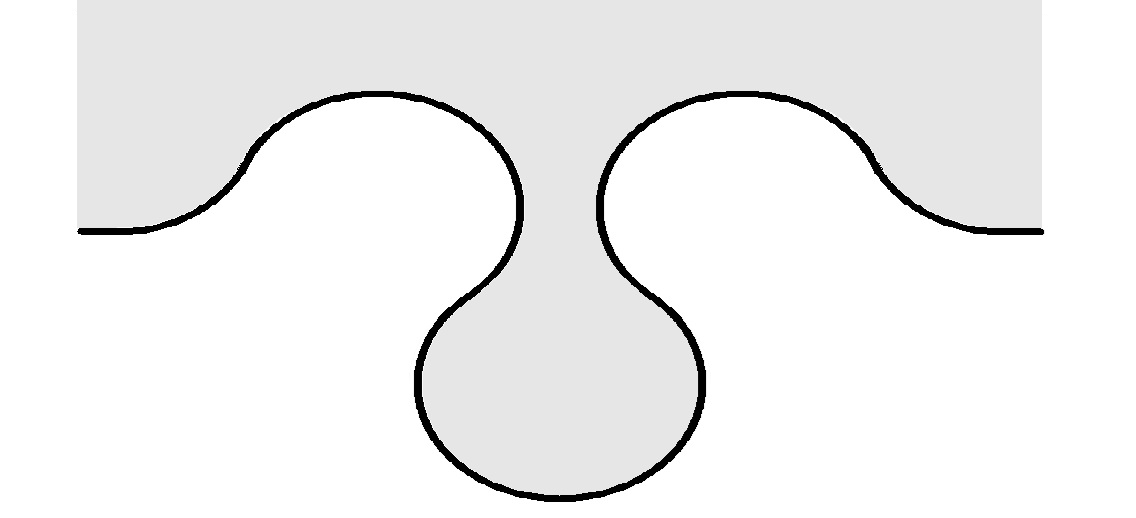}

    \begin{tikzpicture}[overlay, remember picture]
    \node at (2.9,2.1) {$\Gamma$};
    \node at (0,1.2) {$\Omega_+$};
    \end{tikzpicture}
        \caption{}
        \label{fig:Omegaplus}
    \end{minipage}
\end{figure}
Note that $\Sigma_+(\delta)$ has $\delta$-pinches at $\theta = \pm \pi/2$, precisely where $X_0(\theta)$ gives the splash point for $\Gamma_0$, and that it sits above $\Sigma = S^1\times[-1,0]$, analogous to $\Omega_+$ lying above $\Omega$. The pinch of $\Sigma_+(\delta)$ corresponds to the pinch of $\Omega_+$. For a variation on the domain $\Sigma_+(\delta)$ analogous to $\cV$, which has holes in it due to the walls $\mathcal{W}=\cW_1\cup\cW_2$, we use the domain $\Sigma^\circ_+(\delta)$ defined below, where we denote $a_\star=(0,(1+\delta)/2)$ and $a_\infty=(\pi,(1+\delta)/2)$:
\begin{align}
\Sigma^\circ_+(\delta) &= \{a\in \Sigma_+(\delta):|a-a_\star|> 10^{-2}\mbox{ and }|a-a_\infty|> 10^{-2}\}, \\
\tilde \cW_1 &= \{|a-a_\infty|=10^{-2}\},\label{tildeW1}\\
\tilde \cW_2 &= \{|a-a_\star|=10^{-2}\}.\label{tildeW2}
\end{align}

In our analysis, we frequently use mappings between $\Sigma_+(\delta)$ and the domain $\Omega_+$. A minor complication is that ``flattening'' $\Omega_+$ in this way leads to a map which is a double cover\footnote{See Proposition~\ref{extensionProp2}.} of $\Omega_+$ rather than a bijection. Despite this, both pictures arise naturally in the analysis, and so we define weighted spaces for both settings.

\subsection{Weighted Sobolev spaces and elliptic estimates in pinch domains}\label{weightedspacessection}

We now discuss weighted function spaces for $\Sigma_+(\delta)$ as well as $\Omega_+$, $\cV$, and $\Gamma$. Our weight functions, indexed by $\delta \geq 0$ and integer $m$, are chosen such that their order of magnitude is $\delta^{-m}$ in regions where a domain has pinch $\delta$ and $1$ away from these regions.

Taking a fixed smooth cutoff $\chi$ whose support contains the splash point $p_\splash$, we define the weights
\begin{align}\label{weightfcndef}
& &    \mu_\delta(x) &= \frac{1}{(\delta^2 + |x-p_\splash|^4 )^{1/2}}\chi(x) + 1-\chi(x) & &(x \in S^1\times\R), \\
& &    M_\delta(\theta) &= \frac{1}{\delta+\cos^2\theta} & & ( \theta \in S^1 ).
\end{align}

We develop the following weighted Sobolev norms for functions that, near the pinch, are very small if $m$ is positive and possibly very large if $m$ is negative. Before defining weighted norms on $\Sigma_+(\delta)$, $\Omega_+$, and $\cV$, we define simpler integer order Sobolev spaces for the boundaries $S^1$, $\Gamma$, and $\del\cV$.

\begin{definition}\label{weightedSpaceDefs}
Consider a fixed $\delta$ in $[0,\delta_0]$ and $\xi$ in $\sB^\bbox(\delta)$ with associated $\Gamma$ parametrized by arc-length by $\gamma:S\to \Gamma$. Let $j$ and $m$ be integers, with $0\leq j \leq k+2$. We define
\begin{align}
    \lV f \rV^2_{H^j (\Gamma, \delta, m)} &= \sum^j_{i=0}\lV(\mu_\delta \circ \gamma)^m \, \del^i_s (f\circ \gamma)\rV^2_{L^2(S)} &&(f:\Gamma\to\R^n), \label{Gammaweightednorm} \\
    \lV f \rV^2_{H^j (\del\cV, \delta, m)} &= \lV f \rV^2_{H^j (\Gamma, \delta, m)} +\lV f \rV^2_{H^j(\mathcal{W})} &&(f:\del\cV\to\R^n), \\
    \lV F \rV^2_{H^j(S^1,\delta, m)}&=\sum^j_{i=0}\lV M^m_\delta \, \del^i_\theta F \rV^2_{L^2(S^1)} &&(F:S^1\to\R^n),\\
   \lV F \rV^2_{H^j(\del \Sigma_+,\delta, m)}&= \left\lV F|_{\psi=0} \right\rV^2_{H^j(S^1,\delta, m)}+
   \left\lV F|_{\psi=\delta+\cos^2(\cdot)}\right\rV^2_{H^j(S^1,\delta, m)} &&(F:\del\Sigma_+(\delta)\to\R^n).\\
   \lV F \rV^2_{H^j(\del \Sigma^\circ_+,\delta, m)}&= \lV F \rV^2_{H^j(\del \Sigma_+,\delta, m)} +\lV F \rV^2_{H^j(\del \tilde \cW_1 \cup \del \tilde \cW_2)}   &&(F:\del\Sigma^\circ_+(\delta)\to\R^n).
\end{align}
\end{definition}
Regarding the domains $\Omega_+$, $\cV$, and $\Sigma_+(\delta)$, which require half-integer order derivative Sobolev spaces, we define the corresponding norms differently. Instead, we localize to a $\delta$-dependent collection of squares $\{Q_\nu\}$ with disjoint interiors satisfying $\cup_{\nu} Q_\nu = S^1\times\R$. Near $p_\splash$, similar to a Whitney decomposition, the squares are selected to match the ``local pinch scale'' for interfaces $\Gamma$ with pinch $\delta$. Due to the geometry, this amounts to arranging for the sidelength $l_\nu$ of each square $Q_\nu$
\begin{align}\label{sidelengthrule}
c \ell_\nu \leq \max\left(\delta,\left(\operatorname{dist}(Q_\nu,p_\splash)\right)^2\right) \leq C \ell_\nu .
\end{align}
One can form a collection $\{Q_\nu\}$ for which \eqref{sidelengthrule} holds for all $\nu$
by starting with a uniform grid of candidate squares, and discarding a given candidate, say $Q_{\textrm{cand}}$, precisely when
\begin{align}
\textrm{sidelength}(Q_{\textrm{cand}})>c'\max\left(\delta,\left(\operatorname{dist}(Q_\nu,p_\splash)\right)^2\right).
\end{align}
One then replaces $Q_{\textrm{cand}}$ by four new candidate squares produced by bisecting  $Q_{\textrm{cand}}$. Running this test on each starting square of the uniform grid, iterating the above process, and keeping all squares at which bisection halts yields the collection $\{Q_\nu\}$. Note that this collection is finite if $\delta>0$ and infinite if $\delta=0$.
\begin{remark}\label{finiteoverlap}
The collection $\{Q_\nu\}$ produced in the above way satisfies some standard properties for Whitney-type decompositions which we state here without proof.
\begin{itemize}
\item If a pair of squares $Q_\nu$ and $Q_\mu$ are adjacent, one has $c \ell_\mu \leq \ell_\nu \leq C \ell_\mu$.
\item For a given $Q_\nu$, the number of $Q_\mu$ for which the dilates (about centers) $1.1 Q_\nu$ and $1.1 Q_\mu$ intersect is bounded by a constant $C$.
\end{itemize}
\end{remark}

\begin{definition}\label{whitneyweightdef}
Fix $\delta$ in $[0,\delta_0]$. Let $\{Q_\nu\}$ be the disjoint Whitney-type cover of $S^1\times\R$ satisfying \eqref{sidelengthrule} for all $\nu$ as described above. Take $\xi$ in $\sB^\bbox(\delta)$ with associated $\Gamma$, $\cV$, and $\Omega_+$.

For real $s$ with $0\le s \le k+5/2$, integer $m$, we define
\begin{align}
&& \|f\|_{H^s(\Omega_+,\delta,m)}^2
&= \sum_\nu\ell_\nu^{-2m}\,\|f\|_{H^s(1.01Q_\nu\cap\Omega_+)}^2 &(f:\Omega_+\to\R^n),\label{OmegaPweightednorm}\\
&& \|f\|_{H^s(\cV,\delta,m)}^2 
&= \sum_\nu\ell_\nu^{-2m}\,\|f\|_{H^s(1.01Q_\nu\cap \cV)}^2 &(f:\cV\to\R^n).
\end{align}

Similarly, for the Lagrangian domains, we construct a collection of squares $\{\tilde Q_\nu\}$ with sidelengths $\tilde \ell_\nu$ whose union is $S^1\times\R$ such that the analogue of \eqref{sidelengthrule} holds with $\{(\pi/2,0),(-\pi/2,0)\}$ in place of $p_\splash$. For $s\ge 0$ and integer $m$, we define
\begin{align*}
&& \|F\|_{H^s(\Sigma_+,\delta,m)}^2
&= \sum_\nu\tilde \ell_\nu^{-2m}\,\|F\|_{H^s(1.01\tilde Q_\nu\cap\Sigma_+(\delta))}^2 & (F:\Sigma_+(\delta)\to\R^n), \\
&& \|F\|_{H^s(\Sigma^\circ_+,\delta,m)}^2
&= \sum_\nu\tilde \ell_\nu^{-2m}\,\|F\|_{H^s(1.01\tilde Q_\nu\cap\Sigma^\circ_+(\delta))}^2 & (F:\Sigma^\circ_+(\delta)\to\R^n).
\end{align*}
\end{definition}

\begin{remark}
Let us note that in the case that $s$ is an integer and $\delta>0$ the norm of \eqref{OmegaPweightednorm} is equivalent to the natural analogue of \eqref{Gammaweightednorm}:
\begin{align}
c\|f\|_{H^s(\Omega_+,\delta,m)}^2\leq
\sum_{|\beta|\leq s}\| \mu^m_\delta \del^\beta f \|^2_{L^2(\Omega_+)} 
\leq C\|f\|_{H^s(\Omega_+,\delta,m)}^2.
\end{align}
\end{remark}

Now we move on to the main general weighted elliptic estimate for exterior domains pinched by a given interface $\Gamma$. The key is that we are able to prove bounds where the constants in the right-hand sides do not depend on the pinch $\delta$.

\begin{proposition}\label{1stWeightedEstimate}
Consider half-integer $s$ with $1/2\le s \le k + 1/2$. For any $m \geq 0$, there is an $m'\geq m $ such that the following holds. Take any $\delta\in[0,\delta_0]$ and $\xi$ in $\sB^\bbox(\delta)$ with corresponding vacuum $\cV$, interface $\Gamma$, and upper region $\Omega_+$ (see Figure~\ref{fig:Omegaplus}). We then have the estimates below for a function $f$ in $H^{s+2}(\Omega_+)$ or $H^{s+2}(\cV)$:
\begin{align}
\lV f \rV_{H^{s+2}(\Omega_+,\delta,m)} &\leq C(M) \left( \lV \Delta f \rV_{H^s(\Omega_+,\delta,m')}
+
\lV f \rV_{H^{s+3/2}(\Gamma,\delta,m')}
\right) ,\label{1stWeightedEstimateBd2}\\
\lV f \rV_{H^{s+2}(\cV,\delta,m)} &\leq C(M) \left( \lV \Delta f \rV_{H^s(\cV,\delta,m')}
+
\lV f \rV_{H^{s+3/2}(\del \cV,\delta,m')}
\right) ,
\label{1stWeightedEstimateBd1} \\
\lV \grad f \rV_{H^{s+1}(\Omega_+,\delta,m)} &\leq C(M) \left( \lV \Delta f \rV_{H^s(\Omega_+,\delta,m')}
+
\lV \del_n f \rV_{H^{s+1/2}(\Gamma,\delta,m')}
\right) ,
\label{1stWeightedEstimateBd2b}\\
\lV \grad f \rV_{H^{s+1}(\cV,\delta,m)} &\leq C(M) \left( \lV \Delta f \rV_{H^s(\cV,\delta,m')}
+
\lV \del_n f \rV_{H^{s+1/2}(\del \cV,\delta,m')}
\right) ,
\label{1stWeightedEstimateBd1b}
\end{align}
where $M$ can be replaced by $\lV X\rV_{H^{k+1}(S^1)}$ in the case $s\leq k-1/2$. 
\end{proposition}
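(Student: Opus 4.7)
The plan is to reduce \eqref{1stWeightedEstimateBd2}--\eqref{1stWeightedEstimateBd1b} to a family of uniform local elliptic estimates on the Whitney cells $\{1.01 Q_\nu\}$ of Definition~\ref{whitneyweightdef}, then reassemble them against the weight using the finite-overlap property in Remark~\ref{finiteoverlap}. The whole approach hinges on the observation that, after rescaling each cell to unit size, the geometry of the rescaled local interface is uniformly non-degenerate, so classical elliptic regularity applies with constants depending only on $M$ and $k$.

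First I would fix a cell $Q_\nu$ of sidelength $\ell_\nu$ and dilate $\hat x = (x - x_\nu)/\ell_\nu$ to unit scale. By the sidelength rule \eqref{sidelengthrule}, either $Q_\nu$ is disjoint from a neighborhood of $p_\splash$ and the piece of $\Gamma$ in $1.01 Q_\nu$ is a single $C^2$ arc with bounded curvature (coming from $\lV X \rV_{H^{k+1}(S^1)}$), or else $Q_\nu$ is near the pinch and $\ell_\nu \asymp \delta$, in which case $1.01 Q_\nu \cap \Gamma$ consists of at most two nearly-parallel arcs whose separation after dilation is bounded below by (a fixed multiple of) the glancing-permitted constant $C^0_\gp$. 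In either case the rescaled local domain has uniformly Lipschitz boundary, and standard elliptic regularity for the Laplacian yields a bound of the form
\begin{align*}
\lV \hat f \rV_{H^{s+2}}^2 \leq C(M)\bigl(\lV \Delta \hat f \rV_{H^s}^2 + \lV \hat f \rV_{H^{s+3/2}}^2 + \lV \hat f \rV_{L^2}^2\bigr)
\end{align*}
(and likewise on the $\cV$ side, carrying along the fixed wall $\mathcal{W}$ with its own Dirichlet/Neumann data), together with the analogous gradient versions without the $L^2$ term.

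Second I would undo the dilation. Each spatial derivative contributes a factor of $\ell_\nu^{-1}$, and volume/surface measures scale by the corresponding powers of $\ell_\nu$. Multiplying the local inequality through by $\ell_\nu^{-2m}$ and grouping, each term on the right picks up a factor $\ell_\nu^{-2m'}$ for some fixed $m' \geq m$ determined by the worst scaling loss (itself bounded in terms of $s$). Summing in $\nu$ and using the finite-overlap property recovers the global weighted norms of Definition~\ref{whitneyweightdef}. This produces \eqref{1stWeightedEstimateBd2}--\eqref{1stWeightedEstimateBd1b} modulo a residual of the form $\lV f \rV_{H^0(\Omega_+,\delta,m')}^2$ (or the $\cV$ analogue). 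For the gradient bounds this residual is irrelevant, since the left-hand side controls only $\grad f$ and one may normalize $f$. For the $H^{s+2}$ bounds, I would absorb it by iterating the estimate down to the base regularity and invoking a weighted Poincar\'{e} inequality based on the Dirichlet data on $\Gamma$ (or on $\mathcal{W}$ in the $\cV$ case).

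The main obstacle is the geometric claim behind the uniform local constants: after dilating by $\ell_\nu^{-1}$, the rescaled interface really does have uniformly bounded curvature and, near the pinch, a uniform lower bound on the separation between the two colliding arcs. This is exactly where the glancing-permitted bound $\cG(X) \geq \tfrac12 C^0_\gp$ matters--it forces the two arcs near $p_\splash$ to pull apart quadratically, at the precise scale $\ell_\nu \asymp \delta$ prescribed by \eqref{sidelengthrule}. Once this geometric input is isolated as a short standalone lemma (using only $\lV X \rV_{H^{k+1}(S^1)} \leq M$ and the definition of $\cG$ in \eqref{glancingfunction}), the remainder is scaling bookkeeping and the finite-overlap summation; the conclusion with constants depending only on $\lV X \rV_{H^{k+1}(S^1)}$ in the range $s \leq k-1/2$ follows by inspecting the loss of derivatives at each step of the iteration.
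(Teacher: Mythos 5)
Your proposal is essentially the paper's argument: the paper packages the rescaled, cell-wise local estimate in Corollary~\ref{buildingcor} (built from Lemmas~\ref{buildinglemma1}--\ref{unscaledbasicest}), applies it over the Whitney cover $\{Q_\nu\}$, and sums against the weight using the finite-overlap property of Remark~\ref{finiteoverlap}, exactly as you describe, and your identification of $\cG(X)\geq\tfrac12 C^0_\gp$ together with the sidelength rule \eqref{sidelengthrule} as the input making the rescaled local geometry uniformly non-degenerate is precisely what certifies the hypotheses on $\mathpzc V,\cU,\gamma$ in Lemma~\ref{unscaledbasicest}. The one small imprecision is calling the rescaled boundary ``uniformly Lipschitz''---for $s$ up to $k+1/2$ one needs the rescaled arc to be uniformly bounded in $H^{k+2}$ (which you in effect have, via the curvature/derivative control from $\lV X\rV_{H^{k+1}(S^1)}$); with that correction your iteration-plus-weighted-Poincar\'e treatment of the $L^2$ residual matches the absorption built into the proof of Lemma~\ref{buildinglemma1}.
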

\begin{proof}
    Let us sketch only the proof of \eqref{1stWeightedEstimateBd2}, since the proofs of the other bounds are similar.
    
    Decomposing the weighted norm of $f$ as described by Definition~\ref{whitneyweightdef} in terms of localized norms over the collection $\{Q_\nu\}$, one finds that the estimate of Corollary~\ref{buildingcor} can be used to give a satisfactory upper bound. This relies in particular on the fact that the intersections $Q_\nu\cap\Omega_+$ can be shown to satisfy the hypotheses for $\mathpzc{V}$ in the statement of the corollary.
    
    One also uses the upper bound discussed in Remark~\ref{finiteoverlap} on the number of dilates $1.1Q_\mu$ intersecting a given $1.1 Q_\nu$. This controls the ``overcount'' produced by summing up the contributions from the right-hand side of \eqref{ScaledSquareEst} for overlapping regions. The additional power of $\ell_\nu$ from the corollary leads to a potential increase in the weight index from $m$ to $m'$.
    
    Without too much trouble, one also verifies that the weighted boundary norms of Definition~\ref{weightedSpaceDefs} give an upper bound for the weighted sum involving the norms over the various $1.1Q_\nu \cap\Gamma$ which results from the above process. Carefully patching together all the estimates yields \eqref{1stWeightedEstimateBd2}.

\end{proof}

\begin{remark}\label{WeightRemark}
Notice that the bounds given in Proposition~\ref{1stWeightedEstimate} are almost identical to basic elliptic estimates in unweighted Sobolev spaces. When applying these bounds, we pay the price that we have a potentially ``weaker'' weight index $m$ in the left-hand side than the $m'$ in the upper bound. This means that to control some quantity $f:\cV\to\R$, we may need tighter control on $\Delta f$, in a norm with a more singular weight.

Despite this increase in weight index, it is not necessary in our construction to keep track of the precise dependence of $m'$ on $m$. The same general principle applies to the other weighted estimates we prove. Our weighted bounds always boil down to bounds on $h$, which decays rapidly enough in the pinched region to be controlled in the $H^s(\cV,\delta,m)$ norm for any integer $m\geq 0$ (Proposition~\ref{hEstimateProp}).
\end{remark}

Now we collect several useful bounds for our weighted Sobolev spaces that will be used throughout our arguments.

\begin{proposition}\label{standardWeightedSobNormEstims}
Fix $\delta$ in $[0,\delta_0]$. For the norms as given by Definition~\ref{weightedSpaceDefs}, for $\xi$ in $\sB^\bbox(\delta)$ with corresponding $\Gamma$ and $\cV$, the following statements are true for any integers $j$ and $m$, with $0\le j \le k+2$.
\begin{enumerate}[(i)]
    \item Given a function $f$ in $H^{j+1/2}(\cV)$,
\begin{align}
    \lV f \rV_{H^j(\del\cV,\delta,m)} \leq C \lV f \rV_{H^{j+1/2}(\cV,\delta,m)} .
\end{align}
    \item Given $F_+$ in $H^{j+1/2}(\Sigma_+(\delta))$ and $F(\theta)=F_+(\theta,0)$,
\begin{align}
    \lV F \rV_{H^j(S^1,\delta,m)} \leq C \lV  F_+  \rV_{H^{j+1/2}(\Sigma_+,\delta,m)} .
\end{align}
    \item Given $f$ in $H^j(\Gamma)$ and $F$ in $H^j(S^1)$ with $F(\theta) = f(X(\theta))$,
\begin{align}
    c \lV F \rV_{H^j(S^1,\delta,m)} \leq \lV f \rV_{H^j(\Gamma,\delta,m)} \leq C \lV F \rV_{H^j(S^1,\delta,m)}.
 \end{align}
    \item For $s \geq 3/2$, and any integers $m_1, m_2\geq 0$, there is an $m_3 \geq m_1+m_2$ such that, for $F_1,F_2$ in $H^s(\Sigma_+(\delta))$,
\begin{align}
\lV F_1 F_2 \rV_{H^s(\Sigma_+,\delta,m_1)} \leq C \lV F_1 \rV_{H^s(\Sigma_+,\delta,-m_2)} \lV F_2 \rV_{H^s(\Sigma_+,\delta,m_3)} .
\end{align}
 \end{enumerate}
\end{proposition}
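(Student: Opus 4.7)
\medskip

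\noindent\textbf{Proof plan.} The common thread in all four bounds is that the weighted spaces have been engineered so that, after localizing to the Whitney squares $\{Q_\nu\}$ (resp.\ $\{\tilde Q_\nu\}$) built in Definition~\ref{whitneyweightdef}, each local piece looks like a \emph{standard} (unweighted) Sobolev problem on a domain of a fixed shape, and the weights contribute only the uniform prefactors $\ell_\nu^{-2m}$ (resp.\ $\tilde\ell_\nu^{-2m}$). Accordingly, my plan is to reduce each part to a classical inequality applied on a single Whitney piece, and then to sum using the finite-overlap property of the cover recorded in Remark~\ref{finiteoverlap}. Throughout, I will use that the pieces $1.01 Q_\nu \cap \cV$ and $1.01\tilde Q_\nu\cap \Sigma_+(\delta)$ have uniformly controlled shape after rescaling by $\ell_\nu$ (resp.\ $\tilde\ell_\nu$), which is precisely the content of the defining rule \eqref{sidelengthrule} relating the sidelengths to the local pinch scale; this is the same geometric fact that underlies the proof of Proposition~\ref{1stWeightedEstimate}.

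For parts (i) and (ii), on each Whitney piece the classical half-order trace theorem gives
\[
\|f\|_{H^j(1.01Q_\nu\cap\del\cV)} \leq C \|f\|_{H^{j+1/2}(1.01Q_\nu\cap\cV)},
\]
with $C$ independent of $\nu$ after rescaling, since the rescaled local domains belong to a uniform family. Multiplying by $\ell_\nu^{-2m}$, summing in $\nu$, and using finite overlap gives (i); the Lagrangian analogue on $\Sigma_+(\delta)$ is identical and yields (ii). Part (iii) is essentially an elementary change of variables from arc-length $s$ on $\Gamma$ to the parameter $\theta$ on $S^1$: the bound $|\partial_\theta X|\ge \epsilon_1$ from \eqref{Xzerobds} together with $X\in H^{k+2}_\gp$ makes $X$ bi-Lipschitz with uniform constants, so $\partial_s$ and $|X_\theta|^{-1}\partial_\theta$ are comparable at every order $\le j$. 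The only nontrivial point is checking that the two weights $\mu_\delta\circ X$ and $M_\delta$ are pointwise comparable, and this is a local Taylor computation near the two splash-candidate points $\theta=\pm\pi/2$: there $\cos^2\theta\sim(\theta\mp\pi/2)^2$, while the glancing-permitted lower bound on $\cG(X)$ in Definition~\ref{HsgpDef} forces $|X(\theta)-p_\splash|^2\sim(\theta\mp\pi/2)^2$ in the relevant regime, so $\delta+\cos^2\theta$ and $(\delta^2+|X-p_\splash|^4)^{1/2}$ agree up to constants by AM--GM.

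Part (iv) is the one that requires a bit of care, and is where the ``loss'' $m_3 \ge m_1 + m_2$ (rather than equality) enters. The plan is to localize: on each $1.01\tilde Q_\nu\cap\Sigma_+(\delta)$ the assumption $s\ge 3/2$ puts us strictly above the two-dimensional Sobolev embedding threshold, so after rescaling to a unit-size reference domain the standard product estimate
\[
\|F_1 F_2\|_{H^s} \leq C\,\|F_1\|_{H^s}\,\|F_2\|_{H^s}
\]
holds with a uniform $C$. Un-scaling reintroduces finitely many positive and negative powers of $\tilde\ell_\nu$ arising from the non-scale-invariance of $H^s$; each such factor is bounded by a fixed power of $\tilde\ell_\nu$, depending only on $s$. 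One then multiplies by the desired weight $\tilde\ell_\nu^{-2m_1}$, absorbs the $\tilde\ell_\nu$-power into $\tilde\ell_\nu^{2m_2}\cdot\tilde\ell_\nu^{-2m_3}$ using Cauchy--Schwarz in $\nu$, and sums. Choosing $m_3$ larger than $m_1+m_2$ by enough to dominate all the scaling exponents produced by the unrescaling yields the stated inequality. The main (very mild) obstacle is exactly this bookkeeping of scale factors; as noted in Remark~\ref{WeightRemark}, the precise dependence of $m_3$ on $m_1$ and $m_2$ is unimportant for the applications because the quantities eventually being estimated (via $h$) decay at infinite order in the pinch.
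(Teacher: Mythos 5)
Your proposal is correct and takes essentially the same route as the paper: the paper's own (very terse) proof simply says to decompose via the Whitney collections $\{Q_\nu\}$, $\{\tilde Q_\nu\}$ of Definition~\ref{whitneyweightdef}, localize to standard estimates on the small squares, and patch together as in Proposition~\ref{1stWeightedEstimate}, which is exactly what you spell out. One tiny point of phrasing: in (iv) the step you call ``Cauchy--Schwarz in $\nu$'' is more accurately a sup--sum split (take $\sup_\nu$ of the $F_1$ factor, which is dominated by the $\ell^2$ norm giving $\lV F_1\rV_{H^s(\Sigma_+,\delta,-m_2)}$, and sum the $F_2$ factor), but this does not affect the conclusion.
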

\begin{proof}
The above assertions, which mirror standard bounds for unweighted Sobolev spaces, can be proved by decomposing the domains of consideration with the collections $\{Q_\nu\}$ and $\{\tilde Q_\nu\}$ of Definition~\ref{whitneyweightdef} and localizing to bounds in small squares in pinched regions. One then patches the estimates together as in the proof of Proposition~\ref{1stWeightedEstimate}.
\end{proof}

\subsubsection*{Mapping between $\Sigma_+(\delta)$ and $\Omega_+$}

Now we return to the idea of mapping $\Sigma_+(\delta)$ to $\Omega_+$ (and $\Sigma^\circ_+(\delta)$ to $\cV$), as discussed at the beginning of the section. Referring back to Figures~\ref{fig:Sigmadelta} and~\ref{fig:Omegaplus}, note that $X$ gives a mapping onto $\Gamma$ of the domain $S^1$, which we can also think of as the bottom boundary of $\Sigma_+(\delta)$. In the proposition below, we show how a useful mapping from $\Sigma_+(\delta)$ onto $\Omega_+$ can be obtained as an extension of $X$ to the domain $\Sigma_+(\delta)$. This proposition is proved in Section~\ref{auxiliaryestimates}.
\begin{myprop}{\ref{extensionProp2}}
For $0<\delta\leq \delta_0$ there is an extension operator
\begin{align}
\cE_\delta : \{X:\xi \in \sB^\bbox(\delta)\}  \to \{\mbox{\emph{Continuous maps from}} \ \Sigma_+(\delta) \ \mbox{to} \ \Omega_+\cup\{i\infty\}\} ,
\end{align}
where, for $\xi$ in $\sB^\bbox(\delta)$ with corresponding $X$, $\cE_\delta(X)(\theta,0)=X(\theta)$. We also have that $\cE_\delta(X)$

\begin{enumerate}[(i)]

\item maps the upper boundary as well as the lower boundary of $\Sigma_+(\delta)$ onto $\Gamma$,

\item maps $a_\star = (0,(1+\delta)/2)$ to $z_\star=i$ and $a_\infty = (\pi,(1+\delta)/2)$ to $i\infty$,

\item provides a two-to-one covering map from $\Sigma_+(\delta)\setminus \{a_\star ,a_\infty\}$ to $\Omega_+\setminus\{z_\star\}$ and a two-to-one covering map from $\Sigma^\circ_+(\delta)$ to $\cV$,

\item maps $\tilde \cW_i$ (defined in \eqref{tildeW1} and \eqref{tildeW2}) onto $\cW_i$ for $i=1,2$, and

\item satisfies the following for an $m\geq 0$ dependent only on $k$, where we denote 
\[\Sigma^\star=\Sigma_+(\delta)\cup\{|a-a_\star|\leq 10^{-1}\},\]
with the corresponding weighted norm defined in the obvious way, and we take any integer $j$ with $0\leq j \leq k$:
\begin{align}\label{EdeltaBd}
\lV \cE_\delta(X)\rV_{H^{j+5/2}(\Sigma^\star,\delta,-m)} +\lV (\grad\cE_\delta(X))^{-1}\rV_{H^{j+3/2}(\Sigma^\star,\delta,-m)}\leq C \left(\lV X \rV_{H^{j+2}(S^1)}\right).
\end{align}
Additionally, for a pair $X,\uX$ corresponding to $\xi,\uxi$ in $\sB^\bbox(\delta)$,
\item  we have
\begin{align}
& & \cE_\delta(X)(a) &= \cE_\delta(\uX)(a)  &   \left(|a-a_\star|\leq 10^{-1} \mbox{ or }|a-a_\infty|\leq 10^{-1} \right),
\end{align}

\item and for any integer $j$ with $0\leq j \leq k$ (and $m$ as before),
 \begin{align}
 \lV \cE_\delta(X)-\cE_\delta(\uX) \rV_{H^{j+5/2}(\Sigma_+,\delta,-m)} \leq C \lV X-\uX \rV_{H^{j+2}(S^1)} .
 \end{align}

\end{enumerate}
\end{myprop}

We now have in place a suitable framework in which we will compare vector fields (such as $h$) that live in exterior domains associated to different interfaces, as discussed at the beginning of this section. These comparisons will typically be made by applying the following proposition, which establishes weighted estimates of a similar flavor to those of Proposition~\ref{1stWeightedEstimate}, but for vector fields $V$ solving div-curl-type problems in the domain $\Sigma_+(\delta)$.

\begin{proposition}\label{LagrVacDivCurlEst}
Fix $\delta$ with $0<\delta\leq \delta_0$. Given $\xi$ in $\sB^\bbox(\delta)$, for the associated $X$, let us denote the extension into $\Sigma_+(\delta)$ by $X_+= \cE_\delta(X)$. Let $\sP_\delta$ be the transformation defined in \eqref{sPmapdefn} in the proof of Proposition~\ref{extensionProp2}, mapping $\Sigma_+(\delta)$ onto $\Sigma^\sharp_+(\delta)$.

In the following, let us take $\Sigma^*_+$ to be either the domain $\Sigma_+(\delta)$ or $\Sigma^\circ_+(\delta)$. Assume we are given $V$, $\rho$, and $\varpi$ defined on $\Sigma^*_+(\delta)$, and $\eta$ defined on $\del\Sigma^*_+(\delta)$, where $V=V^\sharp \circ \sP_\delta$, $\rho = \rho^\sharp \circ \sP_\delta$, $\varpi = \varpi^\sharp \circ \sP_\delta$, and $\eta = \eta^\sharp \circ \sP_\delta$ for $V^\sharp$, $\rho^\sharp$, and $\varpi^\sharp$ continuous on $\sP_\delta(\Sigma^*_+(\delta))$ and $\eta^\sharp$ continuous on $\sP_\delta(\del\Sigma^\sharp_+(\delta))$. 
Moreover, let us assume that $\supp(\rho),\supp(\varpi)\subset \overline{\Sigma^\circ_+(\delta)}$.

Suppose $V$ solves the problem\footnote{In the case $\Sigma^*_+=\Sigma_+(\delta)$, we of course exclude the point $a_\infty$ from $\Sigma^*_+=\Sigma_+(\delta)$ in \eqref{gendivcurlprob} since $X_+$ maps this point to $i\infty$.}
\begin{align}\label{gendivcurlprob}
(a \in \Sigma^*_+) \qquad &
\left\{
    \begin{aligned}
        \grad \cdot (\, \cof (\grad X^t_+) V ) &= \rho \\
        \grad^\perp\cdot (\, \grad X^t_+ V )   &= \varpi \, \Lcm
    \end{aligned}
\right. \\
(a \in \del\Sigma^*_+)  \qquad &  \ \big\{\,(n \circ X_+ ) \cdot V = \eta , \\
 \mbox{and, if $\Sigma^*_+=\Sigma^\circ_+(\delta)$,}\qquad 
 &\int_{\tilde\cW_i} (V \grad X_+) \cdot d\vec r = \mu_i 
 \qquad(i=1,2),\label{lineIntegCond2} 
\end{align}
where $n$ is the inward unit normal for $\cV$ and 
in \eqref{lineIntegCond2} we take counter-clockwise paths around $\tilde \cW_1$ and $\tilde \cW_2$.\footnote{In the case $\Sigma^*_+=\Sigma^\circ_+(\delta)$, the conditions 
\eqref{lineIntegCond2} are required to uniquely determine $V$, since the domain has holes. This is analogous to \eqref{IdealMHD6}, necessary for uniqueness of $h$.} Again, we use the convention $(\grad X_+)_{ij}=\del_{a_j}(X_+)_i$, where $a=(\theta,\psi)$.

Then for half-integer $s$ with $1/2 \leq s \leq k+1/2$ and any integer $m$, there is an $m' \geq m$ such that a given solution $V$ to the problem \eqref{gendivcurlprob}--\eqref{lineIntegCond2} obeys the estimate below. 
\begin{align}\label{lagrDCest}
\lV V \rV_{H^{s+1}(\Sigma^*_+,\delta,m)} &\leq C \Bigg( \lV \rho \rV_{H^s(\Sigma^*_+,\delta,m')}+\lV \varpi \rV_{H^s(\Sigma^*_+,\delta,m')}
+\lV \eta \rV_{H^{s+1/2}(\del\Sigma^*_+,\delta,m')} \\
&\hspace{4cm}+\begin{cases}0 & \mbox{\emph{if}} \ \Sigma^*_+ = \Sigma_+(\delta) \\|\mu_1|+|\mu_2| & \mbox{\emph{if}} \ \Sigma^*_+ = \Sigma^\circ_+(\delta)\end{cases}\Bigg) .
\end{align}
Above, we have $C=C\left(\lV X\rV_{H^{k+2}(S^1)}\right)$ for $s=k+1/2$ and $C=C\left(\lV X\rV_{H^{k+1}(S^1)}\right)$ for $s\leq k-1/2$.

Moreover, the same bound holds when $V$ satisfies the system \eqref{gendivcurlprob}--\eqref{lineIntegCond2} with $n$ replaced by $\tau = -n^\perp$ and $V$ replaced by $V^\perp$ in \eqref{lineIntegCond2}.
\end{proposition}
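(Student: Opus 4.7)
The plan is to transfer the Lagrangian div--curl system on $\Sigma^*_+$ to an equivalent Eulerian div--curl system on $\Omega_+$ or $\cV$ via the map $X_+ = \cE_\delta(X)$ of Proposition~\ref{extensionProp2}, apply the weighted elliptic bounds of Proposition~\ref{1stWeightedEstimate}, and then transfer the result back using the uniform-in-$\delta$ control on $X_+$ and $(\grad X_+)^{-1}$ provided by \eqref{EdeltaBd}. Since $\cE_\delta$ is only two-to-one (Proposition~\ref{extensionProp2}(iii)), I would first cut $\Sigma^*_+$ along a curve joining $a_\star$ and $a_\infty$ into two ``sheets'', each of which maps bijectively onto $\Omega_+$ (respectively $\cV$) under $X_+$, run the argument on each sheet separately, and then sum.

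On a single sheet, set $v = V\circ X_+^{-1}$. The Piola identity turns \eqref{gendivcurlprob} into the Eulerian div--curl system
\begin{align*}
\grad_x\cdot v = \tfrac{\rho}{\det\grad X_+}\circ X_+^{-1}, \qquad
\grad^\perp_x \cdot v = \tfrac{\varpi}{\det\grad X_+}\circ X_+^{-1},
\end{align*}
supplemented on $\Gamma\cup\mathcal W$ by $n\cdot v = \eta\circ X_+^{-1}$, and, when $\Sigma^*_+=\Sigma^\circ_+$, by circulation conditions around $\cW_1,\cW_2$ inherited from \eqref{lineIntegCond2} with the same scalars $\mu_1,\mu_2$. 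Writing $v = \grad\phi + \grad^\perp\psi + v_{\mathrm{harm}}$, where $v_{\mathrm{harm}}$ is a harmonic correction absorbing the two prescribed circulations, one is reduced to two scalar elliptic problems for $\phi$ and $\psi$ to which \eqref{1stWeightedEstimateBd2b} and \eqref{1stWeightedEstimateBd1b} apply, while $v_{\mathrm{harm}}$ is controlled linearly by $|\mu_1|+|\mu_2|$ with constants independent of $\delta$ because $\cW_1,\cW_2$ sit a fixed positive distance away from the pinch.

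To pull the resulting weighted Eulerian bound back to $\Sigma^*_+$, I would compare the Whitney-type decompositions $\{Q_\nu\}$ on $\Omega_+$ and $\{\tilde Q_\nu\}$ on $\Sigma_+(\delta)$ of Definition~\ref{whitneyweightdef}. By the sidelength rule \eqref{sidelengthrule} together with \eqref{EdeltaBd}, $X_+$ is bi-Lipschitz on each $\tilde Q_\nu$ and sends it into a uniformly finite union of $Q_\mu$'s with $\ell_\mu \simeq \tilde\ell_\nu$, so the weights $\tilde\ell_\nu^{-m}$ and $\ell_\mu^{-m}$ can be exchanged at the cost of enlarging the weight index from $m$ to some $m'\geq m$, in line with Remark~\ref{WeightRemark} and the proof of Proposition~\ref{1stWeightedEstimate}. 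The main obstacle will be the careful bookkeeping near the gluing curve between the two sheets and around the puncture points $a_\star, a_\infty$, together with tracking how the factor $1/\det\grad X_+$ coming from Piola shifts the weight index; both are handled by the weighted estimates for $X_+$ in \eqref{EdeltaBd}, which control $\det\grad X_+$ from above and below in weighted norms of suitable index. The statement with $V^\perp$ and $\tau$ in place of $V$ and $n$ follows by repeating the argument after exchanging the roles of divergence and scalar curl in \eqref{gendivcurlprob}.
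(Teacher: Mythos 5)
Your proposal is essentially the same as the paper's proof: pull the Lagrangian div--curl system back to $\Omega_+$ or $\cV$, decompose the pushforward $v$ into $\grad\phi + \grad^\perp\psi$ (plus a harmonic correction absorbing the circulations), apply the weighted elliptic bounds of Proposition~\ref{1stWeightedEstimate}, and transfer back. The one avoidable detour is the preliminary step of cutting $\Sigma^*_+$ along a curve from $a_\star$ to $a_\infty$ into two sheets. This is not needed: the hypothesis written into the statement---that $V$, $\rho$, $\varpi$, $\eta$ all factor through $\sP_\delta$ (i.e.\ are pullbacks of continuous functions on $\sP_\delta(\Sigma^*_+(\delta))$)---is precisely what makes the pushforward $v = V\circ X_+^{-1}$ a single, globally well-defined field on $\Omega_+$ (respectively $\cV$), so there is no slit and no gluing to check. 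Introducing a cut would reopen a matching question along its image in $\Omega_+$ that the hypothesis already closes. The paper's proof reads this hypothesis directly and goes straight to the single Eulerian div--curl problem; otherwise your Piola bookkeeping, your treatment of the circulations via a harmonic correction bounded by $|\mu_1|+|\mu_2|$ (with $\delta$-independent constants since $\cW_1,\cW_2$ stay away from the pinch), and your comparison of the Whitney covers to relate the weighted norms on $\Sigma^*_+$ and on $\Omega_+$ are exactly the content of the paper's argument.
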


\begin{proof}
By using the properties of $X_+=\cE_\delta(X)$, along with the construction from the proof of Proposition~\ref{extensionProp2}, we deduce from the fact that $V$, $\rho$, $\varpi$, and $\eta$ correspond to continuous functions in the $\Sigma^\sharp_+(\delta)$ picture that we can pull back the problem above on $\Sigma^*_+(\delta)$ to a corresponding problem on either $\Omega_+$ or $\cV$, where one solves for a vector field of the form $v=\grad \phi +\grad^\perp \varphi$. This leads to a Dirichlet problem for $\phi$ and a Neumann problem for $\varphi$, ultimately reducing the desired weighted estimates for $V$ to weighted estimates on $\phi$ and $\varphi$, which are provided by Proposition~\ref{1stWeightedEstimate}.
\end{proof}

\begin{remark}
The point of assuming $\supp(\rho),\supp(\varpi)\subset \overline{\Sigma^\circ_+(\delta)}$ is essentially to ensure that, in the case $\Sigma^*_+=\Sigma_+(\delta)$, upon pulling back $V$ to $\Omega_+$, the associated $\phi$ and $\varphi$ discussed in the proof are harmonic above the wall $\cW_1$, corresponding to the region where $X_+$ becomes unbounded.
\end{remark}

Now we record an auxiliary lemma linking vector fields with zero divergence and curl in $\Omega_+$ to the corresponding problem in $\Sigma_+(\delta)$.

\begin{lemma}\label{DivFreeCurlFreeConversion}
Fix $\delta$ with $0<\delta\leq \delta_0$. Consider $\xi$ in $\sB^\bbox(\delta)$ and define $X_+ = \cE_\delta(X)$. For the domain $\Omega^*_+$ below, we choose either $\Omega_+$ (in which case we define $\Sigma^*_+=\Sigma_+(\delta)$) or $\cV$ (in which case we define $\Sigma^*_+=\Sigma^\circ_+(\delta)$).

Suppose a vector field $v:\Omega^*_+\to\R^2$ solves the problem
\begin{align}\label{divcurlfreesys}
(x \in \Omega^*_+) \qquad &
\left\{
    \begin{aligned}
        \grad \cdot v &= 0 \\
        \grad^\perp\cdot v   &= 0 \, \Lcm
    \end{aligned}
\right. \\
(x \in \del \Omega^*_+ 
)  \qquad &  \ \big\{\, n \cdot v =  g , \\ \shortintertext{and in the case $\Omega^*_+=\cV$,}
(i=1,2) \qquad & \int_{\cW_i} v \cdot d\vec r = \upnu_i.
\end{align}
Then $V:\Sigma^*_+\to\R^2$, defined by $V = v\circ X_+$ satisfies the hypotheses of Proposition~\ref{LagrVacDivCurlEst}, in particular solving the system \eqref{gendivcurlprob}--\eqref{lineIntegCond2} for $\rho=\varpi=0$, $\eta=g\circ X_+$, and $\mu_i=2\upnu_i$.
\end{lemma}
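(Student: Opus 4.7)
The plan is to directly verify that $V = v\circ X_+$ satisfies each of the four conditions in \eqref{gendivcurlprob}--\eqref{lineIntegCond2} with $\rho = \varpi = 0$, $\eta = g\circ X_+$, and $\mu_i = 2\upnu_i$. The support hypothesis $\supp(\rho),\supp(\varpi)\subset\overline{\Sigma^\circ_+(\delta)}$ holds vacuously. Three of the four verifications reduce to standard chain-rule computations; the only subtle step, and the one I expect to be the main obstacle, is tracing how the two-to-one covering property of $X_+$ from Proposition~\ref{extensionProp2}(iii) converts the Eulerian circulation $\upnu_i$ around $\cW_i$ into the Lagrangian circulation $2\upnu_i$ around $\tilde\cW_i$.

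For the divergence equation, write $F = \grad X_+$ and $J = \det F$, so that (with the paper's convention $(\grad X_+)_{ij}=\del_{a_j}(X_+)_i$) one has the matrix identity $\cof(\grad X_+^t) = JF^{-1}$. Combining the Piola identity $\del_{a_j}(JF^{-1})_{jk} = 0$ with the chain rule $\del_{a_j}(v_k\circ X_+) = (\del_{x_l}v_k)\circ X_+\cdot F_{lj}$ and the elementary contraction $F^{-1}_{jk}F_{lj} = \delta_{lk}$, I get
\[
\grad_a\cdot\bigl(\cof(\grad X_+^t)\,V\bigr) \;=\; J\,(\grad\cdot v)\circ X_+ \;=\; 0.
\]
For the scalar curl, note $(\grad X_+^t V)_i = V\cdot\del_{a_i}X_+$; expanding $\grad^\perp\cdot(\grad X_+^t V)$ by the chain rule, the mixed-partial terms $\del_{a_1}\del_{a_2}(X_+)_j$ cancel, and the remaining antisymmetric combination of $\del_{a_i}(X_+)_j\del_{a_k}(X_+)_l$ collapses to $J$, yielding
\[
\grad^\perp\cdot(\grad X_+^t V) \;=\; J\,(\grad^\perp\cdot v)\circ X_+ \;=\; 0.
\]
Equivalently, since $\grad^\perp\cdot v = 0$ gives $v = \grad\Phi$ locally, one has $(\grad X_+^t V)_i = \del_{a_i}(\Phi\circ X_+)$, so $\grad X_+^t V$ is locally exact. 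The boundary condition is immediate: $(n\circ X_+)\cdot V = (n\cdot v)\circ X_+ = g\circ X_+$.

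For the circulation (only needed when $\Sigma^*_+ = \Sigma^\circ_+(\delta)$), unwinding the matrix product and applying the change-of-variables formula for line integrals gives
\[
\int_{\tilde\cW_i}(V\grad X_+)\cdot d\vec r \;=\; \int_{\tilde\cW_i} V\cdot dX_+ \;=\; \int_{X_+\circ\tilde\cW_i} v\cdot d\vec x,
\]
where the image curve is taken with its induced parametrization and multiplicity. By Proposition~\ref{extensionProp2}(iii)--(iv), $X_+$ restricts to a two-to-one covering of $\cV$ sending $\tilde\cW_i$ onto $\cW_i$, so a single counter-clockwise traversal of $\tilde\cW_i$ pushes forward to a double counter-clockwise traversal of $\cW_i$. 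The degree-two claim is easily checked by a winding-number argument, or by observing that the restriction of a two-to-one regular cover to a simple loop around a single boundary component of the base must be a connected double cover of that component. Consequently the integral equals $2\int_{\cW_i}v\cdot d\vec x = 2\upnu_i$, giving the desired identification $\mu_i = 2\upnu_i$ and completing the verification of the hypotheses of Proposition~\ref{LagrVacDivCurlEst}.
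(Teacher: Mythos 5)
Your proof is correct and follows the same essential route as the paper, which gives only a one-sentence justification and singles out exactly the same point you do — that the two-to-one nature of $X_+=\cE_\delta(X)$ on $\tilde\cW_i\to\cW_i$ is responsible for the factor $\mu_i=2\upnu_i$. Your additional detail (Piola identity for the divergence pullback, the local-exactness argument $\grad X_+^t V=\grad_a(\Phi\circ X_+)$ for the curl, and the change-of-variables and covering-degree argument for the circulation) fills in precisely the steps the paper declares ``relatively easy to check.'' One small point worth flagging if you were to write this up fully: Proposition~\ref{LagrVacDivCurlEst} also requires that $V,\rho,\varpi,\eta$ factor continuously through $\sP_\delta$, and this needs a word — it holds because $X_+$ itself factors as a continuous map composed with $\sP_\delta$ in the construction of $\cE_\delta$ (see the proof of Proposition~\ref{extensionProp2}), so $V=v\circ X_+$ inherits the factorization. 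Likewise, the degree-two claim really needs $X_+$ to be orientation-preserving on $\tilde\cW_i$, which follows from the construction of $\cE_\delta$ via (anti)holomorphic and positively-oriented building blocks, though neither you nor the paper spell it out.
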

\begin{proof}
With the help of the construction of $X_+=\cE_\delta(X)$ in the proof of Proposition~\ref{extensionProp2}, the above assertions are relatively easy to check. Regarding the property that $V=v\circ X_+$ satisfies \eqref{lineIntegCond2}, with $\mu_i = 2\upnu_i$, this follows as a consequence of the fact that each directed line integral over a given $\tilde \cW_i$ corresponds to a line integral in the $\Omega_+$ picture in which one loops around $\cW_i$ twice.
\end{proof}
\subsection{Magnetic field estimates}\label{magfieldestimatessection}

Now that the general weighted estimates are in place, we can apply them to prove various critically important bounds for maps such as $\xi \mapsto  H(\xi)$ and $\xi\mapsto   P^+_\grad(\xi)$ which we simply stated and used in the latter part of Section~\ref{lagrangianwavesystemsection} to define terms in the Lagrangian wave system that we intend to solve. We can also use our setup to compare vector fields associated to different exterior domains, establishing Lipschitz bounds on maps like $\xi \mapsto  H(\xi)$ which are necessary for our local existence argument.

The next main objective is to apply our weighted estimates to prove bounds on the exterior magnetic field with constants that are independent of the interface pinch $\delta$.
\begin{proposition}
\label{hEstimateProp}
Consider $\xi$ in $\sB^\bbox(\delta)$ for $\delta$ in $[0,\delta_0]$, with associated $\Gamma$. We take $h(\Gamma)$ and $H(\xi)$ as in Definitions~\ref{formalhdefn} and~\ref{prelimHmapDefn}.  Then we have the following estimates for any $m\geq 0$:

(i) We have
\begin{align}\label{mainhestimates}
\begin{aligned}
    \lV  h(\Gamma) \rV_{H^{k+3/2}(\cV,\delta,m)} &\leq C(M) , \\
    \lV  H(\xi) \rV_{H^{k+1}(S^1,\delta,m)} &\leq C(M),
\end{aligned}
\end{align}
as well as the analogous bounds where we replace $k$ by $k-1$ and $M$ by $\lV \xi \rV_{\sH^{k-1}}$.

(ii) Given $\uxi$ in addition to $\xi$ in $\sB^\bbox(\delta)$, if $\delta>0$,
\begin{align}
  \lV  H (\xi)- H (\uxi) \rV_{H^2(S^1,\delta,m)} \leq C \lV X - \uX \rV_{H^{3}(S^1)} .
\end{align}
\end{proposition}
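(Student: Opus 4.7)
The plan is to pull $h(\Gamma)$ back to the $\delta$-dependent reference domain $\Sigma^\circ_+(\delta)$ via the extension $X_+ = \cE_\delta(X)$ of Proposition~\ref{extensionProp2} and reduce everything to the uniform weighted div-curl estimate of Proposition~\ref{LagrVacDivCurlEst}. The key observation driving both bounds is that in the pulled-back system for $V = h(\Gamma) \circ X_+$, every datum \emph{except} the circulation constants $\mu_i$ vanishes identically; this homogeneity is precisely what allows an arbitrarily large weight index $m$ to be absorbed into the constant.

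\textbf{Part (i).} First I would assume $\delta > 0$; the $\delta = 0$ case then follows from an approximation/density argument in the spirit of Lemma~\ref{sNiExt}, passing to the limit in the uniform bounds. Setting $V = h(\Gamma) \circ X_+$ on $\Sigma^\circ_+(\delta)$, Lemma~\ref{DivFreeCurlFreeConversion} shows that $V$ solves \eqref{gendivcurlprob}--\eqref{lineIntegCond2} with $\rho = \varpi = 0$, $\eta = 0$, and $\mu_i = 2\eta_i = 2$. Applying Proposition~\ref{LagrVacDivCurlEst} at $s = k+1/2$ and arbitrary integer $m \geq 0$ yields
\begin{align*}
\|V\|_{H^{k+3/2}(\Sigma^\circ_+, \delta, m)} \leq C(|\mu_1|+|\mu_2|) \leq C(M),
\end{align*}
uniformly in $\delta$. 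Since $X_+$ is a two-to-one covering of $\cV$ with weighted control on $\grad X_+$ and $(\grad X_+)^{-1}$ from \eqref{EdeltaBd}, a change of variables together with Proposition~\ref{standardWeightedSobNormEstims}(iv) transfers this into the first line of \eqref{mainhestimates} (at the cost of inflating $m$ to some $m' = m'(m,k)$, which is harmless since $m$ is arbitrary, cf.\ Remark~\ref{WeightRemark}). The trace bound on $H(\xi) = h \circ X$ then follows from Proposition~\ref{standardWeightedSobNormEstims}(i), (iii). The low-regularity variant with $k$ replaced by $k-1$ and $M$ by $\|\xi\|_{\sH^{k-1}}$ is identical, invoking the lower-regularity branches of the same propositions.

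\textbf{Part (ii).} For the Lipschitz estimate, fix $\delta > 0$ and set $V = h(\Gamma) \circ X_+$, $\underline V = h(\underline\Gamma) \circ \underline X_+$ on the common reference domain $\Sigma^\circ_+(\delta)$, with $X_+ = \cE_\delta(X)$ and $\underline X_+ = \cE_\delta(\underline X)$. Subtracting the two pulled-back systems yields a div-curl problem for $W = V - \underline V$ whose right-hand sides are linear in $\grad X_+ - \grad \underline X_+$ and $\underline V$ (or its traces), for instance
\begin{align*}
\grad \cdot \bigl(\cof(\grad X_+^t)\, W\bigr) = -\grad \cdot \bigl((\cof(\grad X_+^t) - \cof(\grad \underline X_+^t))\, \underline V\bigr),
\end{align*}
and analogously for the curl equation, the Neumann condition, and the two circulation integrals around $\tilde \cW_i$. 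By part (i), $\underline V$ is uniformly bounded in the relevant weighted norms; combined with Proposition~\ref{extensionProp2}(vii), each right-hand side is then dominated by $C\|X - \underline X\|_{H^3(S^1)}$ in the appropriate weighted space on $\Sigma^\circ_+(\delta)$. Applying Proposition~\ref{LagrVacDivCurlEst} to $W$ at $s = 3/2$ and transferring back to $S^1$ via the traces of $X$ and $\underline X$ then produces the stated bound on $\|H(\xi) - H(\underline\xi)\|_{H^2(S^1, \delta, m)}$.

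\textbf{Main obstacle.} The principal technical difficulty is the careful accounting of weight indices during each pull-back and push-forward, particularly in part (ii), where the subtraction must be carried out on a single common domain. The crucial structural point is that $\cE_\delta$ is one operator evaluated at two inputs, so $V$ and $\underline V$ genuinely live on the same $\Sigma^\circ_+(\delta)$; this is also why the restriction $\delta > 0$ is essential. The one subtlety beyond bookkeeping is the non-vanishing of the line-integral error $\int_{\tilde \cW_i}(\underline V\,(\grad X_+ - \grad \underline X_+))\cdot d\vec r$ produced by the subtraction, but this is itself Lipschitz in $X$ by Proposition~\ref{extensionProp2}(vii) and is absorbed by the $|\mu_1|+|\mu_2|$ term in Proposition~\ref{LagrVacDivCurlEst}.
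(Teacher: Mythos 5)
Your part (ii) is essentially the paper's proof: both pull $h$ and $\underline h$ back to the common domain $\Sigma^\circ_+(\delta)$ via $\cE_\delta$, subtract the two systems (the paper's \eqref{HplusSys}--\eqref{HplusDiffSys}), and close with Proposition~\ref{LagrVacDivCurlEst} together with Propositions~\ref{standardWeightedSobNormEstims} and~\ref{extensionProp2}. One small correction there: the line-integral error $\int_{\tilde\cW_i}\underline V\,(\grad X_+-\grad\uX_+)\cdot d\vec r$ that you treat as a nonzero Lipschitz remainder is in fact \emph{exactly} zero, because Proposition~\ref{extensionProp2}(vi) gives $\cE_\delta(X)=\cE_\delta(\uX)$ in the disks $|a-a_\star|\le 10^{-1}$ and $|a-a_\infty|\le 10^{-1}$ containing $\tilde\cW_1,\tilde\cW_2$; the paper's \eqref{HplusDiffSys} records the resulting zero circulation for the difference. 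Not a gap, but worth noticing the clean structural reason.

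For part (i), however, your route differs from the paper's and has a genuine gap. The paper works entirely in the Eulerian vacuum: it takes the stream function $\uppsi$ with $h=\grad^\perp\uppsi$, notes $\uppsi=0$ on $\Gamma$ and $\uppsi=\uplambda_i$ on $\cW_i$, and applies the weighted elliptic estimate of Proposition~\ref{1stWeightedEstimate} (specifically the $\cV$-variant with vanishing Laplacian and piecewise-constant boundary data), obtaining $\lV\uppsi\rV_{H^{k+5/2}(\cV,\delta,m)}\le C(M)$ in one step. Crucially, Proposition~\ref{1stWeightedEstimate} is stated for all $\delta\in[0,\delta_0]$, so $\delta=0$ is covered with no limiting argument. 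Your pull-back via $\cE_\delta$ and Lemma~\ref{DivFreeCurlFreeConversion}/Proposition~\ref{LagrVacDivCurlEst} is fine for $\delta>0$, but all three of those tools are stated only for $\delta>0$ (the map $\cE_\delta$ and the domain $\Sigma^\circ_+(\delta)$ are not defined at $\delta=0$). Your fallback --- a density argument ``in the spirit of Lemma~\ref{sNiExt}'' --- does not close: that lemma relies on a Lipschitz bound that lets you pass to the limit along an approximating sequence $\xi_n\in\sB^\wbox$, but the only Lipschitz bound you have for $H$ is part (ii) of this very proposition, and it compares two states with the \emph{same} pinch $\delta$. You therefore have no mechanism establishing convergence of $H(\xi_n)$ as the pinches $\delta_n\to 0$, which is exactly the nontrivial step (continuous dependence of the vacuum Dirichlet problem as the neck closes). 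The paper's direct Eulerian argument is both shorter and sidesteps this entirely; if you want to keep the pull-back approach, you would need to supply the missing convergence of $\uppsi_n$ or $H(\xi_n)$ in the degenerating-domain limit, which is a separate argument not supplied by the results you cite.
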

\begin{proof}
Recall the $\uppsi$ associated to $\Gamma$ as discussed in Remark~\ref{hdefnremark}. Noting in particular $\uppsi=0$ on $\Gamma$, $\uppsi=\uplambda_1$ on $\cW_1$, and $\uppsi=\uplambda_2$ on $\cW_2$. It is not difficult to verify uniform bounds on the constants $\uplambda_i$. We then find a satisfactory bound on $\uppsi$ follows immediately from Proposition~\ref{1stWeightedEstimate}, regardless of the value of $m'$.
    \begin{align}
    \lV \uppsi \rV_{H^{k+5/2}(\cV,\delta,m)}
            &\leq C(M) \lV \uplambda_1 {\mathbbm 1}_{\cW_1}+\uplambda_2 {\mathbbm 1}_{\cW_2}  \rV_{H^{k+2}(\del\cV,\delta,m')} \\
            &\leq  C(M ).
    \end{align}
The bounds of \eqref{mainhestimates} are then readily verified from the above. The proof of the corresponding bounds with $k-1$ in place of $k$ is similar.

    Now, to prove (ii), we claim that there are extensions of $ H (\xi)$ and $ H (\uxi)$, $H_+$ and $\underline{H}_+$, respectively, to $\Sigma^\circ_+(\delta)$, for which we find
    \begin{align}    
\lV H_+-\underline{H}_+\rV_{H^{5/2}(\Sigma^\circ_+,\delta,m)} \leq C \lV X-\uX \rV_{H^3(S^1)} .
\end{align}

To justify this claim, we use Proposition~\ref{extensionProp2} to define the extension $X_+=\cE_\delta(X)$ on $\Sigma_+(\delta)$ and consider $h=\grad^\perp \uppsi$ corresponding to $X$. We define the extension of $H$ to $\Sigma^\circ_+(\delta)$ by $H_+=h\circ X_+$. With the aid of the system for $h$ given in Definition~\ref{formalhdefn} and Lemma~\ref{DivFreeCurlFreeConversion}, we find
\begin{align}\label{HplusSys}
\begin{aligned}
(a \in \Sigma^\circ_+(\delta)) \qquad &
\left\{
    \begin{aligned}
        \grad \cdot (\, \cof (\grad X^t_+) H_+ ) &= 0 \\
        \grad^\perp\cdot (\, \grad X^t_+ H_+ )   &= 0 \, \Lcm
    \end{aligned}
\right. \\
(a \in \del\Sigma^\circ_+(\delta))  \qquad &  (n \circ X_+ ) \cdot H_+ = 0 , \\
(i=1,2)\qquad &\int_{\tilde \cW_i} ( H_+ \grad X_+) \cdot  d\vec r = 2 .
\end{aligned}
\end{align}

Defining the analogous quantities $\uX$, $\uX_+$, $\underline h$, $\underline{H}_+$, $\underline n$, etc. associated to $\uxi$, we get that $\underline H_+$ satisfies the analogous problem with underlines everywhere. Now we prepare to use Proposition~\ref{LagrVacDivCurlEst} to estimate $H_+ - \underline H_+$. By reviewing the definitions and properties of $X_+=\cE_\delta(X)$ and $\uX_+=\cE_\delta(\uX)$ from Proposition~\ref{extensionProp2} we find the difference satisfies
\begin{align}\label{HplusDiffSys}
\begin{aligned}
(a \in \Sigma^\circ_+(\delta)) \qquad &
\left\{
    \begin{aligned}
        \grad \cdot (\, \cof (\grad X^t_+) (H_+ - \underline H_+) ) &=  (\del_\theta X_+ - \del_\theta \uX_+ ) \cdot \del_\psi \underline H^\perp_+ - (\del_\psi X_+ - \del_\psi \uX_+)\cdot \del_\theta \underline H^\perp_+\\
        \grad^\perp\cdot (\, \grad X^t_+ (H_+-\underline H_+) )   &= (\del_\theta X_+ - \del_\theta \uX_+)\cdot \del_\psi \underline H_+ - (\del_\psi X_+ - \del_\psi \uX_+) \cdot \del_\theta \underline H_+ \, \Lcm
    \end{aligned}
\right. \\
(a \in \del\Sigma^\circ_+(\delta)) \qquad &  \ \Big\{(n \circ X_+ ) \cdot (H_+-\underline H_+) = \begin{cases}- (n \circ X_+ - \underline n \circ \uX_+ ) \cdot \underline H_+ & a\in  \del \Sigma_+(\delta) \\ 0 & a\in  \tilde \cW_1\cup \tilde \cW_2 \ ,
\end{cases}\\
(i=1,2) \qquad &\int_{\tilde \cW_i} ((H_+ - \underline H_+)\grad X_+ )\cdot  d\vec r = 0.
\end{aligned}
\end{align}
For appropriate choices of $m_i$ for $i=1,\ldots, 5$, we make the series of estimates below. Using the estimates of Proposition~\ref{LagrVacDivCurlEst}, Proposition~\ref{standardWeightedSobNormEstims}, and Proposition~\ref{extensionProp2}, we find
\begin{align}\label{beginHLipbd}
\lV H_+ -  \underline{H}_+\rV_{H^{5/2}(\Sigma^\circ_+,\delta,m)} &\leq C_1 \Big(\max_{i,j} \lV(\grad X_+ - \grad \uX_+)\del_i \underline H^j_+ \rV_{H^{3/2}(\Sigma^\circ_+,\delta,m_1)} \\
& \qquad\qquad + \lV (n\circ X_+ - \underline{n} \circ \uX_+ )  \cdot \underline H_+ \rV_{H^2(\del\Sigma_+,\delta,m_1)} \Big) \\
&\leq C_2 \Big( \lV \grad X_+ -\grad \uX_+ \rV_{H^{3/2}(\Sigma^\circ_+,\delta,-m_2)} \lV  \underline H_+ \rV_{H^{5/2}(\Sigma^\circ_+,\delta,m_3)} \\
& \qquad\qquad + \lV \grad X_+ - \grad \uX_+ \rV_{H^2(\del\Sigma_+,\delta,-m_2)} \lV \underline H_+ \rV_{H^2(\del\Sigma_+,\delta,m_3)} \Big) \\
&\leq C_3 \lV X_+ - \uX_+ \rV_{H^{7/2}(\Sigma^\circ_+,\delta,-m_4)} \lV \underline{H}_+\rV_{H^{5/2}(\Sigma^\circ_+,\delta,m_5)} \\
&\leq C_4 \lV X - \uX \rV_{H^3(S^1)} . \label{endHLipbd}
\end{align}
This directly implies (ii) of the statement of the proposition.
\end{proof}

\begin{corollary}\label{Jbounds}
For integer $j$ with $2\leq j \leq k$, given $\xi$ in $\sH^j$ and $\uxi_1,\uxi_2$ in $\sB^\bbox$, we have
\begin{align}
\lV \bJ(\uxi_1)\xi \rV_{\sH^j} &\leq C(\lV \uxi_1\rV_{\sH^{k-1}})\lV \xi\rV_{\sH^j}, \\
\lV (\bJ( 
\uxi_1) - \bJ(\uxi_2)) \xi \rV_{\sH^2} &\leq C \lV \uxi_1-\uxi_2 \rV_{\sH^1}\lV \xi \rV_{\sH^2}.
\end{align}
\end{corollary}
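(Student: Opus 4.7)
The plan is to exploit the structure noted in Proposition~\ref{cRbJBoundProp}: every entry of $\bJ(\xi)$ is $0$ or $1$ except for the single scalar slot $(\bJ(\xi))_{24} = \bD(\xi)_{22} = 1 + |H(\xi)|^2/|X_\theta|^2$. Writing $\xi=(\dot U^*, \dot B^*, \bom, \bj, X, U)$ and carrying out the block multiplication,
\[
\bJ(\uxi_1)\xi = \bigl(\dot B^*_1,\; \bD(\uxi_1)_{22}\dot B^*_2,\; \dot U^*_1,\; \dot U^*_2,\; \bj,\; \bom,\; 0,\; 0,\; 0,\; 0\bigr).
\]
Thus the $\sH^j$-norm reduces to norms of the slots $\dot U^*,\dot B^*,\bom,\bj$ of $\xi$, which are trivially bounded by $\lV\xi\rV_{\sH^j}$, plus the single term $\lV\bD(\uxi_1)_{22}\,\dot B^*_2\rV_{H^j(S^1)}$. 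The same reduction sends the Lipschitz difference $(\bJ(\uxi_1)-\bJ(\uxi_2))\xi$ to $\lV(\bD(\uxi_1)_{22}-\bD(\uxi_2)_{22})\,\dot B^*_2\rV_{H^2(S^1)}$, so both estimates collapse to controlling a single scalar product.

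For the first bound, since $j\geq 2$ and $H^j(S^1)$ is a Banach algebra, it suffices to show $\lV\bD(\uxi_1)_{22}\rV_{H^j(S^1)}\leq C(\lV\uxi_1\rV_{\sH^{k-1}})$. The $k\mapsto k-1$ variant of Proposition~\ref{hEstimateProp}(i) yields $\lV H(\uxi_1)\rV_{H^k(S^1)}\leq C(\lV\uxi_1\rV_{\sH^{k-1}})$, hence the same control in $H^j(S^1)$ for $j\leq k$. For the denominator, $\uxi_1\in \sB^\bbox\subset\sH^k_\gp$ forces $\cG(X_1)\geq \tfrac{1}{2}C^0_\gp$; letting $\vartheta\to\theta$ in the first term of \eqref{glancingfunction} produces a uniform lower bound $|X_{\theta,1}|\geq c>0$. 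A standard Moser composition estimate for $y\mapsto 1/y^2$ on $[c,\infty)$ then bounds $|X_{\theta,1}|^{-2}$ in $H^j(S^1)$ by $C\lV X_1\rV_{H^{k+1}}\leq C\lV\uxi_1\rV_{\sH^{k-1}}$, and the algebra property closes the estimate.

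For the Lipschitz bound I would decompose
\[
\bD(\uxi_1)_{22}-\bD(\uxi_2)_{22} = \frac{(H(\uxi_1)+H(\uxi_2))\cdot(H(\uxi_1)-H(\uxi_2))}{|X_{\theta,1}|^2} + |H(\uxi_2)|^2\Bigl(\frac{1}{|X_{\theta,1}|^2}-\frac{1}{|X_{\theta,2}|^2}\Bigr)
\]
and estimate each piece in $H^2(S^1)$ via the algebra property. Proposition~\ref{hEstimateProp}(ii) yields $\lV H(\uxi_1)-H(\uxi_2)\rV_{H^2(S^1,\delta,m)}\leq C\lV X_1-X_2\rV_{H^3(S^1)}$, and since the weight $M_\delta(\theta)=1/(\delta+\cos^2\theta)\geq c>0$ for $\delta\in[0,\delta_0]$, this dominates the unweighted $H^2(S^1)$ norm. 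For the second piece, a Moser-type Lipschitz bound for $y\mapsto 1/y^2$ on $[c,\infty)$ gives a bound by $C\lV X_{\theta,1}-X_{\theta,2}\rV_{H^2(S^1)}=C\lV X_1-X_2\rV_{H^3(S^1)}$. Finally $\lV X_1-X_2\rV_{H^3(S^1)}\leq \lV\uxi_1-\uxi_2\rV_{\sH^1}$ directly from the definition of the $\sH^1$-norm, completing the estimate.

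The one mild obstacle I anticipate is that Proposition~\ref{hEstimateProp}(ii) is stated for $\uxi_1,\uxi_2$ in a common $\sB^\bbox(\delta)$ with $\delta>0$, whereas the corollary permits distinct pinches and pinch zero. This is handled by a density and weak-$\ast$ compactness argument in the spirit of Lemma~\ref{sNiExt}: approximate zero-pinch configurations by sequences in $\sB^\wbox$ and pass to the limit using the uniform $H^{k+1}(S^1)$ bound on $H$ from part~(i). Beyond this technical point, the corollary is a direct packaging of Proposition~\ref{hEstimateProp} with the Sobolev algebra and Moser composition machinery on $S^1$.
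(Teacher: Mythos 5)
Your proof is the paper's one-line sketch made explicit, and the approach is the same: isolate the single non-constant slot $\bD(\uxi_1)_{22}$, control it via the $k\mapsto k-1$ variant of Proposition~\ref{hEstimateProp}(i) together with a uniform lower bound on $|X_\theta|$, and close using the Sobolev algebra and Moser composition machinery on $S^1$. The only genuinely distinct choice you make is the route to $|X_\theta|\geq c$: you extract it from $\cG(X)\geq\tfrac{1}{2}C^0_\gp$ by sending $\vartheta\to\theta$, which is clean and correct, whereas the paper's remark obtains it from $C^1$-smallness of $X-X_0$ combined with $|\del_\theta X_0|\geq\epsilon_1$ from \eqref{Xzerobds}. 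Both conditions are built into $H^{k+2}_\gp$, so either route is valid.

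The last paragraph is the only place where the argument is incomplete as stated. You are right that Proposition~\ref{hEstimateProp}(ii) is formulated for $\uxi_1,\uxi_2$ lying in a \emph{common} $\sB^\bbox(\delta)$ with $\delta>0$, but the density/weak-$\ast$ argument you invoke (in the style of Lemma~\ref{sNiExt}) only removes the restriction $\delta>0$, by approximating a zero-pinch state from $\sB^\wbox$ and passing to the limit; it gives nothing for the case $\uxi_1\in\sB^\bbox(\delta_1)$, $\uxi_2\in\sB^\bbox(\delta_2)$ with $\delta_1\neq\delta_2$ both positive, which you also flag. To be fair, the paper's own one-line proof elides this as well, and in every actual invocation of the corollary (Propositions~\ref{KatoProp} and~\ref{LimitExistence}) the two arguments are $\xi_n(t)$ and $\xi_{n-1}(t)$, both of which lie in $\sB^\bbox(t)$ by Proposition~\ref{xiInPinchRanget}, so the common-$\delta$ hypothesis is always satisfied in practice. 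If you want the statement's literal generality, the bridge is short: the constraint $c\delta\le\delta_\Gamma\le C\delta$ in Definition~\ref{sBboxdefns} determines $\delta$ up to a bounded ratio from $\delta_\Gamma$, and $|\delta_{\Gamma_1}-\delta_{\Gamma_2}|\les\lV X_1-X_2\rV_{C^0(S^1)}$, so either $\delta_1$ and $\delta_2$ are comparable (widen the constants to place both states in a common class and apply the estimate) or $\lV\uxi_1-\uxi_2\rV_{\sH^1}$ is bounded below and the Lipschitz inequality is vacuous. That observation is worth spelling out if the full generality is retained.
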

\begin{proof}
These follow from our above bounds on $ H(\xi)$ and the definitions of $\bJ(\xi)$ (see Definition~\ref{DEcommDefns}), $\sB^\bbox$, and the $\sH^k$ norm. We also use here that we can ensure $|X_\theta|\geq c$ for $\xi$ in $\sB^\bbox$, as long as $r$ is small enough, since then $\lV \xi-\xi_0\rV_{\sH^{k-1}}\leq r$.
\end{proof}
Just as we proved bounds for the map $\xi \mapsto  H(\xi)$ above, we do the same for the external pressure gradient map, $\xi\mapsto P^+_\grad (\xi)$, in Proposition~\ref{PplusbdProp}. First we record an easy lemma to be used in the proof.

\begin{lemma}\label{harmExtLinftyBd}
Consider $\xi$ in $\sB^\bbox$ with corresponding interface $\Gamma$ and harmonic extension operator $\bigh_+$. For any $0\leq s\leq k+2$,
\begin{align}
    \lV \bigh_+ f \rV_{H^s(\mathcal{W})} \leq C \lV f \rV_{L^\infty (\Gamma) } .
\end{align}
\end{lemma}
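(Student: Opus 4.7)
The plan is to exploit the geometric fact that the walls $\mathcal{W}_1$ and $\mathcal{W}_2$ sit in the interior of $\Omega_+$ at a uniform positive distance from the interface $\Gamma$. Indeed, for $\xi\in\sB^\bbox$ the parametrization $X$ is close to $X_0$ in $H^{k+1}(S^1)$ by the condition \eqref{closenesscondition}, while the walls are fixed. In view of the initial placement laid out in Definition~\ref{initialData}, choosing $r_0$ small enough therefore guarantees
\[
\operatorname{dist}(\Gamma,\mathcal{W}) \geq c_0 > 0
\]
uniformly in $\xi\in\sB^\bbox$, independently of the pinch $\delta_\Gamma$. In particular the degenerating geometry of the vacuum plays no role at the walls.

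First I would apply the maximum principle to the bounded harmonic function $\bigh_+ f$ on $\Omega_+$ to obtain
\[
\lV \bigh_+ f \rV_{L^\infty(\Omega_+)} \leq \lV f \rV_{L^\infty(\Gamma)}.
\]
Then, noting that $\bigh_+ f$ is harmonic on the open neighborhood $U = \{x\in\Omega_+ : \operatorname{dist}(x,\mathcal{W}) < c_0/2\}$, which lies at distance at least $c_0/2$ from the Dirichlet data set $\Gamma$, I would invoke standard interior estimates for harmonic functions (Cauchy-type bounds from iterating the mean value property): for every multi-index $\alpha$,
\[
\sup_{x\in\mathcal{W}} |\nabla^\alpha \bigh_+ f(x)| \leq C_\alpha \lV \bigh_+ f \rV_{L^\infty(U)} \leq C_\alpha \lV f \rV_{L^\infty(\Gamma)}.
\]

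To conclude, I would use that each component $\mathcal{W}_i$ is a fixed smooth compact one-dimensional manifold whose shape is independent of $\xi$. Hence the $H^s(\mathcal{W})$ norm is dominated by the $C^s(\mathcal{W})$ norm, which in turn is controlled by the pointwise bounds on $|\nabla^\alpha \bigh_+ f|$ for $|\alpha|\leq s$ just established. Patching these together yields the asserted inequality with a constant depending only on $s$, the separation $c_0$, and the fixed geometry of $\mathcal{W}$.

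The only point that requires any thought is the uniform separation $\operatorname{dist}(\Gamma,\mathcal{W})\geq c_0$; once this is in hand, the estimate is a routine consequence of interior harmonic regularity and presents no genuine obstacle. In particular, in sharp contrast with the weighted estimates for $h$ or $P^+_\grad$ of Section~\ref{vacuuminterfaceestimatessection}, no weight is needed here since $\mathcal{W}$ never approaches the splash point.
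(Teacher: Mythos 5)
Your proof is correct and follows essentially the same route as the paper: both establish the bound by (i) using the uniform separation between $\Gamma$ and the fixed walls $\mathcal{W}$, (ii) applying the maximum principle to bound $\bigh_+ f$ in $L^\infty$, and (iii) invoking interior elliptic regularity to upgrade to derivative bounds on $\mathcal{W}$. The only cosmetic difference is that the paper passes through an $L^2$ norm on a neighborhood $\sV$ of $\mathcal{W}$ while you pass through pointwise Cauchy-type derivative estimates, but this is the same argument in minor variants.
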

\begin{proof}
    Let us take $\sV$ to be the set of all points within distance $10^{-1}$ of $\mathcal{W}$. By standard elliptic theory, it is not hard to verify that
    \begin{align}
        \lV \bigh_+ f \rV_{H^s(\mathcal{W})} \leq C \lV \bigh_+ f \rV_{L^2(\sV)}.
    \end{align}
    With the use of the maximum principle, we can then show the estimate claimed by the lemma.
\end{proof}
\begin{proposition}\label{PplusbdProp}
    Consider $\xi$ in $\sB^\bbox$. Then we have the bound
    \begin{align}\label{Pplusbd}
    \lV  P^+_\grad (\xi)\rV_{H^{k+1}(S^1) } &\leq C(M ) ,
\end{align}
as well as the analogous bound with $k-1$ in place of $k$ and $\lV \xi \rV_{\sH^{k-1}}$ in place of $M$.

In addition, if both $\xi$ and $\uxi$ are in $\sB^\bbox(\delta)$ for $\delta$ with $0<\delta\leq \delta_0$,
    \begin{align}\label{PplusDiffbd}
    \lV  P^+_\grad (\xi) - P^+_\grad (\uxi)\rV_{H^2(S^1) } &\leq C \lV X-\uX \rV_{H^3(S^1)} .
\end{align}
\end{proposition}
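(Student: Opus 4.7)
\medskip

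\textbf{Setup.} Since $p_+=\tfrac{1}{2}(1-\bigh_+)|h|^2$ and $\bigh_+|H|^2$ is harmonic in $\Omega_+\supset\cV$, the function $p_+$ is the unique solution in $\cV$ to the mixed Dirichlet problem
\[
\Delta p_+=|\grad h|^2 \text{ in } \cV,\qquad p_+=0 \text{ on } \Gamma,\qquad p_+=\tfrac{1}{2}\bigl(|h|^2-\bigh_+|H|^2\bigr) \text{ on } \cW,
\]
where the source comes from $\grad\cdot h=\grad^\perp\cdot h=0$ forcing $\Delta h=0$ componentwise. Because $p_+$ vanishes on $\Gamma$, its tangential derivative along $\Gamma$ is zero, so $\grad p_+|_\Gamma=(\del_n p_+)\,n$ is purely normal. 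Consequently
\[
P^+_\grad(\xi)=(\grad p_+)\circ X=\bigl((\del_n p_+)\circ X\bigr)\,N/|X_\theta|,
\]
and controlling $P^+_\grad$ in $H^{k+1}(S^1)$ reduces, after composition with $X\in H^{k+2}_\gp$ and the bound $|X_\theta|\ge c$, to controlling $\grad p_+$ on $\Gamma$ in $H^{k+1}$.

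\medskip

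\textbf{Proof of \eqref{Pplusbd}.} I would apply the uniform-in-$\delta$ weighted elliptic estimate of Proposition~\ref{1stWeightedEstimate} with $s=k+1/2$ to $p_+$, obtaining
\[
\|p_+\|_{H^{k+5/2}(\cV,\delta,0)}\le C(M)\Bigl[\,\| |\grad h|^2 \|_{H^{k+1/2}(\cV,\delta,m')}+\|p_+\|_{H^{k+2}(\del\cV,\delta,m')}\Bigr].
\]
The source is bounded by $C(M)$ using the weighted product estimate (Proposition~\ref{standardWeightedSobNormEstims}(iv)) combined with the weighted bound $\|h\|_{H^{k+3/2}(\cV,\delta,m)}\le C(M)$ for any $m\ge 0$ from Proposition~\ref{hEstimateProp}; the freedom to take $m$ arbitrarily large absorbs the index inflation $m\to m'$. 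The boundary data vanishes on $\Gamma$; on $\cW$ the weight is trivial, and $\|p_+\|_{H^{k+2}(\cW)}$ is controlled by the weighted $h$ bound on $\cV$ (restricted to the thickened shell around $\cW$) together with Lemma~\ref{harmExtLinftyBd} and the uniform $L^\infty$ bound on $|H|^2$. Taking the trace of $\grad p_+$ to $\Gamma$ (using the boundary trace statement of Proposition~\ref{standardWeightedSobNormEstims}(i) for $m=0$, which is the unweighted norm), composing with $X$, and invoking Proposition~\ref{standardWeightedSobNormEstims}(iii) yields \eqref{Pplusbd}. The same argument at one lower regularity, using the $k-1$ versions of the $h$ bound, produces the companion estimate with $M$ replaced by $\|\xi\|_{\sH^{k-1}}$.

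\medskip

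\textbf{Proof of \eqref{PplusDiffbd}.} Since $p_+(\xi)$ and $p_+(\uxi)$ live on distinct vacuums $\cV$ and $\underline\cV$, I would compare them on the common Lagrangian reference $\Sigma^\circ_+(\delta)$ using the extensions $X_+=\cE_\delta(X)$ and $\uX_+=\cE_\delta(\uX)$ of Proposition~\ref{extensionProp2}. Viewing $V=(\grad p_+)\circ X_+$ and $\underline V=(\grad\underline p_+)\circ\uX_+$ as vector fields on $\Sigma^\circ_+(\delta)$, each satisfies a div-curl system of the form \eqref{gendivcurlprob}--\eqref{lineIntegCond2} with zero curl, divergence $|\grad h|^2\circ X_+\cdot\det\grad X_+$, zero normal trace on the bottom boundary, and data on the top, $\tilde\cW_1$, $\tilde\cW_2$ determined by $p_+|_\cW$. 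The difference $V-\underline V$ then solves a div-curl problem whose source and boundary terms, via a chain mirroring \eqref{beginHLipbd}--\eqref{endHLipbd}, are bounded by $C\|X-\uX\|_{H^3(S^1)}$ using Proposition~\ref{extensionProp2}(vi)--(vii) for the geometric Lipschitz inputs and Proposition~\ref{hEstimateProp}(ii) together with its $\cV$-counterpart (obtained by pulling the field $h$ back via $X_+$ to $\Sigma^\circ_+(\delta)$ and applying Proposition~\ref{LagrVacDivCurlEst} to the difference, exactly as in the proof of Proposition~\ref{hEstimateProp}(ii)). A final application of Proposition~\ref{LagrVacDivCurlEst} at $s=3/2$ with $m=0$ controls $V-\underline V$ in $H^{5/2}(\Sigma^\circ_+,\delta,0)$, whose trace at $\psi=0$ yields \eqref{PplusDiffbd}.

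\medskip

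\textbf{Main obstacle.} The delicate step is the Lipschitz estimate: the two pressures are anchored to geometrically different vacuums, and the source term $|\grad h|^2$ is quadratic in $h$, so linearizing in $\xi$ requires simultaneously a uniform bound on one factor and a Lipschitz bound on the other in compatible weighted norms. The pullback to $\Sigma^\circ_+(\delta)$ via $\cE_\delta$ converts this into a single div-curl comparison where the geometric dependence is encapsulated in $\grad X_+-\grad\uX_+$, making the weight bookkeeping tractable; handling the walls $\tilde\cW_1,\tilde\cW_2$ (where $\bigh_+|H|^2$ must be compared across domains) and absorbing the index inflation $m\to m'$ at each elliptic step are the remaining technicalities, both manageable because the weighted $h$ bound is available for every $m\ge 0$.
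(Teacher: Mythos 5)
Your approach for \eqref{Pplusbd} is essentially the same as the paper's --- apply Proposition~\ref{1stWeightedEstimate} using $\Delta p_+=|\grad h|^2$, the Dirichlet condition $p_+|_\Gamma=0$, Lemma~\ref{harmExtLinftyBd} for the wall data, and the weighted $h$ bounds of Proposition~\ref{hEstimateProp} to absorb the weight-index inflation. The only cosmetic difference is that the paper first applies the estimate to $\chi p_+$ for a cutoff $\chi$ vanishing near the walls (so that the boundary term drops entirely), whereas you keep $p_+$ and carry the wall boundary datum; both are valid. Your observation that $\grad p_+|_\Gamma$ is purely normal is correct but not needed: one simply bounds the whole gradient. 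For \eqref{PplusDiffbd}, the strategy (pull back to $\Sigma^\circ_+(\delta)$ via $\cE_\delta$, derive a div-curl system for the difference, iterate Proposition~\ref{LagrVacDivCurlEst}) matches the paper, which again inserts $\tilde\chi=\chi\circ X_+$ to simplify the wall boundary conditions.

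There is, however, an error in your description of the div-curl system for $V=(\grad p_+)\circ X_+$. You write that it has ``zero normal trace on the bottom boundary, and data on the top, $\tilde\cW_1$, $\tilde\cW_2$ determined by $p_+|_\cW$.'' This contradicts your own (correct) remark that $p_+|_\Gamma=0$ forces $\grad p_+|_\Gamma$ to be purely normal. The Dirichlet condition on $\Gamma$ gives zero \emph{tangential} trace of $\grad p_+$ there, and the normal component is the nontrivial datum one is trying to estimate; this is why the paper invokes the variant of Proposition~\ref{LagrVacDivCurlEst} ``where $n$ is replaced by $\tau$ and $V$ by $V^\perp$.'' Moreover, since $\cE_\delta$ sends \emph{both} the top and the bottom boundary of $\Sigma_+(\delta)$ onto $\Gamma$ (Proposition~\ref{extensionProp2}(i)), the top of $\Sigma^\circ_+(\delta)$ should also carry zero tangential trace; only $\tilde\cW_1$ and $\tilde\cW_2$ inherit data from $p_+|_\cW$. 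These slips do not affect the viability of the overall argument, but as written the div-curl problem you set up is the wrong one.
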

\begin{proof}
    For \eqref{Pplusbd}, recall the quantity $p_+$ introduced in Definition~\ref{ExtPressureDefnBd}.
    Note that, in $\cV$,
    \begin{align}
        \Delta p_+ = \Delta \left(\half(1-\bigh_+) |h|^2 \right) = |\grad h|^2 .
    \end{align}
    Note $\Omega_+\setminus \cV$ gives the set of points outside the walls $\mathcal{W}$. Let us then take a smooth cutoff function $\chi:\R^2\to\R^+$ vanishing at all points within distance $10^{-3}$ of $\Omega_+\setminus \cV$, and equal to $1$ at all points at least a distance $10^{-2}$ from $\Omega_+\setminus \cV$.
    We find that in $\cV$
    \begin{align}
    \Delta (\chi p_+)
    =\Delta \chi p_+ + 2 \grad \chi \cdot \grad p_+ + \chi |\grad h|^2  .
    \end{align}
    Using this in an application of Proposition~\ref{1stWeightedEstimate}, noting that $\chi p_+$ vanishes on both $\Gamma$ and $\mathcal{W}$, we find for an $m \geq 0$
    \begin{align}\label{chipplusbd}
        \lV \chi p_+ \rV_{H^{k+5/2}(\cV) } \leq C(M) \left(\lV p_+ \rV_{H^{k+3/2}(\cV,\delta,m)}+\lV h \rV_{H^{k+3/2}(\cV,\delta,m)}\right).
    \end{align}
    Applying Proposition~\ref{1stWeightedEstimate} again and then applying Lemma~\ref{harmExtLinftyBd}, we get for an $m'\geq 0$
    \begin{align}
        \lV p_+ \rV_{H^{k+3/2}(\cV,\delta,m) } &\leq C(M) \left(\lV |\grad h|^2 \rV_{H^{k-1/2}(\cV,\delta,m')} + \lV (1-\bigh_+) |h|^2 \rV_{H^{k+1}(\mathcal{W})}\right) \\
       &\leq C(M) \left(\lV |\grad h|^2 \rV_{H^{k-1/2}(\cV,\delta,m')} + \lV h \rV_{H^{k+1}(\mathcal{W})} +  \lV h \rV_{L^\infty(\Gamma)}\right).
    \end{align}
    Using this in \eqref{chipplusbd} and then applying Propositions~\ref{standardWeightedSobNormEstims} and~\ref{hEstimateProp}, we thus obtain
    \begin{align}
    \lV \chi p_+ \rV_{H^{k+5/2}(\cV)} \leq C(M).
    \end{align}
    Since $\chi$ is $1$ in a neighborhood of $\Gamma$, recalling that $  P^+_\grad(\xi)=(\grad p_+)\circ X$, we easily use this to conclude that \eqref{Pplusbd} holds. The alternate version with $k-1$ in place of $k$ is similar.

Regarding the proof of the Lipschitz bound \eqref{PplusDiffbd}, we sketch the main steps below. The first is to extend $P^+_\grad$ to $\Sigma^\circ_+(\delta)$ with $\bP^+_\grad = (\grad p_+)\circ X_+$, for $X_+ = \cE_\delta(X)$. Recall from above the identity $\Delta p_+ = |\grad h|^2$ and the cutoff function $\chi$. Defining $H_+ = h\circ X_+$ and $\tilde\chi=\chi\circ X_+$, one finds that $\tilde\chi\bP^+_\grad$ satisfies the following:
\begin{align}
\begin{aligned}
(a \in \Sigma^\circ_+(\delta)) \qquad &
\left\{
    \begin{aligned}
        \grad \cdot (\, \cof (\grad X^t_+) \tilde\chi\bP^+_\grad ) &= \grad \tilde\chi \cdot (\, \cof(\grad X^t_+) \bP^+_\grad )+ \tilde\chi\det \grad X_+ | \grad H_+(\grad X_+)^{-1}|^2 \\
        \grad^\perp\cdot (\, \grad X^t_+  \tilde\chi\bP^+_\grad )   &= \grad^\perp \tilde\chi \cdot (\, \grad X^t_+ \bP^+_\grad ) \, \Lcm
    \end{aligned}
\right. \\
(a \in \del\Sigma^\circ_+(\delta))  \qquad &  \ \big\{ \, (\tau \circ X_+ ) \cdot \tilde\chi \bP^+_\grad = 0 , \\
(i=1,2)\qquad &\int_{\tilde \cW_i} ( (\tilde\chi\bP^+_\grad)^\perp \grad X_+) \cdot  d\vec r = 0 .
\end{aligned}
\end{align}
For $\uX$ corresponding to $\uxi$, defining $\underline P^+_\grad$, etc. in the analogous way, we obtain the analogous system for $\tilde \chi\underline \bP^+_\grad$. Similar to the way \eqref{HplusDiffSys} was obtained from \eqref{HplusSys} in the proof of Proposition~\ref{hEstimateProp}, one obtains a system for \(\tilde\chi(\bP^+_\grad-\underline \bP^+_\grad)\) in \(\Sigma^\circ_+(\delta)\).

After repeated applications of Proposition~\ref{LagrVacDivCurlEst} in which one uses a similar strategy to the proof of (ii) of Proposition~\ref{hEstimateProp}, one eventually arrives at the bound \eqref{PplusDiffbd} by estimating \(\tilde\chi(\bP^+_\grad-\underline \bP^+_\grad)\) in $H^{5/2}(\Sigma^\circ_+(\delta))$. Let us note that to estimate this quantity, when one applies Proposition~\ref{LagrVacDivCurlEst}, one uses the alternative version of the system \eqref{gendivcurlprob}--\eqref{lineIntegCond2} where $n$ is replaced by $\tau$ and $V$ by $V^\perp$ in \eqref{lineIntegCond2}. Furthermore, after handling the obvious terms, since $\bP^+_\grad -\underline \bP^+_\grad$ still appears in the right-hand sides (though without being hit by derivatives), this first application still leaves one with the task of proving a bound on this difference in $H^{3/2}(\Sigma^\circ_+(\delta))$, similar to the step we took in \eqref{chipplusbd}. After a second application of a div-curl system bound, it still remains to bound $(\bigh_+|h|^2)\circ X-(\underline\bigh_+|\underline h|^2)\circ \uX$ on $\tilde\cW_1\cup\tilde\cW_2$ in the appropriate norm. By deriving yet another elliptic problem in $\Sigma^\circ_+(\delta)$, applying another such elliptic estimate, and repeatedly following the strategy of \eqref{beginHLipbd}--\eqref{endHLipbd}, one finishes the proof.
\end{proof}

Now that we have established bounds for the maps $\xi\mapsto H(\xi)$ and $\xi\mapsto  P^+_\grad(\xi)$, we merely state the corresponding bounds for the time derivative versions. Their proofs can be found in Section~\ref{auxiliaryestimates}.
\begin{myprop}{\ref{dotHEstimateProp}}
Fix $m\geq 0$. For $\xi$ in $\sB^\bbox(\delta)$ with $\delta$ in $[0,\delta_0]$, we have the estimates
\begin{align}
\lV \dot{h}(\xi) \rV_{H^k(\Gamma, \delta, m)} &\leq C(M) , \\
\lV \dot{H}(\xi) \rV_{H^k(S^1,\delta,m)} &\leq C(M) .
\end{align}
Additionally, if $\delta>0$, for $\uxi$ also in $\sB^\bbox(\delta)$,
\begin{align}
\lV \dot{H}(\xi)-\dot{H}(\uxi) \rV_{H^2(S^1,\delta,m)} &\leq C \lV \xi - \uxi \rV_{\sH^2} .
\end{align}
\end{myprop}
\begin{myprop}{\ref{dotPplusSysProp}}
Fix $m\geq 0$. For $\xi$ in $\sB^\bbox$, we have the estimate
\begin{align}
\lV  \dot{P}^+_\grad(\xi) \rV_{H^k(S^1)} &\leq C(M) ,
\end{align}
and if $\xi$ and $\uxi$ are in $\sB^\bbox(\delta)$ for $0<\delta\leq \delta_0$,
\begin{align}
\lV  \dot{P}^+_\grad(\xi) - \dot{P}^+_\grad(\uxi) \rV_{H^2(S^1)} &\leq C \lV \xi - \uxi \rV_{\sH^2} .
\end{align}
\end{myprop}

\subsection{Dirichlet-to-Neumann estimates}\label{DtNestimatessection}
        Now we discuss bounds involving Dirichlet-to-Neumann operators $\bign_\pm(\xi)$ and related objects, like $\Nres(\xi)$. This analysis is more nuanced than that in Section~\ref{magfieldestimatessection} for several reasons. For instance, operators such as these, which involve harmonic extensions off of a curve, are very sensitive to changing pinches in the domain. Additionally, they can either gain or lose regularity depending on where in space the function they act on is supported and where the output of the map is evaluated.


        Before we proceed, let us outline the intuition behind the way the Dirichlet-to-Neumann operator $\bign_+$ for extensions into $\Omega_+$ behaves to help explain what one should expect for the most basic estimates.

        Consider a situation in which the vacuum with a pinch given by a very small (but positive) number $\delta$. In this case, the output of $\bign_+ f$ can be quite large even if $f:\Gamma\to\R$ is very smooth. For example, if $f$ takes a different constant value on each of the two arcs of $\Gamma$ that come close together in the event of a near-splash, the magnitude of the gradient of the harmonic extension would generally be inversely proportional to $\delta$ in the pinched region. Thus classical bounds on the normal derivative cannot hold uniformly for small $\delta$.

        On the other hand, the proposition below establishes that if one has sufficient $\delta$-dependent weighted control on the input function $f$, then the output of $\bign_+ f$ can be uniformly bounded in standard Sobolev spaces, and thus in $L^\infty$. For example, the weighted bound on $|h|$ of Proposition~\ref{hEstimateProp} implies that $|h|$ must be very small where the vacuum is very pinched. In fact the magnitude of $h$ near the pinch points becomes small enough to guarantee that $\bign_+ |h|$ remains bounded, even for small $\delta$.

        The proposition below clarifies the sense it which the operator produces bounded output if the input function is small in the pinch. Note that it remains consistent with the standard, well-known facts that harmonic extensions off of a curve have a half-degree higher index of Sobolev regularity in the domain of extension, and that the Dirichlet-to-Neumann operator generally takes one derivative when applied to a function on the interface.
        \begin{proposition}\label{basicHarmExtBds}
            Let $\xi$ be in $\sB^\bbox(\delta)$ for $0<\delta\leq \delta_0$, with corresponding $\Gamma$ (and $\cV$) and the associated harmonic extension operator $\bigh_+$ to $\Omega_+$. Let $\bign_+(\xi)$ and $\cN_+(\xi)$ be as defined in Definition~\ref{DirichToNeumDefs}. 
            Then for some integer $m \geq 0$ and any integer $s$ with \(0 \leq s \leq k+1 \), we have the estimates
            \begin{align}
&&                \lV \bigh_+ f \rV_{H^{s+3/2}(\cV)} &\leq C(M)  \lV f \rV_{H^{s+1}(\Gamma,\delta,m)} &(f:\Gamma\to \R),\\
&&                \lV \bign_+(\xi) f \rV_{H^{s}(\Gamma)} &\leq C(M)  \lV f \rV_{H^{s+1}(\Gamma,\delta,m)}  & (f:\Gamma\to\R), \\
&&                \lV \cN_+(\xi) F \rV_{H^{s}(S^1)} &\leq C(M)  \lV F \rV_{H^{s+1}(S^1,\delta,m)} &(F:S^1\to\R).
            \end{align}
Moreover, in the case that $s\leq k$, we can replace $M$ by $\lV \xi\rV_{\sH^{k-1}}$.
        \end{proposition}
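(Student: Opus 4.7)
The plan is to derive all three bounds from the weighted elliptic estimate of Proposition~\ref{1stWeightedEstimate}, exploiting the key observation that the weighted norm $H^s(\cV,\delta,0)$ coincides (up to the finite-overlap constant from Remark~\ref{finiteoverlap}) with the unweighted norm $H^s(\cV)$, since the weight factor $\ell_\nu^{-2m}$ reduces to $1$ when $m = 0$. This is what converts the weighted-to-weighted estimate of Proposition~\ref{1stWeightedEstimate} into a weighted-to-unweighted bound suitable for the statement.

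I would begin with the harmonic extension bound. Fix integer $s$ with $1 \le s \le k+1$ and apply estimate~\eqref{1stWeightedEstimateBd1} to $u = \bigh_+ f$ on $\cV$, taking the half-integer index $s_* = s - 1/2$ and the LHS weight $m = 0$. Since $\Delta u = 0$, the interior term drops out, leaving
\[
\lV \bigh_+ f \rV_{H^{s+3/2}(\cV,\delta,0)} \le C(M)\, \lV \bigh_+ f \rV_{H^{s+1}(\del\cV,\delta,m')}.
\]
Decompose the boundary into $\Gamma$ and $\mathcal{W}$. The $\Gamma$ contribution is exactly $\lV f \rV_{H^{s+1}(\Gamma,\delta,m')}$. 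The $\mathcal{W}$ contribution is controlled by the unweighted norm $\lV \bigh_+ f\rV_{H^{s+1}(\mathcal{W})}$, because $\mathcal{W}$ lies at positive distance from $p_\splash$, so the weight $\mu_\delta^{m'}$ is uniformly bounded on $\mathcal{W}$. Then Lemma~\ref{harmExtLinftyBd} gives $\lV \bigh_+ f\rV_{H^{s+1}(\mathcal{W})} \le C \lV f\rV_{L^\infty(\Gamma)}$, and Sobolev embedding on $\Gamma$ together with $\mu_\delta^{m'} \ge 1$ yields $\lV f\rV_{L^\infty(\Gamma)} \le C \lV f\rV_{H^{s+1}(\Gamma,\delta,m')}$. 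Finally the $m=0$ weighted norm on $\cV$ is comparable to the unweighted one. The remaining case $s=0$ is handled by applying the same argument with $s_* = 1/2$, which yields the stronger $H^{5/2}(\cV)$ bound dominating the required $H^{3/2}(\cV)$ bound (one may then interpolate, or just observe that the $s=0$ case is weaker than the $s=1$ case and so can be absorbed).

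For the pointwise Dirichlet-to-Neumann bound, I would apply the trace theorem: for $u \in H^{s+3/2}(\cV)$, one has $\grad u \in H^{s+1/2}(\cV)$, whose trace on $\Gamma$ lies in $H^s(\Gamma)$. The constant in this trace bound is uniform in $\delta$ because, through the Whitney cover $\{Q_\nu\}$, the trace reduces to local trace estimates on scaled-uniform domains of the form $1.01 Q_\nu \cap \cV$, which are chord-arc with constants independent of $\nu$; summing with the finite-overlap property and the equivalence $H^{s+3/2}(\cV,\delta,0) \simeq H^{s+3/2}(\cV)$ gives $\lV \bign_+(\xi)f\rV_{H^s(\Gamma)} \le C \lV \bigh_+ f\rV_{H^{s+3/2}(\cV)}$, and composing with the first bound yields the claim. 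The Lagrangian version $\cN_+(\xi) F$ then follows directly from the definition \eqref{Npmdefns} together with the norm equivalences between $\Gamma$ and $S^1$, weighted and unweighted, supplied by Proposition~\ref{standardWeightedSobNormEstims}(iii). The final assertion about replacing $M$ by $\lV\xi\rV_{\sH^{k-1}}$ for $s \le k$ is inherited from the analogous statements in Proposition~\ref{1stWeightedEstimate} and Lemma~\ref{harmExtLinftyBd}.

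The main obstacle is ensuring that no constant secretly depends on $\delta$. The crucial leverage comes from Proposition~\ref{1stWeightedEstimate}: the price of an unweighted left-hand side is merely an increase of the weight index from $m$ to some $m'\ge m$ on the right-hand side, a loss that is harmless in this proposition because the $m$ in the hypothesis is already arbitrary. Everything else — the trace theorem, Sobolev embedding, and the change of variables to $S^1$ — is local and uniform in $\delta$ once this weighted-to-unweighted conversion is in place.
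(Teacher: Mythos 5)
Your argument is the same one the paper intends (the paper's proof is a one-liner pointing at Proposition~\ref{1stWeightedEstimate} and Lemma~\ref{harmExtLinftyBd}, which is exactly what you unpack): apply the weighted elliptic estimate with LHS weight index $0$ so that the left side is the unweighted $H^{s+3/2}(\cV)$ norm, drop the Laplacian term, split the boundary norm into $\Gamma$ and $\cW$ (the paper's $\lV\cdot\rV_{H^j(\del\cV,\delta,m)}$ already carries no weight on $\cW$, so that piece is unweighted by fiat), handle $\cW$ via Lemma~\ref{harmExtLinftyBd} and $\lV f\rV_{L^\infty(\Gamma)}\le C\lV f\rV_{H^{s+1}(\Gamma,\delta,m')}$, and then pass to the Dirichlet-to-Neumann bounds by a trace estimate (equivalently Proposition~\ref{standardWeightedSobNormEstims}(i)) and the $\Gamma\leftrightarrow S^1$ equivalence in Proposition~\ref{standardWeightedSobNormEstims}(iii).

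One small point worth fixing: your treatment of the $s=0$ case doesn't actually close. Applying Proposition~\ref{1stWeightedEstimate} with $s_*=1/2$ gives $\lV\bigh_+ f\rV_{H^{5/2}(\cV)}\lesssim\lV f\rV_{H^2(\Gamma,\delta,m')}$, which has a \emph{stronger right-hand side} than the claimed $\lV\bigh_+ f\rV_{H^{3/2}(\cV)}\lesssim\lV f\rV_{H^1(\Gamma,\delta,m)}$, so it neither dominates nor implies the $s=0$ bound, and ``interpolate'' needs a second endpoint that the stated range of Proposition~\ref{1stWeightedEstimate} (half-integer $s\ge 1/2$) does not supply. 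The clean fix is to observe that the building block Corollary~\ref{buildingcor} is stated for real $s\ge 0$, so the weighted elliptic estimate extends to $s_*=-1/2$ (equivalently, to bound $H^{3/2}(\cV)$ in terms of $H^1(\del\cV,\delta,m')$ boundary data) by the same stitching argument; with that extension the $s=0$ case goes through exactly as for $s\ge 1$.
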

        \begin{proof}
            This follows easily from Proposition~\ref{1stWeightedEstimate} and Lemma~\ref{harmExtLinftyBd}.
        \end{proof}
The lemma below states that the output of a Dirichlet-to-Neumann operator $\bign_\pm f$ does not lead to a decrease in regularity away from the support of the input function $f$. Our estimate is far from sharp but sufficient. To prepare for the lemma, we define the distance-within-$\Gamma$ function by
     \begin{align}
     \operatorname{dist}_\Gamma(x,y) = \inf \{\operatorname{length}(\gamma):\gamma\subset\Gamma\mbox{ is an arc joining} \ x \ \mbox{and} \ y\}   &&  (x,\, y \in \Gamma ).
     \end{align}
    We then define $\operatorname{dist}_\Gamma(S_1,S_2)$ for a pair of subsets $S_1, S_2$ of $\Gamma$ in the obvious way.
\begin{lemma}\label{SuppsBddAwayLem}
     There exists an $m \geq 0$ so the following holds, for some $d_0>0$. Consider $\xi$ in $\sB^\bbox(\delta)$ for $0<\delta\leq \delta_0$ and some arc $\gamma\subset \Gamma$. Then for $ f : \Gamma \to \R$ such that
     \begin{align}
     \operatorname{dist}_\Gamma (\supp(f),\gamma ) \geq d_0 ,
     \end{align}
    given integer $s$ with $1 \leq s \leq k+1$ and $\bign_\pm(\xi)$ of Definition~\ref{DirichToNeumDefs}, we have
    \begin{align}\label{offsuppDtN}
        \lV \bign_{\pm}(\xi) f \rV_{H^s(\gamma)}\leq C(M)   \lV f \rV_{H^s(\Gamma,\delta,m)},
    \end{align}
    where we can replace $M$ by $\lV \xi \rV_{\sH^{k-1}}$ for $s\leq k$.
    Moreover, in the case of $\pm=-$, we have $m=0$ (that is, the weighted norm can be replaced by the standard Sobolev norm).
\end{lemma}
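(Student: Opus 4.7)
The plan is to reduce each case to localized elliptic regularity for the harmonic extension $\phi_\pm=\bigh_\pm f$, exploiting that the arc-length separation $\operatorname{dist}_\Gamma(\supp(f),\gamma)\ge d_0$ forces $\phi_\pm|_\Gamma$ to vanish on a neighborhood of $\gamma$ in $\Gamma$. Combined with the regularity of $\Gamma$ (which lies in $H^{k+2}\subset C^{k+1}$ by Sobolev embedding), this gives a homogeneous Dirichlet condition along $\gamma$ that, together with an $L^\infty$ or weighted global bound on $\phi_\pm$, controls $\del_n\phi_\pm|_\gamma$ in $H^s$.

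For the $\bign_-$ case I would cover $\gamma$ by finitely many Euclidean balls $B_r(x_0)$, $x_0\in\gamma$, of a radius $r$ chosen small enough (depending only on $d_0$, since $\Omega_-$ does not pinch) that $B_{2r}(x_0)\cap\Gamma$ lies in $\Gamma\setminus\supp(f)$. In each ball, $\phi_-$ is harmonic in $B_{2r}(x_0)\cap\Omega_-$ with zero Dirichlet data on $B_{2r}(x_0)\cap\Gamma$; boundary elliptic regularity then yields
\[
    \lV\phi_-\rV_{H^{s+2}(B_r(x_0)\cap\Omega_-)}\le C(M)\lV\phi_-\rV_{L^2(B_{2r}(x_0)\cap\Omega_-)}.
\]
The maximum principle plus Sobolev embedding (using $s\ge 1$) give $\lV\phi_-\rV_{L^\infty(\Omega_-)}\le \lV f\rV_{L^\infty(\Gamma)}\le C\lV f\rV_{H^s(\Gamma)}$. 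Taking the trace of $\del_n\phi_-$ onto $B_r(x_0)\cap\gamma$ and patching the finite cover then produces \eqref{offsuppDtN} with no weight, i.e.\ $m=0$.

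For the $\bign_+$ case the same argument handles the portion $\gamma_\text{far}$ of $\gamma$ whose Euclidean distance to $p_\splash$ is bounded below independently of $\delta$. The new difficulty is the complementary piece $\gamma_\text{near}$, where a Euclidean ball around a point of $\gamma_\text{near}$ of radius exceeding the pinch width $\delta$ can cross $\Gamma$ and reach the opposite arc, along which $f$ may be nontrivial. My plan here is to apply Proposition~\ref{1stWeightedEstimate} to a spatial cutoff $v=\chi\phi_+$, with $\chi\colon\R^2\to[0,1]$ equal to $1$ on a fixed-scale neighborhood of $\gamma_\text{near}$ in $\Omega_+$ and with $\supp(\chi)\cap\Gamma\subset\Gamma\setminus\supp(f)$. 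Then $v|_\Gamma=0$ and $\Delta v = 2\grad\chi\cdot\grad\phi_+ + \Delta\chi\,\phi_+$ is supported in $\supp(\grad\chi)\subset\Omega_+$; the weighted estimate, together with Proposition~\ref{basicHarmExtBds} applied to $\phi_+$ on the support of $\grad\chi$, controls $\lV v\rV_{H^{s+2}(\Omega_+,\delta,m)}$ by $\lV f\rV_{H^s(\Gamma,\delta,m')}$ for appropriate $m,m'\ge 0$, after which the trace theorem yields the desired $H^s(\gamma_\text{near})$ bound.

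The main obstacle will be verifying that such a cutoff $\chi$ exists uniformly in $\delta\in[0,\delta_0]$ and $\xi\in\sB^\bbox(\delta)$. Concretely, one must show that for $d_0$ sufficiently large (but independent of $\delta$), the contribution to $\bign_+ f$ on $\gamma_\text{near}$ transmitted through the narrow neck from the opposite arc is absorbed into the weighted norm $\lV f\rV_{H^s(\Gamma,\delta,m)}$ for $m$ independent of $\delta$. Morally, this contribution decays at rate $e^{-c d_0/\delta}$ — the Poisson-kernel decay in a strip of width $\delta$ — which overwhelms any algebraic singularity of $\bign_+$ as $\delta\to 0$. Casting this quantitative pinch decay rigorously inside the weighted Sobolev framework of Section~\ref{weightedspacessection} is the technical crux of the argument.
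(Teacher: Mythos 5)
Your overall strategy for $\bign_+$---a cutoff $\chi$ so that $\chi\bigh_+ f$ vanishes on $\Gamma$, fed into Proposition~\ref{1stWeightedEstimate}---is the paper's, but the ``technical crux'' you flag at the end is not the one the paper has to face, and the fixed-scale cutoff you propose cannot exist. If $\gamma$ passes near $p_\splash$, a point of $\gamma$ may be within Euclidean distance $\sim\delta$ of the opposite arc (where $f$ may be supported) even though the arc-length separation is $\ge d_0$; a cutoff that is $1$ on a fixed-scale neighborhood of that part of $\gamma$ and vanishes on $\supp(f)$ would have to transition across a gap of width $\sim\delta$, which is not fixed-scale. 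Insisting on a fixed-scale transition region is precisely what forces you into the exponential-decay estimate you worry about. The paper instead takes a $\delta$-adapted cutoff: $\chi$ interpolates across a parabolic strip bounded by curves $\cP_1,\cP_2$ separated by $\delta_\Gamma/2$, with $\supp(f)$ on one side and $\gamma$ on the other, and is built so that $|\del^\beta\chi|\le C\,\mu_\delta^{|\beta|}$ for the weight $\mu_\delta$ of \eqref{weightfcndef}. Thus the transition scale is $\mu_\delta^{-1}$, shrinking to $\sim\delta$ at the pinch; each derivative hitting $\chi$ in $\Delta(\chi\bigh_+ f)=2\grad\chi\cdot\grad\bigh_+ f+(\Delta\chi)\,\bigh_+ f$ costs a factor of $\mu_\delta$ that is absorbed by increasing the weight index $m$ in the norms $H^s(\Omega_+,\delta,m)$. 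This is a purely algebraic/polynomial accounting, not an exponential-decay estimate, and it is exactly what the weighted Sobolev framework of Section~\ref{weightedspacessection} is designed to encode. Two applications of Proposition~\ref{1stWeightedEstimate} then close the estimate for $\chi\bigh_+ f$, and taking $n\cdot\grad$ on $\gamma$ (where $\chi\equiv 1$) yields \eqref{offsuppDtN}. Your $\bign_-$ argument (fixed-radius balls away from a non-pinching boundary, plus the maximum principle and interior/boundary elliptic regularity) is fine and matches the paper's remark that this case is ``strictly easier''; the only thing to change is to replace the fixed-scale cutoff in the $\bign_+$ case with the $\mu_\delta$-adapted one, after which the split into $\gamma_\text{near}$ and $\gamma_\text{far}$ becomes unnecessary.
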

\begin{proof}
We give the proof only of the estimate for $\bign_+(\xi)$, since the pinching domain plays no role in the estimate for $\bign_-(\xi)$, which is strictly easier. Consider a pair of parabolas $\cP_1$ and $\cP_2$ as pictured in Figure~\ref{fig:parabolas}, separated by distance $\delta_\Gamma/2$. Let $\tilde\gamma_1$ be the set of all points on $\Gamma$ lying to the left of $\cP_1$ and $\tilde\gamma_2$ the points on $\Gamma$ to the right of $\cP_2$. For simplicity, let us assume $\supp (f)\subset \tilde\gamma_1$, and $\gamma\subset\joinrel\subset \tilde\gamma_2$. It is easy to adapt the proof to general $f$ as in the statement of the lemma.
\begin{figure}
    \centering
    \includegraphics[width=0.45\textwidth]{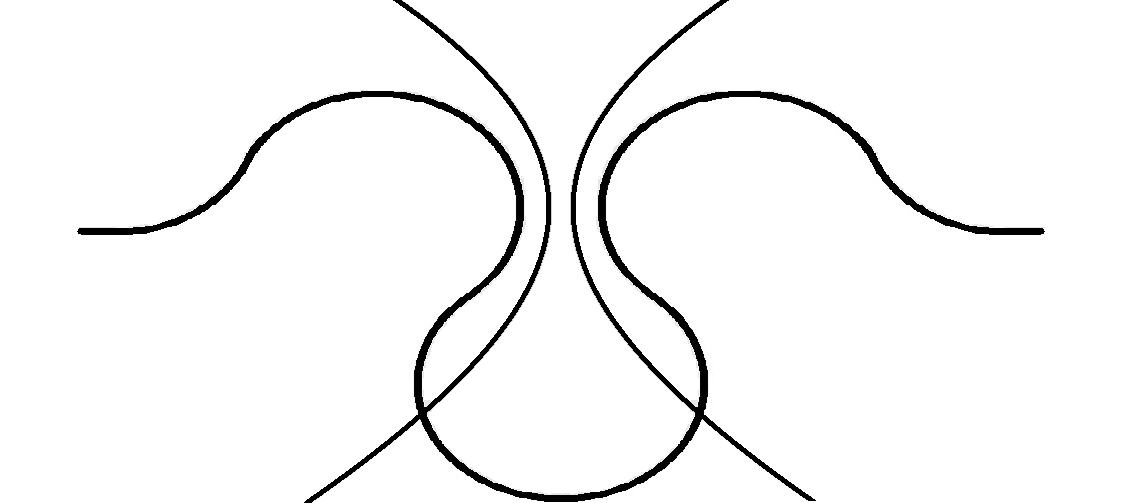}
    \begin{tikzpicture}[overlay, remember picture]
        \node at (-2.5,3) {$\cP_2$};
        \node at (-4,3) {$\cP_1$};
    \end{tikzpicture}
    \caption{}\label{fig:parabolas}
\end{figure}
Define a smooth cutoff $\chi$ on $S^1\times\R$ with $\chi=1$ to the right of $\cP_2$ and $\chi=0$ to the left of $\cP_1$, and such that for any multi-index $\beta$ with $|\beta|\leq k+2$ one has $|\del^\beta \chi|\leq C \mu^{|\beta|}_\delta $ for $\mu_\delta$ defined by \eqref{weightfcndef}. Note that because $\supp (f)\subset\tilde\gamma_1$, the trace of $\chi \bigh_+ f$ is zero on $\Gamma$. We apply Proposition~\ref{1stWeightedEstimate} to $\chi \bigh_+ f$ to get
\begin{align}
\lV\chi \bigh_+ f \rV_{H^{s+3/2}(\Omega_+)} 
\leq C \lV \Delta (\chi \bigh_+ f) \rV_{H^{s-1/2}(\Omega_+,\delta,m_1)} .
\end{align}
When all of the Laplacian falls on $\bigh_+ f$, one gets zero. Alternatively, at least one derivative falls on $\chi$ instead, and we may bound the result above by a weighted norm of $\bigh_+ f$ with an increased weight index $m_2$. We end up with
\begin{align}
\lV\Delta (\chi \bigh_+ f) \rV_{H^{s-1/2}(\Omega_+,\delta,m_1)} 
\leq C \lV  \bigh_+ f \rV_{H^{s+1/2}(\Omega_+,\delta,m_2)} .
\end{align}
Applying Proposition~\ref{1stWeightedEstimate} again results in
\begin{align}
\lV\chi \bigh_+ f \rV_{H^{s+3/2}(\Omega_+)}
\leq C \lV  \bigh_+ f \rV_{H^{s+1/2}(\Omega_+,\delta,m_2)}
\leq C' \lV  f \rV_{H^{s}(\Gamma,\delta,m_3)} .
\end{align}
We then get \eqref{offsuppDtN} from the facts that $\bign_+ f = n\cdot \grad \bigh_+f$ along $\Gamma$ and $\chi=1$ in a neighborhood of $\gamma$.
\end{proof}

\subsubsection*{The Dirichlet-to-Neumann cancellation bound (Proposition~\ref{DtoNCancellation})}
        As discussed above, the operators $\bign_\pm$ and $\cN_\pm$ tend to reduce regularity by one degree. However, now we discuss an important subtlety, which is that the operators defined by
        \begin{align}
            \nres &= \bign_+ + \bign_-  ,\\ 
         \Nres &= \cN_+ + \cN_- ,
        \end{align}
        maintain regularity. A cancellation occurs at the leading order between $\bign_+ f$ and $\bign_- f$ when the two are combined. While estimates verifying this are already well-established for chord-arc domains, we prove uniform estimates demonstrating this for the first time for near-splash curves. Unique to our setting is the need for $\delta$-dependent weighted bounds on the operator which hold uniformly even when the pinch $\delta$ is very small. In the proposition below, we formulate an appropriate uniform bound demonstrating the Dirichlet-to-Neumann cancellation for our pinching vacuum scenario.
        \begin{proposition}
        \label{DtoNCancellation}
        \hfill \\
        Let $\xi$ be in $\sB^\bbox(\delta)$ for $0<\delta\leq \delta_0$. 
   There exists an $m \geq 0$ such that for integer $s$ with \( 1 \leq s \leq k+1 \),
        \begin{align}
            & & \lV \nres(\xi) f \rV_{H^s(\Gamma)} &\leq C(M) \lV f \rV_{H^s(\Gamma,\delta,m)} & (f:\Gamma \to \R), \label{nresCancelBd}\\
            & & \lV \Nres(\xi) F \rV_{H^s(S^1)} &\leq C(M) \lV F \rV_{H^s(S^1,\delta,m)} & (F:S^1 \to \R). \label{NresCancelBd}
        \end{align}
        In the case that $s\leq k$, $M$ can be replaced by $\lV \xi \rV_{\sH^{k-1}}$.
        \end{proposition}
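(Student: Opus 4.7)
\textbf{Proof plan for Proposition~\ref{DtoNCancellation}.}

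The first step is a reduction from the Lagrangian statement \eqref{NresCancelBd} to the Eulerian one \eqref{nresCancelBd}. Since $\Nres(\xi) F = (\nres(\xi) f)\circ X$ whenever $F = f\circ X$, and the weighted norms on $S^1$ and on $\Gamma$ are comparable by Proposition~\ref{standardWeightedSobNormEstims}(iii) with constants depending only on $\lV X \rV_{C^1(S^1)}$, the two estimates are equivalent. Accordingly, the task reduces to proving \eqref{nresCancelBd}.

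The cancellation is realized by a reflection trick. Let $\phi_\pm$ be the bounded harmonic extensions of $f$ to $\Omega_\pm$, so that $\bign_\pm f = (n\cdot\grad \phi_\pm)|_\Gamma$. Define $\tilde \phi_-(x) := \phi_-(R_\Gamma(x))$ on a one-sided collar of $\Gamma$ in $\Omega_+$, where $R_\Gamma$ is the Euclidean reflection across $\Gamma$, and extend by a smooth cutoff to all of $\Omega_+$. Since $R_\Gamma|_\Gamma = \mathrm{id}$ and $R_\Gamma$ reverses the normal, both $\phi_+$ and $\tilde\phi_-$ have trace $f$ on $\Gamma$ and $\del_n \tilde\phi_-|_\Gamma = -\bign_- f$; hence $w := \phi_+ - \tilde\phi_-$ has zero Dirichlet data on $\Gamma$ and
\[
(n\cdot\grad w)|_\Gamma = \bign_+ f + \bign_- f = \nres f .
\]
The function $w$ solves $-\Delta w = \rho$ in $\Omega_+$, with $\rho := -\Delta(\phi_- \circ R_\Gamma)$. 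Expanding $\rho$ one finds the principal piece is $E_{kl}(x)(\del_k\del_l \phi_-)(R_\Gamma x)$, in which the matrix $E_{kl}$ measuring non-conformality of $R_\Gamma$ vanishes identically on $\Gamma$ (because the Jacobian of $R_\Gamma$ at a point of $\Gamma$ is orthogonal in local tangent-normal coordinates). Writing $E_{kl}(x) = \tilde E_{kl}(x)\cdot d(x)$ with $d(x) = \mathrm{dist}(x,\Gamma)$ and $\tilde E_{kl}$ smooth, and combining with the harmonic regularity $\del^2\phi_- \in H^{s-3/2}$ together with a Hardy-type gain from the factor $d$, we obtain $\rho \in H^{s-1/2}(\Omega_+)$---one order better than the naive count. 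The subprincipal terms of $\rho$ involve first derivatives of $\phi_-$ times bounded curvature of $\Gamma$ (controlled by the definition of $\sB^\bbox$) and are of the same order or better.

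Applying Proposition~\ref{1stWeightedEstimate} to $w$ with zero Dirichlet data on $\Gamma$ (and outer boundary data controlled via the maximum principle by $\lV f \rV_{L^\infty(\Gamma)}$) yields
\[
\lV w \rV_{H^{s+2}(\Omega_+,\delta,0)} \le C(M)\,\lV \rho \rV_{H^s(\Omega_+,\delta,m')} \le C(M)\,\lV f \rV_{H^s(\Gamma,\delta,m)}
\]
for some $m' \ge m$, where the second inequality uses the weighted elliptic estimate for $\phi_-$ in $\Omega_-$ together with the weight-algebra rules of Proposition~\ref{standardWeightedSobNormEstims}(iv). Taking the normal trace on $\Gamma$ gives the target bound \eqref{nresCancelBd}. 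The refined constant $C(\lV \xi \rV_{\sH^{k-1}})$ in the range $s \le k$ follows from the corresponding version of Proposition~\ref{1stWeightedEstimate} with $M$ replaced by $\lV X \rV_{H^{k+1}(S^1)}$.

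The main technical obstacle is that the collar on which the Euclidean reflection $R_\Gamma$ is a diffeomorphism has width $\lesssim \min(1/\kappa,\delta)$, which shrinks to zero near the pinch point $p_\splash$ as $\delta \to 0$. To overcome this, I replace $R_\Gamma$ in a neighborhood of $p_\splash$ by the reflection induced by the flattening map $X_+ = \cE_\delta(X)$ of Proposition~\ref{extensionProp2}: in the flat domain $\Sigma_+(\delta)$ the map $(\theta,\psi)\mapsto(\theta,-\psi)$ is globally defined, and pushing it forward through $X_+$ produces a bi-Lipschitz involution $\widehat R_\delta : \Omega_+ \to \Omega_-$ that fixes $\Gamma$ pointwise and whose Jacobian is uniformly controlled in weighted norms via \eqref{EdeltaBd}. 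The modified source $\hat\rho$ resulting from $\widehat R_\delta$ has the same structural features as $\rho$, with the geometric corrections absorbed into an increase of the weight index from $m$ to $m'$ allowed by Remark~\ref{WeightRemark}. After this substitution the weighted elliptic estimate carries through uniformly in $\delta$, closing the argument.
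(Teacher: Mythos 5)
Your proposal takes a genuinely different route from the paper. The paper's proof relies on the layer-potential identity $\slp\,\nres f = 2\dcal T f$ from Lemma~\ref{keycancellationident}, and handles the pinch by inverting the single-layer potential not on $\Gamma$ itself but on a \emph{tailored chord-arc curve} $\Gamma_\vardiamond$ that coincides with $\Gamma$ away from the splash and dips into the plasma near it; a localization (Lemma~\ref{SuppsBddAwayLem}) then controls the cross-terms. Your reflection argument is a classical alternative way to see the $\bign_++\bign_-$ cancellation, and the mechanism you identify --- that the non-conformality tensor $E_{kl}$ of $R_\Gamma$ vanishes to first order on $\Gamma$, so $\rho$ picks up a Hardy-type derivative gain because $\phi_-$ is harmonic --- is a correct and standard observation in the chord-arc setting.

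There is, however, a concrete gap in your workaround for the pinch. You propose defining $\widehat R_\delta = X_- \circ (\theta,\psi\mapsto\theta,-\psi)\circ X_+^{-1}$ with $X_+=\cE_\delta(X)$, but by Proposition~\ref{extensionProp2}(iii) $X_+$ is a \emph{two-to-one} covering of $\Omega_+$, so $X_+^{-1}$ is not a map. Moreover, the deck transformation of this cover is (up to the subsequent bijections) $(\theta,\psi)\mapsto(-\theta,\,\delta+\cos^2\theta-\psi)$, which sends $\theta$ to $-\theta$; the two preimages of a generic $x$ near $\Gamma$ therefore push forward under $X_-\circ\mathrm{flip}$ to points on $\Omega_-$ near $X(\theta_0)$ and near $X(-\theta_0)$ respectively, so the two branches genuinely disagree and no choice makes $\widehat R_\delta$ well-defined. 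Restricting to a one-sided sheet of $X_+$ near $\{\psi=0\}$ recovers local injectivity but gives a collar in $\Omega_+$ of width comparable to $\delta+\cos^2\theta$ --- i.e., exactly the $O(\delta)$-thin collar you were trying to avoid, so the proposal does not actually gain anything over the Euclidean reflection. If you retain the Euclidean $R_\Gamma$ with a cutoff $\chi$ adapted to the local gap, the cutoff derivatives contribute source terms of size $\mu_\delta\cdot|\nabla\phi_-|$ and $\mu_\delta^2\cdot|\phi_-|$ supported at distance $\sim\mu_\delta^{-1}$ from $\Gamma$; these are absorbable only by trading weight exponent against the $\mu_\delta^{-m}$ decay of $f$, and you would also need a weighted (uniform in $\delta$) version of the Hardy gain $d\cdot\nabla^2\phi_-\in H^{s-1/2}$ for harmonic $\phi_-$ near a pinching $\Gamma$; neither of these is supplied, and both are nontrivial. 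In short: the cancellation mechanism is right, the degenerate-collar problem is the real difficulty, and your proposed resolution of it does not work as stated.
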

        The proof of this proposition (which begins after the statement of Lemma~\ref{EvalSLPBdLem}) is somewhat involved and first requires more supporting framework. A key idea of ours is to reveal the cancellation which occurs by performing elementary calculations involving layer potentials. Below, we provide an overview of the notation we use in this section for layer potentials and related operators.
        
        For the Newtonian potential (periodic in $x_1$), we denote
\begin{align}\label{newtpotdefn}
    & & G(x) = G(x_1+i x_2) &= \frac{1}{\pi}\log\left|\sin\left(\frac{x_1+ix_2}{2}\right)\right|  & \qquad ( x \in \R^2 \setminus \{0\} ) .
\end{align}
For $ f : \Gamma \to \R$, we define the single layer potential $\Slp f $ and double layer potential $\Dlp f$ by the following.
\begin{align}
& & \Slp f (x) &= \int_\Gamma G(x-y) f (y) \, dS(y)    &&   (x \in \R^2 ) , \\
& & \Dlp f (x) &= -\int_\Gamma \left(\frac{\del G}{\del n_y }\right)(x-y) f (y) \, dS(y) && (x \in \R^2 \setminus \Gamma ) ,\\ \intertext{
where $\frac{\del G}{\del n_y}=n(y)\cdot \grad G$, the normal $n$ pointing into the region above $\Gamma$, i.e. $\Omega_+$. For the restriction of the single layer potential to $\Gamma$, we denote}
& & \slp f (x) &= \Slp f (x)  &&  (x \in \Gamma ) .
\\ \intertext{
Let us define the operator $\dcal{T}$ acting on $f: \Gamma \to \R$ by}
& &    \dcal{T} f (x) &= -\int_\Gamma \left(\frac{\del G}{\del n_y}\right)(x-y) f (y)\, dS(y) && (x \in \Gamma ) .\label{TOpDefn}
\end{align}
\begin{remark}
The following well-known identity regarding the limits of the double layer potential holds for periodic, $C^1$ non-self-intersecting $\Gamma$ and $f$ in $C^{0,\alpha}(\Gamma)$ for $\alpha>0$:
\begin{align}\label{pottheorylim}
& &    \lim_{\epsilon\to 0^+} \Dlp f (x\pm\epsilon n) &= \left(\pm\half+\dcal{T}\right) f (x)    &   
    ( x \in \Gamma ) .
\end{align}
\end{remark}
    \subsubsection*{Using the key cancellation identity and tailored chord-arc curves}   

    The Dirichlet-to-Neumann cancellation boils down to the following idea. It turns out that one can express the Dirichlet-to-Neumann operators $\bign_\pm$ associated\footnote{In the material that follows, we frequently drop the argument $\xi$ from expressions like $\bign_\pm(\xi)$ when the curve of consideration is understood.} to a given curve $\Gamma$ in terms of layer potentials, with
    \begin{align}
    \bign_\pm f  &= \slp^{-1}\left(\pm \half+\dcal{T}\right) f .
\end{align}    
    Thus, for the combination $\nres = \bign_+ + \bign_-$, one finds that the top order terms cancel, yielding
    \begin{align}\label{essidentity}
    \nres f  &= 2\slp^{-1}\dcal{T} f .
    \end{align}
    Since $\dcal T$ generally gains a derivative and $\slp^{-1}$ loses one, this suggests $\nres$ preserves regularity, as asserted by Proposition~\ref{DtoNCancellation}. It is worth noting that in the proof of Proposition~\ref{DtoNCancellation}, we only invert $\slp$ for non-self-intersecting curves, a case in which invertibility is well-known. In the following Lemma, essentially, we record the identity \eqref{essidentity}.
\begin{lemma}\label{keycancellationident}
Consider $\Slp$, $\slp$, $\dcal T$, $\bigh_\pm$, and $\nres$ associated to a non-self-intersecting curve $\Gamma$ parametrized by $X$ in $H^2_\gp$. Given $f$ in $C^{0,\alpha}(\Gamma)$ for $\alpha>0$,
\begin{align}
& &    \slp\nres f  &= 2\dcal{T} f    &   & (x\in \Gamma) , \label{keynresIdent1}\\
& &    \Slp \nres f  & = 2 \bigh_\pm(\dcal{T} f ) &  & (x \in \Omega_\pm) \label{keynresIdent2}.
\end{align}
\end{lemma}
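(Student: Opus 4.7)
The plan is to establish the boundary identity \eqref{keynresIdent1} first, and then deduce \eqref{keynresIdent2} by uniqueness of bounded harmonic extensions. For the deduction, I would note that $\Slp \nres f$ is harmonic on each $\Omega_\pm$ (since single-layer potentials are harmonic off $\Gamma$), continuous across $\Gamma$ with trace $\slp \nres f = 2\dcal{T} f$, and bounded on each $\Omega_\pm$. The boundedness follows from the vanishing-mean property $\int_\Gamma \nres f \, dS = 0$, a consequence of applying the divergence theorem to the bounded harmonic functions $\phi_\pm$ in $\Omega_\pm$; combined with the asymptotic $G(x) = O(|x_2|)$ as $|x_2| \to \infty$, this kills the only potentially unbounded contribution to $\Slp \nres f$. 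Then $\Slp \nres f$ and $2 \bigh_\pm (\dcal{T} f)$ are both bounded harmonic extensions of the same boundary data into $\Omega_\pm$, and hence agree.

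The core of the argument is then \eqref{keynresIdent1}, which I would prove via Green's second identity. Applying it in $\Omega_+$ to the pair $(G(\cdot - y), \phi_+)$, first for $y \in \Omega_+$ (where the delta from $\Delta G$ fires and produces a multiple of $\phi_+(y)$) and then for $y \in \Omega_-$ (where it does not), the boundary integral over $\Gamma$ splits into the single-layer piece $\Slp \bign_+ f(y)$ and an ``adjoint double layer'' integral $J(y) := \int_\Gamma f(x)\, \partial_{n_x} G(x-y)\, dS(x)$. This yields a representation of the form $\Slp \bign_+ f(y) = c\, \phi_+(y)\, \mathbbm{1}_{\Omega_+}(y) + J(y)$ up to a $\Delta G$-normalization constant $c$. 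A parallel computation with $\phi_-$ in $\Omega_-$, using the opposite outward normal, produces the companion expression for $\Slp \bign_- f$. Summing the two and using $\nres = \bign_+ + \bign_-$, I obtain $\Slp \nres f = c\, \phi_+ \mathbbm{1}_{\Omega_+} - c\, \phi_- \mathbbm{1}_{\Omega_-} + 2J$ on $\Omega_+ \cup \Omega_-$.

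The final step is to take the boundary trace on $\Gamma$ from each side. The $\phi_\pm$-pieces reduce to constant multiples of $f$, and $J$ has a jump computable by reducing to a Poisson-kernel calculation: the one-sided limits are of the form $\pm \tfrac{1}{2} f(x) + Kf(x)$, where $K$ denotes the principal-value integral $\pv \int_\Gamma f(x')\, \partial_{n_{x'}} G(x'-x)\, dS(x')$, which by the evenness $G(z) = G(-z)$ and the paper's orientation convention for $n$ is a scalar multiple of $\dcal{T} f$. The key structural cancellation is that the $f$-terms from the $\phi_\pm$ contributions and from the jump of $J$ exactly balance, leaving a clean $2 \dcal{T} f$; continuity of $\Slp \nres f$ across $\Gamma$ then guarantees that both one-sided limits coincide (as they must), giving \eqref{keynresIdent1} and incidentally confirming the factorization $\bign_\pm = \slp^{-1}(\pm \tfrac{1}{2} + \dcal{T})$ invoked in the discussion preceding the lemma. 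The main obstacle is bookkeeping rather than concept: because the paper normalizes $G = \pi^{-1} \log |\sin(z/2)|$ so that $\Delta G = 2\delta$ and fixes its own sign conventions in \eqref{pottheorylim} and in the definition of $\dcal{T}$, one must carefully track factors of $2$ and signs throughout Green's identity, the adjoint-double-layer jump formula, and the symmetry relating the transpose-kernel PV to $\dcal{T}$, so as to land on exactly $+2\dcal{T} f$ rather than an off-by-sign multiple.
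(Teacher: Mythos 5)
Your route is essentially the one the paper takes: Green's second identity applied to the harmonic extensions $\phi_\pm$, combined with the double--layer jump relation \eqref{pottheorylim}, gives \eqref{keynresIdent1} (the paper records this step compactly as the two Calderon-type identities $\slp\bign_\pm f = (\pm\tfrac12+\dcal T)f$ and adds them, which is precisely what your trace computation unpacks), and \eqref{keynresIdent2} then follows from uniqueness of bounded harmonic extensions, with boundedness of $\Slp\nres f$ secured by the vanishing mean of $\nres f$ exactly as you argue. One detail worth making explicit when you run Green's identity: because $\Omega_\pm$ are unbounded in the periodic strip and $G$ grows like $|x_2|$, the identity also produces a nonvanishing boundary contribution at $x_2 = \pm\infty$ (a constant proportional to the asymptotic value of $\phi_\pm$), and this constant has to be carried through the cancellation alongside the jump of $J$ --- it is not just signs and factors of $2$ that need tracking.
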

\begin{proof}
The identities below follow from a short calculation\footnote{The authors have only found these identities derived or stated in \cite{taylorPDE2}.} using Green’s formula and \eqref{pottheorylim}:
\begin{align}\label{layerpotident2}
& &     \Dlp f & = \Slp \bign_\mp f  =\Slp \bign_\pm f  \mp\bigh_\pm f &   (x &\in \Omega_\pm ),
\end{align}
\begin{align}\label{slpRef}
& &    \slp \bign_\pm f  &= \left(\pm\half+\dcal{T}\right) f  & & (x \in \Gamma ) .
\end{align}
Thus by adding the equation \eqref{slpRef} with $\pm=+$ together with the version with $\pm=-$, one arrives at \eqref{keynresIdent1}. The identity \eqref{keynresIdent2} then arises from the fact that the output of the operator $\Slp$ evaluated in $\Omega_\pm$ yields the appropriate harmonic extension (recall Definition~\ref{harmonicextdefn}) of the left-hand side of \eqref{keynresIdent1}. In particular this uses the fact that $\Slp g(x)$ remains bounded as $x_2$ tends to $\pm\infty$ if $\int_\Gamma g \, dS=0$, which holds in the case $g=\nres f$.
\end{proof}

Note that directly using the identity \eqref{keynresIdent1} to get the formula \eqref{essidentity} for $\nres$ requires one to invert $\slp$. The estimates we state momentarily in Lemma~\ref{ChordArcOpBdLem} include a bound on the inverse of a single layer potential. However, one should note that these only hold uniformly for curves whose chord-arc constant remains uniformly bounded below. These are in contrast with the near-splash curves associated to $\xi$ in $\sB^\wbox$, which we are forced to deal with.

Let us say a curve $\Gamma$ is an \emph{admissible chord-arc curve} if it is parametrized by $X$ in $C^1(S^1)$ such that $\|X-X_0\|_{C^1(S^1)}\leq r_0$ and, for a universal constant $c_\ca$,
\begin{align}\label{chordarc}
\min_{\theta,\vartheta\in S^1}\frac{|X(\theta)-X(\vartheta)|}{|\theta-\vartheta|}\geq c_\ca .
\end{align}
For such curves classical methods\footnote{See \cite{MazyaShaposhnikova} for example.} yield the conclusions of Lemma~\ref{ChordArcOpBdLem}.
\begin{lemma}\label{ChordArcOpBdLem}
    Consider an admissible chord-arc curve $\Gamma_{\ca}$ parametrized by $X$ in $H^{k+2}(S^1)$ with corresponding single layer potential $\slp_{\ca}$ and operator $\dcal{T}_{\ca}$ defined analogously to $\dcal{T}$ in \eqref{TOpDefn}. Then for $0 \leq s \leq k + 1$, the operator $\slp_{\ca}:H^s(\Gamma_{\ca})\to H^{s+1}(\Gamma_{\ca})$ is invertible, and
    \begin{align}
    \lV \dcal{T}_{\ca} f \rV_{H^{s+1}(\Gamma_{\ca})} &\leq C \lV f \rV_{H^s(\Gamma_{\ca})} , \\
    \lV \slp_{\ca}^{-1} f \rV_{H^s(\Gamma_{\ca})} &\leq C\lV f \rV_{H^{s+1}(\Gamma_{\ca})} ,
    \end{align}
    where we have $C=C\left(\lV X \rV_{H^{k+2}(S^1)} \right)$ if $s=k+1$ and $C=C\left(\lV X \rV_{H^{k+1}(S^1)} \right)$ if $s\leq k$.
\end{lemma}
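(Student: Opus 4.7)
The plan is to reduce the estimates on the curve $\Gamma_\ca$ to the symbol-level analysis on $S^1$ by pulling back all operators via the arc-length parametrization, and then to exploit the classical pseudodifferential structure of the layer-potential operators. First I would introduce the arc-length parametrization $\gamma : S \to \Gamma_\ca$, setting $\tilde f = f \circ \gamma$. For chord-arc curves with $X \in H^{k+2}(S^1)$ (so $\gamma \in H^{k+2}(S)$), the pulled-back kernels of $\slp_\ca$ and $\dcal T_\ca$ can be decomposed as a model part plus an $H^{k+2}$-smooth remainder:
\begin{align}
G(\gamma(s)-\gamma(s')) &= \tfrac{1}{\pi}\log|s-s'| + g_{\mathrm{reg}}(s,s'), \\
\partial_{n_{y}}G(\gamma(s)-\gamma(s')) &= k_{\mathrm{reg}}(s,s') ,
\end{align}
where in the second line the cancellation $n(\gamma(s')) \cdot (\gamma(s)-\gamma(s')) = O(|s-s'|^2)$ makes the kernel bounded (and in fact $C^{k-1}$ in its arguments for curves of $H^{k+1}$ regularity, by Sobolev embedding). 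The remainder $g_{\mathrm{reg}}$ is in $H^{k+2}$ in $(s,s')$ away from the diagonal, and its near-diagonal structure is controlled by the chord-arc condition \eqref{chordarc}, which is what keeps $|\gamma(s)-\gamma(s')| \gtrsim |s-s'|$ and prevents the logarithm from inheriting any cancellation or singularity from the parametrization.

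For the bound on $\dcal T_\ca$, the key observation is that the cancellation in the normal-derivative kernel makes $\dcal T_\ca$ a smoothing operator of order $+1$. Concretely, after pullback, $\dcal T_\ca$ has kernel $k_{\mathrm{reg}}(s,s') |\gamma'(s')|$ with $k_{\mathrm{reg}} \in H^{k-1}$, and a standard bound for integral operators with kernels in such mixed-regularity spaces yields the desired $H^s \to H^{s+1}$ boundedness for $0\leq s\leq k+1$, with constant depending on the $H^{k+2}$ norm of $X$ in the top case and on the $H^{k+1}$ norm in the intermediate cases (the distinction arising from how many derivatives must fall on the kernel vs.\ the density).

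For $\slp_\ca$, I would first identify it as an elliptic pseudodifferential operator of order $-1$ on $S^1$: its principal symbol, after pullback and extraction of the logarithmic part, is $|\gamma'(s)|/(2|\xi|)$, so $\slp_\ca : H^s(S) \to H^{s+1}(S)$ is Fredholm, with continuity bound of the required form obtained from Calder\'on--Zygmund theory for the logarithmic kernel plus the smoother remainder $g_{\mathrm{reg}}$. Injectivity follows from a standard potential-theoretic argument: if $\slp_\ca f = 0$ on $\Gamma_\ca$, then $\Slp_\ca f$ is harmonic in both $\Omega_+$ and $\Omega_-$, bounded at infinity, and vanishes on $\Gamma_\ca$, so it vanishes identically by the maximum principle; the jump of its normal derivative across $\Gamma_\ca$ then forces $f=0$. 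Combined with Fredholmness and the fact that the index is zero (homotopy to the circle), this gives invertibility, and the inverse bound $\|\slp_\ca^{-1} f\|_{H^s} \leq C \|f\|_{H^{s+1}}$ follows from the open mapping theorem together with elliptic regularity.

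The main obstacle, and the point where one must be careful, is tracking how the constants depend on the curve: specifically, one must verify that the Fredholm index, the lower bound in the open mapping step, and the commutator estimates that bootstrap from $L^2$ to $H^{k+1}$ all depend only on the chord-arc constant $c_\ca$ and on the relevant Sobolev norm of $X$. This is a uniformity issue rather than a conceptual one, and is handled by the classical Maz\cprime ya--Shaposhnikova framework: the commutator $[\partial_s, \slp_\ca]$ is of the same order as $\slp_\ca$, with norm controlled by the $H^{k+1}$ (or $H^{k+2}$) norm of $\gamma$, and iterating this identity $s$ times together with the base case $s=0$ produces the stated $C(\|X\|_{H^{k+1}})$ and $C(\|X\|_{H^{k+2}})$ constants.
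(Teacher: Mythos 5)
Your outline follows the same classical potential-theoretic route the paper invokes (its ``proof'' is just a citation to Maz\cprime ya--Shaposhnikova): pull back to $S^1$, split off the model logarithmic kernel, exploit the second-order cancellation in the double-layer kernel, and establish invertibility of $\slp_\ca$ via Fredholm theory plus injectivity plus index zero. So the approach is not substantively different from what the paper is pointing at.

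There is, however, a genuine gap in the injectivity step. You assert that if $\slp_\ca f = 0$ then $\Slp_\ca f$ is ``bounded at infinity,'' and conclude via the maximum principle. In this periodic-strip setting the kernel is
$G(x)=\tfrac{1}{\pi}\log\bigl|\sin\bigl(\tfrac{x_1+ix_2}{2}\bigr)\bigr|$, which behaves like $\tfrac{|x_2|}{2\pi}+O(1)$ as $x_2\to\pm\infty$, so
\[
\Slp_\ca f(x)\;\sim\;\frac{|x_2|}{2\pi}\int_{\Gamma_\ca} f\,dS \qquad (x_2\to\pm\infty).
\]
Unless $\int_{\Gamma_\ca} f\,dS = 0$, this grows linearly in both directions, and a harmonic function on $\Omega_\pm$ that vanishes on $\Gamma_\ca$ but grows linearly at infinity need not vanish identically; the maximum principle as you have stated it does not apply. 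The fix is standard but unskippable in two dimensions: treat the mean-zero case with the energy (or maximum-principle) argument, and for $\int f\neq 0$ (say $\int f>0$) observe that $\Slp_\ca f\to+\infty$ at $\pm\infty$ and $=0$ on $\Gamma_\ca$, so $\Slp_\ca f>0$ strictly in both $\Omega_\pm$; the Hopf lemma then gives $\partial_n u\big|_{\Omega_+}>0>\partial_n u\big|_{\Omega_-}$ (with $n$ pointing into $\Omega_+$), and the jump relation $\partial_n u\big|_{\Omega_+}-\partial_n u\big|_{\Omega_-}=-f$ forces $f<0$ on all of $\Gamma_\ca$, contradicting $\int f>0$. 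A secondary, more minor imprecision: for $X\in H^{k+1}(S^1)\hookrightarrow C^{k,1/2}$, the double-layer kernel after the second-order cancellation is roughly $C^{k-2,1/2}$, not $C^{k-1}$ as claimed; this does not change the qualitative mapping property but should be tracked correctly when deciding how many derivatives can fall on the kernel versus the density.
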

    It is not obvious how to prove useful bounds for $\slp^{-1}$ for the operator $\slp$ associated to a curve $\Gamma$ with a very small pinch, even in terms of our weighted Sobolev spaces. To get around this, in the proof of Proposition~\ref{DtoNCancellation}, we will occasionally replace a nearly-self-glancing curve $\Gamma$ by a modified version, $\Gamma_\vardiamond$, which coincides with $\Gamma$ along the entire ``left portion'' $X([-\pi,0])$, but dips into $\Omega$ along the ``right portion'', so as to avoid the breakdown of \eqref{chordarc} and form an admissible chord-arc curve.
\begin{definition}\label{tailoredCA}
Fix a smooth bump function $\eta_\vardiamond:S^1\to\R$ supported in $[0.2,\pi-0.2]$ and equal to a small constant $\epsilon_0$ on $[0.3,\pi-0.3]$. Given $\Gamma$ parametrized by $X$ in $H^{k+2}_\gp$ and corresponding $N=n\circ X$, we define
\begin{align}
&& X_\vardiamond(\theta) &= X(\theta) - \eta_\vardiamond(\theta) N(\theta) & (\theta\in S^1), 
\end{align}
and we then define $\Gamma_\vardiamond$, the tailored chord-arc version of $\Gamma$, by
\begin{align}
\Gamma_\vardiamond = X_\vardiamond(S^1) .
\end{align}
\end{definition}


Some minor error terms will arise from replacing $\Gamma$ by $\Gamma_\vardiamond$ at certain points in the proof of Proposition~\ref{DtoNCancellation}. The following two lemmas are designed to handle some of the errors introduced by doing so. The first gives an estimate for a single layer potential $\Slp$ associated to $\Gamma$ when evaluated along $\Gamma_\vardiamond$.
\begin{lemma}\label{EvalSLPBdLem}
Consider $\xi$ in $\sB^\bbox$, with corresponding interface curve $\Gamma$ and its tailored chord-arc version $\Gamma_\vardiamond$. Let $\Slp$ denote the corresponding single layer potential operator for $\Gamma$. For integer $s$ with $0 \leq s \leq k+1$ we have the estimate
\begin{align}\label{evalBd}
    \lV (\Slp f )|_{\Gamma_\vardiamond}\rV_{H^{s+1}(\Gamma_\vardiamond)}\leq C(M)  \lV f \rV_{H^s(\Gamma)},
\end{align}
where we may replace $M$ by $\lV \xi \rV_{\sH^{k-1}}$ for $s\leq k$.
\end{lemma}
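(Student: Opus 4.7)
The plan is to split the evaluation curve $\Gamma_\vardiamond$ according to the support of $\eta_\vardiamond$ into the coinciding portion $\Gamma_\vardiamond^= := X(\{\eta_\vardiamond = 0\})\subset\Gamma$ and the pushed portion $\Gamma_\vardiamond^\neq := X_\vardiamond(\{\eta_\vardiamond>0\})$, and to estimate each separately. For $\epsilon_0$ small enough that $\Gamma_\vardiamond$ is an admissible chord-arc curve, one has $\operatorname{dist}(\Gamma_\vardiamond^\neq,\Gamma)\ge c\epsilon_0$ uniformly in the pinch $\delta_\Gamma\ge 0$. On this ``safe'' piece $\Gamma_\vardiamond^\neq$, the plan is simply to differentiate $\Slp f$ under the integral sign via the parametrization $X_\vardiamond$ up to $s+1\le k+2$ times, since the kernel $G(x-y)$ is $C^\infty$ in $x\in\Gamma_\vardiamond^\neq$ uniformly for $y\in\Gamma$. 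This will give
\[
\lV (\Slp f)|_{\Gamma_\vardiamond^\neq}\rV_{H^{s+1}(\Gamma_\vardiamond^\neq)}\le C(\lV X\rV_{H^{k+2}(S^1)})\,\lV f\rV_{L^2(\Gamma)}.
\]

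On the coinciding piece, $(\Slp f)|_{\Gamma_\vardiamond^=}=(\slp f)|_{\Gamma_\vardiamond^=}$. Since the splash parameters $\theta=\pm\pi/2$ both lie strictly inside $\{\eta_\vardiamond=\epsilon_0\}$ (the pushed side) and $\cG(X)\ge\tfrac12 C^0_\gp$ by Definition~\ref{HsgpDef}, the sub-arc $\Gamma_\vardiamond^=$ is itself uniformly chord-arc, though the full curve $\Gamma$ is not. I would introduce a smooth cutoff $\chi\in C^\infty(S^1)$ equal to $1$ on a neighborhood of $\{\eta_\vardiamond=0\}$ and $0$ on $\{\eta_\vardiamond\ge\epsilon_0/2\}$, and split $f=\chi f+(1-\chi)f$. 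The piece $\slp(\chi f)|_{\Gamma_\vardiamond^=}$ is a single layer potential with density supported on the uniformly chord-arc sub-arc $\supp(\chi)\cap\Gamma$, so the standard $H^s\to H^{s+1}$ mapping bound for $\slp$ on chord-arc curves (of the type established in Lemma~\ref{ChordArcOpBdLem}) will give the desired control. The remaining piece $\slp((1-\chi)f)|_{\Gamma_\vardiamond^=}$ is the delicate one: $(1-\chi)f$ is supported on the ``right'' arc of $\Gamma$, which can pass within distance $\delta_\Gamma$ of $\Gamma_\vardiamond^=$ near the splash parametrization $X(-\pi/2)$ (the other splash parametrization $X(\pi/2)$ lies on $\Gamma_\vardiamond^\neq$ and has already been handled via the smooth-kernel estimate above).

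The main obstacle will be showing that this cross-arc piece is controlled uniformly as $\delta_\Gamma\to 0$. The idea is to parametrize $x=X(-\pi/2+t)\in\Gamma_\vardiamond^=$ and $y=X(\pi/2-s)\in\supp((1-\chi)f)$ locally near the splash, use the glancing assumption from Definition~\ref{HsgpDef} to conclude that $X'(-\pi/2)$ and $X'(\pi/2)$ are parallel, and Taylor-expand to obtain (up to a sign depending on orientation)
\[
|x-y|^2 = c_1(t+s)^2+\delta_\Gamma^2+O\!\bigl(t^4+s^4+\delta_\Gamma(t^2+s^2)\bigr).
\]
Then one $t$-derivative of $G(x-y)=\pi^{-1}\log|\sin((x-y)_1+i(x-y)_2)/2|$ produces
\[
\partial_t G(x-y) = \frac{c_2\,(t+s)}{(t+s)^2+\delta_\Gamma^2} + (\text{smoother remainder}),
\]
i.e.\ a $\delta_\Gamma$-regularized Hilbert kernel in $t+s$. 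Since such regularized Hilbert transforms are bounded on $L^2$ with constant independent of $\delta_\Gamma\ge 0$, and commuting further $\partial_t$ through introduces only pointwise-smoother remainders controlled by $\lV X\rV_{H^{k+2}(S^1)}$, this will give the required $H^s\to H^{s+1}$ bound on the cross-arc operator uniformly in the pinch. Patching the three pieces then yields \eqref{evalBd}; the distinction between the $C(M)$ constant at $s=k+1$ and the $C(\lV\xi\rV_{\sH^{k-1}})$ constant for $s\le k$ reflects the one extra derivative of $X$ consumed by the Taylor and kernel manipulations when $s+1=k+2$.
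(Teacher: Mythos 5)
Your overall decomposition of $\Gamma_\vardiamond$ is in the spirit of the paper's proof, but the first step contains a concrete error. You define $\Gamma_\vardiamond^\neq = X_\vardiamond(\{\eta_\vardiamond>0\})$ and assert $\operatorname{dist}(\Gamma_\vardiamond^\neq,\Gamma)\ge c\epsilon_0$ uniformly; this is false. The set $\{\eta_\vardiamond>0\}$ is open with boundary inside $\{\eta_\vardiamond=0\}$, and since $X_\vardiamond(\theta)=X(\theta)-\eta_\vardiamond(\theta)N(\theta)$, points of $\Gamma_\vardiamond^\neq$ near the branch parameters $\theta\approx 0.2$ and $\theta\approx \pi-0.2$ are at distance $\eta_\vardiamond(\theta)\to 0$ from $\Gamma$. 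Consequently the kernel $G(X_\vardiamond(\theta)-X(\vartheta))$ is not $C^\infty$ uniformly on $\Gamma_\vardiamond^\neq$: for $\theta$ just inside the support of $\eta_\vardiamond$ and $\vartheta$ close to $\theta$, it develops the standard logarithmic singularity, and differentiating under the integral sign $s+1$ times (which you propose) gives a kernel of size $|\theta-\vartheta|^{-(s+1)}$ rather than something bounded, so the $L^2(\Gamma)\to H^{s+1}(\Gamma_\vardiamond^\neq)$ smoothing estimate does not follow as stated.

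The fix is exactly the point the paper singles out: near the branch points of $\Gamma_\vardiamond$ (which lie a fixed distance away from the pinch, so the local geometry is uniformly chord-arc), one must verify separately that the kernel $G(X_\vardiamond(\theta)-X(\vartheta))$ is \emph{no more singular} than the coincident single-layer kernel $G(X(\theta)-X(\vartheta))$, i.e.\ that pushing the evaluation point by $\eta_\vardiamond(\theta)N(\theta)$ does not worsen the $|\theta-\vartheta|$-scaling of the singularity. This requires a short local argument (e.g.\ $|X_\vardiamond(\theta)-X(\vartheta)|\ge c|X(\theta)-X(\vartheta)|$ on a neighborhood of the branch point) that your decomposition does not supply; once you have it, that transition region is handled by the same derivative-gain analysis as the overlap region. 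Your treatment of the coincident piece --- the splitting $f=\chi f+(1-\chi)f$, the chord-arc bound for $\slp(\chi f)$ on a subarc, and the regularized-Hilbert-kernel expansion for the cross-arc term --- is consistent with what the paper does (modulo possible sign bookkeeping in the Taylor expansion) and fills in the computation the paper dismisses as "completely standard". So aside from the distance claim and the missing branch-point region, the proposal is on the right track and provides more explicit detail than the paper's sketch.
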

\begin{proof}


After plugging in the parametrization $X_\diamond(\theta)$ into $\cS f$, one applies derivatives to the resulting expression, and verifies that the resulting integral kernel is well-behaved, leading to a gain of one derivative. This calculation is completely standard when evaluating the result at $\theta$ corresponding to the part of the curve where $\Gamma$ and $\Gamma_\diamond$ overlap. The only slightly nonstandard part of the argument arises when one evaluates $\Slp f$ along $X_\diamond(\theta)$ where it pushes into $\Omega$. In a neighborhood (bounded away from the pinch) of the points where $\Gamma_\diamond$ branches off from $\Gamma$, one verifies the kernel is no more singular than it is where the curves coincide. Over the rest of $\Gamma_\diamond$, the kernel is evaluated away from its singularity, which results in a smoothing remainder.
\end{proof}
We discuss one last identity to be applied in the proof of Proposition~\ref{DtoNCancellation}, which relates cutoffs and single layer potential operators $\Slp$ and $\Slp'$ associated to partially overlapping curves $\Gamma$ and $\Gamma'$. For a given $\Gamma$ in $\cXbar$, $\Gamma'$ will play the role of the tailored chord-arc version $\Gamma_\vardiamond$.
\begin{lemma}\label{PartOverlapIdLem}
Consider two $C^1$ non-self-intersecting closed curves $\Gamma$ and $\Gamma'$. We denote the single layer potential operator associated to $\Gamma$ by $\Slp$ and its restriction by $\slp$. Additionally, denote the corresponding operators for $\Gamma'$ by $\Slp'$ and $\slp'$. Suppose the intersection $\Gamma\cap \Gamma'$ contains a nonempty curve segment. Consider functions $ f :\Gamma\to\R$ and $ \chi : \Gamma\to\R$ with $\supp(\chi)\subset \Gamma\cap \Gamma'$ and $\chi=1$ on some subset $\Gamma''\subset\Gamma\cap\Gamma'$. Then
\begin{align}\label{curveSwapIdent}
f &= \slp'^{-1}((\Slp f)|_{\Gamma'})+\slp'^{-1}((\Slp[(\chi-1) f ])|_{\Gamma'})   &   ( x \in \Gamma'' ) .
\end{align}
\end{lemma}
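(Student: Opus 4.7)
The plan is to directly manipulate the decomposition $f = \chi f + (1-\chi)f$ on $\Gamma$ and exploit the fact that the single layer potential of a function supported on $\Gamma \cap \Gamma'$ is insensitive to whether we view the source density as living on $\Gamma$ or on $\Gamma'$.

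First I would observe the following key identity. Since $\supp(\chi f) \subset \Gamma \cap \Gamma'$, the integral defining $\Slp(\chi f)$ at any point $x$ reduces to an integral over $\Gamma \cap \Gamma'$:
\begin{align*}
\Slp(\chi f)(x) = \int_{\Gamma} G(x-y)\, \chi(y) f(y)\, dS(y) = \int_{\Gamma \cap \Gamma'} G(x-y)\, \chi(y) f(y)\, dS(y) = \Slp'(\chi f)(x),
\end{align*}
where in the last step I interpret $\chi f$ as a function on $\Gamma'$ extended by zero. Restricting this identity to points of $\Gamma'$ yields $(\Slp(\chi f))|_{\Gamma'} = \slp'(\chi f)$.

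Next I would apply $\Slp$ to the decomposition $f = \chi f + (1-\chi)f$, restrict to $\Gamma'$, and use the identity from the previous step:
\begin{align*}
(\Slp f)|_{\Gamma'} = \slp'(\chi f) + (\Slp((1-\chi)f))|_{\Gamma'} = \slp'(\chi f) - (\Slp((\chi-1)f))|_{\Gamma'}.
\end{align*}
Applying $\slp'^{-1}$ to both sides (this is well-defined as a formal manipulation, and justified rigorously by the invertibility of $\slp'$ on appropriate function spaces when $\Gamma'$ is a nice chord--arc curve) gives
\begin{align*}
\chi f = \slp'^{-1}((\Slp f)|_{\Gamma'}) + \slp'^{-1}((\Slp((\chi-1)f))|_{\Gamma'}).
\end{align*}
Finally, restricting to the subset $\Gamma'' \subset \Gamma \cap \Gamma'$ where $\chi = 1$, the left-hand side equals $f$, which yields the claimed identity \eqref{curveSwapIdent}.

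There is really no main obstacle here: the entire content of the lemma is the observation that $\Slp$ and $\Slp'$ agree when applied to densities supported on the common part of the two curves, so that a local-in-$\Gamma''$ identity follows by a clean algebraic rearrangement. The only technical point that might warrant a remark in the final write-up is ensuring that the inverse $\slp'^{-1}$ can be applied in the function-space setting of interest — but this is a standard fact once $\Gamma'$ satisfies the chord--arc condition, which will be the case in every application (taking $\Gamma' = \Gamma_\vardiamond$ via Definition~\ref{tailoredCA}).
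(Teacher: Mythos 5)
Your proof is correct and is essentially the same argument the paper gives, just run in the forward direction: the paper collects the two terms on the right-hand side by linearity into $\slp'^{-1}\big((\Slp[\chi f])|_{\Gamma'}\big)$ and then uses the same key observation that $(\Slp g)|_{\Gamma'}=\slp' g$ for $g=\chi f$ supported on $\Gamma\cap\Gamma'$, whereas you start from $f=\chi f+(1-\chi)f$ and solve for $\chi f$. Both hinge on exactly the same identity and the same final restriction to $\Gamma''$ where $\chi=1$.
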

\begin{proof}
First one uses linearity of $\Slp$ and $\slp'^{-1}$ to reduce the right-hand side of \eqref{curveSwapIdent} to
\begin{align}
\slp'^{-1}((\Slp[\chi f])|_{\Gamma'}).
\end{align}
Then one observes that $g=\chi f:\Gamma\to\R$, which is supported in $\Gamma\cap\Gamma'$, can be extended trivially to $\Gamma'\setminus\Gamma$, and then one has $(\Slp g)|_{\Gamma'}=\slp' g$. At the last step, after recalling $\chi=1$ on $\Gamma''$, one finds the right-hand side of \eqref{curveSwapIdent} reduces to $g=f$ there.
\end{proof}

\subsubsection*{Proof of Proposition~\ref{DtoNCancellation}}

        \begin{proof}
We proceed to argue there exists an $m\geq 0$ such that for $\xi$ in $\sB^\bbox(\delta)$ with pinch $\delta>0$, for integer $s$ with $1\leq s \leq k+1$,
        \begin{align}
            \lV  \nres f \rV_{H^s(\Gamma)} \leq C(M) \lV f \rV_{H^s(\Gamma,\delta,m)} ,\label{nresCancelBd2}
        \end{align}
        where $M$ may be replaced by $\lV \xi \rV_{\sH^{k-1}}$ if $s\leq k$. Once we show this, the remaining bounds \eqref{NresCancelBd} easily follow.

First, let us prove \eqref{nresCancelBd2} holds under the assumption that $F = f\circ X$ is supported in $[-\pi,0]$.

For $i=1,2,3$, let us define $a_i=-\pi-10^{-1}(i-1)$, $b_i=10^{-1}(i-1)$, and $\Gamma_i = X([a_i,b_i])$. We also define the complimentary arcs $\Gamma^c_i=\Gamma\setminus \Gamma_i$, for $i=1,2,3$. Now note
\begin{align}\label{NresPf1}
    \lV \nres f \rV_{ H^s (\Gamma) }\leq \lV \nres f \rV_{H^s(\Gamma_2)} + \lV \bign_+ f \rV_{H^s(\Gamma^c_2)} + \lV \bign_- f \rV_{H^s(\Gamma^c_2)}
\end{align}
Let us remark here that our choices of $a_i$, $b_i$, and the constant $d_0$ in the statement of Lemma~\ref{SuppsBddAwayLem} guarantee that $\operatorname{dist}_\Gamma(\Gamma_1,\Gamma^c_2) \geq d_0$. Note $\supp (f) \subset \Gamma_1$, and that the lemma then gives us the following, for some $m \geq 0$:
\begin{align}
    \lV \bign_\pm f \rV_{H^s(\Gamma^c_2)}\leq C(M) \lV f \rV_{H^s (\Gamma_1, \delta, m)} .
\end{align}
Note that applying this immediately gives a satisfactory bound on the second and third terms in \eqref{NresPf1}, so that only the first term remains to be bounded.

Now we use the tailored chord-arc curve $\Gamma_\vardiamond$ of Definition~\ref{tailoredCA}, which we note is a deformation of $\Gamma$ where we have pushed $\Gamma^c_3$ slightly into the plasma region $\Omega$ while preserving $\Gamma_3$. Let us denote by $\tilde \Gamma^c_3$ the altered portion of the curve, so that $\Gamma_\vardiamond = \Gamma_3\cup \tilde{\Gamma}^c_3$.

We take a smooth cutoff $\chi:\Gamma\to\R$ with $\supp(\chi)\subset\Gamma_3$ and $\chi=1$ on $\Gamma_2$. We then apply the identity of Lemma~\ref{PartOverlapIdLem} to $\nres f $, decomposing
\begin{align*}
\begin{aligned}
    \nres f &=
    \slp_{\vardiamond}^{-1}((\Slp \nres f )|_{\Gamma_\vardiamond})
    +
    \slp_{\vardiamond}^{-1}((\Slp[(\chi-1)\nres f ])|_{\Gamma_\vardiamond})  &   \\
    &=\nres^1 f + \nres^2 f  & 
\end{aligned} \quad (x \in \Gamma_2 ).
\end{align*}
We think of $\nres^1 f $ as the main term and $\nres^2 f$ as a remainder term. Let us note for now that $(\chi - 1)=0$ on $\Gamma_2$. We bound
\begin{align}\label{collectednresBd}
    \lV \nres f 
    \rV_{H^s(\Gamma_2)}
    \leq
    \lV \nres^1 f \rV_{H^s(\Gamma_2)}+\lV \nres^2 f \rV_{H^s(\Gamma_2)} ,
\end{align}
 Note we may extend $f$ from $\Gamma$ to $\Gamma_\vardiamond$ by defining it to be zero on $\Gamma_\vardiamond\setminus\Gamma$. Denote by $\dcal T_\vardiamond$ the operator defined by \eqref{TOpDefn} for the curve $\Gamma_\vardiamond$. Since $\supp (f ) \subset \Gamma \cap \Gamma_\vardiamond$, inspecting the expression for $\dcal T_\vardiamond f:\Gamma_\vardiamond\to\R$ reveals it is precisely the restriction to $\Gamma_\vardiamond$ of a harmonic extension of $\dcal{T} f:\Gamma\to\R$. Using identity \eqref{keynresIdent2} of Lemma~\ref{keycancellationident}, we thus obtain
\begin{align}
& &    \Slp \nres f &= 2 \bigh_- \dcal{T} f =2\dcal{T}_\vardiamond f  &   (x \in \Gamma_\vardiamond ) .
\end{align}
Since $\Gamma_\diamond$ is an admissible chord-arc curve, Lemma~\ref{ChordArcOpBdLem} and the above observations lead us to
\begin{align}
    \lV \nres^1 f \rV_{H^s(\Gamma_2)}&=
    \lV \slp^{-1}_\vardiamond
    (\Slp \nres f )|_{\Gamma_\vardiamond}
    \rV_{H^s(\Gamma_2)}\\
    &\leq C(M) \lV (\Slp \nres f)|_{\Gamma_\vardiamond}\rV_{H^{s+1}(\Gamma_\vardiamond)}\\
    &\leq C(M)\lV \dcal{T}_\vardiamond f \rV_{H^{s+1}(\Gamma_\vardiamond)}\\
    &\leq C(M)\lV f \rV_{H^s(\Gamma)} . \label{nres1bound}
\end{align}
For the remainder term $\nres^2 f$, a crude bound suffices. Using Lemmas~\ref{ChordArcOpBdLem} and~\ref{EvalSLPBdLem}, we bound
\begin{align}
    \lV \nres^2 f \rV_{H^s(\Gamma_2)}
    &=\lV\slp_{\vardiamond}^{-1}((\Slp[(\chi-1)\nres f ])|_{\Gamma_\vardiamond})\rV_{H^s(\Gamma_2)}\\
    &\leq C(M)\lV(\Slp[(\chi-1)\nres f ])|_{\Gamma_\vardiamond}\rV_{H^{s+1}(\Gamma_\vardiamond)}\\
    &\leq C(M)\lV (\chi-1)\nres f \rV_{H^s(\Gamma)} . \label{nres2bound}
\end{align}
Now we use the fact that $\supp(\chi-1)\subset \Gamma^c_2$ is bounded away from $\supp (f) \subset \Gamma_1$ to apply Lemma~\ref{SuppsBddAwayLem}:
\begin{align}
    \lV (\chi-1)\nres f \rV_{H^s(\Gamma)}
    &\leq C
    \lV \nres f \rV_{H^s(\Gamma^c_2)}\\
    &\leq C(M)
    \lV f \rV_{H^s (\Gamma_1,\delta,m)} .
\end{align}
Using this in \eqref{nres2bound} to bound $\nres^2 f$, and combining the result with the bound \eqref{nres1bound} for $\nres^1 f$, we get
\begin{align}
    \lV \nres f \rV_{H^s(\Gamma_2)}\leq C \lV f \rV_{H^s(\Gamma,\delta,m)}.
\end{align}
This takes care of the last remaining term in the right-hand side of \eqref{NresPf1}.

Thus, \eqref{nresCancelBd2} holds in the case that $F = f \circ X$ is supported in $[-\pi,0]$. Using a similar argument to the one above, one proves the bound in the case that $F$ is supported in $S^1\setminus [-3\pi/4,-\pi/4]$. By patching these estimates together with a partition of unity, we verify \eqref{nresCancelBd2} holds in general. For $s\leq k$, we similarly prove the bound with $M$ replaced by $\lV\xi\rV_{\sH^{k-1}}$.
        \end{proof}

\subsubsection*{Lipschitz bound for $\Nres$}
Now we establish a basic Lipschitz estimate for $\xi\mapsto\Nres(\xi)$. For our Lipschitz estimates, we only need bounds at lower orders, so a cancellation-type estimate like in Proposition~\ref{DtoNCancellation}, where the regularity level of the input function is preserved, is unnecessary. 

\begin{proposition}\label{NresLipBd}
Then there exists an $m\geq 0$ such that the following holds for $\xi$ and $\uxi$ in $\sB^\bbox(\delta)$, where $0<\delta\leq \delta_0$.
\begin{align}\label{lipNres}
            \lV (\Nres(\xi)-\Nres(\uxi)) F \rV_{H^2(S^1)} \leq C \lV \xi - \uxi \rV_{\sH^1}\lV F \rV_{H^3(S^1,\delta,m)} ,
        \end{align}
\end{proposition}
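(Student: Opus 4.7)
The plan is to mirror the cancellation argument used for Proposition~\ref{DtoNCancellation} and to upgrade it to a Lipschitz bound. As in the original proof, the first step is a partition of unity on $S^1$ subordinate to the cover $[-\pi,0] \cup (S^1 \setminus [-3\pi/4,-\pi/4])$, which reduces matters to the case $\supp F \subset [-\pi,0]$ (the other localization is symmetric). Setting $f = F \circ X^{-1}$ and $\uf = F \circ \uX^{-1}$, we form the tailored chord-arc curves $\Gamma_\vardiamond$ and $\uGamma_\vardiamond$ of Definition~\ref{tailoredCA}: both coincide with their parent curves on the arc carrying $\supp f$, $\supp \uf$, and both push into the plasma region on the complementary arc, so both are admissible chord-arc curves with $H^{k+2}$ parametrizations bounded by $M$. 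Lemma~\ref{ChordArcOpBdLem} therefore furnishes uniform bounds for $\slp_\vardiamond^{-1}$, $\dcal T_\vardiamond$ and their underlined counterparts.

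Applying Lemmas~\ref{keycancellationident} and~\ref{PartOverlapIdLem} to each of $\xi$, $\uxi$ as in the proof of Proposition~\ref{DtoNCancellation}, one decomposes
\begin{align*}
\nres f = 2\,\slp_\vardiamond^{-1}\dcal T_\vardiamond f \;+\; \slp_\vardiamond^{-1}\bigl((\Slp[(\chi-1)\nres f])|_{\Gamma_\vardiamond}\bigr),
\end{align*}
and analogously for $\underline\nres \uf$. The difference $\nres f - \underline\nres \uf$ therefore splits into a main term and a remainder term. For the main term I would use the resolvent identity
\begin{align*}
\slp_\vardiamond^{-1}\dcal T_\vardiamond f - \underline\slp_\vardiamond^{-1}\underline{\dcal T}_\vardiamond\uf
= \slp_\vardiamond^{-1}(\dcal T_\vardiamond - \underline{\dcal T}_\vardiamond)\uf + \slp_\vardiamond^{-1}\dcal T_\vardiamond(f - \uf) - \slp_\vardiamond^{-1}(\slp_\vardiamond - \underline\slp_\vardiamond)\underline\slp_\vardiamond^{-1}\underline{\dcal T}_\vardiamond\uf.
\end{align*}
Each operator difference is an integral operator whose kernel is a smooth function of $X_\vardiamond$, $\uX_\vardiamond$ and their first derivatives, and a first-order Taylor expansion of the kernel produces
\begin{align*}
\lV(\dcal T_\vardiamond - \underline{\dcal T}_\vardiamond)g\rV_{H^2(\Gamma_\vardiamond)}
+ \lV(\slp_\vardiamond - \underline\slp_\vardiamond)g\rV_{H^3(\Gamma_\vardiamond)}
\leq C\lV X_\vardiamond - \uX_\vardiamond\rV_{H^3(S^1)}\lV g\rV_{H^2(\uGamma_\vardiamond)}.
\end{align*}
Since $X_\vardiamond - \uX_\vardiamond = (X-\uX) - \eta_\vardiamond(N-\underline N)$ and $\lV N - \underline N\rV_{H^2}\leq C\lV X - \uX\rV_{H^3}$, this is controlled by $C\lV \xi - \uxi\rV_{\sH^1}$. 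Combining this with a standard change-of-variables bound $\lV f - \uf\rV_{H^2}\leq C\lV X - \uX\rV_{H^3}\lV F\rV_{H^3(S^1)}$ gives the claimed Lipschitz estimate for the main term.

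For the remainder term, note that both $(\chi-1)\nres f$ and $(\underline\chi - 1)\underline\nres \uf$ are supported on the portion of the interface that is bounded away from $\supp F$. One splits the difference as before into a piece involving $\slp_\vardiamond^{-1} - \underline\slp_\vardiamond^{-1}$, a piece involving $\Slp - \underline\Slp$ (evaluated along the tailored curve using Lemma~\ref{EvalSLPBdLem} to pick up a derivative uniformly), and a piece involving $(\chi-1)\nres f - (\underline\chi - 1)\underline\nres \uf$. The first two pieces are handled by the same kernel-Taylor argument as above. The third piece is estimated by applying Lemma~\ref{SuppsBddAwayLem} to transfer off-support control and then Proposition~\ref{DtoNCancellation} itself to bound $\nres f$ uniformly in the \emph{weighted} norm $H^3(\Gamma,\delta,m)$; this is the step that forces the weighted norm on $F$ in the right-hand side of \eqref{lipNres}. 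Finally one pulls back from $\Gamma$ to $S^1$ via \eqref{Npmdefns} to obtain \eqref{lipNres}.

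The main obstacle is the bookkeeping of weights in the remainder term: one must carefully track how many powers of the weight $\mu_\delta$ (or $M_\delta$) are consumed when differentiating through the chain $\slp_\vardiamond^{-1}\circ \Slp\circ ((\chi-1)\cdot)\circ \nres$, and verify that the resulting index $m$ is finite — this mirrors the role of the weight increase $m\mapsto m'$ already present in Proposition~\ref{DtoNCancellation}. A secondary technical point is ensuring that the change of variables $F\circ X^{-1}$ versus $F\circ \uX^{-1}$ never costs more than one derivative on $X-\uX$, which is fine because $\lV\xi - \uxi\rV_{\sH^1}$ controls $\lV X - \uX\rV_{H^3(S^1)}$.
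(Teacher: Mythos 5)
Your route differs from the paper's and, as written, does not close. The paper's own proof makes no use of the tailored chord-arc/layer-potential cancellation at all. The key observation, stated just before the proposition, is that for Lipschitz estimates at lower orders the cancellation is unnecessary: the target norm in \eqref{lipNres} is only $H^2(S^1)$ while $F$ is measured in $H^3(S^1,\delta,m)$, so there is one derivative of slack that absorbs the derivative loss of $\cN_+$ and $\cN_-$ \emph{individually}. The paper then bounds $\cN_+(\xi)-\cN_+(\uxi)$ directly by expressing $\cN_+(\xi)F=-\bigtau\cdot V$ with $V$ solving the pulled-back div-curl problem \eqref{DtNDivCurl} on $\Sigma_+(\delta)$, forms the analogous system for $V-\underline V$, and applies the weighted div-curl estimate of Proposition~\ref{LagrVacDivCurlEst}; the $\cN_-$ Lipschitz bound is exactly Proposition~\ref{LipDirichNeumBd}. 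The two pieces are simply added.

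Your strategy of Lipschitz-ifying the decomposition from Proposition~\ref{DtoNCancellation} duplicates structure that is not needed here, and the remainder term leaves a genuine gap. You need a bound on $(\chi-1)\nres f-(\underline\chi-1)\underline{\nres}\,\underline f$ carrying the factor $\lV\xi-\uxi\rV_{\sH^1}$, yet the two results you invoke --- Lemma~\ref{SuppsBddAwayLem} and Proposition~\ref{DtoNCancellation} --- are single-curve estimates: they control $\nres f$ and $\underline{\nres}\,\underline f$ \emph{separately}, giving $C(M)\lV F\rV_{H^3(S^1,\delta,m)}$ on each term and hence on the sum, but no Lipschitz smallness at all. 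What you actually need is a Lipschitz version of Lemma~\ref{SuppsBddAwayLem}, which the paper does not supply and which you do not prove. If you try to prove one, you will be led to compare $\bign_\pm(\xi)$ against $\bign_\pm(\uxi)$ on the far arc via their div-curl formulations --- precisely the paper's argument, entered through a longer route. (A secondary point: the resolvent identity for your main term compares $\slp_\vardiamond^{-1}\dcal T_\vardiamond$ with an operator acting on functions on a \emph{different} curve, so it must be conjugated by the two parametrizations before the three terms can be isolated; your mention of $\lV f-\underline f\rV$ suggests you know this, but the identity as written glosses over the domain mismatch.)
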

\begin{proof}
Let us use underlines to denote obvious analogues of quantities corresponding to $\uxi$. First, we verify an estimate of the form
\begin{align}
\lV (\cN_+ - \underline \cN_+) F \rV_{H^2(S^1)} \leq C \lV \xi - \uxi \rV_{\sH^1}\lV F \rV_{H^3(S^1,\delta,m)} .
\end{align}
Recalling Proposition~\ref{extensionProp2}, let us define $X_+=\cE_\delta(X)$ and $\uX_+=\cE_\delta(\uX)$, noting in particular that for some $m\geq 0$
\begin{align}
\lV X_+ - \uX_+ \rV_{H^{5/2}(\Sigma_+,\delta,-m)} \leq C \lV X - \uX \rV_{H^2(S^1)} .
\end{align}
Let us define $G:\del\Sigma_+(\delta)\to\R$ by
\begin{align}
& & G&=\frac{F'\circ X^{-1}\circ X_+}{|X_\theta \circ X^{-1}\circ X_+|} & (a\in\del\Sigma_+(\delta)).
\end{align}
Now we will express $\cN_+ F$ in terms of the unique solution $V$ to the following problem:
\begin{align}\label{DtNDivCurl}
\begin{aligned}
(a \in \Sigma_+(\delta)) \qquad &
\left\{
    \begin{aligned}
        \grad \cdot (\, \cof (\grad X^t_+) V ) &= 0 \\
        \grad^\perp\cdot (\, \grad X^t_+ V )   &= 0 \, \Lcm
    \end{aligned}
\right. \\
(a \in \del\Sigma_+(\delta))  \qquad &  \ \big\{ \,(n \circ X_+ ) \cdot V = G , \\
 \qquad &\int_{\del \Sigma_+(\delta)} (V \grad X_+) \cdot d\vec r = 0,
 \end{aligned}
\end{align}
where the directed line integral along $\del\Sigma_+(\delta)$ is taken with paths going left to right along the top and bottom boundaries of $\Sigma_+(\delta)$.

By using Lemma~\ref{DivFreeCurlFreeConversion}, one finds the $V$ above is produced by taking $V=v\circ X_+$, where $v$ is the solution to \eqref{divcurlfreesys} in the case $\Omega^*_+=\Omega_+$, $\nu_1=\nu_2=0$, and $g=(F'\circ X^{-1})/|X_\theta\circ X^{-1}|$, which satisfies the necessary compatibility condition $\int_\Gamma g \,dS=0$.\footnote{That the line integral in \eqref{DtNDivCurl} is zero is verified by using properties related to the fact that $X_+=\cE_\delta(X)$ is a double cover and the construction of Proposition~\ref{extensionProp2}.} Moreover, one verifies that $v=\grad^\perp \phi$ for $\phi$ satisfying the Dirichlet problem in $\Omega_+$ with boundary data $F\circ X^{-1}$ on $\Gamma$. 
Using these observations and referring back to Definition~\ref{DirichToNeumDefs}, we find $\cN_+ F = -\bigtau \cdot V$.

We also have a unique $\underline V:\Sigma_+(\delta)\to\R^2$ solving the analogous system in which the various quantities have been replaced by the analogous versions corresponding to $\uxi$, and we find $\underline \cN_+ F = -\underline \bigtau \cdot \underline V$. Thus
\begin{align}\label{cNdifference1}
\lV (\cN_+  - \underline \cN_+ )F\rV_{H^2(S^1)}
\leq C \left( \lV \bigtau - \underline \bigtau \rV_{H^2(S^1)} \lV V \rV_{H^2(S^1)} + \lV \underline \bigtau \rV_{H^2(S^1)}\lV V - \underline V \rV_{H^2(S^1)}\right) .
\end{align}
Meanwhile, by using that $V$ solves the system \eqref{DtNDivCurl} and $\underline V$ solves the analogous one, one is able to formulate a system for the difference $V-\underline V$. By applying Proposition~\ref{LagrVacDivCurlEst}, we are able to show the following, for appropriate $m_1$ and $m_2$. Let us note that the steps here are similar to those used to prove the Lipschitz bound, (ii), of Proposition~\ref{hEstimateProp}.
\begin{align}
\lV V - \underline V \rV_{H^{5/2}(\Sigma_+(\delta))} &\leq C \Big( \lV \grad \cdot [(\cof(\grad X^t_+) - \cof(\grad \uX^t_+)) V] \rV_{H^{3/2}(\Sigma_+,\delta,m_1)} \\
&\qquad +\lV \grad^\perp \cdot [(\grad X^t_+ - \grad \uX^t_+) V] \rV_{H^{3/2}(\Sigma_+,\delta,m_1)} + \lV G-\underline G \rV_{H^2(\del\Sigma_+,\delta,m_1)} \\
&\qquad + \lV (n\circ X_+ - \underline n \circ \uX_+)\cdot V \rV_{H^2(\del\Sigma_+,\delta,m_1)} + \sup_{a\in \del\Sigma_+(\delta)}| V(\grad X_+ - \grad \uX_+)| \Big)\\
&\leq C'\left( \lV \uX_+ - X_+ \rV_{H^{7/2}(\Sigma_+,\delta,-m)}\lV  V \rV_{H^{5/2}(\Sigma_+,\delta,m_2)} + \lV G-\underline G \rV_{H^2(\del\Sigma_+,\delta,m_1)}\right) \\
&\leq C'' \left( \lV \uX-X \rV_{H^3(S^1)} \lV F \rV_{H^3(S^1,\delta,m_2)}+\lV G-\underline G \rV_{H^2(\del\Sigma_+,\delta,m_1)}\right).
\end{align}
We use the above to bound $\lV V - \underline V \rV_{H^2(S^1)}$ and incorporate the result in \eqref{cNdifference1}, again using the bound \(\lV V \rV_{H^2(S^1)} \leq C \lV F \rV_{H^3(S^1,\delta,m_2)}\). With the same techniques, we then bound $\lV G-\underline G \rV_{H^2(\del\Sigma_+,\delta,m_1)}$, finding
\begin{align}
\lV (\cN_+ - \underline \cN_+) F\rV_{H^2(S^1)}
&\leq C \left(\lV X - \uX \rV_{H^3(S^1)} \lV F \rV_{H^3(S^1,\delta,m_2)}+\lV G- \underline G \rV_{H^2(S^1)}\right) \\
&\leq C'\lV X - \uX \rV_{H^3(S^1)} \lV F \rV_{H^3(S^1,\delta,m_3)} .
\end{align}
Combining this with Proposition~\ref{LipDirichNeumBd} gives \eqref{lipNres}.
\end{proof}

\subsubsection*{Remaining bounds needed for $[\del_t;\Nres](\xi)$}

Similar to the cancellation bound for $\Nres(\xi)$, we also need a kind of cancellation bound for the time derivative commutator type operator $[\del_t;\Nres](\xi)$. Section~\ref{commutatormapssection} includes a rigorous definition of this operator as well as the proof of the proposition below.
\begin{myprop}{\ref{CommutatorCancelBd}}
For some $m \geq 0$ the following holds. Let $\xi$ be in $\sB^\bbox(\delta)$ for $0<\delta\leq \delta_0$. Then for the corresponding operator \([\del_t;\Nres](\xi)\), for any integer $s$ with \(0 \leq s \leq k+1 \), we have the estimate
\begin{align}
            \lV [\del_t;\Nres](\xi) F \rV_{H^s(S^1)} \leq C(M) \lV F \rV_{H^s (S^1,\delta,m)} .
        \end{align}
\end{myprop}
Likewise, we need a Lipschitz estimate for $[\del_t;\Nres](\xi)$ we state here, proved in Section~\ref{commutatormapssection}.
\begin{myprop}{\ref{commNresLipBound}}
For some $m\geq0$, given any $\xi$ and $\uxi$ in $\sB^\bbox(\delta)$ for $0<\delta\leq \delta_0$ we have the estimate
\begin{align}
\lV ([\del_t;\Nres](\xi)-[\del_t;\Nres](\uxi) )F\rV_{H^2(S^1)} \leq C \lV \xi-\uxi \rV_{\sH^2} \lV F \rV_{H^3(S^1,\delta,m)}.
\end{align}
\end{myprop}

\subsubsection*{Concluding bounds for the main nonlinear terms}

Now that we have the bounds necessary to define nearly all the maps in the Lagrangian wave system for $\xi$ in $\sB$, let us recall $\sN(\xi)$, $\dot\sN(\xi)$, and $\mathring \cF_\sN(\xi)$, first discussed in Remark~\ref{dcF1forBcircDefn}:
\begin{align}
\sN(\xi) &= \half \Nres(\xi) | H(\xi)|^2 ,\\
\dot \sN(\xi) &=  \Nres(\xi) ( H(\xi) \cdot \dot H(\xi) ) +\half [\del_t;\Nres](\xi) | H(\xi)|^2 , \\
 \mathring \cF_\sN(\xi) &=\bR^{11}(\xi)  (\sN(\xi) e_2 )  -
 \dot \sN(\xi)e_2  .
\end{align}
Having $\xi$ in $\sB$ requires that the interface $\Gamma$ has zero pinch at time $t=0$ despite the fact that we only define $\Nres(\xi)$ and $[\del_t ; \Nres](\xi)$ for $\xi$ with positive pinch. 

It is necessary for us to define $\dcF_1(\xi)$ for any $\xi$ in $\sB$. We are able to do so with Lemma~\ref{sNiExt} only after having established the essential uniform bounds of Lemma~\ref{sNilemma}, below.
\begin{lemma}\label{sNilemma}
We have for $\mathring \cF_\sN$ given by \eqref{ringcFsNDef}
\begin{align}
\mathring \cF_\sN: \sB^\wbox \to H^k(S^1) ,
\end{align}
and for $\xi$ and $\uxi$ in $\sB^\wbox$, we have the following estimates:
\begin{align}
\lV \mathring \cF_\sN(\xi) \rV_{H^k(S^1)} &\leq C(M)   , \label{sNiUnifBd}\\
\lV \mathring \cF_\sN(\xi)-\mathring \cF_\sN(\uxi) \rV_{H^2(S^1)} &\leq C \lV \xi - \uxi \rV_{\sH^2} .\label{sNiLipBd}
\end{align}
\end{lemma}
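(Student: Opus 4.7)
The plan is to bound each of the building blocks $\sN(\xi)$ and $\dot\sN(\xi)$ in $H^k(S^1)$, and then assemble $\mathring\cF_\sN(\xi)$ using the previously established bounds on $\bR^{11}(\xi)$. The key mechanism throughout is that whenever the operator $\Nres(\xi)$ or $[\del_t;\Nres](\xi)$ is applied, it acts on a quantity built out of $H(\xi)$ and $\dot H(\xi)$, both of which are controlled in weighted Sobolev norms with arbitrarily strong decay at the pinch (Propositions~\ref{hEstimateProp} and~\ref{dotHEstimateProp}). Combined with the cancellation bounds (Propositions~\ref{DtoNCancellation} and~\ref{CommutatorCancelBd}), this produces uniform (in $\delta>0$) control in standard Sobolev norms.

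For the uniform bound \eqref{sNiUnifBd}, I first estimate $\sN(\xi)=\tfrac12\Nres(\xi)|H(\xi)|^2$. Applying Proposition~\ref{DtoNCancellation} at level $s=k$ yields
\[
\|\sN(\xi)\|_{H^k(S^1)} \leq C(M)\,\||H(\xi)|^2\|_{H^k(S^1,\delta,m)},
\]
and the weighted product inequality (the $S^1$-analogue of Proposition~\ref{standardWeightedSobNormEstims}(iv), combined with $k\geq 4$ so that $H^k$ is an algebra in the weighted setting), together with Proposition~\ref{hEstimateProp}, gives $\||H(\xi)|^2\|_{H^k(S^1,\delta,m)} \leq C(M)$ once $m'\geq m$ is chosen large enough (per Remark~\ref{WeightRemark}). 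For $\dot\sN(\xi)$, the term $\Nres(\xi)(H(\xi)\cdot\dot H(\xi))$ is handled identically using in addition Proposition~\ref{dotHEstimateProp}, and the term $[\del_t;\Nres](\xi)|H(\xi)|^2$ is handled by Proposition~\ref{CommutatorCancelBd} in exactly the same way. Finally, $\mathring\cF_\sN(\xi)=\bR^{11}(\xi)(\sN(\xi)e_2)-\dot\sN(\xi)e_2$ is controlled in $H^k(S^1)$ using Proposition~\ref{BasicRBound} (applied to the first entry of $\bR$ and multiplication into a scalar quantity), yielding \eqref{sNiUnifBd}.

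For the Lipschitz bound \eqref{sNiLipBd}, I decompose each difference using the standard telescoping, e.g.
\[
\Nres(\xi)|H(\xi)|^2-\Nres(\uxi)|H(\uxi)|^2 = \bigl(\Nres(\xi)-\Nres(\uxi)\bigr)|H(\xi)|^2+\Nres(\uxi)\bigl(|H(\xi)|^2-|H(\uxi)|^2\bigr),
\]
and bound the first term in $H^2$ using Proposition~\ref{NresLipBd} together with the weighted control of $|H(\xi)|^2$ at level $H^3(S^1,\delta,m)$, and the second using Proposition~\ref{DtoNCancellation} at $s=2$ followed by the Lipschitz bound on $H(\xi)$ from Proposition~\ref{hEstimateProp}(ii) (applied to $|H(\xi)|^2-|H(\uxi)|^2 = (H(\xi)-H(\uxi))\cdot(H(\xi)+H(\uxi))$, estimated via a weighted product inequality). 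An entirely analogous decomposition handles the $\Nres(H\cdot\dot H)$ and $[\del_t;\Nres]|H|^2$ contributions to $\dot\sN$, using Proposition~\ref{commNresLipBound} in place of Proposition~\ref{NresLipBd} for the commutator. Finally, the $\bR^{11}$-multiplier difference is absorbed using Lipschitz bounds on $\bD,\dot\bD,\bE,[\del_t;\bE]$ that are immediate consequences of their definitions together with the Lipschitz bounds on $H$ and $\dot H$.

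The main obstacle will be bookkeeping the weight indices $m$ as they propagate through each composition: each application of Proposition~\ref{DtoNCancellation} or~\ref{CommutatorCancelBd} transfers the estimate into a weighted norm whose index may grow, and each product inequality for $|H|^2$ or $H\cdot\dot H$ redistributes the indices across factors. However, because Propositions~\ref{hEstimateProp} and~\ref{dotHEstimateProp} provide bounds on $H(\xi)$ and $\dot H(\xi)$ in $H^{k+1}(S^1,\delta,m)$ and $H^k(S^1,\delta,m)$ for arbitrary $m\geq0$, we may always choose $m$ large enough at the end of the chain to close the estimate without tracking its exact value, precisely as in Remark~\ref{WeightRemark}. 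The one place this is slightly delicate is in the Lipschitz bound, where Proposition~\ref{hEstimateProp}(ii) only provides Lipschitz control in a weighted $H^2$ norm; this is still enough to close \eqref{sNiLipBd} because that bound is required only at level $H^2(S^1)$ on the left-hand side and allows weighted control on the right. Once this bookkeeping is carried out once for the $\sN$ contribution, the same template applies to each remaining term.
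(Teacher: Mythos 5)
Your proposal is correct and follows essentially the same route as the paper: the paper's own proof is a short paragraph citing precisely the ingredients you assemble — the weighted bounds on $H(\xi)$ and $\dot H(\xi)$ (Propositions~\ref{hEstimateProp},~\ref{dotHEstimateProp}), the cancellation bounds for $\Nres$ and $[\del_t;\Nres]$ (Propositions~\ref{DtoNCancellation},~\ref{CommutatorCancelBd}), the Lipschitz bounds of Propositions~\ref{NresLipBd} and~\ref{commNresLipBound}, Proposition~\ref{BasicRBound} for $\bR^{11}$, and the $\bE$, $[\del_t;\bE]$ Lipschitz bounds of Propositions~\ref{bEbounds} and~\ref{bEcommBounds}. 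Your write-up simply spells out the telescoping and weight-bookkeeping that the paper leaves implicit.
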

\begin{proof}
         The above is an easy consequence of the uniform weighted bounds for $ H(\xi)$ and $\dot H(\xi)$ given by Propositions~\ref{hEstimateProp} and~\ref{dotHEstimateProp} in combination with the main cancellation bounds of Propositions~\ref{DtoNCancellation} and~\ref{CommutatorCancelBd}, and the Lipschitz bounds for $\Nres(\xi)$ and $[\del_t;\Nres](\xi)$ of Propositions~\ref{NresLipBd} and~\ref{commNresLipBound}. We also use Proposition~\ref{BasicRBound} to control $\bR^{11}(\xi)$ (defined in \eqref{bRderivation}), and in order to bound $\bR^{11}(\xi)-\bR^{11}(\uxi)$, we use in particular the estimates for $\bE(\xi)-\bE(\uxi)$ and $[\del_t;\bE]-[\del_t;\bE](\uxi)$ given by Propositions~\ref{bEbounds} and~\ref{bEcommBounds}.
\end{proof}
Now let us recall how Lemma~\ref{sNilemma} allows us to extend $\mathring \cF_\sN(\xi)$ to a map on $\sB^\bbox$, as stated in Lemma~\ref{sNiExt}, which in turn allows us to define $\dcF_1(\xi)$ for all $\xi$ in $\sB$ with Definition~\ref{finalDefdcF1}. This means that all the terms are well-defined for $\xi$ in $\sB$ in the $\xi$ system
\begin{align}
\del_t \xi = \bJ(\xi) \, \del_\theta \xi + \cR(\xi) \, \xi + \dcF(\xi)  .
\label{XiSystem2}
\end{align}
Furthermore, the bounds we have established imply that the source term $\dcF(\xi)$ is truly lower order in the system above.

With Proposition~\ref{dcF2UnifBd}, we verified that $\dcF_2(\xi)$ remains uniformly bounded in the necessary norm for the class of $\xi$ considered. Now we conclude with the following proposition, which uses many of the bounds proved above to confirm that, similarly, $\dcF_1(\xi)$ remains bounded in the appropriate norm. In the next section, we will use these bounds to construct the solution to \eqref{XiSystem2} which ultimately gives us our splash--squeeze singularity.
\begin{proposition}\label{dcF1BoundProp}
For $\xi$ in $\sB$ and $\dcF_1(\xi)$ given by Definition~\ref{finalDefdcF1},
\begin{align}\label{estimatedcF1}
\sup_{t}\lV\dcF_1(\xi)\rV_{H^k(S^1)} &\leq C(M).
\end{align}

\end{proposition}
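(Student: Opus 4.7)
The plan is to estimate $\dcF_1(\xi)$ by handling its three constituents separately, namely $\mathring\cF_-(\xi)$, $\mathring\cF_+(\xi)$, and $\mathring\cF_\sN(\xi)$ from \eqref{mathrcFdef}, and then summing. The heavy technical work has already been carried out in the estimates developed earlier in the paper; here the task is essentially to assemble these and apply the algebra property of $H^k(S^1)$ (valid since $k\ge 4 \ge 1$). The only genuinely delicate piece is the term involving $\Nres$, but this is already packaged as Lemma~\ref{sNilemma} together with its extension Lemma~\ref{sNiExt}, which yield directly
\begin{align}
\sup_{t}\lV \mathring\cF_\sN(\xi)\rV_{H^k(S^1)}\leq C(M)
\end{align}
for $\xi\in\sB$. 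So the whole proposition will reduce to estimates on $\mathring\cF_+$ and $\mathring\cF_-$, for which the pinch-zero issue does not arise.

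For $\mathring\cF_+(\xi)$, defined by \eqref{ringcFpmDef2}, I would bound each summand in $H^k(S^1)$. The factors $N$, $\bigtau$, $1/|X_\theta|$, and $U_\theta$ are controlled in $H^k(S^1)$ by the definition of $\sB$ (recall $X\in H^{k+2}(S^1)$, $U\in H^{k+1}(S^1)$, and $|X_\theta|\geq c$ for $\xi\in\sB^\bbox$, by Corollary~\ref{Jbounds}-type reasoning). Proposition~\ref{PplusbdProp} gives $\lV P^+_\grad(\xi)\rV_{H^{k+1}(S^1)}\le C(M)$ and Proposition~\ref{dotPplusSysProp} gives $\lV\dot P^+_\grad(\xi)\rV_{H^k(S^1)}\le C(M)$. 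Finally Proposition~\ref{BasicRBound} (applied at $s=k$) controls $\bR^{11}(\xi)$ as a bounded operator on $H^k(S^1)$. Combining with the Sobolev algebra property in $H^k(S^1)$ yields $\sup_t\lV\mathring\cF_+(\xi)\rV_{H^k(S^1)}\le C(M)$.

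For $\mathring\cF_-(\xi)$, defined by \eqref{ringcFpmDef1}, the same strategy applies using different input estimates. Proposition~\ref{PminusBdProp} gives $\lV P^-_\grad(\xi)\rV_{H^{k+1}(S^1)}\le C(M)$ and $\lV\dot P^-_\grad(\xi)\rV_{H^k(S^1)}\le C(M)$ (the latter via Definition~\ref{dotPintdefn} and its accompanying bound). Lemma~\ref{bEboundLemma} gives that $\bE(\xi)$ and $\bE^{-1}(\xi)$ are bounded on $H^{k+1}(S^1)$ and that $[\del_t;\bE](\xi)$ is bounded on $H^k(S^1)$, both with constants $C(M)$. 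Together with Proposition~\ref{BasicRBound} applied to $\bR^{11}(\xi)$, this handles the three summands $\bR^{11}\bE P^-_\grad$, $[\del_t;\bE]P^-_\grad$, and $\bE\dot P^-_\grad$, yielding $\sup_t\lV\mathring\cF_-(\xi)\rV_{H^k(S^1)}\le C(M)$.

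The main conceptual obstacle is no longer a live issue at this point in the paper: it was the possibility of blow-up of the $\Nres$-dependent terms near the splash moment (where $\delta_\Gamma=0$), which was resolved by the uniform-in-$\delta$ Dirichlet-to-Neumann cancellation bound (Proposition~\ref{DtoNCancellation}) and the commutator analogue (Proposition~\ref{CommutatorCancelBd}), packaged into Lemma~\ref{sNilemma} and then extended to $\sB^\bbox$ by density in Lemma~\ref{sNiExt}. Once those are invoked, the bound \eqref{estimatedcF1} for $\dcF_1(\xi)$ follows by standard algebra and the estimates listed above.
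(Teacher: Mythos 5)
Your proposal is correct and takes essentially the same approach as the paper, which (quite tersely) lists exactly the same ingredients: Lemmas~\ref{bEboundLemma} and~\ref{sNiExt} together with Propositions~\ref{PminusBdProp},~\ref{BasicRBound},~\ref{PplusbdProp},~\ref{dotPplusSysProp}, and~\ref{PminusDotBds}, applied to the decomposition $\mathring\cF = \mathring\cF_- + \mathring\cF_+ + \mathring\cF_\sN$ via the $H^k(S^1)$ algebra property. Your write-up fills in the bookkeeping the paper leaves implicit (and correctly identifies that the $\dot P^-_\grad$ bound comes from Proposition~\ref{PminusDotBds} rather than~\ref{PminusBdProp}), so there is nothing to change.
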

\begin{proof}

Referring back to the definitions of Remark~\ref{dcF1forBcircDefn} and Definition~\ref{finalDefdcF1}, we prove the estimate \eqref{estimatedcF1} by using Lemmas~\ref{bEboundLemma} and~\ref{sNiExt} together with Propositions~\ref{PminusBdProp},~\ref{BasicRBound},~\ref{PplusbdProp},~\ref{dotPplusSysProp}, and~\ref{PminusDotBds}.
\end{proof}
Now we get the following immediate corollary from Proposition~\ref{dcF1BoundProp} and Proposition~\ref{dcF2UnifBd}.
\begin{corollary}\label{maindcFBound}
For $\xi$ in $\sB$ and $\dcF(\xi)$ given by Definition~\ref{finalDefdcF1},
\begin{align}
\sup_{t}\lV \dcF(\xi)\rV_{\sH^k} \leq C(M) .
\end{align}
\end{corollary}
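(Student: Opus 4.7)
The plan is to simply unpack the definitions and plug in the two component bounds. Recall that by Definition~\ref{finalDefdcF1}, $\dcF(\xi) = (\dcF_1(\xi), \dcF_2(\xi), 0, 0, 0, 0)$ viewed as a column vector compatible with the block structure $\xi = (\dot U^*, \dot B^*, \bom, \bj, X, U)$. From the definition of the $\sH^k$ norm,
\begin{align*}
\lV \eta \rV_{\sH^k} = \lV \eta_{\dot U^*} \rV_{H^k(S^1)} + \lV \eta_{\dot B^*} \rV_{H^k(S^1)} + \lV \eta_{\bom} \rV_{H^{k+1/2}(\Sigma)} + \lV \eta_{\bj} \rV_{H^{k+1/2}(\Sigma)} + \lV \eta_X \rV_{H^{k+2}(S^1)} + \lV \eta_U \rV_{H^{k+1}(S^1)},
\end{align*}
the bound for $\dcF(\xi)$ reduces to controlling $\lV \dcF_1(\xi) \rV_{H^k(S^1)}$ (which accounts for both the $\dot U^*$- and $\dot B^*$-slots, since the bottom two entries of $\dcF_1$ are zero by \eqref{dcF1defn}) plus $\lV \dcF_2(\xi) \rV_{H^{k+1/2}(\Sigma)}$ (which accounts for the $\bom$- and $\bj$-slots). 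The last two blocks of $\dcF$ are identically zero, so they contribute nothing.

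With that decomposition in place, I would invoke Proposition~\ref{dcF1BoundProp} to bound $\sup_t \lV \dcF_1(\xi) \rV_{H^k(S^1)} \leq C(M)$, and then Proposition~\ref{dcF2UnifBd} to bound $\sup_t \lV \dcF_2(\xi) \rV_{H^{k+1/2}(\Sigma)} \leq C(M)$. Summing the two yields $\sup_t \lV \dcF(\xi) \rV_{\sH^k} \leq C(M)$, as required.

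There is no real obstacle at this step: all of the substantive analysis — the weighted elliptic machinery of Section~\ref{vacuuminterfaceestimatessection}, the $\Nres$ and $[\del_t;\Nres]$ cancellation bounds, the Lipschitz/continuity statements for $H$, $\dot H$, $P^\pm_\grad$, $\dot P^\pm_\grad$, $\bE$, and the extension of $\mathring \cF_\sN$ to $\sB^\bbox$ via Lemma~\ref{sNiExt} — has already been carried out to prove the two component propositions. The corollary is a purely bookkeeping step, whose only content is matching the components of $\dcF(\xi)$ to the correct summands of the $\sH^k$ norm.
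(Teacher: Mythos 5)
Your proof is correct and is exactly the argument the paper has in mind; the paper simply asserts the corollary is "immediate" from Propositions~\ref{dcF1BoundProp} and~\ref{dcF2UnifBd}, and your write-up fills in the short bookkeeping step of matching the blocks of $\dcF(\xi)=(\dcF_1,\dcF_2,0,0,0,0)$ against the summands in the definition of $\lV\cdot\rV_{\sH^k}$.
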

Let us also prove an additional estimate for a term which mainly arises at a more technical point of our local existence construction in Section~\ref{constructionsection}.
\begin{proposition}\label{PenultBoundConstDepProp}
For $\cF(\xi)$ defined for $\xi$ in $\sB$ by\footnote{Note $\sN(\xi)$ is not originally defined for $\xi$ with pinch zero. By using the same technique as in the proof of Lemma~\ref{sNiExt}, we first extend it to $\xi$ in $\sB^\bbox$.}
\begin{align}\label{cFdefn}
\cF(\xi) &= -\bE(\xi) P^-_\grad(\xi) - N\cdot P^+_\grad(\xi) e_2 - \sN(\xi) e_2 ,
\end{align}
we have the estimate
\begin{align}\label{polynomialbounds}
\sup_{t}\lV \cF(\xi)  \rV_{H^k(S^1)}
&\leq C \left(\sup_{t}\lV \xi \rV_{\sH^{k-1}} \right) .
\end{align}
\end{proposition}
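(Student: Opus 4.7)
The plan is to estimate each of the three summands of $\cF(\xi)$ in $H^k(S^1)$ separately, using in every case the \emph{alternate} forms of the bounds from Section~\ref{vacuuminterfaceestimatessection}, in which the constant depends on $\lV\xi\rV_{\sH^{k-1}}$ rather than on $M$. The key observation is that $\cF(\xi)$ is precisely the combination of pressure and Dirichlet--to--Neumann terms entering the original surface system \eqref{ModSurfaceSystem}, i.e.\ it contains no time-derivative maps like $\dot P^\pm_\grad$ or $[\del_t;\bE]$, and each of the operators and functions appearing has a bound at the $k$-th regularity level controlled by one less derivative of $\xi$.

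First I would handle $\bE(\xi) P^-_\grad(\xi)$. By the alternate version of Proposition~\ref{PminusBdProp} (replacing $k$ by $k-1$ and $M$ by $\lV\xi\rV_{\sH^{k-1}}$), one has $\lV P^-_\grad(\xi)\rV_{H^k(S^1)}\le C(\lV\xi\rV_{\sH^{k-1}})$. Then Lemma~\ref{bEboundLemma} gives $\lV\bE(\xi)\rV_{B(H^k,H^k)}\le C(\lV\xi\rV_{\sH^{k-1}})$, and composing yields the desired bound. The term $N\cdot P^+_\grad(\xi)\,e_2$ is even easier: the alternate version of Proposition~\ref{PplusbdProp} gives $\lV P^+_\grad(\xi)\rV_{H^k(S^1)}\le C(\lV\xi\rV_{\sH^{k-1}})$, and $N=X^\perp_\theta/|X_\theta|$ is controlled in $H^{k+1}(S^1)$ uniformly in $\sB$ since $\lV X\rV_{H^{k+1}(S^1)}\le C$ by \eqref{closenesscondition} and $|X_\theta|$ is bounded below.

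The main point---and the one that motivates the statement---is the bound on $\sN(\xi)=\tfrac12\Nres(\xi)|H(\xi)|^2$. Here I would apply the cancellation estimate of Proposition~\ref{DtoNCancellation} at $s=k$, in the form where the prefactor is $C(\lV\xi\rV_{\sH^{k-1}})$:
\[
\lV\Nres(\xi)|H(\xi)|^2\rV_{H^k(S^1)}\le C(\lV\xi\rV_{\sH^{k-1}})\,\lV |H(\xi)|^2\rV_{H^k(S^1,\delta,m)},
\]
for some universal $m\ge 0$. For the right-hand side I would use the weighted product estimate of Proposition~\ref{standardWeightedSobNormEstims}(iv) (in its obvious $S^1$-analogue, obtained by the same Whitney-type localization used there), writing
\[
\lV |H|^2\rV_{H^k(S^1,\delta,m)}\le C\,\lV H\rV_{H^k(S^1,\delta,-m_2)}\lV H\rV_{H^k(S^1,\delta,m_3)}\le C(\lV\xi\rV_{\sH^{k-1}}),
\]
where both factors are controlled by the alternate weighted bound for $H(\xi)$ in Proposition~\ref{hEstimateProp} (any nonnegative weight index is allowed, so in particular $-m_2$ with $m_2\ge 0$ as well, noting the negative weight corresponds to the unweighted bound plus powers of a size-$1$ cutoff, and the positive weight version is what makes the cancellation actually close).

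The only mild obstacle is making sure that at every step one actually has a $C(\lV\xi\rV_{\sH^{k-1}})$ version of the bound at the top order $k$; this is exactly why all three propositions \ref{PminusBdProp}, \ref{PplusbdProp}, \ref{hEstimateProp}, as well as the cancellation bound \ref{DtoNCancellation}, were stated with the alternate ``shifted'' form. Combining the three estimates gives $\lV\cF(\xi)\rV_{H^k(S^1)}\le C(\lV\xi\rV_{\sH^{k-1}})$, and taking the supremum in $t$ completes the proof.
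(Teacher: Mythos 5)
Your proposal is correct and takes essentially the same approach the paper intends: the paper's proof simply defers to Proposition~\ref{dcF1BoundProp} (which, for the simpler term $\cF(\xi)$ lacking the time-derivative maps $\dot P^\pm_\grad$, $[\del_t;\bE]$, reduces to exactly your decomposition), invoking the $k-1$ alternate versions of Propositions~\ref{PminusBdProp},~\ref{PplusbdProp},~\ref{hEstimateProp}, Lemma~\ref{bEboundLemma}, the cancellation bound of Proposition~\ref{DtoNCancellation}, and the density extension of $\sN(\xi)$ to $\sB^\bbox$ noted in the footnote. One minor cleanup: in the product step for $|H|^2$ you can simply take $m_2=0$ in Proposition~\ref{standardWeightedSobNormEstims}(iv), since $H^k(S^1,\delta,0)$ is the unweighted norm, which avoids the aside about negative weight indices.
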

\begin{proof}
The proof is similar to that of Proposition~\ref{dcF1BoundProp}.
\end{proof}

Finally, we finish off the Lipschitz bounds for some remaining terms in the $\xi$ system by combining our various Lipschitz estimates together.

\begin{proposition}\label{finalLipBdSummary}
For $\xi,\uxi$ in $\sB$, we have the following estimates, where $\cF(\xi)$ is defined by \eqref{cFdefn}, $R(\xi)$ by Definition~\ref{DEcommDefns}, and $\dcF(\xi)$ by Definition~\ref{finalDefdcF1}.
\begin{align}
\lV R(\xi)-R(\uxi) \rV_{B(\sH^2_\dagger,\sH^2_\dagger)} &\leq C\lV \xi -\uxi \rV_{\sH^2} ,\label{lipcRBd}\\
\sup_{t}\lV\dcF(\xi)-\dcF(\uxi)\rV_{\sH^2} &\leq C\sup_{t}\lV \xi -\uxi \rV_{\sH^2},\label{lipdcFBd}\\
\sup_{t}\lV\cF(\xi)-\cF(\uxi)\rV_{H^2(S^1)} &\leq C\sup_{t}\lV \xi -\uxi \rV_{\sH^1}.\label{lipcFBd}
\end{align}
\end{proposition}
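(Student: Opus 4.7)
The plan is to treat each of the three estimates by decomposing the quantities into their constituent maps, inserting a telescoping difference, and applying the Lipschitz bounds for the building blocks that have already been assembled in the previous subsections.

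For the bound \eqref{lipcRBd} on $R(\xi)-R(\uxi)$, I would unfold Definition~\ref{DEcommDefns} entry by entry. Each $\bR^{ij}$ is an algebraic expression in $\bD$, $\bD^{-1}$, $\bE$, $\bE^{-1}$, $\dot\bD$, $[\del_t;\bE]$, and $[\del_\theta,\bE]$. Adding and subtracting one factor at a time reduces a given difference, say $\bR^{11}(\xi)-\bR^{11}(\uxi)$, to a sum of terms each of which contains exactly one Lipschitz difference (e.g.~$\bD(\xi)-\bD(\uxi)$ or $\bE(\xi)-\bE(\uxi)$) and uniformly bounded remaining factors. The needed Lipschitz bounds for $\bE$ and $\bE^{-1}$ come from Proposition~\ref{bEbounds}; those for $[\del_t;\bE]$ and $[\del_\theta,\bE]$ from Proposition~\ref{bEcommBounds}; and the ones for $\bD$ and $\dot \bD$ follow from Propositions~\ref{hEstimateProp}(ii) and~\ref{dotHEstimateProp}, together with the fact that $|X_\theta|$ stays bounded below on $\sB$ so $H^2(S^1)$ is a Banach algebra in these variables. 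The uniform operator bounds of Lemma~\ref{bEboundLemma} and Lemma~\ref{bDboundLemma} provide the background estimates to absorb the remaining, non-differenced, factors.

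For \eqref{lipdcFBd}, I would split $\dcF(\xi)-\dcF(\uxi)$ into its $\dcF_1$ and $\dcF_2$ parts. The $\dcF_2$ piece reduces, via Definition~\ref{dcF2Defn}, to differences of products of $\grad\bU(\xi)$ and $\grad\bB(\xi)$, handled directly by the Lipschitz bound \eqref{bUbBbXlipBdStatement} of Proposition~\ref{LipBdMainDivCurlSysProp} combined with the uniform bound of Proposition~\ref{dcF2UnifBd}. For $\dcF_1 = (\mathring\cF_- + \mathring\cF_+ + \mathring\cF_\sN, 0)^T$, I would treat each of the three summands separately. The pieces $\mathring\cF_\pm$ are, by \eqref{ringcFpmDef2}--\eqref{ringcFpmDef1}, products and compositions of $\bR^{11}$, $\bE$, $[\del_t;\bE]$, $P^\pm_\grad$, $\dot P^\pm_\grad$, $N$, $U_\theta$, and $X_\theta$, so the telescoping trick again reduces everything to the Lipschitz estimates supplied by Propositions~\ref{PminusBdProp}, \ref{PplusbdProp}, \ref{dotPplusSysProp}, \ref{PminusDotBds}, \ref{bEbounds}, \ref{bEcommBounds}, and the $R$ bound \eqref{lipcRBd} just proved. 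The term $\mathring\cF_\sN$ is the delicate one because $\Nres$ and $[\del_t;\Nres]$ are undefined at pinch zero; here I would use Lemma~\ref{sNiExt}, which already provides the Lipschitz bound \eqref{sNiLipBdCopy} after the continuous extension from $\sB^\wbox$ to $\sB^\bbox$.

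The bound \eqref{lipcFBd} is proved in the same spirit: expand $\cF(\xi)-\cF(\uxi)$ using \eqref{cFdefn}, telescope, and invoke Propositions~\ref{PminusBdProp}, \ref{PplusbdProp}, and \ref{bEbounds}, together with the $\mathring\cF_\sN$ argument for $\sN$. The gain of one derivative (the right-hand side is measured in $\sH^1$ rather than $\sH^2$) reflects that $\cF$ contains no time-derivative factors, so no $\dot P^\pm_\grad$ or $\dot H$ appears; consequently each Lipschitz estimate can be applied at one derivative lower. The expected main obstacle is bookkeeping for $\mathring\cF_\sN$: to carry the Lipschitz estimate through the weighted framework one must verify that the approximating sequences used in the extension of Lemma~\ref{sNiExt} can be chosen to depend Lipschitz-continuously on $\xi$ when $\xi$ has zero pinch. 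For $\xi,\uxi \in \sB^\wbox$ the bound is given directly; for $\xi$ or $\uxi$ in $\sB^\bbox \setminus \sB^\wbox$, one passes to the limit in the already-established bound \eqref{sNiLipBdCopy} using the density of $\sB^\wbox$ in $\sB^\bbox$ in $\sH^2$ norm and the continuity of the extension. Once this is handled, the rest of the proof is a mechanical assembly of prior Lipschitz bounds.
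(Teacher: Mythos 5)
Your proposal is correct and mirrors the paper's own proof, which likewise decomposes $\dcF$ into the $\dcF_2$, $\mathring\cF_\pm$, and $\mathring\cF_\sN$ pieces, telescopes the differences, and assembles the previously established Lipschitz bounds (Propositions~\ref{LipBdMainDivCurlSysProp}, \ref{hEstimateProp}, \ref{dotHEstimateProp}, \ref{bEbounds}, \ref{bEcommBounds}, \ref{PplusbdProp}, \ref{dotPplusSysProp}, \ref{PminusBdProp}, \ref{PminusDotBds}, and Lemma~\ref{sNiExt}/\ref{sNilemma}), obtaining \eqref{lipcRBd} en route from the $\bD,\dot\bD,\bR^{ij}$ differences. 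Your concluding worry about propagating the Lipschitz estimate for $\mathring\cF_\sN$ through the density argument is already discharged by Lemma~\ref{sNiExt} as stated, which asserts \eqref{sNiLipBdCopy} directly for all $\xi,\uxi\in\sB^\bbox$ (zero pinch included), so no additional verification is needed; your explanation of the one-derivative gain in \eqref{lipcFBd} (absence of $\dot H$, $\dot P^\pm_\grad$ in $\cF$) is a useful gloss on what the paper leaves implicit with ``proven similarly.''
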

\begin{proof}
Recall from Definition~\ref{finalDefdcF1} that $\dcF(\xi)$ consists of only the components $\mathring \cF(\xi)$ and $\dcF_2(\xi)$. In the proof below we use the notation $\mathring\cF = \mathring \cF(\xi)$, $\dcF_2=\dcF_2(\xi)$, $\underline {\mathring\cF} = \mathring \cF(\uxi)$, $\underline \dcF_2 = \dcF_2(\uxi)$, etc., dropping the arguments $\xi$ and $\uxi$ from various terms and instead using underlines to denote the quantities associated to $\uxi$. From Definition~\ref{dcF2Defn} we find
\begin{align*}
\dcF_2 - \underline \dcF_2 &= 2 \bm \sigma^{-1} e_2 \big(   ( \bU_\theta - \underline \bU_\theta) \cdot \bB_\psi  + \underline \bU_\theta \cdot(\bB_\psi - \underline \bB_\psi)- ( \bU_\psi - \underline \bU_\psi) \cdot \bB_\theta - \underline \bU_\psi \cdot (\bB_\theta - \underline \bB_\theta)     \big),
\end{align*}
and so by using the Lipschitz bounds given by Proposition~\ref{LipBdMainDivCurlSysProp} we get
\begin{align}
\sup_{t}\lV \dcF_2 - \underline \dcF_2 \rV_{H^{5/2}(\Sigma)}\leq C \sup_{t} \lV \xi - \uxi \rV_{\sH^2}  .\label{dcF2lipBd}
\end{align}

Now we consider $\mathring \cF -\underline{\mathring\cF}$. Recall the expressions given for $\mathring \cF_\pm$ in \eqref{ringcFpmDef2} and \eqref{ringcFpmDef1}, along with the expression for $\bR^{11}$ in \eqref{bRderivation}. Recall the expressions for $\bD$ and $\dot \bD$ from Definition~\ref{DEcommDefns}. Using the fact that $|X_\theta|\geq c$ for $\xi$ in $\sB$, it is not hard to verify from Propositions~\ref{hEstimateProp} and~\ref{dotHEstimateProp}, which bound $H-\underline H$ and $\dot H-\underline{\dot H}$, that
\begin{align}
\lV\bD - \underline \bD \rV_{H^2(S^1)}+\lV\dot\bD - \underline{\dot\bD}\rV_{H^2(S^1)} &\leq C \lV \xi-\uxi\rV_{\sH^2}.
\end{align}
Using this along with the bounds on $\bE-\underline \bE$ and $[\del_t;\bE]-[\del_t;\underline \bE]$ from Propositions~\ref{bEbounds} and~\ref{bEcommBounds} we easily get a suitable bound for $\bR^{ij}-\underline \bR^{ij}$ for $i,j=1,2$ (see \eqref{bRderivation} for the definitions of the $\bR^{ij}$):
\begin{align}
\lV (\bR^{ij}-\underline \bR^{ij})V \rV_{H^2(S^1)} &\leq C\lV \xi-\uxi \rV_{\sH^2} \lV V \rV_{H^2(S^1)} .\label{RijLipbd}
\end{align}
Note that the first bound \eqref{lipcRBd} follows from \eqref{RijLipbd}. Using additionally the Lipschitz bounds for $  P^\pm_\grad$ and $\dot P^\pm_\grad$ given by Propositions~\ref{PplusbdProp},~\ref{dotPplusSysProp},~\ref{PminusBdProp}, and~\ref{PminusDotBds}, we also find
\begin{align}
\sup_{t}\lV \mathring \cF_\pm - \underline{\mathring \cF}_\pm \rV_{H^2(S^1)} \leq C \sup_{t}\lV \xi-\uxi \rV_{\sH^2}.
\end{align}
Recalling \eqref{mathrcFdef}, we thus conclude from the bound on $\mathring \cF_\sN-\underline{\mathring \cF}_\sN$ of Lemma~\ref{sNilemma} that
\begin{align}
\sup_{t}\lV \mathring \cF -\underline{\mathring \cF} \rV_{H^2(S^1)} \leq C\sup_{t} \lV \xi -\uxi \rV_{\sH^2}
\end{align}
which, when combined with \eqref{dcF2lipBd}, yields \eqref{lipdcFBd}. The estimate \eqref{lipcFBd} is proven similarly.
\end{proof}

\section{Backward-in-time splash--squeeze construction in Sobolev spaces}\label{constructionsection}


The Lagrangian wave system for $\xi$ is the quasilinear equation below:
\begin{align}\label{xiSystemRep}
& &    \xi_t &= \bJ(\xi) \xi_\theta + \cR(\xi) \xi + \dcF (\xi)    &(t&\in[0,T]), \\
& &     \xi &= \xi_0      &(t &= 0) .
\end{align}
For $\xi$ in $\sB$, $\bJ(\xi)$ and $\cR(\xi)$ above are given by Definition~\ref{DEcommDefns}, $\dcF(\xi)$ is given by Definition~\ref{finalDefdcF1}, and $\xi_0$ is given by Definition~\ref{initialxiData}.

With the vast majority of the setup out of the way, in this section we construct a solution to ideal MHD demonstrating a splash--squeeze singularity. Let us remind the reader of the overall structure of our argument, now with all the working parts in place. In Section~\ref{solvinglagwavesystemsection}, we construct a solution $\xi$ to \eqref{xiSystemRep} with initial data $\xi_0=(\dot U^*_0, \dot B^*_0, \bom_0,\bj_0,X_0,U_0)$, where $X_0$ parametrizes the interface starting out as a splash curve. For this solution, the self-intersecting interface opens up, becoming non-self-intersecting for $t>0$. Once we have the solution $\xi$ to \eqref{xiSystemRep}, which is a kind of reformulation of the original ideal MHD equations \eqref{IdealMHD1}--\eqref{IdealMHD6}, we then demonstrate in Section~\ref{backtoorigsystemsubsection} exactly how it yields a solution to \eqref{IdealMHD1}--\eqref{IdealMHD6}, with Proposition~\ref{maintheoremforward}. In Section~\ref{maintheoremsubsection}, we invoke time reversibility for the ideal MHD system to produce a solution which starts with the interface separated at time $t=0$ and which terminates in a splash at $t=t_\splash$. This is done in the proof of Theorem~\ref{maintheorem}, which follows almost immediately from the previously mentioned results.

\subsection{Solving the Lagrangian wave system}\label{solvinglagwavesystemsection}

To construct the solution which starts in a splash, we generate a sequence of iterates $\xi_n$ solving the linearized systems
\begin{align}\label{xiItSystRep}
& &    \frac{d\xi_{n+1}}{dt} &= \bJ(\xi_n) \del_\theta\xi_{n+1} + \cR(\xi_n) \xi_{n+1} + \dcF (\xi_n)  &   (t&\in[0,T]), \\
& &     \xi_{n+1} &= \xi_0      &(t &= 0) .
\end{align}
To explain the setting for the iteration scheme, let us consider $\xi_\dagger$, the truncated version of a state vector $\xi=(\dot U^*, \dot B^*, \bom, \bj,X,U)$. For the truncated version, we omit the less crucial components $X$ and $U$, which depend implicitly on $\xi_\dagger$, defining
\[\xi_\dagger = (\dot U^*, \dot B^*, \bom, \bj) . \]

The iteration scheme is then run in the Banach space ``ball'' $\sB$, defined by
\begin{align}
\sB_\dagger &=\{\xi_\dagger\in C^0([0,T];\sH^k_\dagger) : \sup_{t} \lV \xi_\dagger \rV_{\sH^k_\dagger} \leq M, \ \sup_{t} \lV \xi_\dagger - \xi_{\dagger,0} \rV_{\sH^{k-1}_\dagger} \leq r\} ,\\
    \sB &= \{\xi \in C^0([0,T]; \sH^k) : \sup_{t} \lV \xi \rV_{\sH^k} \leq M, \ \xi_\dagger \in \sB_\dagger, \ \xi(0) = \xi_0,  \ X_t = U , \  \lV U\rV_{C^1_{t,\theta}}\leq M\}  .
\end{align}

To sketch the iteration step, let us suppose we are either starting only with $\xi_0$, the initial datum, which is in $\sB^\bbox$, or that we have already constructed an iterate $\xi_n$ in $\sB$ for some $n\geq 1$. Instead of directly solving the system \eqref{xiItSystRep} for $\xi_{n+1}$, we first solve the system for the truncated vector $\xi_{\dagger,n+1}$. This system is basically just the version of \eqref{xiItSystRep} in which we drop the last two equations, which are for $\del_t X_{n+1}$ and $\del_t U_{n+1}$.

Let us provide the definition now for the lower order terms to appear in our iteration scheme. Of course, it will only make sense in tandem with the explanation of how the iterates themselves are generated, which we explain shortly afterwards.
\begin{definition}\label{lowerordertermsIterateSys}
Assume we have already defined the $n^\text{th}$ iterate, $\xi_n$, for some $n\geq 0$. Suppose either $n=0$, so that $\xi_n=\xi_0$ is our initial datum, or that $\xi_n$ is in $\sB$. Referring to the maps given in Definitions~\ref{DEcommDefns},~\ref{finalDefdcF1}, and~\ref{dcF2Defn}), we define
\begin{align}\label{AnForm}
J_n &=\begin{pmatrix} \bJ_1(\xi_n) & 0 \\ 0 & \bJ_2 \end{pmatrix} && (n\geq 0), \\
R_n &= \begin{pmatrix}\bR^{11}(\xi_n) & \bR^{12}(\xi_n) & 0 \\ 
\bR^{21}(\xi_n) & \bR^{22}(\xi_n) & 0 \\
0 & 0 & 0_{2\times 2}
\end{pmatrix} && (n\geq 1),\qquad R_0 = \begin{pmatrix}  0 & \bR^{12}(\xi_0) & 0 \\ 
\bR^{21}(\xi_0) & 0 & 0 \\
0 & 0 & 0_{2\times 2}
\end{pmatrix},\\
F_n &= \col{\dcF_1(\xi_n)}{\dcF_2(\xi_n)} && (n\geq 1), \qquad F_0 = 0 .
\end{align}
\end{definition}
Given $\xi_n$ for $n\geq 0$, the system for the subsequent truncated iterate $\xi_{\dagger,n+1}$ is then
\begin{align}\label{definingSysIte}
\frac{d \xi_{\dagger,n+1}}{dt} &= J_n \del_\theta \xi_{\dagger,n+1} + R_n \xi_{\dagger,n+1} + F_n(t) , \\
\xi_{\dagger,n+1}(0) &= \xi_{\dagger,0} .
\end{align}
Due to the form of $J_n$, \eqref{definingSysIte} is simply a linear wave system with variable coefficients, although in a slightly more abstract setting than usual, since $R_n$ is a nonlocal operator. We use Kato's semigroup method to construct the solution $\xi_{\dagger,n+1}$. Finally, we use the $\xi_{\dagger,n+1}$ obtained to get $U_{n+1}$ and $X_{n+1}$ via \eqref{Undefn}--\eqref{xindefn}, defining the remaining components of the $\xi_{n+1}$ vector and thus finishing the iteration step. Of course, we must show $\xi_{n+1}$ is in $\sB$ for the scheme to close.

The above process is handled in detail in Proposition~\ref{MainInductiveBoundProp}, used to rigorously construct the sequence of iterates $\{\xi_n\}_{n\geq 1}$. Following the construction of the sequence are Propositions~\ref{LimitExistence} and~\ref{improvedxireg}, used to prove the existence of a limit $\xi$ in $\sB$ which solves the quasilinear system \eqref{xiSystemRep}.
\subsubsection{Kato's method for the linearized wave evolution}\label{katosmethodsection}

Constructing the solutions to the linear systems \eqref{definingSysIte} for the iterates with Kato's semigroup method amounts to checking certain criteria and applying Theorem 1 from \cite{kato2} to define a semigroup solution operator for a homogeneous problem.

To explain how this works, let us define for $n\geq 0$
\begin{align}
& & A_n(t) &= J_n(t) \del_\theta + R_n(t) & (t\in[0,T]),
\end{align}
and suppose we can solve the homogeneous problem below for each $s$ in $[0,T]$ and any data in $\sH^k_\dagger$ to be taken at time $s$, which we denote by $\xi_{\dagger;s}$.
\begin{align}\label{homogsys1}
& & \frac{d\xi_\dagger}{dt} &= A_n(t) \xi_\dagger & (t\in[0,T]),\\
& & \xi_\dagger(t)&=\xi_{\dagger;s} &   (t=s).
\end{align}
Consider a solution operator $S_n(t,s)$ mapping the choice of data to the corresponding solution at time $t$,
\begin{align}
\xi_\dagger(t)=S_n(t,s)\xi_{\dagger;s} .
\end{align}
We can then use Duhamel's principle to describe the solution to \eqref{definingSysIte} as
\begin{align}
\xi_{\dagger,n+1}(t) = S_n(t, 0) \xi_{\dagger,0} + \int^t_0 S_n(t, s) F_n(s) \, ds .
\end{align}

To apply Theorem 1 from \cite{kato2}, which is quite general, in order to verify the existence of an operator $S_n(t,s)$ which maps a ``state vector at time $s$'' to ``the corresponding solution state at time $t$'', the key properties we observe among the pieces $J_n(t)$ (matrix-valued) and $R_n(t)$ (operator-valued) making up the operator $A_n(t) = J_n(t)\del_\theta + R_n(t)$ are the following:
\begin{enumerate}
\item We have that $J_n(t)$ is symmetrizable and bounded in $\sH^k_\dagger$ (see Definition~\ref{DEcommDefns} and Proposition~\ref{cRbJBoundProp}).
\item We have that $R_n(t)$ preserves regularity in $\sH^j_\dagger$ for $0\leq j\leq k$ (See Proposition~\ref{BasicRBound}).
\end{enumerate}

The details are explained in Proposition~\ref{KatoProp}. Before diving into the proof, let us outline some ideas behind the enumerated points above and their relation to the existence of solutions.

Regarding the first point, to say $J_n(t)$ is symmetrizable (in the Lax-Friedrichs sense) means that there exists a positive definite matrix $M_n(t)$ such that $M_n(t) J_n(t)$ is symmetric. This essentially implies that the system \eqref{homogsys1} yields good energy estimates. 

Let us relate this to a major criterion of Kato's method, that the associated family of semigroup generators is \emph{stable}\footnote{See \cite{kato1} for the definition of a stable family of generators.}. Let us fix an $n$ and for any $s$ in $[0,T]$ consider the problem
\begin{align}\label{homogsys2}
& & \frac{d\xi_\dagger}{dt} &= J_n(s)\del_\theta \xi_\dagger & (t\geq 0), \\
& & \xi_\dagger &= \xi_{\dagger;0} & (t=0).
\end{align}
We express the solution in terms of the semigroup generator as $\xi_\dagger(t)=e^{t J_n(s) \del_\theta}\xi_{\dagger,0}$. Thanks to the structure of $J_n$, standard energy estimates show for any $j$ with $0\leq j \leq k$,
\begin{align}\label{stabilityest1}
& & \left\lV e^{t J_n(s)\del_\theta} \right\rV_{B(\sH^j_\dagger,\sH^j_\dagger)} & \leq C e^{C(M) t} &   ( s \in [0,T], \ t\geq 0 ).
\end{align}
The property \eqref{stabilityest1} implies that the family of generators \(\{J_n(t)\del_\theta\}_{t\in[0,1]}\) is stable, Similarly, using that $R_n(s)$ preserves regularity, one finds \eqref{stabilityest1} holds with $A_n(s)=J_n(s)\del_\theta + R_n(s)$ in place of $J_n(s)\del_\theta$, and therefore the family \(\{A_n(t)\}_{t\in[0,1]}\) is also stable. Besides this, to guarantee the existence of the solution operator $S_n(t,s)$ to \eqref{homogsys1}, Theorem 1 of \cite{kato2} only requires continuity with respect to $t$ and easy-to-check bounds involving $A_n(t)$.

\begin{proposition}\label{KatoProp}
Assume we have already defined the $n^\text{th}$ iterate, $\xi_n$, for some $n\geq 0$. Suppose either $n=0$ or that $\xi_n$ is in $\sB$.
Suppose either $n=0$ or that we have \( \xi_n \) in \( \sB \) for an $n\geq 1$.
\begin{enumerate}[(i)]
\item There exists a semigroup solution operator $S_n(t,s)$ for \eqref{homogsys1}, where $S_n(t,s)$ is in $B(\sH^j_\dagger,\sH^j_\dagger)$ for $0\leq j\leq k$, and it is strongly continuous with respect to $(t,s)$ in $[0,T]^2$. In other words, for any fixed $s$ in $[0,T]$ and $\xi_{\dagger;s}$ in $\sH^k_\dagger$, the map $t\mapsto S_n(t,s) \xi_{\dagger;s}$ is a solution to \eqref{homogsys1}, and for a fixed $\xi_{\dagger;0}$ in $\sH^k_\dagger$, the map $(t,s)\mapsto S_n(t,s)\xi_{\dagger;0}$ continuously takes values in $\sH^k_\dagger$.

\item Additionally, for $t,s$ in $[0,T]$, $0\leq j \leq k$,
\begin{align}\label{SntsBound}
\lV S_n(t,s) \rV_{B(\sH^j_\dagger,\sH^j_\dagger)} \leq Ce^{C(M)(t-s)} .
\end{align}
\end{enumerate}
\end{proposition}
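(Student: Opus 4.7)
The plan is to apply Theorem 1 of \cite{kato2} to the family $\{A_n(t)\}_{t\in[0,T]}$, so the work consists of verifying its hypotheses: symmetrizability (to obtain stability), regularity of the coefficients in $t$, and the basic mapping properties of $A_n(t)$ between $\sH^j_\dagger$ and $\sH^{j-1}_\dagger$. The case $n=0$ is strictly simpler than $n\ge1$ since $R_0$ has zero diagonal blocks and $F_0=0$; the arguments below are written for $n\ge 1$ and apply verbatim, with constants, to $n=0$.

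First I would construct an explicit symmetrizer. The block structure of $J_n(t)$ and the fact that $\bD(\xi_n)$ is diagonal with positive entries bounded above and below uniformly in $t$ (thanks to Lemma~\ref{bDboundLemma} and $|X_\theta|\ge c$ on $\sB$) suggests taking
\begin{align}
M_n(t)=\begin{pmatrix} I_2 & 0 & 0 \\ 0 & \bD(\xi_n(t)) & 0 \\ 0 & 0 & I_2 \end{pmatrix},
\end{align}
which is self-adjoint and uniformly positive definite, and for which $M_n J_n$ is self-adjoint (as a multiplication operator). Using $M_n$ as an inner-product weight, a standard integration by parts gives the $L^2$ energy identity
\begin{align}
\tfrac{d}{dt}\langle\xi_\dagger,M_n\xi_\dagger\rangle_{L^2}
=\langle\xi_\dagger,(\del_t M_n+[\del_\theta,M_n J_n])\xi_\dagger\rangle_{L^2}+2\langle M_n\xi_\dagger,R_n\xi_\dagger\rangle_{L^2}.
\end{align}
The coefficient $\del_t M_n$ is controlled by $\dot\bD(\xi_n)$ (Lemma~\ref{bDboundLemma}), the commutator term by $\del_\theta\bD(\xi_n)$, and $R_n$ by Proposition~\ref{BasicRBound}, all giving $L^2$ energy growth at rate $C(M)$. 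To promote this to $\sH^j_\dagger$ for $1\le j\le k$, I would commute $\del_\theta^j$ through the equation; the principal commutator $[\del_\theta^j,J_n\del_\theta]$ is controlled by $\lV\bJ_1\rV_{H^k}\le C(M)$ (Proposition~\ref{cRbJBoundProp}) via Moser/Kato--Ponce type estimates, and $[\del_\theta^j,R_n]$ is benign since $R_n$ already preserves $\sH^j_\dagger$ (Proposition~\ref{BasicRBound}). Here one uses that $\sH^j_\dagger$ mixes $H^j(S^1)$ (for $\dot U^*,\dot B^*$) with $H^{j+1/2}(\Sigma)$ (for $\bom,\bj$), and the estimates for the two blocks decouple thanks to the block-diagonal form of $J_n$ and $R_n$. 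The resulting bound
\begin{align}
\lV\xi_\dagger(t)\rV_{\sH^j_\dagger}\le C e^{C(M)(t-s)}\lV\xi_{\dagger;s}\rV_{\sH^j_\dagger}\qquad(0\le j\le k),
\end{align}
applied with coefficients frozen at arbitrary times, furnishes the stability of the family $\{A_n(t)\}$ required by Kato.

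Next I would verify the remaining Kato hypotheses. Time continuity of $A_n(t)$ follows because $\xi_n\in C^0([0,T];\sH^k)$ (with $\sB_\dagger\subset C^0$), hence $\bD(\xi_n),\bE(\xi_n),H(\xi_n),P^\pm_\grad(\xi_n),\dot H(\xi_n),\dots$ are continuous in $t$ in their respective norm topologies by the propositions of Sections~\ref{lagrangianwavesystemsection}--\ref{vacuuminterfaceestimatessection}, so that $J_n(\cdot)$ and $R_n(\cdot)$ depend continuously on $t$ as bounded operators $\sH^j_\dagger\to\sH^{j-1}_\dagger$. The mapping property $A_n(t):\sH^j_\dagger\to\sH^{j-1}_\dagger$ for $1\le j\le k$ is immediate from Propositions~\ref{cRbJBoundProp} and~\ref{BasicRBound}. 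Kato's Theorem 1 then produces the two-parameter family $S_n(t,s)$ of bounded operators on $\sH^j_\dagger$ which is strongly continuous in $(t,s)\in[0,T]^2$, solves the homogeneous problem \eqref{homogsys1} with initial data prescribed at time $s$, and inherits the exponential bound \eqref{SntsBound} from the stability estimate above; part~(ii) of the proposition is then exactly this bound.

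The step I expect to be the main technical obstacle is the higher-order energy estimate on $\sH^k_\dagger$, because the commutator $[\del_\theta^k,\bD(\xi_n)\del_\theta]$ requires controlling $k+1$ derivatives of the coefficient $\bD(\xi_n)$, which through $H(\xi_n)$ ultimately depends on the weighted elliptic estimates of Section~\ref{vacuuminterfaceestimatessection}; in particular one must verify that the constants on the right-hand side depend only on $M$ (and not on $\delta$) so that the same semigroup bound holds uniformly across the family of iterates. The key observation making this work is that $J_n$ involves $H(\xi_n)$ only through $|H|^2/|X_\theta|^2$, to which the uniform weighted bound of Proposition~\ref{hEstimateProp} applies without any increase in weight index entering the final estimate.
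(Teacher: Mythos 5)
Your proposal is correct and follows the same overall strategy as the paper: verify the hypotheses of Kato's Theorem 1 in \cite{kato2} — stability, continuity, and mapping properties of the family $\{A_n(t)\}_t$ — via symmetric-hyperbolic energy estimates, then read off the semigroup and the bound \eqref{SntsBound}. The main difference is in how you organize the higher-order estimates and the commutator hypothesis.

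The paper checks exactly the three hypotheses in Kato's formulation: (1) stability in each $\sH^j_\dagger$, (2) uniform boundedness of $[\bm{\langle\grad\rangle}^{k-2},A_n(t)]$ as a map $\sH^k_\dagger\to\sH^2_\dagger$ using the commutator estimate \eqref{commestimate}, and (3) $A_n(t)\in B(\sH^k_\dagger,\sH^2_\dagger)$ with continuity in $t$ via the Lipschitz bounds of Propositions~\ref{hEstimateProp}, \ref{dotHEstimateProp}, \ref{bEbounds}, \ref{bEcommBounds}. It then invokes Theorem~5.1 of \cite{kato1} for the exponential bound. You instead make the symmetrizer $M_n$ explicit and run $L^2$ energy identities, then commute $\del_\theta^j$ through the equation to reach each $\sH^j_\dagger$. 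Both routes deliver the same information. Two remarks on your plan, both minor: first, commuting $\del_\theta^j$ alone does not directly control the $H^{j+1/2}(\Sigma)$ part of $\sH^j_\dagger$ for the interior components $(\bom,\bj)$ — you correctly observe this block decouples, and indeed since $J_n$ and $R_n$ both have trivial interior blocks in the homogeneous problem, the interior estimate is elementary; but it is worth being explicit that the intertwining operator for Kato's hypothesis has to act on interior functions as $(1-\del^2_\theta-\del^2_\psi)^{1/2}$, which is why the paper introduces $\bm{\langle\grad\rangle}$ rather than $\langle\del_\theta\rangle$ uniformly. Second, when you write the highest-order commutator as the "main technical obstacle," note the paper sidesteps this by fixing the gap in Kato's hypothesis at $(\sH^k_\dagger,\sH^2_\dagger)$ rather than $(\sH^j_\dagger,\sH^{j-1}_\dagger)$ level by level: the commutator estimate \eqref{commestimate} only needs $\bJ_1(\xi_n)\in H^k(S^1)$, which is directly supplied by Corollary~\ref{Jbounds}, so the weighted elliptic machinery does not re-enter at this point. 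Your observation that the constant in the stability estimate depends only on $M$ through $\lVert\bD_n\rVert_{L^\infty}$ is the same one the paper makes (and further reduces to a constant depending only on the initial interface by continuity, which you could add).
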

\begin{proof}

Let us define \( \bm{\langle \grad \rangle} \) acting on a function $f$ either with domain $S^1$ or $\Sigma$, via
\begin{align}
&& \bm{\langle \grad \rangle} f& =(1-\del^2_\theta)^{1/2}f= \langle \del_\theta \rangle f &&(f:S^1\to\R^N, \ \theta \in S^1),\\
&& \bm{\langle \grad \rangle} f& = (1-\del^2_\theta-\del^2_\psi)^{1/2}f &&(f:\Sigma\to\R^N, \ a\in\Sigma).
\end{align}

We verify three hypotheses in order to apply Theorem 1 from \cite{kato2}, which implies the existence of an operator $S_n(t,s)$ satisfying (i). They are as follows:

\begin{enumerate}[(1)]
\item the family of generators $\{A_n(t)\}_{t}$ is stable;

\item the commutators \( [\bm{\langle \grad \rangle}^{k-2}, A_n(t)] \) are uniformly bounded in $B(\sH^k_\dagger, \sH^2_\dagger)$;

\item $A_n(t)$ is in $B(\sH^k_\dagger, \sH^2_\dagger)$, with $A_n(t)$ continuous in $t$ on $[0,T]$ in the operator norm topology.
\end{enumerate}

We explain how to check the above in detail for $n\geq 1$. Adapting the argument to the case $n=0$ is trivial. Let us justify (1). From Proposition~\ref{BasicRBound},
\begin{align}\label{RnBound}
& & \lV R_n(t) \xi_\dagger \rV_{\sH^2_\dagger} &\leq C(M) \lV  \xi_\dagger \rV_{\sH^2_\dagger}  &   ( \xi_\dagger \in \sH^2_\dagger, \ t \in [0,T] ) .
\end{align}
Meanwhile, we also find the bound \(\sup_{t}\lV J_n\rV_{H^k(S^1)}\leq C(M)\) follows from Corollary~\ref{Jbounds}. Using this in combination with \eqref{RnBound}, standard energy estimate techniques for wave equations yield the following: for each $s$ in $[0,T]$, the corresponding solution $\xi_\dagger$ to the homogeneous problem
\begin{align}\label{homogsys3}
& & \frac{d\xi_\dagger}{dt} &= A_n(s)\del_\theta \xi_\dagger & (t\geq 0), \\
& & \xi_\dagger &= \xi_{\dagger;0} & (t=0),
\end{align}
satisfies for $j$ with $0\leq j \leq k$ the estimate below.
\begin{align}
&& \lV \xi_\dagger(t) \rV_{\sH^j_\dagger} &\leq C\lV \bD_n(s)\rV_{L^\infty} e^{C(M) t} \lV \xi_{\dagger;0}\rV_{\sH^j_\dagger} &(t\geq 0).
\end{align}
By using the Lipschitz bound on $H(\xi)$ of Proposition~\ref{hEstimateProp} and the continuity of $\xi_n(t)$, for $T$ sufficiently small we may bound $\lV \bD_n(s)\rV_{L^\infty}$ by a constant dependent only on the initial interface. Therefore we have
\begin{align}\label{stabilityest2}
& & \left\lV e^{t A_n(s)\del_\theta} \right\rV_{B(\sH^j_\dagger,\sH^j_\dagger)} & \leq C e^{C(M) t} &   ( s \in [0,T], \ t\geq 0, \ 0\leq j \leq k ),
\end{align}
which implies \(\{A_n(t)\del_\theta\}_{t\in[0,1]}\) is stable.

Now we consider (2). First we assert that
\begin{align}\label{commBound0}
\sup_{t}\lV [\bm{\langle \grad \rangle}^{k-2},J_n\del_\theta] \rV_{B(\sH^k_\dagger,\sH^2_\dagger)} \leq C(M).
\end{align}
Thanks to the form of $J_n$, we have
\begin{align}
[\bm{\langle \grad \rangle}^{k-2},J_n\del_\theta] = 
\begin{pmatrix}[\bm{\langle \grad \rangle}^{k-2},\bJ_1(\xi_n)\del_\theta] &  0 \\ 0 & 0_{2\times 2}
\end{pmatrix} .
\end{align}
Using the above together with the fact that $\bm{\langle \grad \rangle}=(1-\del^2_\theta)^{1/2}=\langle \del_\theta \rangle$ when acting on functions independent of $\psi$, for $\xi_\dagger$ in $\sH^k_\dagger$,
\begin{align}\label{commBound1}
\lV [\bm{\langle \grad \rangle}^{k-2},J_n \del_\theta] \xi_\dagger \rV_{\sH^2_\dagger}
=\left\lV [\langle \del_\theta \rangle^{k-2},\bJ_1(\xi_n) \del_\theta] \col{\dot U^*}{\dot B^*}\right \rV_{H^2(S^1)} .
\end{align}
Now, one is able to get the desired bound by using the standard bound below for the commutator of $\langle \del_\theta \rangle^j$ with multiplication by a function $f(\theta)$:
\begin{align}\label{commestimate}
\lV[\langle \del_\theta \rangle^{k-2}, f] g \rV_{H^2(S^1)} \leq C \lV f  \rV_{H^k(S^1)}\lV g \rV_{H^{k-1}(S^1)} .
\end{align}
Indeed, using that $\del_\theta$ commutes with $\langle \del_\theta \rangle^j$ and applying \eqref{commestimate}, we get
\begin{align*}
\left\lV [\langle \del_\theta \rangle^{k-2},\bJ_1(\xi_n)\del_\theta] \col{\dot U^*}{\dot B^*}\right \rV_{H^2(S^1)} 
&=\left\lV [\langle \del_\theta \rangle^{k-2},\bJ_1(\xi_n) ] \del_\theta \col{\dot U^*}{\dot B^*}\right \rV_{H^2(S^1)} \\
&\leq C \lV \bJ_1(\xi_n)  \rV_{H^k(S^1)}( \lV \dot U^* \rV_{H^k(S^1)} + \lV \dot B^* \rV_{H^k(S^1)}) \\
&\leq C(M) \lV \xi_\dagger\rV_{\sH^k_\dagger}.
\end{align*}
Combining this with \eqref{commBound1} gives us \eqref{commBound0}. In light of \eqref{commBound0}, the statement of (2) follows provided we show
\begin{align}
\lV [\bm{\langle \grad \rangle}^{k-2},R_n] \rV_{B(\sH^k_\dagger,\sH^2_\dagger)} \leq C(M).
\end{align}
This is confirmed by using Proposition~\ref{BasicRBound}, from which we find for $\xi_\dagger$ in $\sH^k_\dagger$
\begin{align}
\lV [\bm{\langle \grad \rangle}^{k-2},R_n]\xi_\dagger \rV_{B(\sH^k_\dagger,\sH^2_\dagger)}
&\leq \lV \bm{\langle \grad \rangle}^{k-2} (R_n \xi_\dagger) \rV_{\sH^2_\dagger} + \lV  R_n \bm{\langle \grad \rangle}^{k-2} \xi_\dagger \rV_{\sH^2_\dagger} \\
&\leq C(M) \lV \xi_\dagger \rV_{\sH^k_\dagger}.
\end{align}

Regarding (3), by using \eqref{RnBound} together with Corollary~\ref{Jbounds} it is not hard to show
\begin{align}
\lV A_n(t) \rV_{B(\sH^k_\dagger,\sH^2_\dagger)} \leq C(M).
\end{align}
Regarding continuity of $A_n(t)$ in the operator norm topology, it only remains to verify for $t$ in $[0,T]$
\begin{align}
\lim_{s\to t}\left(\sup_{\lV V\rV_{H^k} = 1} \lV(A_n(t)-A_n(s)) V\rV_{H^2(S^1)}\right) = 0.
\end{align}
The above is not hard to show by using the Lipschitz bounds with respect to $\xi$ satisfied by $H(\xi)$, $\dot H(\xi)$, $\bE(\xi)$, and $[\del_t;\bE](\xi)$ of Propositions~\ref{hEstimateProp},~\ref{dotHEstimateProp},~\ref{bEbounds}, and~\ref{bEcommBounds} together with the fact that $\xi_n$ is in $C^0([0,T];\sH^k)$.

Properties (1), (2), and (3) imply the hypotheses of Theorem 1 from \cite{kato2}, which then asserts the existence of the solution operator $S_n(t,s)$ satisfying (i). Furthermore, (1), (2), and (3) imply the hypotheses of Theorem 5.1 of \cite{kato1}. The bound \eqref{stabilityest2} in combination with Theorem 5.1 of \cite{kato1} implies \eqref{SntsBound}.
\end{proof}

\subsubsection{Iteration and convergence}

We now define the $(n+1)^\text{st}$ iterate $\xi_{n+1}$ given $\xi_n$. This definition consists of solving the linear system for $\xi_{\dagger,n+1}$, followed by a reconstruction of the full state $\xi_{n+1}$. The next proposition formalizes this step.

\begin{proposition}
\label{MainInductiveBoundProp}
Assume we have already defined the $n^\text{th}$ iterate, $\xi_n$, for some $n\geq 0$. Suppose either $n=0$ or that $\xi_n$ is in $\sB$. We use Definition~\ref{lowerordertermsIterateSys} to define the terms $A_n$ and $F_n$ corresponding to the $n^\text{th}$ iterate. It follows that

\begin{enumerate}[(i)]
\item there exists a solution $\xi_{\dagger,n+1}=(U^*_\npo,B^*_\npo,\bom_\npo,\bj_\npo)$ in $\sB_\dagger$ to the system
\begin{align}
\label{iteratedXiSys}
\frac{d \xi_{\dagger,n+1}}{dt} &= A_n(t) \xi_{\dagger,n+1} + F_n(t) , \\
\xi_{\dagger,n+1}(0) &= \xi_{\dagger,0} .
\end{align}

\item Moreover, let us define
\begin{align}
U_{n+1} &= U_0 + \int^t_0 (\bE(\xi_{n}))^{-1} \dot U^*_{n+1}(\tau)\, d\tau ,\label{Undefn}\\
B_{n+1} &= B_0 + \int^t_0 (\bE(\xi_{n}))^{-1} \dot B^*_{n+1}(\tau)\, d\tau ,\label{Bndefn}\\
X_{n+1} &= X_0 + \int^t_0 U_{n+1}(\tau) \, d\tau ,\label{Xndefn} \\
\xi_{n+1} &= (\xi_{\dagger,n+1}, X_{n+1}, U_{n+1} ) .\label{xindefn}
\end{align}
Then $U_{n+1}$ and $B_{n+1}$ are in $H^{k+1}(S^1)$ and $X_{n+1}$ is in $H^{k+2}(S^1)$. Moreover, the new iterate $\xi_{n+1}$ is in $\sB$ and solves \eqref{xiItSystRep}.
\end{enumerate}

\begin{remark}
Note the proposition asserts that $U_{n+1}$ is in $H^{k+1}(S^1)$ and $X_{n+1}$ is in $H^{k+2}(S^1)$, implying both are of higher regularity than $\dot U^\star_\npo \in H^k(S^1)$. On the other hand, from the definition of $U_{n+1}$ in terms of $\xi_{\dagger,n+1}$ in \eqref{Undefn} (and $X_\npo$ in terms of $U_\npo$ in \eqref{Xndefn}) there is no reason to expect that $X_{n+1}$ and $U_{n+1}$ have more regularity than $\dot U^*_{n+1}$. If they did not, we would lose regularity in the iteration step, since $\xi_n\in\sB$ requires that $X_{n}$ has one more degree of regularity than $U_{n}$ and that $U_n$ has one more than $\dot U^*_{n+1}$. However, as claimed by the proposition, \emph{our iteration step loses no regularity}, and thus we need no artificial smoothing of any kind for our iterates.\footnote{The absence of regularity loss at the iteration step and the introduction of artificial smoothing in our construction sets it apart from other Lagrangian proofs of existence for free-boundary ideal MHD models, with or without splash (see \cite{GuMHDtension,UnifEstFBViscMHD}). The apparent difficulty in avoiding tangential regularity loss for Lagrangian methods is observed in \cite{GuMHDtension,XieLuoALE}.} Due to the wave structure in the system, we are able to relate $X_{n+1}$, $B_{n+1}$, and $U_{n+1}$ with identities that imply they have the regularities claimed above. This is an important step in the proof of Proposition~\ref{MainInductiveBoundProp}
\end{remark}

\begin{proof}

By Proposition~\ref{KatoProp}, we get a solution semigroup operator $S_n(t,s)$ to the associated homogenous problem. We use it to write the desired solution to \eqref{iteratedXiSys} as
\begin{align}
\xi_{\dagger,n+1}(t) = S_n(t, 0) \xi_{\dagger,0} + \int^t_0 S_n(t, s) F_n(s) \, ds .
\end{align}
By using \eqref{SntsBound} and the bound on
\(\lV F_n \rV_{\sH^k_\dagger}\) implied by  Corollary~\ref{maindcFBound}, we get
\begin{align}
\lV \xi_{\dagger,n+1}(t)\rV_{\sH^k_\dagger} &\leq \sup_{\tau,s\in[0,T]} \lV S_n(\tau, s)  \rV_{B(\sH^k_\dagger,\sH^k_\dagger)}( \lV \xi_{\dagger,0} \rV_{\sH^k_\dagger} + T\lV F_n(s) \rV_{\sH^k_\dagger} )  \notag \\
&\leq Ce^{C(1+M^p)T}(\lV \xi_{\dagger,0} \rV _{\sH^k_\dagger}+ T(1+M^p) ) ,\\ \intertext{
which, for $T$ small enough (dependent on $M$ and the initial data), gives the bounds}
\lV \xi_{\dagger,n+1}(t)\rV_{\sH^k_\dagger}&\leq 2C(\lV \xi_{\dagger,0} \rV _{\sH^k_\dagger}+ T(1+M^p)),\\
&\leq 4C \lV \xi_{\dagger,0} \rV _{\sH^k_\dagger}, \label{xiprimeTbound}\\
&\leq M .\label{sBdagBound1}
\end{align}
Above, we use the fact that we may choose $M$ larger than $4C\lV \xi_{\dagger,0} \rV _{\sH^k_\dagger}$.
Now we verify
\begin{align}\label{sBdagBound2}
\sup_{t}\lV \xi_{\dagger,\npo} - \xi_{\dagger,0} \rV_{\sH^{k-1}_\dagger} \leq r .
\end{align}
In view of \eqref{definingSysIte}, the evolution equation for $\xi_{\dagger,\npo}$, we find
\begin{align}
\left\Vert \frac{d\xi_{\dagger,\npo}}{dt} \right\Vert_{\sH^{k-1}_\dagger} &\leq C(1+M^p) (\lV \xi_{\dagger,\npo}\rV_{\sH^k_\dagger}+\lV F_n\rV_{\sH^{k-1}_\dagger})\leq C'(1+M^{p'}),
\end{align}
and thus
\begin{align}
\sup_{t}\lV \xi_{\dagger,\npo} - \xi_{\dagger,0} \rV_{\sH^{k-1}_\dagger} \leq CT(1+M^p).
\end{align}
Thus for small enough $T$ we guarantee \eqref{sBdagBound2} holds in addition to \eqref{sBdagBound1}. The only other property required for $\xi_{\dagger,n+1}$ to be in $\sB_\dagger$ is that $t\mapsto \xi_{\dagger,\npo}$ continuously takes values in $\sH^k_\dagger$. This is standard for iteration schemes for symmetrizable systems like ours. We refer the reader to the proof of Theorem 2.1 (b) from \cite{majdaconservationlaws} for a proof of the analogous result for a class of similar symmetrizable systems.

Now let us show that $\xi_{n+1}$ is in $\sH^k$. We will first use the equations satisfied by $\dot U^*_{n+1}$ and $\dot B^*_{n+1}$ to deduce $U_{n+1}$ and $B_{n+1}$ are of one degree higher regularity than $\dot U^*_{n+1}$ and $\dot B^*_{n+1}$.

We define the following for $n\geq 1$, using the subscript $n$ to denote corresponding objects with $\xi_n$ as the input state vector, using, for example, the definitions of $\dot \bD_n=\dot  \bD(\xi_n)$ from Definition~\ref{DEcommDefns}, of $[\del_t;\bE]_n=[\del_t;\bE](\xi_n)$ from Proposition~\ref{bEcommBounds}, and of $\sN_n=\sN(\xi_n)$ and $\dot \sN_n=\dot \sN (\xi_n)$ from Remark~\ref{dcF1forBcircDefn}.\footnote{Using the same method as in Lemma~\ref{sNiExt}, we extend the definitions of $\sN(\xi)$ and $\dot \sN(\xi)$ to $\xi$ in $\sB^\bbox$.}
\begin{align}
\begin{aligned}
\bR^{11}_n &= \dot \bD _n\bD_n^{-1}+ \bD_n[\del_t;\bE]_n \bE^{-1}_n \bD^{-1}_n , \hspace{15mm}   & \bR^{12}_n &= -\bD_n [\del_\theta, \bE_n] \bE^{-1}_n , \\
\bR^{21}_n &= -[\del_\theta, \bE_n] \bE^{-1}_n , & \bR^{22}_n &= [\del_t; \bE]_n \bE^{-1}_n ,
\end{aligned}\label{bRnidents}
\end{align}
\begin{align}\label{cFnidents}
\begin{aligned}
\cF_n &= -\bE_n P^-_{\grad,n}-(N_n\cdot P^+_{\grad,n}+\sN_n)e_2 , \\
\mathring{\cF}_n &= -\bR^{11}_n
\cF_n-\bE_n \dot P^-_{\grad,n}-[\del_t;\bE]_n P^-_{\grad,n} +\left(\frac{N_n\cdot \del_\theta U_n}{|X_\theta|}\bigtau_n\cdot P^+_{\grad,n}-N_n\cdot \dot P^+_{\grad,n}-\dot\sN_n\right)e_2 .
\end{aligned}
\end{align}
For the case $n=0$, we take $\bD_0$, $\bE_0$, and $\cF_0$ as in Definition~\ref{initialxiData}, and we correspondingly define $\bR^{12}_0$ and $\bR^{21}_0$ as above. On the other hand, we define each of $\bR^{11}_0$, $\bR^{22}_0$, and $\mathring \cF_0$ to be zero to ensure consistency with the fact that $\xi_0$ is constant in time, and thus various corresponding objects should be time-independent.

For $n\geq 1$, by following our derivation of the system in Section~\ref{formalderivationsubsection} and using
\begin{itemize}
\item the facts \(\del_t X_n = U_n\) and \(\xi_n(0)=\xi_0\), due to $\xi_n\in \sB$, and
\item the definitions of the various time derivative maps, such as $\dot H_n$, $\dot P^\pm_{\grad,n}$, $\dot \bD_n$, $[\del_t;\bE]_n$, etc.,
\end{itemize}
we find $\dot H_n=\del_t(H_n)$, $\dot P^\pm_{\grad,n}=\del_t(P^\pm_{\grad,n})$, $\dot \bD_n=\del_t(\bD_n)$, $[\del_t;\bE]_n=[\del_t,\bE_n]$, and so on.

Now, referring back to \eqref{definingSysIte}, using the identities $\dot H_n=\del_t(H_n)$, etc., in the case $n\geq 1$, and unpacking the various terms from \eqref{bRnidents}--\eqref{cFnidents}, one verifies\footnote{To do this, one works backwards through the derivation which took us from the basic system discussed in Section~\ref{basicsystemsection} to the wave system \eqref{SymizableSurfSys1} for $\dot U^*$ and $\dot B^*$.} for $n\geq 0$
\begin{align}
& &\mathring{\cF}_n &= \del_t \cF_n - [\del_t, \bD_n\bE_n] (\bD_n \bE_n)^{-1} \cF_n , \\
   & &  \del_t\dot{ U }^*_{n+1} &= \bD_n \del_\theta \dot{ B }^*_{n+1} + \bR^{11}_n \dot{ U }^*_{n+1}  + \bR^{12}_n \dot{ B }^*_{n+1}  + \mathring{\cF}_n ,\label{dotUstarnp1ident}\\
& &    \del_t\dot{ B }^*_{n+1} &= \del_\theta\dot{ U }^*_{n+1} + \bR^{21}_n \dot{ U }^*_{n+1}  + \bR^{22}_n \dot{ B }^*_{n+1} .\label{dotBstarnp1ident}\\
\intertext{Integrating \eqref{dotUstarnp1ident} and \eqref{dotBstarnp1ident} in time leads to}
& & \dot{ U }^*_{n+1}  &= \bD_n \bE_n \del_\theta B_{n+1} + \cF_n  ,\\
& & \del_t B_{n+1} &= \del_\theta U_{n+1},
\intertext{which we rearrange to get}
& & \del_\theta B_{n+1} &= (\bD_n \bE_n)^{-1}( \dot{ U }^*_{n+1}  - \cF_n) , \label{DthetaBformula}\\
& & \del_\theta U_{n+1} &= \bE^{-1}_n \dot{ B }^*_{n+1}  . \label{DthetaUformula}
\end{align}
Using these equations together with bounds of Lemmas~\ref{bEboundLemma} and~\ref{bDboundLemma}, we are able to deduce that $U_{n+1}$ and $B_{n+1}$ are in $H^{k+1}(S^1)$. Similarly, we are able to deduce that $\del_t X_{n+1} = U_{n+1}$, that $\del_\theta X_{n+1} = B_{n+1}$, and thus that $X_{n+1}$ is in $H^{k+2}(S^1)$. Thus, it follows that $\xi_{n+1}$ is in $\sH^k$.

Now we argue that in fact $\xi_{n+1}$ is in $\sB\subset \sH^k$. We already established above that $\xi_{\dagger,n+1}$ is in $\sB_\dagger$ and $U_{n+1} = \del_t X_{n+1}$, and it clearly follows from the definition of $\xi_{n+1}$ that $\xi_{n+1}(0)=\xi_0$. It only remains to verify that for our constant $M$ we have
\begin{align}\label{Mkbound}
\sup_{t}\lV \xi_{n+1} \rV_{\sH^k} \leq M\quad \mbox{and}\quad\lV U_{n+1} \rV_{C^1_{t,\theta}}\leq M .
\end{align}
Note by using the bounds on the objects $\bE(\xi)$, $\bD(\xi)$, and $\cF(\xi)$ given by Proposition~\ref{PenultBoundConstDepProp} together with the formulas for $\del^2_\theta X_{n+1}$ and $\del_\theta U_{n+1}$ given by \eqref{DthetaBformula} and \eqref{DthetaUformula}, followed by an application of the upper bound for $\xi_{\dagger,n+1}$ of \eqref{xiprimeTbound}, we find
\begin{align}
\lV \del^2_\theta X_{n+1} \rV_{H^k(S^1)}+ \lV \del_\theta U_{n+1}  \rV_{H^k(S^1)}
&\leq
C (1+ \sup_{t} \lV \xi_n \rV_{\sH^{k-1}} + \lV \xi_{\dagger,n+1} \rV_{\sH^k_\dagger} )^p \\
&\leq C' (1+  \sup_{t}\lV \xi_n \rV_{\sH^{k-1}} + \lV \xi_{\dagger,0} \rV _{\sH^k_\dagger})^p.\label{Xnplusonebound}
\end{align}
Meanwhile, using the relation $\xi_n = \xi_0 + \int^t_0 \del_\tau \xi_n(\tau) d\tau$ with the equation for $\del_t \xi_n$ and bounds on $\bJ(\xi_{n-1})$, $\cR(\xi_{n-1})$, and $\dcF(\xi_{n-1})$ following from Corollary~\ref{Jbounds}, Proposition~\ref{BasicRBound}, and Corollary~\ref{maindcFBound}, we find
\begin{align}
\lV \xi_n \rV_{\sH^{k-1}}
&\leq 
\lV \xi_0 \rV_{\sH^{k-1}} + C T(1+M^p)  (
\lV \del_\theta \xi_n \rV_{\sH^{k-1}} + 
\lV \xi_n \rV_{\sH^{k-1}} +1 ) \\
&\leq \lV \xi_0 \rV_{\sH^{k-1}} + C'T(1+M^{p'}) .
\end{align}
Using this in the upper bound \eqref{Xnplusonebound} yields
\begin{align}
\lV \del^2_\theta X_{n+1} \rV_{H^k(S^1)}+ \lV \del_\theta U_{n+1}  \rV_{H^k(S^1)}
&\leq
C (1+ \lV \xi_0 \rV^p_{\sH^k} + T(1+M^p) ).
\end{align}
By combining this with our upper bound of \eqref{xiprimeTbound} on $\xi_{\dagger,n+1}$ one then easily obtains
\begin{align}
\lV \xi_{n+1} \rV_{\sH^k} \leq C (1+ \lV \xi_0 \rV^p_{\sH^k} + T(1+M^p) ),
\end{align}
and so as long as $M$ is chosen large enough that $2C(1+ \lV \xi_0 \rV^p_{\sH^k}) \leq M$ and $T$ is small enough that $T(1+M^p)\leq \lV \xi_0 \rV^p_{\sH^k}$, we guarantee that for all $t$ in $[0,T]$
\begin{align}\label{latexibound}
\lV \xi_{n+1} \rV_{\sH^k} \leq M .
\end{align}
To verify that also
\begin{align}\label{accelBound}
\lV U_{n+1} \rV_{C^1_{t,\theta}} \leq M ,
\end{align}
one starts by using the equation below, which can be shown from \eqref{dotUstarnp1ident}:
\begin{align}
\del_t U_{n+1} = \bE^{-1}_n \left( \dot U^*_0 + \int^t_0 (\bD_n \del_\theta \dot{ B }^*_{n+1} + \bR^{11}_n \dot{ U }^*_{n+1}  + \bR^{12}_n \dot{ B }^*_{n+1}  + \mathring{\cF}_n) d\tau \right) .
\end{align}
Proceeding in the obvious way, we obtain
\begin{align}\label{finalpfbound}
\lV  U_{n+1} \rV_{C^1_{t,\theta}} \leq C( 1 + T M^p )( \lV \xi_0 \rV_{\sH^k} + T M^p) .
\end{align}
With the above, by choosing sufficiently large $M$ (larger than a constant depending on our initial data) and a corresponding choice of sufficiently small $T$, we conclude that \eqref{accelBound} holds in addition to \eqref{latexibound}. It is not hard to show from the above calculations together with the fact that $\xi_\dagger$ is in $C^0([0,T];\sH^k_\dagger)$ that $\xi$ continuously takes values in $\sH^k$, finishing the proof that $\xi_{n+1}$ is in $\sB$.
\end{proof}
\end{proposition}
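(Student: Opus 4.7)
Part~(i) is essentially a direct application of Proposition~\ref{KatoProp}. Using Duhamel's principle, I will write the solution as
\begin{align}
\xi_{\dagger,n+1}(t) = S_n(t,0)\xi_{\dagger,0} + \int^t_0 S_n(t,s)F_n(s)\,ds,
\end{align}
and then combine the operator bound \eqref{SntsBound} with the uniform $\sH^k_\dagger$ estimate on $F_n$ furnished by Corollary~\ref{maindcFBound} to obtain
\begin{align}
\sup_t \lV \xi_{\dagger,n+1}\rV_{\sH^k_\dagger} \leq Ce^{C(M)T}\bigl(\lV \xi_{\dagger,0}\rV_{\sH^k_\dagger} + T\,C(M)\bigr).
\end{align}
Selecting $M\geq 4C\lV\xi_{\dagger,0}\rV_{\sH^k_\dagger}$ and then choosing $T$ small enough (depending on $M$) gives the $\sH^k_\dagger$ ball condition. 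The closeness condition $\sup_t \lV\xi_{\dagger,n+1}-\xi_{\dagger,0}\rV_{\sH^{k-1}_\dagger}\leq r$ will then follow by bounding $\partial_t \xi_{\dagger,n+1}$ in $\sH^{k-1}_\dagger$ directly from the equation and shrinking $T$ again if necessary; continuity of $t\mapsto \xi_{\dagger,n+1}$ into $\sH^k_\dagger$ is a standard feature of symmetrizable linear evolutions, and I will cite the analogous result from \cite{majdaconservationlaws}.

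Part~(ii) is the heart of the matter. The formulas \eqref{Undefn}--\eqref{Xndefn} naively only give $U_{n+1},B_{n+1}\in H^k(S^1)$ and $X_{n+1}\in H^{k+1}(S^1)$, which would cost a derivative per iteration step. The key observation is that the wave structure of \eqref{iteratedXiSys} lets one recover this missing derivative. My plan is to unwind the derivation of the good wave system from Section~\ref{thewavesystemsubsection} and produce the two identities
\begin{align}
\partial_\theta B_{n+1} &= (\bD_n \bE_n)^{-1}\bigl(\dot U^*_{n+1}-\cF_n\bigr), &
\partial_\theta U_{n+1} &= \bE_n^{-1}\dot B^*_{n+1},
\end{align}
obtained by time-integrating the equations for $\dot U^*_{n+1}$ and $\dot B^*_{n+1}$. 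The delicate check required is that, because $\xi_n\in\sB$ (so in particular $\partial_t X_n=U_n$ and $\xi_n(0)=\xi_0$), the various ``dot'' maps $\dot H_n$, $\dot \bD_n$, $\dot P^\pm_{\grad,n}$, $[\del_t;\bE]_n$, and $[\del_t;\Nres]_n$ actually coincide with genuine time derivatives of the corresponding undotted quantities along $\xi_n(t)$, which is what justifies reading the derivation of Section~\ref{thewavesystemsubsection} backwards. Once this is in place, the initial condition at $t=0$ matches via Definition~\ref{initialxiData}, and the operator bounds of Lemmas~\ref{bEboundLemma} and~\ref{bDboundLemma} together with Proposition~\ref{PenultBoundConstDepProp} promote the resulting $H^k$ regularity of the right-hand sides to the desired $H^{k+1}$ regularity of $U_{n+1},B_{n+1}$, and consequently $X_{n+1}\in H^{k+2}(S^1)$.

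To close the ball, I still need to verify $\lV\xi_{n+1}\rV_{\sH^k}\leq M$ and $\lV U_{n+1}\rV_{C^1_{t,\theta}}\leq M$. Inserting the two identities above into Proposition~\ref{PenultBoundConstDepProp} gives a bound of the form
\begin{align}
\lV\partial^2_\theta X_{n+1}\rV_{H^k}+\lV\partial_\theta U_{n+1}\rV_{H^k} \leq C\bigl(1+\sup_t\lV\xi_n\rV_{\sH^{k-1}}+\lV\xi_{\dagger,n+1}\rV_{\sH^k_\dagger}\bigr)^p.
\end{align}
To avoid an $M$-dependence on the right-hand side, I will control $\sup_t\lV\xi_n\rV_{\sH^{k-1}}$ independently by integrating the one-lower-order version of the evolution equation for $\xi_n$, which yields $\sup_t\lV\xi_n\rV_{\sH^{k-1}} \leq \lV\xi_0\rV_{\sH^{k-1}}+CT(1+M^p)$. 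This produces an estimate of the form $C(1+\lV\xi_0\rV^p_{\sH^k}+T(1+M^p))$ on the full $\sH^k$ norm of $\xi_{n+1}$, so fixing $M\geq 2C(1+\lV\xi_0\rV^p_{\sH^k})$ and then selecting $T$ correspondingly small closes \eqref{Mkbound}. The $C^1_{t,\theta}$ bound on $U_{n+1}$ follows by the analogous argument, starting from $\partial_t U_{n+1} = \bE_n^{-1}\dot U^*_{n+1}$ together with \eqref{dotUstarnp1ident}. The main obstacle I anticipate is the bookkeeping for the no-derivative-loss step: verifying that on an iterate $\xi_n\in\sB$ (which does not solve the exact nonlinear system) the derivation of Section~\ref{thewavesystemsubsection} holds in sufficient integrity to deliver the displayed identities; success there is what allows us to iterate without artificial smoothing.
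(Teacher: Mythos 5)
Your proposal follows essentially the same route as the paper's proof: Duhamel's principle via Kato for part (i), then the key identities $\partial_\theta B_{n+1}=(\bD_n\bE_n)^{-1}(\dot U^*_{n+1}-\cF_n)$ and $\partial_\theta U_{n+1}=\bE_n^{-1}\dot B^*_{n+1}$, recovered by undoing the derivation of the wave system and checking that the dotted maps agree with genuine time derivatives along $\xi_n\in\sB$, followed by the same uniform bounds on $\xi_n$ in $\sH^{k-1}$ to close the $\sH^k$ ball. This is the correct argument and matches the paper's strategy step by step.
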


Using Proposition~\ref{MainInductiveBoundProp}, we thus construct a sequence of iterates $\{\xi_n\}_{n\geq 1}$ with the property that
\begin{align}\label{sBreview}
& & \xi_n &\in \sB &  ( n \geq 1 ) .
\end{align}
The next task is to show that the sequence converges to a solution $\xi(t)$ to the problem \eqref{xiSystemRep} in $\sH^k$ for $t$ in the interval $[0,T]$. Convergence in the $\sH^k$ norm can be deduced by showing the sequence converges in a weaker norm, such as that of $\sH^2$, and combining this with the uniform bound in the stronger norm given by \eqref{sBreview}, that is,
\begin{align}
\label{MainUniformBound}
& & \sup_{t}\lV \xi_n \rV_{\sH^k} &\leq M &  ( n \geq 1 ) .
\end{align}
With the next proposition, we show that the sequence indeed converges in the $\sH^2$ norm.
\begin{proposition}\label{LimitExistence}
For $t$ in $[0,T]$, there exists $\xi(t)$ in $\sH^2$ such that
\begin{align}\label{convergence}
\lim_{n\to \infty} \sup_{t}\lV \xi_n - \xi \rV_{\sH^2} = 0 .
\end{align}
\end{proposition}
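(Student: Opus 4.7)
The plan is to show that $\{\xi_n\}_{n \geq 1}$ is Cauchy in $C^0([0,T];\sH^2)$. Let $\zeta_n = \xi_{n+1} - \xi_n$, which vanishes at $t=0$ since all iterates share the same initial datum $\xi_0$. Subtracting the equations \eqref{xiItSystRep} for $\xi_{n+1}$ and $\xi_n$ yields
\begin{align}
\del_t \zeta_n = \bJ(\xi_n)\,\del_\theta \zeta_n + \cR(\xi_n)\zeta_n + \cE_n,
\end{align}
where the source is
\begin{align}
\cE_n = (\bJ(\xi_n)-\bJ(\xi_{n-1}))\,\del_\theta \xi_n + (\cR(\xi_n)-\cR(\xi_{n-1}))\,\xi_n + (\dcF(\xi_n)-\dcF(\xi_{n-1})).
\end{align}
The structure of the principal part is the same symmetrizable wave-type operator $\bJ(\xi_n)\del_\theta + \cR(\xi_n)$ already analyzed in Proposition~\ref{KatoProp}; in particular, the block form of $\bJ$ together with positivity of the diagonal entries of $\bD(\xi_n)$ supplies a uniformly equivalent symmetrizer in $\sH^2$.

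I would then run the standard $\sH^2$ energy estimate for this system. For the wave components $(\dot U^*,\dot B^*,\bom,\bj)$ this gives, after commuting up to two derivatives through $\bJ$ and using Corollary~\ref{Jbounds}, Proposition~\ref{BasicRBound}, and the symmetrizer bounds from Lemma~\ref{bDboundLemma},
\begin{align}
\frac{d}{dt}\lV \zeta_n \rV_{\sH^2_\dagger}^2 \le C(M)\,\lV \zeta_n \rV_{\sH^2_\dagger}^2 + C(M)\,\lV \cE_n \rV_{\sH^2_\dagger}.
\end{align}
For the $(X,U)$ components, the evolution $\del_t X_{n+1} = U_{n+1}$ and $\del_t U_{n+1} = \bE(\xi_n)^{-1}\dot U^*_{n+1}$ give
\begin{align}
\lV X_{n+1}-X_n \rV_{H^4(S^1)} + \lV U_{n+1}-U_n \rV_{H^3(S^1)} \le C(M)\int_0^t \lV \zeta_n \rV_{\sH^2}\,d\tau + C(M)\int_0^t \lV \xi_n-\xi_{n-1}\rV_{\sH^2}\,d\tau,
\end{align}
where the second integrand comes from the Lipschitz bound $\lV(\bE(\xi_n)^{-1}-\bE(\xi_{n-1})^{-1})\dot U^*_n\rV_{H^3(S^1)}\le C\lV \xi_n-\xi_{n-1}\rV_{\sH^2}$ supplied by Proposition~\ref{bEbounds} together with the uniform bound \eqref{MainUniformBound}. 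These combine into a single inequality of the form
\begin{align}
\lV \zeta_n(t) \rV_{\sH^2}^2 \le C(M)\int_0^t \lV \zeta_n(\tau)\rV_{\sH^2}^2 \,d\tau + C(M)\int_0^t \lV \zeta_{n-1}(\tau)\rV_{\sH^2}^2 \,d\tau.
\end{align}

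To estimate $\cE_n$ in $\sH^2$ I would invoke the Lipschitz package already assembled: Corollary~\ref{Jbounds} bounds $(\bJ(\xi_n)-\bJ(\xi_{n-1}))\del_\theta\xi_n$ by $C\lV\xi_n-\xi_{n-1}\rV_{\sH^1}\lV\xi_n\rV_{\sH^3} \le C(M)\lV\xi_n-\xi_{n-1}\rV_{\sH^2}$ using the uniform bound \eqref{MainUniformBound}; the $R$-type piece is controlled by \eqref{lipcRBd}; and $\dcF(\xi_n)-\dcF(\xi_{n-1})$ is handled by \eqref{lipdcFBd}. Applying Gronwall's inequality and taking $T$ small enough yields
\begin{align}
\sup_{t\in[0,T]}\lV \zeta_n \rV_{\sH^2} \le \half \sup_{t\in[0,T]}\lV \zeta_{n-1}\rV_{\sH^2},
\end{align}
so $\{\xi_n\}$ is Cauchy in $C^0([0,T];\sH^2)$ and the limit $\xi$ exists.

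The main obstacle is arranging the energy estimate so that the Lipschitz constants in the source bounds can be absorbed by a factor of $T$, producing a true contraction. This requires simultaneously (i) leveraging the uniform $\sH^k$ bound \eqref{MainUniformBound} to replace $\sH^3$ norms of $\xi_n$ by $C(M)$ whenever they appear paired with low-regularity Lipschitz differences (since $k\ge 4$ is more than enough room), and (ii) handling the $(X,U)$ components via the zero initial data and the integral relations, so that their contribution to $\lV \zeta_n\rV_{\sH^2}$ gains a factor of $T$ for free. Once contraction is secured, strong convergence in $\sH^2$ with limit $\xi(t)\in\sH^2$ for each $t\in[0,T]$ is immediate, and the uniform bound passes to $\xi$ as well.
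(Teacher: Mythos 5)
Your proposal replaces the paper's Duhamel/semigroup estimate for $\xi_{\dagger,n+1}-\xi_{\dagger,n}$ with a direct symmetrizer energy estimate; that substitution is fine and essentially equivalent. The gap is in how you control the $(X,U)$ components of $\xi_{n+1}-\xi_n$.

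You propose to bound $\lV X_{n+1}-X_n \rV_{H^4(S^1)} + \lV U_{n+1}-U_n \rV_{H^3(S^1)}$ by integrating the relations $\del_t X_{n+1}=U_{n+1}$ and $\del_t U_{n+1}=\bE(\xi_n)^{-1}\dot U^*_{n+1}$ in time, bounding the result by $\int_0^t \lV\zeta_n\rV_{\sH^2}$ plus a Lipschitz error. But integrating in time does not gain a $\theta$-derivative: $\del_t(U_{n+1}-U_n)$ contains $\bE(\xi_n)^{-1}(\dot U^*_{n+1}-\dot U^*_n)$, and the difference $\dot U^*_{n+1}-\dot U^*_n$ is only controlled in $H^2(S^1)$ inside $\lV\zeta_n\rV_{\sH^2_\dagger}$, so its $\bE^{-1}$-image is only controlled in $H^2$, not the $H^3$ you need. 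The same mismatch appears one level up for $X_{n+1}-X_n$ in $H^4$. As written, the right-hand side of your $(X,U)$ inequality does not dominate the left-hand side, so the Gronwall loop does not close.

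The paper avoids this by exploiting the extra tangential structure the iteration preserves: the identities $\del^2_\theta X_{n+1}=(\bD_n\bE_n)^{-1}(\dot U^*_{n+1}-\cF_n)$ and $\del_\theta U_{n+1}=\bE_n^{-1}\dot B^*_{n+1}$ (equations \eqref{DthetaBformula}--\eqref{DthetaUformula}). Taking differences of these gives $\lV\del^2_\theta(X_{n+1}-X_n)\rV_{H^2}$ and $\lV\del_\theta(U_{n+1}-U_n)\rV_{H^2}$ directly in terms of $\lV\xi_{\dagger,n+1}-\xi_{\dagger,n}\rV_{\sH^2_\dagger}$ and Lipschitz differences of $\bD$, $\bE$, $\cF$, with no derivative loss. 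Without some such structural identity, the top-order $X,U$ control cannot be extracted from the time-integrated evolution alone. Relatedly, once these identities are used, what one obtains is the two-step recurrence \eqref{CauchySeqProved} involving both $\xi_n-\xi_{n-1}$ and $\xi_{n-1}-\xi_{n-2}$, rather than the single-step Gronwall contraction your proposal asserts; the conclusion still follows for $T$ small, but the recurrence structure is genuinely different.
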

\begin{proof}
First, we bound the difference $\xi_{\dagger,n+1}-\xi_{\dagger,n}$. We have
\begin{align}
\frac{d}{dt}(\xi_{\dagger,n+1}-\xi_{\dagger,n}) &= A_n(t)(\xi_{\dagger,n+1}-\xi_{\dagger,n}) + (A_n-A_{n-1})(t) \xi_{\dagger,n} + F_n(t) - F_{n-1}(t) , \\
(\xi_{\dagger,n+1}-\xi_{\dagger,n})(0) &= 0 ,
\end{align}
and so
\begin{align}
\xi_{\dagger,n+1}-\xi_{\dagger,n} = \int^t_0 S_n(t,s)((A_n-A_{n-1})(s) \xi_{\dagger,n} + F_n(s) - F_{n-1}(s)) \, ds ,
\end{align}
giving the following bound, in which we have used that $\xi_n$ is uniformly bounded in $\sH^k$ and that $S_n$ is bounded from $\sH^2_\dagger$ to $\sH^2_\dagger$
\begin{align}\label{diffexpress1}
\lV \xi_{\dagger,n+1}-\xi_{\dagger,n} \rV_{\sH^2_\dagger}
& \leq
C T \sup_{s\in[0,T]}( \lV (A_n-A_{n-1} )\xi_{\dagger,n} \rV_{\sH^2_\dagger} + \lV F_n - F_{n-1} \rV_{\sH^2_\dagger} ) .
\end{align}
Using \(A_n-A_{n-1} = (J_n-J_{n-1}) \del_\theta +R_n-R_{n-1} \) we then find after using uniform bounds on $\xi_{\dagger,n}$
\begin{align}
\lV (A_n - A_{n-1})\xi_{\dagger,n} \rV_{\sH^2_\dagger}
&\leq C  ( \lV J_n-J_{n-1} \rV_{\sH^2_\dagger} + \lV R_n - R_{n-1} \rV_{B(\sH^2_\dagger,\sH^2_\dagger)} ).
\end{align}

Note Corollary~\ref{Jbounds} and Proposition~\ref{finalLipBdSummary} imply
\begin{align}
 \lV J_n-J_{n-1} \rV_{\sH^2_\dagger}+\lV R_n - R_{n-1} \rV_{B(\sH^2_\dagger,\sH^2_\dagger)}+\lV F_n - F_{n-1} \rV_{\sH^2_\dagger} \leq C \sup_{s\in[0,T]}\lV \xi_n - \xi_{n-1} \rV_{\sH^2}.
\end{align}
Using these in \eqref{diffexpress1} leads to
\begin{align}\label{xiprimeiteratediffbound}
\lV \xi_{\dagger,n+1}-\xi_{\dagger,n} \rV_{\sH^2_\dagger}
\leq C T \sup_{s\in[0,T]} \lV \xi_n - \xi_{n-1}\rV_{\sH^2} .
\end{align}
It remains to bound the remaining components of the difference $\xi_{n+1}-\xi_n$, which are just $X_{n+1}-X_n$ and $U_{n+1}-U_n$. Recall our expressions \eqref{DthetaBformula} and \eqref{DthetaUformula}, and the fact that $\del_\theta X_n$ = $B_n$. Using these we are able to derive
\begin{align}
\del^2_\theta X_{n+1} - \del^2_\theta X_n &= (\bD_n \bE_n)^{-1} ( \dot U^*_{n+1} - \cF_n ) - (\bD_{n-1} \bE_{n-1})^{-1} ( \dot U^*_n - \cF_{n-1} ), \\
\del_\theta U_{n+1} - \del_\theta U_n &= \bE_n^{-1} \dot B^*_{n+1} - \bE_{n-1}^{-1} \dot B^*_n .
\end{align}
Using in the above expressions the Lipschitz bounds of Propositions~\ref{hEstimateProp},~\ref{bEbounds}, and~\ref{finalLipBdSummary} in combination with \eqref{xiprimeiteratediffbound}, we then obtain
\begin{align*}
\lV \del^2_\theta X_{n+1} - \del^2_\theta X_n \rV_{H^2(S^1)} + \lV \del_\theta U_{n+1} - \del_\theta U_n \rV_{H^2(S^1)}
&\leq C \big( \lV \xi_{\dagger,n+1} - \xi_{\dagger,n}\rV_{\sH^2_\dagger} + \lV X_n - X_{n-1}\rV_{H^3(S^1)}\\
&\quad  +\lV \bE_n - \bE_{n-1} \rV_{B(H^2(S^1),H^2(S^1))}+\lV \cF_n - \cF_{n-1}\rV_{H^2(S^1)}\big)\\
&\leq C'  \left( T \sup_{s\in[0,T]}\lV \xi_n - \xi_{n-1} \rV_{\sH^2} + \sup_{s\in[0,T]}\lV \xi_n - \xi_{n-1} \rV_{\sH^1} \right)  .
\end{align*}
By writing $X_{n+1}-X_n$ and $U_{n+1} - U_n$ in terms of time integrals (as in \eqref{Undefn} and \eqref{Xndefn}), we find from the above bound
\begin{align}\label{XnUnbounds}
&\\
\lV X_{n+1} - X_n \rV_{H^4(S^1)} + \lV U_{n+1} - U_n \rV_{H^3(S^1)}
&\leq C' \left( T  \sup_{s\in[0,T]}\lV \xi_n - \xi_{n-1} \rV_{\sH^2}+ \sup_{s\in[0,T]}\lV \xi_n - \xi_{n-1} \rV_{\sH^1} \right) .
\end{align}
Meanwhile
\begin{align}
\frac{d}{dt}(\xi_n-\xi_{n-1})=&\big(\bJ(\xi_{n-1})\del_\theta + \cR(\xi_{n-1})\big)(\xi_n - \xi_{n-1})\\
&+\Big(\big(\bJ(\xi_{n-1})-\bJ(\xi_{n-2})\big)\del_\theta + \cR(\xi_{n-1})-\cR(\xi_{n-2})\Big)\xi_{n-1}+\dcF(\xi_{n-1})-\dcF(\xi_{n-2})
\end{align}
implies after integrating and using our Lipschitz bounds on $\bJ$ and $\cR$ that
\begin{align}
\lV \xi_n - \xi_{n-1} \rV_{\sH^1} \leq CT\left(\sup_{s\in[0,T]}\lV\xi_n - \xi_{n-1} \rV_{\sH^2} + \sup_{s\in[0,T]}\lV\xi_{n-1} - \xi_{n-2} \rV_{\sH^2}\right) .
\end{align}
Incorporating this bound in \eqref{XnUnbounds} above gives us
\begin{align*}
\lV X_{n+1} - X_n \rV_{H^4(S^1)} + \lV U_{n+1} - U_n \rV_{H^3(S^1)}
&\leq C T \left( \sup_{s\in[0,T]}\lV \xi_n - \xi_{n-1} \rV_{\sH^2} + \sup_{s\in[0,T]}\lV \xi_{n-1} - \xi_{n-2} \rV_{\sH^2} \right) ,
\end{align*}
which, when combined with \eqref{xiprimeiteratediffbound}, yields
\begin{align}\label{CauchySeqProved}
 \sup_{t}\lV \xi_{n+1}-\xi_n \rV_{\sH^2}
\leq C T\left( \sup_{t} \lV \xi_n - \xi_{n-1} \rV_{\sH^2} + \sup_{t} \lV \xi_{n-1} - \xi_{n-2} \rV_{\sH^2} \right).
\end{align}
By ensuring $T$ is sufficiently small, it is straightforward to verify from \eqref{CauchySeqProved} that \eqref{convergence} holds.
\end{proof}

Now that we have proved that the sequence $\{\xi_n\}_{n\geq 1}$ converges to a limit $\xi$ which is in $\sH^2$ at each time $t$, we use relatively standard arguments from functional analysis to upgrade this to convergence in $\sH^k$, using the uniform boundedness of the sequence in $\sH^k$.
\begin{proposition}\label{improvedxireg}
There exists a solution $\xi$ in $\sB$ to the system \eqref{xiSystemRep}, namely the limit $\xi$ given by Proposition~\ref{LimitExistence} of the sequence $\{\xi_n\}_{n\geq 1}$.
\end{proposition}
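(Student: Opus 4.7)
The plan is to upgrade the $\sH^2$-convergence from Proposition~\ref{LimitExistence} to the regularity, continuity, and equation satisfaction required for $\xi$ to belong to $\sB$ and solve \eqref{xiSystemRep}. First I would exploit the uniform bound \eqref{MainUniformBound}: by weak-$*$ compactness of $L^\infty([0,T];\sH^k)$ together with uniqueness of the weak-$*$ limit (pinned down by the already-established strong $C^0_t\sH^2$ convergence), a subsequence of $\{\xi_n\}$ converges weak-$*$ to $\xi$ in $L^\infty([0,T];\sH^k)$. Lower semicontinuity of the norm under weak limits, applied at each fixed $t$, then gives $\xi\in L^\infty([0,T];\sH^k)$ with $\sup_{t}\lV\xi\rV_{\sH^k}\leq M$. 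Interpolating between the strong $C^0_t\sH^2$ convergence and the uniform $\sH^k$ boundedness yields strong convergence $\xi_n\to\xi$ in $C^0([0,T];\sH^s)$ for every $s\in[2,k)$, and in particular in $C^0([0,T];\sH^{k-1})$.

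Next I would pass to the limit in the linearized iteration system \eqref{xiItSystRep}. By Corollary~\ref{Jbounds}, Proposition~\ref{BasicRBound}, and Proposition~\ref{finalLipBdSummary}, each of the terms $\bJ(\xi_n)\del_\theta\xi_{n+1}$, $\cR(\xi_n)\xi_{n+1}$, and $\dcF(\xi_n)$ converges (in, say, $C^0_t\sH^1$) to the corresponding term evaluated at $\xi$, so $\del_t\xi$ exists in the distributional sense and satisfies \eqref{xiSystemRep}. Corollaries~\ref{Jbounds} and~\ref{maindcFBound} together with Proposition~\ref{BasicRBound} moreover give $\del_t\xi\in L^\infty([0,T];\sH^{k-1})$. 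The remaining $\sB$-properties — $\xi(0)=\xi_0$, $X_t=U$, $\lV U\rV_{C^1_{t,\theta}}\leq M$, and $\sup_{t}\lV\xi_\dagger-\xi_{\dagger,0}\rV_{\sH^{k-1}_\dagger}\leq r$ — transfer directly from the corresponding properties of $\xi_n\in\sB$ via the strong $C^0_t\sH^{k-1}$ convergence and Sobolev embedding in $\theta$ (for the $C^1$ control of $U$, which reads off from $U_t=\bE^{-1}(\xi)\dot U^*$ and the limit equation).

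The principal remaining issue — and the step I expect to be the main technical obstacle — is the strong time continuity $\xi\in C^0([0,T];\sH^k)$ at the top regularity level. From $\xi\in L^\infty_t\sH^k$ and $\del_t\xi\in L^\infty_t\sH^{k-1}$ a standard density/duality argument yields weak continuity, $\xi\in C^0_w([0,T];\sH^k)$, so what must be proved is continuity of $t\mapsto\lV\xi(t)\rV_{\sH^k}$, which combined with weak continuity upgrades to strong continuity in $\sH^k$. To obtain this I would follow the Bona--Smith / Kato energy template, leveraging the symmetrizable structure of the system already exploited in Proposition~\ref{KatoProp}: using an equivalent $\sH^k$-energy built from the symmetrizer of $\bJ(\xi)$, one differentiates in time and expresses the derivative in terms of quantities controlled by the Lipschitz and boundedness estimates already proved for $\bJ$, $\cR$, and $\dcF$ (Corollary~\ref{Jbounds}, Propositions~\ref{BasicRBound} and~\ref{finalLipBdSummary}, Corollary~\ref{maindcFBound}), together with the commutator-type bounds from the proof of Proposition~\ref{KatoProp}. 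This shows $t\mapsto\lV\xi(t)\rV_{\sH^k}^2$ is continuous, completes $\xi\in C^0([0,T];\sH^k)$, and hence establishes $\xi\in\sB$ and the solution property of Proposition~\ref{improvedxireg}.
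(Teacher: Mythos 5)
Your proposal is correct and takes essentially the same approach as the paper: extract $\xi(t)\in\sH^k$ from the uniform bound \eqref{MainUniformBound} via weak compactness (the paper applies Banach--Alaoglu at each fixed $t$, you take weak-$*$ limits in $L^\infty_t\sH^k$ — same substance), identify the weak limit with the strong $\sH^2$ limit from Proposition~\ref{LimitExistence}, pass to the limit in the iterated equation using the Lipschitz bounds, and upgrade to $C^0([0,T];\sH^k)$ to conclude $\xi\in\sB$. The paper's proof is terse on the last two steps (it simply asserts they are "not difficult" and "easy"), whereas you spell them out — in particular your route to strong time-continuity via $C^0_w$ plus a Bona--Smith/Kato energy argument for continuity of $t\mapsto\|\xi(t)\|_{\sH^k}$, exploiting the symmetrizable structure already used in Proposition~\ref{KatoProp}, is a sound and standard way to fill in what the paper leaves implicit.
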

\begin{proof}
Note that the space $\sH^k$ is reflexive. For any $t$ in $[0,T]$, the sequence \( \{\xi_n(t)\}_{n\geq 1} \) is contained in a closed ball in $\sH^k$, and thus for each fixed $t$ there exists a weakly convergent subsequence, by Banach--Alaoglu. One then deduces that its limit must agree with the limit $\xi(t)$ in $\sH^2$ given by Proposition~\ref{LimitExistence}. From this, it follows that $\xi(t)$ is in $\sH^k$ for each $t$ in $[0,T]$. It is not difficult to upgrade this to deduce that $\xi$ is in $C^0([0,T];\sH^k)$. Moreover, one easily checks that $\xi$ solves \eqref{xiSystemRep}, satisfies $X_t=U$, obeys the bounds necessary to conclude $\xi$ is in $\sB$.
\end{proof}
\subsection{Returning to the original system}\label{backtoorigsystemsubsection}
In the previous subsection, we constructed a solution $\xi \in \sB$ on the time interval $[0,T]$ to
\begin{align}\label{xiSystemRep2}
& &    \xi_t &= \bJ(\xi) \xi_\theta + \cR(\xi) \xi + \dcF (\xi),  &    \\
& &     \xi(0) &= \xi_0  . & 
\end{align}
In this section, we will verify that our solution $\xi$ to the above system in fact results in a solution to the original ideal MHD equations. Recall again the definitions of $\bJ$ and $\cR$ from Definition~\ref{DEcommDefns} and that of $\dcF$ from Definition~\ref{finalDefdcF1}. In view of these, the system \eqref{xiSystemRep2} is equivalent to
\begin{align}
    \col{\dot{ U }^* }{\dot{ B }^* }_t &= \bJ_1 \col{\dot{ U }^* }{\dot{ B }^* }_\theta + \bR \col{\dot{ U }^* }{\dot{ B }^* } + \dcF_1 \,, \label{surfacewavesysRep}\\
    \col{\bom}{\bj}_t &= \bJ_2 \col{\bom}{\bj}_\theta + \dcF_2 \,,  \\
    \col{X}{U}_t &= \begin{pmatrix} \bI & 0 \\
                                       0 & \bE^{-1} \end{pmatrix} \col{U}{\dot{ U }^* } , 
                                       \label{auxiliarysys}\\
                                      (\dot U^*, \dot B^*, \bom, \bj, X,U)(0) &= (\dot U^*_0, \dot B^*_0, \bom_0, \bj_0, X_0,U_0),
\end{align}
To begin with our derivation of a solution to the original ideal MHD system, we essentially work backwards through the derivation of \eqref{symmetrizable2}--\eqref{symmetrizable1} in Section~\ref{formalderivationsubsection}. Note \eqref{auxiliarysys} implies $U$ is given by
    \begin{align}
        &&U(t,\theta) &= U_0(\theta) + \int_0^t \bE^{-1} \dot{ U }^* (\tau,\theta) d\tau &(\theta\in S^1, \ t\in[0,T]).\\ \intertext{
    We use the analogous relation to define $B$:}
       && B(t,\theta) &= B_0(\theta) + \int_0^t \bE^{-1} \dot{ B }^* (\tau,\theta) d\tau &(\theta\in S^1, \ t\in[0,T]).\label{Bdefn}
    \end{align}
    By carefully undoing the commutators in the terms appearing in our evolution equation above for $\dot{ B }^* $, as a result of the way the terms $\bD$ and $\bR^{ij}$ were constructed, one finds that the evolution equation for $\dot B^*$ is equivalent to $B_t = U_\theta$, and, finally, due to the fact that $\del_\theta X_0 = B_0$, one easily verifies that, in addition to $X_t = U$, we have the identity $X_\theta = B$.
    
Gradually, we will retrieve more identities which bring us closer to verifying we indeed have a solution to the original MHD system. Next we define the other corresponding quantities for our solution $\xi$, such as the associated $\bU$, $\bB$, and $\bX$ in particular. Momentarily, we will address related identities they must satisfy.

\begin{definition}\label{finaldefns}
For our exact solution $\xi$ in $\sB$ to the Lagrangian wave system, we define $B(t,\theta)$ by \eqref{Bdefn}, and we define
\begin{align}
\bU(t,a) =  \bU(\xi)(t,a),\qquad \bB(t,a) =  \bB(\xi)(t,a),\qquad \bX(t,a) =  \bX(\xi)(t,a),\qquad (a\in\Sigma, \ t\in [0,T]),
\end{align}
referring to the maps $\xi\mapsto\bU(\xi)$, $\xi\mapsto\bB(\xi)$, and $\xi\mapsto\bX(\xi)$ given by Definition~\ref{UBXdefn}.

We also define the quantities $H(t,\theta)= H(\xi)(t,\theta)$, and analogously define $\dot H(t,\theta)$, $h(t,x)$, $\dot h(t,x)$, $p_\pm(t,x)$, $P^\pm_\grad(t,\theta)$, $\dot P^\pm_\grad(t,\theta)$, $\bP^-_\grad(t,a)$, $\dot \bP^-_\grad(t,a)$, $\bD(t,\theta)$, $\bE$, $\Nres$, and $\bigh_\pm$, all in terms of the obvious corresponding maps evaluated at $\xi$ (see Sections~\ref{lagrangianwavesystemsection} and~\ref{auxiliaryestimates} for definitions).

Moreover, we denote the accompanying interface and plasma region by $\Gamma(t)$ and $\Omega(t)$, and define the quantities $|\Omega(t)|= \operatorname{area}(\Omega(t))$ as well as
    \begin{align}
        \Phi(t) &= \int^\pi_{-\pi} B^\perp(t,\theta) \cdot U(t,\theta) \, d\theta & & (t\in[0,T]), \label{fluxDefn} \\
        w_\Omega(t,x)&=\frac{\Phi(t)}{|\Omega(t)|}\int ^\pi_{-\pi}\left(G_R(x,\vartheta)e_2 -G_R(x,X(t,\vartheta))\del_\vartheta X^\perp (t,\vartheta) \right)\,d\vartheta   &&  (x \in \Omega(t), \ t\in[0,T] ) , \label{wDefn}
\end{align}
where $G_R(x,y)=G(x-y)+G(x-\bar y)$, for $\bar y = (y_1,-y_2)$ and
\begin{align}
G(x) &= \frac{1}{\pi} \log \left| \sin \left( \frac{x_1 + i x_2}{2} \right) \right| .
\end{align}
Additionally, we define
\begin{align}
&& W(t,\theta) &= w_\Omega(t,X(t,\theta)) &&(\theta\in S^1, \ t\in[0,T]), \\
&&\bW(t,a) &= w_\Omega(t,\bX(t,a))&&(a\in \Sigma, \ t\in[0,T]) .
\end{align}
\end{definition}

\begin{remark}
Note we cannot assume that $\bX$ agrees with $X$ at $\psi=0$. Rather than defining $X$ to be the trace of $\bX$, we produce $X$ by solving \eqref{xiSystemRep2}. Similarly, we cannot assume $\bU|_{\psi=0}=U$ or $\bB|_{\psi=0}=B$.
\end{remark}

Note that if the $\bU(t,a)$ defined above is the true Lagrangian velocity in the bulk, we \emph{should} have $\del_t \bX (t,a)=\bU(t,a)$. Meanwhile, with the lemma below, we only know $\del_t \bX (t,a)=\bU(t,a)+\bW(t,a)$, not yet having verified $\bW(t,a)$ is in fact zero. In this lemma, we record some basic identities which follow from the definitions above.
\begin{lemma}\label{variousIdents}
For the quantities below as given by Definition~\ref{finaldefns}, the following relations all hold for $t$ in $[0,T]$, $\theta$ in $S^1$, $a$ in $\Sigma$, and $x$ in $\cV(t)$:
\begin{align} 
P^-_\grad(t,\theta) &= \bP^-_\grad(t,\theta,0), &\del_t h(t,x) &= \dot h(t,x),\\
\del_t X(t,\theta) &= U(t,\theta),  &     \del_t H(t,\theta) &= \dot H(t,\theta)  ,\\
\del_t U(t,\theta) &= \bE^{-1}\dot U^*(t,\theta) ,&   \del_t\bU(t,a) &= \dot \bU(t,a) ,\\
\del_t B(t,\theta) &=\del_\theta U(t,\theta)= \bE^{-1}\dot B^*(t,\theta), \qquad &               \del_t\bB(t,a) &= \dot \bB(t,a) ,\\
\del_\theta X(t,\theta)&= B(t,\theta), &    \del_t \bP^-_\grad(t,a) &= \dot \bP^-_\grad(t,a) ,\\
&      &  \del_t P^+_\grad(t,\theta) &= \dot P^+_\grad(t,\theta).
\end{align}
Additional identities include
\begin{align}
\del_t \bX(t,a) &= \bU(t,a) + \bW(t,a) , \label{XUWident} \\
\del_t \bD(t,\theta) &= \dot \bD (\xi)(t,\theta) ,\label{operatortimederividents}\\
[\del_t, \bE] &= [\del_t;\bE](\xi) ,\label{operatortimederividents2}\\
[\del_t, \Nres] &= [\del_t;\Nres](\xi) ,\label{operatortimederividents3}
\end{align}
and for $\cF$ and $\mathring \cF$ as given by \eqref{cFdefn2} and \eqref{mathrcFdef}, and $\bR^{11}$ as in \eqref{bRderivation} corresponding to $\xi$,
\begin{align}\label{dtcFident}
\del_t \cF(t,\theta) - \bR^{11} \cF(t,\theta) = \mathring \cF(t,\theta) .
\end{align}
\end{lemma}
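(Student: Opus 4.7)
The plan is to prove this lemma by separating the identities into three tiers of increasing subtlety, and then using the more basic identities to bootstrap the more involved ones. The first tier consists of identities that are essentially immediate from the construction. The equations $\del_t X = U$ and $\del_t U = \bE^{-1}\dot U^*$ are literally the auxiliary equations \eqref{auxiliarysys} of the system \eqref{xiSystemRep2}, and $P^-_\grad(t,\theta) = \bP^-_\grad(t,\theta,0)$ is just the trace convention adopted in Definition~\ref{PminusDefns}. The identity $\del_t B = \bE^{-1}\dot B^*$ is the time derivative of the defining formula \eqref{Bdefn}. The identity $\del_\theta X = B$ follows by integrating $X_t=U$ in time and using $B_t = U_\theta$ (which itself comes from the $\dot B^*$ equation together with the definition of $B$); since $\del_\theta X_0 = B_0$ by Definition~\ref{initialData} and Definition~\ref{initialxiData}, the difference $\del_\theta X - B$ vanishes initially and solves a trivial ODE in time.

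The second tier comprises the ``dot-map'' identities $\del_t h = \dot h$, $\del_t H = \dot H$, $\del_t P^\pm_\grad = \dot P^\pm_\grad$, $\del_t \bP^-_\grad = \dot \bP^-_\grad$, and $\del_t \bD = \dot \bD(\xi)$. Each dot map was defined precisely so that plugging in a time-dependent $\xi$ with $X_t = U$ makes the map coincide with the corresponding time derivative: this is recorded in the specifications following Definitions~\ref{dotHdefns},~\ref{dotPplusdefn},~\ref{dotPintdefn}, and in the formula for $\dot \bD(\xi)$ in Definition~\ref{DEcommDefns}. My plan is to verify for each map that the hypotheses needed for this compatibility hold for our $\xi$ in $\sB$: namely $X_t=U$ (which is tier one), the regularity needed to differentiate through the elliptic systems defining $\bP^-_\grad$ and $p_\pm$, and the time continuity of the interface $\Gamma(t)$. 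The operator-valued analogues $[\del_t,\bE] = [\del_t;\bE](\xi)$ and $[\del_t,\Nres] = [\del_t;\Nres](\xi)$ are justified the same way, using Propositions~\ref{bEcommBounds} and~\ref{DtNCommDefns}; the only caveat is that these commutator maps were originally only defined for $\xi$ with positive pinch, but for $t>0$ Proposition~\ref{dcXlemma} gives $\delta_\Gamma(t) > 0$, and continuity in $t$ extends the identities to $t=0$.

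The third tier is \eqref{XUWident}, $\del_t \bX = \bU + \bW$, which is the main obstacle and the only place where $\bW$ appears essentially. The point is that $\bU = \bU(\xi)$ and $\bX = \bX(\xi)$ were defined through the augmented Lagrangian div-curl system \eqref{AugmentedInteriorSystemL} in Definition~\ref{UBXdefn}, and that system explicitly builds in $\bX_t = \bU + w_\Omega\circ\bX$ as its last line. So my plan is to go to the construction in the proof of Proposition~\ref{SolnMainDivCurlProp}, observe that $w_\Omega$ defined there by \eqref{ulwformula} matches the explicit kernel formula \eqref{wDefn} (this requires a brief check that the integrated form of $K_R$ in \eqref{ulwformula} reduces to the expression using the Newtonian potential $G_R$ given in Definition~\ref{finaldefns}, via the identity $\grad_y G_R(x,y)\cdot X^\perp_\theta = -\del_\theta(G_R(x,X))$ and integration by parts on $S^1$), and conclude \eqref{XUWident}.

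Finally, \eqref{dtcFident} is then a direct, if tedious, consequence of the second-tier identities: differentiating the definition $\cF = -\bE P^-_\grad - (N\cdot P^+_\grad + \half\Nres|H|^2)e_2$ in $t$ and collecting terms using $\del_t\bE = [\del_t;\bE](\xi) + \bE\del_t$ (and the analogous expansion for $\Nres$) reproduces exactly the expression \eqref{cFnidents} for $\mathring \cF$, once one also uses $\del_t \bD \cdot \bD^{-1}$ to rewrite the $\bR^{11}\cF$ term via the formula \eqref{bRderivation}. The only care needed is to ensure each application of a ``dot map'' is legitimate, which is where the tier-two identities are used. I expect the tier-three step involving $\bW$ to be the most substantive, since it is the one place one must reconcile the augmented interior system (solved by $\bX$) with the Eulerian kernel formula (used to define $w_\Omega$).
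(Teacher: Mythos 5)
Your tier-one and tier-two arguments are sound for the surface quantities $X,U,B,H,h,P^+_\grad,\bD$ and for the commutator identities, and the tier-three treatment of \eqref{XUWident} via the last line of the augmented system \eqref{AugmentedInteriorSystemL} is exactly what the paper does. However, there is a genuine logical gap in the placement of $\del_t\bP^-_\grad = \dot\bP^-_\grad$ and in your omission of two identities from the lemma statement.

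You put $\del_t\bP^-_\grad = \dot\bP^-_\grad$ in tier two, \emph{before} \eqref{XUWident}, but in fact it must come \emph{after}. The defining system \eqref{IntDotPressureGradSystem} for $\dot\bP^-_\grad$ has $\dot\bU$ and $\dot\bB$ in its right-hand side, and its coefficient matrix is $\cof(\bM) = \cof(\grad\bX^t)$; so differentiating the system for $\bP^-_\grad$ in time requires knowing both $\del_t\bX = \bU + \bW$ and that $\del_t\bU,\del_t\bB$ coincide with the solutions $\dot\bU(\xi),\dot\bB(\xi)$ of the systems in Proposition~\ref{dotUdotBdefns} (which themselves feature $\bW$ in their boundary conditions). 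You never discuss the identities $\del_t\bU = \dot\bU$ and $\del_t\bB = \dot\bB$ at all, even though they appear in the lemma statement, and you characterize $\bW$ as appearing ``only'' in \eqref{XUWident}, which is not correct. The paper's proof is explicit about this dependency chain: establish \eqref{XUWident} first, then show by uniqueness for the div-curl systems that $\del_t\bU = \dot\bU$ and $\del_t\bB = \dot\bB$, and only then derive $\del_t\bP^-_\grad = \dot\bP^-_\grad$. Your proposal as written has a circularity: you would attempt the $\bP^-_\grad$ identity with inputs you have not yet justified, since they sit in a later tier or are missing.

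Apart from this reordering (move $\del_t\bP^-_\grad$ and the omitted $\del_t\bU,\del_t\bB$ identities to after \eqref{XUWident}), the rest of your plan is consistent with the paper. Your aside about reconciling the kernel formula \eqref{ulwformula} with \eqref{wDefn} is a legitimate extra check, though the paper treats it as automatic; it is not load-bearing for the proof.
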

\begin{proof}
The fact that $\bP^-_\grad$ agrees with $P^-_\grad$ at $\psi=0$ follows from Definition~\ref{PminusDefns}.

The observations made just before Definition~\ref{finaldefns} lead to $\del_t X(t,\theta)=U(t,\theta)$, $\del_\theta X(t,\theta) = B(t,\theta)$, and the identities for $\del_t U(t,\theta)$ and $\del_t B(t,\theta)$ stated in the lemma.

One is able to verify from the definitions given in Section~\ref{VacuumQuantitiesTime} of $\dot h$ and $\dot H$ corresponding to a given $\xi$, using mainly that $\del_t X=U$, that we get the corresponding identities involving $\del_t h$ and $\del_t H)$. With these, it is not hard to establish the identity for $\dot P^+_\grad$.

The identity \eqref{XUWident} follows directly from the equation for $\bX$ in the system \eqref{AugmentedInteriorSystemL}. Using this while reexamining the systems given in Section~\ref{PlasmaQuantitiesTime} for $\dot \bU$ and $\dot \bB$ reveals that the unique solutions $\dot \bU$ and $\dot \bB$ can only be $\del_t \bU$ and $\del_t \bB$, respectively. Once we have this, a similar approach also leads to the identity for $\del_t \bP^-_\grad$.

Careful examination of our definitions of the maps in the right-hand sides of \eqref{operatortimederividents}--\eqref{operatortimederividents3} reveals that these identities hold as well. Once those are established, one may verify from the definitions of $\bR^{11}$, $\cF$, and $\mathring \cF$ that \eqref{dtcFident} holds.
\end{proof}

The following identity is slightly more obscure than those above, but is equally important, serving as a stepping stone to get from the surface Lagrangian wave system \eqref{surfacewavesysRep}--\eqref{auxiliarysys} to the original surface system \eqref{UnmodSurfaceSystem} without $(\dot U^*,\dot B^*)$.
\begin{lemma}\label{ProjectionIdentityLemma}
For quantities defined in Definition~\ref{finaldefns},
    \begin{align}\label{ProjIdent1}
        \bE(U_t - B_\theta + P^-_\grad) = \col{0}{-\frac{1}{2} \cN_- |H|^2} .
    \end{align}
\end{lemma}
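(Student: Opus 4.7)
The plan is to trace back through the algebraic derivation in Section~\ref{formalderivationsubsection} that produced the Lagrangian wave system, converting the limiting equation for $\dot U^\ast$ into one involving $U_t$, $B_\theta$, and $P^-_\grad$. For our limit solution $\xi$, the relation $U_t = \bE^{-1}\dot U^\ast$ holds by construction (the last equation of \eqref{auxiliarysys}), and passing to the limit in \eqref{DthetaBformula} of Proposition~\ref{MainInductiveBoundProp} gives $\dot U^\ast = \bD\bE B_\theta + \cF$ with $\cF$ as in \eqref{cFdefn2}. Combining these and subtracting $\bE B_\theta$ from both sides yields
\begin{align}
\bE(U_t - B_\theta) = (\bD - \bI)\bE B_\theta + \cF.
\end{align}

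Next, I would unpack this identity componentwise. Since $\bD - \bI = \operatorname{diag}(0,\,|H|^2/|B|^2)$, the first term has only a second component, equal to $\tfrac{|H|^2}{|B|^2} N\cdot B_\theta$. Substituting the explicit form of $\cF$ from \eqref{cFdefn2}, the piece $-\bE P^-_\grad$ can be moved to the left-hand side, producing
\begin{align}
\bE(U_t - B_\theta + P^-_\grad) = \col{0}{\tfrac{|H|^2}{|B|^2}\,N\cdot B_\theta \;-\; N\cdot P^+_\grad \;-\; \tfrac{1}{2}\Nres|H|^2}.
\end{align}

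The remaining task is to simplify the second component, and this is the conceptual heart of the lemma. I would apply identity \eqref{HBIdent} to rewrite $\tfrac{|H|^2}{|B|^2} N\cdot B_\theta = \tfrac{1}{2}N\cdot(\grad|h|^2)\circ X$, and identity \eqref{npg1} to write $N\cdot P^+_\grad = \tfrac{1}{2}N\cdot(\grad|h|^2)\circ X - \tfrac{1}{2}\cN_+|H|^2$. The two $\tfrac{1}{2}N\cdot(\grad|h|^2)\circ X$ contributions then cancel, leaving $\tfrac{1}{2}\cN_+|H|^2 - \tfrac{1}{2}\Nres|H|^2 = -\tfrac{1}{2}\cN_-|H|^2$ by Definition~\ref{Nresdefn}. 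The only step that is not purely algebraic is checking that the hypotheses of \eqref{HBIdent} hold for the limit solution: the tangency of $B$ to $\Gamma$ is immediate from $\del_\theta X = B$ recorded in Lemma~\ref{variousIdents}, and the requirement that $h$ points left-to-right along $\Gamma$ follows from Lemma~\ref{hTanDirLemma} applied to $h = h(\Gamma)$. I do not foresee any genuine obstacle; the lemma is essentially an exercise in undoing the algebraic substitutions that produced the Lagrangian wave system in the first place.
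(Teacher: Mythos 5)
Your proof is correct and follows essentially the same path as the paper's: once the relation $\dot U^* = \bD\bE B_\theta + \cF$ is in hand, the componentwise unpacking of $\cF$, the application of \eqref{HBIdent} and the formula for $N\cdot P^+_\grad$, and the cancellation via $\Nres=\cN_+ +\cN_-$ are identical. The only minor divergence is in how that first relation is established — you pass to the limit in the iterate identity \eqref{DthetaBformula}, whereas the paper derives it directly from the wave system \eqref{xiSystemRep2} by unwinding the commutators in the evolution equation for $\dot U^*_t$ and integrating in time — but both routes are valid given the convergence already established.
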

\begin{proof}
To establish the above, first we verify that
\begin{align}\label{bEUtident}
    \bE U_t = \bD \bE B_\theta + \cF .
\end{align}
Due to $\xi$ satisfying \eqref{xiSystemRep2}, one can show by using Lemma~\ref{variousIdents} that
\begin{align*}
\dot U^*_t &= \bD \dot B^*_\theta + \bR^{11} \dot U^* + \bR^{12} \dot B^*-[\del_t,\bD] \bD^{-1}\cF - \bD[\del_t,\bE]\bE^{-1}\bD^{-1}\cF + \del_t \cF \\
&= \bD \dot B^*_\theta  + \bR^{11} \dot U^* + \bR^{12} \dot B^* - [\del_t,\bD\bE](\bD \bE)^{-1} \cF + \del_t \cF \\
&= \bD \dot B^*_\theta + [\del_t,\bD\bE](\bD \bE)^{-1} \dot U^* -\bD[\del_\theta,\bE] \bE^{-1} \dot B^*  - [\del_t,\bD\bE](\bD \bE)^{-1} \cF + \del_t \cF \\
&= \bD \bE B_{t\theta} +[\del_t,\bD\bE](\bD \bE)^{-1} \dot U^* + \bD \bE \del_t \left( (\bD \bE)^{-1} \cF \right) .
\end{align*}
This then implies
\begin{align}
\del_t \left( (\bD\bE)^{-1} \dot U^* \right) = B_{t\theta} + \del_t \left( (\bD\bE)^{-1} \cF \right).
\end{align}
Integrating and making basic observations about our initial data, we find
\begin{align}
 (\bD\bE)^{-1} \dot U^* = B_{\theta} + (\bD\bE)^{-1} \cF,
\end{align}
which leads to \eqref{bEUtident} after applying $\bD\bE$ and recalling $\bE U_t = \dot U^*$.

Regarding the terms appearing in \eqref{bEUtident}, let us note from the form of $\bD$ that
\begin{align}
\bD \bE B_\theta = \bE B_\theta +\col{0}{\frac{|H|^2}{|B|^2}N\cdot B_\theta},
\end{align}
and recall the expression from \eqref{cFdefn2}. Together, these give us that
\begin{align}\label{Utident2}
\bE U_t = \bE B_\theta - \bE P^-_\grad + \col{0}{\frac{|H|^2}{|B|^2}N\cdot B_\theta-N\cdot P^+_\grad - \half\Nres |H|^2} .
\end{align}
From Definition~\ref{ExtPressureDefnBd} it follows
\begin{align}\label{NPIdent}
    N \cdot P^+_\grad = \frac{1}{2} N \cdot (\grad |h|^2) \circ X - \half \cN_+ |H|^2 .
\end{align}
As in the proof of Proposition~\ref{heuristicPropPG}, we are able to verify that the identity \eqref{HBIdent} holds. Incorporating \eqref{NPIdent} and \eqref{HBIdent} into \eqref{Utident2} yields
\begin{align}
\bE U_t = \bE B_\theta - \bE P^-_\grad + \col{0}{\half \cN_+ |H|^2 - \half \Nres |H|^2} ,
\end{align}
so that \eqref{ProjIdent1} follows upon recalling that $\Nres = \cN_+ +\cN_-$.
\end{proof}

Now we use the lemma above to obtain the evolution of the Lagrangian surface velocity, $U$, almost bringing us back to the original Lagrangian surface system \eqref{UnmodSurfaceSystem}.
\begin{corollary}\label{SurfIdentLems}
On the time interval $[0,T]$ we have the identities
\begin{align}
\label{MainSurfIdent}
& & U_t - B_\theta + P^-_\grad &= - \half \left( \grad \bigh_- |h|^2 \right) (X) & (\theta \in S^1 ) ,
\end{align}
and
\begin{align}\label{averageZeroEvo}
\int^\pi_{-\pi}  B^\perp \cdot (U_t - B_\theta + P^-_\grad) \, d\theta = 0 .
\end{align}
\end{corollary}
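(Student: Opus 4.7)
The plan is to read both identities directly off Lemma~\ref{ProjectionIdentityLemma} by unpacking the definition of $\bE$ and matching components against the harmonic extension $\phi := \bigh_- |h|^2$, a bounded harmonic function on the unbounded region $\Omega_-$ below $\Gamma$ with boundary data $|h|^2$ on $\Gamma$. Abbreviate $\Xi := U_t - B_\theta + P^-_\grad$. From the formula $\bE V = \bigl(\bigtau \cdot V + \cH(N\cdot V),\ N \cdot V\bigr)^{T}$ in \eqref{bEdefnFmla}, Lemma~\ref{ProjectionIdentityLemma} is equivalent to the pair of scalar identities
\[
N \cdot \Xi = -\tfrac12 \cN_- |H|^2, \qquad \bigtau \cdot \Xi = -\cH\bigl(-\tfrac12 \cN_- |H|^2\bigr) = \tfrac12 \cH\, \cN_- |H|^2.
\]
To prove \eqref{MainSurfIdent}, I match these with the normal and tangential components of $-\tfrac12(\grad\phi)(X)$.

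The normal match is immediate: $N \cdot (\grad\phi)(X) = (\del_n\phi)(X) = \bign_- |h|^2\circ X = \cN_- |H|^2$, by Definition~\ref{DirichToNeumDefs}. For the tangential match, since $\phi|_\Gamma = |h|^2$, chain rule gives $\bigtau \cdot (\grad\phi)(X) = (\del_\tau\phi)(X) = |X_\theta|^{-1}\del_\theta|H|^2$, so the claim reduces to
\[
\cH\,\cN_- |H|^2 = -\,\frac{1}{|X_\theta|}\,\del_\theta |H|^2.
\]
By Proposition~\ref{bEDefns}, $\cH F = -|X_\theta|^{-1}\,\del_\theta G$ where $G$ solves $\cN_- G = F - \langle F\rangle$. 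Taking $F = \cN_- |H|^2$ and $G = |H|^2$, the identity will follow once we verify the normalization $\langle \cN_- |H|^2\rangle = 0$, i.e. $\int_\Gamma \bign_- |h|^2\,dS = 0$. This in turn follows from the divergence theorem applied to $\phi$ on $\Omega_-$: on the periodic strip $S^1\times\R$, $\phi$ decomposes into Fourier modes in $x_1$ that decay exponentially as $x_2\to -\infty$, so the flux contribution at infinity vanishes and $\int_\Gamma \del_n \phi\,dS = 0$. Equality of both components gives \eqref{MainSurfIdent}.

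The identity \eqref{averageZeroEvo} is then an immediate corollary of \eqref{MainSurfIdent}. By Lemma~\ref{variousIdents}, $B = X_\theta$, so $B^\perp = X_\theta^\perp = |X_\theta|\,N$, and substituting \eqref{MainSurfIdent} yields
\[
\int_{-\pi}^\pi B^\perp \cdot \Xi\,d\theta = -\tfrac12 \int_{-\pi}^\pi |X_\theta|\,(\del_n\phi)(X)\,d\theta = -\tfrac12 \int_\Gamma \del_n \phi\,dS = 0,
\]
by the very same vanishing-flux fact used in the tangential match above.

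The only subtlety I anticipate is the bookkeeping around $\cH$: showing that $\cH\,\cN_-$ reduces to $-|X_\theta|^{-1}\del_\theta$ on $|H|^2$ requires the mean-zero normalization $\langle \cN_- |H|^2\rangle = 0$, and this is where the unbounded geometry of $\Omega_-$ together with the exponential decay of bounded harmonic functions on the periodic strip is essential. Everything else is a direct algebraic translation of Lemma~\ref{ProjectionIdentityLemma}, using the explicit form of $\bE$ together with the fact that $\bigh_-|h|^2$ is by construction the harmonic extension of the trace $|H|^2$ along $\Gamma$.
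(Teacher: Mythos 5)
Your proof is correct and follows essentially the same route as the paper's: the paper verifies that $\bE\bigl[-\tfrac12(\grad\bigh_-|h|^2)(X)\bigr] = \col{0}{-\half\cN_-|H|^2}$ and then applies $\bE^{-1}$ to Lemma~\ref{ProjectionIdentityLemma}, and obtains \eqref{averageZeroEvo} from \eqref{MainSurfIdent} together with $\int_\Gamma \del_n \bigh_-|h|^2\,dS=0$. You have simply spelled out the "one verifies from the definitions of $\bE$, $\bigh_-$, and $\cN_-$" step component-by-component, including the mean-zero normalization $\langle\cN_-|H|^2\rangle=0$ that justifies $G=|H|^2$ in the definition of $\cH$, which the paper leaves implicit.
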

\begin{proof}
To prove the first identity, one verifies from the definitions of $\bE$, $\bigh_-$, and $\cN_-$ that
\begin{align}
\bE \left[ -\half\left( \grad \bigh_- |h|^2 \right)(X)\right] = \col{0}{ -\half\cN_- |H|^2 } .
\end{align}
Equation \eqref{MainSurfIdent} then follows by using \eqref{ProjIdent1} and then applying $\bE^{-1}$.

To get \eqref{averageZeroEvo}, one uses \eqref{MainSurfIdent} together with the following, where we have used $B^\perp=X^\perp_\theta$ and the divergence theorem.
\begin{align}\label{nbighZero}
\int^\pi_{-\pi}  B^\perp \cdot \left( \grad \bigh_- |h|^2 \right) (X) \, d \theta = \int_\Gamma \del_n \bigh_- |h|^2\, d S = 0 .
\end{align}
\end{proof}

Now that we have obtained the surface evolution identity \eqref{MainSurfIdent}, we essentially need to make sure our ``surface quantities'' are compatible with our ``interior quantities'' for various pairings, e.g. $X=\bX|_{\psi=0}$, $U=\bU|_{\psi=0}$, and that the error term $\bW$ appearing in the identity $\bX_t = \bU+\bW$ is zero. Once we have verified these properties, we will be able to prove we have satisfied the original div-curl system without errors, that is, \eqref{OriginalLagrDivCurlSys}, and use the compatibility of the surface quantities with the interior quantities to basically translate 
\eqref{MainSurfIdent} into the statement that the original evolution equations \eqref{OriginalLagrangianSystem} hold at $\psi=0$ as well as everywhere else in $\Sigma$.


\subsubsection*{Evolution of the error}
Proving various quantities agree on the boundary and that our proposed solution fulfills other critical requirements comes down to showing that the quantity $\Xi$ we define below is zero for all time.
\begin{definition}\label{Xidefn}
For the quantities appearing in the right-hand side below all as given by Definition~\ref{finaldefns}, let us define
\begin{align}
&&\Xi (t,\theta,\psi) &=
\begin{pmatrix}
\bU (t,\theta,0) - U(t,\theta) \\
\bB (t,\theta,0) - B(t,\theta)  \\
\bX(t,\theta,0) - X(t,\theta)  \\
W(t,\theta)    \\
\bW(t,\theta,\psi)   \\
 \bB(t,\theta,\psi)-\del_\theta\bX(t,\theta,\psi) \\
\Phi(t) 
\end{pmatrix} 
& (\theta\in S^1,\ \psi\in[-1,0],\ t\in[0,T]).
\end{align}
\end{definition}
The task now at hand is to show that $\Xi$ satisfies a system of the following form, where $\bm{\mathrm{J}}_0$ is a simple symmetrizable matrix and $\cT_\xi$ is a regularity-preserving operator:
\begin{align}\label{ErrorEvoDemo}
\frac{d \Xi }{d t}
    &= \bm{\mathrm{J}}_0 \del_\theta \Xi + \cT_\xi \Xi  ,\\
    \Xi (0) &= 0 .
\end{align}
The point is that $\Xi(t)=0$ is the unique solution. Now we begin with the process of deriving such an evolution equation. For the space in which $\Xi(t)$ takes values, let us define
\begin{align}
 \sZ = (H^2(S^1))^4 \times (H^{5/2}(\Sigma))^2\times \R , \end{align}
 defining the corresponding norm in the corresponding way, so that, for example,
 \begin{align}
 \lV \Xi \rV_\sZ = & \lV \bU|_{\psi=0}-U\rV_{H^2(S^1)}+\lV \bB|_{\psi=0}-B\rV_{H^2(S^1)}+\lV \bX|_{\psi=0}-X\rV_{H^2(S^1)}\\
 & + \lV W \rV_{H^2(S^1)}+ \lV \bW \rV_{H^{5/2}(\Sigma)} + \lV \bB-\bX_\theta \rV_{H^{5/2}(\Sigma)} + |\Phi| .
 \end{align}

First, we make note of the equation driving the corresponding error for the Lagrangian trajectory maps at the surface, $\bX-X$ at $\psi=0$.
\begin{lemma}\label{cTDeltaxLem}
For each $t$ in $[0,T]$ we have a map $\cT_{\Delta x}(t)$ in $B(\sZ,H^2(S^1))$ with
\begin{align}
& & \del_t( \bX - X ) &= (\bU|_{\psi=0} - U ) + \bW|_{\psi=0} = \cT_{\Delta x} \Xi,&
\end{align}
and
\begin{align}
\sup_t\lV \cT_{\Delta x} \rV_{B(\sZ,H^2(S^1))} \leq C .
\end{align}
\end{lemma}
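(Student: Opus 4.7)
The plan is to first verify the claimed pointwise identity and then identify $\cT_{\Delta x}$ explicitly as a simple linear combination of components of $\Xi$, at which point the operator bound becomes essentially a trace estimate.

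First, I would combine two identities recorded earlier to get the displayed equation. From Lemma~\ref{variousIdents} we have $\del_t \bX(t,a) = \bU(t,a) + \bW(t,a)$ for all $a=(\theta,\psi) \in \Sigma$, and separately $\del_t X(t,\theta) = U(t,\theta)$. Subtracting these and restricting to $\psi=0$ gives
\begin{align}
\del_t(\bX - X)(t,\theta,0) = \bU(t,\theta,0) - U(t,\theta) + \bW(t,\theta,0),
\end{align}
which is the first equality in the statement.

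Next I would define $\cT_{\Delta x}$ by reading off which components of $\Xi$ appear in the right-hand side. Consulting Definition~\ref{Xidefn}, the first component of $\Xi$ is $\bU|_{\psi=0} - U$ and the fifth component is $\bW(t,\theta,\psi)$. Hence I set
\begin{align}
\cT_{\Delta x}(t)\,\Xi = \Xi_1 + \operatorname{tr}_{\psi=0}(\Xi_5),
\end{align}
where $\operatorname{tr}_{\psi=0}: H^{5/2}(\Sigma)\to H^2(S^1)$ denotes the trace on the lower boundary. This is a linear operator that does not depend on $t$ at all, so in particular it is continuous in $t$.

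Finally, the operator bound is immediate: by the standard trace theorem on $\Sigma$ we have
\begin{align}
\|\operatorname{tr}_{\psi=0}\Xi_5\|_{H^2(S^1)} \le C\,\|\Xi_5\|_{H^{5/2}(\Sigma)},
\end{align}
and trivially $\|\Xi_1\|_{H^2(S^1)} \le \|\Xi\|_{\sZ}$. Adding these yields
\begin{align}
\|\cT_{\Delta x}(t)\,\Xi\|_{H^2(S^1)} \le C\,\|\Xi\|_{\sZ}
\end{align}
with a constant $C$ independent of $t$. I do not expect any genuine obstacle: the only ``content'' in the lemma is recognizing that the right-hand side $(\bU|_{\psi=0}-U)+\bW|_{\psi=0}$ is literally a sum of two entries of $\Xi$ (one of them read off on the boundary), so no estimate beyond a trace inequality is needed.
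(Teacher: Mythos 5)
Your proof is correct and takes essentially the same route as the paper: both derive the identity directly from the two time-derivative formulas in Lemma~\ref{variousIdents} and then observe that $\cT_{\Delta x}\Xi$ is simply the sum of the first component of $\Xi$ and the $\psi=0$ trace of the fifth component, so the bound reduces to the trace inequality $H^{5/2}(\Sigma)\to H^2(S^1)$. The paper states this tersely (``defines $\cT_{\Delta x}\Xi$ in the obvious way''), and you have just made the ``obvious way'' explicit.
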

\begin{proof}
The identity follows immediately from the identities for $\bX_t$ and $X_t$ given by Lemma~\ref{variousIdents}. One simply defines $\cT_{\Delta x}\Xi$ in the obvious way in terms of the components of $\Xi$, trivially resulting in the claimed bound.
\end{proof}

Now we derive an equation for the evolution of $\Phi(t)$, the flux of the velocity across $\Gamma(t)$.
\begin{lemma}
\label{DerivDiffLem1}
We have a continuous vector-valued function $Y_\Phi(t,\theta)$ on \([0,T]\times S^1\), and a corresponding linear functional $\cT_\phi(t)$ in $\sZ'$ defined for each $t$ in $[0,T]$ with
\begin{align}
\label{DPhiDt}
    \frac{d \Phi}{dt}(t) = \int^\pi_{-\pi} Y_\phi(t,\theta) \cdot \Xi (t,\theta,0) \, d\theta =\cT_\phi(t) \Xi(t),
\end{align}
and
\begin{align}
\sup_t\lV \cT_\phi \rV_{\sZ'} \leq C .
\end{align}
\end{lemma}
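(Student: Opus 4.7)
The plan is to differentiate $\Phi$ using the surface evolution already at our disposal, eliminate the harmonic-extension contribution using \eqref{nbighZero}, and then rewrite the remaining integrand as a linear functional of $\Xi$ by trading every surface quantity for its bulk counterpart at $\psi=0$ (plus explicit error terms, each of which is by construction one of the components of $\Xi$). First I would apply $d/dt$ and substitute $B_t=U_\theta$ (Lemma~\ref{variousIdents}) and $U_t=B_\theta-P^-_\grad-\tfrac12(\grad\bigh_-|h|^2)\circ X$ (Corollary~\ref{SurfIdentLems}), which together with \eqref{nbighZero} reduce
\begin{equation*}
\frac{d\Phi}{dt}=\int_{-\pi}^\pi\!\bigl(U^\perp_\theta\!\cdot\! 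U+B^\perp\!\cdot\! B_\theta-B^\perp\!\cdot\! P^-_\grad\bigr)\,d\theta.
\end{equation*}

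Next, I would replace $U$ by $\bU|_{\psi=0}-(\bU|_{\psi=0}-U)$, $B$ by $\del_\theta\bX|_{\psi=0}-\del_\theta(\bX|_{\psi=0}-X)$, and $\bB$ by $\del_\theta\bX+(\bB-\del_\theta\bX)$ wherever convenient; note that each parenthesised difference is exactly a component of $\Xi$. The bulk quantities $\bU,\bB,\bX$ satisfy the augmented Lagrangian div-curl system \eqref{AugmentedInteriorSystemL} and $\bP^-_\grad$ solves \eqref{InteriorPressureGradSystem}, both with prescribed boundary data at $\psi=-1$. Integration of the divergence-free relation $\grad\!\cdot\!(\cof(\bM)\bU)=0$ over $\Sigma$, combined with $\bm N\cdot\bU=0$ at $\psi=-1$ and the identity $(\cof(\bM)\bU)_2=\del_\theta\bX^\perp\!\cdot\!\bU$, yields $\int\del_\theta\bX^\perp|_{\psi=0}\!\cdot\!\bU|_{\psi=0}\,d\theta=\int_{\Gamma(t)}n\cdot\bU\,dS-\int_{\{x_2=0\}}\bU\cdot e_2\,dx_1$, and the floor contribution equals $-\Phi(t)$ after accounting for the correction $\bW$ (whose normal trace on $\Gamma(t)$ is given by \eqref{wDefn} scaled by $\Phi$). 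Parallel computations for $B^\perp\!\cdot\! B_\theta$ (using $B=\del_\theta X$ together with the curl/div relations) and for $B^\perp\!\cdot\!P^-_\grad$ (using the div-curl problem \eqref{InteriorPressureGradSystem} and the Neumann condition at $\psi=-1$ which couples to $\bigh_-|h|^2$) show that the purely bulk portions of the three integrals, pieced together, reduce via Green's theorem in $\Sigma$ to a term proportional to $\Phi$ itself (the seventh slot of $\Xi$) plus no inhomogeneity. What remains is exactly $\int Y_\phi\cdot\Xi|_{\psi=0}\,d\theta$, plus a multiple of the scalar $\Phi$.

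For the bound, the coefficients of $Y_\phi$ are algebraic combinations of $U,B,X$ and of the traces at $\psi=0$ of $\bU,\bB,\bX,\bP^-_\grad$ and their $\theta$-derivatives, together with the scalar $\Phi$. All of these are controlled uniformly in $t$ in $L^\infty(S^1)$ (and in fact in $H^{k-1/2}(S^1)$) by $C(M)$ via Propositions~\ref{SolnMainDivCurlProp} and~\ref{PminusBdProp}, since $\xi\in\sB$ and $k\ge 4$. Pairing $Y_\phi$ against $\Xi(t,\cdot,0)\in(H^2(S^1))^4\times(H^{5/2}(S^1))^2$, and taking absolute value of the $\Phi$ component, Cauchy--Schwarz delivers $\sup_t\|\cT_\phi\|_{\sZ'}\le C$. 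Continuity of $t\mapsto Y_\phi$ follows from the time-continuity of the bulk fields, already part of Definition~\ref{UBXdefn} and Proposition~\ref{PminusBdProp}.

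The main obstacle is the middle step: tracking, by integration by parts in $\Sigma$ with the Piola-type identities of \eqref{AugmentedInteriorSystemL}, that every bulk contribution either vanishes by the boundary conditions at $\psi=-1$, by periodicity in $\theta$, or is absorbed into a scalar multiple of the $\Phi$-slot of $\Xi$. The bookkeeping is extensive because the three integrals produce numerous cross-terms involving $\bU|_{\psi=0}-U$, $\del_\theta\bX|_{\psi=0}-B$, $\bB-\del_\theta\bX$, $W$, and $\bW|_{\psi=0}$, but each is literally a component of $\Xi$, and once they are sorted the uniform estimate is immediate. The key structural input is simply that for the exact solution ($\Xi\equiv 0$) one has $\Phi\equiv0$, which forces the closed-loop structure $d\Phi/dt=\cT_\phi\Xi$ with no inhomogeneous residue.
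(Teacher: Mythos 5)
Your opening move (differentiating $\Phi$, using $B_t=U_\theta$, Corollary~\ref{SurfIdentLems}, and \eqref{nbighZero} to reach $\frac{d\Phi}{dt}=\int B^\perp\cdot B_\theta - U^\perp\cdot U_\theta - B^\perp\cdot P^-_\grad\,d\theta$) matches the paper, and your discussion of the bound at the end is fine in outline. The trouble is the middle step, which is not a missing bookkeeping detail but an incorrect claim.

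First, the divergence-theorem computation is garbled. Integrating $\grad\cdot(\cof(\bM)\bU)=0$ over $\Sigma$ gives $\int_{\psi=0}\bX_\theta^\perp\cdot\bU\,d\theta - \int_{\psi=-1}\bX_\theta^\perp\cdot\bU\,d\theta=0$. The $\psi=-1$ contribution vanishes outright, since the augmented system \eqref{AugmentedInteriorSystemL} imposes $\bm N\cdot\bU=0$ there (and $\del_\theta\bX_2=0$ on the floor). The $\bW$ correction enters only the $\psi=0$ boundary condition; it plays no role at $\psi=-1$. So the floor contribution is exactly zero, not $-\Phi(t)$.

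Second, and more seriously, the claim that ``the purely bulk portions \ldots reduce via Green's theorem in $\Sigma$ to a term proportional to $\Phi$ itself'' is false, and if you try to follow it through the derivation does not close. The correct bulk input is the divergence equation for $\bP^-_\grad$ in \eqref{InteriorPressureGradSystem}, which, after the FTC in $\psi$ and the vanishing at $\psi=-1$ (from $\bU_2=\bB_2=0$ there), yields
\[
-\int_{-\pi}^\pi \bX_\theta^\perp\cdot\bP^-_\grad\big|_{\psi=0}\,d\theta \;=\; \int_{-\pi}^\pi\big(\bU^\perp\cdot\bU_\theta-\bB^\perp\cdot\bB_\theta\big)\big|_{\psi=0}\,d\theta ,
\]
a surface integral, not a multiple of $\Phi$. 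The paper then keeps $B=X_\theta$ (rather than trading it for $\del_\theta\bX|_{\psi=0}$ and introducing $\Xi_6$) and splits only the $P^-_\grad$ term as $-X_\theta^\perp = -\bX_\theta^\perp + (\bX_\theta-X_\theta)^\perp$ at $\psi=0$, after which the algebra
\[
\bU^\perp\cdot\bU_\theta - U^\perp\cdot U_\theta = (\bU-U)^\perp\cdot U_\theta + \bU^\perp\cdot\del_\theta(\bU-U),
\]
and the analogous identity for $B,\bB$, produce an expression \emph{linear} in the slots $\bU-U$, $\bB-B$, $\bX-X$ of $\Xi(t,\theta,0)$; the coefficient of the $\Phi$-slot is zero. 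Your proposed decomposition, replacing $U,B$ systematically by bulk traces plus error, generates quadratic cross terms such as $\del_\theta(\bX-X)^\perp\cdot\del^2_\theta(\bX-X)$ which cannot be absorbed into a linear functional of $\Xi$, and the alleged $\Phi$-multiple is not there to save it. The route you sketch therefore does not reach the linear-in-$\Xi$ form the lemma needs; you should instead use the splitting of $-B^\perp\cdot P^-_\grad$ alone and the grouping identities above, as the paper does.
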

\begin{proof}
Recall the definition of $\Phi(t)$ from \eqref{fluxDefn}. Using \eqref{averageZeroEvo} and $B_t=U_\theta$, we find
\begin{align}
    \frac{d \Phi}{dt} &= \int^\pi_{-\pi} U_t \cdot B^\perp - U^\perp \cdot B_t \, d\theta \\
    &= \int^\pi_{-\pi} B_\theta \cdot B^\perp- U^\perp \cdot U_\theta \, d\theta -\int B^\perp \cdot P^-_\grad \, d\theta\\
    &= \int^\pi_{-\pi} B_\theta \cdot B^\perp -U^\perp \cdot U_\theta \, d\theta
    - \int^\pi_{-\pi} (B - \bX_\theta )^\perp \cdot P^-_\grad \, d\theta - \int^\pi_{-\pi} \bX_\theta^\perp \cdot P^-_\grad \, d\theta \qquad (\psi = 0 ). \label{PhiEvoComp}
\end{align}
Recall that \( P^-_\grad(t,\theta) = \bP^-_\grad(t,\theta,0) \) and that from \eqref{InteriorPressureGradSystem} we have
\begin{align}
& &     \grad \cdot (\, \cof (\bM) \bP^-_\grad ) &= \grad \cdot (\, \cof ( \grad \bU^t) \, \bU - \cof( \grad \bB^t) \bB ) &   (a & \in \Sigma), \label{InteriorPressureDiv}
\intertext{where}
& &     \bM_{ij} &= \del_{a_i} \bX_j .&   & \label{MdefnReminder}
\end{align}
First, note the two integrals below are equal due to the boundary conditions for the system \eqref{InteriorPressureGradSystem}, and moreover the right-hand side is zero, shown similarly as \eqref{nbighZero} in the proof of Corollary~\ref{SurfIdentLems}.
\begin{align}
\label{BottomAvgZero1}
\begin{aligned}
\int^\pi_{-\pi} \bX_\theta^\perp \cdot \bP^-_\grad  \, d\theta \, \Big|_{\psi=-1}  &= 
\frac{1}{2} \int^\pi_{-\pi} \del_n \left( \bigh_- |h|^2 \right) (x_1, -1) \, dx_1  \\
&= 0 .
\end{aligned}
\end{align}
Now we observe that $\bX_\theta^\perp$ appears as the second row of $\cof(\bM)$. This means that with the use of \eqref{BottomAvgZero1}, we get the first equality below simply by using the fundamental theorem of calculus. Following this, we apply \eqref{InteriorPressureDiv} and perform a similar computation.
\begin{align*}
\int^\pi_{-\pi} \bX_\theta^\perp \cdot \bP^-_\grad \, d\theta \, \Big|_{ \psi = 0 }
    &= \int_\Sigma \grad \cdot (\cof(\bM) \bP^-_\grad) \, da \\
    &= \int_\Sigma \grad \cdot (\, \cof(\grad \bU^t) \bU - \cof( \grad \bB^t) \bB) \, da \\
    &= \int^\pi_{-\pi}  \bU_\theta^\perp \cdot \bU - \bB_\theta^\perp \cdot \bB  \, d\theta \, \bigg|^{\psi=0}_{\psi=-1} \\
    &= - \int^\pi_{-\pi} \bU^\perp \cdot \bU_\theta - \bB^\perp \cdot \bB_\theta  \, d\theta \, \bigg|^{\psi=0}_{\psi=-1} \, .
\end{align*}
Now we note from the boundary conditions for $\bU$ and $\bB$ that $\bU_2 = \bB_2 = 0$ along $\psi = -1$. Thus also $\del_\theta \bU_2 = \del_\theta \bB_2 = 0$ along $\psi = -1$. It follows
\begin{align*}
& &   \bU^\perp \cdot \bU_\theta - \bB^\perp \cdot \bB_\theta &= 0 & (\psi = -1).
\end{align*}
So we conclude
\begin{align*}
    -\int^\pi_{-\pi} \bX_\theta^\perp  \cdot P^-_\grad \, d\theta
    = \int^\pi_{-\pi} \bU^\perp \cdot \bU_\theta - \bB^\perp \cdot \bB_\theta \, d\theta \qquad \qquad ( \psi = 0 ) .
\end{align*}
Substituting this back into \eqref{PhiEvoComp}, we find the following holds (in which we evaluate all terms at $\psi=0$).
\begin{align*}
    \frac{d \Phi}{dt}
    =& \int^\pi_{-\pi} B^\perp \cdot B_\theta- U^\perp \cdot U_\theta \, d\theta
    +\int^\pi_{-\pi} (\bX_\theta - B)^\perp \cdot P^-_\grad \, d\theta
    + \int^\pi_{-\pi} \bU^\perp \cdot \bU_\theta - \bB^\perp \cdot \bB_\theta  \, d\theta
    \\
    =& \int^\pi_{-\pi} \del_\theta(\bX-X)^\perp \cdot P^-_\grad \, d\theta + \int^\pi_{-\pi} (\bU - U)^\perp \cdot U_\theta + \bU^\perp \cdot \del_\theta(\bU-U) \, d\theta  \\
    &\qquad+ \int^\pi_{-\pi} (B-\bB )^\perp \cdot B_\theta + \bB^\perp \cdot \del_\theta(B - \bB) \, d\theta \\
    =& \int^\pi_{-\pi} (\del_\theta P^-_\grad)^\perp \cdot ( \bX - X) -(U_\theta+\bU_\theta)^\perp \cdot (\bU - U) +(B_\theta +\bB_\theta)^\perp \cdot (\bB - B) \, d\theta  \qquad(\psi=0).
\end{align*}
Thus we obtain \eqref{DPhiDt}. The bound on the right-hand side easily follows.
\end{proof}

Now that we have derived the evolution of the flux term $\Phi(t)$, we can use this to calculate some expressions for certain partial derivatives of the error terms $W(t,\theta)$ and $\bW(t,a)$.
\begin{lemma}
\label{WPartialsLem}
We have vector fields $Y^1_w(t,\theta)$, $Y^2_w(t,\theta)$, $Y^3_w(t,\theta)$, $Y^4_w(t,a)$, $Y^5_w(t,\theta)$, $Y^6_w(t,a)$, and $Y^7_w(t,a)$, continuous in $t$ and $C^3$ in $\theta$ and $a$, where, for $t$ in $[0,T]$ and $a=(\theta,\psi)$ in $\Sigma$,
\begin{align}
\del_t W(t,\theta) &= \Phi(t) Y^1_w(t,\theta) + \left( \int^\pi_{-\pi} Y^2_w(t,\vartheta) \cdot \Xi(t,\vartheta,0)\, d\vartheta \right) Y^3_w(t,\theta) , \label{WtimeDeriv}\\
\del_t\bW (t,a) &= \Phi(t) Y^4_w(t,a) + \left( \int^\pi_{-\pi} Y^5_w(t,\vartheta) \cdot \Xi(t,\vartheta,\psi)\, d\vartheta \right) Y^6_w(t,a), \label{bWtimeDeriv} \\
\del_\theta\bW(t,a) &= \Phi(t) Y^7_w(t,a) .
\end{align}
Moreover, defining for $t$ in $[0,T]$ the linear map $\cT^1_w(t)$ in the obvious way so that the right-hand side of \eqref{WtimeDeriv} is $\cT^1_w(t)\Xi(t)$, and defining the analogous linear map $\cT^2_w(t)$ corresponding to the right-hand side of \eqref{bWtimeDeriv}, we have
\begin{align}
\sup_t\lV \cT^1_w \rV_{B(\sZ,H^2(S^1))} +\sup_t \lV \cT^2_w \rV_{B(\sZ,H^{5/2}(\Sigma))}\leq C.
\end{align}
\end{lemma}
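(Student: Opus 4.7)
The plan is to exploit the explicit proportionality $w_\Omega(t,x) = \Phi(t)\,\tilde w(t,x)$, with
\[
\tilde w(t,x) = \frac{1}{|\Omega(t)|}\int_{-\pi}^{\pi}\bigl(G_R(x,\vartheta)\,e_2 - G_R(x,X(t,\vartheta))\,\del_\vartheta X^\perp(t,\vartheta)\bigr)\,d\vartheta,
\]
so that $\tilde w$ is a $\Phi$-independent, purely geometric quantity determined by $\xi$ via $X$ and $\Omega(t)$. This will cleanly separate the $\Xi$-dependent from the $\Xi$-independent parts of each derivative of $W$ and $\bW$: the former can only enter through $\del_t\Phi$ (handled by Lemma~\ref{DerivDiffLem1}) or through the $\bW$-component of $\Xi$ in the identity $\del_t\bX = \bU + \bW$ from \eqref{XUWident}.

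First I would handle $\del_\theta\bW$: differentiating $\bW(t,a) = \Phi(t)\tilde w(t,\bX(t,a))$ directly gives
\[
\del_\theta\bW(t,a) = \Phi(t)\,\grad\tilde w(t,\bX(t,a))\cdot\del_\theta\bX(t,a),
\]
so $Y^7_w$ is just the second factor. The required $C^3$-regularity in $a$ and continuity in $t$ follow because $\grad\tilde w(t,\cdot)$ is a boundary layer potential whose logarithmic kernel $\grad_x G_R(x,X(\vartheta))$ is smoothed by integration against the weight $\del_\vartheta X^\perp$, and $\bX$ is $H^{k+3/2}(\Sigma)$-regular by Proposition~\ref{SolnMainDivCurlProp}.

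Next I would compute $\del_t W$ and $\del_t\bW$ by the product rule. Using $\del_t X = U$ from Lemma~\ref{variousIdents} and the identity $\del_t\Phi = \int Y_\phi\cdot\Xi(t,\vartheta,0)\,d\vartheta$ from Lemma~\ref{DerivDiffLem1},
\[
\del_t W = \Phi\bigl((\del_t\tilde w)(t,X) + \grad\tilde w(t,X)\cdot U\bigr) + (\del_t\Phi)\,\tilde w(t,X)
\]
immediately produces \eqref{WtimeDeriv} upon setting $Y^1_w = (\del_t\tilde w)\circ X + (\grad\tilde w\circ X)\cdot U$, $Y^2_w = Y_\phi$, $Y^3_w = \tilde w\circ X$. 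The analogous expansion for $\bW$, now with $\del_t\bX = \bU + \bW$, yields
\[
\del_t\bW = \Phi\bigl((\del_t\tilde w)(t,\bX) + \grad\tilde w(t,\bX)\cdot\bU\bigr) + (\del_t\Phi)\,\tilde w(t,\bX) + \Phi\,\grad\tilde w(t,\bX)\cdot\bW;
\]
the first bracket defines $\Phi\,Y^4_w$, the middle yields $(\cT_\phi\Xi)Y^6_w$, and the residual $\Phi\,\grad\tilde w\cdot\bW$, which is linear in the $\bW$-component of $\Xi$, is folded into the $\cT^2_w\Xi$ piece---the integral representation in the lemma should be read schematically, as the content is just that $\cT^2_w$ be a bounded linear map $\sZ \to H^{5/2}(\Sigma)$.

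The main obstacle will be establishing the uniform operator bounds $\sup_t\|\cT^1_w\|_{B(\sZ,H^2(S^1))} + \sup_t\|\cT^2_w\|_{B(\sZ,H^{5/2}(\Sigma))} \le C$. These reduce to $H^2(S^1)$- and $H^{5/2}(\Sigma)$-estimates on $\tilde w\circ X$, $\tilde w\circ\bX$, their spatial gradients and time derivatives. Because the integrand defining $\tilde w$ is a single-layer--type potential whose logarithmic singularity is tamed by the $\del_\vartheta X^\perp$-weight, standard layer potential bounds combined with the $\|X\|_{H^{k+2}(S^1)}+\|U\|_{H^{k+1}(S^1)}$-control from $\xi\in\sB$ give uniform $H^{k+1}(\Omega(t))$-bounds on $\tilde w(t,\cdot)$ and its time derivative; composition with $X$ or $\bX$ then costs no regularity. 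Uniformity in $t\in[0,T]$ rests on the uniform lower bound for $|\Omega(t)|$ (preserved for small $T$ by Proposition~\ref{dcXlemma}) and on the chord-arc properties of $\Gamma(t)$ inherited through closeness to the initial configuration.
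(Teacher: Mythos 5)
Your approach is essentially the paper's: factor $w_\Omega = \Phi\,\tilde w$, apply the product and chain rules, and route every occurrence of $\del_t\Phi$ through Lemma~\ref{DerivDiffLem1}. For $\del_t W$ this is clean, since $\del_t X = U$ involves no $\Xi$-components, and your assignments of $Y^1_w, Y^2_w, Y^3_w$ are correct.

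The gap is in how you treat the residual term $\Phi\,\grad\tilde w(\bX)\cdot\bW$ in $\del_t\bW$, which arises from $\del_t\bX = \bU + \bW$. You note it is ``linear in the $\bW$-component of $\Xi$'' and propose to fold it schematically into $\cT^2_w\Xi$. But that is not what the lemma claims, and as written the term is actually \emph{quadratic} in $\Xi$, since both $\Phi$ and $\bW$ are $\Xi$-components: keeping $\bW$ as a free $\Xi$-component while also treating $\Phi$ as a known coefficient, or vice versa, is a choice you never make explicit, and neither choice reproduces the stated form $\Phi Y^4_w + \bigl(\int Y^5_w\cdot\Xi\bigr)Y^6_w$. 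The missing observation is to use the exact identity $\bW = \Phi\,\tilde w(\bX)$, which turns the residual into
\[
\Phi\,\grad\tilde w(\bX)\cdot\bW \;=\; \Phi\cdot\Bigl[\Phi(t)\,\grad\tilde w(\bX)\cdot\tilde w(\bX)\Bigr],
\]
where the bracketed expression is a fixed, $\Xi$-independent vector field on $\Sigma$ depending only on $\xi$ (including through the known scalar $\Phi(t)=\Phi(\xi)(t)$). Absorbing it into $Y^4_w$, i.e.\ setting $Y^4_w = (\del_t\tilde w)(\bX) + \grad\tilde w(\bX)\cdot\bU + \Phi\,\grad\tilde w(\bX)\cdot\tilde w(\bX)$, produces exactly the claimed form $\Phi Y^4_w$. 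With this correction in place the operator bounds follow as you describe, using $|\Phi|\le \lV\Xi\rV_\sZ$ and the $H^{k+3/2}(\Sigma)$-control on $\bX, \bU$ from Proposition~\ref{SolnMainDivCurlProp} together with the smoothness of $\tilde w$ on $\overline{\Omega(t)}$. The rest of your argument, including the treatment of $\del_\theta\bW$ and the remark that $\frac{d}{dt}|\Omega(t)|$ itself equals $\Phi$ (hence is absorbed into the $\Phi Y^1_w$ contributions), is sound.
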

\begin{proof}
Recall the definition of $w_\Omega(t,x)$ from \eqref{wDefn}. For $W(t,\theta)=w(t,X(t,\theta))$, we calculate that
\begin{align*}
    \del_t W(t,\theta) =
    &\frac{\Phi}{|\Omega|} \left( \int_{-\pi}^{\pi} \del_t X(t,\theta) \cdot \grad G_R(X(t,\theta), \vartheta) \, d\vartheta \right) e_2 + \frac{d}{dt}\left( \frac{\Phi}{|\Omega|} \right) \left( \int_{-\pi}^{\pi} G_R(X(t,\theta), \vartheta) \, d\vartheta \right) e_2 \\
    & -\int_{-\pi}^{\pi} G_R(X(t,\theta) - X(t,\vartheta)) \left( \frac{d}{dt}\left( \frac{\Phi}{|\Omega|} \right) \del_\vartheta X^\perp (t,\vartheta) + \frac{\Phi}{|\Omega|} \del^2_{t \vartheta} X^\perp(t,\vartheta) \right) d\vartheta \\
    &- \frac{\Phi}{|\Omega|} \int_{-\pi}^{\pi} (\del_t X(t,\theta) - \del_t X(t,\vartheta)) \cdot K(X(t,\theta) - X(t,\vartheta)) \del_\vartheta X^\perp(t,\vartheta) \, d\vartheta \\
    &- \frac{\Phi}{|\Omega|} \int_{-\pi}^{\pi} (\del_t X(t,\theta) - \del_t \bar{X}(t,\vartheta)) \cdot K(X(t,\theta) - \bar{X}(t,\vartheta)) \del_\vartheta X^\perp(t,\vartheta) \, d\vartheta .
\end{align*}
It is not hard to verify that $\frac{d}{dt} |\Omega(t)| = \mbox{harmless terms}$. Whenever the time derivative hits $\Phi(t)$, Lemma~\ref{DerivDiffLem1} implies a term as in the right-hand side of \eqref{DPhiDt} is produced. As a result, one arrives at an expression for $W_t$ as in the statement of the lemma. The calculations for $\bW_t$ and $\bW_\theta$ are similarly straightforward, and, noting $|\Phi|\leq \lV \Xi \rV_{\sZ}$ for example, the bounds claimed are easily verified.
\end{proof}

Now that we have evolution equations for $\bX-X$ at $\psi=0$, $W$, $\bW$, and $\Phi$, it remains to do the same for $\bU-U$ and $\bB-B$ at $\psi=0$ and for $\bB-\bX_\theta$. Recall our comment that in \eqref{ErrorEvoDemo}, the evolution equation we are in the process of deriving, we find that $\bm{\mathrm{J}}_0$ is symmetrizable. In fact, it takes the form
\begin{align}\label{bJ0def}
\bm{\mathrm{J}}_0 =
\begin{pmatrix}
0 & \bI_{(2\times 2)} & 0 \\
\bI_{(2\times 2)} & 0 & 0 \\
0 & 0 & 0_{(9\times 9)}
\end{pmatrix}.
\end{align}
This means that we will ultimately show
\begin{align}
&&\del_t(\bU-U)-\del_\theta(\bB-B) &= \cT_u \Xi  &(\psi=0), \label{UdiffEvoEx}\\
&&\del_t(\bB-B)-\del_\theta(\bU-U) &=\cT_b \Xi  &(\psi=0),\label{BdiffEvoEx}
\end{align}
for some uniformly bounded maps $\cT_u(t)$ and $\cT_b(t)$ from $\sZ$ into $H^2(S^1)$.

First, we verify the normal component of the left-hand side of \eqref{UdiffEvoEx} can be expressed as follows.
\begin{lemma}
For some vector-valued functions $Y^1_u(t,\theta)$ and $Y^2_u(t,\theta,\vartheta)$ continuous in $t$, $C^2$ in $\theta$ and $\vartheta$, we have on the time interval $[0,T]$ that
\begin{equation}
\label{ProjIdent2}
    N \cdot ( \del_t(\bU-U) - \del_\theta(\bB-B) )|_{\psi=0} = Y^1_u(t,\theta) \cdot \Xi (t,\theta,0) + \int^\pi_{-\pi} Y^2_u (t,\theta,\vartheta) \cdot \Xi (t,\vartheta,0)\, d\vartheta .
\end{equation}
\end{lemma}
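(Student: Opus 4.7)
The plan is to introduce the auxiliary quantity
\[
\Theta(t,a) \;=\; \del_t\bU - \del_\theta\bB + \bP^-_\grad + \tfrac12\bigl(\grad\bigh_-|h|^2\bigr)\circ\bX ,
\qquad a\in\Sigma ,
\]
and to show that $N\cdot\Theta|_{\psi=0}$ can be written as a local and a nonlocal linear functional of $\Xi$. Once that is established the lemma follows immediately, because Corollary~\ref{SurfIdentLems} gives
\[
N\cdot U_t - N\cdot B_\theta + N\cdot P^-_\grad + \tfrac12 N\cdot(\grad \bigh_-|h|^2)(X) \;=\; 0,
\]
so subtracting yields $N\cdot\Theta|_{\psi=0} = N\cdot(\del_t(\bU-U)-\del_\theta(\bB-B))|_{\psi=0}$, which is the expression to be controlled.

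The first major step will be to derive a div--curl system for $\Theta$ in $\Sigma$. Differentiating the augmented div--curl system \eqref{AugmentedInteriorSystemL} for $\bU$ in $t$ and the one for $\bB$ in $\theta$, and comparing with the pressure system \eqref{InteriorPressureGradSystem}, the ``ideal MHD identities'' produce exact cancellation of the leading terms. What remains are commutator-type contributions that arise because (i) $\bX_t=\bU+\bW$ rather than $\bU$, and (ii) $\bB$ need not equal $\bX_\theta$; both defects are precisely two components of $\Xi$. Using $\del_t\bm M$ and $\del_\theta$ of the div/curl relations, together with Lemma~\ref{WPartialsLem}, one shows
\[
\grad\cdot(\cof(\bm M)\Theta) \;=\; \cR_{\mathrm{div}}\Xi,
\qquad
\grad^\perp\cdot(\bm M\Theta) \;=\; \cR_{\mathrm{curl}}\Xi ,
\]
where $\cR_{\mathrm{div}}$ and $\cR_{\mathrm{curl}}$ are linear operators (local plus a nonlocal piece coming from $\bW$ and the flux $\Phi$) uniformly bounded into the relevant Sobolev spaces. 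Likewise, the tangential trace $\bX_\theta\cdot\Theta|_{\psi=0}$ and the normal trace $\bm N\cdot\Theta|_{\psi=-1}$ can be expressed as linear-in-$\Xi$ quantities, using the explicit boundary conditions of \eqref{InteriorPressureGradSystem} together with Lemma~\ref{cTDeltaxLem} for the $\bX-X$ defect at $\psi=0$.

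Once this div--curl system for $\Theta$ is assembled, the last step is to read off $N\cdot\Theta|_{\psi=0}$. Because the div--curl problem is solvable (the operator matrix $\cof(\bm M)\bm M^{-1}$ is coercive, as used for $\bP^-_\grad$), $\Theta$ is recovered by a bounded solution map from its divergence, curl, tangential trace at $\psi=0$, and normal trace at $\psi=-1$; composing with evaluation of $N\cdot(\cdot)|_{\psi=0}$ yields a linear operator producing exactly the representation \eqref{ProjIdent2}, with the local piece $Y^1_u\cdot\Xi(t,\theta,0)$ arising from the local terms in the divergence/curl and in the boundary data, and with $Y^2_u(t,\theta,\vartheta)$ the kernel of the nonlocal component (coming from the Poisson kernel for the div--curl solution, from $\bigh_-$, and from the $w_\Omega$ integral kernel).

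The main obstacle is the first step: tracking, through a careful differentiation of the augmented system, that every non-canceling residual in $\grad\cdot(\cof(\bm M)\Theta)$ and $\grad^\perp\cdot(\bm M\Theta)$ is genuinely linear in $\Xi$ with coefficients continuous in the known, uniformly bounded quantities. The delicate pieces are the time derivative of the boundary condition $\bm N\cdot\bU|_{\psi=0}=\bm N\cdot\underline u(\bX)-\bm N\cdot w(\bX)$ (where $\bm N-N$ and $\bX|_{\psi=0}-X$ both contribute via $\Xi$), and the identification of $\del_t \bP^-_\grad$ as $\dot{\bP}^-_\grad(\xi)$ from Lemma~\ref{variousIdents}, which ensures that time derivatives of the pressure system land in the right class. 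Everything else (notably the $\tfrac12(\grad\bigh_-|h|^2)\circ\bX$ term) is smooth in $\xi$, so the corresponding commutators produce bounded operators into $H^2(S^1)$, giving the continuity of $Y^1_u,Y^2_u$ claimed in the statement.
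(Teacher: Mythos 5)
There is a genuine gap in the route you propose. Your plan reduces the lemma to the claim that $N\cdot\Theta|_{\psi=0}$ is a linear functional of $\Xi$, where $\Theta=\bV_*$, and you propose to get this by assembling a div--curl problem for $\Theta$ in $\Sigma$ whose boundary data at $\psi=0$ is the \emph{tangential} trace $\bX_\theta\cdot\Theta$, which you claim is linear in $\Xi$ ``using the explicit boundary conditions of \eqref{InteriorPressureGradSystem}''. That claim does not hold up. The pressure system controls $\bX_\theta\cdot\bP^-_\grad=0$ at $\psi=0$, but the remaining piece $\bX_\theta\cdot(\bU_t-\bB_\theta)|_{\psi=0}$ is not touched by \eqref{InteriorPressureGradSystem} or by Lemma~\ref{cTDeltaxLem}. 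Using \eqref{MainSurfIdent} and \eqref{QDefn} one finds
\begin{equation*}
\bX_\theta\cdot\Theta|_{\psi=0}\;=\;\bX_\theta\cdot\bigl(\del_t(\bU-U)-\del_\theta(\bB-B)\bigr)\big|_{\psi=0}\;+\;\tfrac12\,\bX_\theta\cdot\mathrm{Q}(\bX-X)\big|_{\psi=0},
\end{equation*}
so your proposed tangential boundary datum is precisely the tangential analogue of the quantity you are trying to control in \eqref{ProjIdent2} and is exactly as problematic: it involves $\del_t\Xi_1$ and $\del_\theta\Xi_2$, i.e.\ derivatives of components of $\Xi$, which are not bounded in terms of $\|\Xi\|_\sZ$ unless you already have the very wave-structure identity the lemma is meant to establish (compare Corollary~\ref{cT1Cor}). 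If instead you try to use the normal trace of $\Theta$ at $\psi=0$ as the boundary datum, you will find that deriving it amounts precisely to differentiating the augmented boundary conditions for $\bU$ and $\bB$ in $t$ and $\theta$ respectively, which already yields \eqref{ProjIdent2} directly, so the subsequent div--curl solve is superfluous; worse, that solve is essentially Proposition~\ref{bVstarProp}, which in the paper is proved \emph{using} \eqref{ProjIdent2}, so invoking it here is circular.

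The paper's actual argument is much more elementary and avoids all of this: from the augmented boundary conditions one reads off
\begin{align*}
N\cdot(\bU-U)\big|_{\psi=0} &= N\cdot(u\circ\bX-u\circ X)\big|_{\psi=0}-N\cdot W, \\
N\cdot(\bB-B)\big|_{\psi=0} &= N\cdot(b\circ\bX-b\circ X)\big|_{\psi=0},
\end{align*}
then applies $\del_t$ to the first identity and $\del_\theta$ to the second. The $t$- and $\theta$-derivatives fall only on factors that convert immediately back to $\Xi$-components (e.g.\ $\del_t(\bX-X)|_{\psi=0}=(\bU-U)|_{\psi=0}+\bW|_{\psi=0}$), with no derivative loss; the single nonlocal contribution comes from $W_t$ via Lemma~\ref{WPartialsLem}. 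No div--curl solve and no pressure system is needed. You should take this more direct route rather than the div--curl reduction.
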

\begin{proof}
Observe that in solving the system \eqref{AugmentedInteriorSystemL} in the proof of Proposition~\ref{SolnMainDivCurlProp} to construct $\bU$, $\bB$, and $\bX$, we get nice vector-valued functions \(u(t), b(t):\Omega(t) \to \R^2 \), for which $\bU = u \circ \bX$ and $\bB = b \circ \bX$. By examining the boundary conditions in the system \eqref{AugmentedInteriorSystemL} we find $N\cdot (u \circ X) = N\cdot U - N\cdot W$, and correspondingly
\begin{align}
 & &   N \cdot (\bU - U) &= N \cdot (u \circ \bX - u \circ X) - N \cdot W  & & (\psi = 0) , \\
 & &   N \cdot (\bB - B) &= N \cdot (b \circ \bX - b \circ X)  & & (\psi = 0) .
\end{align}
By applying $\del_t$ to the first identity, $\del_\theta$ to the second, and rearranging terms, one can show for some vector-valued $Y^1(t,\theta)$, $Y^2(t,\theta)$ that
\begin{align}
\label{NdotIdent1}
& & N \cdot ( \del_t( \bU - U) - \del_\theta(\bB - B) ) &= Y^1 \cdot \Xi + Y^2 \cdot W_t & ( \psi = 0 ) .
\end{align}
Now we use \eqref{WtimeDeriv} to express $W_t$ in particular in terms of vector fields dotted with $\Xi $ to get a right-hand side as claimed in the statement of the lemma.
\end{proof}

Let us use the fundamental theorem of calculus to define the \( 2 \times 2 \) matrix-valued function $\mathrm{Q} (t,\theta)$ in the obvious way which leads to the identity below, valid on the time interval $[0,T]$:
\begin{align}
\label{QDefn}
 \left(\grad \bigh_- |h|^2 \right) (\bX(t,\theta,0)) - \left(\grad \bigh_- |h|^2 \right) (X(t,\theta)) &= \mathrm {Q}(t,\theta) (\bX(t,\theta,0) - X(t,\theta))  & ( \theta\in S^1 ) .
\end{align}
Then we find
\begin{align}
& & \del_t(\bU - U) - \del_\theta(\bB - B) &= \bV_* - \mathrm{Q} ( \bX - X ) & (\psi = 0 ) ,
\end{align}
where we define for $t$ in $[0,T]$ and $a$ in $\Sigma$
\begin{align}\label{Vstardef}
\bV_*(t,a) = \bU_t(t,a)- \bB_\theta(t,a) + \bP^-_\grad(t,a) + \half \left( \grad \bigh_- |h|^2 \right)(\bX(t,a)).
\end{align}
Thus we should be able to prove a relation of the form \eqref{UdiffEvoEx} holds if we can show that $\bV_*$ solves an appropriate div-curl system. This is the idea behind the next proposition.
\begin{proposition}
\label{bVstarProp}
For $t$ in $[0,T]$ and $a$ in $\Sigma$, let us define $\bV_*(t,a)$ by \eqref{Vstardef}. Then for each such $t$ we have some $\cT_*(t)$ in $B(\sZ,H^{5/2}(\Sigma))$ for which
\begin{align}
\bV_*(t) = \cT_*(t) \Xi(t) ,
\end{align}
and
\begin{align}
&& \sup_t\lV \cT_* \rV_{B(\sZ,H^{5/2}(\Sigma))} &\leq C.&
\end{align}
\end{proposition}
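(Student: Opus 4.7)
The plan is to exhibit $\bV_*$ as the solution to a div--curl system on $\Sigma$ whose forcing data and boundary data are each manifestly linear in the components of $\Xi$ (with coefficients depending smoothly on $\xi$ and the vacuum data), and then to invoke a standard elliptic div--curl estimate to extract the operator bound. The key observation is that for an exact solution $\bV_*$ would vanish identically, so every expression that appears when we compute $\grad\cdot(\cof(\bM)\bV_*)$, $\grad^\perp\cdot(\bM\bV_*)$, and the normal traces of $\bV_*$ at $\psi=-1$ and $\psi=0$ must reduce either to zero or to a ``defect term'' built from the quantities appearing in $\Xi$, namely $\bW$, $\bB-\bX_\theta$, $\bU|_{\psi=0}-U$, $\bB|_{\psi=0}-B$, $\bX|_{\psi=0}-X$, $W$ and $\Phi$.

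First I would compute the boundary data. At $\psi=-1$, the floor conditions $\bm N\cdot\bU=\bm N\cdot\bB=0$ and $\bm N\cdot\bW=0$ (built into the definition of $w_\Omega$ via the reflection kernel $K_R$) together with the Neumann condition on $\bP^-_\grad$ in \eqref{InteriorPressureGradSystem} and the fact that $\bm N=(0,-1)$ imply $\bm N\cdot\bV_*=0$ identically on $\{\psi=-1\}$, with no residual error. At $\psi=0$, I would use $\bP^-_\grad|_{\psi=0}=P^-_\grad$ from Lemma~\ref{variousIdents} and the surface identity \eqref{MainSurfIdent} of Corollary~\ref{SurfIdentLems} to rewrite
\[
\bV_*\big|_{\psi=0}=\del_t(\bU|_{\psi=0}-U)-\del_\theta(\bB|_{\psi=0}-B)+\tfrac12\mathrm Q\,(\bX|_{\psi=0}-X),
\]
so that the normal trace $N\cdot\bV_*|_{\psi=0}$ reduces to the expression \eqref{ProjIdent2}, which is already written as a linear functional in $\Xi$ (the $W_t$ term being handled via Lemma~\ref{WPartialsLem} and Lemma~\ref{DerivDiffLem1}).

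Next I would compute the interior divergence and curl. Starting from the augmented div--curl identities for $\bU$ and $\bB$ in \eqref{AugmentedInteriorSystemL}, I differentiate $\grad\cdot(\cof(\bM)\bU)=0$ in $t$ and $\grad\cdot(\cof(\bM)\bB)=0$ in $\theta$. The Piola identity lets me commute $\cof(\bM)$ with the derivatives, and the resulting forcing $\grad\cdot(\del_t\cof(\bM)\bU)$, $\grad\cdot(\del_\theta\cof(\bM)\bB)$ is rewritten using $\del_t\bX=\bU+\bW$ and $\bX_\theta=\bB+(\bX_\theta-\bB)$; the exact contributions cancel against $\grad\cdot(\cof(\bM)\bP^-_\grad)=\grad\cdot(\cof(\grad\bU^t)\bU-\cof(\grad\bB^t)\bB)$ from \eqref{InteriorPressureGradSystem}, and the harmonic extension piece $\half(\grad\bigh_-|h|^2)(\bX)$ contributes a zero-divergence, zero-curl vector field (since $\bigh_-|h|^2$ is harmonic in $\Omega$). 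What remains are linear combinations of $\bW$, $\grad\bW$, and $\bB-\bX_\theta$ with coefficients in $H^{3/2}(\Sigma)$ depending on $\xi$, bounded via Proposition~\ref{SolnMainDivCurlProp}. An entirely parallel computation handles $\grad^\perp\cdot(\bM\bV_*)$.

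The final step is to invoke the coercive div--curl estimate of Lemma~\ref{GeneralDivCurlSysLem} for the pair $(\cof(\bM),\bM)$ on $\Sigma$ with Neumann-type boundary data at $\psi=0,-1$, together with the scalar flux condition
$\int_{-\pi}^{\pi}\bV_*\cdot\bX_\theta\,d\theta|_{\psi=0}$, which can be computed by the same integration-by-parts argument as in Lemma~\ref{DerivDiffLem1} and is again a linear functional of $\Xi$. This yields
\[
\lV\bV_*\rV_{H^{5/2}(\Sigma)}\le C\bigl(\lV\Xi\rV_{\sZ}+\text{(boundary and interior data)}\bigr)\le C\lV\Xi\rV_\sZ,
\]
defining $\cT_*(t)$ as the linear map $\Xi\mapsto\bV_*$ implicit in this bound, with the desired uniform operator norm. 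The main obstacle I foresee is the careful bookkeeping in the divergence computation: both $\del_t\bX=\bU+\bW$ and $\bB\neq\bX_\theta$ inject defect terms into every Piola-type cancellation, and one must verify that the harmonic-extension term $(\grad\bigh_-|h|^2)\circ\bX$ --- which is evaluated at the perturbed map $\bX$ rather than at $X$ --- combines cleanly with the boundary Neumann datum of $\bP^-_\grad$ at $\psi=-1$ and with the interior identity $\Delta(\bigh_-|h|^2)=0$, producing only defects attributable to components of $\Xi$ rather than to uncontrolled top-order pieces.
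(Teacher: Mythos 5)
Your proposal is correct and follows the same strategy the paper uses: write down a div--curl system for $\bV_*$ on $\Sigma$ whose interior forcing and boundary data are manifestly linear in the components of $\Xi$, then invoke Lemma~\ref{GeneralDivCurlSysLem}. The one noteworthy deviation is that you place the circulation condition at $\psi=0$ and propose to evaluate it by the Lemma~\ref{DerivDiffLem1}-style integration by parts, whereas the paper keeps it at $\psi=-1$ (matching the form of Lemma~\ref{GeneralDivCurlSysLem} verbatim) and derives it directly from the conditions $\int\bU\cdot\bX_\theta\,d\theta=\upalpha$, $\int\bB\cdot\bX_\theta\,d\theta=\upbeta$ at the floor together with the vanishing circulation of the gradient term; both routes work, but the paper's choice avoids a modification to the div--curl solver. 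Otherwise, your cancellation bookkeeping (the defects coming from $\del_t\bX=\bU+\bW$ and $\bB\neq\bX_\theta$, the harmonicity of $\grad\bigh_-|h|^2$, and the Neumann cancellation at $\psi=-1$) lines up exactly with the paper's proof.
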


\begin{proof}
To prove this, we will apply Lemma~\ref{GeneralDivCurlSysLem} after verifying that $\bV_*$ solves a nice system of the following form:
\begin{align}
(a \in \Sigma) \qquad &
\left\{
    \begin{aligned}
        \grad \cdot (\, \cof (\bM) \bV_* ) &= Y^1_* \cdot \Xi_\theta + Y^2_* \cdot \Xi_\psi \\
        \grad^\perp\cdot (\, \bM \bV_*) &= Y^3_* \cdot \Xi_\theta + Y^4_* \cdot \Xi_\psi \, \Lcm
    \end{aligned}
\right. \label{divcurlsysVstar} \\
(\psi = 0)  \qquad & \ \Big\{ \, \bm{N} \cdot \bV_* = Y^5_* \cdot \Xi + \int^\pi_{-\pi} Y^6_* \cdot \Xi \, d\vartheta , \label{NormalIdentity1}\\
(\psi = -1) \qquad & \ \Big\{ \, \bm{N} \cdot \bV_* = 0 , \qquad\quad \int^\pi_{-\pi} \bV_* \cdot \bX_\theta \, d\theta =  \int^\pi_{-\pi} Y^7_* \cdot \Xi \, d\theta,
\end{align}
where \( \bm{N} = n(\bX) \) for $\psi = 0$ and \( \bm{N} = (0, -1) \) for $\psi = -1$.
Moreover,
First we show \eqref{NormalIdentity1}. 
Using the identity \eqref{MainSurfIdent} of Corollary~\ref{SurfIdentLems} and taking $\mathrm{Q}(t,\theta)$ defined by \eqref{QDefn}, we find for some vector field $Y(t,\theta)$
\begin{align}\label{NormalIdentity2}
N\cdot(U_t - B_\theta + P^-_\grad +\half (\grad \bigh_- |h|^2)(\bX) ) = Y \cdot (\bX - X ) \qquad\qquad (\psi = 0 ) .
\end{align}
Note that the left-hand side of \eqref{NormalIdentity2} is almost the same as $\bm{N} \cdot \bV_*$ at $\psi=0$, except with $N$, $U$, and $B$ in place of $\bm{N}$, $\bU$, and $\bB$.
Using \eqref{ProjIdent2} we can essentially replace $N\cdot(U_t-B_\theta)$ with $\bm{N} \cdot (\bU_t-\bB_\theta)$ in \eqref{NormalIdentity2} at the price of harmless error terms such as $n\circ \bX - n \circ X$, etc., leading to \eqref{NormalIdentity1}.

Now we verify that
\begin{align}
\label{NormalIdentity3}
 \bm{N}\cdot \bV_* =  \bm{N} \cdot \left(\bU_t - \bB_\theta + \bP^-_\grad +\half ( \grad \bigh_- |h|^2 )( \bX )\right) &= 0  & (\psi = -1 ).
\end{align}
We remind the reader that $\bm{N}$ is simply $(0,-1)$ at \( \psi = -1 \). Regarding the terms $\bm{N} \cdot \bU_t$ and $\bm{N} \cdot \bB_\theta$ appearing above, it is not hard to show from the div-curl systems satisfied by $\bU$ and $\bB$ that
\begin{align}
& &\bm{N} \cdot \bU_t &= \bm{N} \cdot \bB_t = 0 & (\psi = -1) .
\end{align}
Regarding the two terms left in \eqref{NormalIdentity3}, one observes from the boundary condition at $\psi = -1$ in the system \eqref{InteriorPressureGradSystem} for $\bP^-_\grad$ that these remaining terms indeed cancel.

Now we claim
\begin{align}
\label{CirculationIdent1}
\int^\pi_{-\pi} \bV_* \cdot \bX_\theta \, d\theta &= \int^\pi_{-\pi} \bX_{\theta t} \cdot \bW + \bB_\theta \cdot (\bB- \bX_\theta ) \, d\theta & (\psi = -1 ) .
\end{align}
Since we have that $\bP^-_\grad = (\grad \phi)(\bX)$ for a nice function $\phi$ defined on $\Omega(t)$,
\begin{align}
\label{CirculationIdent2}
\int^\pi_{-\pi}  (\bP^-_\grad +\half ( \grad \bigh_- |h|^2 )( \bX ) ) \cdot \bX_\theta \, d\theta = 0 \qquad \qquad (\psi = -1 ).
\end{align}
Now, we use the condition $\int^\pi_{-\pi} \bU \cdot \bX_\theta \, d\theta = \upalpha$ at $\psi = -1$ imposed on $\bU$ in the system \eqref{AugmentedInteriorSystemL} to find that at $\psi = -1$ we have
\begin{align}
&&\int^\pi_{-\pi} \bU_t \cdot \bX_\theta \, d\theta &= - \int^\pi_{-\pi} \bU \cdot \bX_{\theta t} \, d\theta &\notag \\
&&&= -\int^\pi_{-\pi} (\bX_t - \bW) \cdot \bX_{\theta t} \, d\theta &\notag \\
&&&= \int \bW \cdot \bX_{\theta t} \, d\theta  & (\psi = -1 ). \label{CirculationIdent3}
\end{align}
Similarly, the analogous condition imposed on $\bB$ implies
\begin{align}
\label{CirculationIdent4}
& & -\int \bB_\theta \cdot \bX_\theta \, d\theta &= \int \bB_\theta \cdot (\bB - \bX_\theta ) \, d\theta & (\psi = -1 ) .
\end{align}
Adding identities \eqref{CirculationIdent2}, \eqref{CirculationIdent3}, and \eqref{CirculationIdent4}, one obtains \eqref{CirculationIdent1}.

Now we derive expressions for the right-hand sides in \eqref{divcurlsysVstar}. First let us recall the $\bom$ and $\bj$ evolution equations, which can be written in the form
\begin{align}
\begin{aligned}
    \bom_t &= \bj_\theta , \\
    \bj_t &= \bom_\theta + \bm{\sigma}^{-1}\grad^\perp\cdot( \grad \bU^t \bB - \grad \bB^t \bU ) .
    \end{aligned}\label{divcurlEvoNice}
\end{align}
Using the above together with the systems \eqref{AugmentedInteriorSystemL} for $\bU$ and $\bB$ and the system \eqref{InteriorPressureGradSystem} for $\bP^-_\grad$, we are able to find that
\begin{align}
\label{preDivCurlSystembV}
\begin{aligned}
    \grad\cdot(\, \cof(\bM) (\bU_t-\bB_\theta + \bP^-_\grad)) &= \grad\cdot(\, \cof(\grad \bU^t - \bM_t) \bU - \cof(\grad \bB^t - \bM_\theta) \bB ) , \\
    \grad^\perp \cdot(\, \bM (\bU_t-\bB_\theta + \bP^-_\grad)) &= \grad^\perp \cdot(\, \bM_\theta \bB - \bM_t \bU ) .
\end{aligned}
\end{align}
Now we record a few identities which hold for any vector fields $\bm{Y}$ and $\bm{Z}$ on $\Sigma$.
\begin{align}
    \grad\cdot( \, \cof(\grad \bm{Y}^t) \bm{Z} )&= \grad\cdot( \, \cof(\grad \bm{Z}^t) \bm{Y} ) ,\label{cofId}\\
    \grad^\perp \cdot ( \grad \bm{Y}^t \bm{Z} ) &= -\grad^\perp \cdot ( \grad \bm{Z}^t \bm{Y} ) ,\\
    \grad^\perp \cdot ( \grad \bm{Y}^t \bm{Y} ) &= 0 .
\end{align}
Using these identities as well as the fact the Lagrangian divergence and curl of the term \( \left( \grad \bigh_- |h|^2 \right)(\bX) \) is zero we then find
\begin{align*}
 \grad\cdot(\, \cof(\bM) \bV_*) &= \grad\cdot(\, \cof(\grad \bU^t) (\bU-\bX_t) - \cof(\grad \bB^t) (\bB-\bX_\theta) ) \\
  &= -\grad\cdot(\, \cof(\grad \bU^t)\bW + \cof(\grad \bB^t) (\bB-\bX_\theta) ) ,
\end{align*}
and
\begin{align*}
   \grad^\perp \cdot(\, \bM \bV_*) 
    &= \grad^\perp \cdot( \grad \bU^t \bX_t- \grad \bB^t \bX_\theta ) \\
    &= \grad^\perp \cdot( \grad \bU^t (\bX_t-\bU)+ \grad \bB^t (\bB-\bX_\theta) ) \\
    &= \grad^\perp \cdot( \grad \bU^t \bW + \grad \bB^t (\bB-\bX_\theta) ) .
\end{align*}
These result in expressions of the form
\begin{align*}
    \grad\cdot(\, \cof(\bM) 
    \bV_*)
    &= \bU^\perp_\psi \cdot \bW_\theta - \bU^\perp_\theta\cdot \bW_\psi + \bB^\perp_\psi \cdot (\bB_\theta-\bX_{\theta\theta}) - \bB^\perp_\theta \cdot (\bB_\psi-\bX_{\theta\psi}) , \\
   \grad^\perp \cdot(\, \bM \bV_*)
   &= \bU_\psi \cdot \bW_\theta - \bU_\theta \cdot \bW_\psi + \bB_\psi \cdot (\bB_\theta-\bX_{\theta\theta}) - \bB_\theta \cdot (\bB_\psi-\bX_{\theta\psi}) .
\end{align*}
Now we take the resulting div-curl system satisfied by $\bV_*$, and apply Lemma~\ref{GeneralDivCurlSysLem}, defining $\cT_*$ in the obvious way. Carefully keeping track of the various regularities, one obtains the claimed bound.
\end{proof}

Now we use the above result to get our evolution equation of the form \eqref{UdiffEvoEx} for $\bU-U$ at $\psi=0$.
\begin{corollary}\label{cT1Cor}
For each $t$ in $[0,T]$ we have a map $\cT_u(t)$ in $B(\sZ, H^2(S^1))$ such that
\begin{align}
& & \del_t(\bU - U) - \del_\theta(\bB - B) &= \cT_u \Xi    &   (\psi = 0) ,
\end{align}
and
\begin{align}
&&\lV \cT_u (t) \rV_{B(\sZ,H^2(S^1))} &\leq C .&
\end{align}
\end{corollary}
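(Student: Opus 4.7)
The plan is to read off $\cT_u$ directly from the decomposition already set up just before \eqref{Vstardef}. Recall that by the definition \eqref{Vstardef} of $\bV_*$ and the identity \eqref{QDefn} defining the matrix $\mathrm{Q}(t,\theta)$, one has, at $\psi = 0$,
\begin{align}
\del_t(\bU - U) - \del_\theta(\bB - B) = \bV_*\big|_{\psi=0} - \mathrm{Q}\,(\bX|_{\psi=0} - X).
\end{align}
So all the real work has already been done: Proposition~\ref{bVstarProp} provides an operator $\cT_*(t)\in B(\sZ,H^{5/2}(\Sigma))$ with $\bV_*(t) = \cT_*(t)\Xi(t)$, and the second summand is linear in the third component of $\Xi$ with coefficient $\mathrm{Q}$.

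The first step is to take the trace of the identity $\bV_* = \cT_*\Xi$ at $\psi = 0$. By the standard trace inequality $H^{5/2}(\Sigma) \hookrightarrow H^2(S^1)$ together with the uniform bound $\sup_t\lV\cT_*\rV_{B(\sZ,H^{5/2}(\Sigma))}\leq C$ from Proposition~\ref{bVstarProp}, this gives an operator $\cT_*^{\,0}(t) := (\mathrm{tr}_{\psi=0})\circ \cT_*(t)\in B(\sZ,H^2(S^1))$ with a uniform-in-$t$ norm bound. The second step is to control the remainder $\mathrm{Q}(\bX - X)$. Since $\mathrm{Q}(t,\theta)$ is obtained from \eqref{QDefn} as a line integral of $\grad^2\bigh_-|h|^2$ between $X$ and $\bX|_{\psi=0}$ composed with $\bX$ and $X$, the uniform weighted bounds for $h$ (Proposition~\ref{hEstimateProp}), the harmonic extension bound of Lemma~\ref{harmExtLinftyBd}, and the fact that $X$, $\bX|_{\psi=0}\in H^{k+2}(S^1)\hookrightarrow C^2(S^1)$ for $\xi\in\sB$ yield $\mathrm{Q}\in C^0([0,T];H^2(S^1))$ with a $t$-uniform bound. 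Then the Banach-algebra property of $H^2(S^1)$ (since $s=2>1/2$) shows multiplication by $\mathrm{Q}$ is bounded on $H^2(S^1)$ uniformly in $t$, so the linear map $\Xi\mapsto \mathrm{Q}\,(\bX|_{\psi=0}-X)$, which picks out the third component of $\Xi$, defines an operator in $B(\sZ,H^2(S^1))$ with uniform norm.

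Combining these two steps, define
\begin{align}
\cT_u(t)\Xi := \cT_*^{\,0}(t)\Xi - \mathrm{Q}(t)\cdot\pi_{\bX-X}\Xi,
\end{align}
where $\pi_{\bX-X}$ denotes projection onto the $(\bX|_{\psi=0}-X)$ component of $\Xi$. By construction, $\del_t(\bU-U)-\del_\theta(\bB-B) = \cT_u\Xi$ at $\psi=0$, and the triangle inequality combined with the two uniform bounds above gives $\sup_t\lV\cT_u(t)\rV_{B(\sZ,H^2(S^1))}\leq C$.

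The only step that requires any real care is the control of $\mathrm{Q}$: one has to verify that $\grad\bigh_-|h|^2$ is sufficiently smooth along a neighborhood in $\R^2$ containing the segment between $X(t,\theta)$ and $\bX(t,\theta,0)$, uniformly in $t$, and that the resulting $H^2(S^1)$ norm is controlled independently of the pinch. But this is immediate from the uniform weighted bound on $h$ (which makes $|h|^2$ small to any order near the splash point and hence suppresses any harmonic-extension singularity), combined with interior regularity for $\bigh_-$ evaluated along the smooth traces $X$ and $\bX|_{\psi=0}$. Everything else is just the trace theorem and the Banach algebra property of $H^2(S^1)$.
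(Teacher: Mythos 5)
Your proposal is correct and takes essentially the same approach as the paper: reading off the decomposition $\del_t(\bU-U) - \del_\theta(\bB-B) = \bV_*\big|_{\psi=0} - \mathrm{Q}(\bX\big|_{\psi=0}-X)$ established just before Proposition~\ref{bVstarProp}, and then invoking that proposition together with boundedness of $\mathrm{Q}$. The paper's own proof is terser — it restates the decomposition (which in fact also rests on \eqref{MainSurfIdent}, not only \eqref{Vstardef} and \eqref{QDefn}) and then simply cites Proposition~\ref{bVstarProp} — whereas you have usefully spelled out the trace step, the $H^2(S^1)$ Banach-algebra estimate for multiplication by $\mathrm{Q}$, and the regularity of $\mathrm{Q}$ itself, all of which the paper leaves implicit.
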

\begin{proof}
Recall that
\begin{align}
& & \bV_* &= \bU_t - \bB_\theta + P^-_\grad + \half (\grad \bigh_- |h|^2 )(\bX)  & (\psi = 0 ) ,
\end{align}
while, meanwhile, \eqref{MainSurfIdent} holds. Using these with \eqref{QDefn}, we find
\begin{align}
& & \del_t(\bU - U) - \del_\theta(\bB - B) &= \bV_* - \mathrm{Q} ( \bX - X ) & (\psi = 0 ) .
\end{align}
Now we obtain the statement of the proposition by applying Proposition~\ref{bVstarProp}.
\end{proof}

Using similar techniques, we get an evolution equation for $\bB-B$ at $\psi=0$ of the form \eqref{BdiffEvoEx}.
\begin{proposition}
\label{BtMinusUthetaLinearProp}
For each $t$ in $[0,T]$ we have a map $\cT^1_b(t)$ in $B(\sZ,H^{5/2}(\Sigma))$ and a map $\cT^2_b(t)$ in $B(\sZ,H^2(S^1))$ such that
\begin{align}
& & \bB_t - \bU_\theta &= \cT^1_b \Xi &   ( a \in \Sigma) , \label{cT2ident}\\
& & \del_t(\bB - B) - \del_\theta(\bU - U) &= \cT^2_b \Xi    &   (\psi = 0 ),\label{cT3ident}
\end{align}
and
\begin{align}
\sup_t\lV\cT^1_b\rV_{B(\sZ,H^{5/2}(\Sigma))}+\sup_t\lV \cT^2_b\rV_{B(\sZ,H^2(S^1))} &\leq C.
\end{align}
\end{proposition}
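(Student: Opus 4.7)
The plan is to mirror the strategy of the proof of Proposition~\ref{bVstarProp}, but applied to the quantity $\bV_{**} := \bB_t - \bU_\theta$ in place of $\bV_*$. First, I would establish \eqref{cT2ident} by exhibiting $\bV_{**}$ as the (unique) solution of a linear div-curl system on $\Sigma$ whose inhomogeneous data and boundary data depend linearly and boundedly on $\Xi(t)$; then \eqref{cT3ident} will follow from \eqref{cT2ident} essentially for free. Indeed, evaluating at $\psi=0$ and using the identity $B_t = \del_\theta U$ from Lemma~\ref{variousIdents} gives
\[
\del_t(\bB-B) - \del_\theta(\bU - U)\big|_{\psi=0} = (\bB_t - \bU_\theta)\big|_{\psi=0},
\]
so that $\cT^2_b\Xi := (\cT^1_b \Xi)|_{\psi=0}$ defines a bounded map $\sZ \to H^2(S^1)$ by the trace theorem, using $\cT^1_b \Xi \in H^{5/2}(\Sigma)$.

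To set up the div-curl system for $\bV_{**}$, I would differentiate $\grad\cdot(\cof(\bM)\bB)=0$ and $\grad^\perp\cdot(\bM\bB)=\bm\sigma\bj$ in $t$, and differentiate $\grad\cdot(\cof(\bM)\bU)=0$ and $\grad^\perp\cdot(\bM\bU)=\bm\sigma\bom$ in $\theta$, then subtract. Using the vorticity/current evolution \eqref{divcurlEvoNice} and the fact that $\bm\sigma$ depends only on $\psi$, this yields
\begin{align*}
\grad\cdot(\cof(\bM)\bV_{**}) &= -\grad\cdot\big(\cof(\bM)_t\,\bB - \cof(\bM)_\theta\,\bU\big), \\
\grad^\perp\cdot(\bM\bV_{**}) &= \bm\sigma\,\dcF_2^{(2)} - \grad^\perp\cdot\big(\bM_t\,\bB - \bM_\theta\,\bU\big).
\end{align*}
The next step is to substitute $\bM_t = \grad(\bU+\bW)^t$ (from $\bX_t = \bU + \bW$) and $\bM_\theta = \grad\bB^t - \grad(\bB-\bX_\theta)^t$, using linearity of the cofactor on $2\times 2$ matrices. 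Two algebraic cancellations are then decisive: the identity \eqref{cofId} shows that $\grad\cdot(\cof(\grad\bU^t)\bB - \cof(\grad\bB^t)\bU) = 0$, and the antisymmetry identity $\grad^\perp\cdot(\grad\bY^t\bZ) = -\grad^\perp\cdot(\grad\bZ^t\bY)$ together with the definition \eqref{earlyF22defn} shows that $\bm\sigma\,\dcF_2^{(2)}$ exactly matches $\grad^\perp\cdot((\grad\bU)^t\bB - (\grad\bB)^t\bU)$ up to sign. The net effect is that both right-hand sides reduce to expressions of the form $Y^1\cdot\bW_\theta + Y^2\cdot\bW_\psi + Y^3\cdot(\bB-\bX_\theta)_\theta + Y^4\cdot(\bB-\bX_\theta)_\psi$, with coefficients built from first and second derivatives of $\bU,\bB$, which are linear in two components of $\Xi$.

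For the boundary data, at $\psi = -1$ the constants $\bm N \cdot \bU = \bm N \cdot \bB = 0$ (with $\bm N = (0,-1)$) immediately give $\bm N \cdot \bV_{**} = 0$, and the circulation condition $\int \bV_{**}\cdot \bX_\theta\, d\theta|_{\psi=-1}$ can be rewritten using the conservation laws $\int \bU\cdot\bX_\theta\, d\theta = \upalpha$, $\int \bB\cdot\bX_\theta\, d\theta = \upbeta$, the relation $\bX_t = \bU + \bW$, $\bX_\theta = \bB - (\bB-\bX_\theta)$, and integration by parts --- as in the proof of Lemma~\ref{DerivDiffLem1} --- giving $\int Y^7_*\cdot\Xi\,d\theta$. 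At $\psi = 0$, I would differentiate $\bm N\cdot\bB = 0$ in $t$ and the boundary condition $\bm N\cdot\bU = \bm N\cdot\underline u(\bX) - \bm N\cdot w_\Omega(\bX)$ in $\theta$, then invoke Lemma~\ref{WPartialsLem} to replace $\del_\theta \bW$ by $\Phi(t) Y^7_w$; a small calculation shows $\bm N\cdot\bV_{**}|_{\psi=0} = Y^5_*\cdot\Xi + \int Y^6_*\cdot\Xi\,d\vartheta$, with coefficients that involve $\bX|_{\psi=0} - X$, $\bU|_{\psi=0} - U$, $W$, and $\Phi$. Feeding these data into Lemma~\ref{GeneralDivCurlSysLem} yields $\bV_{**} \in H^{5/2}(\Sigma)$, together with the linear operator bound on $\cT^1_b$ uniform in $t$.

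The main obstacle, as in Proposition~\ref{bVstarProp}, is the second algebraic cancellation: verifying that the non-perturbative piece $\grad^\perp\cdot((\grad\bU)^t\bB - (\grad\bB)^t\bU)$ of $\grad^\perp\cdot(\bM_t\bB - \bM_\theta\bU)$ is precisely compensated by $\bm\sigma\,\dcF_2^{(2)}$. This is the whole reason the exact-MHD identity $\bB_t = \bU_\theta$ is reproduced at zeroth order by the iteration scheme; getting the signs and factors of $2$ right here --- so that only the desired $\bW$- and $(\bB-\bX_\theta)$-linear remainders survive --- is the only delicate step. Once this identity is in hand, the regularity bookkeeping (first derivatives of $H^{5/2}$ quantities multiplied by $H^{k+1/2}$ coefficients give $H^{3/2}$ data, which the div-curl estimate improves to $H^{5/2}$) and the Lipschitz-type bounds are routine.
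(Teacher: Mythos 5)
Your proposal follows essentially the same route as the paper's proof. The paper derives precisely the pair of interior div-curl equations you sketch (differentiating the div-curl constraints on $\bU$ and $\bB$ in $\theta$ and $t$, respectively, and using the $\bom,\bj$ evolution), lands on the equivalent forms
\[
\grad\cdot(\cof(\bM)(\bB_t-\bU_\theta))=-\grad\cdot(\cof(\bM_\theta)\bW+\cof(\bM_t)(\bB-\bX_\theta)),\qquad
\grad^\perp\cdot(\bM(\bB_t-\bU_\theta))=\grad^\perp\cdot(\grad\bU^t(\bB-\bX_\theta)+\grad\bB^t\bW),
\]
and then states that the boundary conditions and application of Lemma~\ref{GeneralDivCurlSysLem} go as in Proposition~\ref{bVstarProp}, with \eqref{cT3ident} obtained from $B_t=U_\theta$ exactly as you say. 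Your two "algebraic cancellations" are the same cancellations the paper exploits (the \eqref{cofId} identity for the divergence, and the $\grad^\perp\cdot(\grad\bY^t\bZ)=-\grad^\perp\cdot(\grad\bZ^t\bY)$ antisymmetry plus the definition of $\dcF_2$ for the curl). The only tiny looseness is the hedge "up to sign" and the passing conflation of $W=w_\Omega\circ X$ with $\bW=w_\Omega\circ\bX$ at $\psi=0$ when invoking Lemma~\ref{WPartialsLem}; both are components of $\Xi$, so neither affects the argument, and the regularity bookkeeping and final bound are correct.
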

\begin{proof}
The proof is similar to the argument used to prove Proposition~\ref{bVstarProp}. For now, we just include some work calculating the interior div-curl equations for \(\bB_t - \bU_\theta\).

Using the identity \eqref{cofId} together with the divergence free conditions for $\bU$ and $\bB$ in Lagrangian coordinates, one can show
\begin{align}
\label{ProjIdent4}
\begin{aligned}
    \grad\cdot( \, \cof(\bM) (\bB_t-\bU_\theta) ) 
    &= \grad \cdot ( \, \cof (\bM_\theta)  \bU - \, \cof(\bM_t) \bB \, ) \\
    &= -\grad \cdot ( \, \cof (\bM_\theta) \bW+\cof(\bM_t) (\bB-\bX_\theta) ) .
\end{aligned}
\end{align}
Using the $\bom$ and $\bj$ evolution equations \eqref{divcurlEvoNice} together with the systems \eqref{AugmentedInteriorSystemL} for $\bU$ and $\bB$ one arrives at the pair of interior div-curl equations
\begin{align}
    \grad \cdot ( \, \cof(\bM) (\bB_t - \bU_\theta) ) &= -\grad \cdot ( \, \cof(\bM_t) (\bB-\bX_\theta) + \cof (\bM_\theta) \bW) \\
    &=\bX_{t\psi}^\perp \cdot (\bB_\theta-\bX_{\theta\theta})-\bX^\perp_{t\theta} \cdot(\bB_\psi-\bX_{\theta\psi}) +\bX^\perp_{\theta \psi} \cdot \bW_\theta  - \bX^\perp_{\theta\theta} \cdot \bW_\psi , \\
    \grad^\perp \cdot ( \, \bM (\bB_t - \bU_\theta ) ) &= \grad^\perp \cdot ( \grad\bU^t (\bB-\bX_\theta) + \grad \bB^t \bW) \\
    &= \bU_\psi \cdot (\bB_\theta-\bX_{\theta\theta}) - \bU_\theta \cdot (\bB_\psi-\bX_{\theta\psi})+ \bB_\psi \cdot \bW_\theta - \bB_\theta \cdot \bW_\psi.
\end{align}
Very similar to the proof of Proposition~\ref{bVstarProp}, one is able to verify certain boundary conditions hold for the vector field $\bB_t-\bU_\theta$ at $\psi=0$ and $\psi=-1$ which allow one to apply Lemma~\ref{GeneralDivCurlSysLem} when combined with the above div-curl equations. This results in a map $\cT^1_b(t)$ which satisfies \eqref{cT2ident} and the corresponding bound. Once one has these, \eqref{cT3ident} and the bound for $\cT^2_b$ follow easily from the identity $B_t=U_\theta$ of Lemma~\ref{variousIdents}.
\end{proof}

Finally, we have one more evolution equation to check, which is for the error term $\bB-\bX_\theta$.
\begin{corollary}\label{bBminbXCor}
For each $t$ in $[0,T]$, we have a map $\cT^3_b(t)$ in $B(\sZ,H^{5/2}(\Sigma))$ such that
\begin{align}
&&\del_t(\bB - \bX_\theta ) &= \cT^3_b \Xi &(a\in \Sigma),
\end{align}
and
\begin{align}
\sup_t\lV \cT^3_b \rV_{B(\sZ,H^{5/2}(\Sigma))} \leq C.
\end{align}
\end{corollary}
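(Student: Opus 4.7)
The plan is to express $\del_t(\bB - \bX_\theta)$ as a sum of two terms, each of which has already been identified as a linear functional of $\Xi$ in preceding results. First, I would use the identity $\del_t \bX = \bU + \bW$ from \eqref{XUWident} in Lemma~\ref{variousIdents}. Differentiating in $\theta$ and rearranging gives
\begin{align}
\del_t(\bB - \bX_\theta) = (\bB_t - \bU_\theta) - \del_\theta \bW.
\end{align}

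The first parenthesized term is handled by Proposition~\ref{BtMinusUthetaLinearProp}, which provides $\cT^1_b(t) \in B(\sZ, H^{5/2}(\Sigma))$ with $\bB_t - \bU_\theta = \cT^1_b \Xi$ and the corresponding uniform bound. The second term is controlled by Lemma~\ref{WPartialsLem}, which states $\del_\theta \bW(t,a) = \Phi(t) Y^7_w(t,a)$, where $Y^7_w$ is continuous in $t$ and $C^3$ in $a$. Since $\Phi(t)$ is the last component of the state vector $\Xi(t)$, the map $\Xi \mapsto \Phi \, Y^7_w$ defines a bounded linear operator into $H^{5/2}(\Sigma)$, with norm bounded by $\sup_t \lV Y^7_w\rV_{H^{5/2}(\Sigma)}$, which is finite by the regularity asserted in Lemma~\ref{WPartialsLem}.

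I would then set
\begin{align}
\cT^3_b(t) \Xi = \cT^1_b(t) \Xi - \Phi(t)\, Y^7_w(t,\cdot),
\end{align}
where $\Phi(t)$ is read off as the final component of $\Xi(t)$. The claimed identity holds by construction. The uniform bound
\begin{align}
\sup_t \lV \cT^3_b \rV_{B(\sZ, H^{5/2}(\Sigma))} \leq C
\end{align}
follows by the triangle inequality from the bound on $\cT^1_b$ in Proposition~\ref{BtMinusUthetaLinearProp} together with the elementary estimate $|\Phi(t)| \leq \lV \Xi(t)\rV_\sZ$ and the uniform $H^{5/2}(\Sigma)$-control on $Y^7_w$. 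There is no real obstacle here: this corollary is purely a bookkeeping step assembling results already proved, and its content is that the last of the seven components of the error vector $\Xi$ also evolves linearly in $\Xi$, thereby completing the derivation of a closed evolution system of the form \eqref{ErrorEvoDemo} with symmetrizable principal part $\bm{\mathrm{J}}_0$ as in \eqref{bJ0def}.
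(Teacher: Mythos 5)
Your proof is correct and matches the paper's argument essentially verbatim: both start from \eqref{XUWident}, differentiate in $\theta$ to obtain $\del_t(\bB - \bX_\theta) = (\bB_t - \bU_\theta) - \bW_\theta$, and then combine Proposition~\ref{BtMinusUthetaLinearProp} with the formula $\bW_\theta = \Phi\, Y^7_w$ from Lemma~\ref{WPartialsLem}. Your version is slightly more explicit about the definition of $\cT^3_b$ and the resulting operator bound, but there is no substantive difference.
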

\begin{proof}
Observe from \eqref{XUWident} that
\begin{align}
\label{AuxiliaryEqn}
    \bB_t-\bX_{\theta t}=(\bB_t-\bU_\theta)-\bW_\theta .
\end{align}
Using $\cT^1_b$ from Proposition~\ref{BtMinusUthetaLinearProp} and combining this with the expression for $\bW_\theta$ given by Lemma~\ref{WPartialsLem}, we find an appropriate $\cT^3_b$ with the desired properties.
\end{proof}
All in all, by collecting the time derivatives of each component of $\Xi$, we are able to show that the ``error~vector'' $\Xi$ satisfies a system of the following form, which we shall easily verify has the unique solution $\Xi(t)=0$.
\begin{proposition}
For all $t$ in $[0,T]$ we have a map $\cT_\xi(t)$ in $B(\sZ,\sZ)$ such that $\Xi$ satisfies the system below, in which $\bm{\mathrm{J}}_0$ is defined by \eqref{bJ0def}:
\begin{align}\label{XiFinalSysProved}
\begin{aligned}
\frac{d\Xi}{dt} &= \bm{\mathrm{J}}_0 \del_\theta \Xi + \cT_\xi \Xi ,\\
\Xi(0) &= 0.
\end{aligned}
\end{align}
Moreover
\begin{align}
\sup_t\lV\cT_\xi\rV_{B(\sZ,\sZ)} \leq C.
\end{align}
\end{proposition}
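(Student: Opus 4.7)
The plan is to assemble the system \eqref{XiFinalSysProved} componentwise by simply collecting the evolution identities for the seven components of $\Xi$ that have already been derived in the preceding sequence of lemmas and propositions, and then packaging them into a single first-order operator equation. For the pair of components $\bU|_{\psi=0}-U$ and $\bB|_{\psi=0}-B$, Corollary~\ref{cT1Cor} and Proposition~\ref{BtMinusUthetaLinearProp} give
\begin{align}
\del_t(\bU-U)-\del_\theta(\bB-B) &= \cT_u \Xi, & \del_t(\bB-B)-\del_\theta(\bU-U) &= \cT^2_b\Xi \qquad (\psi=0),
\end{align}
which together produce exactly the top-left $4\times 4$ block of $\bm{\mathrm{J}}_0\,\del_\theta$ in \eqref{bJ0def}, with the remaining lower-order terms grouped into $\cT_\xi$. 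For the other five components, the evolution identities of Lemmas~\ref{cTDeltaxLem}, \ref{DerivDiffLem1}, \ref{WPartialsLem}, and Corollary~\ref{bBminbXCor} already express the time derivative of each component directly as a bounded linear functional or map applied to $\Xi$, with no $\del_\theta \Xi$ appearing. These thus populate the $0_{(9\times 9)}$ block of $\bm{\mathrm{J}}_0$ and contribute only to $\cT_\xi$.

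After this block assembly, the construction of $\cT_\xi$ is essentially bookkeeping: I would take
\[
\cT_\xi = \begin{pmatrix} \cT_u \\ \cT^2_b \\ \cT_{\Delta x} \\ \cT^1_w \\ \cT^2_w \\ \cT^3_b \\ \cT_\phi \end{pmatrix},
\]
interpreted as a row-stacked map into $(H^2(S^1))^4 \times (H^{5/2}(\Sigma))^2 \times \R = \sZ$. The uniform bound $\sup_t \lV \cT_\xi \rV_{B(\sZ,\sZ)} \le C$ then follows from the individual uniform bounds
\begin{align}
\sup_t\bigl(\lV \cT_u\rV + \lV \cT^2_b\rV + \lV \cT_{\Delta x}\rV + \lV \cT^1_w\rV + \lV \cT^2_w\rV + \lV \cT^3_b\rV + \lV \cT_\phi\rV\bigr) &\le C,
\end{align}
each already verified in the appropriate operator space.

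To verify the initial condition $\Xi(0)=0$, I would simply check each component against Definitions~\ref{initialData} and~\ref{initialxiData}: the trace identities $\bX_0(\theta,0)=X_0(\theta)$, $\bU_0(\theta,0)=U_0(\theta)$, $\bB_0(\theta,0)=B_0(\theta)$, together with $\bB_0=\del_\theta \bX_0$ from the magnetic-Lagrangian setup, handle five components. For $\Phi(0)$ I would use the divergence-free and boundary-tangency of $u_0$ (or equivalently the compatibility calculation \eqref{compatConditExpl} at $t=0$) to see that the initial flux integral vanishes, which via the definition \eqref{wDefn} also gives $W(0,\theta)=0$ and $\bW(0,a)=0$.

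The only subtle step, and likely the main source of careful bookkeeping, is ensuring that when the evolution identities for $W$ and $\bW$ are inserted into the equation for $\bU|_{\psi=0}-U$ and for $\bB-\bX_\theta$ (via the identity \eqref{AuxiliaryEqn} used in Corollary~\ref{bBminbXCor}), the resulting chained dependencies still close as a bounded map $\sZ \to \sZ$ with no hidden derivative loss. Concretely, one must check that the nonlocal integral operators from Lemma~\ref{WPartialsLem}, which involve compositions with $\Phi(t)$ and kernels like $K_R(X(\theta)-X(\vartheta))$, preserve $H^2$ regularity on $S^1$ and $H^{5/2}$ regularity on $\Sigma$. This is routine but requires cataloging the kernel smoothness afforded by the fact that $\Gamma(t)$ is still embedded and smooth on $[0,T]$ (using Proposition~\ref{dcXlemma} to ensure $\delta_\Gamma(t)>0$ for $t>0$ and $\xi\in\sB$), so that $K_R$ is nonsingular on the relevant diagonal. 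Once this is in hand, the bound $\sup_t \lV \cT_\xi \rV_{B(\sZ,\sZ)} \le C$ is immediate, completing the proof.
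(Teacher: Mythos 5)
Your proposal is correct and follows essentially the same route as the paper: the paper's proof is a one-line assembly citing precisely the same set of results (Lemmas~\ref{cTDeltaxLem}, \ref{DerivDiffLem1}, \ref{WPartialsLem}, Corollaries~\ref{cT1Cor} and~\ref{bBminbXCor}, Proposition~\ref{BtMinusUthetaLinearProp}), together with the observation that $\Xi(0)=0$ follows from the choice of $\xi_0$ and Definition~\ref{Xidefn}. Your additional checks on $\Phi(0)=0$ and the kernel boundedness are correct but are subsumed in the cited lemmas, so the paper does not spell them out.
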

\begin{proof}
As a consequence of the initial data $\xi_0$ chosen for the $\xi$ system and Definition~\ref{Xidefn}, one finds $\Xi(0)=0$. The result then follows by applying Lemmas~\ref{cTDeltaxLem},~\ref{DerivDiffLem1}, and~\ref{WPartialsLem}, Corollaries~\ref{cT1Cor} and~\ref{bBminbXCor}, and Proposition~\ref{BtMinusUthetaLinearProp}.
\end{proof}
Now we prove $\Xi(t)=0$ and record the resulting identities.
\begin{corollary}\label{identsResolved}
    For all $t$ in $[0,T]$, $\Xi(t)=0$, and with the various quantities below given by Definition~\ref{finaldefns}, for $a=(\theta,\psi)$ in $\Sigma$ and $x$ in $\Omega(t)$,
\begin{align*}
  X(t,\theta)       &= \bX(t,\theta,0),     &\quad
  \del_t \bX(t,a)   &= \bU(t,a),            &\quad
  w_\Omega(t,x)     &= 0,
  \\
  U(t,\theta)       &= \bU(t,\theta,0),     &\quad
  \del_\theta\bX(t,a)&= \bB(t,a),           &\quad
  \Phi(t)           &= 0.
  \\
  B(t,\theta)       &= \bB(t,\theta,0),     &\quad
  \del_t \bB(t,a)   &= \del_\theta\bU(t,a), &&
\end{align*}
\end{corollary}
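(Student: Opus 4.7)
The plan is to show that $\Xi(t)\equiv 0$ on $[0,T]$ by a standard energy/Gronwall argument for the linear system \eqref{XiFinalSysProved}, and then read off the listed identities from Definition~\ref{Xidefn}. The main point is that the principal symbol $\bm{\mathrm{J}}_0$ in \eqref{bJ0def} is literally symmetric (not merely symmetrizable), with the $\del_\theta$-block acting only on the first four components $(\bU|_{\psi=0}-U,\bB|_{\psi=0}-B)$ and annihilating the remaining components, which include $\bX|_{\psi=0}-X$, $W$, $\bW$, $\bB-\bX_\theta$, and $\Phi$. Since $\cT_\xi\in B(\sZ,\sZ)$ is uniformly bounded in $t$, the system is a bounded perturbation of a constant-coefficient symmetric hyperbolic system, and uniqueness for zero data is automatic.

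Concretely, I would first test the $\bm{\mathrm{J}}_0\del_\theta$ part against $\Xi$ in the $L^2$-inner product on the relevant $S^1$- and $\Sigma$-components. Symmetry of $\bm{\mathrm{J}}_0$ together with periodicity in $\theta$ kills the first-order contribution after integration by parts, so only the $\cT_\xi\Xi$ term survives. To control higher regularity — needed because the norm in $\sZ$ involves $H^2(S^1)$ and $H^{5/2}(\Sigma)$ pieces — I would commute $\bm{\langle\grad\rangle}^s$ (with $s=2$ on the $S^1$-components and $s=5/2$ on the $\Sigma$-components) through the equation. Because $\bm{\mathrm{J}}_0$ has constant entries, $[\bm{\langle\grad\rangle}^s,\bm{\mathrm{J}}_0\del_\theta]=0$, so no commutator terms appear from the principal part; the only estimate needed is the continuity bound $\lV\cT_\xi\Xi\rV_\sZ\leq C\lV\Xi\rV_\sZ$ established in the lemmas of this subsection (Lemmas~\ref{cTDeltaxLem}, \ref{DerivDiffLem1}, \ref{WPartialsLem}, Corollaries~\ref{cT1Cor}, \ref{bBminbXCor}, and Proposition~\ref{BtMinusUthetaLinearProp}). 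This yields
\[
\tfrac{d}{dt}\lV\Xi(t)\rV_\sZ^2 \leq C\lV\Xi(t)\rV_\sZ^2,
\]
and Gronwall, together with $\Xi(0)=0$, forces $\Xi(t)\equiv 0$ on $[0,T]$.

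Once $\Xi\equiv 0$, the listed identities fall out by unpacking Definition~\ref{Xidefn}. Vanishing of the first three components gives $X=\bX|_{\psi=0}$, $U=\bU|_{\psi=0}$, and $B=\bB|_{\psi=0}$. Vanishing of $W$ and $\bW$ at $\psi=0$, combined with $\Phi(t)=0$ and the explicit formula \eqref{wDefn}, yields $w_\Omega\equiv 0$ in $\Omega(t)$ (so also $\bW\equiv 0$ on all of $\Sigma$). Plugging $\bW=0$ into the identity $\del_t\bX=\bU+\bW$ from \eqref{XUWident} gives $\del_t\bX=\bU$. Vanishing of the sixth component gives $\del_\theta\bX=\bB$. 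Finally, $\del_t\bB=\del_\theta\bU$ follows from Proposition~\ref{BtMinusUthetaLinearProp}, which states $\bB_t-\bU_\theta=\cT_b^1\Xi=0$.

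The only slightly delicate point — and the one I would be most careful about — is matching the regularity indices in $\sZ$ with the operator norm bounds of $\cT_\xi$. The outputs of the various $\cT$'s as stated (e.g.\ $H^{5/2}(\Sigma)$ for $\cT_*$ and $\cT_b^1$, $H^2(S^1)$ for $\cT_u$, $\cT_b^2$, $\cT_{\Delta x}$, $\cT_w^1$) are tuned precisely to land in the norms appearing in the definition of $\lV\cdot\rV_\sZ$, so the Gronwall loop closes at exactly the level at which $\Xi$ is defined. No interpolation or bootstrap is required; this is the content of having proved the individual Lipschitz/continuity bounds at the correct regularity in the preceding lemmas.
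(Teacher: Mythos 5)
Your proof is correct, and the reading-off of the identities from $\Xi\equiv 0$ matches the paper exactly. The only difference is in how you establish $\Xi\equiv 0$: you run an energy estimate for \eqref{XiFinalSysProved} — exploiting that $\bm{\mathrm{J}}_0$ is constant-coefficient and symmetric so the $\bm{\langle\grad\rangle}^s$-commutators vanish and the principal term is skew after integration by parts — and close with Gronwall; the paper instead writes the solution in Duhamel form
\[
\Xi(t) = \int^t_0 e^{(t-s)\bm{\mathrm{J}}_0 \del_\theta}\cT_\xi(s) \Xi (s)\, ds,
\]
notes that $e^{t\bm{\mathrm{J}}_0\del_\theta}$ is the solution operator of a constant-coefficient one-dimensional wave system and hence uniformly bounded on $\sZ$, and concludes by a fixed-point (Volterra) argument. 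These are interchangeable routes to the same uniqueness statement; your energy version has the marginal advantage of not requiring one to explicitly verify that the wave semigroup acts boundedly on the mixed-order space $\sZ$ (though that check is itself trivial since the generator is constant-coefficient), at the cost of needing $\lV\Xi(t)\rV_\sZ^2$ to be absolutely continuous, which the Duhamel formulation sidesteps. Both rely on the same inputs — symmetry of $\bm{\mathrm{J}}_0$, uniform boundedness of $\cT_\xi$ on $\sZ$, and $\Xi(0)=0$ — so the substance is the same.
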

\begin{proof}
Recalling the matrix $\bm{\mathrm{J}}_0$ from \eqref{bJ0def}, we observe that the semigroup operator $e^{t\bm{\mathrm{J}}_0 \del_\theta}$ is the solution operator to a constant coefficient, one-dimensional wave equation system. It trivially follows that $e^{t\bm{\mathrm{J}}_0 \del_\theta}$, in addition to the map $\cT_\xi(t)$, is uniformly bounded in $B(\sZ,\sZ)$. Meanwhile, \eqref{XiFinalSysProved} implies
\begin{align}\label{fixedpointInt}
\Xi(t) = \int^t_0 e^{(t-s)\bm{\mathrm{J}}_0 \del_\theta}\cT_\xi(s) \Xi (s) ds ,
\end{align}
and thus $\Xi(t)$ must be zero by a fixed point argument. In view of the definition of $\Xi(t)$, this implies the claimed identities.
\end{proof}

\vspace{1cm}
\begin{remark}
We note that $\bU$ and $\bB$ are now guaranteed to solve the original Lagrangian div-curl system \eqref{OriginalLagrDivCurlSys}, since the above shows the extra terms involving $w_\Omega$ that appear in \eqref{AugmentedInteriorSystemL} are zero.
\end{remark}
Now that the above result is proved, one can show the following. 
\begin{lemma}\label{gradharmonicRHS}
There is a function $\phi(t,x)$ defined for $x$ in $\Omega(t)$ which is harmonic in $\Omega(t)$, where
\begin{align*}
 & &   \bU_t - \bB_\theta + \bP^-_\grad = (\grad \phi)\circ \bX &    &(a \in \Sigma).
\end{align*}
\end{lemma}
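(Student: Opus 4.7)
The plan is to exploit the div--curl system for the auxiliary field
\[
\bV_* = \bU_t - \bB_\theta + \bP^-_\grad + \tfrac{1}{2}\left(\grad \bigh_- |h|^2\right)\circ \bX
\]
that was derived in the proof of Proposition~\ref{bVstarProp}. There, the forcing terms and boundary data of $\bV_*$ were expressed as linear maps applied to the error vector $\Xi$. Since Corollary~\ref{identsResolved} has now established $\Xi \equiv 0$, the whole system for $\bV_*$ becomes homogeneous. The first step is therefore simply to record that
\begin{align}
\grad\cdot(\cof(\bM)\bV_*) &= 0, & \grad^\perp\cdot(\bM \bV_*) &= 0 & &(a \in \Sigma),\\
\bm{N}\cdot \bV_* &= 0 & &\mbox{at $\psi = 0$ and $\psi = -1$,} & \int^\pi_{-\pi} \bV_* \cdot \bX_\theta \, d\theta &= 0 \ \ (\psi = -1).
\end{align}

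The second step is to transfer this system to Eulerian coordinates. Thanks to Corollary~\ref{identsResolved}, we now have $\bX_\theta = \bB$, $\bX_t = \bU$, and $w_\Omega = 0$, so $\bX(t,\cdot):\Sigma \to \Omega(t)$ is a genuine Lagrangian diffeomorphism (the Jacobian identity of Proposition~\ref{JacobianIdent} applies). Setting $v = \bV_* \circ \bX^{-1}$, a direct translation of the system above yields that $v$ is divergence-free and curl-free in $\Omega(t)$, satisfies $n\cdot v = 0$ on $\Gamma(t) \cup \{x_2=0\}$, and has zero circulation along the floor $\{x_2=0\}$.

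The third step is to deduce that $v \equiv 0$. The space of divergence-free, curl-free vector fields in the periodic strip $\Omega(t)$ which are tangent to both boundary components is one-dimensional, parameterized by the circulation around the floor (writing $v = \grad^\perp \Psi$ for a harmonic stream function $\Psi$, tangency forces $\Psi$ to take constant values on each boundary component, and the circulation coincides with the jump between those constants). The vanishing-circulation condition pins down the trivial element of this one-dimensional space, giving $v = 0$ and hence $\bV_* = 0$ throughout $\Sigma$. This is the one conceptual point in the argument; everything else is a mechanical consequence of $\Xi \equiv 0$.

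Finally, taking $\phi(t,x) = -\tfrac{1}{2}(\bigh_- |h|^2)(t,x)$ for $x$ in $\Omega(t)$ completes the proof. Since $\Omega(t) \subset \Omega_-$ (the plasma lies below the interface), Definition~\ref{harmonicextdefn} guarantees that $\bigh_- |h|^2$ is harmonic throughout $\Omega(t)$, hence so is $\phi$. The identity $\bV_* = 0$ then reads
\[
\bU_t - \bB_\theta + \bP^-_\grad = -\tfrac{1}{2}\left(\grad \bigh_- |h|^2\right)\circ \bX = (\grad \phi)\circ \bX,
\]
which is the claim of the lemma.
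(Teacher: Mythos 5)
Your proof is correct and follows the paper's route: use $\Xi\equiv 0$ from Corollary~\ref{identsResolved} to force $\bV_*=0$, then take $\phi=-\tfrac12\bigh_-|h|^2$, harmonic in $\Omega(t)\subset\Omega_-$ by construction. The only difference is that the paper reads $\bV_*=0$ directly off the operator identity $\bV_*(t)=\cT_*(t)\Xi(t)$ that Proposition~\ref{bVstarProp} already establishes, whereas you re-derive the homogeneous div--curl system and re-run its uniqueness argument in Eulerian coordinates, which is a longer but logically equivalent path.
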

\begin{proof}
Because $\Xi = 0$, Proposition~\ref{bVstarProp} implies $\bV_* = 0$. In view of \eqref{Vstardef}, the above assertion is then immediate.
\end{proof}
Given the identity above, it is not terribly difficult to justify that $\bU$ and $\bB$ satisfy the original Lagrangian evolution equations \eqref{OriginalLagrangianSystem}, and that the original ideal MHD system is indeed solved.
\begin{proposition}\label{maintheoremforward}
Taking $\xi$ to be the solution provided by Proposition~\ref{improvedxireg} to the Lagrangian wave system and defining the terms given in Definition~\ref{finaldefns} as well as 
\begin{align}
&&
\begin{aligned}
u(t,x)&=\bU(t,\bX^{-1}(t,x)), \\[.3em]
b(t,x)&=\bB(t,\bX^{-1}(t,x)), \\
p(t,x)&=p_-(t,x) + \half(\bigh_-|h|^2)(t,x),
\end{aligned}
&&(t\in [0,T], \ x\in \Omega(t)),
\end{align}
we obtain a solution to the original ideal MHD system, \eqref{IdealMHD1}--\eqref{IdealMHD6}. We have that $X$ is in $C^0([0,T];H^{k+2}(S^1))$, that $u$, $b$, and $p$ are in $C^0([0,T];H^{k+1}(\Omega(t)))$, and that $h$ is in $C^0([0,T];H^{k+1}(\cV(t)))$.

Additionally, the corresponding interface $\Gamma(t)$ exhibits a splash at time $t=0$, with splash point $p_\splash$, and is non-self-intersecting for $0<t\leq T$. The external magnetic field $h(t,x)$ is zero at time $t=0$ only at the splash point $x=p_\splash$, and nonzero in all of $\overline{\cV(t)}$ for $0<t\leq T$.
\end{proposition}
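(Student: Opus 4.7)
The plan is to convert the already-established vanishing $\Xi\equiv 0$ (Corollary~\ref{identsResolved}) and the consequent $\bV_*\equiv 0$ (via Proposition~\ref{bVstarProp}) into the original Eulerian ideal MHD statements. Using the decomposition $\bP_\grad=\bP^-_\grad+\half(\grad\bigh_-|h|^2)\circ\bX$ from \eqref{intPressureGradrep} together with the definition $p=p_-+\half\bigh_-|h|^2$, the identity $\bV_*=0$ reads $\bU_t-\bB_\theta+\bP_\grad=0$ on $\Sigma$; combined with $\bB_t=\bU_\theta$ from Corollary~\ref{identsResolved} and the collapse of \eqref{AugmentedInteriorSystemL} to the original div-curl system \eqref{OriginalLagrDivCurlSys} (since $w_\Omega\equiv 0$ and the trace identities of Corollary~\ref{identsResolved} remove the $\underline u$ discrepancy), pushing forward by the diffeomorphism $\bX(t,\cdot)$ and invoking the chain rule yields the interior equations \eqref{IdealMHD1}. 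The vacuum system \eqref{IdealMHD2}, the tangency $n\cdot h=0$ on $\Gamma\cup\mathcal{W}$, and the circulation conditions \eqref{IdealMHD6} are built into Definition~\ref{formalhdefn}; the kinematic condition \eqref{KinematicCondition} is immediate from $\del_t\bX=\bU$.

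For the pressure, we view \eqref{InteriorPressureGradSystem} (pushed forward via $\bX$) as a mixed Dirichlet--Neumann problem for a scalar $p_-$ on $\Omega(t)$ with $\grad p_-=\bP^-_\grad\circ\bX^{-1}$: the condition $\bX_\theta\cdot\bP^-_\grad=0$ at $\psi=0$ forces $\del_\tau p_-|_\Gamma=0$, so $p_-$ is constant along the connected curve $\Gamma$, which we normalize to zero; the $\psi=-1$ condition yields $\del_{x_2}p_-|_{x_2=0}=-\half\del_{x_2}(\bigh_-|h|^2)|_{x_2=0}$. Adding $\half\bigh_-|h|^2$ then gives $p|_\Gamma=\half|h|^2$ and $\del_{x_2}p|_{x_2=0}=0$, while the Poisson equation in \eqref{PressureSystem} follows by taking the Eulerian divergence of the momentum equation and using $\grad\cdot u=\grad\cdot b=0$. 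The remaining conditions \eqref{IdealMHD3}--\eqref{IdealMHD5} for $u$ and $b$ are inherited directly from the boundary data in \eqref{OriginalLagrDivCurlSys}.

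For the regularity statements, $\xi\in C^0([0,T];\sH^k)$ directly gives $X\in C^0([0,T];H^{k+2}(S^1))$. The identities $\del_\theta\bX=\bB$ and $\del_t\bX=\bU$ in Corollary~\ref{identsResolved}, combined with the bounds in Proposition~\ref{SolnMainDivCurlProp}, promote $\bX$ to one extra derivative of regularity over $\bU$ and $\bB$ (the wave-structure gain remarked on after Proposition~\ref{SolnMainDivCurlProp}); then $u=\bU\circ\bX^{-1}$ and $b=\bB\circ\bX^{-1}$ belong to $C^0([0,T];H^{k+1}(\Omega(t)))$ by standard composition estimates. The bound for $h$ is Proposition~\ref{hEstimateProp}, and the bound for $p$ follows from standard elliptic regularity for the mixed problem above combined with the $H^{k+1}$ control on $|h|^2$ along $\Gamma$.

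Finally, the geometric and magnetic conclusions come from results already in hand. Proposition~\ref{dcXlemma} applied to $\xi\in\sB$ gives $ct\leq\delta_\Gamma(t)\leq Ct$ for $t\in[0,T]$, so $\Gamma(t)$ self-intersects exactly at $t=0$, at the prescribed point $p_\splash$, and is non-self-intersecting on $(0,T]$; Lemma~\ref{hTanDirLemma} then shows $h$ is nonvanishing on $\overline{\cV(t)}$ for $t\in(0,T]$ and vanishes at $t=0$ exactly at $p_\splash$. The genuinely hard obstacles (the closed evolution of $\Xi$ and the uniform-in-$\delta$ control of $h$) have been met in the preceding sections, so the remaining work is careful bookkeeping of boundary conditions and Sobolev indices. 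The one subtle verification is ensuring that the vanishing of $\bV_*$ yields the full Eulerian momentum equation rather than merely its harmonic part; this is automatic because Proposition~\ref{bVstarProp} asserts $\bV_*\equiv 0$, not just that its div--curl data vanish.
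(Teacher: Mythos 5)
Your proposal is correct and follows essentially the same route as the paper, resting on the same three pillars: the vanishing of the error vector $\Xi$ (hence $\bV_*\equiv0$ from Proposition~\ref{bVstarProp}), the div-curl characterization of $\bU$ and $\bB$, and Lemma~\ref{dcXlemma}/Lemma~\ref{hTanDirLemma} for the geometric and magnetic claims. You are slightly more direct than the paper at one point: you go from $\bV_*\equiv0$ straight to $\bU_t-\bB_\theta+\bP_\grad=0$ using the decomposition $\bP_\grad=\bP^-_\grad+\tfrac12(\grad\bigh_-|h|^2)\circ\bX$, whereas the paper restates only the weaker conclusion of Lemma~\ref{gradharmonicRHS} (that the combination equals a harmonic gradient), writes the corresponding div-curl system for $\bU_t-\bB_\theta+\bP_\grad$ with a separately verified circulation condition \eqref{surfacecondition}, and invokes uniqueness from Lemma~\ref{GeneralDivCurlSysLem}. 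The two are logically equivalent; the paper's detour trades a line of algebra for a self-contained uniqueness argument. The paper also separately re-verifies $\grad\cdot u=\grad\cdot b=0$ by showing the Lagrangian divergences satisfy a linear wave system with zero data, whereas you treat this as built into \eqref{OriginalLagrDivCurlSys}; since $\bU,\bB$ are by construction solutions of that system, your shortcut is legitimate. Finally, you spell out the pressure boundary conditions (constancy of $p_-$ on $\Gamma$ from the tangential condition, cancellation of the Neumann data at $x_2=0$) more explicitly than the paper, which simply asserts these are immediate; this is a matter of exposition, not substance.
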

\begin{proof}
The following div-curl system is satisfied for reasons explained below.
\begin{align}
(a \in \Sigma) \qquad &
\left\{
    \begin{aligned}
        \grad \cdot (\, \cof \bM (\bU_t - \bB_\theta + \bP^-_\grad) ) &= 0 \\
        \grad^\perp\cdot (\, \bM (\bU_t - \bB_\theta + \bP^-_\grad) )   &= 0 \, \Lcm
    \end{aligned}
\right. \label{divcurlFinal}\\
(\psi = 0,-1)  \qquad & \ \Big\{ \, N \cdot (\bU_t - \bB_\theta + \bP^-_\grad) = -\half N\cdot (\grad \bigh_- |h|^2) \circ X ,
\end{align}
where $N=-e_2$ at $\psi=-1$. The div-curl equations of \eqref{divcurlFinal} can be deduced from Lemma~\ref{gradharmonicRHS}, and the boundary condition at $\psi=0$ follows from Lemma~\ref{SurfIdentLems} with the aid of Corollary~\ref{identsResolved}. It is not hard to check from the systems for $\bU$ and $\bB$ that $\bU_2=\bB_2=0$ at $\psi=-1$, and then the condition at $\psi=-1$ in the system above follows from the boundary conditions in the system \eqref{InteriorPressureGradSystem} for $\bP^-_\grad$.

Taking $\bP_\grad = (\grad p) \circ \bX$ for $p$ as defined in the statement of the theorem, it is then not hard to show that
\begin{align}
\begin{aligned}
(a \in \Sigma) \qquad &
\left\{
    \begin{aligned}
        \grad \cdot (\, \cof \bM (\bU_t - \bB_\theta + \bP_\grad) ) &= 0 \\
        \grad^\perp\cdot (\, \bM (\bU_t - \bB_\theta + \bP_\grad) )   &= 0 \, \Lcm
    \end{aligned}
\right. \\
(\psi = 0,-1)  \qquad & \ \big\{ \, N \cdot (\bU_t - \bB_\theta + \bP_\grad) = 0.
\end{aligned}
\end{align}
Using the same kind of reasoning as that used in the proof of Corollary~\ref{SurfIdentLems} to show \eqref{averageZeroEvo}, one then  finds
\begin{align}\label{surfacecondition}
& & \int^\pi_{-\pi} X_\theta \cdot (\bU_t - \bB_\theta + \bP_\grad) \, d\theta & = 0   &(\psi = 0) .
\end{align}
Using these together with the uniqueness implied by Lemma~\ref{GeneralDivCurlSysLem} (following a trivial modification to replace the integral condition over $\psi=-1$ with an integral condition at $\psi=0$, such as \eqref{surfacecondition}), we arrive at
\begin{align}
& & \bU_t - \bB_\theta + \bP_\grad &= 0 & &(a \in \Sigma) , \label{finalUEvo}\\ \shortintertext{and from Corollary~\ref{identsResolved},}
& & \bB_t - \bU_\theta &=0 &&(a\in\Sigma).
\end{align}
By backing out of Lagrangian coordinates, using the fact that $\bX_\theta=\bB$, from Corollary~\ref{identsResolved}, we thus find the evolution equations for $u$ and $b$ in \eqref{IdealMHD1} are satisfied. The divergence free conditions for $u$ and $b$ can be shown to hold by checking
\begin{align}\label{divsZero}
\grad \cdot (\, \cof(\bM) \bU ) =0 \qquad\qquad\mbox{and}\qquad\qquad \grad \cdot (\, \cof(\bM) \bB ) =0.
\end{align}
 The above identities can be justified by observing the quantities in the left-hand sides are zero at $t=0$, and that they solve a simple linear wave system for $t$ in $[0,T]$, namely
 \begin{align}
 \del_t[\grad \cdot (\, \cof(\bM) \bU )] &= \del_\theta  [\grad \cdot (\, \cof(\bM) \bB )], \\
 \del_t[\grad \cdot (\, \cof(\bM) \bB )] &= \del_\theta[\grad \cdot (\, \cof(\bM) \bU )].
 \end{align}
 For example, using \eqref{finalUEvo} and the system solved by $\bP^-_\grad$, the first equation follows from
 \begin{align}
 \del_t[\grad \cdot (\, \cof(\bM) \bU )] &= \grad \cdot (\, \cof (\grad \bU^t)\bU + \cof(\bM) \bU_t) \\
 &=\grad \cdot (\, \cof (\grad \bU^t)\bU + \cof(\bM)\bB_\theta) - \grad \cdot (\, \cof(\bM)\bP_\grad)\\
  &=\grad \cdot (\, \cof (\grad \bB^t)\bB + \cof(\bM)\bB_{\theta}),
 \end{align}
and the second equation is verified by using the identities \eqref{cofId} and $\bB_t=\bU_\theta$ in particular:
\begin{align}
\del_t [\grad \cdot (\,\cof (\bM) \bB)] &=\grad \cdot (\,\cof (\grad\bU^t)\bB) + \grad \cdot (\,\cof (\bM) \bB_t) \\
&=\grad \cdot (\,\cof (\grad\bB^t)\bU) +\grad \cdot (\,\cof (\bM) \bU_\theta) .
\end{align}
Thus, we do indeed get \eqref{divsZero}, which implies $u$ and $b$ are divergence free. The boundary conditions \eqref{IdealMHD3}--\eqref{IdealMHD4} are immediate from the system \eqref{OriginalLagrDivCurlSys}, and, by the construction of $h$, the vacuum system \eqref{IdealMHD2}, \eqref{IdealMHD5}, \eqref{IdealMHD6} is of course satisfied for $\eta_1=\eta_2=1$.

That the various quantities have the regularities claimed in the statement of the theorem is not difficult to check as a consequence of the fact that $\xi$ is in $\sB$. By Lemma~\ref{dcXlemma}, this fact also implies that the interface becomes non-self-intersecting for $t>0$, and by Lemma~\ref{hTanDirLemma}, that the external magnetic field $h$ only vanishes at the splash point at time $t=0$.
\end{proof}
\subsection{Sobolev data existence theorem}\label{maintheoremsubsection}
Finally, we are ready to prove our first main theorem, following from the time-reversibility property for the ideal MHD system together with the result of Proposition~\ref{maintheoremforward}, which provides a solution to the ideal MHD equations that starts with a splash--squeeze singularity at time $t=0$ and features an interface which opens up, becoming non-self-intersecting for $t$ positive.

Whereas $k$ is considered to be a fixed integer in the results stated above, we emphasize that the result below holds for arbitrary integer $k\geq 4$. This means that we are able to create splash--squeeze singularities for the ideal MHD system for which the interface has arbitrarily high degree of smoothness up to and including the moment at which the singularity has formed.
\begin{theorem}\label{maintheorem}
For any $k\geq 4$, for some positive $t_\splash$, there exists a solution $(u_\play,b_\play,h_\play,\Gamma_\play)$ to the free-boundary ideal MHD equations for which $\Gamma_\play(t)$ forms a single glancing self-intersection at time $t=t_\splash$ as in a standard splash singularity and the $h_\play$ is nonzero at all points inside the vacuum $\cV_\play(t)$ for all $t$ in $[0,t_\splash]$. In particular, the solution exhibits a splash--squeeze singularity at the time $t_\splash$.

Furthermore, the interface $\Gamma_\play(t)$ is parametrized by $X_\play$ in $C([0,t_\splash];H^{k+2}(S^1))$, $u_\play$ and $b_\play$ are in $C([0,t_\splash]; H^{k+1}(\Omega(t)))$, and $h_\play$ is in $C([0,t_\splash]; H^{k+1}(\cV(t)))$.
\end{theorem}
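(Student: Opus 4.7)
The plan is to deduce Theorem~\ref{maintheorem} as an essentially immediate corollary of Proposition~\ref{maintheoremforward} via a time-reversal argument. Proposition~\ref{maintheoremforward} already delivers a solution $(u,b,h,\Gamma)$ on some time interval $[0,T]$ whose interface starts in a splash configuration at $t=0$ (with splash point $p_\splash$ and $H^{k+2}$ regularity), opens up for $t>0$, and carries a vacuum field $h$ that is nonzero everywhere on $\overline{\cV(t)}$ for $t>0$ and vanishes only at $p_\splash$ at $t=0$. So what remains is to flip the arrow of time.

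First I would verify the reversibility symmetry of the system~\eqref{IdealMHD1}--\eqref{IdealMHD6}: the map
\[
(u(t,x),b(t,x),h(t,x),p(t,x),\Omega(t))\ \longmapsto\ (-u(T-t,x),\,b(T-t,x),\,h(T-t,x),\,p(T-t,x),\,\Omega(T-t))
\]
sends solutions to solutions. One checks that the $u$-equation is invariant (both $\del_t u$ and $u\cdot\grad u$ change sign twice, while $b\cdot\grad b-\grad p$ is untouched), that the $b$-equation flips sign on both sides, and that $\grad\cdot u=\grad\cdot b=0$, the vacuum system for $h$, the boundary conditions $n\cdot b=n\cdot h=0$, $p=\tfrac12|h|^2$, $n\cdot h=0$ on $\cW$, and the circulation conditions $\int_{\cW_i} h\cdot d\vec r=\eta_i$ are all time-independent in form, hence preserved. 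The kinematic condition $\Gamma(t)=\bX(t,\Gamma_0)$ is also preserved, since reversing $t$ and flipping the sign of $u$ reverses the flow map and so $\Gamma(T-t)$ is still transported by the new velocity field $-u(T-t,\cdot)$.

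Next, I would define
\[
\Gamma_\play(t)=\Gamma(T-t),\quad u_\play(t,x)=-u(T-t,x),\quad b_\play(t,x)=b(T-t,x),\quad h_\play(t,x)=h(T-t,x),
\]
for $t\in[0,T]$, and set $t_\splash:=T$. By the symmetry above, the quadruple $(u_\play,b_\play,h_\play,\Gamma_\play)$ solves \eqref{IdealMHD1}--\eqref{IdealMHD6} on $[0,t_\splash]$. The properties now translate one-to-one: since $\Gamma(s)$ is non-self-intersecting for $s\in(0,T]$ and forms a single glancing self-intersection at $s=0$, the reversed curve $\Gamma_\play(t)=\Gamma(T-t)$ is non-self-intersecting for $t\in[0,t_\splash)$ and forms a single glancing self-intersection at $t=t_\splash$, giving the splash. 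Similarly, $h_\play(t,\cdot)=h(T-t,\cdot)$ is nonvanishing on $\overline{\cV_\play(t)}$ for all $t\in[0,t_\splash)$ by Proposition~\ref{maintheoremforward}, and at $t=t_\splash$ vanishes only at the single splash point, so that $h_\play$ remains nontrivial in each pinched-off vacuum component at the moment of splash (squeezing). Finally, the regularity statements for $X_\play$, $u_\play$, $b_\play$, $h_\play$ in the spaces $C([0,t_\splash];H^{k+2}(S^1))$ and $C([0,t_\splash];H^{k+1})$ follow immediately from the corresponding regularity of $(u,b,h,\Gamma)$ asserted by Proposition~\ref{maintheoremforward}, since time reversal and sign flip are isometries of these spaces.

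There is no real obstacle in this step — all the hard work is already contained in Proposition~\ref{maintheoremforward} and the uniform weighted estimates built up in Sections~\ref{lagrangianwavesystemsection}--\ref{vacuuminterfaceestimatessection}. The only item that requires some care is the bookkeeping of the symmetry: one must verify that the specific geometric constraints in our setup (the fixed floor $\{x_2=0\}$, the fixed vacuum walls $\cW_1,\cW_2$, the prescribed circulations $\eta_i$) are all static structures untouched by $t\mapsto T-t$, so no additional compatibility has to be checked. Once that verification is made, Theorem~\ref{maintheorem} is established with $t_\splash=T$.
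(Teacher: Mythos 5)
Your proposal is correct and follows exactly the same route as the paper: invoke Proposition~\ref{maintheoremforward} for a solution that opens up from an initial splash, then apply the time-reversal symmetry $(u,b,h,\Gamma)\mapsto(-u,b,h,\Gamma)\circ(T-t)$ and set $t_\splash=T$. The paper states the reversibility without checking it term-by-term, so your explicit verification of the symmetry in each equation and boundary condition is a small but welcome addition rather than a different approach.
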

\begin{proof}
For any such $k$, we are able to apply Proposition~\ref{maintheoremforward} to get the corresponding solution to the ideal MHD equations, which we denote by $u_\revplay$, $b_\revplay$, and $h_\revplay$, with $u_\revplay$ and $b_\revplay$ defined in the plasma domain $\Omega_\revplay$ and $h_\revplay$ in the vacuum domain $\cV_\revplay$, all continuously taking values in the appropriate Sobolev spaces on the time interval $[0,T]$. Let us denote $t_\splash=T$. Appealing to the time-reversibility of the system, we construct a new solution to \eqref{IdealMHD1}--\eqref{IdealMHD6} for $t$ in $[0,t_\splash]$ by defining for $\Omega_\play(t)=\Omega_\revplay(t_\splash-t)$ and $\cV_\play(t)=\cV_\revplay(t_\splash-t)$ the vector fields
\begin{align}
&& u_\play(t,x) &= - u_\revplay(t_\splash-t,x),
&& (t\in[0,t_\splash), \ x\in \Omega_\play(t)), \\
&& b_\play(t,x) &=  b_\revplay(t_\splash-t,x),
&& (t\in[0,t_\splash), \ x\in \Omega_\play(t)), \\
&& h_\play(t,x) &= h_\revplay(t_\splash-t,x)
&& (t\in[0,t_\splash], \ x\in \cV_ \play(t)),
\end{align}
and correspondingly defining $X_\play$ and $\Gamma_\play(t)$. This produces a solution with the described Sobolev regularity which starts with a non-self-intersecting interface and culminates in a splash--squeeze singularity at the time $t=t_\splash$.
\end{proof}

\section{Supporting lemmas and estimates}\label{auxiliaryestimates}

\subsection{Extension lemmas}
Let us define
\begin{align}
\Sigma_- = \{(\theta,\psi):\theta\in S^1, \ \psi \leq 0\}
\end{align}
and think of $\Sigma_-$ as sitting below $S^1$. Whereas $\Sigma_+(\delta)$, defined in \eqref{SigmaPlusDefn}, is analogous $\Omega_+$, the domain $\Sigma_-$ is analogous to $\Omega_-$. The following is a basic lemma providing a linear extension operator for extending vector fields defined on just $S^1$ to the rest of $\Sigma_-$.
\begin{lemma}\label{vecLowerExtLem}
There exists a linear map $\cE_{\operatorname{lin}}$ acting on $\R^2$-valued vector fields $V$ in $H^s(S^1)$ for any $s\geq 0$, producing an $\R^2$-valued vector field $\cE_{\operatorname{lin}} V$ in $H^{j+1/2}(\Sigma_-)$ with
\begin{align}
\cE_{\operatorname{lin}} V(\theta,0) = V(\theta) \hspace{2cm}  (\theta \in S^1), \\
\supp (\cE_{\operatorname{lin}} V )\subset\{(\theta,\psi):\theta\in S^1, \ -2\leq \psi \leq 0 \},
\end{align}
and for some constant $C_s$ dependent on $s$ alone,
\begin{align}
& & \lV\cE_{\operatorname{lin}} V\rV_{H^{s+1/2}(\Sigma_-)} &\leq C_s \lV V \rV_{H^{s}(S^1)}. &
\end{align}
\end{lemma}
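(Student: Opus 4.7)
The plan is to construct $\cE_{\operatorname{lin}}$ by the classical Fourier-based right inverse to the trace map, followed by multiplication with a cutoff in $\psi$ to enforce the support condition.

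First I would expand $V \in H^s(S^1;\R^2)$ as a Fourier series $V(\theta)=\sum_{n\in\Z} \hat V(n) e^{in\theta}$ and define the harmonic extension
\begin{align}
\tilde V(\theta,\psi) = \sum_{n\in\Z} \hat V(n)\, e^{in\theta}\, e^{\langle n\rangle \psi}
\qquad (\psi\le 0),
\end{align}
where $\langle n\rangle = (1+n^2)^{1/2}$. A direct computation with Plancherel and the identity $\int_{-\infty}^0 e^{2\langle n\rangle \psi}\,d\psi = (2\langle n\rangle)^{-1}$ shows that, for every integer $j\ge 0$,
\begin{align}
\|\tilde V\|_{H^{s+1/2}(\Sigma_-)}^2
\le C_s \sum_{n\in\Z} \langle n\rangle^{2s}|\hat V(n)|^2
= C_s \|V\|_{H^s(S^1)}^2,
\end{align}
since each tangential derivative pulls down a factor of $\langle n\rangle$ and each normal derivative pulls down a factor of $\langle n\rangle$ as well, while the $\psi$-integration contributes the crucial $\langle n\rangle^{-1}$ that accounts for the half-derivative gain. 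For non-integer $s$ one interpolates (or uses the Gagliardo seminorm characterization directly). By construction $\tilde V(\theta,0)=V(\theta)$.

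Next I would fix a smooth cutoff $\chi\in C^\infty_c(\R)$ with $\chi\equiv 1$ on $[-1,0]$, $\supp\chi\subset[-2,1]$, and $\chi(0)=1$, and define
\begin{align}
\cE_{\operatorname{lin}} V(\theta,\psi) = \chi(\psi)\, \tilde V(\theta,\psi).
\end{align}
Then $\cE_{\operatorname{lin}} V$ agrees with $V$ on $\psi=0$, is supported in $\{-2\le \psi \le 0\}$, and linearity is immediate. The norm bound is preserved up to a constant depending on finitely many derivatives of $\chi$, via the Leibniz rule: derivatives falling on $\chi$ are harmless because they are bounded, and derivatives on $\tilde V$ are controlled by the estimate above.

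I do not expect any real obstacle here; this is a standard right-inverse-of-trace construction, and the half-derivative gain is exactly the content of the classical trace theorem. The only small point to watch is ensuring the constant $C_s$ can be taken independent of $V$ (which is automatic from linearity) and that the support and trace conditions hold simultaneously — the cutoff $\chi$ with $\chi\equiv 1$ near $\psi=0$ handles both.
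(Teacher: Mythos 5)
Your construction — Fourier-side exponential extension $e^{\langle n\rangle\psi}$ into $\Sigma_-$, Plancherel to get the $\langle n\rangle^{-1}$ half-derivative gain from the $\psi$-integral, then a smooth cutoff in $\psi$ — is exactly the paper's argument, which is stated in one line as "taking harmonic extensions of the components of $V$ to $\Sigma_-$ and multiplying by a smooth cutoff function." Your use of $e^{\langle n\rangle\psi}$ instead of $e^{|n|\psi}$ is a minor, standard variant that avoids the $n=0$ constant mode and is just as good here.
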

\begin{proof}
One can construct $\cE_{\operatorname{lin}}$ satisfying the stated properties by taking harmonic extensions of the components of $V$ to $\Sigma_-$ and multiplying by a smooth cutoff function.
\end{proof}

Now we discuss another extension operator, in this case nonlinear, which is designed specifically for parametrizations of interface curves. The point is to provide a mapping between $\Sigma_-$ and a domain $\Omega_-$ corresponding to some $X:S^1\to\Gamma$. This is primarily used to compare vector fields defined on a pair of domains $\Omega_-$ and $\underline \Omega_-$ associated to some $X$ and $\uX$ after pulling the two vector fields back to the common domain $\Sigma_-$.
\begin{proposition}\label{LowExtensionProp}
There is an extension map
\begin{align}
\cE_- : H^{k+2}_\gp  \to \{\mbox{\emph{Continuous maps from}} \ \Sigma_- \ \mbox{to} \ \Omega_-\}
\end{align}
where, for $X$ in $H^{k+2}_\gp$, we have $\cE_-(X)(\theta,0)=X(\theta)$. Moreover, $\cE_-$ satisfies the following additional properties:

\begin{enumerate}[(i)]
\item $\cE_-(X)$ is a bijection from $\Sigma_-$ to $\Omega_-$.

\item  We have
\begin{align}
& & \cE_-(X)(a) &= a  &   (a=(\theta,\psi)\in\Sigma_-, \  \psi \leq -2 ).
\end{align}

\item We have for integer $j$ with $0\leq j \leq k$ that
\begin{align}
\lV \cE_-(X)-\operatorname{id}\rV_{H^{j+5/2}(\Sigma_-)} \leq C\left( \lV X \rV_{H^{j+2}(S^1)}\right) .
\end{align}

\item Given $\xi$ and $\uxi$ in $\sB^\bbox$, for the corresponding pair $X$ and $\uX$ and $j$ as above,
 \begin{align}
 \lV \cE_-(X)-\cE_-(\uX) \rV_{H^{j+5/2}(\Sigma_-)} \leq C \lV X-\uX \rV_{H^{j+2}(S^1)} .
 \end{align}
 \end{enumerate}
\end{proposition}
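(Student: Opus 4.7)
The plan is to define $\cE_-(X)$ as a perturbation of a reference extension $E_0 := \cE_-(X_0)$ associated to the initial splash parametrization. Specifically, for $X \in H^{k+2}_\gp$, I would set
\[ \cE_-(X) := E_0 + \cE_{\operatorname{lin}}(X - X_0), \]
where $\cE_{\operatorname{lin}}$ is the linear extension operator of Lemma~\ref{vecLowerExtLem}. With this choice, property (iv) is immediate from Lemma~\ref{vecLowerExtLem} since $\cE_-(X) - \cE_-(\uX) = \cE_{\operatorname{lin}}(X - \uX)$; property (ii) reduces to the same property for $E_0$ because $\cE_{\operatorname{lin}}$ is supported in $\{-2 \leq \psi \leq 0\}$; and property (iii) follows by combining a fixed bound on $\|E_0 - \operatorname{id}\|_{H^{j+5/2}(\Sigma_-)}$ with the operator bound of Lemma~\ref{vecLowerExtLem} applied to $X - X_0$.

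To construct $E_0$, I would fix a smooth vector field $\tau_0 : S^1 \to \R^2$ playing the role of a downward-pointing direction along $X_0$; because $\tau_0$ is treated as a function of $\theta$ (not of position on $\Gamma_0$), the two arcs at the splash point are assigned distinct, opposite downward directions consistent with their respective tangent orientations. Picking a smooth cutoff $\eta$ on $(-\infty,0]$ with $\eta(0) = 1$ and $\eta(\psi) = 0$ for $\psi \leq -2$, I would then set
\[ E_0(\theta, \psi) := \eta(\psi)\bigl(X_0(\theta) + \psi\, \tau_0(\theta)\bigr) + \bigl(1 - \eta(\psi)\bigr)(\theta, \psi). \]
This reference map matches $X_0$ at $\psi = 0$, coincides with the identity for $\psi \leq -2$, and lies in $H^{k+5/2}(\Sigma_-)$ thanks to the smoothness of $\tau_0$ and $\eta$ together with the regularity of $X_0$. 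For appropriate choice of $\tau_0$ and sufficiently small support of $\eta$, its Jacobian is positive throughout $\Sigma_-$, so $E_0$ is a local diffeomorphism on $\Sigma_-^\circ$.

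The central difficulty is promoting local to global injectivity of $E_0$, which is where the glancing structure must enter. Near the pinch we have $X_0(\pi/2) = X_0(-\pi/2) = p_\splash$, but the two arcs separate quadratically in the tangential direction, as encoded by the strict positivity $\cG(X_0) \geq C^0_\gp$. Combined with the fact that $\tau_0$ assigns opposite downward directions to $\theta = \pi/2$ and $\theta = -\pi/2$, this gives for $|\psi|$ small that the images of the two arcs lie in disjoint open subsets of $\Omega_-$. Away from the splash $X_0$ is an embedding and the tubular map is injective by the standard implicit-function argument; a compactness-plus-degree argument then upgrades this to a global bijection $E_0 : \Sigma_-^\circ \to \Omega_-$, with boundary behavior matching $X_0$ on $\{\psi = 0\}$ (injective except at $p_\splash$, which lies on $\partial\Omega_-$ rather than in $\Omega_-$).

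For general $X \in H^{k+2}_\gp$, the $C^1$ smallness $\|X - X_0\|_{C^1(S^1)} \leq r_0$ built into $H^{k+2}_\gp$ together with Sobolev embedding for $\cE_{\operatorname{lin}}(X - X_0)$ keeps the Jacobian of $\cE_-(X)$ uniformly bounded below, and the lower bound $\cG(X) \geq \tfrac{1}{2} C^0_\gp$ from Definition~\ref{HsgpDef} preserves the quadratic separation of the two arcs uniformly in $\delta_\Gamma \in [0, \delta_0]$. Bijectivity of $\cE_-(X)$ onto $\Omega_-$ then follows from a continuity argument along the family $X_s := (1-s)X_0 + sX$, which remains in $H^{k+2}_\gp$ throughout for $r_0$ sufficiently small. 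I expect the hardest step to be this uniform-in-pinch injectivity control near the splash; all other properties reduce cleanly to Lemma~\ref{vecLowerExtLem} and the explicit construction of $E_0$.
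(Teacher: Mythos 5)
Your linear decomposition $\cE_-(X) = E_0 + \cE_{\operatorname{lin}}(X-X_0)$ is in the spirit of the paper's (very terse) sketch, and it does make properties (ii)--(iv) immediate, as you say. The gap is in the bijectivity claim (i) for general $X\in H^{k+2}_\gp$. You argue that the Jacobian of $\cE_-(X)$ stays positive because $\lV X-X_0\rV_{C^1(S^1)}\le r_0$ together with Sobolev embedding controls $\cE_{\operatorname{lin}}(X-X_0)$ in $C^1$. That control is not available: $\cE_{\operatorname{lin}}$ is built from harmonic extension (Lemma~\ref{vecLowerExtLem}), and the $\psi$-derivative on $\{\psi=0\}$ of the harmonic extension of $V=X-X_0$ is the periodic Hilbert transform of $V'$, which does \emph{not} map $C^0(S^1)$ boundedly into $L^\infty(S^1)$. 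Thus $\lV\grad\cE_{\operatorname{lin}}(X-X_0)\rV_{L^\infty(\Sigma_-)}$ is controlled only by a strictly stronger norm of $X-X_0$, say $C^{1,\alpha}$ or $H^{3/2+\varepsilon}$, and $H^{k+2}_\gp$ imposes $C^2$ membership but no quantitative bound beyond the $C^1$ one, so this quantity need not be $O(r_0)$. Consequently $\det\grad\cE_-(X)=\det\bigl(\grad E_0+\grad\cE_{\operatorname{lin}}(X-X_0)\bigr)$ can change sign, and (i) does not follow. The repair is either to note that every application of the proposition already assumes the stronger closeness $\lV X-X_0\rV_{H^{k+1}(S^1)}\le r_0$ (cf.\ \eqref{closenesscondition} and Definition~\ref{sBboxdefns}), which restores $C^1$-smallness of the perturbation via $H^{k+3/2}(\Sigma_-)\hookrightarrow C^1$, or to replace the harmonic-extension kernel in Lemma~\ref{vecLowerExtLem} by a compactly supported mollifier-type extension kernel, which still gains half a Sobolev derivative while being bounded from $C^1(S^1)$ to $C^1(\Sigma_-)$.

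Separately, your explicit convex combination $E_0 = \eta A + (1-\eta)B$, with $A(\theta,\psi)=X_0(\theta)+\psi\tau_0(\theta)$ and $B(\theta,\psi)=(\theta,\psi)$, is internally inconsistent: you ask both that $\supp(\eta)\subset[-2,0]$ (forced by (ii)) and that the support be ``sufficiently small.'' Either way, $\del_\psi E_0$ contains the term $\eta'(\psi)\bigl(A(\theta,\psi)-B(\theta,\psi)\bigr)$, whose size is $O(|\eta'|)$ and whose direction is governed by $X_0(\theta)-(\theta,\psi)$, which is $O(1)$ and not aligned with $\tau_0$; positivity of the determinant across the transition band is not evident. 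This is almost certainly why the paper's proof mentions a partition of unity: $E_0$ should be patched together from the tubular chart near $\{\psi=0\}$, the identity far below, and interpolating pieces in between, with the gluing arranged slice by slice so that each constant-$\psi$ level set is a non-self-intersecting curve strictly nested inside the previous one, rather than by a single global convex combination.
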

\begin{proof}
One can use Lemma~\ref{vecLowerExtLem} and a partition of unity to construct an extension map satisfying (i)-(iv), where $\cE_-(x)$ maps the lines of constant $\psi$ below $\{\psi=0\}$ to ``progressively flatter versions'' of $\Gamma$ lying below $\Gamma$, which sweep out the region $\Omega_-$.
\end{proof}

Now, we give the counterpart to the above proposition for extensions to the domain $\Sigma_+(\delta)$. This is for comparing vector fields defined on domains $\Omega_+$ and $\underline \Omega_+$ associated to parametrizations $X$ and $\uX$ of interfaces $\Gamma$ and $\underline \Gamma$. In these situations, it helps to ensure the extension operator satisfies a few more specialized properties.
\begin{proposition}\label{extensionProp2}
For $0<\delta\leq \delta_0$ there is an extension operator
\begin{align}
\cE_\delta : \{X:\xi \in \sB^\bbox(\delta)\}  \to \{\mbox{Continuous maps from} \ \Sigma_+(\delta) \ \mbox{to} \ \Omega_+\cup\{i\infty\}\} ,
\end{align}
where, for $\xi$ in $\sB^\bbox(\delta)$ with corresponding $X$, $\cE_\delta(X)(\theta,0)=X(\theta)$. We also have that $\cE_\delta(X)$

\begin{enumerate}[(i)]

\item maps the upper boundary as well as the lower boundary of $\Sigma_+(\delta)$ onto $\Gamma$,

\item maps $a_\star = (0,(1+\delta)/2)$ to $z_\star=i$ and $a_\infty = (\pi,(1+\delta)/2)$ to $i\infty$,

\item provides a two-to-one covering map from $\Sigma_+(\delta)\setminus \{a_\star ,a_\infty\}$ to $\Omega_+\setminus\{z_\star\}$ and a two-to-one covering map from $\Sigma^\circ_+(\delta)$ to $\cV$,

\item maps $\tilde \cW_i$ (defined in \eqref{tildeW1} and \eqref{tildeW2}) onto $\cW_i$ for $i=1,2$, and

\item satisfies the following for an $m\geq 0$ dependent only on $k$, where we denote 
\[\Sigma^\star=\Sigma_+(\delta)\cup\{|a-a_\star|\leq 10^{-1}\},\]
with the corresponding weighted norm defined in the obvious way, and we take any integer $j$ with $0\leq j \leq k$:
\begin{align}
\lV \cE_\delta(X)\rV_{H^{j+5/2}(\Sigma^\star,\delta,-m)} +\lV (\grad\cE_\delta(X))^{-1}\rV_{H^{j+3/2}(\Sigma^\star,\delta,-m)}\leq C \left(\lV X \rV_{H^{j+2}(S^1)}\right).
\end{align}
Additionally, for a pair $X,\uX$ corresponding to $\xi,\uxi$ in $\sB^\bbox(\delta)$,
\item  we have
\begin{align}
& & \cE_\delta(X)(a) &= \cE_\delta(\uX)(a)  &   \left(|a-a_\star|\leq 10^{-1} \mbox{ or }|a-a_\infty|\leq 10^{-1} \right),
\end{align}

\item and for any integer $j$ with $0\leq j \leq k$ (and $m$ as before),
 \begin{align}
 \lV \cE_\delta(X)-\cE_\delta(\uX) \rV_{H^{j+5/2}(\Sigma_+,\delta,-m)} \leq C \lV X-\uX \rV_{H^{j+2}(S^1)} .
 \end{align}

\end{enumerate}
\end{proposition}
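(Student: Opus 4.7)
The plan is to build $\cE_\delta(X)$ in two stages. First, define a canonical $X$-independent change of coordinates $\sP_\delta : \Sigma_+(\delta) \to \Sigma^\sharp_+(\delta)$ that flattens the two pinches at $\theta = \pm\pi/2$ via an explicit branched-covering model (essentially a rescaled $z \mapsto z^2$ centered at each pinch), absorbing the $\delta$-scaling into the target coordinates while being uniformly close to the identity away from the pinches. This encodes the $2$-to-$1$ covering structure of property (iii) and isolates the $\delta$-dependence into an explicit model. Second, I will map $\Sigma^\sharp_+(\delta)$ into $\Omega_+ \cup \{i\infty\}$ by combining three pieces: (a) a universal reference extension fixed in small neighborhoods of $a_\star$, $a_\infty$ and the two pinch points, depending only on $\delta$ and on $X_0$, so that (vi) and the mappings $a_\star \mapsto i$, $a_\infty \mapsto i\infty$, $\tilde\cW_i \mapsto \cW_i$ of (ii) and (iv) hold automatically; (b) in the bulk, away from those distinguished points, an $X$-dependent extension built analogously to $\cE_-$ of Proposition~\ref{LowExtensionProp}, parametrizing the interior of $\Omega_+$ by sweeping both the top and bottom boundaries of $\Sigma_+(\delta)$ onto $\Gamma$ via $X$; (c) a smooth partition-of-unity gluing, with cutoffs supported where both local definitions are smoothly compatible, preserving the covering property. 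Properties (i)--(iv) and (vi) then hold by construction.

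The main obstacle is the weighted estimate (v). On each Whitney-type square $\tilde Q_\nu$ of $\Sigma_+(\delta)$ of sidelength $\tilde\ell_\nu \gtrsim \delta$ near a pinch, the Jacobian of $\cE_\delta(X)$, dictated by the branched-covering model in $\sP_\delta$, scales like $\tilde\ell_\nu$ in one direction and $O(1)$ in the other, while its higher derivatives grow like inverse powers of $\tilde\ell_\nu$. The weight $\tilde\ell_\nu^{m}$ appearing in $H^{j+5/2}(\Sigma^\star,\delta,-m)$ compensates exactly for this growth, provided $m$ is chosen sufficiently large depending on $k$; an explicit calculation on the pinch model gives a uniform bound of the form $\|\cE_\delta(X)\|_{H^{j+5/2}(1.01 \tilde Q_\nu)} \leq C\tilde\ell_\nu^{m}(1+\|X\|_{H^{j+2}})$, and summing over $\nu$ in the spirit of Proposition~\ref{1stWeightedEstimate} closes the first half of (v). The bound on $(\grad \cE_\delta(X))^{-1}$ follows from a pinch-model lower bound on $|\det \grad \cE_\delta(X)|$ that is likewise comparable to $\tilde\ell_\nu$, yielding control in $H^{j+3/2}(\Sigma^\star,\delta,-m)$ after the expected loss of one derivative.

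Property (vii) then reduces to the Lipschitz estimate for the $X$-dependent bulk piece, since by construction the pinch model and the $a_\star$, $a_\infty$ models are $X$-independent. The support of $\cE_\delta(X) - \cE_\delta(\uX)$ is therefore bounded away from the pinches and the distinguished points, so no delicate weighted cancellation is needed and standard Lipschitz bounds for the $\cE_-$-type bulk map give the claimed control by $\|X - \uX\|_{H^{j+2}(S^1)}$. The main subtlety will be ensuring that the partition-of-unity gluing is consistent across the local models without destroying the covering property (iii); this is handled by choosing the cutoffs to act only in the direction transverse to the $\sP_\delta$-coordinate lines near the gluing interfaces, so that the $2$-to-$1$ structure of $\sP_\delta$ is preserved throughout.
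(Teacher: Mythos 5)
Your overall plan (reduce to an explicit branched-cover model, then build an $X$-dependent extension, glue with cutoffs) is in the spirit of the paper, but two of its structural ingredients are wrong, and a third key tool is missing.

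First, the branched cover cannot branch at the pinches. Requirements (ii) and (iii) force the two branch points of the degree-two cover $\Sigma_+(\delta)\to \Omega_+\cup\{i\infty\}$ to be exactly $a_\star$ and $a_\infty$ (these are the unique preimages of $z_\star$ and $i\infty$, which is the definition of a branch point); this is also what the Riemann--Hurwitz count $\chi(\text{annulus}) = 2\cdot\chi(\text{disk}) - n$ requires. The paper realizes this with the single global conformal map $\sP_\delta(a)=\cos(\sS_\delta(a))$, i.e.\ $e^{iz}$ followed by the Joukowski map, whose critical points sit at $a_\star$ and $a_\infty$, while near $\theta=\pm\pi/2$ the map is biholomorphic (approximately linear) and simply glues the two pinch strands together in the image. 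Your model of ``a rescaled $z\mapsto z^2$ centered at each pinch'' puts the ramification at the pinches, which is topologically and geometrically the wrong place and makes (ii)--(iii) fail.

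Second, your proof of (vii) relies on the extension being $X$-independent near the pinch points, so that $\cE_\delta(X)-\cE_\delta(\uX)$ is supported away from the pinches. This cannot hold: the boundary trace $\cE_\delta(X)(\theta,0)=X(\theta)$ must be satisfied for $\theta$ near $\pm\pi/2$, and $X(\theta)$ there is genuinely $X$-dependent. Note that (vi) in the statement only asserts $X$-independence near $a_\star$ and $a_\infty$, which are at distance $O(1)$ from the pinches; the pinch region is not covered by (vi), and the weighted norm in (vii) (with the \emph{negative} power $-m$) is designed precisely so that the difference is allowed to be nonzero near the pinch. So the ``support bounded away from the pinch'' shortcut is not available, and you must actually produce the weighted bound there.

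Third, the paper explicitly points out that both (v) and (vii) rely on the fact $\lV X-X_0\rV_{H^{k+1}(S^1)}\leq C\delta$ for $\xi\in\sB^\bbox(\delta)$, which controls the pinch geometry uniformly as $\delta\to 0$. Your argument never invokes this, but without it the $\delta$-dependence in the Jacobian estimates does not close. Concretely, the paper's route avoids your partition-of-unity gluing altogether: it composes the conformal branched cover $\sP_\delta$ with an explicit vertical stretching $\Psi_\delta(x_1+ix_2)=(x_1,\,x_2/(x_1^2+\delta))$ to flatten both $\Sigma_+^\sharp(\delta)$ and $\Omega_+^\sharp$ into uniformly chord-arc domains, and only then inserts the $X$-dependence via a single ordinary diffeomorphism $\sM_X$ between the two flattened domains; this isolates the covering structure entirely into the explicit $\sP_\delta$ and turns the pinch geometry into a bounded problem, at which point the $O(\delta)$ closeness to $X_0$ yields (v) and (vii) cleanly.
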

\begin{proof}
Consider positive $\delta\leq \delta_0$. First let us construct a nice, smooth map $\sP_\delta$ from the domain $\Sigma_+(\delta)$ to a dumbbell-shaped region we call $\Sigma^\sharp_+(\delta)$.

Observe that applying the map $\sS_\delta(\theta,\psi)=\theta+i(\psi-(\delta+\cos^2 \theta)/2)$ to $\Sigma_+(\delta)$ yields the symmetric region $\tilde \Sigma_+(\delta)$ sketched in Figure~\ref{tildeSigma}, where $a_\star$ has been mapped to $0$ and $a_\infty$ to $\pi$.

\begin{figure}[h!]
  \centering

  \begin{minipage}[t]{0.55\textwidth}
    \centering
\begin{tikzpicture}
  \begin{axis}[
    height=4cm,
    width=8cm,
    axis lines=none,
    domain=-pi:pi,
    samples=200,
    clip=false,
    enlargelimits=false
  ]

  \addplot [
    fill=gray!20,
    draw=none,
  ] coordinates {
    (-pi,1.3)
    (pi,1.3)
  }
  -- plot[domain=pi:-pi,samples=200] (\x,{-cos(deg(2*\x))})
  -- cycle;


  \addplot [very thick, black, domain=-pi:pi, samples=200] {-cos(deg(2*x))};
\addplot [
  fill=gray!20,
  draw=none,
  domain=-pi:pi,
  samples=200
] 
{1.2} -- plot[domain=pi:-pi, samples=200] (\x,{2.4 + cos(deg(2*\x))}) -- cycle;

\addplot [very thick, black, domain=-pi:pi, samples=200] {2.4 + cos(deg(2*x))};

  \addplot [
    only marks,
    mark=*,
    mark size=1.5pt,
    black
  ] coordinates {(0,1.2)};
  
  \addplot [
    only marks,
    mark=*,
    mark size=1.5pt,
    black
  ] coordinates {(-pi,1.2) (pi,1.2)};

  \node[below left] at (axis cs:-pi,1.2) {$-\pi$};
  \node[below right] at (axis cs:pi,1.2) {$\pi$};
  \node at (0,.8) {$0$};
  \end{axis}
\end{tikzpicture}
    \captionof{figure}{$\tilde \Sigma_+(\delta)$}
    \label{tildeSigma}
  \end{minipage}\hfill
  %
  \begin{minipage}[t]{0.4\textwidth}
    \centering
    \includegraphics[width=0.9\linewidth]{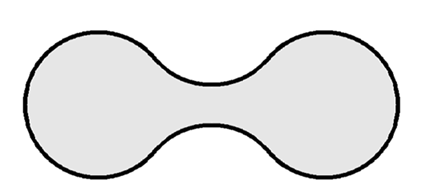}

    \begin{tikzpicture}[overlay, remember picture]
        \node at (-1.5,1.65) {$-1$};
        \node at (-1.15,1.65) {$\bullet$};
        \node at (1.4,1.65) {$1$};
        \node at (1.15,1.65) {$\bullet$};
    \end{tikzpicture}
    
    \captionof{figure}{$\Sigma^\sharp_+(\delta)$}
    \label{fig:intermediateVacuum}
  \end{minipage}

\end{figure}

We then have $z\mapsto e^{iz}$ gives a bijective map from $\tilde \Sigma_+(\delta)$ to a distorted, but symmetric, annular-type region, which has pinches of thickness approximately $\delta$ near $i$ and $-i$, the images of $\pi/2$ and $-\pi/2$, respectively. Applying the Joukowski map\footnote{The Joukowski map, which is two-to-one, is the reason that we end up with a double cover.} $\sJ(z)=(z+z^{-1})/2$ after applying $e^{iz}$ then yields the simply-connected region $\Sigma^\sharp_+(\delta)$ (also with pinch approximately $\delta$), which is sketched in Figure~\ref{fig:intermediateVacuum}, where $0$ is mapped to $1$ and $\pi$ is mapped to $-1$. Let us note $\sJ(e^{iz})=\cos z$. We thus get our desired map from $\Sigma_+(\delta)$ to $\Sigma^\sharp_+(\delta)$ (providing a two-to-one cover) by defining
\begin{align}\label{sPmapdefn}
&& \sP_\delta(a)&=\cos(\sS_\delta(a))& (a\in\Sigma_+(\delta)).
\end{align}

Applying the map $\Psi_\delta(x_1+ix_2)=(x_1,x_2/(x^2_1+\delta))$ to $\Sigma^\sharp_+(\delta)$ then results in a nice, simply-connected region $ \Sigma^\flat_+(\delta)$ whose pinch is uniformly bounded below. We define the map $Y_\delta:\Sigma_+(\delta)\to \Sigma^\flat_+(\delta)$ by $Y_\delta(a)=\Psi_\delta\circ\sP_\delta$.

Now consider $X$ corresponding to a fixed $\xi$ in $\sB^\bbox(\delta)$ for $0<\delta\leq\delta_0$. We proceed to define a bijection from $\Omega_+\cup\{i\infty\}$ to a region $\Omega^\flat_+$ which is similar to $\Sigma^\flat_+(\delta)$ in that it, too, is both simply-connected and of pinch uniformly bounded below. Let $p_\ell$ and $p_r$ be the unique pair of points on either side of the pinch of $\Gamma$ ($p_\ell$ to the left and $p_r$ to the right) such that $|p_\ell-p_r|=\delta_\Gamma$, and let $p_\odot$ be the midpoint of $p_\ell$ and $p_r$. Consider the map
\begin{align}
\sC_X(z)= 2\cos^2(p_r/2)(\tau(p_r))^{-1}(\tan(z/2)-\tan(p_\odot/2)),
\end{align}
where, above, $\tau(p_r)$ represents the tangent to $\Gamma$ at $p_r$ regarded as a complex number. It follows $\sC_X(z)$ maps $\Omega_+\cup\{i\infty\}$ to a bounded, roughly dumbbell-shaped region $\Omega^\sharp_+$ whose pinch is approximately $\delta$, with horizontal tangents at the new pinch points, i.e. $\sC_X(p_\ell)$ and $\sC_X(p_r)$. Thus $\Omega^\sharp_+$ is qualitatively similar to $\Sigma^\sharp_+(\delta)$. Moreover, $i\infty$ and $z_\star$ are mapped to points bounded away from the origin, just as $1$ and $-1$ are in Figure~\ref{fig:intermediateVacuum}. After this, applying the map $\Psi_\delta$ results in the region $\Omega^\flat_+=\Psi_\delta(\Omega^\sharp_+)$, which has pinch uniformly bounded below. Let us apply the composition of these maps to $X$ to get $X^\flat: S^1 \to \del\Omega^\flat_+$, with
\begin{align}
&& X^\flat(\theta) &=\Psi_\delta(\sC_X(X(\theta)))&(\theta\in S^1).
\end{align}

Now the idea is to define a diffeomorphism between $\Omega^\flat_+$ and $ \Sigma^\flat_+(\delta)$, say, $\sM_X:\Omega^\flat_+\to \Sigma^\flat_+(\delta)$. We choose a nice, smooth map $\sM_X$ so that, in particular,
\begin{align}\label{sMdiffeo}
&& \sM_X(X^\flat(\theta)) &= Y_\delta(\theta,0) & (\theta \in S^1).
\end{align}
Therefore we get a map $\cE_\delta(X):\Sigma_+(\delta)\to \Omega_+\cup\{i\infty\}$ by defining
\begin{align}
&& \cE_\delta(X)(a) &= \sC_X^{-1}\circ\Psi^{-1}_\delta\circ\sM_X^{-1} \circ Y_\delta(a) & (a\in \Sigma_+(\delta)),
\end{align}
resulting in $\cE_\delta(X)(\theta,0)=X(\theta)$, so that we have an extension of $X$ from $S^1$ to $\Sigma_+(\delta)$.
Property (i) is automatic from the construction, and with some minor additional specifications for the map $\sM_X$, it is not difficult to ensure that properties (ii)-(vii) are all satisfied. We comment that in order to guarantee (v) and (vii) specifically, we rely on the key fact that $\lV X-X_0\rV_{H^{k+1}(S^1)}\leq C\delta$ for $\xi$ in $\sB^\bbox(\delta)$.

\end{proof}


\subsection{Local elliptic estimates in a neighborhood of a smooth boundary}\label{basicEstSection}
Now we discuss some basic local estimates which we are able to stitch together to produce virtually all the other elliptic estimates, weighted and unweighted, that we need for our analysis.

\begin{lemma}\label{buildinglemma1}
Consider a bounded domain $\cD\subset \R^2$ whose boundary is parametrized by $Z$ in $H^{k+2}(S^1)$, with curvature bounded by $10^{-1}$ and length bounded by $10$. Take a pair of concentric discs $D_1$, $D_2$ with common center on $\del\cD$ and radii $r_1$ and $r_2$, respectively, where $r_1\geq 1$, $r_2\leq 2$, and $r_1/r_2\leq 9/10$.
Consider $\mathpzc{V} = D_1 \cap \cD$ and $\cU = D_2 \cap \cD$. Consider real $s$ with \( 0 \leq s \leq k+1/2\). Then we have the estimate
\begin{align}
& &     \lV f \rV_{H^{s+2}(\mathpzc V)} & \leq C\lV \Delta f \rV_{H^s(\cU)}    &   (f \in H^{s+2}_0(\cU ) ),
\end{align}
where $C=C\left(\lV Z \rV_{H^{k+2}(S^1)}\right)$ if $s>k-1/2$ and $C=C\left(\lV Z \rV_{H^{k+1}(S^1)}\right)$ if $s\leq k-1/2$.
\end{lemma}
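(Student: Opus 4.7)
The plan is to prove this by the standard strategy for Dirichlet-type elliptic regularity in a neighborhood of a smooth piece of boundary: flatten $\partial\cD\cap D_2$, apply textbook elliptic estimates in a half-ball, and transfer the bound back. Because $f \in H^{s+2}_0(\cU)$ vanishes on the full boundary $\partial\cU$, the condition on $\partial D_2\cap \cD$ is already homogeneous and need not be handled separately; the only boundary that contributes is $\partial\cD\cap D_2$, and the cushion $r_1/r_2\le 9/10$ keeps $\mathpzc V$ away from the ``artificial'' part of $\partial\cU$, so a single localization suffices rather than a partition-of-unity argument over many charts.

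First I would fix a smooth cutoff $\chi$ supported in $D_2$ with $\chi\equiv 1$ on $D_1$. Setting $g=\chi f$, the function $g$ satisfies
\[
\Delta g = \chi\,\Delta f + 2\,\nabla\chi\cdot\nabla f + (\Delta\chi)\,f, \qquad g\big|_{\partial\cU}=0,
\]
and it suffices to bound $\lV g\rV_{H^{s+2}(\cU)}$, since $g=f$ on $\mathpzc V$. Next I would flatten the boundary: using $Z$ and the bound on curvature, construct a diffeomorphism $\Phi$ from a half-disk $B^+\subset\R^2$ onto a neighborhood of $\overline{D_2}\cap\overline\cD$ in $\overline\cD$, carrying the flat part of $\partial B^+$ onto $\partial\cD\cap D_2$. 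The smoothness of $\Phi$ matches that of $Z$: tangential derivatives inherit the full $H^{k+2}$ regularity of $Z$, and a normal-extension construction (e.g.\ taking $\Phi(y,z)=Z(y)+z\,N(y)$ for a smooth continuation of the inward unit normal $N$) gives $\Phi\in H^{k+5/2}$ of $B^+$.

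With $\tilde g = g\circ\Phi$, the transformed equation is $L\tilde g=\tilde F$ where $L=\sum a_{ij}\partial_i\partial_j+\sum b_i\partial_i$ is uniformly elliptic (ellipticity constant controlled by the bounds on $Z$) with coefficients $a_{ij},b_i$ depending on $\grad\Phi$ and $\grad^2\Phi$, and $\tilde F$ is built from $(\Delta g)\circ\Phi$. The standard half-ball Dirichlet regularity estimate for $L$ with variable coefficients, proved by difference quotients in the tangential directions and solving for the one missing normal derivative from the equation, yields
\[
\lV \tilde g\rV_{H^{s+2}(B^+)} \;\le\; C\,\lV \tilde F\rV_{H^s(B^+)},
\]
where $C$ depends on the coefficients through a product/multiplier bound. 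Because $H^\sigma(\R^2)$ is a Banach algebra and a multiplier on $H^\tau$ for $\sigma>1$ and $|\tau|\le\sigma$, the constant can be taken as a polynomial in $\lV Z\rV_{H^{k+2}(S^1)}$ when $s>k-1/2$ (the top-regularity case, where coefficients need to act on functions up to one derivative away from the top), and in $\lV Z\rV_{H^{k+1}(S^1)}$ when $s\le k-1/2$ (since then the coefficients only need to multiply into a strictly lower Sobolev class). This dichotomy is precisely the source of the stated split in the dependence of $C$.

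Finally, pulling $\tilde g$ back via $\Phi^{-1}$ and absorbing the lower-order terms arising from $\nabla\chi\cdot\nabla f$ and $(\Delta\chi)f$ by an induction on $s$ (the base case $s=0$ being the classical $H^2$-regularity for Dirichlet problems on domains with $C^{1,1}$ boundary, which is covered since $H^{k+2}(S^1)\hookrightarrow C^{1,1}$) yields $\lV f\rV_{H^{s+2}(\mathpzc V)}\le C\lV\Delta f\rV_{H^s(\cU)}$ with the claimed constant. The main obstacle I anticipate is the bookkeeping in the last paragraph: verifying that at the top endpoint $s=k+1/2$ the multiplication rules still close, so that the coefficients $a_{ij}\in H^{k+1}$ really act boundedly on $H^{k+1/2}$, and that the absorption of lower-order terms in the induction step never requires strictly more than $H^{k+2}$-regularity of $Z$. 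This is where the Sobolev-algebra threshold $\sigma>1$ is tight and must be invoked carefully; everything else is a routine adaptation of the half-ball Dirichlet estimate.
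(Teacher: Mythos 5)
Your strategy — localize with a cutoff, flatten the boundary arc to a half-ball, apply the variable-coefficient Dirichlet half-ball estimate, and transfer back, with the base case $s=0$ handled by the classical $H^2$ estimate for the homogeneous Dirichlet problem (and the induction literally an induction only for integer $s$; for half-integers one interpolates) — is exactly the ``fairly standard'' route the paper is invoking. The Poincar\'e absorption the paper highlights is the step hidden inside your invocation of that classical base case, so the approaches are the same.

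One claim should be corrected. The normal-extension map $\Phi(y,z)=Z(y)+zN(y)$ is not in $H^{k+5/2}(B^+)$: if $N$ is the actual unit normal $iZ'/|Z'|$, then $N\in H^{k+1}(S^1)$ and so $\Phi\in H^{k+1}(B^+)$; even with a fixed smooth transversal vector field in place of $N$, one only gets $\Phi\in H^{k+2}(B^+)$, since extending in the $z$-variable gains no tangential derivatives. This is also inconsistent with your own later (and correct) assertion that $a_{ij}\in H^{k+1}$, which presupposes $\grad\Phi\in H^{k+1}$, i.e.\ $\Phi\in H^{k+2}$. The estimate still closes under the weaker $H^{k+2}$ regularity — by Kato--Ponce, $a_{ij}\in H^{k+1}$ controls $\lV\nabla a\rV_{L^\infty}$ and $\lV a\rV_{H^s}$ for all $s\le k+1/2$, which is all the commutator step needs — but the $H^{k+5/2}$ assertion should be dropped or justified by a different construction. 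A flattening map achieving the full trace-lifting regularity $H^{k+5/2}$ would be built from a genuine Sobolev extension (for instance a harmonic lifting of $Z$, corrected to be a diffeomorphism), which is precisely what the paper's extension operator $\cE_-$ does in Lemma~\ref{vecLowerExtLem} and Proposition~\ref{LowExtensionProp}.
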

\begin{proof}
    This is a very classical type of estimate whose proof is fairly standard. We simply comment that in the proof for the case of integer $s$, which would typically be carried out first, the Poincare inequality is useful for absorbing an $L^2$ norm of $f$ that appears in the right-hand side.
\end{proof}
\begin{lemma}\label{unscaledbasicest}
Let $\cD$ be a bounded domain in $\R^2$ with boundary $\gamma$ parametrized by $Z$ as in Lemma~\ref{buildinglemma1}. Consider real $s$ with \(0 \leq s \leq k + 1/2\). Suppose $\gamma$ passes through a pair of opposing sides for some unit square $Q$, cutting $Q$ into two pieces $P_1$ and $P_2$, each having area at least $10^{-1}$. Furthermore, suppose for each $i=1,2$ that $\del P_i$ is given by arclength parametrization $\zeta_i:S_i\to\R^2$ (which is Lipschitz) satisfying\footnote{Note \eqref{chordarclipbound} gives a lower bound on the interior angles formed at the corners of $\del P_i$ as well as the ``pinch'' of $\del P_i$.}
\begin{align}\label{chordarclipbound}
&& \frac{|\zeta_i(s_1)-\zeta_i(s_2)|}{|s_1-s_2|}&\ge 10^{-1} &(s_1,s_2\in S_i).
\end{align}

Let $1.1 Q$ denote the dilate of $Q$ by a factor of $1.1$ about its center. Define $\cU = 1.1 Q \cap \cD$ and \(\mathpzc{V} = Q \cap \cD\). Then the following holds:
\begin{align}
    \lV f \rV_{H^{s+2}(\mathpzc{V})} \leq C&\left( \lV \Delta f \rV_{H^s(\cU)} + \lV f \rV_{H^{s+3/2}( 1.1 Q \cap \gamma )}\right)  \\
    &\hspace{2cm}(f \in C^\infty(\overline{\cD}) \text{ \emph{with} } \supp ( f )\subset \overline{\cU} ),
\label{BasicSquareEst}
\end{align}
where $C=C\left(\lV Z \rV_{H^{k+2}(S^1)}\right)$ if $s>k-1/2$ and $C=C\left(\lV Z \rV_{H^{k+1}(S^1)}\right)$ if $s\leq k-1/2$.
\end{lemma}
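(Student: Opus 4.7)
\medskip

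\textbf{Plan of proof.} The statement is a standard near-boundary local elliptic estimate, but phrased with the inhomogeneous Dirichlet data $f|_{1.1Q\cap\gamma}$ explicit and with $f$ only compactly supported inside $\cU$ relative to the non-boundary part of $\partial\cU$. My plan is to reduce to the homogeneous-boundary setting of Lemma~\ref{buildinglemma1} by subtracting a suitable extension of the boundary trace, and then cover $\overline{\mathpzc V}$ by small discs and patch together an interior estimate with the disc-based boundary estimate of Lemma~\ref{buildinglemma1}.

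First, I would construct a function $F\in H^{s+2}(\cU)$ whose trace on $1.1Q\cap\gamma$ agrees with $f|_{1.1Q\cap\gamma}$ and which satisfies
\[
\|F\|_{H^{s+2}(\cU)} \le C\,\|f\|_{H^{s+3/2}(1.1Q\cap\gamma)},
\]
with $C$ depending on $\|Z\|_{H^{k+2}(S^1)}$ (resp.\ $H^{k+1}$) in the same regime as in the statement. This can be done in the standard way by flattening $\gamma$ with a local $C^{k+2}$ change of coordinates (the curvature bound $\le 10^{-1}$ and the chord-arc condition \eqref{chordarclipbound} make this uniform), applying a half-space extension operator, and multiplying by a smooth cutoff supported in a neighborhood of $\gamma\cap 1.1Q$ whose distance to $\partial(1.1Q)\cap\overline{\cD}$ is bounded below by a universal constant. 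The cutoff ensures $F$ vanishes identically in a neighborhood of $\partial\cU\setminus\gamma$, so that the difference $w:=f-F$ vanishes on $1.1Q\cap\gamma$ (by design) and near $\partial(1.1Q)\cap\cD$ (since $\supp f\subset\overline{\cU}$ does too).

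Next I would cover $\overline{\mathpzc V}$ by a uniformly finite family of discs $D_1^{(\alpha)}\subset D_2^{(\alpha)}$ of the type appearing in Lemma~\ref{buildinglemma1}, chosen as follows. For points of $\overline{\mathpzc V}$ within a fixed distance of $\gamma$, use pairs of concentric discs centered on $\gamma$ with radii in $[1,2]$ (scaled down by a universal factor so that they all sit inside $1.1Q$); for points of $\mathpzc V$ bounded away from $\gamma$, use interior discs and apply the standard interior elliptic estimate $\|w\|_{H^{s+2}(D_1)}\lesssim \|\Delta w\|_{H^s(D_2)}+\|w\|_{L^2(D_2)}$. At the boundary discs, Lemma~\ref{buildinglemma1} applies to $\chi_\alpha w$, where $\chi_\alpha$ is a subordinate partition of unity, since $\chi_\alpha w\in H^{s+2}_0(D_2^{(\alpha)}\cap\cD)$ by the vanishing properties of $w$ and $\chi_\alpha$. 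Summing the local estimates and using $\Delta F$ together with $\Delta f$ to form $\Delta w$, then reabsorbing $\|F\|_{H^{s+2}(\cU)}$ into the right-hand side via the extension bound and dominating the low-order $L^2$ tails by Poincar\'e on $\cU$, yields
\[
\|f\|_{H^{s+2}(\mathpzc V)} \le \|w\|_{H^{s+2}(\mathpzc V)} + \|F\|_{H^{s+2}(\mathpzc V)}
\le C\bigl(\|\Delta f\|_{H^s(\cU)}+\|f\|_{H^{s+3/2}(1.1Q\cap\gamma)}\bigr).
\]

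The main obstacle I anticipate is ensuring the constants are uniform across the geometric configurations allowed by the hypothesis: the interior angles where $\gamma$ meets $\partial Q$ can be small (though bounded below by \eqref{chordarclipbound}), and the resulting domain $\cU$ is only Lipschitz, not smooth. This is handled by arranging the cover so that no disc $D_2^{(\alpha)}$ straddles both $\gamma$ and $\partial(1.1Q)$ (possible because $\mathpzc V=Q\cap\cD$ is separated from $\partial(1.1Q)\setminus\gamma$ by a universal distance once $r_1/r_2\le 9/10$-type buffering is imposed on $Q\subset 1.1Q$) and because $w$ already vanishes near $\partial(1.1Q)\cap\cD$, so the corners of $\cU$ where $\gamma$ meets $\partial(1.1Q)$ never actually see $w$. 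The dependence $C=C(\|Z\|_{H^{k+2}})$ vs.\ $C(\|Z\|_{H^{k+1}})$ according to $s$ is inherited directly from Lemma~\ref{buildinglemma1} applied in the boundary discs.
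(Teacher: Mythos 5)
Your proposal captures the same key reduction the paper uses: subtract a Sobolev trace extension of $f|_{1.1Q\cap\gamma}$ so that the difference has zero interfacial trace, then apply the homogeneous‐boundary‐data estimate of Lemma~\ref{buildinglemma1}. That much is identical. But the route you take after the subtraction is genuinely different and considerably more elaborate than the paper's. The paper observes that one can choose a \emph{single} pair of concentric discs $D_1\supset\mathpzc V$, $D_2\supset\cU$ with center on $\gamma\cap Q$ and radii meeting the constraints $r_1\ge 1$, $r_2\le 2$, $r_1/r_2\le 9/10$ (a unit square has diameter $\sqrt2$, so $r_1=\sqrt2$, $r_2\approx 10\sqrt2/9$ work and cover $1.1Q$). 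One application of Lemma~\ref{buildinglemma1} to the enlarged regions $\mathpzc V'=D_1\cap\cD$, $\cU'=D_2\cap\cD$, using $\supp f\subset\overline{\cU}$ to control the right-hand side, then finishes the zero-trace case directly. Your version instead covers $\overline{\mathpzc V}$ by many small discs, which costs you: a rescaled version of Lemma~\ref{buildinglemma1} (the lemma as stated requires $r_1\ge 1$, so discs "scaled down to fit inside $1.1Q$" need the analogue given by Corollary~\ref{buildingcor} rather than Lemma~\ref{buildinglemma1} itself), separate interior elliptic estimates, a partition of unity, and a global Poincar\'e argument to absorb the $L^2$ tails. None of this is wrong, but none of it is needed once the one-big-disc observation is made. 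One small slip in your write-up: you claim $w$ vanishes \emph{near} $\partial(1.1Q)\cap\cD$ "since $\supp f\subset\overline{\cU}$ does too"; the support hypothesis only forces $f$ (and all its derivatives, by smoothness) to vanish \emph{on} $\partial(1.1Q)\cap\cD$, not on a neighborhood. This is harmless for your argument because $\overline{\mathpzc V}=\overline{Q\cap\cD}$ is separated from $\partial(1.1Q)$ by a universal distance, so the covering discs never reach that part of the boundary — but the stated justification is off.
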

\begin{proof}
    First, we verify the conclusion of Lemma~\ref{buildinglemma1} holds when we replace the $\mathpzc V$ and $\cU$ as described in the statement of Lemma~\ref{buildinglemma1} by $\mathpzc V$ and $\cU$ as in the statement of this lemma. Suppose we have $f$ in $H^{s+2}_0(\cU)$. Extend $f$ to the rest of $\R^2$ by taking it to be zero. Let us take a pair of concentric discs $D_1$ and $D_2$ as described in the statement of Lemma~\ref{buildinglemma1} such that $\mathpzc V\subset D_1$ and $\cU \subset D_2$, defining $\mathpzc V' = D_1\cap \cD$ and $\cU' = D_2\cap \cD$. Applying Lemma~\ref{buildinglemma1} and using the fact that $\supp (f) \subset \overline \cU$, we obtain
    \begin{align}\label{buildingblockest1}
    \lV f \rV_{H^{s+2}(\mathpzc V)} \leq \lV f \rV_{H^{s+2}(\mathpzc V')} \leq C \lV \Delta f \rV_{H^s(\cU')}\leq C' \lV \Delta f \rV_{H^s(\cU)}.
    \end{align}
    
    This gives us the desired bound in the case that $f$ is in $H^{s+2}(\cU)$.

    Now let us consider \( f \in C^\infty (\overline{\cD})\) with \(\supp ( f )  \subset \cU \). Taking a standard Sobolev extension operator $\cE_s$ for extending functions from the curve $\gamma$ to the rest of $\R^2$, we find
    \begin{align}
        \lV \cE_s (f|_\gamma) \rV_{H^{s+2}(\R^2)} \leq C \lV f \rV_{H^{s+3/2}(1.1 Q \cap \gamma)} .
    \end{align}
    Note $f - \cE_s (f|_\gamma)$ is in $H^{s+2}(\cU)$. By applying our estimate \eqref{buildingblockest1} to $f - \cE_s (f|_\gamma)$, we conclude \eqref{BasicSquareEst} holds. The claimed dependence of the bounding constant on the norm of $Z$ is not difficult to verify.
\end{proof}
\begin{corollary}\label{buildingcor}
    Suppose $\cD$, $\gamma$, $Q$, $\cU$, $\mathpzc{V}$, $Z$, and $s$ are all as in the statement of Lemma~\ref{unscaledbasicest} except that the sidelength of $Q$ is $\ell$ for some $\ell \leq 1$. Then for an $m\geq 0$ depending only on $s$, the following holds.
\begin{align}
     \lV f \rV_{H^{s+2}(\mathpzc{V})} &\leq C\left( \lV \Delta f \rV_{H^s(\cU)} + \lV f \rV_{H^{s+3/2}( 1.1 Q \cap \gamma )}\right) \ell^{-m} \\
    &\hspace{2cm}(f \in C^\infty(\overline{\cD}) \text{ \emph{with} } \supp ( f )\subset \overline{\cU} ),
\label{ScaledSquareEst}
\end{align}
where $C=C\left(\lV Z \rV_{H^{k+2}(S^1)}\right)$ if $s>k-1/2$ and $C=C\left(\lV Z \rV_{H^{k+1}(S^1)}\right)$ if $s\leq k-1/2$.
\end{corollary}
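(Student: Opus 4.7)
The plan is to reduce to the unit-sidelength setting of Lemma~\ref{unscaledbasicest} by rescaling. Let $\Phi_\ell : y \mapsto \ell y$, and set
\[
\tilde{\mathpzc V} = \Phi_\ell^{-1}(\mathpzc V), \quad \tilde\cU = \Phi_\ell^{-1}(\cU), \quad \tilde Q = \Phi_\ell^{-1}(Q), \quad \tilde\gamma = \Phi_\ell^{-1}(\gamma), \quad \tilde f = f \circ \Phi_\ell.
\]
Then $\tilde Q$ is a unit square. The first step is to verify that the rescaled data satisfies the hypotheses of Lemma~\ref{unscaledbasicest}. The curvature of $\tilde\gamma$ equals $\ell \kappa_\gamma \leq \ell \cdot 10^{-1} \leq 10^{-1}$, and the arclength parametrizations of the rescaled polygonal boundaries $\del(\Phi_\ell^{-1}(P_i))$ inherit the Lipschitz lower bound \eqref{chordarclipbound} because that condition is scale invariant. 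The global length assumption on $Z$ is a nuisance under enlargement by $1/\ell$, but since the conclusion of Lemma~\ref{unscaledbasicest} is genuinely local---only the portion of $\tilde\gamma$ inside $2\tilde Q$ enters the proof---one can truncate $\tilde \cD$ outside $3\tilde Q$ and smoothly close off its boundary to obtain an admissible domain whose parametrization $\tilde Z$ has $H^{k+2}(S^1)$ norm controlled by a constant depending polynomially on $\ell^{-1}$ and on $\|Z\|_{H^{k+2}}$.

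The second step is to apply Lemma~\ref{unscaledbasicest} to $\tilde f$ on this rescaled, truncated configuration and translate back by tracking powers of $\ell$. A direct computation gives, for any multi-index $\alpha$ with $|\alpha| = j$,
\[
\|\del^\alpha \tilde f\|_{L^2(\tilde{\mathpzc V})} = \ell^{j-1} \|\del^\alpha f\|_{L^2(\mathpzc V)}, \qquad \|\del^\alpha \Delta \tilde f\|_{L^2(\tilde\cU)} = \ell^{j+1} \|\del^\alpha \Delta f\|_{L^2(\cU)},
\]
and an analogous scaling on $\tilde\gamma$, where the arclength element satisfies $d\tilde s = \ell^{-1}\, ds$. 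Combined with the corresponding Sobolev--Slobodeckij identities for fractional boundary orders (and the usual interpolation extension to half-integer $s$), each term in Lemma~\ref{unscaledbasicest} acquires a power of $\ell$ lying in a bounded range of exponents depending only on $s$. Collecting these powers, and noting that $\ell \le 1$ so the worst (most singular) power determines the overall scaling, one obtains an estimate of the form \eqref{ScaledSquareEst} with an explicit $\ell^{-m_0}$ depending only on $s$ (hence only on $k$).

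The main obstacle, and where the final $m$ in the exponent is determined, is the dependence of the constant in Lemma~\ref{unscaledbasicest} on $\|\tilde Z\|_{H^{k+2}(S^1)}$ (or $H^{k+1}(S^1)$ for $s \leq k-1/2$). Because the rescaled, truncated parametrization $\tilde Z$ has Sobolev norms that grow polynomially in $\ell^{-1}$, the function $C(\|\tilde Z\|_{H^{k+2}})$ produced by Lemma~\ref{unscaledbasicest} is of the form $C(\|Z\|_{H^{k+2}}) \cdot \ell^{-m_1}$ for some $m_1 \geq 0$ depending only on $k$; in particular, this is exactly the place where the convention of Remark~\ref{PowerDep} (polynomial dependence of constants) matters, since we do not need sharp exponents. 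Taking $m = m_0 + m_1$ then yields the claimed bound with $C = C(\|Z\|_{H^{k+2}})$ in the regime $s > k - 1/2$ and with $C = C(\|Z\|_{H^{k+1}})$ in the regime $s \leq k - 1/2$, the latter following because the second bound in Lemma~\ref{unscaledbasicest} has the same improved dependence in that range.
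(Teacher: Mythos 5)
The paper's own proof of this corollary is just one line: "The above bound is easily verified by rescaling the estimate of Lemma~\ref{unscaledbasicest}." Your proposal fills in precisely that rescaling argument, so it takes the same approach; the details you supply are correct. In particular you correctly note that the curvature and the chord-arc Lipschitz condition \eqref{chordarclipbound} behave favorably (or invariantly) under the dilation, and you flag the genuine point that the length constraint on $\gamma$ and the $H^{k+2}(S^1)$ bound on the global parametrization $Z$ are \emph{not} scale-invariant — dilating by $\ell^{-1}$ violates the length bound — and that this is fixed by truncating the domain to a neighborhood of $3\tilde Q$ and closing off the boundary, exploiting the local nature of the estimate. Your derivative bookkeeping under the substitution $\tilde f = f\circ\Phi_\ell$, including the $\ell^{j-1}$ interior factors, the $\ell^{j+1}$ factors for $\Delta f$, and the $\ell^{-1}$ rescaling of arclength, is right, and the observation that the worst-case powers plus the polynomial growth of $C(\|\tilde Z\|)$ (via the convention of Remark~\ref{PowerDep}) accumulate into a single $\ell^{-m}$ with $m$ controlled by $s$ and $k$ is exactly what is needed.
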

\begin{proof}
    The above bound is easily verified by rescaling the estimate of Lemma~\ref{unscaledbasicest}.
\end{proof}

\subsection{Below-interface elliptic estimates}\label{unpinchedestimates}
In this section, we apply our local estimates from the previous section to prove estimates for the domains in our problems which do not become pinched, such as the plasma domain $\Omega$, as opposed to the vacuum domain $\cV$. As a result, the proofs of many of these estimates are fairly elementary.

Recall the domain $\Sigma_- = \{(\theta,\psi):\theta\in S^1, \ \psi \leq 0\}$ we defined earlier. Consider an $X$ in $H^{k+2}_\gp$, with the region below the associated $\Gamma$ denoted by $\Omega_-$. Using the extension map $\cE_-$ provided by Proposition~\ref{LowExtensionProp}, we define $X_- = \cE_-(X):\Sigma\to\Omega_-$ and correspondingly, for $f:\Sigma_-\to\R$,
\begin{align}
&&
\begin{aligned}
 A(X) &= (\grad X_-)^{-1}\cof (\grad X_-),  \\
L(X) f  &= \sum^2_{i,j=1} \del_i(A(X)_{ij}\del_j f),
\end{aligned}
&&(a\in\Sigma_-).
\end{align}
Let us remark that for $\varphi_f:\Omega_-\to\R$ defined by $f = \varphi_f \circ X_-$, one has the identity
\begin{align}\label{varphiLagProb}
L(X)f = (\det \grad X_-)(\Delta \varphi_f) \circ X_- .
\end{align}
By stitching together estimates given by Lemma~\ref{buildinglemma1}, using \eqref{varphiLagProb}, and applying Proposition~\ref{LowExtensionProp}, we obtain the following.
\begin{lemma}\label{basicellipticboundLagr}
For $X$ in $H^{k+2}_\gp$, we have for $0\leq s \leq k+1/2$ the estimate
\begin{align}
&& \lV f \rV_{H^{s+2}(\Sigma_-)} &\leq C\left( \lV L(X) f \rV_{H^{s}(\Sigma_-)} + \lV f |_{\psi = 0} \rV_{H^{s+3/2}(S^1)}\right), &
\end{align}
where $C=C\left(\lV X \rV_{H^{k+2}(S^1)}\right)$ if $s=k+1/2$ and $C=C\left(\lV X \rV_{H^{k+1}(S^1)}\right)$ if $s\leq k-1/2$.
\end{lemma}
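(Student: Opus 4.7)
The plan is to transfer the Lagrangian elliptic problem to a Euclidean one via the diffeomorphism $X_-=\cE_-(X):\Sigma_-\to\Omega_-$, then piece together local estimates from Lemma~\ref{buildinglemma1} with a standard interior bound on the semi-infinite part of the cylinder. First, for $f:\Sigma_-\to\R$ I set $\varphi_f = f\circ X_-^{-1}:\Omega_-\to\R$. The identity \eqref{varphiLagProb} states that $(\det\grad X_-)(\Delta\varphi_f)\circ X_- = L(X)f$ on $\Sigma_-$. By property (iii) of Proposition~\ref{LowExtensionProp}, $\cE_-(X)-\mathrm{id}$ is supported in $\{\psi\ge -2\}$ and $\cE_-(X)$ is a global bijection onto $\Omega_-$ with $\|\cE_-(X)\|_{H^{j+5/2}(\Sigma_-)}\leq C(\|X\|_{H^{j+2}(S^1)})$ for $0\leq j \leq k$; in particular the Jacobian satisfies $c\le\det\grad X_-\le C$ on $\Sigma_-$. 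Standard composition/product estimates in Sobolev spaces then show that it suffices to prove the Eulerian analogue
\[
\lV \varphi_f \rV_{H^{s+2}(\Omega_-)} \leq C\big(\lV \Delta\varphi_f \rV_{H^s(\Omega_-)} + \lV \varphi_f|_{\Gamma}\rV_{H^{s+3/2}(\Gamma)}\big),
\]
where the constant $C$ has the same dependence on $\|X\|_{H^{k+2}}$ (or $\|X\|_{H^{k+1}}$ in the lower-regularity range) as asserted.

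For the Eulerian bound, I first subtract a trace extension: choose $g:\Omega_-\to\R$ with $g|_{\Gamma}=\varphi_f|_{\Gamma}$, bounded support and
\[
\lV g \rV_{H^{s+2}(\Omega_-)} \leq C\lV \varphi_f|_{\Gamma}\rV_{H^{s+3/2}(\Gamma)},
\]
which exists because $\Gamma$ is an $H^{k+2}$ chord-arc curve with curvature $\leq 10$ by the assumption $X\in H^{k+2}_\gp$. The residual $w = \varphi_f - g$ satisfies $\Delta w = \Delta\varphi_f - \Delta g \in H^s(\Omega_-)$ with $w|_{\Gamma}=0$. Now I cover a bounded tubular neighborhood $U$ of $\Gamma$ in $\overline{\Omega_-}$ by finitely many pairs $(D_1^{(i)},D_2^{(i)})$ as in Lemma~\ref{buildinglemma1}, with centers on $\Gamma$ and radii of order one, localize $w$ with a subordinate partition of unity $\{\chi_i\}$, and apply the lemma to each piece $\chi_i w$. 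The commutator $[\Delta,\chi_i]w$ is lower order in derivatives of $w$ and is absorbed by the usual bootstrap on $s$.

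Away from $\Gamma$, specifically on $\Omega_-\setminus U$, the coefficients of $L(X)$ reduce to those of $\Delta$ once one is below the support of $\cE_-(X)-\mathrm{id}$, so the problem becomes a pure Dirichlet problem for $\Delta$ on (a translate of) the lower half-cylinder. A standard interior estimate on this semi-infinite slab, together with finiteness of $\|w\|_{L^2}$ guaranteed by the hypothesis $f\in H^{s+2}(\Sigma_-)$, yields
\[
\lV w \rV_{H^{s+2}(\Omega_-\setminus U)} \leq C\big(\lV \Delta w \rV_{H^s(\Omega_-)} + \lV w\rV_{H^{s+2}(U')}\big),
\]
where $U'\supset\supp\chi_i'$ and the second term is controlled by the previous step. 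Adding the two contributions and absorbing the trace term from $g$ gives the desired Eulerian bound. Finally, pulling back via $X_-$ and using the above-mentioned composition estimates yields the stated inequality on $\Sigma_-$.

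The main obstacle is the borderline case $s=k+1/2$: here the Lagrangian right-hand side $L(X)f$ involves $\cof(\grad X_-)\in H^{k+3/2}$ paired with $\grad f\in H^{s+1}=H^{k+3/2}$, and the change-of-variables formulas must be executed with care so that no derivative of $X_-$ exceeds $\lV X_-\rV_{H^{k+5/2}(\Sigma_-)}\leq C(\|X\|_{H^{k+2}})$ from Proposition~\ref{LowExtensionProp}. This derivative count is exactly compatible with the constant $C=C(\|X\|_{H^{k+2}})$ claimed at $s=k+1/2$, and a similar count with one derivative to spare yields the improved constant $C=C(\|X\|_{H^{k+1}})$ in the range $s\leq k-1/2$, matching the hypotheses of Lemma~\ref{buildinglemma1} in each regime.
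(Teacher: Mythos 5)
Your proof follows essentially the same route the paper indicates: transfer to the Eulerian picture on $\Omega_-$ via the extension $X_-=\cE_-(X)$ from Proposition~\ref{LowExtensionProp} and the identity \eqref{varphiLagProb}, subtract a trace extension, and then patch together the local Dirichlet estimates of Lemma~\ref{buildinglemma1} near $\Gamma$ with a flat interior estimate on the deep portion of the periodic slab where $\cE_-(X)=\mathrm{id}$. Since the paper's own proof is a one-sentence pointer to exactly these three ingredients, your write-up is a correct and faithful expansion of the intended argument, and your accounting of the constant's dependence on $\lV X\rV_{H^{k+2}}$ versus $\lV X\rV_{H^{k+1}}$ correctly mirrors the two regularity regimes of Lemma~\ref{buildinglemma1} and Proposition~\ref{LowExtensionProp}(iii).

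One point that deserves a sentence more care: when you invoke a ``standard interior estimate on this semi-infinite slab,'' the role of $w\in L^2(\Omega_-)$ is not merely to guarantee finiteness but to single out the decaying branch of the constant-coefficient solution operator for $\psi<-M$, which is what lets you express $w$ there in terms of $\Delta w$ and the trace of $w$ on $\{\psi=-M\}$ without an uncontrolled $\lV w\rV_{L^2(\Omega_-\setminus U)}$ appearing on the right; once phrased that way, the $\lV w\rV_{H^{s+2}(U')}$ term (for $U'\subset U$, an annulus containing $\supp\nabla\chi_i$) is absorbed by the near-boundary estimate as you intend.
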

\subsubsection*{Div-curl system bounds}

We are able to formulate bounds for div-curl systems as a consequence of Lemma~\ref{basicellipticboundLagr}, for a vector field $V:\Sigma_-\to\R^2$ given by $V=(\grad \varphi_f)\circ X_-$ (or $(\grad^\perp \varphi_f)\circ X_-$), where $f =  \varphi_f \circ X_-$ solves a Poisson-type problem of the form below, or some minor variation thereof:
\begin{align}
&& L(X) f &= \rho &&(a\in\Sigma), \\
&& f &= g && (\psi = 0).
\end{align}
Below, we record several elementary results which can be proved in this way. 
\begin{lemma}
\label{normalDivCurlSys}
Take $X$ in $H^{k+2}_\gp(S^1)$. Let us denote $X_- = \cE_-(X)$, the extension of $X$ to $\Sigma_-$ given by Proposition~\ref{LowExtensionProp}. Consider the system
\begin{align}
\label{GeneralDivCurlSystem1}
\begin{aligned}
(a \in \Sigma_-) \qquad &
\left\{
    \begin{aligned}
        \grad \cdot (\, \cof (\grad X^t_-) V ) &= \rho \\
        \grad^\perp\cdot (\, \grad X^t_- V )   &= \varpi \, \Lcm
    \end{aligned}
\right. \\
(\psi = 0)  \qquad & \ \big\{ \, X^\perp_\theta \cdot V = \eta .
\end{aligned}
\end{align}
Suppose for half-integer $s$ with $0\leq s \leq k+1/2$ we have $\rho,\varpi \in H^s(\Sigma_-)$, $\eta \in H^{s+1/2}(S^1)$, and
\begin{align}
  \int_{\Sigma_-} \rho \, da = \int^\pi_{-\pi} \eta \, d\theta\, .
\end{align}
Then there exists a unique solution $V$ defined in $\Sigma_-$ to the above system. Moreover, we have the estimate 
\begin{align}\label{GenDivCurlSysBd}
\lV V \rV_{H^{s+1}(\Sigma_-)}
\leq C\left(\lV \rho \rV_{H^s(\Sigma_-)}+ \lV \varpi \rV_{H^s(\Sigma_-)} + \lV \eta \rV_{H^{s+1/2}(S^1)}\right),
\end{align}
where $C=C\left(\lV X \rV_{H^{k+2}(S^1)}\right)$ if $s=k+1/2$ and $C=C\left(\lV X \rV_{H^{k+1}(S^1)}\right)$ if $s\leq k-1/2$.
\end{lemma}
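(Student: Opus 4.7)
\medskip
\noindent\textbf{Proof proposal for Lemma~\ref{normalDivCurlSys}.}

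The plan is to reduce the Lagrangian div–curl system to an Eulerian problem on $\Omega_-$ via the extension $X_- = \cE_-(X)$, split it into two scalar Poisson problems by Helmholtz decomposition, and then apply Lemma~\ref{basicellipticboundLagr} (and a Neumann analogue) to recover the claimed $H^{s+1}$ bound on $V$. First I would set $v = V\circ X_-^{-1}$ on $\Omega_-$. By the Piola identity,
\begin{align*}
\grad_a\cdot(\cof(\grad X_-^t)V) &= (\det\grad X_-)\,(\grad_x\cdot v)\circ X_-,\\
\grad_a^\perp\cdot(\grad X_-^t V) &= (\det\grad X_-)\,(\grad_x^\perp\cdot v)\circ X_-,
\end{align*}
so the interior equations become $\grad\cdot v=\tilde\rho$, $\grad^\perp\cdot v=\tilde\varpi$ on $\Omega_-$, where $\tilde\rho,\tilde\varpi$ are $\rho,\varpi$ transferred through $X_-$ and divided by $\det\grad X_-$. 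Since $X_-^\perp\cdot V\,d\theta=|X_\theta|\,n\cdot v\,dS$ along the trace, the boundary condition becomes $n\cdot v=\tilde\eta$ on $\Gamma$, where $\tilde\eta$ is the corresponding pullback of $\eta$. The hypothesis $\int_{\Sigma_-}\rho\,da=\int\eta\,d\theta$ transforms into the standard divergence-theorem compatibility $\int_{\Omega_-}\tilde\rho\,dx=\int_\Gamma n\cdot v\,dS$.

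Next I would construct $v$ via Helmholtz decomposition as $v=\grad\Phi+\grad^\perp\Psi$. The potential $\Phi$ is obtained by solving the Neumann problem $\Delta\Phi=\tilde\rho$ in $\Omega_-$ with $\del_n\Phi=\tilde\eta$ on $\Gamma$, which is solvable precisely because of the compatibility condition above. The potential $\Psi$ is then obtained by solving the Dirichlet problem $\Delta\Psi=\tilde\varpi$ in $\Omega_-$ with $\Psi=0$ on $\Gamma$, which ensures $n\cdot\grad^\perp\Psi=\tau\cdot\grad\Psi=0$ on $\Gamma$, consistent with $n\cdot v=\tilde\eta$. Because $X_-$ is the identity for $\psi\le -2$ (part (ii) of Proposition~\ref{LowExtensionProp}), the geometry outside a compact set is just the flat periodic strip $S^1\times(-\infty,-2]$, where solvability and boundedness at $\psi\to-\infty$ for both Poisson problems follow from elementary Fourier analysis. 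Uniqueness of $V$ is then immediate: for zero data, $\Phi$ and $\Psi$ are harmonic with vanishing Neumann/Dirichlet data on $\Gamma$ and bounded at depth, hence constant, so $V\equiv 0$.

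For the estimate, pull $\Phi$ and $\Psi$ back to $\Sigma_-$ by setting $\phi=\Phi\circ X_-$ and $\varphi=\Psi\circ X_-$. By \eqref{varphiLagProb}, these satisfy $L(X)\phi=\rho$ and $L(X)\varphi=\varpi$ on $\Sigma_-$, with appropriate boundary conditions corresponding to the Neumann/Dirichlet data above. Lemma~\ref{basicellipticboundLagr} (together with its Neumann analogue, which is proved by exactly the same localization argument from Section~\ref{basicEstSection}) gives
\begin{align*}
\|\phi\|_{H^{s+2}(\Sigma_-)}+\|\varphi\|_{H^{s+2}(\Sigma_-)}\le C\bigl(\|\rho\|_{H^s(\Sigma_-)}+\|\varpi\|_{H^s(\Sigma_-)}+\|\eta\|_{H^{s+1/2}(S^1)}\bigr),
\end{align*}
with the stated dependence of $C$ on $\|X\|_{H^{k+2}}$ or $\|X\|_{H^{k+1}}$. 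Since $V$ is recovered from $\phi$ and $\varphi$ by an algebraic combination involving $(\grad X_-)^{-1}$ applied to $\grad\phi$ and $\grad\varphi$, the bound \eqref{GenDivCurlSysBd} on $\|V\|_{H^{s+1}(\Sigma_-)}$ follows from Proposition~\ref{LowExtensionProp}(iii)–(iv), which controls $\grad X_-$ and its inverse at the required regularity.

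The main technical nuisance will be handling the unbounded nature of $\Sigma_-$: the estimate is stated globally, but standard Poisson theory on unbounded Lipschitz domains requires care about decay. I expect this to be essentially routine because the geometry trivializes at depth; patching the near-boundary estimate from Lemma~\ref{basicellipticboundLagr} with a constant-coefficient estimate on the flat cylinder via a fixed cutoff handles it cleanly, with any low-order cross terms absorbed using the Poincar\'e-type inequality available thanks to the prescribed boundary trace (for $\Psi$) or the compatibility condition fixing the mean (for $\Phi$).
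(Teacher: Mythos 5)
Your proposal is correct and lands where the paper lands, but it takes a slightly different path for the $\rho,\eta$ part. The paper's sketch before the lemma points to representing $V$ in the form $(\grad\varphi_f)\circ X_-$ or $(\grad^\perp\varphi_f)\circ X_-$ with $f$ solving a \emph{Dirichlet} Poisson problem, and the natural way to run the whole argument with only Dirichlet data is: first solve $\Delta\Phi=\tilde\rho$ with $\Phi|_\Gamma=0$, set $v_0=\grad\Phi$, then note the correction $w=v-v_0$ is divergence free with $\grad^\perp\cdot w=\tilde\varpi$ and $n\cdot w=\tilde\eta-\partial_n\Phi$; writing $w=\grad^\perp\Xi$ converts $n\cdot w$ into $-\partial_\tau\Xi$, which integrates along $\Gamma$ (precisely because of the compatibility hypothesis) to a Dirichlet datum $\Xi|_\Gamma\in H^{s+3/2}(S^1)$, so Lemma~\ref{basicellipticboundLagr} is invoked twice as stated. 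Your proposal instead imposes Neumann data $\partial_n\Phi=\tilde\eta$ directly and therefore needs a Neumann analogue of Lemma~\ref{basicellipticboundLagr}. That analogue is indeed true and provable by the same localization argument (and the paper proves its upper-region counterpart in \eqref{1stWeightedEstimateBd2b} of Proposition~\ref{1stWeightedEstimate}), so your route is sound; it just uses a tool the paper does not record in Section~\ref{unpinchedestimates}, whereas the stream-function correction gets by on the stated Dirichlet estimate alone. One small slip: the Neumann problem determines $\Phi$ only modulo constants, so the displayed bound should read $\|\grad\phi\|_{H^{s+1}(\Sigma_-)}$ rather than $\|\phi\|_{H^{s+2}(\Sigma_-)}$; this is harmless since only $\grad\phi$ enters the reconstruction of $V$.
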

\begin{lemma}
\label{tanglDivCurlSys}
Let us take $X$ in $H^{k+2}_\gp(S^1)$ and $X_- = \cE_-(X)$. Consider the system
\begin{align}
\label{GeneralDivCurlSystem2}
\begin{aligned}
(a \in \Sigma_-) \qquad &
\left\{
    \begin{aligned}
        \grad \cdot (\, \cof (\grad X^t_-)  V ) &= \rho \\
        \grad^\perp\cdot (\, \grad X^t_- V )   &= \varpi \, \Lcm
    \end{aligned}
\right. \\
(\psi = 0)  \qquad & \ \big\{ \,  X_\theta \cdot V = \eta .
\end{aligned}
\end{align}
Suppose for half-integer $s$ with $0\leq s \leq k+1/2$ we have $\rho,\varpi \in H^s(\Sigma_-)$, $\eta \in H^{s+1/2}(S^1)$, and
\begin{align}
\int_{\Sigma_-} \varpi \, da = - \int^\pi_{-\pi} \eta \, d\theta\, .
\end{align}
Then there exists a unique solution $V$ defined in $\Sigma_-$ to the above system, and the estimate \eqref{GenDivCurlSysBd} holds.
%
\end{lemma}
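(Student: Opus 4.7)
The plan is to mirror the strategy behind Lemma~\ref{normalDivCurlSys}, exploiting the fact that the only change is swapping the normal boundary condition $X_\theta^\perp\cdot V=\eta$ for the tangential one $X_\theta\cdot V=\eta$. Concretely, push the system forward to Eulerian coordinates via $X_-=\cE_-(X)$ and look for $v=V\circ X_-^{-1}:\Omega_-\to\R^2$ of the form $v=\grad\phi+\grad^\perp\psi$. Setting $\tilde\rho=(\det\grad X_-)^{-1}\rho\circ X_-^{-1}$, $\tilde\varpi=(\det\grad X_-)^{-1}\varpi\circ X_-^{-1}$, and $\tilde\eta=\eta\circ X^{-1}/|X_\theta\circ X^{-1}|$, the system \eqref{GeneralDivCurlSystem2} becomes
\begin{align*}
\Delta\phi&=\tilde\rho, & -\Delta\psi&=\tilde\varpi \qquad (x\in\Omega_-),\\
\tau\cdot v&=\del_\tau\phi-\del_n\psi=\tilde\eta \qquad (x\in\Gamma),
\end{align*}
with both $\phi$ and $\psi$ required to remain bounded as $x_2\to-\infty$.

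The next step is to decouple these problems. I would first solve the Dirichlet problem $\Delta\phi=\tilde\rho$ in $\Omega_-$ with $\phi|_\Gamma=0$ and decay at $-\infty$; this is uniquely solvable, and by Lemma~\ref{basicellipticboundLagr} applied to the Lagrangian pullback $f=\phi\circ X_-$ (where the Laplacian becomes $L(X)$), one obtains the estimate $\lV \phi\circ X_-\rV_{H^{s+2}(\Sigma_-)}\le C\lV \rho\rV_{H^s(\Sigma_-)}$. Then I would solve the Neumann problem $-\Delta\psi=\tilde\varpi$ in $\Omega_-$ with $\del_n\psi|_\Gamma=\del_\tau\phi-\tilde\eta$ and decay at $-\infty$. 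Compatibility for this Neumann problem reads $\int_{\Omega_-}\tilde\varpi\,dx=\int_\Gamma(\del_\tau\phi-\tilde\eta)\,dS$; since $\int_\Gamma\del_\tau\phi\,dS=0$ and changing variables back to $\Sigma_-$ turns the remaining two integrals into $\int_{\Sigma_-}\varpi\,da$ and $-\int_{-\pi}^\pi\eta\,d\theta$, the hypothesis $\int_{\Sigma_-}\varpi\,da=-\int_{-\pi}^\pi\eta\,d\theta$ supplies exactly the required compatibility. Another application of the elliptic estimate (after reducing to a Dirichlet problem by solving away the boundary data, in the spirit of the proof of Lemma~\ref{unscaledbasicest}) yields the matching bound for $\psi$.

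Combining the bounds on $\phi$ and $\psi$, and using that $V=(\grad\phi+\grad^\perp\psi)\circ X_-$ together with the regularity of $X_-$ guaranteed by Proposition~\ref{LowExtensionProp}, produces the estimate \eqref{GenDivCurlSysBd}, with the claimed dependence of the constant on $\lV X\rV_{H^{k+2}(S^1)}$ or $\lV X\rV_{H^{k+1}(S^1)}$ according to the range of $s$. For uniqueness, a homogeneous solution $V$ corresponds to a divergence- and curl-free $v$ in $\Omega_-$ with $\tau\cdot v=0$ on $\Gamma$; writing $v=\grad\phi$ locally, this forces $\phi$ to be constant along $\Gamma$, and the decay condition at $-\infty$ then forces $\phi$ to be globally constant, so $V\equiv0$.

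The main obstacle I anticipate is not conceptual but technical: carefully handling the Neumann problem for $\psi$ on the unbounded strip-like domain $\Omega_-$, especially in checking that the compatibility condition closes correctly and in propagating regularity all the way up to $s=k+1/2$ (rather than $s=k-1/2$) while keeping the constant polynomial in $\lV X\rV_{H^{k+2}}$. As in the proof of Lemma~\ref{normalDivCurlSys}, this is done by a standard reduction to a Dirichlet problem plus a boundary lift, but one must be careful that the intermediate Sobolev extension off $\Gamma$ does not absorb too much regularity of $X$, which is why the bound for the top index $s=k+1/2$ is allowed to depend on $\lV X\rV_{H^{k+2}(S^1)}$ rather than $\lV X\rV_{H^{k+1}(S^1)}$.
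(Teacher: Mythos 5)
Your strategy matches the paper's implicit one — the paper only sketches these lemmas, saying they follow by writing $V$ as (the pullback of) a gradient or perp-gradient of a scalar solving a Poisson problem controlled by Lemma~\ref{basicellipticboundLagr}; superposing $v=\grad\phi+\grad^\perp\psi$ and splitting into a Dirichlet problem for $\phi$ and a Neumann problem for $\psi$ is exactly the ``minor variation'' the paper alludes to. Two small points to tighten. First, with the paper's convention ($\grad^\perp\psi=(-\del_{x_2}\psi,\del_{x_1}\psi)$, so that $\grad^\perp\cdot\grad^\perp\psi=+\Delta\psi$ and $\tau\cdot\grad^\perp\psi=-\del_n\psi$), the curl equation should be $\Delta\psi=\tilde\varpi$, not $-\Delta\psi=\tilde\varpi$; your compatibility check comes out correct only because the stated Neumann compatibility $\int\tilde\varpi=\int(\del_\tau\phi-\tilde\eta)$ already carries the opposite compensating sign, so as written the two displays are mutually inconsistent. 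Second, the uniqueness argument is slightly under-justified: $\Omega_-$ is homotopy equivalent to $S^1$, so a divergence- and curl-free $v$ is a gradient only locally, and ``$\phi$ is constant on $\Gamma$'' is not meaningful until $\phi$ is single-valued. You should first use the decay forced by $V\in H^{s+1}(\Sigma_-)$ (hence $v\to0$ as $x_2\to-\infty$) to conclude that the circulation of $v$ around a deep horizontal loop vanishes; only then is $\phi$ globally defined and your Liouville/boundedness argument closes.
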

We record one more result on existence and bounds for solutions to div-curl type problems proven in a similar manner to Lemmas~\ref{normalDivCurlSys} and~\ref{tanglDivCurlSys}, this time in the bounded domain $\Sigma$ as opposed to $\Sigma_-$.
\begin{lemma}
\label{GeneralDivCurlSysLem}
Consider $X$ in $H^{k+1}_\gp$ with associated plasma domain $\Omega$. Consider a bijective map $\bX:\Sigma\to\Omega$ in $H^{k+3/2}(\Sigma)$. For half integer $s$ with $0\leq s\leq k-1/2$, we define the vector space
\begin{align}
\mathpzc{F}^s = \{( \rho, \varpi, f, \gamma ) : \: \rho, \varpi \in H^s(\Sigma) , \ \eta \in H^{s+1/2}(S^1) , \ \gamma \in \R , \ \mbox{and} \ \int_\Sigma \rho \, da = \int^\pi_{-\pi} \eta \, d\theta \} .
\end{align}
There is a bounded map $\mathpzc T \in B(\mathpzc F^s, H^{s+1}(\Sigma) )$ defined by
\begin{align}
&& \mathpzc T(\rho, \varpi, \eta, \gamma)(a) &= V(a) & (a\in \Sigma),
\end{align}
where, for \( \bM = \grad \bX^t \) and $\bm{N} = \bX^\perp_\theta/|\bX_\theta|$, $V$ is the unique solution of the following system.
\begin{align}
\label{GeneralDivCurlSystem}
\begin{aligned}
(a \in \Sigma) \qquad &
\left\{
    \begin{aligned}
        \grad \cdot (\, \cof (\bM) V ) &= \rho \\
        \grad^\perp\cdot (\, \bM V )   &= \varpi \, \Lcm
    \end{aligned}
\right. \\
(\psi = 0)  \qquad & \ \big\{ \, \bm{N} \cdot V = \eta , \\
(\psi = -1) \qquad & \ \Big\{ \, \bm{N} \cdot V = 0 , \qquad\quad \int^\pi_{-\pi} \bX_\theta \cdot V \, d\theta = \lambda .
\end{aligned}
\end{align}
Moreover, we have the estimate
\begin{align}
\lV V \rV_{H^{s+1}(\Sigma)}
\leq C \left(\lV \rho \rV_{H^s(\Sigma)}+ \lV \varpi \rV_{H^s(\Sigma)} + \lV \eta \rV_{H^{s+1/2}(S^1)} + |\lambda|\right),
\end{align}
where $C=C\left(\lV X \rV_{H^{k+1}(S^1)}\right)+C\left(\lV \bX \rV_{H^{k+3/2}(\Sigma)}\right)$.
\end{lemma}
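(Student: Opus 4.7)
The plan is to push the problem forward to the Eulerian domain $\Omega=\bX(\Sigma)$, solve the resulting classical div–curl system there by a Helmholtz–Hodge decomposition, and then pull back. Set $v=V\circ\bX^{-1}$. Since $\bX$ is a diffeomorphism with Jacobian bounded below (a consequence of $X\in H^{k+1}_\gp$ and $\bX\in H^{k+3/2}(\Sigma)$ with $\bX$ bijective), the Piola identity $\grad_a\cdot(\cof(\bM)\,V)=(\det\grad\bX)\,(\grad_x\cdot v)\circ\bX$ together with its rotated analogue $\grad^\perp_a\cdot(\bM V)=(\det\grad\bX)\,(\grad^\perp_x\cdot v)\circ\bX$ converts \eqref{GeneralDivCurlSystem} into
\[
\grad\cdot v=\rho^\flat,\qquad \grad^\perp\cdot v=\varpi^\flat\qquad(x\in\Omega),
\]
with $\rho^\flat,\varpi^\flat$ the Jacobian-rescaled pullbacks of $\rho,\varpi$, together with $n\cdot v=\eta^\flat$ on $\Gamma$, $n\cdot v=0$ on the floor, and the floor circulation $\int_{\mathrm{floor}}\tau\cdot v\,dS=\lambda$. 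The compatibility hypothesis $\int_\Sigma\rho\,da=\int^\pi_{-\pi}\eta\,d\theta$ becomes $\int_\Omega\rho^\flat\,dx=\int_{\partial\Omega}n\cdot v\,dS$, which is precisely the divergence-theorem compatibility needed for Neumann solvability.

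Next, I would construct $v$ by the standard Helmholtz decomposition $v=\grad\phi+\grad^\perp\psi+v_h$. The potential $\phi$ is obtained by solving the Neumann problem $\Delta\phi=\rho^\flat$ in $\Omega$ with $\partial_n\phi=\eta^\flat$ on $\Gamma$ and $\partial_n\phi=0$ on the floor (uniquely determined up to an irrelevant additive constant). The stream function $\psi$ is obtained by solving $\Delta\psi=-\varpi^\flat$ with Dirichlet data zero on $\Gamma$ and on the floor. These two elliptic problems are readily controlled in terms of the data: pulling them back to $\Sigma$, they become Poisson problems for the operator $L(\bX)$ of \eqref{varphiLagProb} adapted to the full $\Sigma$ (with Neumann/Dirichlet data), and the required $H^{s+2}$ bounds follow from patching together applications of Lemma~\ref{basicellipticboundLagr} (appropriately extended from $\Sigma_-$ to $\Sigma$ by using the same local estimates of Corollary~\ref{buildingcor} at both boundaries $\psi=0$ and $\psi=-1$).

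The residual field $v_0:=v-\grad\phi-\grad^\perp\psi$ is divergence- and curl-free with zero normal trace on $\partial\Omega$. Since $\Omega$ is an annulus (periodic in $x_1$, bounded between floor and $\Gamma$), the space of such harmonic tangent fields is one-dimensional, spanned by a canonical $v_h^0$ whose circulation around the floor can be normalized to $1$; the solution is then $v_0=(\lambda-\lambda_\phi-\lambda_\psi)\,v_h^0$, where $\lambda_\phi,\lambda_\psi$ are the circulations already contributed by $\grad\phi$ and $\grad^\perp\psi$. The field $v_h^0$ depends smoothly on the geometry (it is the conjugate of the harmonic measure of $\Gamma$ in $\Omega$) and is bounded in $H^{s+1}(\Omega)$ by $C(\|X\|_{H^{k+1}(S^1)})+C(\|\bX\|_{H^{k+3/2}(\Sigma)})$. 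Summing the three contributions and pulling back via $V=v\circ\bX$ furnishes a solution with the claimed estimate. Uniqueness follows immediately: if all data vanish then $v$ is divergence- and curl-free with vanishing normal trace and zero floor circulation, forcing $v_h^0$-coefficient zero and $\phi,\psi$ constant, hence $V=0$.

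The main technical obstacle I anticipate is not any single ingredient but rather the careful bookkeeping needed to keep the dependence of constants on $\|X\|_{H^{k+1}(S^1)}$ and $\|\bX\|_{H^{k+3/2}(\Sigma)}$ separated, as stated in the lemma: the interface parametrization $X$ controls the boundary regularity of $\Omega$ (and hence the elliptic constants up to the boundary), while the bulk map $\bX$ controls the coefficients of $L(\bX)$ in the interior. Since $\bX$ need not be the Lagrangian map of an exact solution (in particular $\bX_\theta|_{\psi=0}$ need not equal $X$), one cannot invoke the identities of Proposition~\ref{JacobianIdent}, and the two scales of regularity must be tracked independently through the patching argument.
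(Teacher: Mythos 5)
Your proposal is correct and matches the approach the paper indicates: Helmholtz decomposition into a curl-free part, a divergence-free part, and a harmonic tangent field, each controlled by the elliptic estimates of Lemma~\ref{basicellipticboundLagr}/Corollary~\ref{buildingcor} patched at both boundaries, with the one-dimensional harmonic space pinned down by the floor circulation $\lambda$. Your closing remark—that $\bX$ need not be the Lagrangian map of an exact solution, so $\bX_\theta|_{\psi=0}$ need not agree with $X_\theta$, forcing the $\lV X\rV_{H^{k+1}(S^1)}$ and $\lV \bX\rV_{H^{k+3/2}(\Sigma)}$ dependencies to be tracked separately and hence the additive form of the constant—is exactly the point that distinguishes this lemma from Lemmas~\ref{normalDivCurlSys} and \ref{tanglDivCurlSys}, and is worth making explicit; a minor sign slip ($\grad^\perp\cdot\grad^\perp\psi = +\Delta\psi$ with the paper's convention, so $\Delta\psi = \varpi^\flat$, not $-\varpi^\flat$) does not affect the argument.
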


\subsubsection*{Basic applications to $\cN_-(\xi)$ and $\bE(\xi)$}

The bound of Lemma~\ref{tanglDivCurlSys} is relevant to the Dirichlet-to-Neumann operator $\cN_-(\xi)$, whose definition for a given $\xi$ in $\sB^\bbox$ (see Definition~\ref{DirichToNeumDefs}) involves extension into the unpinched region $\Omega_-$. Indeed, consider a function $f$ in $H^1(S^1)$. Then we may define $V$ to be the unique solution of \eqref{GeneralDivCurlSystem2} with $\rho=\varpi=0$ in $\Sigma_-$ and boundary condition $X_\theta \cdot V = \eta=\frac{df}{d\theta}$. It follows
\begin{align}
& & \cN_-(\xi) f &= N \cdot V &   (\theta \in S^1). 
\end{align}
Let us show how this relation with the system \eqref{GeneralDivCurlSystem2} is used to verify a Lipschitz bound for the map $\xi\mapsto\cN_-(\xi)$.
\begin{proposition}\label{LipDirichNeumBd}
For $\xi$, $\uxi$ in $\sB^\bbox$,
\begin{align}
\lV ( \cN_-(\xi) - \cN_-(\uxi) )f \rV_{H^2(S^1)} \leq C \lV \xi-\uxi \rV_{\sH^1} \lV f \rV_{H^3(S^1)}.
\end{align}
\end{proposition}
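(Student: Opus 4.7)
The plan is to mirror the strategy used for $\cN_+$ in the proof of Proposition~\ref{NresLipBd}, but working on the fixed reference domain $\Sigma_-$ instead of $\Sigma_+(\delta)$, so that no pinch weights will be needed. Recall from the remark following Lemma~\ref{tanglDivCurlSys} that, setting $X_-=\cE_-(X)$ and extending $\xi$-data accordingly, one has the representation
\begin{align}
\cN_-(\xi) f(\theta) \;=\; N(\theta) \cdot V(\theta,0) \qquad (\theta\in S^1),
\end{align}
where $V:\Sigma_-\to\R^2$ is the unique solution to the system \eqref{GeneralDivCurlSystem2} with $\rho=\varpi=0$ and tangential boundary data $X_\theta\cdot V|_{\psi=0}=\tfrac{df}{d\theta}$. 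The analogous $\underline V$ with parameters $(\uxi,\uX,\uX_-,\underline N)$ represents $\cN_-(\uxi)f$.

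First, I would write
\begin{align}
(\cN_-(\xi)-\cN_-(\uxi))f = (N-\underline N)\cdot V|_{\psi=0} + \underline N\cdot (V-\underline V)|_{\psi=0},
\end{align}
so that in $H^2(S^1)$ it suffices to bound (a)~$\lV N-\underline N\rV_{H^2(S^1)}$ by $C\lV X-\uX\rV_{H^3(S^1)}$ (immediate from the definition of $N$ and $\uxi\in\sB^\bbox$), (b)~$\lV V|_{\psi=0}\rV_{H^2(S^1)}$ by $C\lV f\rV_{H^3(S^1)}$ via a trace inequality combined with the div--curl estimate of Lemma~\ref{tanglDivCurlSys} applied to $V$ at regularity $s=5/2$, and (c)~$\lV V-\underline V\rV_{H^{5/2}(\Sigma_-)}\leq C\lV \xi-\uxi\rV_{\sH^1}\lV f\rV_{H^3(S^1)}$, which is the main point.

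For (c), subtract the two div-curl systems to obtain a system for the difference $V-\underline V$ on $\Sigma_-$. Since $V$ solves \eqref{GeneralDivCurlSystem2} with $\cof(\grad X_-^t)$ and $\grad X_-^t$, and $\underline V$ the analogous system with $\uX_-$, the subtraction gives
\begin{align}
\grad\cdot(\cof(\grad X_-^t)(V-\underline V)) &= -\grad\cdot((\cof(\grad X_-^t)-\cof(\grad\uX_-^t))\underline V),\\
\grad^\perp\cdot(\grad X_-^t(V-\underline V)) &= -\grad^\perp\cdot((\grad X_-^t-\grad\uX_-^t)\underline V),\\
X_\theta\cdot (V-\underline V)\big|_{\psi=0} &= (X_\theta-\uX_\theta)\cdot \underline V\big|_{\psi=0},
\end{align}
with the necessary compatibility condition following automatically from those satisfied by $V$ and $\underline V$. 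Applying Lemma~\ref{tanglDivCurlSys} with $s=3/2$ then yields
\begin{align}
\lV V-\underline V\rV_{H^{5/2}(\Sigma_-)}\leq C\bigl( &\lV X_--\uX_-\rV_{H^{7/2}(\Sigma_-)}\lV\underline V\rV_{H^{5/2}(\Sigma_-)} \\
&+\lV X_\theta-\uX_\theta\rV_{H^2(S^1)}\lV\underline V|_{\psi=0}\rV_{H^2(S^1)}\bigr),
\end{align}
after using the product estimate in $H^{3/2}(\Sigma_-)$ (which is an algebra). Proposition~\ref{LowExtensionProp}(iv) converts $\lV X_--\uX_-\rV_{H^{7/2}(\Sigma_-)}$ into $C\lV X-\uX\rV_{H^3(S^1)}$, and Lemma~\ref{tanglDivCurlSys} applied to $\underline V$ at $s=3/2$ bounds $\lV\underline V\rV_{H^{5/2}(\Sigma_-)}$ by $C\lV f\rV_{H^3(S^1)}$. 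Combining these ingredients and then controlling the trace at $\psi=0$ gives (c), and hence the proposition.

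The main obstacle I anticipate is simply a bookkeeping one: ensuring the compatibility integral for the difference system is genuinely satisfied (i.e.\ that $\int_{\Sigma_-}\grad^\perp\cdot((\grad X_-^t-\grad\uX_-^t)\underline V)\,da = -\int_{S^1}(X_\theta-\uX_\theta)\cdot\underline V\,d\theta|_{\psi=0}$) so that Lemma~\ref{tanglDivCurlSys} is directly applicable. This, however, is a direct consequence of the divergence theorem together with the fact that $\cE_-(X)$ and $\cE_-(\uX)$ agree outside a compact set (Proposition~\ref{LowExtensionProp}(ii)), so the boundary contributions at $\psi\to-\infty$ vanish. There are no pinch issues at all here because $\Omega_-$ is unpinched and the constants in Lemma~\ref{tanglDivCurlSys} depend only on $\lV X\rV_{H^{k+1}(S^1)}$; this is why the bound for $\cN_-$ is strictly easier than the corresponding Lipschitz estimate for $\cN_+$ in Proposition~\ref{NresLipBd}.
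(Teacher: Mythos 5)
Your proposal follows essentially the same route as the paper's proof: both express $\cN_-(\xi)f = N\cdot V$ via the div--curl system \eqref{GeneralDivCurlSystem2} on $\Sigma_-$ with $X_-=\cE_-(X)$, decompose the difference into an $(N-\underline N)\cdot V$ term and an $\underline N\cdot(V-\underline V)$ term, derive a div--curl system for $V-\underline V$ by subtraction, and then apply Lemma~\ref{tanglDivCurlSys} together with Proposition~\ref{LowExtensionProp}(iv) to close the estimate. The only differences are cosmetic: the paper formulates the difference system using the $\uX_-$-coefficients applied to $V-\underline V$ with forcing in terms of $V$ (whereas you keep the $X_-$-coefficients and put $\underline V$ in the forcing), and the paper is slightly more parsimonious, needing only $\lVert X_--\uX_-\rVert_{H^{5/2}(\Sigma_-)}$ rather than $H^{7/2}$. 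One small slip: the boundary condition in your difference system should read $X_\theta\cdot(V-\underline V)|_{\psi=0} = -(X_\theta-\uX_\theta)\cdot\underline V|_{\psi=0}$, with a minus sign, since $X_\theta\cdot V = \uX_\theta\cdot\underline V = f'$; this does not affect the estimate. Your comment that compatibility is automatic is correct (and the paper takes it for granted), and your observation that no pinch weights are needed because $\Omega_-$ is unpinched is exactly the point of the final sentence of Lemma~\ref{SuppsBddAwayLem} and the remark after Proposition~\ref{bEDefns}.
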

\begin{proof}
In the following, we use the obvious notation with underlines to denote various quantities corresponding to $\uxi$. Let us use Proposition~\ref{LowExtensionProp} to define extensions $X_-=\cE_-(X)$ and $\uX_- = \cE_-(\uX)$. Now consider $V$ and $\underline V$ satisfying the corresponding div-curl systems of \eqref{GeneralDivCurlSystem2} with $X_-$ and $\uX_-$, respectively, and taking $\rho=0$, $\varpi=0$, and boundary conditions $X_\theta \cdot V =\uX_\theta \cdot \underline V= \frac{df}{d\theta}$ at $\psi=0$. It follows
\begin{align}
\cN_- f - \underline \cN_- f = N \cdot V - \underline N\cdot \underline V.
\end{align}
We then observe
\begin{align}\label{cNdifference}
\lV (\cN_-  - \underline \cN_- )f\rV_{H^2(S^1)}
\leq C ( \lV N- \underline N \rV_{H^2(S^1)} \lV V \rV_{H^2(S^1)} + \lV \underline N \rV_{H^2(S^1)}\lV V - \underline V \rV_{H^2(S^1)}) .
\end{align}
Meanwhile, noting in particular \(X_\theta \cdot V = \underline X_\theta \cdot \underline V\), we find that when we apply Lemma~\ref{tanglDivCurlSys} to the system satisfied by $V-\underline V$, we get
\begin{align}
\lV V - \underline V \rV_{H^{5/2}(\Sigma_-)} &\leq C ( \lV \grad \cdot [(\cof(\grad \uX^t_-) - \cof(\grad X^t_-))V] \rV_{H^{3/2}(\Sigma_-)}) \\
&\qquad +\lV \grad^\perp \cdot [(\grad \uX^t_- - \grad X^t_-)V] \rV_{H^{3/2}(\Sigma_-)} + \lV (\uX_\theta -  X_\theta)\cdot V \rV_{H^2(S^1)} )\\
&\leq C'( \lV \uX_- - X_- \rV_{H^{5/2}(\Sigma_-)}\lV V \rV_{H^{5/2}(\Sigma_-)} +\lV \uX- X \rV_{H^3(S^1)}\lV V \rV_{H^2(S^1)}) \\
&\leq C'' \lV X- \uX\rV_{H^3(S^1)} \lV f\rV_{H^3(S^1)}.
\end{align}
Combining this with \eqref{cNdifference} and again using the bound \(\lV V \rV_{H^2(S^1)} \leq C \lV f \rV_{H^3(S^1)}\), the result follows.
\end{proof}

Using the estimate of Lemma~\ref{normalDivCurlSys}, we are able to show the following bounds hold for the operators $\cH(\xi)$ and $\bE(\xi)$ defined in Proposition~\ref{bEDefns}.
\begin{proposition}\label{bEbounds}
For $\xi$ in $\sB^\bbox$, for integer $s$ with $0\leq s \leq k+1$, we have the estimates
\begin{align}
\lV \cH(\xi) f \rV_{H^s(S^1)} &\leq C(M) \lV f \rV_{H^s(S^1)},\\
\lV \bE(\xi) V \rV_{H^s(S^1)} &\leq C(M) \lV V \rV_{H^s(S^1)} ,\\
\lV (\bE(\xi))^{-1} V \rV_{H^s(S^1)} &\leq C(M) \lV V \rV_{H^s(S^1)}.
\end{align}
In the case $s\leq k$, we can replace $M$ in the right-hand sides above by $\lV \xi \rV_{\sH^{k-1}}$, and we also have
\begin{align}
\lV [\del_\theta,\bE(\xi)] V \rV_{H^s(S^1)} &\leq C(M) \lV V \rV_{H^s(S^1)} .
\end{align}

Given $\uxi$ in $\sB^\bbox$ as well, we have for integer $s$ with $0\leq s \leq k+1$
\begin{align}
\lV (\cH(\xi) - \cH(\uxi))f \rV_{H^s(S^1)} &\leq C \lV X - \uX \rV_{H^{s+1}(S^1)} \lV f \rV_{H^s(S^1)} ,\\
\lV (\bE(\xi) - \bE(\uxi))V \rV_{H^s(S^1)} &\leq C \lV X - \uX \rV_{H^{s+1}(S^1)} \lV V \rV_{H^s(S^1)} ,\\
\lV ((\bE(\xi))^{-1} - (\bE(\uxi))^{-1})V \rV_{H^s(S^1)} &\leq C \lV X - \uX \rV_{H^{s+1}(S^1)} \lV V \rV_{H^s(S^1)} .
\end{align}
In the case $s \leq k$,
\begin{align}
\lV ([\del_\theta,\bE(\xi)] - [\del_\theta,\bE(\uxi)])V \rV_{H^s(S^1)} &\leq C \lV X - \uX \rV_{H^{s+2}(S^1)} \lV V \rV_{H^s(S^1)} .
\end{align}
\end{proposition}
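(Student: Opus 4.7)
The plan is to reduce all four types of bounds (on $\cH$, $\bE$, $\bE^{-1}$, and $[\del_\theta,\bE]$) to elliptic estimates in the unpinched region $\Omega_-$ for the operator $\cN_-(\xi)^{-1}$, which are available through the div-curl machinery of Lemmas~\ref{normalDivCurlSys} and~\ref{tanglDivCurlSys}. The key point is that the operator $\cH$ is defined so that $\cH F = -|X_\theta|^{-1}\,\del_\theta G$ with $\cN_-(\xi)G = F - \langle F\rangle$, so bounding $\cH$ amounts to solving a Neumann problem in $\Omega_-$ and reading off a tangential derivative.

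For the bound on $\cH$, I would first solve $\cN_-(\xi)G = F-\langle F\rangle$ by realizing $G$ as the Dirichlet trace of the harmonic function $\phi$ in $\Omega_-$ whose inward normal derivative equals $F - \langle F\rangle$ (in Lagrangian variables this is the zero-divergence, zero-curl problem for $V = \grad\phi \circ X_-$ with prescribed $X_\theta^\perp\cdot V$, solvable by Lemma~\ref{normalDivCurlSys} since $\int (F-\langle F\rangle)|X_\theta|\,d\theta = 0$). This yields $\lV V\rV_{H^{s+1/2}(\Sigma_-)} \leq C(\lV X\rV_{H^{k+1}})\lV F\rV_{H^s(S^1)}$ for $0\leq s \leq k$, and the corresponding $H^{k+2}$-dependent constant for $s=k+1$. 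Reading off the tangential component of $V$ at $\psi=0$ gives $\del_\theta G$ in $H^s(S^1)$ with the claimed bound, whence the estimate on $\cH(\xi)$. The estimates for $\bE(\xi)$ then follow by combining this with the obvious product estimates on $\bigtau = X_\theta/|X_\theta|$ and $N = X_\theta^\perp/|X_\theta|$ (which lie in $H^{k+1}(S^1)$ for $\xi\in\sB^\bbox$), and the estimate on $\bE(\xi)^{-1}$ follows directly from the explicit formula \eqref{bEinverse} together with invertibility of the orthonormal frame matrix $\col{\bigtau}{N}$.

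The commutator estimate for $[\del_\theta,\bE(\xi)]$ is the step requiring the most care, since $\bE$ is nonlocal. I would expand $[\del_\theta,\bE]V$ using the product rule: the terms in which $\del_\theta$ lands on $\bigtau$, $N$, or $|X_\theta|$ are controlled at order zero once $X\in H^{k+2}$, so the essential difficulty is the term $[\del_\theta,\cH](N\cdot V)$. To treat this, differentiate the defining equation $\cN_-(\xi)G = F - \langle F\rangle$ in $\theta$; writing $G' = \del_\theta G$ and commuting $\del_\theta$ through, one finds $\cN_-(\xi)G' = \del_\theta F - [\del_\theta,\cN_-(\xi)]G$ plus mean-zero corrections. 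The commutator $[\del_\theta,\cN_-(\xi)]$ is the tricky piece, but it is order zero (being the variation of the Dirichlet–Neumann map along the flow of $\del_\theta$, whose coefficients involve only derivatives of $X$), so an application of Lemma~\ref{normalDivCurlSys} to $G'$ yields the required $H^s\to H^s$ bound. This commutator structure is exactly what allows us to conclude $[\del_\theta,\bE]$ maps $H^s\to H^s$ rather than losing a derivative.

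For the Lipschitz estimates, I would treat $\cH(\xi) - \cH(\uxi)$ by writing both $G$ and $\underline G$ as solutions of their respective div-curl systems on the common reference domain $\Sigma_-$ via the extension $\cE_-$ of Proposition~\ref{LowExtensionProp}, then form the system satisfied by $G - \underline G$: its right-hand side has the structure $\grad X_- - \grad\uX_-$ times lower-order factors, which is estimated by $\lV X-\uX\rV_{H^{s+1}(S^1)}$ times a norm of $G$ (itself bounded by $\lV F\rV_{H^s}$). Applying Lemma~\ref{normalDivCurlSys} once more yields the stated Lipschitz bound on $\cH$; the bounds for $\bE$, $\bE^{-1}$, and $[\del_\theta,\bE]$ then follow by the same product-and-commutator considerations as above, using Proposition~\ref{LipDirichNeumBd} wherever a perturbation of $\cN_-$ itself appears. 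The main obstacle I anticipate is making the commutator term $[\del_\theta,\cN_-(\xi)]$ explicit enough to see that it is genuinely of order zero rather than order one, but this is transparent from the div-curl formulation, where $\del_\theta$ differentiation of the system produces only coefficient derivatives of $X$.
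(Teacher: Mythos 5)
Your overall strategy — reduce everything to div-curl estimates in the unpinched Lagrangian domain $\Sigma_-$ via Lemma~\ref{normalDivCurlSys}, then read off $\cH F$ from the tangential trace of the resulting vector field — matches what the paper intends (the paper offers no written proof and simply points to Lemma~\ref{normalDivCurlSys}). The bounds on $\cH$, $\bE$, and $\bE^{-1}$, as well as the Lipschitz estimates via the reference extension $\cE_-$ and Proposition~\ref{LipDirichNeumBd}, are correctly sketched along these lines, and the constant-dependence you trace out ($H^{k+1}$ for $s\le k$, $H^{k+2}$ for $s=k+1$) is the right bookkeeping against Lemma~\ref{normalDivCurlSys}.

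The commutator step, however, rests on a false claim: $[\del_\theta,\cN_-(\xi)]$ is not order zero. At the level of principal symbols, $\cN_-(\xi) = |X_\theta|^{-1}|D_\theta| + \text{(l.o.t.)}$, so $[\del_\theta,\cN_-(\xi)] = (\del_\theta|X_\theta|^{-1})|D_\theta| + \text{(l.o.t.)}$, which is order one unless $|X_\theta|$ is constant — and here $|X_\theta|=|B|$ is not. Equivalently, $[\del_\theta,\cN_-(X)]$ is exactly $D\cN_-(X)[X_\theta]$, the Fr\'echet (shape) derivative of the Dirichlet-to-Neumann map in the tangential direction, and such derivatives are first-order operators. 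The argument nonetheless closes, but for a different reason: in the identity $\cN_-(\xi)G = F-\langle F\rangle$ defining $\cH$, the auxiliary unknown $G$ lies in $H^{s+1}$, one degree above $F$; thus $[\del_\theta,\cN_-]G\in H^s$, the difference $W = \del_\theta G-\hat G$ (where $\cN_-\hat G = \del_\theta F-\langle\del_\theta F\rangle$) satisfies $\cN_- W = -[\del_\theta,\cN_-]G+\text{const}\in H^s$ and hence $W\in H^{s+1}$, and finally $[\del_\theta,\cH]F = -\del_\theta W/|X_\theta|+\text{(smooth multiplication)}\in H^s$. So the gain comes from the regularity of $G$, not from the commutator; your proposal should be corrected to make this explicit, since as stated the key mechanism in the step you yourself flag as the delicate one is misattributed.
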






\subsection{Time derivative estimates}\label{timederivdefnsection}

In this section, we give the definitions and bounds for the various time derivative type maps acting on $\xi$ in $\sB^\bbox$, a class of time-independent states. 

Below we begin with basic vacuum-centric quantities first, defining $\dot h(\xi)$ and $\dot H(\xi)$, for example. Following this we define quantities living in the plasma region, such as $\dot \bU(\xi)$ and $\dot \bB(\xi)$. Afterwards, we focus on defining time derivative commutator maps such as $[\del_t;\bE](\xi)$, which we arrange to agree with $[\del_t,\bE(\xi)]$.
\subsubsection{Vacuum quantities}\label{VacuumQuantitiesTime}

\begin{definition}\label{dotHdefns}
Consider $\xi$ in $\sB^\bbox$ with corresponding vacuum region $\cV$ and interface $\Gamma$.

\begin{enumerate}[(i)]
\item We then define $\dot h(\xi)$ to be the unique solution $\dot h:\overline \cV \to \R^2$ to the system below, in which $h=h(\Gamma)$.
\begin{align}
(x \in \cV ) \qquad
&   \left\{
        \left.
        \begin{aligned}
            \grad \cdot \dot h &= 0 \ \\
            \grad^\perp \cdot \dot h & = 0 \, ,
        \end{aligned}
        \right.
    \right. \label{ExternalFieldDeriv1} \\
(x \in \Gamma)  \qquad 
& \ \bigg\{ \,  n \cdot \dot h = \frac{(U_\theta\circ X^{-1}) \cdot n}{|X_\theta \circ X^{-1}|} |h|-\sum^2_{i,j=1}(U_i\circ X^{-1}) n_j  \del_i h_j, \label{ExternalFieldDeriv2} \\
(x \in \mathcal{W} )  \qquad & \ \big\{ \,  n \cdot \dot h = 0, \label{ExternalFieldDeriv3} \\
(i=1,2) \qquad & \int_{\cW_i} h \cdot d \vec r = 0.\label{ExternalFieldDeriv4}
\end{align}
In the directed line integral in \eqref{ExternalFieldDeriv4}, the path runs left to right along $\cW_1$ for $i=1$, and the path runs counter-clockwise around $\cW_2$ for $i=2$. Moreover, whenever we consider $\del_i h$ on $\Gamma$ for $i=1,2$, we refer to the limits taken from within $\cV$.

\item Additionally, we define
\begin{align}
&& \dot H(\xi)(\theta) &= \dot h(X(\theta)) + \sum^2_{i=1} U_i(\theta) (\del_i h)(X(\theta)) & (\theta \in S^1).
\end{align}
\end{enumerate}
\end{definition}

\begin{remark}
\hfill
\begin{enumerate}[(i)]
\item To derive the system above for defining $\dot h$, we used in particular that, if $X_t=U$, one has $N \cdot H_t = -N_t\cdot H$ and $N_t = -\bigtau (U_\theta \cdot N)/|X_\theta|$. 
 The definition of $\dot H$ is chosen to match with the identity $H_t= h_t\circ X + (u\cdot \grad h)\circ X$.
 
 \item Existence of a solution to the system \eqref{ExternalFieldDeriv1}--\eqref{ExternalFieldDeriv4} is straightforward. One verifies the compatibility condition $\int_\Gamma n\cdot \dot h \,dS=0$ without too much difficulty by using the identity $( v \cdot \grad h )\cdot w = - (w^\perp \cdot \grad h) \cdot v^\perp$ which holds for the divergence-free, curl-free field $h$. Indeed, one calculates
 \begin{align}
 \int_\Gamma n\cdot \dot h\, dS = \int^\pi_{-\pi}U_\theta \cdot H^\perp - (U \cdot \grad h \circ X) \cdot X^\perp_\theta\, d\theta = \int^\pi_{-\pi}U_\theta \cdot H^\perp - H_\theta \cdot U^\perp \, d\theta = 0 .
 \end{align}
 \end{enumerate}
\end{remark}

\begin{proposition}\label{dotHEstimateProp}
Consider integer $m\geq 0$. For $\xi$ and $\uxi$ in $\sB^\bbox(\delta)$ for $\delta\geq 0$, 
we have the estimates
\begin{align}
\lV \dot{h}(\xi) \rV_{H^k(\Gamma, \delta, m)} &\leq C(M) , \label{dotHest1}\\
\lV \dot{H}(\xi) \rV_{H^k(S^1,\delta,m)} &\leq C(M) , \label{dotHest2}\\ 
\lV \dot{H}(\xi)-\dot{H}(\uxi) \rV_{H^2(S^1,\delta,m)} &\leq C \lV \xi - \uxi \rV_{\sH^2} . \label{dotHest3}
\end{align}
\end{proposition}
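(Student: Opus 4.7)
The plan is to mirror the structure of the proof of Proposition~\ref{hEstimateProp}, using the Lagrangian vacuum domain $\Sigma^\circ_+(\delta)$ as the common reference space and reducing everything to the weighted div-curl estimate of Proposition~\ref{LagrVacDivCurlEst}. Concretely, let $X_+ = \cE_\delta(X)$ be the extension of Proposition~\ref{extensionProp2}, and define the extension $\dot H_+ = \dot h \circ X_+ + \sum_i U_{+,i}\,(\del_i h)\circ X_+$ on $\Sigma^\circ_+(\delta)$, where $U_+$ is any convenient continuous extension of $U$ off of the bottom boundary (e.g.\ via Lemma~\ref{vecLowerExtLem} applied to a cutoff rotation of $U$). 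Using that $h$ is divergence and curl free, together with Lemma~\ref{DivFreeCurlFreeConversion} applied to $\dot h$, one checks that $\dot H_+$ satisfies a system of the form \eqref{gendivcurlprob}--\eqref{lineIntegCond2} on $\Sigma^\circ_+(\delta)$ whose right-hand sides $\rho$, $\varpi$ are polynomial expressions in $\grad X_+$, $\grad U_+$, $H_+ = h\circ X_+$, and $\grad H_+(\grad X_+)^{-1}$, whose boundary data $\eta$ comes from the explicit formulas in \eqref{ExternalFieldDeriv2}--\eqref{ExternalFieldDeriv3}, and whose line integrals over $\tilde\cW_i$ vanish.

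\textbf{Bounds (i)--(ii).} Applying Proposition~\ref{LagrVacDivCurlEst} at $s = k-1/2$ gives, for an appropriate $m'\ge m$,
\begin{align}
\lV \dot H_+\rV_{H^{k+1/2}(\Sigma^\circ_+,\delta,m)} \le C\big(\lV \rho\rV_{H^{k-1/2}(\Sigma^\circ_+,\delta,m')}+\lV\varpi\rV_{H^{k-1/2}(\Sigma^\circ_+,\delta,m')}+\lV\eta\rV_{H^k(\del\Sigma^\circ_+,\delta,m')}\big).
\end{align}
Each of the terms on the right is a product of factors in two kinds of norms: either unweighted-but-bounded (like $X_+$, $U_+$, whose bounds follow from $\xi\in\sB^\bbox$ and \eqref{EdeltaBd}) or controlled in arbitrarily singular weighted norms (the factors built out of $h$, which are handled by Proposition~\ref{hEstimateProp}, valid for any weight index). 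The multiplication rule of Proposition~\ref{standardWeightedSobNormEstims}(iv) then bounds each product by $C(M)$, which yields (i)--(ii) after restricting to the bottom boundary via Proposition~\ref{standardWeightedSobNormEstims}(ii)--(iii).

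\textbf{Lipschitz bound (iii).} For $\xi$ and $\uxi$ in $\sB^\bbox(\delta)$ with $\delta>0$, form $\dot H_+$ and $\underline{\dot H}_+$ as above, and subtract the two div-curl systems. Exactly as in the step \eqref{HplusDiffSys} in the proof of Proposition~\ref{hEstimateProp}, the difference $\dot H_+-\underline{\dot H}_+$ solves a system on $\Sigma^\circ_+(\delta)$ whose right-hand sides and boundary data can be organized as finite sums of products in which one factor is a difference of low regularity (controlled by $\lV \xi-\uxi\rV_{\sH^2}$ through Proposition~\ref{extensionProp2}(vii), the Lipschitz part of Proposition~\ref{hEstimateProp}, and the bound on $\grad h-\underline{\grad h}$ obtained by one more div-curl comparison on $\Sigma^\circ_+$) and the other factor is uniformly bounded in a weighted norm of high weight index. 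Applying Proposition~\ref{LagrVacDivCurlEst} at $s=3/2$ and using Proposition~\ref{standardWeightedSobNormEstims}(iv) once more gives the estimate $\lV\dot H_+ - \underline{\dot H}_+\rV_{H^{5/2}(\Sigma^\circ_+,\delta,m)} \le C\lV \xi - \uxi\rV_{\sH^2}$, and taking the trace at $\psi=0$ yields (iii). For the case $\delta=0$, one first observes the bound for $\delta>0$ and then obtains (iii) by the same density/weak-limit extension argument used in Lemma~\ref{sNiExt}, using the uniform bound (ii) to extract a weakly convergent subsequence in the weighted space.

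\textbf{Main obstacle.} The delicate point is accounting for the loss of weight index inherent in Proposition~\ref{LagrVacDivCurlEst} once we start composing products involving $h$ and its gradient near the pinch. The boundary forcing \eqref{ExternalFieldDeriv2} is a \emph{product} of $U$, $N$, $H$, and $\grad h\circ X$; under the multiplication rule, only one of the factors is allowed to carry a negative weight index, so one has to allocate \emph{all} of the pinch-decay to $h$ (or its derivative) and treat $U$, $N$, $\grad X_+$, etc.\ as bounded. The fact that Proposition~\ref{hEstimateProp} provides weighted bounds on $h$ with \emph{arbitrarily large} weight index $m$ is exactly what makes this allocation possible and is the reason the proof closes; this is the same mechanism that underlies Lemma~\ref{sNilemma}.
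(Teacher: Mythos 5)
Your high-level strategy matches the paper's: pull things back to $\Sigma^\circ_+(\delta)$ via $X_+=\cE_\delta(X)$, reduce to a div-curl system, and invoke Propositions~\ref{LagrVacDivCurlEst} and~\ref{standardWeightedSobNormEstims} together with the arbitrarily-high-weight bound on $h$ from Proposition~\ref{hEstimateProp}. However, there is a genuine gap in your choice of what to extend. The paper sets $\dot H_+ = \dot h \circ X_+$, \emph{not} an extension of the full $\dot H(\xi)$. Because $\dot h$ is divergence-free and curl-free (see~\eqref{ExternalFieldDeriv1}), Lemma~\ref{DivFreeCurlFreeConversion} gives $\rho=\varpi=0$, and the only nontrivial input is the boundary trace $\eta = (U_\theta\cdot H^\perp - H_\theta\cdot U^\perp)/|X_\theta|$; since this is built from the surface quantities $U, H, X$, it automatically factors through $\sP_\delta$. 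The extra term $\sum_i U_i(\theta)(\del_i h)(X(\theta))$ in $\dot H$ is then a purely boundary quantity that is controlled directly from the weighted bound on $H$ in Proposition~\ref{hEstimateProp} (the full $\grad h$ on $\Gamma$ is recovered from the tangential derivative $H_\theta$ using the div-free/curl-free structure), without ever needing to extend $U$ off of $\Gamma$.

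Your version instead extends the full expression $\dot H_+ = \dot h\circ X_+ + \sum_i U_{+,i}\,(\del_i h)\circ X_+$ using an extension $U_+$ of $U$. This creates two problems. First, the extension you propose, Lemma~\ref{vecLowerExtLem}, extends into $\Sigma_-$, not into $\Sigma_+(\delta)$, so it does not produce a function on the relevant domain at all. Second, and more structurally, the hypotheses of Proposition~\ref{LagrVacDivCurlEst} require $V$, $\rho$, $\varpi$, and $\eta$ to all factor through the double-cover projection $\sP_\delta$ (i.e.\ $V = V^\sharp\circ\sP_\delta$, etc.). Anything of the form $(\cdot)\circ X_+$ satisfies this automatically, but a generic extension $U_+$ of $U$ to $\Sigma^\circ_+(\delta)$ will not, and you never verify this. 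One could fix it by setting $U_+ = \tilde u\circ X_+$ where $\tilde u$ is some extension of $U\circ X^{-1}$ from $\Gamma$ into $\cV$, but at that point you would have reproduced more work than the paper does, while also carrying around the nontrivial interior source terms $\rho,\varpi$ that the paper's choice avoids. The correct and most economical move is to extend only $\dot h$ and treat the $U\cdot\grad h$ contribution as a boundary quantity.
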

\begin{proof}
Let us first prove \eqref{dotHest1} and \eqref{dotHest2} when $\delta>0$. 

We define $X_+=\cE_\delta(X)$ and $\dot H_+$ in $\Sigma^\circ_+(\delta)$ by $\dot H_+ = \dot h \circ X_+$. We then find that $\dot H_+$ solves the system of Proposition~\ref{LagrVacDivCurlEst} in the case that $\Sigma^*_+=\Sigma^\circ_+(\delta)$, $\rho=\varpi=0$, and $\eta = (U_\theta \cdot H^\perp -H_\theta \cdot U^\perp)/|X_\theta|$. Thus, by using Proposition~\ref{LagrVacDivCurlEst} and (iv) of Proposition~\ref{standardWeightedSobNormEstims}, for some $m'$ and $m''$ we find
\begin{align}
\lV \dot H_+ \rV_{H^{k+1/2}(\Sigma^\circ_+,\delta,m)} &\leq C(M) \left(\lV U_\theta \cdot H^\perp\rV_{H^k(S^1,\delta,m')}+\lV H_\theta \cdot U^\perp \rV_{H^k(S^1,\delta,m')} \right)\\
&\leq C(M)
\lV U \rV_{H^{k+1}(S^1)}\lV H \rV_{H^{k+1}(S^1,\delta,m'')} \\
&\leq C(M),
\end{align}
applying Proposition~\ref{hEstimateProp} in the last step. From this we easily conclude \eqref{dotHest1} and \eqref{dotHest2} in the case $\delta>0$.

Let us simply remark that a similar argument to the proof of (ii) of Proposition~\ref{hEstimateProp} verifies the Lipschitz estimate \eqref{dotHest3} for $\delta>0$. With this, using a method similar to the proof of Lemma~\ref{sNiExt}, one deduces the estimates \eqref{dotHest1}--\eqref{dotHest3} hold for $\delta=0$ as well.
\end{proof}

\begin{corollary}
We have a bounded map
\begin{align}
\dot{H} &: \sB^\bbox \to H^k(S^1)
.
\end{align}
\end{corollary}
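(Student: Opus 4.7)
The corollary is an essentially immediate consequence of the preceding Proposition~\ref{dotHEstimateProp}, specialized to the trivial weight. My plan is to take any $\xi \in \sB^\bbox$ and observe that, by Definition~\ref{sBboxdefns}, $\sB^\bbox = \bigcup_{\delta \in [0, \delta_0]} \sB^\bbox(\delta)$, so $\xi$ lies in $\sB^\bbox(\delta)$ for some pinch value $\delta \in [0, \delta_0]$, to which Proposition~\ref{dotHEstimateProp} applies.

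The key observation is to invoke the bound \eqref{dotHest2} with weight index $m = 0$. In that case, the weight function $M_\delta^0 \equiv 1$, so the weighted norm $\lV \cdot \rV_{H^k(S^1,\delta,0)}$ coincides with the standard Sobolev norm $\lV \cdot \rV_{H^k(S^1)}$ (this is immediate from Definition~\ref{weightedSpaceDefs}). Proposition~\ref{dotHEstimateProp} then gives
\begin{align}
\lV \dot H(\xi) \rV_{H^k(S^1)} \leq C(M),
\end{align}
with $C(M)$ independent of $\delta$ and of the particular $\xi \in \sB^\bbox(\delta)$. Since $\sB^\bbox$ is a bounded subset of $\sH^k_\gp$ (all elements satisfy $\lV \xi \rV_{\sH^k} \leq M$), this exhibits $\dot H$ as a bounded map $\sB^\bbox \to H^k(S^1)$ in the sense used elsewhere in the paper (cf.\ the analogous statements for $\cN_\pm(\cdot)$ and $\bE(\cdot)$).

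There is really no obstacle here, since the uniform-in-$\delta$ character of the estimate has already been absorbed into the weighted framework developed in Section~\ref{vacuuminterfaceestimatessection}; the whole point of Proposition~\ref{dotHEstimateProp} was to do the heavy lifting, and the present corollary just strips off the weight. If one additionally wants continuity of $\dot H$, one may couple the uniform $H^k$ bound with the Lipschitz estimate \eqref{dotHest3} and a standard interpolation argument; but this is not required by the stated conclusion.
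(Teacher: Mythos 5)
Your proof is correct and matches the paper's (implicit) argument: the corollary is stated without proof precisely because one just specializes the uniform weighted estimate \eqref{dotHest2} to $m=0$, where $M_\delta^0\equiv 1$ makes $\lV\cdot\rV_{H^k(S^1,\delta,0)}$ coincide with $\lV\cdot\rV_{H^k(S^1)}$, and the constant $C(M)$ is already independent of $\delta$.
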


Now we work our way towards the time derivative type map corresponding to the exterior pressure gradient trace $P^+_\grad$. To help clarify its definition, which we provide shortly, let us recall from Definition~\ref{ExtPressureDefnBd} the exterior pressure function $p_+ = \half(1-\bigh_+)|h|^2$ and observe that it satisfies $\Delta p_+ = |\grad h|^2$ in $\cV$.
Let us attempt to derive an expression for $\del_t p_+$. For a moving domain, one finds that (i)
\begin{align}
& & (\del_t + u \cdot \grad )\bigh_+ |h|^2&= 2 h_t \cdot h  &   (x \in \Gamma) ,
\end{align}
and (ii) that $\del_t \bigh_+ |h|^2$ is harmonic in $\Omega_+$. Thus
\begin{align}
& & \half \del_t \bigh_+ |h|^2 &=\bigh_+( h_t \cdot h - \half u \cdot \grad \bigh_+ |h|^2) &   (x \in \cV) .
\end{align}
Using this together with the fact that $\half \del_t |h|^2 = h_t\cdot h$ and the definition of $p_+$, we derive the expression for the time derivative of $p_+$ which is given in the expression for $\dot p_+$ below. The definition for $\dot P^+_\grad$, the time derivative of $P^+_\grad =( \grad p_+) \circ X$, can also be derived without much difficulty.
\begin{definition}\label{dotPplusdefn}
For $\xi$ in $\sB^\bbox$, consider $\dot{p}_+:\overline{\cV}\to\R$ as follows, where $h=h(\Gamma)$ and $\dot h = \dot h(\xi)$:
\begin{align}
& & \dot p_+(x) & = (1-\bigh_+)(h\cdot \dot h )(x) + \half \bigh_+\left(U( X^{-1}(x))\cdot \grad \bigh_+ |h|^2(x) \right)  &   (x\in \overline \cV).
\end{align}
We define $\dot{P}^+_\grad(\xi):S^1\to\R^2$ by the following, in which $p_+=\half (1-\bigh_+)|h|^2$:
\begin{align}
& & \big( \dot{P}^+_\grad(\xi)\big)_i (\theta) &= (\del_{x_i} \dot{p}_+)(X(\theta))+ \sum^2_{j=1} U_j(\theta) ( \del^2_{x_i x_j} p_+ )(X(\theta))   &   (\theta \in S^1 , \ i=1,2) .
\end{align}
\end{definition}

\begin{proposition}\label{dotPplusSysProp}
Consider integer $m\geq 0$. For $\xi$ in $\sB^\bbox$, 
we have the estimate
\begin{align}
\lV \dot{P}^+_\grad\rV_{H^k(S^1)} \leq C(M) .
\end{align}
If $\xi$ and $\uxi$ are in $\sB^\bbox(\delta)$ for $\delta>0$,
\begin{align}
\lV  \dot{P}^+_\grad(\xi) - \dot{P}^+_\grad(\uxi) \rV_{H^2(S^1)} &\leq C \lV \xi - \uxi \rV_{\sH^2} .
\end{align}
\end{proposition}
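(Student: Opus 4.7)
The plan is to adapt the strategy already used in Proposition~\ref{PplusbdProp} to handle $\dot P^+_\grad$. First I would rewrite $\dot p_+$ as two pieces:
\begin{align}
\dot p_+ = (1-\bigh_+)(h\cdot \dot h) + \tfrac{1}{2}\bigh_+\!\left( U(X^{-1})\cdot \grad\bigh_+ |h|^2\right).
\end{align}
Since each component of $h$ and $\dot h$ is harmonic in $\cV$, a short computation gives $\Delta\dot p_+ = 2\sum_{i,j}\del_i h_j\,\del_i\dot h_j$ in $\cV$, while on $\Gamma$ the first piece vanishes and the second piece reduces to the trace of a harmonic extension controlled by $U$ and $\grad\bigh_+|h|^2$. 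This sets up the exact same template that worked for $p_+$.

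Next, I would introduce the cutoff $\chi$ used in the proof of Proposition~\ref{PplusbdProp}, which vanishes near the walls, and estimate $\chi\dot p_+$ by two successive applications of the weighted elliptic estimate of Proposition~\ref{1stWeightedEstimate}. The first application localizes away from $\mathcal W$ and converts the bound on $\chi\dot p_+$ into bounds on $\Delta\dot p_+$ in a weighted $H^{k-1/2}(\cV,\delta,m')$ norm and on the $\mathcal W$-traces of $(1-\bigh_+)(h\cdot\dot h)$ and $\bigh_+(U(X^{-1})\cdot\grad\bigh_+|h|^2)$. The trace on $\mathcal W$ of any harmonic extension is controlled by an $L^\infty(\Gamma)$ bound via Lemma~\ref{harmExtLinftyBd}, which in turn is dominated by the weighted bounds on $h$ and $\dot h$ from Propositions~\ref{hEstimateProp} and~\ref{dotHEstimateProp}. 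The Laplacian is a pointwise product $\grad h\cdot\grad\dot h$, so Proposition~\ref{standardWeightedSobNormEstims}(iv) together with the same two weighted bounds yield $\lV\Delta\dot p_+\rV_{H^{k-1/2}(\cV,\delta,m')}\le C(M)$. A standard trace inequality then gives the desired $H^k(S^1)$ bound on $\dot P^+_\grad$, after including the transport correction $U\cdot \grad^2 p_+\circ X$, which is already controlled in $H^k(S^1)$ by Proposition~\ref{PplusbdProp} and $U\in H^{k+1}(S^1)$.

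For the Lipschitz bound, I would mimic precisely the second half of the proof of Proposition~\ref{PplusbdProp}: lift $\dot P^+_\grad$ and $\underline{\dot P}^+_\grad$ to fields $\dot\bP^+_\grad=(\grad\dot p_+)\circ X_+$ and $\underline{\dot\bP}^+_\grad$ on $\Sigma^\circ_+(\delta)$ via the extension map $\cE_\delta$ of Proposition~\ref{extensionProp2}, multiply by a cutoff $\tilde\chi=\chi\circ X_+$, and derive a coupled div-curl system for $\tilde\chi(\dot\bP^+_\grad - \underline{\dot\bP}^+_\grad)$ on $\Sigma^\circ_+(\delta)$ whose right-hand sides collect commutator terms between $\cE_\delta(X)-\cE_\delta(\uX)$, $H_+-\underline H_+$, $\dot H_+-\underline{\dot H}_+$, and differences of harmonic extensions on tailored wall data. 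Applying Proposition~\ref{LagrVacDivCurlEst} (with $n$ replaced by $\tau$ as in the $P^+_\grad$ case) reduces matters to differences of: (i) $X_+-\uX_+$, controlled by Proposition~\ref{extensionProp2}; (ii) $H_+-\underline H_+$, controlled by Proposition~\ref{hEstimateProp}(ii); and (iii) $\dot H_+-\underline{\dot H}_+$, controlled by Proposition~\ref{dotHEstimateProp}. Each is Lipschitz in $\xi$ in $\sH^2$.

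The main obstacle I foresee is bookkeeping the compound harmonic-extension structure in the Lipschitz argument. Because $\dot p_+$ contains a nested extension $\bigh_+(U\cdot \grad\bigh_+|h|^2)$, closing the Lipschitz estimate requires a cascade of elliptic problems on $\Sigma^\circ_+(\delta)$: one for the outer layer, one for $\grad\bigh_+|h|^2$, and auxiliary ones for the $\tilde\cW_i$-traces, as in the final steps of the proof of Proposition~\ref{PplusbdProp}. Each layer must be handled with the weighted estimates of Section~\ref{weightedspacessection} so that the weight indices $m$ do not blow up as $\delta\to 0$. This is purely a careful iteration of already-established tools—no new analytic input beyond the uniform weighted control of $h$ and $\dot h$ is needed—but it must be sequenced carefully so that each remainder term is dominated by the next application of Proposition~\ref{LagrVacDivCurlEst}, exactly as in the chain \eqref{beginHLipbd}--\eqref{endHLipbd}.
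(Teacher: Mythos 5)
Your proposal is correct and follows exactly the same route as the paper's own (deliberately terse) proof: mimic Proposition~\ref{PplusbdProp}, compute $\Delta\dot p_+ = 2\sum_{i,j}\del_i h_j\,\del_i\dot h_j$ in $\cV$, localize away from $\mathcal W$ with the cutoff $\chi$, apply Propositions~\ref{1stWeightedEstimate} and~\ref{LagrVacDivCurlEst} and Lemma~\ref{harmExtLinftyBd}, and close by invoking the uniform weighted and Lipschitz bounds on $h$, $\dot h$, $H$, $\dot H$ from Propositions~\ref{hEstimateProp} and~\ref{dotHEstimateProp}. You have merely written out the cascade of elliptic and div-curl estimates that the paper leaves implicit.
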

\begin{proof}
Similar to the proof of Proposition~\ref{PplusbdProp}, the estimates are proved with the use of Propositions~\ref{1stWeightedEstimate} and~\ref{LagrVacDivCurlEst}, in the end reducing to the upper and Lipschitz bounds we have proved for $h(\Gamma)$, $\dot h(\xi)$, $H(\xi)$, and $\dot H(\xi)$.
\end{proof}

\subsubsection{Estimates in the plasma region}\label{PlasmaQuantitiesTime}

Now we pivot to discuss time derivative type maps for the velocity and interior magnetic field.
We define $\dot{\bU}(\xi)$ and $\dot{\bB}(\xi)$ as the unique solutions, guaranteed to exist as a consequence of Lemma~\ref{GeneralDivCurlSysLem}, to the corresponding div-curl systems below, for $\bW$ as described in Proposition~\ref{dotUdotBdefns}:
\begin{align}
\begin{aligned}
(a \in \Sigma) \qquad &
\left\{
    \begin{aligned}
        \grad \cdot (\, \cof ({\bM}) \dot{\bU})  &= - \grad \cdot (\cof(\grad {\bU}^t+\grad \bW^t){\bU} ) \\
        \grad^\perp\cdot (\, {\bM} \dot{\bU})    &= \bm{\sigma} \bj_\theta - \grad^\perp \cdot ( \grad \bW^t {\bU}  )  \, \Lcm
    \end{aligned}
\right. \\
(\psi = 0)  \qquad & \ \Big\{ \, \bm{N} \cdot \dot{\bU} = {\bU} \cdot \frac{\bX_\theta}{|\bX_\theta|}({\bU} + \bW) \cdot \bm{N} , \\
(\psi = -1) \qquad & \ \Big\{ \, \bm{N} \cdot \dot{\bU} = 0 , \qquad\quad \int^\pi_{-\pi}  \dot{\bU} \cdot {\bX}_\theta \, d\theta = -\int^\pi_{-\pi}  |{\bU}|^2 + {\bU} \cdot \bW \, d\theta , \\
(a \in \Sigma) \qquad &
\left\{
    \begin{aligned}
        \grad \cdot (\, \cof (\bM) \dot{\bB})  &= - \grad \cdot (\cof(\grad {\bU}^t+\grad {\bB}^t){\bB} ) \\
        \grad^\perp\cdot (\, \bM \dot{\bB})    &= \bm{\sigma} \bom_\theta + \grad^\perp \cdot ( (\grad {\bU}^t - \grad \bW^t ) {\bB}  )  \, \Lcm
    \end{aligned}
\right. \\
(\psi = 0)  \qquad & \ \big\{ \, \bm{N} \cdot \dot{\bB} = ({\bU} + \bW) \cdot {\bB}^\perp , \\
(\psi = -1) \qquad & \ \Big\{ \, \bm{N} \cdot \dot{\bB} = 0 , \qquad\quad \int^\pi_{-\pi}  \dot{\bB} \cdot {\bX}_\theta \, d\theta = -\int^\pi_{-\pi} {\bB}\cdot({\bU} + \bW) \, d\theta .
\end{aligned}
\end{align}
\begin{proposition}\label{dotUdotBdefns}
For $\xi \in \sB$, at each time $t$ in $[0,T]$ we define $\dot{\bU}(\xi)(t):\Sigma\to\R^2$ and $\dot{\bB}(\xi)(t):\Sigma\to\R^2$ to be the unique solutions to the systems above with $\bU= \bU(\xi)$, $\bB= \bB(\xi)$, and $\bX= \bX(\xi)$ of Proposition~\ref{SolnMainDivCurlProp}, solving the system \eqref{AugmentedInteriorSystemL}, and we have $\bM=\grad \bX^t$, $\bm{N}=n\circ \bX$, and $\bW = w_\Omega\circ \bX$ for $w_\Omega:\Omega\to\R^2$ defined by \eqref{ulwformula}.

Then $\dot{\bU}(\xi)$ and $\dot{\bB}(\xi)$ obey the bound below.
\begin{align}
\sup_{t}\lV \dot{\bU}(\xi)\rV_{H^{k+1/2}(\Sigma)}+\sup_{t}\lV \dot{\bB}(\xi)\rV_{H^{k+1/2}(\Sigma)} &\leq C(M) .
\end{align}
\end{proposition}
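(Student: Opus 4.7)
The plan is to apply Lemma~\ref{GeneralDivCurlSysLem} at level $s = k-1/2$ to each of the two div-curl systems, which by hypothesis $k\geq 4$ sits inside the algebra range for $H^{s+1}(\Sigma) = H^{k+1/2}(\Sigma)$. From Proposition~\ref{SolnMainDivCurlProp} the plasma-centric maps $\bU=\bU(\xi), \bB=\bB(\xi), \bX=\bX(\xi)$ lie in $H^{k+3/2}(\Sigma)$ uniformly in $t$, so $\grad\bU, \grad\bB, \bM = \grad\bX^t$ are uniformly bounded in $H^{k+1/2}(\Sigma)$; and from $\xi\in\sB$ the fields $\bom,\bj$ lie in $H^{k+1/2}(\Sigma)$, hence $\bom_\theta, \bj_\theta$ in $H^{k-1/2}(\Sigma)$. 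The auxiliary field $\bW = w_\Omega\circ\bX$ is constructed from the smoothing kernel $K_R$ against $\Phi(t)$ and the domain $\Omega(t)$, so a standard estimate shows $\lV\bW\rV_{H^{k+1/2}(\Sigma)}\leq C|\Phi(t)|\leq C(M)$. Thus each ingredient appearing in the right-hand sides of the two systems lies in the correct space; after invoking the algebra property to combine them, Lemma~\ref{GeneralDivCurlSysLem} delivers unique solutions $\dot\bU,\dot\bB$ in $H^{k+1/2}(\Sigma)$ with the stated bound, provided we verify the required scalar compatibility on the total divergence.

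The step I expect to be most subtle is checking the compatibility condition
\[
\int_\Sigma \rho \, da = \int_{\psi=0}\eta\, d\theta
\]
for each system, where $\rho$ is the prescribed divergence and $\eta$ the prescribed normal trace at $\psi=0$ (the $\psi=-1$ contribution vanishes by the boundary condition $\bm{N}\cdot\dot\bU = \bm{N}\cdot\dot\bB = 0$). The cleanest way is to observe that both systems are precisely what one obtains by formally applying $\del_t$ to the augmented Lagrangian div-curl system \eqref{AugmentedInteriorSystemL} satisfied by $\bU,\bB$ (using $\del_t\bM = \grad\bU^t + \grad\bW^t$, $\del_t\bm{N}$ computed from $N_t = -\bigtau(U_\theta\cdot N)/|X_\theta|$ extended into the interior via $\bW$, and the $\bom,\bj$ evolution $\bom_t=\bj_\theta$, $\bj_t=\bom_\theta + \mathit{l.o.t.}$ of \eqref{vortEvoSystem}). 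Since the compatibility identity for \eqref{AugmentedInteriorSystemL} was already established in \eqref{compatConditExpl}, differentiating that identity in time and recombining the boundary terms using the defining formula \eqref{ulwformula} for $w_\Omega$ and the Piola identity $\grad\cdot\cof(\grad\bX^t) = 0$ yields the two required compatibility relations. A minor but necessary verification is that the circulation prescriptions at $\psi=-1$ arise consistently: they come from differentiating $\int_{-\pi}^\pi \bU\cdot\bX_\theta\,d\theta = \upalpha$ and $\int_{-\pi}^\pi \bB\cdot\bX_\theta\,d\theta = \upbeta$ in time, using $\del_t\bX = \bU + \bW$.

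With compatibility in hand, the bound is then immediate: Lemma~\ref{GeneralDivCurlSysLem} gives
\[
\lV\dot\bU\rV_{H^{k+1/2}(\Sigma)} \leq C\Bigl(\lV\rho_1\rV_{H^{k-1/2}(\Sigma)} + \lV\varpi_1\rV_{H^{k-1/2}(\Sigma)} + \lV\eta_1\rV_{H^k(S^1)} + |\lambda_1|\Bigr),
\]
and likewise for $\dot\bB$, with $C = C(M)$ in view of the bound \eqref{InteriorSysLBound} on $\bX$. Each right-hand piece is controlled by $C(M)$: the divergence terms involve products $\cof(\grad\bU^t+\grad\bW^t)\bU$ and $\cof(\grad\bU^t + \grad\bB^t)\bB$ followed by one derivative, estimated in $H^{k-1/2}(\Sigma)$ by the algebra property combined with the $H^{k+3/2}$ control on $\bU,\bB$; the curl terms similarly, with the additional factors $\bom_\theta,\bj_\theta \in H^{k-1/2}(\Sigma)$; the Dirichlet traces at $\psi=0$ are products of traces of $\bU,\bB,\bW$ controlled in $H^k(S^1)$ by the trace inequality; and the integrals controlling $\lambda_1,\lambda_2$ are bounded by $C(M)$. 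Summing yields the claimed estimate.
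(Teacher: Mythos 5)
Your proposal is correct and takes the same route as the paper, which simply cites Lemma~\ref{GeneralDivCurlSysLem} and Proposition~\ref{SolnMainDivCurlProp} without elaboration; you fill in exactly the bookkeeping that one-line proof leaves implicit (level $s=k-1/2$, regularity of each source term, and the compatibility identity obtained by time-differentiating \eqref{compatConditExpl}, which is the intended mechanism since the $\dot\bU,\dot\bB$ systems are the formal $\del_t$ of \eqref{AugmentedInteriorSystemL} once one uses $\del_t\bM=\grad\bU^t+\grad\bW^t$ and $\del_t\bX=\bU+\bW$).
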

\begin{proof}
The proof follows from Lemma~\ref{GeneralDivCurlSysLem} and Proposition~\ref{SolnMainDivCurlProp}.
\end{proof}
\begin{proposition}
For all $\xi$ and $\uxi$ in $\sB$ the following holds.
\begin{align}
\sup_{t}\lV \dot \bU(\xi) - \dot \bU(\uxi) \rV_{H^{5/2}(\Sigma)} + \sup_{t}\lV \dot \bB(\xi) - \dot \bB(\uxi) \rV_{H^{5/2}(\Sigma)}
\leq C \sup_{t \in [0,T] } \lV \xi(t) - \uxi(t) \rV_{\sH^2} .
\end{align}
\end{proposition}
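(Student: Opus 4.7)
The plan is to derive a div-curl system for the differences $\dot\bU(\xi)-\dot\bU(\uxi)$ and $\dot\bB(\xi)-\dot\bB(\uxi)$ and apply the uniform div-curl estimate of Lemma~\ref{GeneralDivCurlSysLem}, with all right-hand sides controlled by the Lipschitz bounds already established. I will use underlines to denote quantities corresponding to $\uxi$. Subtract the systems defining $\dot\bU(\xi)$ and $\dot\bU(\uxi)$, and rewrite the result so that $\bM=\grad\bX^t$ appears as the ``reference'' matrix on the left-hand side; every factor of $\bM-\underline\bM$, $\bU-\underline\bU$, $\bB-\underline\bB$, $\bW-\underline\bW$, $\bm N-\underline{\bm N}$, or $\bj_\theta-\underline\bj_\theta$ is absorbed into the right-hand side. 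The resulting problem for $\dot\bU(\xi)-\dot\bU(\uxi)$ is exactly of the form \eqref{GeneralDivCurlSystem} with $\bX=\bX(\xi)$, so Lemma~\ref{GeneralDivCurlSysLem} applied at $s=3/2$ gives
\[
\lV\dot\bU(\xi)-\dot\bU(\uxi)\rV_{H^{5/2}(\Sigma)}\le C\bigl(\lV\rho_*\rV_{H^{3/2}(\Sigma)}+\lV\varpi_*\rV_{H^{3/2}(\Sigma)}+\lV\eta_*\rV_{H^{2}(S^1)}+|\lambda_*|\bigr),
\]
where $\rho_*,\varpi_*,\eta_*,\lambda_*$ collect the error terms described above, and the analogous bound holds for $\dot\bB(\xi)-\dot\bB(\uxi)$.

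Next I would estimate each of the resulting error pieces. The terms involving only Lagrangian quantities $\bU$, $\bB$, $\bX$ and their first derivatives are handled by the algebra property of $H^{5/2}(\Sigma)$ (or $H^{3/2}(\Sigma)$), the uniform bound \eqref{InteriorSysLBound}, and the Lipschitz bound \eqref{bUbBbXlipBdStatement} of Proposition~\ref{LipBdMainDivCurlSysProp}, which gives $\sup_t(\lV\bU(\xi)-\bU(\uxi)\rV_{H^{7/2}}+\lV\bB(\xi)-\bB(\uxi)\rV_{H^{7/2}}+\lV\bX(\xi)-\bX(\uxi)\rV_{H^{7/2}})\le C\sup_t\lV\xi-\uxi\rV_{\sH^2}$. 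The curl source picks up $\bm\sigma(\bj_\theta-\underline\bj_\theta)$, which is directly controlled by $\sup_t\lV\xi-\uxi\rV_{\sH^2}$ since $\bj\in H^{5/2}(\Sigma)$ and the $\sH^2$ norm controls $H^{5/2}$ of $\bj$. The boundary data differences are products of traces of $\bU,\bB,\bX,\bW$ (and $\bm N$), so the same Lipschitz and uniform bounds, together with trace estimates, handle them at the required $H^2(S^1)$ level.

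The one slightly nonstandard ingredient is the Lipschitz control of the auxiliary field $\bW=w_\Omega\circ\bX$ and the integral quantity $\lambda_*$. From the explicit formula \eqref{ulwformula} for $w_\Omega$, together with the definition of $\Phi$ and $|\Omega|$, one gets $\bW$ as a smooth nonlocal functional of $X$ and $U$; using the chain rule and the standard convolution estimates for $K_R$, one obtains $\sup_t\lV\bW(\xi)-\bW(\uxi)\rV_{H^{7/2}(\Sigma)}\le C\sup_t\lV\xi-\uxi\rV_{\sH^2}$. The integral condition at $\psi=-1$ in each system likewise yields a $\lambda_*$ bounded by $\sup_t\lV\xi-\uxi\rV_{\sH^2}$ via the same collection of estimates. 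Combining all of these in the div-curl estimate above gives the claimed Lipschitz bound.

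The main obstacle I anticipate is not conceptual but bookkeeping: one must carefully track regularity in each error term so that Lemma~\ref{GeneralDivCurlSysLem} can be applied at precisely $s=3/2$, which requires each nonlinear difference to sit in $H^{3/2}(\Sigma)$ (or $H^2(S^1)$ for boundary data). Since the Lipschitz bound of Proposition~\ref{LipBdMainDivCurlSysProp} is stated in $H^{7/2}$ and the uniform bound in $H^{k+3/2}\supset H^{7/2}$ (using $k\ge 4$), each product $(\bX-\uX)\,\underline{\bU}_\theta$, etc., is comfortably within the Sobolev multiplicative range, and the scheme closes.
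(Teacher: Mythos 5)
Your proposal is correct and follows the paper's approach exactly: subtract the div-curl systems defining $\dot\bU(\xi)$ and $\dot\bU(\uxi)$ (resp.\ $\dot\bB$), invoke Lemma~\ref{GeneralDivCurlSysLem} at $s=3/2$, and control the resulting source and boundary data differences via the Lipschitz bounds of Proposition~\ref{LipBdMainDivCurlSysProp} and the explicit formula for $w_\Omega$. The paper's proof is only a one-sentence sketch pointing at this strategy; you have simply filled in the details it leaves implicit.
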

\begin{proof}
These bounds are verified by writing out the div-curl systems for the differences $\dot \bU(\xi)-\dot\bU(\uxi) $ and $\dot \bB(\xi)-\dot\bB(\uxi)$ and applying Lemma~\ref{GeneralDivCurlSysLem}.
\end{proof}

Now we analogously define the map $\dot P^-_\grad$ in the proposition below, describing it as the solution to the system that $\del_t \bP^-_\grad$ must satisfy, and record related bounds for later use.
\begin{definition}\label{dotPintdefn}
For $\xi$ in $\sB$, taking the various quantities as in Proposition~\ref{dotUdotBdefns} as well as \(\bP^-_\grad=\bP^-_\grad(\xi)\), \( h =  h(\Gamma)\), and \(\dot h = \dot h(\xi)\), for each $t$ in $[0,T]$ we define $\dot{\bP}^-_\grad (\xi)(t):\Sigma\to\R^2$ as the solution, whose existence is implied by Lemma~\ref{GeneralDivCurlSysLem}, to the following system:
\begin{align}
\label{IntDotPressureGradSystem}
\begin{aligned}
(a \in \Sigma) \qquad &
\left\{
    \begin{aligned}
        \grad \cdot (\, \cof (\bM) \dot{\bP}^-_\grad )  &= 2 \grad \cdot ( \, \cof (\grad \bU^t) \dot{\bU} - \cof(\grad \bB^t) \dot{\bB} ) \\
                                                        & \quad -\grad \cdot (\, \cof (\grad \bU^t + \grad \bW^t) \bP^-_\grad )  \\
        \grad^\perp\cdot (\, \bM \dot{\bP}^-_\grad  )    &= - \grad^\perp\cdot (\, (\grad \bU^t + \grad \bW^t) \bP^-_\grad ) \, \Lcm
    \end{aligned}
\right. \\
(\psi = 0)  \qquad & \ \big\{ \, \bX_\theta \cdot \dot{\bP}^-_\grad  = -(\bU_\theta + \bW_\theta) \cdot \bP^-_\grad , \\
(\psi = -1) \qquad & \ \bigg\{ \, \bm{N} \cdot \dot{\bP}^-_\grad  =\left(\del_{x_2}  \left[ \bigh_-( \dot h \cdot h - \half u \cdot \grad \bigh_- |h|^2) \right]\right)(t,\bX).
\end{aligned}
\end{align}
We then define
\begin{align}
&& \dot P^-_\grad(\xi)(\theta) &= \dot \bP^-_\grad(\theta,0) &(\theta\in S^1).
\end{align}
\end{definition}

\begin{proposition}\label{PminusDotBds}
For $\xi$ in $\sB$, we have
\begin{align}\label{dotPmbound}
\sup_{t}\lV\dot P^-_\grad(\xi) \rV_{H^k(S^1)}+ \sup_{t}\lV\dot{\bP}^-_\grad(\xi) \rV_{H^{k+1/2}(\Sigma)}
\leq C(M) ,
\end{align}
and, given $\uxi$ also in $\sB$,
\begin{align}
\sup_{t}\lV \dot P^-_\grad(\xi) - \dot P^-_\grad(\uxi) \rV_{H^2(S^1)} 
+
\sup_{t}\lV \dot \bP^-_\grad(\xi) - \dot \bP^-_\grad(\uxi) \rV_{H^{5/2}(\Sigma)} 
&\leq C \sup_{t} \lV \xi - \uxi \rV_{\sH^2} .
\end{align}
\end{proposition}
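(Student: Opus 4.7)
The plan is to apply Lemma~\ref{GeneralDivCurlSysLem} at each fixed time $t$ to the mixed div-curl system \eqref{IntDotPressureGradSystem} satisfied by $\dot \bP^-_\grad(\xi)$, obtaining a bound in $H^{k+1/2}(\Sigma)$ that dominates the trace $\dot P^-_\grad(\xi)$ in $H^k(S^1)$. To do this I must verify that each right-hand side datum (the divergence source, the curl source, the tangential condition at $\psi=0$, the normal condition at $\psi=-1$, and the circulation integral) lies in the appropriate space with norm bounded by $C(M)$. Since $\bX(\xi)$ is in $H^{k+3/2}(\Sigma)$ by Proposition~\ref{SolnMainDivCurlProp}, the hypotheses of Lemma~\ref{GeneralDivCurlSysLem} are satisfied with bounding constant depending on $M$.

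For the interior sources, I would estimate the two divergence terms using the product rule: $\cof(\grad \bU^t)\dot \bU$ and $\cof(\grad \bB^t)\dot \bB$ are controlled by combining Proposition~\ref{SolnMainDivCurlProp} (giving $\bU,\bB \in H^{k+3/2}(\Sigma)$) with Proposition~\ref{dotUdotBdefns} (giving $\dot \bU,\dot \bB \in H^{k+1/2}(\Sigma)$), and $\cof(\grad \bU^t + \grad \bW^t)\bP^-_\grad$ is handled similarly using Proposition~\ref{PminusBdProp} for $\bP^-_\grad \in H^{k+3/2}(\Sigma)$ and the explicit formula \eqref{ulwformula} for the regularity of $\bW = w_\Omega \circ \bX$. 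The curl-source term is controlled analogously. The tangential boundary condition at $\psi=0$, which involves $(\bU_\theta + \bW_\theta)\cdot \bP^-_\grad$, is strictly lower order and bounded by the same ingredients.

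The main obstacle will be the Neumann datum at $\psi=-1$, namely the trace at $\bX(\theta,-1)$ of $\del_{x_2}[\bigh_-(\dot h \cdot h - \tfrac12 u \cdot \grad \bigh_- |h|^2)]$. The saving grace is that this lives entirely in the unpinched lower region $\Omega_-$ and is evaluated at the bottom floor $\{x_2=0\}$, which lies at uniformly positive distance from $\Gamma$; hence standard (unweighted) harmonic-extension estimates in $\Omega_-$ apply. Specifically, I would use Lemma~\ref{basicellipticboundLagr} (or, in Eulerian form, classical interior estimates for $\bigh_-$) to reduce the $H^k(S^1)$ norm of the trace to an $L^\infty(\Gamma)$-type bound on $|h|^2$ and $h\cdot \dot h$; these are then controlled by the weighted bounds of Propositions~\ref{hEstimateProp} and~\ref{dotHEstimateProp} (with any weight index $m\geq 0$, since $h$ decays to infinite order at the pinch). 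The contribution involving $u$ is absorbed using the uniform bound on $U$ built into $\sB$. Putting these together produces \eqref{dotPmbound}.

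For the Lipschitz estimate, I would write out the system satisfied by $\dot \bP^-_\grad(\xi) - \dot \bP^-_\grad(\uxi)$: the left-hand side is the same type of div-curl operator with coefficients depending on $\bX(\xi)$, while the right-hand side is the difference of all the source and boundary data. Running Lemma~\ref{GeneralDivCurlSysLem} once more and invoking the Lipschitz bounds already established in this section—Proposition~\ref{LipBdMainDivCurlSysProp} for $\bU,\bB,\bX$, Proposition~\ref{PminusBdProp} for $\bP^-_\grad$, the unnumbered Lipschitz estimate after Proposition~\ref{dotUdotBdefns} for $\dot \bU, \dot \bB$, and Propositions~\ref{hEstimateProp} and~\ref{dotHEstimateProp} for $h, H, \dot h, \dot H$—produces the claimed $H^{5/2}(\Sigma)$ and $H^2(S^1)$ bounds on the difference. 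The only subtlety is pairing lower regularity for one factor with higher regularity for the other at the bilinear steps, which forces the drop from $k$ to $2$ in the Lipschitz norm, exactly as in the analogous proofs earlier in Section~\ref{vacuuminterfaceestimatessection}.
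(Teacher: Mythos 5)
Your proposal is correct and follows essentially the same route as the paper, which records the proof only in two terse sentences (apply Lemma~\ref{GeneralDivCurlSysLem} to the system \eqref{IntDotPressureGradSystem} for the upper bound; derive the difference system and apply the lemma again for the Lipschitz bound). You have simply filled in the implicit details — the product-rule control of the div-curl sources via Propositions~\ref{SolnMainDivCurlProp}, \ref{dotUdotBdefns}, and~\ref{PminusBdProp}, and the treatment of the $\psi=-1$ Neumann datum via unweighted harmonic-extension estimates in $\Omega_-$ combined with the weighted bounds on $h,\dot h$ — exactly as the analogous proof of Proposition~\ref{PminusBdProp} does for $\bP^-_\grad$.
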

\begin{proof}
Regarding \eqref{dotPmbound}, a suitable bound on $\dot \bP^-_\grad(\xi)$ can be shown by applying Lemma~\ref{GeneralDivCurlSysLem}. The Lipschitz bound is proved similarly by considering the system for $\dot \bP^-_\grad(\xi)-\dot \bP^-_\grad(\uxi)$.
\end{proof}

\subsubsection{Commutator maps}\label{commutatormapssection}

Now we take certain operator-valued maps, say $P(\xi)$, defined for $\xi$ in $\sB^\bbox$, and define a corresponding map $[\del_t;P](\xi)$ which will agree with $[\del_t, P(\xi(t))]$ for time-dependent $\xi$ in $\sB$.

{\subsubsection*{Defining $[\del_t;\cH](\xi)$ and $[\del_t;\bE](\xi)$}}

Recall the definitions of $\bE(\xi)$ and $\cH(\xi)$ given by Proposition~\ref{bEDefns}. In particular, $\cH(\xi)$ is essentially the Hilbert transform associated to the curve $\Gamma$. Using well-understood techniques, one is able to establish an integral kernel formula for this operator. We will see the use of integral kernel operators yield simple definitions for the commutator maps $[\del_t;\cH](\xi)$ and $[\del_t;\bE](\xi)$ (see Proposition~\ref{bEcommBounds}). We have the kernel
\begin{align}\label{HilbKernelDef}
\cK(x) &=\frac{1}{2\pi}\frac{(\sin (x_1) ,\sinh (x_2)) }{\cosh(x_2)-\cos(x_1)},\\ \intertext{which can also be represented in complex form by}
\cK(x)&=\frac{1}{2\pi}\overline{\cot\left(\frac{x_1+ix_2}{2}\right)}. \label{HilbKernelComplexVers}
\end{align}
One is able to show that in fact
\begin{align}
\cH(\xi)f(\theta)&=\pv\int^\pi_{-\pi} \del_\vartheta   X(\vartheta) \cdot \cK(  X(\theta)-  X(\vartheta))  \cP f(\vartheta)\,d\vartheta, \label{cHaltStdDefn}\\ \intertext{where}
  \cP f(\theta) &= f(\theta) - \frac{1}{|  \Gamma|}\int^\pi_{-\pi} f(\vartheta)|\del_\vartheta   X(\vartheta)|\,d\vartheta.
\end{align}
By using the above formula, one can use elementary techniques for bounding integral kernel operators to prove estimates such as those given in Proposition~\ref{bEbounds} for $\cH(\xi)$. To prove such bounds, this provides a fairly standard method for cases in which $\Gamma$ is uniformly chord-arc, alternative to the method of using div-curl estimates, as done in Section~\ref{unpinchedestimates}.

It is cleaner to define the commutator map $[\del_t; \cH](\xi)$ with an integral kernel formula as opposed to the solution to a div-curl problem, so we now explain how to adapt the integral kernel approach to our setting, where we must deal with splash curves as opposed to chord-arc curves.

In the following we employ the ``square root trick'' often used in splash constructions (such as in \cite{splash2}) to understand and control the corresponding $[\del_t;\cH](\xi)$ operator. In the trick, one employs a conformal map $z\mapsto \cO(z)$, which behaves morally like $z\mapsto \sqrt{z}$, to split open the vacuum domain and turn a given splash curve into a chord-arc curve. With this, one is able to convert the formula \eqref{cHaltStdDefn} to an analogous one associated to a uniformly chord-arc curve $\tilde \Gamma$ in place of $\Gamma$. After expressing the operator in this way, it can be estimated by elementary methods.

Recall the point $z_\star$ located inside the wall $\cW_2$. We define
\begin{align}\label{cOdefn}
\cO(z)=\sqrt{\tan\left(\frac{z-z_\star}{2}\right)},
\end{align}
where the branch cut is taken to emerge from $z_\star$, forming the ray which runs directly out of the cavity formed during the splash state, tangent to the two kissing arcs at the splash point and extending to infinity from there, so that it never enters $\Omega$.

\begin{remark}
For a given $\xi$ with corresponding $X$ and $\Gamma$, let us define
\begin{align}
&&\tilde X(\theta) &= \cO(X(\theta)) & (\theta\in S^1),\label{tildeXdefn}\\
&&\tilde \Gamma &= \cO(\Gamma). &\label{tildeGammadefn}
\end{align}
Since conformal maps preserve the property of being harmonic, in addition to the formula \eqref{cHaltStdDefn}, we find the definition of $\cH(\xi)$ given in Proposition~\ref{bEDefns} coincides with the formula below, for $\cK(x)$ as in \eqref{HilbKernelDef}:
\begin{align}
\cH(\xi)f(\theta)&=\pv\int^\pi_{-\pi} \del_\vartheta \tilde X(\vartheta) \cdot \cK(\tilde X(\theta)-\tilde X(\vartheta)) \tilde\cP f(\vartheta)\,d\vartheta, \label{cHaltDefn}\\ \intertext{where}
\tilde \cP f(\theta) &= f(\theta) - \frac{1}{|\tilde \Gamma|}\int^\pi_{-\pi} f(\vartheta)|\del_\vartheta \tilde X(\vartheta)|\,d\vartheta.
\end{align}
\end{remark}

In light of the above remark, we find the definitions for $[\del_t;\cH](\xi)$ and $[\del_t;\bE](\xi)$ in the following proposition are sufficient for our purposes.

\begin{proposition}\label{bEcommBounds}
Consider $\xi$ in $\sB^\bbox$ and $\cO(z)$ as in \eqref{cOdefn}. Let $\tilde X$ and  $\tilde\Gamma$ be as defined in \eqref{tildeXdefn} and \eqref{tildeGammadefn}, and take $\tilde U_i(\theta) =  U(\theta)\cdot \grad \cO_i(X(\theta))$ for $i=1,2$. We then define $[\del_t;\cH](\xi)$ and $[\del_t;\bE](\xi)$ by the following formulas:
\begin{align}\label{commcHform1}
& & [\del_t;\cH](\xi)f(\theta)&=\pv\int^\pi_{-\pi} \del_\vartheta \tilde U(\vartheta) \cdot \cK(\tilde X(\theta)-\tilde X(\vartheta))\tilde  \cP f(\vartheta)\,d\vartheta   &\\
&&& \quad+\pv\int^\pi_{-\pi} \del_\vartheta \tilde X_i(\vartheta) (\tilde U_j(\theta)-\tilde U_j(\vartheta) )(\del_i\cK_j)(\tilde X(\theta)-\tilde X(\vartheta)) \tilde \cP f(\vartheta)\,d\vartheta & (\theta \in S^1 ) ,\label{dtcHcomm}\\
&&[\del_t;\bE](\xi)V &= [\del_t;\cH](\xi)(N\cdot V) e_1 - \frac{U_\theta \cdot N}{|X_\theta|} \bE(\xi) V^\perp . &   (\theta \in S^1 ) . 
\label{CommDtbEDefn}
\end{align}
Then we have a bounded map for integer $s$ with $0\leq s \leq k$,
\begin{align}
[\del_t;\bE](\cdot) : \sB^\bbox \to B( H^s(S^1), H^s(S^1) ) ,
\end{align}
satisfying for $\xi$ in $\sB^\bbox$
\begin{align}\label{commUpperBd}
\lV [\del_t;\bE](\xi) V \rV_{H^s(S^1)}
&\leq C(M) \lV V \rV_{H^s(S^1)} , \\ \intertext{and for $\uxi$ in $\sB^\bbox$,}
\lV( [\del_t;\bE](\xi)-[\del_t;\bE](\uxi)) V \rV_{H^2(S^1)}
&\leq C \lV\xi-\uxi\rV_{\sH^2}\lV V \rV_{H^2(S^1)} .\label {commLipBd}
\end{align}
\end{proposition}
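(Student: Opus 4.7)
\medskip

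\noindent\textbf{Proof proposal for Proposition~\ref{bEcommBounds}.} The strategy is to pull everything back to the square-root--unfolded curve $\tilde\Gamma=\cO(\Gamma)$. The motivation for the formulas \eqref{commcHform1}--\eqref{CommDtbEDefn} is to read off the correct definition of $[\del_t;\cH](\xi)$ by formally differentiating the integral representation \eqref{cHaltDefn} in time, using $\tilde X_t=\tilde U$. The derivative lands in three places in \eqref{cHaltDefn}: on $\del_\vartheta\tilde X$, on $\cK(\tilde X(\theta)-\tilde X(\vartheta))$, and on the projection $\tilde\cP$. The first two produce the two integrals in \eqref{dtcHcomm}; the third contributes only a $\theta$-independent function of $\vartheta$, which the principal-value integration against the odd kernel $\cK$ kills modulo lower order, so it is absorbed into the other pieces. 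The definition \eqref{CommDtbEDefn} is then obtained by formally computing $[\del_t,\bE(\xi)]$ from \eqref{bEdefnFmla} using $N_t=-\bigtau(U_\theta\cdot N)/|X_\theta|$ and $\bigtau_t=N(U_\theta\cdot N)/|X_\theta|$, which yields the rotation-by-$\bE$-of-$V^\perp$ correction term together with the commutator in the $\cH$ slot.

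Next, the key point is that $\tilde\Gamma=\cO(\Gamma)$ is \emph{uniformly} admissibly chord-arc in the sense of \eqref{chordarc}, with constants depending only on $\lV X\rV_{H^{k+1}(S^1)}$ and the geometric setup: the branch cut of $\cO$ (emerging from $z_\star$ and passing through $p_\splash$ tangentially to the glancing arcs) separates the two approaching arcs of $\Gamma$, so the singular behavior $\cO(z)\sim\sqrt{z-z_\star}$ precisely compensates the quadratic touching. This is where the splash--permitted geometry $X\in H^{k+2}_\gp$ is essential; a direct computation using $\cG(X)\ge\tfrac12 C^0_\gp$ from Definition~\ref{HsgpDef} shows that $\tilde X(\theta)$ satisfies a lower bound $|\tilde X(\theta)-\tilde X(\vartheta)|\ge c|\theta-\vartheta|$ with $c$ independent of $\delta$.

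Once the chord-arc property of $\tilde\Gamma$ is in hand, the two integrals in \eqref{dtcHcomm} are analyzed by standard singular integral theory on chord-arc curves (as in \cite{MazyaShaposhnikova}). The first integral is essentially a Cauchy-type singular integral on $\tilde\Gamma$ applied to $\del_\vartheta\tilde U\,\tilde\cP f$, hence bounded on $H^s$ by $C(\lV \tilde U\rV_{H^{s+1}})\lV f\rV_{H^s}$; the factor $\lV \tilde U\rV_{H^{s+1}}$ is controlled by $C(M)$ using $\tilde U=U\cdot\grad\cO(X)$ and the $C^1$ control of $\cO$ away from its branch point. The second integral is of Calder\'on commutator type: the antisymmetric factor $(\tilde U(\theta)-\tilde U(\vartheta))$ removes one order of the singularity of $\del\cK(\tilde X(\theta)-\tilde X(\vartheta))$, leaving an operator that is bounded on $H^s$ for $0\le s\le k$ with constant $C(M)$. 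Combining these with the bound on $\bE(\xi)$ from Proposition~\ref{bEbounds}, the uniform lower bound $|X_\theta|\ge c$ for $\xi\in\sB^\bbox$, and the trivial estimate $\lV U_\theta\cdot N/|X_\theta|\rV_{H^s(S^1)}\le C(M)$, the formula \eqref{CommDtbEDefn} yields the upper bound \eqref{commUpperBd}.

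For the Lipschitz bound \eqref{commLipBd}, write $[\del_t;\bE](\xi)V-[\del_t;\bE](\uxi)V$ as a telescoping sum over the three places where $\xi$ enters (the curve parametrization $\tilde X$, the velocity factor $\tilde U$, and the projection $\tilde \cP$ together with the Jacobian $\del_\vartheta\tilde X$). Each difference is handled by a standard commutator-type Lipschitz estimate for Cauchy integrals on a fixed chord-arc curve, with the regularity gap $H^2\leftarrow H^2$ (rather than the sharp $H^s\leftarrow H^s$) giving plenty of room to absorb losses; the norms that appear are controlled by $\lV\xi-\uxi\rV_{\sH^2}$ via $\tilde X-\underline{\tilde X}=\cO(X)-\cO(\uX)$ and $\tilde U-\underline{\tilde U}$. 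The main technical obstacle, as before, is ensuring that both $\tilde\Gamma$ and $\underline{\tilde\Gamma}$ remain chord-arc with a uniform constant even as both interfaces degenerate to splash configurations, so that the singular integral Lipschitz bounds used do not blow up. This is guaranteed by the $H^{k+2}_\gp$ hypothesis and a continuity argument for $\cG(X)$, identical in spirit to the estimate in Proposition~\ref{dcXlemma}.
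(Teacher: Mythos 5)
Your proposal is correct and follows essentially the same route as the paper: unfold via $\cO$ to obtain a uniformly chord-arc curve $\tilde\Gamma$, then bound the first integral in \eqref{dtcHcomm} as a Cauchy-type singular integral and the second as a Calder\'on-commutator-type operator using standard chord-arc singular integral theory, exactly as the paper does when it divides off $\cI_{\tilde X}(\theta,\vartheta)$ to reduce to the plain Hilbert transform. One small imprecision worth fixing: the chord-arc uniformity of $\tilde\Gamma$ is not because $\cO(z)\sim\sqrt{z-z_\star}$ ``compensates'' the quadratic touching at $p_\splash$ (the map is smooth there on each side), but because the branch cut through $p_\splash$ separates the two kissing arcs, so the argument of $\cO$ jumps by $\pi$ across the cut and nearby points on opposite arcs land far apart in the image.
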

\begin{proof}
Since the curve $\tilde \Gamma$ is uniformly chord-arc, the above bounds can be proved with basic techniques. Rather than giving the complete proof, in the interest of brevity, we simply take the first term appearing the right-hand side of \eqref{commcHform1} and put it in a form which suggests standard, well-known methods estimate the $H^s(S^1)$ norm of the result in terms of the $H^s(S^1)$ norm of $f$. First, observe that the first term can be rewritten the following way, where we use the complex form \eqref{HilbKernelComplexVers} for the kernel and treat vectors in $\R^2$ as elements of $\C$:
\begin{align}
\cH_1(\xi) f(\theta)
=&\frac{1}{2\pi}\operatorname{Re}\left(\pv \int^\pi_{-\pi}  \cot \left(\frac{\tilde X(\theta)-\tilde X(\vartheta)}{2}
\right)
\del_\vartheta \tilde U(\vartheta)\tilde\cP f(\vartheta)\,d\vartheta
\right) \\
=&
\frac{1}{\pi}\operatorname{Re}\Bigg(\pv
\int^\pi_{-\pi} \frac{1}{\tilde X(\theta)-\tilde X(\vartheta)}\del_\vartheta \tilde U(\vartheta)\tilde\cP f(\vartheta) \, d\vartheta
 \\
&\qquad\qquad+\int^\pi_{-\pi}\cA(\tilde X(\theta)-\tilde X(\vartheta)) \del_\vartheta \tilde U(\vartheta) \tilde\cP f(\vartheta) \, d\vartheta \Bigg) ,
\end{align}
where $\cA(z)$ is analytic, thus leading to harmless contributions from the corresponding term above. This reduces most of the analysis to controlling the leading order part, for which we notice
\begin{align}
\frac{1}{\pi}\operatorname{Re}\left(\pv
\int^\pi_{-\pi} \frac{1}{\tilde X(\theta)-\tilde X(\vartheta)}\del_\vartheta \tilde U(\vartheta)\tilde\cP f(\vartheta) \, d\vartheta \right)
=
\frac{1}{\pi}\operatorname{Re}\left(\pv \int^\pi_{-\pi} \frac{1}{\theta-\vartheta}\frac{\del_\vartheta \tilde U(\vartheta)\tilde\cP f(\vartheta)}{\cI_{\tilde X}(\theta,\vartheta)}\,d\vartheta \right) ,
\end{align}
where
\begin{align}
\cI_{\tilde X}(\theta,\vartheta) = \int^1_0 (\del_\theta \tilde X)(\tau \theta + (1-\tau)\vartheta)\,d\tau.
\end{align}
Note that by using the square root trick to get a formula with $\tilde X$ in the kernel in place of $X$, where $\tilde X$ traces out a chord-arc curve, we have afforded ourselves the ability to divide by the harmless term $\cI_{\tilde X}(\theta,\vartheta)$ above, resulting in a leading order term resembling the basic Hilbert transform. The standard approach verifies
\begin{align}
\lV  \cH_1(\xi)f \rV_{H^s(S^1)} \leq C(M) \lV f \rV_{H^s(S^1)}.
\end{align}
Similar techniques yield a bound on the second term appearing in the right-hand side of \eqref{commcHform1}, thus giving us the bound \eqref{commUpperBd}. The Lipschitz bound \eqref{commLipBd} is proven similarly.
\end{proof}



{\subsubsection*{Defining $[\del_t;\cN_\pm](\xi)$ and $[\del_t;\Nres](\xi)$}}

Just as it is key that $\Nres(\xi)$ satisfies the cancellation property of Proposition~\ref{DtoNCancellation}, it is important that $[\del_t;\Nres](\xi)$ satisfies a cancellation property, which we verify in Proposition~\ref{CommutatorCancelBd}. To ensure we get this in a relatively clean way, we define the commutators $[\del_t;\cN_\pm](\xi)$ with the following method.

For many operators we handle that depend on $X$ alone, say $P(X)$ (this could represent $\bE(\xi)$, $\cH(\xi)$, or $\cN_\pm(\xi)$, for example), we find that $[\del_t,P(X)]$ can be realized as the Frech\'{e}t derivative with respect to $X$ in the direction of $U$, i.e.
\begin{align}
[\del_t,P(X)] = DP(X)[U] = \lim_{\epsilon\to 0} \frac{P(X+\epsilon U)-P(X)}{\epsilon} ,
\end{align}
taking the limit in an operator space. 

Though we typically denote the operators $\cN_\pm$ and $\Nres$ as functions of $\xi$, defined in Definition~\ref{DirichToNeumDefs}, let us note that they depend only on $X$. In this subsection, we will, with a temporary, slight inconsistency of notational choice, instead denote these operators by $\cN_\pm(X)$ and $\Nres(X)$. Our definition of the corresponding time derivative commutator operators can then be realized as follows.
\begin{definition}\label{DtNCommDefns}
For $\xi$ in $\sB^\wbox$, where $D\cN_\pm(X)[U]$ denotes the Frech\'{e}t derivative of $\cN_\pm(X)$ in the direction of $U$, we define
\begin{align}
[\del_t;\cN_\pm](\xi) &= D \cN_\pm(X)[U] ,\label{opfmla3}\\
[\del_t;\Nres](\xi) &= [\del_t;\cN_+](\xi)+[\del_t;\cN_-](\xi) . \label{DtNCommDef2}
\end{align}
\end{definition}

Proposition~\ref{existFrech} below clarifies the convergence of the derivatives $D \cN_\pm(X)[U]$, making the above definition rigorous.

\begin{remark}\label{NrescommDefnRemark}
\hfill
\begin{enumerate}[(i)]
\item 
In the proofs of Propositions~\ref{NresLipBd} and~\ref{LipDirichNeumBd}, we discuss how $\cN_+(\xi)f$ and $\cN_-(\xi)f$ can be represented in terms of solutions to corresponding div-curl systems, such as \eqref{DtNDivCurl}. Similarly, one can describe $[\del_t;\cN_+](\xi)f$ and $[\del_t;\cN_-](\xi)f$ in terms of solutions to certain div-curl systems. We use Definition~\ref{DtNCommDefns} to define $[\del_t;\cN_\pm](\xi)f$ instead simply to shorten our presentation.

\item Throughout the remainder of this section, since nearly all functions considered have domain $S^1$, we frequently drop the $S^1$ from our notation, using $H^s$ to refer to $H^s(S^1)$, for instance.
\end{enumerate}
\end{remark}

\begin{proposition}\label{existFrech}
For $\xi$ in $\sB^\wbox$ and integer $s$ with $0\leq s \leq k$, we have convergence of the Frech\'{e}t derivative
\begin{align}
D \cN_\pm(X)[U]\in B(H^{s+1},H^s),
\end{align}
that is, there exists an element of $B(H^{s+1},H^s)$, which we denote by $D\cN_\pm(X)[U]$, such that
\begin{align}
\lim_{\epsilon\to 0}\left\lV\frac{\cN_\pm(X+\epsilon U)-\cN_\pm(X)}{\epsilon}-D\cN_\pm(X)[U]\right\rV_{B(H^{s+1},H^s)} = 0.
\end{align}
\end{proposition}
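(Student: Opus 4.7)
The plan is to establish Fréchet differentiability by the standard pullback technique: transfer $\cN_\pm(X)$ to an elliptic problem on a fixed reference domain whose coefficients depend in a controlled way on $X$, then differentiate inside that picture. For $\cN_-(X)$, I would work with $X_- = \cE_-(X)\colon \Sigma_- \to \Omega_-$ from Proposition~\ref{LowExtensionProp}, and for $\cN_+(X)$, with $X_+ = \cE_\delta(X)\colon \Sigma_+^\circ(\delta) \to \cV$ from Proposition~\ref{extensionProp2} (recall $\xi \in \sB^\wbox$ forces $\delta>0$, so both constructions are available with constants independent of $f$ but allowed to depend on $\delta$). In either case, pulling the harmonic extension $\phi_\pm$ back by $X_\pm$ yields $\tilde\phi_\pm \in H^{s+3/2}$ on $\Sigma_\mp$ or $\Sigma_+^\circ(\delta)$ solving a divergence-form elliptic equation $L(X)\tilde\phi_\pm = 0$ whose coefficient matrix $A(X)$ is a smooth algebraic function of $\grad X_\pm$ and $(\grad X_\pm)^{-1}$. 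The trace computation $\cN_\pm(X) f = N\cdot V$ (with $V$ reconstructed from $\tilde\phi_\pm$ via the appropriate div-curl system of Lemma~\ref{tanglDivCurlSys} or Proposition~\ref{LagrVacDivCurlEst}) exhibits $\cN_\pm(X)$ as the composition of algebraic operations on $\grad X$ with the solution map of an elliptic problem whose coefficients depend smoothly on $X$.

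Next I would define the candidate derivative. Formally differentiating the pulled-back PDE and the trace formula in the direction $U$ produces a linear elliptic problem
\begin{equation*}
    L(X)\dot{\tilde\phi}_\pm \;=\; -\bigl(D_X L(X)[\tilde U]\bigr)\tilde\phi_\pm \quad \text{in } \Sigma_\mp \text{ or } \Sigma_+^\circ(\delta),\qquad \dot{\tilde\phi}_\pm\bigl|_{\Gamma\text{-side}} = 0,
\end{equation*}
with $\tilde U = (D\cE_\pm(X))[U]$, which is well-defined by the Lipschitz estimates built into Propositions~\ref{LowExtensionProp} and~\ref{extensionProp2}. Solvability and boundedness in the correct weighted Sobolev classes follow from Lemma~\ref{basicellipticboundLagr} (for the $-$ case) and Proposition~\ref{1stWeightedEstimate} (for the $+$ case, where the weighted framework is needed because the coefficients degenerate near the walls in the $\Sigma_+(\delta)$ picture, though uniformly for fixed $\delta>0$). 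Composing with the formal derivative of the trace formula then produces a candidate operator $D\cN_\pm(X)[U] \in B(H^{s+1}, H^s)$, bounded by $C(M, \delta)\,\lV U \rV_{H^{s+2}}$ times the identity on $H^{s+1}$.

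The core step is the remainder estimate. Writing $\phi^\epsilon_\pm$ for the harmonic extension associated to $X+\epsilon U$ and pulling back along $X_\pm$ (rather than $X_\pm + \epsilon \tilde U$, which would move the domain), the remainder $R^\epsilon f := \epsilon^{-1}\bigl[\cN_\pm(X+\epsilon U)f - \cN_\pm(X)f - \epsilon D\cN_\pm(X)[U]f\bigr]$ can be expressed as the trace of the solution to a div-curl system of the same form as those in Section~\ref{magfieldestimatessection}, but whose right-hand sides are differences of quadratic expressions in $\grad X$ and $\grad(X+\epsilon U)$ evaluated against $\grad \tilde\phi_\pm$. A direct Taylor expansion of the coefficient matrix $A(X+\epsilon U) - A(X) - \epsilon\,(D_X A)(X)[U]$ shows these forcings are of order $\epsilon^2 \lV U\rV_{H^{s+2}}^2 \lV \tilde\phi_\pm \rV_{H^{s+3/2}}$ in the appropriate weighted norms. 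Applying the estimates of Lemma~\ref{basicellipticboundLagr} and Proposition~\ref{1stWeightedEstimate} (which give the needed uniform-in-$f$ bounds on the unit ball of $H^{s+1}$) and then the trace-style bounds of Proposition~\ref{standardWeightedSobNormEstims}, one obtains $\lV R^\epsilon \rV_{B(H^{s+1}, H^s)} \leq C\epsilon$, which yields the claimed convergence.

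The main obstacle will be making the remainder bound genuinely uniform over the unit ball in $H^{s+1}$ at the highest level $s = k$: the relevant elliptic estimates only close if one knows that the linearized solution $\dot{\tilde\phi}_\pm$ and the second-order Taylor remainder of $A(X+\epsilon U)$ lie in the same weighted regularity class as $\tilde\phi_\pm^\epsilon$, with $X$-dependent constants controlled by $\lV X\rV_{H^{k+2}}$ rather than $\lV X+\epsilon U\rV_{H^{k+2}}$. This can be handled by running the estimates in the scale-of-spaces version: for $s \le k-1$ the constants depend only on $\lV X\rV_{H^{k+1}}$, so the perturbation is routine; at the endpoint $s=k$ one needs an interpolation/bootstrapping step analogous to the argument for the Lipschitz bound in Proposition~\ref{LipDirichNeumBd}, which was the most delicate point of that proof as well. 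Once these uniform estimates are in place, the Fréchet differentiability follows immediately.
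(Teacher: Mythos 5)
The paper disposes of this proposition in two sentences: for $\xi\in\sB^\wbox$ the interface is non-self-intersecting and of high Sobolev regularity, so the existence of the Fr\'echet (shape) derivative of the Dirichlet-to-Neumann operator is a well-known fact, for which the paper simply cites \cite{lanneswaterwaves}. Your proposal, by contrast, spells out a full constructive argument: pull the Dirichlet problem back to a fixed reference domain via the extension maps, express $\cN_\pm(X)$ in terms of solutions to coefficient-dependent div-curl systems, formally differentiate, and close with a Taylor-remainder estimate. This is precisely the kind of argument that the cited reference carries out, so the two proofs are consistent in substance; yours simply does not delegate the work. What the paper's route buys is brevity: because $\sB^\wbox$ restricts to strictly positive pinch, all the weighted machinery is unnecessary here and the standard shape-derivative theory applies verbatim, so there is no reason to re-derive it. What your route buys is self-containedness, and it would be the appropriate path if one needed uniform-in-$\delta$ control (which the paper handles separately via Propositions~\ref{DtoNCancellation} and~\ref{CommutatorCancelBd}, not via this proposition).

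Two small caveats in your sketch, neither fatal: first, $\cN_+$ is defined via harmonic extension into $\Omega_+$, not into $\cV$, so the relevant pullback is $\cE_\delta(X):\Sigma_+(\delta)\to\Omega_+$ rather than the restriction to $\Sigma_+^\circ(\delta)\to\cV$; second, you invoke $\tilde U=(D\cE_\pm(X))[U]$ as ``well-defined by the Lipschitz estimates,'' but Lipschitz bounds on the extension maps do not by themselves give Fr\'echet differentiability --- you would need to check that $\cE_-$ and $\cE_\delta$, which are built from explicit conformal/harmonic constructions, are themselves differentiable in $X$. Both of these points are repairable, and indeed are the sort of detail the paper avoids by citing the literature.
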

\begin{proof}
The statement of the proposition follows from well-known results on the existence of the shape derivative for Dirichlet-to-Neumann operators for domains with non-self-intersecting boundaries of reasonable Sobolev regularity, such as $\Gamma$ associated to $\xi$ in $\sB^\wbox$. For examples of results on the existence of the shape derivative for Dirichlet-to-Neumann operators, we refer the reader to \cite{lanneswaterwaves}.
\end{proof}

\subsubsection*{The main cancellation bound for $[\del_t;\Nres](\xi)$}

Now we proceed with the task of proving the cancellation bound
\begin{align}\label{commutatorIdent}
            \lV [\del_t;\Nres](\xi) f \rV_{H^s(S^1)} \leq C(M) \lV f \rV_{H^s (S^1,\delta,m)} .
        \end{align}
Before we proceed, recall how the identity
\begin{align}
\slp \nres = 2 \dcal T
\end{align}
played a key role in the proof of Proposition~\ref{DtoNCancellation}, the cancellation bound for $\nres$. We the analogous Lagrangian identity is given in the lemma below, with \eqref{keynresIdent1copy}, where $\cS(X)$ and $\cT(X)$ are the Lagrangian versions of $\slp$ and $\dcal T$, respectively, defined by
    \begin{align}
    \cS(X) f (\theta) &= \int^\pi_{-\pi} G(X(\theta)-X(\vartheta)) f (\vartheta) \left| \del_\vartheta X(\vartheta) \right| \, d\vartheta &   (\theta &\in S^1 ) ,\\
    \cT(X) f (\theta) &= -\int^\pi_{-\pi} \left( \del_\vartheta X(\vartheta)\right)^\perp \cdot (\grad G)(X(\theta)-X(\vartheta)) f (\vartheta) \,d\vartheta  &   (\theta &\in S^1 ) .
    \end{align}
Above, $G(x)$ is the Newtonian potential for $S^1 \times \R$, given in \eqref{newtpotdefn}. The proof identity of the lemma below follows from Lemma~\ref{keycancellationident}.
\begin{lemma}\label{LagrangianIdentVers}
For $\xi$ in $\sB^\wbox$ and $\alpha>0$,
\begin{align}\label{keynresIdent1copy}
&& \cS(X) \Nres(X)f &= 2 \cT(X)f &(f\in C^{0,\alpha}).
\end{align}
\end{lemma}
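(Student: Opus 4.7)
The plan is to pull back the Eulerian cancellation identity of Lemma~\ref{keycancellationident} to the Lagrangian setting via the parametrization $X$. The statement is essentially a bookkeeping reformulation, so the only content is to match conventions carefully.

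First, since $\xi \in \sB^\wbox$, the pinch satisfies $\delta_\Gamma > 0$, so the associated interface $\Gamma$ is a non-self-intersecting closed curve parametrized by $X \in H^{k+2}_\gp \subset H^2_\gp$. Thus the hypotheses of Lemma~\ref{keycancellationident} are met. Given $f \in C^{0,\alpha}(S^1)$, define $\tilde f = f \circ X^{-1} : \Gamma \to \R$, which inherits $C^{0,\alpha}$ regularity from $f$ and the smoothness of $X$. This is exactly the setting of Lemma~\ref{keycancellationident}.

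Next, I would record the change-of-variables identifications that relate the Lagrangian operators $\cS(X)$, $\cT(X)$, $\Nres(X)$ to their Eulerian counterparts $\slp$, $\dcal{T}$, $\nres$. Setting $y = X(\vartheta)$, the arclength element is $dS(y) = |X_\vartheta(\vartheta)|\, d\vartheta$, and (with the orientation convention under which $\tau = X_\vartheta/|X_\vartheta|$ points left-to-right along $\Gamma$ and $n$ points into $\Omega_+$) one has $n(X(\vartheta))\,dS(y) = X_\vartheta^\perp(\vartheta)\, d\vartheta$. Substituting into the defining integrals yields
\begin{align*}
\cS(X) f(\theta) &= (\slp \tilde f)(X(\theta)), \\
\cT(X) f(\theta) &= (\dcal{T} \tilde f)(X(\theta)),
\end{align*}
while $\Nres(X) f(\theta) = (\nres \tilde f)(X(\theta))$ holds by construction from Definitions~\ref{DirichToNeumDefs} and~\ref{Nresdefn} (together with the observation that the Eulerian Dirichlet-to-Neumann operators pull back to the Lagrangian ones via $X$).

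Finally, I would invoke Lemma~\ref{keycancellationident} applied to $\tilde f \in C^{0,\alpha}(\Gamma)$ to obtain $\slp \nres \tilde f = 2 \dcal{T} \tilde f$ pointwise on $\Gamma$, and then compose with $X$ to transfer this identity to $S^1$, giving $\cS(X) \Nres(X) f = 2 \cT(X) f$ as claimed. There is no serious obstacle: the principal-value integrals are well-defined thanks to the $C^{0,\alpha}$ regularity of $\tilde f$ and the $H^{k+2}$ regularity of $X$ (hence of $\Gamma$), and all steps are direct applications of change of variables. The only point requiring minor care is the sign/orientation convention for the normal $n$ used in the double-layer kernel, which must be kept consistent between the definitions of $\dcal{T}$ and $\cT(X)$; this is already built into the expression for $\cT(X)$ via the factor $-(\del_\vartheta X)^\perp$.
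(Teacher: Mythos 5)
Your proof is correct and takes exactly the route the paper intends: the paper's proof is the one-line remark that the Lagrangian identity "follows from Lemma~\ref{keycancellationident}," and your argument supplies the change-of-variables bookkeeping (including the orientation check that $n\,dS = X_\vartheta^\perp\,d\vartheta$ matches the kernel of $\cT(X)$) that makes this precise. No gaps.
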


Ultimately, to get our target bound \eqref{commutatorIdent}, we want to establish an identity analogous to \eqref{keynresIdent1copy} for $[\del_t;\Nres](\xi)$. For the true time derivative commutators $[\del_t,\Nres(X)]$, $[\del_t,\cS(X)]$, and $[\del_t,\cT(X)]$ corresponding to time-dependent $\xi$ in $\sB$, for which we have $X_t=U$, the identity \eqref{keynresIdent1copy} implies
\begin{align}\label{commcancelident}
\cS(X)[\del_t,\Nres(X)] &= -[\del_t,\cS(X)]\Nres(X)+2[\del_t,\cT(X)] .
\end{align}
In Corollary~\ref{commcancelidentCor}, we will verify the analogous identity for $\xi$ in $\sB^\wbox$, which is
\begin{align}\label{indirectCancFmla}
\cS(X)[\del_t;\Nres](\xi)=-[\del_t;\cS](\xi)\Nres(X) +  2[\del_t;\cT](\xi),
\end{align}
where $[\del_t;\cS](\xi)$ and $[\del_t;\cT](\xi)$ are defined below.

\begin{definition}\label{cScTcommDefns}
For $\xi$ in $\sB^\wbox$, where $G(x)$ is given by \eqref{newtpotdefn}, we define
    \begin{align}
   [\del_t;\cS](\xi) f(\theta) &= \cS (U_\theta \cdot X_\theta f)(\theta) + \int^\pi_{-\pi} ( U(\theta)-U(\vartheta)) \cdot (\grad G)(X(\theta)-X(\vartheta))\left| \del_\vartheta X(\vartheta) \right| f(\vartheta) \, d\vartheta & (\theta\in S^1), \\
[\del_t;\cT](\xi) f(\theta) &= -\frac{1}{2\pi} \operatorname{Im} \left(
\int \frac{d}{d\vartheta} \left( (U(\theta)-U(\vartheta))\cot \left( \frac{X(\theta)-X(\vartheta)}{2} \right) \right) f(\vartheta) \, d\vartheta
\right)  & (\theta\in S^1).
\end{align}
\end{definition}
\begin{remark}
The definitions above coincide with formal computations of the Frech\'{e}t derivatives of $\cS(X)$ and $\cT(X)$ in the direction of $U$. To see this for $[\del_t;\cT](\xi)f(\theta)$, observe that $\cT(X) f(\theta)$ can be written in the form
\begin{align}\label{altcTident}
\cT(X) f(\theta) &= -\frac{1}{\pi} \operatorname{Im} \left(
\int \frac{d}{d\vartheta} \left( \log \sin \left( \frac{X(\theta)-X(\vartheta)}{2} \right) \right) f(\vartheta) \, d\vartheta
\right) .
\end{align}
\end{remark}

The following is easy to prove by basic, well-known methods, such as those discussed in the proof of Propsition~\ref{bEcommBounds}.
\begin{proposition}
For $\xi$ in $\sB^\wbox$ and integer $s$ with $0\leq s \leq k+1$,
\begin{align}
[\del_t;\cS](\xi) & \in B(H^s,H^{s+1}), \\
[\del_t;\cT](\xi) & \in B(H^s,H^{s+1}).
\end{align}
\end{proposition}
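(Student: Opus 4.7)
The plan is to follow the square-root-trick strategy of Proposition~\ref{bEcommBounds}, transforming both operators into layer potentials on the tailored chord-arc curve $\tilde\Gamma = \cO(\Gamma)$ via $\tilde X = \cO\circ X$, then invoking classical smoothing bounds for single layer potentials and Calderón commutators on chord-arc curves. Because the branch ray of $\cO$ (defined in \eqref{cOdefn}) lies outside $\overline{\Omega}$, the map $\cO$ is holomorphic on a neighborhood of $\Gamma$, so $\tilde X \in H^{k+2}(S^1)$ parametrizes a chord-arc curve with constants uniform on $\sB^\wbox$, and the standard Calder\'on-Zygmund theory applies in the $\tilde X$ picture.

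For $[\del_t;\cS](\xi)$, I will handle the two summands in its definition separately. The first, $\cS(U_\theta\cdot X_\theta\, f)$, is the single layer potential applied to the pointwise product $U_\theta\cdot X_\theta\cdot f$; since $\cS$ gains one derivative by the chord-arc bound (now standard on $\tilde\Gamma$) and $U_\theta\cdot X_\theta\in H^k$, we get $H^s \to H^{s+1}$ for $0\le s\le k$, and for $s=k+1$ one commutes a derivative past $\cS$ at the price of an analytic lower-order remainder. For the second summand, writing $\grad G$ via its complex form reduces the integrand, modulo a bounded analytic correction, to a Cauchy-type Calder\'on commutator
\[
\pv\int \frac{(U(\theta)-U(\vartheta))|\del_\vartheta X(\vartheta)|}{X(\theta)-X(\vartheta)}\, f(\vartheta)\,d\vartheta;
\]
factoring $(\theta-\vartheta)$ out of both differences via the auxiliary quantity $\cI_X(\theta,\vartheta)=\int_0^1 \del_\theta X(\tau\theta+(1-\tau)\vartheta)\,d\tau$, and using $\tilde X$ to divide safely near the splash point as in the proof of Proposition~\ref{bEcommBounds}, together with $U\in H^{k+1}$ and $X\in H^{k+2}$, yields an $H^s \to H^{s+1}$ bound for $0\le s\le k+1$ by classical Calder\'on commutator estimates.

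For $[\del_t;\cT](\xi)$, I will first integrate by parts in $\vartheta$ to move the visible $d/d\vartheta$ off the kernel and onto $f$. Using the decomposition $\cot((X(\theta)-X(\vartheta))/2) = 2/(X(\theta)-X(\vartheta)) + \text{(analytic)}$, this produces, up to smoothing analytic remainders, a Calder\'on commutator of the form
\[
\pv\int \frac{U(\theta)-U(\vartheta)}{X(\theta)-X(\vartheta)}\, f'(\vartheta)\,d\vartheta;
\]
the ratio $(U(\theta)-U(\vartheta))/(X(\theta)-X(\vartheta))$ has the regularity of $U_\theta/X_\theta\in H^k$, so the operator behaves like multiplication by an $H^k$ symbol composed with a Cauchy integral on $\tilde\Gamma$, gaining one derivative back to compensate the $f'$; in net we again obtain $H^s \to H^{s+1}$ for $0\le s\le k+1$.

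The main obstacle is bookkeeping the $s=k+1$ endpoint, where a naive product-rule argument would lose half a derivative; the Calder\'on-commutator gain—available precisely because $U$ is one degree more regular than pure multiplication by an $H^k$ coefficient would permit—exactly absorbs that loss. Otherwise the proof is a direct transcription of the chord-arc analysis onto $\tilde\Gamma$ via the explicit integral formulas for $[\del_t;\cS]$ and $[\del_t;\cT]$ in Definition~\ref{cScTcommDefns}, requiring no new ideas beyond those already used for $[\del_t;\bE](\xi)$.
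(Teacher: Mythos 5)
Your conclusion and the Calder\'on-commutator intuition are correct, but the route through $\tilde\Gamma=\cO(\Gamma)$ is an unnecessary complication here, and it leads you into some confused intermediate claims. The proposition is stated on $\sB^\wbox$, where $\delta_\Gamma>0$, and asserts only that the operators are bounded --- \emph{not} that the operator norms are uniform as $\delta\to 0$. So $\Gamma$ itself is already a non-self-intersecting chord-arc curve (with a $\delta$-dependent constant), and one can apply the classical layer-potential and Calder\'on-commutator estimates directly on $\Gamma$, exactly as the paper does in the proof of Lemma~\ref{assortedBddOps} via Lemma~\ref{ChordArcOpBdLem}. The square-root trick of Proposition~\ref{bEcommBounds} is precisely the device for obtaining \emph{uniform} chord-arc constants all the way down to the pinch-zero splash state on $\sB^\bbox$; deploying it here, where no uniformity is claimed, buys you nothing.

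It also creates a difficulty you gloss over. The Hilbert transform $\cH$ transfers to $\tilde\Gamma$ essentially for free because it is built from harmonic extensions, which are conformally invariant. The single layer potential $\Slp$ and the Newtonian kernel $G$ are \emph{not} conformally covariant, so relating $\cS$ on $\Gamma$ to a layer potential on $\tilde\Gamma$ requires tracking conformal factors in both the kernel and the arclength density and then checking the residue is smoothing. Your phrase ``$\cS$ gains one derivative by the chord-arc bound (now standard on $\tilde\Gamma$)'' thus attributes the smoothing to the wrong curve: the gain for $\cS$ comes from the chord-arc property of $X$, which is directly available (with $\delta$-dependent constant) on $\sB^\wbox$. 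Likewise, ``factoring $(\theta-\vartheta)$ out via $\cI_X$ \ldots and using $\tilde X$ to divide safely'' is internally inconsistent --- once you write $X(\theta)-X(\vartheta)=(\theta-\vartheta)\cI_X(\theta,\vartheta)$, the lower bound you need is on $|\cI_X|$, i.e.\ the chord-arc constant of $X$, and invoking $\tilde X$ does not change that. Dropping $\tilde\Gamma$ entirely and using the $\delta$-dependent chord-arc property of $X$, together with the extra cancellation from $(U(\theta)-U(\vartheta))$ vanishing on the diagonal (which you do correctly identify at the end), is the intended ``basic, well-known'' proof.
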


Let us collect additional statements regarding the boundedness of various operators, all for non-self-intersecting interfaces. To prove the identity \eqref{indirectCancFmla}, we do not need explicit bounds on the operator norms or that they remain uniformly bounded as the pinch tends to zero.
\begin{lemma}\label{assortedBddOps}
For $\xi$ in $\sB^\wbox$ and integer $s$ with $0\leq s \leq k+1$, the following statements hold:
\begin{align}
\cS(X) &\in B(H^s,H^{s+1} ) ,\label{op1}\\
\cT(X) &\in B(H^s,H^{s+1}) ,\label{op2}\\
(\cS(X))^{-1} &\in B(H^{s+1},H^s) ,\label{op3}\\
\Nres(X) &\in B(H^s,H^s ) .\label{op4}\\
\end{align}
Moreover, the difference quotients for $D \cS(X)[U]$ and $D \cT(X)[U]$ converge in the $B(H^s,H^{s+1})$ norm, and those for $D\Nres(X)[U]$ converge in the $B(H^{s+1},H^s)$ norm, with
\begin{align}
D \cS(X)[U] &= [\del_t;\cS](\xi)\in B(H^s,H^{s+1}), \label{opfmla1}\\
D \cT(X)[U] &= [\del_t;\cT](\xi)\in B(H^s,H^{s+1}),\label{opfmla2} \\
D\Nres(X)[U] &=[\del_t;\Nres](\xi)\in B(H^{s+1},H^s). \label{op7}
\end{align}
\end{lemma}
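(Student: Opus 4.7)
The plan is to separate the four boundedness claims \eqref{op1}--\eqref{op4} from the three differentiability claims \eqref{opfmla1}--\eqref{op7}, dispatching the former by classical layer potential theory and then leveraging the chord--arc character of $\Gamma$ for $\xi \in \sB^\wbox$ to upgrade Proposition~\ref{existFrech} to the identifications needed.

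For \eqref{op1} and \eqref{op2}, fix $\xi \in \sB^\wbox$, so that $\delta_\Gamma > 0$ and $X \in H^{k+2}_{\gp}$ parametrizes a non-self-intersecting curve with $|X_\theta| \geq c$. The integral kernel of $\cS(X)$ is $G(X(\theta)-X(\vartheta))$, which is weakly singular of logarithmic type, and the kernel of $\cT(X)$ is $(\del_\vartheta X)^\perp \cdot (\grad G)(X(\theta)-X(\vartheta))$, whose apparent $|X(\theta)-X(\vartheta)|^{-1}$ singularity is cancelled by the geometric factor as in the standard analysis on sufficiently smooth closed curves (cf.\ Lemma~\ref{ChordArcOpBdLem} with $\Gamma_\ca = \Gamma$). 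These give \eqref{op1}--\eqref{op2} with norms controlled by $\lV X \rV_{H^{k+2}(S^1)}$. Invertibility \eqref{op3} is classical for the single layer potential on a smooth non-self-intersecting curve, and \eqref{op4} is a restatement of Proposition~\ref{NresIsBoundedMap} specialized to $\xi \in \sB^\wbox$.

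For the convergence statements \eqref{opfmla1} and \eqref{opfmla2}, I would argue directly from the kernel. Writing
\[
\frac{\cS(X+\epsilon U) - \cS(X)}{\epsilon} f(\theta) = \int_{-\pi}^{\pi} K_\epsilon(\theta,\vartheta)\, f(\vartheta)\, d\vartheta,
\]
one Taylor-expands the integrand in $\epsilon$ using smoothness of $G$ off the diagonal together with the parametrization $X + s\epsilon U$, and verifies that the leading-order term reproduces $[\del_t;\cS](\xi) f$ as in Definition~\ref{cScTcommDefns}, while the remainder is $O(\epsilon)$ in the operator norm $B(H^s, H^{s+1})$. Here the key point is that differentiating $G$ at the diagonal introduces at worst a first-order singularity, whose geometric cancellation is preserved because $\xi \in \sB^\wbox$ keeps us uniformly away from self-intersection. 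An analogous kernel computation, using the alternative form \eqref{altcTident} and expanding $\log \sin((X-X)/2)$, yields \eqref{opfmla2}.

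For the remaining convergence claim \eqref{op7}, which is the main obstacle, the difficulty is that naively differencing $\Nres(X)$ would encounter the same regularity-loss issue that made Proposition~\ref{DtoNCancellation} subtle. I would bypass this by differentiating the cancellation identity \eqref{keynresIdent1copy} of Lemma~\ref{LagrangianIdentVers}: for any fixed smooth $f$, applying $\cS(X+\epsilon U)$ to $\Nres(X+\epsilon U) f$ and writing
\[
\cS(X+\epsilon U)\bigl[\Nres(X+\epsilon U)-\Nres(X)\bigr] f = 2\bigl[\cT(X+\epsilon U)-\cT(X)\bigr] f - \bigl[\cS(X+\epsilon U)-\cS(X)\bigr]\Nres(X) f,
\]
dividing by $\epsilon$, and passing to the limit using \eqref{opfmla1}--\eqref{opfmla2} and the $\epsilon$-uniform boundedness of $(\cS(X+\epsilon U))^{-1}$ from \eqref{op3}, yields the operator-norm limit
\[
D\Nres(X)[U] = (\cS(X))^{-1}\bigl(2[\del_t;\cT](\xi) - [\del_t;\cS](\xi)\Nres(X)\bigr),
\]
which lies in $B(H^{s+1}, H^s)$ by \eqref{op3}, \eqref{op4}, \eqref{opfmla1}, \eqref{opfmla2}. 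The existence of the pointwise (in $f$) Frechét derivative is already furnished by Proposition~\ref{existFrech}, so the two agree. This both establishes \eqref{op7} and, via Definition~\ref{DtNCommDefns}, records the identification $D\Nres(X)[U] = [\del_t;\Nres](\xi)$, yielding as a byproduct the identity \eqref{indirectCancFmla} that will drive the proof of Proposition~\ref{CommutatorCancelBd}.
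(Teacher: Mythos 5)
Your handling of \eqref{op1}--\eqref{op4}, \eqref{opfmla1}, and \eqref{opfmla2} matches the paper's approach: classical layer potential theory (as in Lemma~\ref{ChordArcOpBdLem} applied with $\Gamma_\ca = \Gamma$, legitimate since $\xi\in\sB^\wbox$ makes $\Gamma$ an admissible chord-arc curve), the cancellation bound from Proposition~\ref{DtoNCancellation} for $\Nres$, and direct kernel-level Taylor expansion using \eqref{altcTident} for the Fr\'echet derivatives of $\cS$ and $\cT$. Where you diverge is \eqref{op7}. The paper's route is shorter: $\Nres=\cN_++\cN_-$ by definition, so $D\Nres(X)[U]=D\cN_+(X)[U]+D\cN_-(X)[U]$, each summand converging in $B(H^{s+1},H^s)$ by Proposition~\ref{existFrech}, and the sum is $[\del_t;\Nres](\xi)$ verbatim by Definition~\ref{DtNCommDefns}. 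You instead difference the cancellation identity $\cS\Nres=2\cT$ at $X$ and $X+\epsilon U$, rearrange, invert $\cS(X+\epsilon U)$, and pass to the limit using \eqref{opfmla1}--\eqref{opfmla2} and uniform invertibility of $\cS$. This is correct, and in fact you are re-deriving the content of Corollary~\ref{commcancelidentCor} as a \emph{means} of proving the present lemma, whereas the paper proves the lemma first (cheaply, via Proposition~\ref{existFrech}) and derives the identity
\[
\cS(X)[\del_t;\Nres](\xi)f=-[\del_t;\cS](\xi)\Nres(\xi)f+2[\del_t;\cT](\xi)f
\]
as a downstream corollary. Your route buys a strictly stronger conclusion (your formula shows the limit lies in $B(H^{s+1},H^{s+1})$, not just $B(H^{s+1},H^s)$, which is precisely what Corollary~\ref{commcancelidentCor} establishes), but it has not made the appeal to Proposition~\ref{existFrech} unnecessary: your kernel argument shows the $\Nres$ difference quotient converges to \emph{some} operator $L$, but to identify $L$ with $[\del_t;\Nres](\xi)=D\cN_+(X)[U]+D\cN_-(X)[U]$ (per Definition~\ref{DtNCommDefns}) you still need the individual Fr\'echet derivatives $D\cN_\pm(X)[U]$ to exist, which only Proposition~\ref{existFrech} gives. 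So your derivation of the convergence is a parallel proof of something Proposition~\ref{existFrech} already furnishes; it is illuminating and tighter, but the logical dependency on that proposition is not removed.
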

\begin{proof}
Assertions \eqref{op1}--\eqref{op3} follow from classical results for non-self-intersecting interfaces, similar to Lemma~\ref{ChordArcOpBdLem}. The statement \eqref{op4} follows trivially from Proposition~\ref{DtoNCancellation}.

Standard computations of Frech\'{e}t derivatives yield \eqref{opfmla1}, \eqref{opfmla2}, and \eqref{op7} from the definitions of $\cS(X)$, $\cT(X)$, and $\Nres(X)$, the identity \eqref{altcTident}, and Proposition~\ref{existFrech}.
\end{proof}
We now calculate the Frech\'{e}t derivative of $(\cS\Nres)(X)$.
\begin{corollary}
For $\xi$ in $\sB^\wbox$ and integer $s$ with $0\leq s \leq k+1$, we have convergence of the Frech\'{e}t derivative
\begin{align}
D(\cS\Nres)(X)[U]\in B(H^s,H^{s+1}),
\end{align}
which satisfies the identity below, given $\alpha>0$:
\begin{align}\label{DSNIdent2}
& & D(\cS\Nres)(X)[U]f &= 2 [\del_t;\cT](\xi)f & (f\in C^{0,\alpha}) .
\end{align}
\end{corollary}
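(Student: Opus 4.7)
The plan is to first promote Lemma~\ref{LagrangianIdentVers} from a pointwise-on-$C^{0,\alpha}$ statement to an operator identity in $B(H^s,H^{s+1})$, and then differentiate in $X$. Because $C^\infty(S^1)\subset C^{0,\alpha}$ is dense in $H^s(S^1)$ for $0\leq s\leq k+1$, and because parts \eqref{op1}, \eqref{op2}, and \eqref{op4} of Lemma~\ref{assortedBddOps} give continuous operators $\cS(X)\Nres(X)$ and $2\cT(X)$ in $B(H^s,H^{s+1})$, the identity $\cS(X)\Nres(X)g=2\cT(X)g$ from Lemma~\ref{LagrangianIdentVers} will extend by density to the full operator equality
\[
\cS(X)\Nres(X)=2\cT(X)\qquad\text{in }B(H^s,H^{s+1}).
\]

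For all sufficiently small $\epsilon$ the perturbed parametrization $X+\epsilon U$ still determines a curve with positive pinch in $H^{k+2}_\gp$ (the class underlying $\sB^\wbox$ is open in the appropriate sense), so the same operator identity applies at $X+\epsilon U$. Subtracting and dividing by $\epsilon$ then yields the \emph{exact} equality
\[
\frac{\cS(X+\epsilon U)\Nres(X+\epsilon U)-\cS(X)\Nres(X)}{\epsilon}
\;=\;2\cdot\frac{\cT(X+\epsilon U)-\cT(X)}{\epsilon}
\]
in $B(H^s,H^{s+1})$. By Lemma~\ref{assortedBddOps}~\eqref{opfmla2}, the right-hand side converges in $B(H^s,H^{s+1})$ to $2[\del_t;\cT](\xi)$, so the left-hand side does as well. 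This proves $D(\cS\Nres)(X)[U]$ exists in $B(H^s,H^{s+1})$ with value $2[\del_t;\cT](\xi)$; evaluating the resulting operator identity on any $f\in C^{0,\alpha}$ then delivers~\eqref{DSNIdent2}.

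There is essentially no analytic obstacle here, since all content has been packaged into Lemmas~\ref{LagrangianIdentVers} and~\ref{assortedBddOps}. The only subtlety worth flagging is that one must \emph{not} attempt to compute $D(\cS\Nres)(X)[U]$ via a naive product rule: the composition $\cS(X)\,[\del_t;\Nres](\xi)$ would factor as $H^{s+1}\xrightarrow{[\del_t;\Nres]} H^s\xrightarrow{\cS} H^{s+1}$ and so fail to define a bounded map out of $H^s$, mismatching the target space of the corollary. Passing through the identity $\cS\Nres=2\cT$ before differentiating sidesteps this defect by reducing the computation to the Fr\'echet derivative of $\cT(X)$ alone, where the mapping properties already match.
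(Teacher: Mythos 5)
Your proof is correct and takes essentially the same route as the paper: extend the pointwise identity of Lemma~\ref{LagrangianIdentVers} to the operator identity $\cS(X)\Nres(X)=2\cT(X)$ in $B(H^s,H^{s+1})$ (the paper does this by density only for $s=0$, since $H^s\subset C^{0,\alpha}$ already handles $s\ge1$, but your uniform density argument works equally well), then differentiate that identity in $X$ and identify the limit via~\eqref{opfmla2}. Your closing remark about the naive product rule is also consistent with the paper, which for exactly that reason applies the product rule only pointwise on $C^{0,\alpha}$ in the subsequent Corollary~\ref{commcancelidentCor}.
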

\begin{proof}
This follows from Lemma~\ref{LagrangianIdentVers} and \eqref{opfmla2}, where we use a density argument for the case $s=0$.
\end{proof}

Now we prove our cancellation identity, verifying an improvement over \eqref{op7}, which says that $D \Nres(X)[U]$ is in fact in $B(H^s,H^s)$.
\begin{corollary}\label{commcancelidentCor}
For $\xi$ in $\sB^\wbox$ and integer $s$ with $0\leq s \leq k+1$, we have 
$[\del_t;\Nres](\xi)$ in $B(H^s,H^s)$. Furthermore, we have the formula below, given $\alpha>0$:
\begin{align}\label{indirectCancFmla2}
&& \cS(X)[\del_t;\Nres](\xi)f&=-[\del_t;\cS](\xi)\Nres(\xi) f+  2[\del_t;\cT](\xi) f & (f \in C^{0,\alpha}).
\end{align}
\end{corollary}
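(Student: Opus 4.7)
The plan is to derive \eqref{indirectCancFmla2} by applying the product rule to the Lagrangian cancellation identity $\cS(X)\Nres(X) = 2\cT(X)$ from Lemma~\ref{LagrangianIdentVers}, and then deduce the improved mapping property $[\del_t;\Nres](\xi) \in B(H^s,H^s)$ by inverting $\cS(X)$.

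First, I would set up the product-rule argument rigorously. Fix $\xi \in \sB^\wbox$, so that $X$ parametrizes a non-self-intersecting curve, and for small real $\epsilon$ consider $X_\epsilon = X + \epsilon U$, which also belongs to $H^{k+2}_\gp$ for $|\epsilon|$ small. By Lemma~\ref{LagrangianIdentVers} applied to $X_\epsilon$ (which is valid on, say, $C^{0,\alpha}$ data), we have the pointwise operator identity
\begin{align}
\cS(X_\epsilon)\Nres(X_\epsilon) = 2\cT(X_\epsilon)
\end{align}
on $C^{0,\alpha}$ functions, and by density and the bounds in Lemma~\ref{assortedBddOps} this extends to an equality in $B(H^s,H^{s+1})$ for $0 \le s \le k+1$. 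I would then subtract the $\epsilon=0$ version, divide by $\epsilon$, and pass to the limit, using the convergence statements \eqref{opfmla1}, \eqref{opfmla2}, \eqref{op7} of Lemma~\ref{assortedBddOps} together with the uniform boundedness of $\cS(X_\epsilon)$ and $\Nres(X_\epsilon)$ in the relevant operator norms for $\epsilon$ in a neighborhood of $0$.

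The core calculation is a standard Leibniz decomposition
\begin{align}
\frac{\cS(X_\epsilon)\Nres(X_\epsilon) - \cS(X)\Nres(X)}{\epsilon}
&= \frac{\cS(X_\epsilon)-\cS(X)}{\epsilon}\,\Nres(X_\epsilon) + \cS(X)\,\frac{\Nres(X_\epsilon)-\Nres(X)}{\epsilon}.
\end{align}
The first term converges to $[\del_t;\cS](\xi)\Nres(\xi)$ in $B(H^s,H^{s+1})$ because the difference quotient converges in $B(H^s,H^{s+1})$ by \eqref{opfmla1} and $\Nres(X_\epsilon)$ converges strongly (in fact in operator norm) to $\Nres(X)$ in $B(H^s,H^s)$, as a Lipschitz estimate analogous to Proposition~\ref{NresLipBd} shows. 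The second term converges to $\cS(X)[\del_t;\Nres](\xi)$ in $B(H^{s+1},H^{s+1})$, hence in $B(H^s,H^{s+1})$, by \eqref{op7} together with the boundedness of $\cS(X)$ from $H^s$ to $H^{s+1}$. Meanwhile the right-hand side converges to $2[\del_t;\cT](\xi)$ by \eqref{opfmla2}, yielding \eqref{indirectCancFmla2} first on $C^{0,\alpha}$ data and then on all of $H^s$ by density and continuity of both sides in the $H^s \to H^{s+1}$ operator topology.

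Given \eqref{indirectCancFmla2}, the mapping property $[\del_t;\Nres](\xi) \in B(H^s,H^s)$ follows by applying $(\cS(X))^{-1}$, which by \eqref{op3} lies in $B(H^{s+1},H^s)$: the right-hand side $-[\del_t;\cS](\xi)\Nres(\xi)f + 2[\del_t;\cT](\xi)f$ takes values in $H^{s+1}$ for $f\in H^s$ (using $\Nres(\xi) \in B(H^s,H^s)$ from Proposition~\ref{DtoNCancellation} and the $H^s \to H^{s+1}$ bounds for $[\del_t;\cS](\xi)$ and $[\del_t;\cT](\xi)$), so $[\del_t;\Nres](\xi)f = (\cS(X))^{-1}(\text{RHS}) \in H^s$.

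The main obstacle I anticipate is the bookkeeping for the product rule: one must confirm that the difference quotient for $\cS$ converges in $B(H^s,H^{s+1})$ while $\Nres(X_\epsilon)$ is applied to an $H^s$ function and outputs land in $H^s$, and symmetrically for the other factor, so that both Leibniz terms live in $B(H^s,H^{s+1})$ and can be legitimately added. All of this is afforded by Lemma~\ref{assortedBddOps}; the only subtlety beyond that is verifying the norm continuity of $\epsilon \mapsto \Nres(X_\epsilon)$ at $\epsilon=0$, which follows from an application of the Lipschitz-type bound of Proposition~\ref{NresLipBd} (at the level of $B(H^3,H^2)$) combined with the uniform $B(H^s,H^s)$ bound of Proposition~\ref{DtoNCancellation} and interpolation. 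Once these technicalities are in place the corollary drops out directly from the identity $\cS\Nres = 2\cT$.
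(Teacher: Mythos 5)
Your proposal matches the paper's proof in both structure and detail: both apply the Leibniz rule for Fréchet derivatives (equivalently, the product rule for difference quotients) to the composition $\cS(X)\Nres(X)$, invoke the cancellation identity $\cS\Nres = 2\cT$ so that the derivative of the product equals $2[\del_t;\cT](\xi)$, rearrange to obtain~\eqref{indirectCancFmla2}, and then apply $(\cS(X))^{-1}$ together with the $H^s\to H^{s+1}$ mapping properties of $[\del_t;\cS]$ and $[\del_t;\cT]$ to deduce $[\del_t;\Nres](\xi)\in B(H^s,H^s)$. One small wrinkle: where you write that the term $\cS(X)D\Nres(X)[U]$ lands in $B(H^{s+1},H^{s+1})$ ``hence in $B(H^s,H^{s+1})$,'' the containment actually runs the other way, but since both Leibniz terms can be brought into $B(H^{s+1},H^{s+1})$ (and the paper only needs pointwise convergence on a fixed $f\in C^{0,\alpha}$), this does not affect the validity of the argument.
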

\begin{proof}
Fix $f$ in $C^{0,\alpha}$. From standard rules for computing Frech\'{e}t derivatives of the composition of linear operators, we find difference quotients approximating $D(\cS\Nres)(X)[U]f$ converge pointwise to
\begin{align}
D(\cS\Nres)(X)[U]f = D\cS(X)[U] \Nres(X)f + \cS(X) D\Nres(X)[U]f ,\label{frechetcompositionderiv}
\end{align}
thanks to \eqref{op4} and \eqref{op1} as well as the convergence of the derivatives \eqref{opfmla1} and \eqref{op7}. By using the identities \eqref{DSNIdent2}, \eqref{opfmla1}, and \eqref{op7} in \eqref{frechetcompositionderiv} and rearranging terms, we get \eqref{indirectCancFmla2}.

Now note that if $f$ is in $H^s$ for $s\geq 1$, the right-hand side of \eqref{indirectCancFmla2} is in $H^{s+1}$. Thus we may apply $(\cS(X))^{-1}$ to both sides, giving us a formula for $[\del_t;\Nres](\xi)f$ which, when viewed with the help of Lemma~\ref{assortedBddOps}, justifies that $[\del_t;\Nres](\xi)$ is in $B(H^s,H^s)$ for $s\geq 1$. The case $s=0$ follows from a density argument.

\end{proof}



Now that the above corollary has given us that $[\del_t;\Nres](\xi)$ maintains $H^s$ regularity, it still remains to give an explicit estimate involving the weighted space $H^s(S^1,\delta,m)$ with a uniform constant in the right-hand side. This is the purpose of Proposition~\ref{CommutatorCancelBd}. Our proof requires the following lemma as the last ingredient.
\begin{lemma}\label{suppBddAwayCommLemma}
    For an integer $m\geq0$ the following holds. Consider $\xi$ in $\sB^\bbox(\delta)$ with $0<\delta\leq \delta_0$ and integer $s$ with $0\leq s \leq k+1$. For $I\subset S^1$ and $f:S^1 \to \R$ with $\operatorname{dist}(\supp (f),I)\geq d_0$, we have
    \begin{align}\label{commcancelbd0}
        \lV[\del_t;\cN_{\pm}](\xi)f\rV_{H^s(I)}\leq C(M) \lV f\rV_{H^s (S^1,\delta,m)}.
    \end{align}
\end{lemma}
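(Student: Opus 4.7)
My starting point is to use the explicit shape-derivative formula for the commutator. By Definition~\ref{DtNCommDefns}, $[\del_t;\cN_\pm](\xi) = D\cN_\pm(X)[U]$. Writing $\phi_\pm = \bigh_\pm(f \circ X^{-1})$ for the harmonic extension of the boundary data, a direct differentiation of $\cN_\pm(X) f(\theta) = n(X(\theta)) \cdot \grad \phi_\pm(X(\theta))$ along the perturbation $X \mapsto X + \epsilon U$ will yield
\begin{align*}
[\del_t;\cN_\pm](\xi) f(\theta) &= n(X(\theta)) \cdot \grad \phi'_\pm(X(\theta)) + \dot{n}(\theta) \cdot \grad \phi_\pm(X(\theta))\\
&\quad + n(X(\theta)) \cdot (U(\theta) \cdot \grad)\grad\phi_\pm(X(\theta)),
\end{align*}
where $\phi'_\pm$ is the unique bounded harmonic function in $\Omega_\pm$ with boundary data $\phi'_\pm \circ X = -U \cdot (\grad \phi_\pm)\circ X$ on $\Gamma$. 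The existence and identification of $\phi'_\pm$ is a standard shape-derivative computation, consistent with Proposition~\ref{existFrech}.

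The two ``lower-order'' terms $\dot n \cdot \grad \phi_\pm$ and $n \cdot (U \cdot \grad)\grad \phi_\pm$, evaluated on $X(I)$, are straightforward: since $\operatorname{dist}_\Gamma(\supp(f), X(I)) \geq d_0$, Schwarz reflection extends $\phi_\pm$ harmonically across $\Gamma$ in a neighborhood of $X(I)$, so both $\grad\phi_\pm$ and $\grad^2\phi_\pm$ on $X(I)$ can be estimated in $H^s$ by $\lV f\rV_{H^s(S^1,\delta,m)}$ via a cutoff argument identical to the one in Lemma~\ref{SuppsBddAwayLem} (applied now to $\grad\phi_\pm$ or $\grad^2\phi_\pm$ rather than $\bign_\pm f$). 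For the principal term $n \cdot \grad \phi'_\pm$ on $X(I)$, I would mimic the cutoff strategy of Lemma~\ref{SuppsBddAwayLem} again: pick a smooth $\chi : \Gamma \to \R$ equal to $1$ near $\gamma = X(I)$ and with $\supp(\chi) \cap X(\supp(f)) = \emptyset$, satisfying $|\del^\beta \chi| \leq C \mu_\delta^{|\beta|}$. Split the boundary data $g_\pm := -(U \circ X^{-1}) \cdot \grad \phi_\pm$ as $\chi g_\pm + (1-\chi) g_\pm$, giving $\phi'_\pm = \bigh_\pm(\chi g_\pm) + \bigh_\pm((1-\chi) g_\pm)$. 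The second piece has data supported away from $\gamma$, so Lemma~\ref{SuppsBddAwayLem} applied to $(1-\chi) g_\pm$ bounds its normal derivative on $\gamma$ by a weighted norm of $(1-\chi) g_\pm$, which in turn is controlled by $\lV U\rV_{H^s(S^1)}$ times a weighted norm of $\grad\phi_\pm$ on $\Gamma$, and ultimately by $\lV f\rV_{H^s(S^1,\delta,m)}$ via Proposition~\ref{1stWeightedEstimate} and Proposition~\ref{basicHarmExtBds}. The first piece has smooth data on $\supp(\chi)$ (since $\grad \phi_\pm$ is smooth off $X(\supp(f))$), and its normal derivative on $\gamma$ is estimated by the same two propositions applied locally.

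The main obstacle, as in Lemma~\ref{SuppsBddAwayLem}, will be the careful bookkeeping of weight indices: each cutoff-and-estimate step can promote the weight index from $m$ to some $m' \geq m$ (recall Remark~\ref{WeightRemark}), and I must pick the final $m$ large enough to absorb all such losses uniformly in $\delta$. A subsidiary technical point arises when $\gamma$ itself lies near the splash point $p_\splash$, so that $X(\supp(f))$ may approach $p_\splash$ from the opposing kissing arc despite being far from $\gamma$ along $\Gamma$; then $\chi$ must be chosen to respect the pinch scale encoded by $\mu_\delta$, exactly as in Lemma~\ref{SuppsBddAwayLem}, so that the cutoff argument stays uniform in $\delta$. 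The $\cN_-$ case is easier and gives the bound with $m = 0$, by the same argument but using the unweighted elliptic estimates of Section~\ref{unpinchedestimates} in place of Proposition~\ref{1stWeightedEstimate}.
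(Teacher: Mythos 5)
Your route through the explicit Hadamard shape-derivative formula differs from the paper's, which expresses $[\del_t;\cN_\pm]$ via solutions to perturbed div-curl systems (see Remark~\ref{NrescommDefnRemark}\,(i) and the discussion around Prop.~\ref{NresLipBd}) and then runs the cutoff argument at the level of those systems. Your approach is reasonable in spirit, but there is a genuine gap in the derivative accounting.

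The chain you propose for the far piece,
\[
\lV\bign_\pm\big((1-\chi)g_\pm\big)\rV_{H^s(\gamma)}
\;\lesssim\;
\lV(1-\chi)g_\pm\rV_{H^s(\Gamma,\delta,m)}
\;\lesssim\;
\lV U\rV_{H^s(S^1)}\,\lV\grad\phi_\pm\rV_{H^s(\Gamma,\delta,m')}
\;\lesssim\;
\lV f\rV_{H^s(S^1,\delta,m'')},
\]
fails at the last step: on $\Gamma$ one has $\grad\phi_\pm = n\,\bign_\pm f + \tau\,\del_\tau(f\circ X^{-1})$, so controlling $\grad\phi_\pm$ in $H^s(\Gamma,\delta,m')$ genuinely requires $f\in H^{s+1}$, not $H^s$ — Proposition~\ref{basicHarmExtBds} gives exactly $\lV\bign_\pm f\rV_{H^s(\Gamma)}\leq C\lV f\rV_{H^{s+1}(\Gamma,\delta,m)}$, and the tangential piece $\del_\tau(f\circ X^{-1})$ is equally costly. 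Since $(1-\chi)g_\pm$ is supported on and near $\supp(f)$, there is no off-support improvement available for $\grad\phi_\pm$ there, and the one-derivative gain from Lemma~\ref{SuppsBddAwayLem} applied to $\bign_\pm$ exactly offsets the loss in $g_\pm$ but does not overcome it: you end at $\lV f\rV_{H^{s+1}}$, one derivative short of the claim. Likewise, your ``lower-order'' term $n\cdot(U\cdot\grad)\grad\phi_\pm$ requires $\grad^2\phi_\pm$ on $\gamma$ in $H^s$, which needs a \emph{two}-derivative off-support gain; the cutoff argument of Lemma~\ref{SuppsBddAwayLem}, taken as stated, delivers only one (it gives $\chi\bigh_\pm f\in H^{s+3/2}(\Omega_\pm)$, hence $\grad^2\phi_\pm\in H^{s-1}(\gamma)$ after taking a trace).

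Closing these gaps is plausible but requires proving a strengthened, iterated version of the off-support smoothing — applying the cutoff argument of Lemma~\ref{SuppsBddAwayLem} repeatedly with nested cutoffs to reach $\chi'\bigh_\pm f\in H^{s+5/2}(\Omega_\pm,\delta,m_0)$, say, at the cost of ever larger weight indices $m$. Your proposal gestures at the weight bookkeeping but not at the need for this iterated gain, and phrases the step as ``a cutoff argument identical to the one in Lemma~\ref{SuppsBddAwayLem}'', which is not sufficient as stated. A cleaner alternative within your framework would be to rewrite the shape derivative using the material derivative $\dot\phi_\pm = \phi'_\pm + u\cdot\grad\phi_\pm$, which by construction has \emph{zero} trace on $\Gamma$: then $[\del_t;\cN_\pm]f = \dot n\cdot\grad\phi_\pm + n\cdot\grad\dot\phi_\pm - n\cdot(\grad u)^t\grad\phi_\pm$, and the cutoff argument applies to $\dot\phi_\pm$ directly (zero trace, interior source $\Delta\dot\phi_\pm = \Delta u\cdot\grad\phi_\pm + 2(\grad u):\grad^2\phi_\pm$), which lines the derivative count up more transparently with the iterated off-support gains needed for $\grad\phi_\pm$ and $\grad^2\phi_\pm$ near $\gamma$.
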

\begin{proof}
Recall Remark~\ref{NrescommDefnRemark} (i). By deriving the expressions for $[\del_t;\cN_{\pm}](\xi)f$ in terms of solutions to associated div-curl systems, one can prove the above bound in a similar manner to the proof of Lemma~\ref{SuppsBddAwayLem}.
\end{proof}
Now we give the proof of the main cancellation bound for $[\del_t;\Nres](\xi)$, which is in the same spirit as the proof of Proposition~\ref{DtoNCancellation}.
\begin{proposition}\label{CommutatorCancelBd}
For an integer $m\geq0$ the following holds. Consider $\xi$ in $\sB^\bbox(\delta)$ with $0<\delta\leq \delta_0$ and integer $s$ with $0\leq s \leq k+1$. Then for the corresponding operator \([\del_t;\Nres](\xi)\) we have the estimate
\begin{align}
            \lV [\del_t;\Nres](\xi) f \rV_{H^s(S^1)} \leq C(M) \lV f \rV_{H^s (S^1,\delta,m)} .
        \end{align}
\end{proposition}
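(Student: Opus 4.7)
The plan is to follow exactly the template of the proof of Proposition~\ref{DtoNCancellation}, using the commutator cancellation identity of Corollary~\ref{commcancelidentCor} in place of the direct identity $\slp\nres = 2\dcal T$. The key input is the formula
\begin{align}
\cS(X)\,[\del_t;\Nres](\xi)f = -[\del_t;\cS](\xi)\,\Nres(X)f + 2\,[\del_t;\cT](\xi)f,
\end{align}
which, modulo inverting $\cS(X)$, reduces the bound on $[\del_t;\Nres](\xi)f$ to bounds on two regularity-preserving quantities. Since inverting $\cS$ on a pinched curve is not directly available in a useful uniform fashion, the strategy, just as for $\Nres$, is to localize and replace $\Gamma$ by the tailored chord-arc curve $\Gamma_\vardiamond$ of Definition~\ref{tailoredCA}, whose corresponding single-layer potential $\slp_\vardiamond$ is invertible with uniform bounds via Lemma~\ref{ChordArcOpBdLem}.

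First I would use a partition of unity on $S^1$ to reduce to the case where $f$ has support contained in $[-\pi, 0]$ (the case of support in $S^1 \setminus [-3\pi/4, -\pi/4]$ is handled by the mirror argument, as in the proof of Proposition~\ref{DtoNCancellation}). Then, using the arcs $\Gamma_i = X([a_i,b_i])$ with $a_i = -\pi - 10^{-1}(i-1)$, $b_i = 10^{-1}(i-1)$, I decompose
\begin{align}
\lV [\del_t;\Nres](\xi)f \rV_{H^s(\Gamma)} \leq \lV [\del_t;\Nres](\xi)f \rV_{H^s(\Gamma_2)} + \lV [\del_t;\cN_+](\xi)f \rV_{H^s(\Gamma_2^c)} + \lV [\del_t;\cN_-](\xi)f \rV_{H^s(\Gamma_2^c)}.
\end{align}
The last two terms are handled immediately by Lemma~\ref{suppBddAwayCommLemma}, since $\supp(f) \subset \Gamma_1$ is separated from $\Gamma_2^c$ by the chosen distance $d_0$. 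This leaves the main contribution $\lV [\del_t;\Nres](\xi)f \rV_{H^s(\Gamma_2)}$.

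For the main term, I would introduce a smooth cutoff $\chi$ with $\supp(\chi) \subset \Gamma_3$ and $\chi = 1$ on $\Gamma_2$, and apply the partial-overlap identity of Lemma~\ref{PartOverlapIdLem} combined with \eqref{indirectCancFmla2} to write, on $\Gamma_2$,
\begin{align}
[\del_t;\Nres](\xi)f = \slp_\vardiamond^{-1}\bigl( (\cS(X)[\del_t;\Nres](\xi)f)|_{\Gamma_\vardiamond} \bigr) + \slp_\vardiamond^{-1}\bigl( (\cS(X)[(\chi - 1)[\del_t;\Nres](\xi)f])|_{\Gamma_\vardiamond}\bigr),
\end{align}
and substitute \eqref{indirectCancFmla2} into the first (main) piece. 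The main piece becomes $\slp_\vardiamond^{-1}$ applied to
\begin{align}
\bigl( -[\del_t;\cS](\xi)\Nres(X)f + 2[\del_t;\cT](\xi)f \bigr)\big|_{\Gamma_\vardiamond},
\end{align}
which, via the chord-arc estimates of Lemma~\ref{ChordArcOpBdLem}, the evaluation lemma~\ref{EvalSLPBdLem}, the cancellation bound of Proposition~\ref{DtoNCancellation} for $\Nres$, and the standard regularity-gaining bounds on $[\del_t;\cS](\xi)$ and $[\del_t;\cT](\xi)$, yields control by $C(M) \lV f \rV_{H^s(S^1,\delta,m)}$ after absorbing weight indices (cf.\ Remark~\ref{WeightRemark}). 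The remainder piece is handled crudely via Lemmas~\ref{ChordArcOpBdLem} and~\ref{EvalSLPBdLem} together with Lemma~\ref{suppBddAwayCommLemma}, exploiting $\supp(\chi - 1) \subset \Gamma_2^c$ bounded away from $\supp(f) \subset \Gamma_1$.

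The main obstacle, and the reason this is more delicate than the $\nres$ case of Proposition~\ref{DtoNCancellation}, is the middle step where we must apply $\cS(X)$ to $[\del_t;\Nres](\xi)f$ as an element evaluated on the \emph{modified} curve $\Gamma_\vardiamond$ rather than on $\Gamma$: the identity \eqref{indirectCancFmla2} was derived by direct Fr\'echet differentiation of the identity $\cS(X)\Nres(X)f = 2\cT(X)f$ on $\Gamma$, and we need the restriction to $\Gamma_\vardiamond$ to still be expressible through objects with uniform weighted bounds. This is resolved by the same observation used in the proof of Proposition~\ref{DtoNCancellation}: since $\supp(f) \subset \Gamma \cap \Gamma_\vardiamond$, the operators $\cS(X)$, $[\del_t;\cS](\xi)$, $[\del_t;\cT](\xi)$ applied to $f$ (or to $\Nres(X)f$, after cutoff) extend as restrictions of harmonic or Cauchy-type integrals, and their traces on $\Gamma_\vardiamond$ therefore enjoy the same gain of one derivative. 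Once these substitutions are made carefully and the weight indices $m$ are allowed to increase as needed, the proof closes, and the case $s \leq k$ with $M$ replaced by $\lV \xi \rV_{\sH^{k-1}}$ follows from the same argument using the sharper variants of Lemmas~\ref{EvalSLPBdLem}, \ref{ChordArcOpBdLem}, and Proposition~\ref{DtoNCancellation}.
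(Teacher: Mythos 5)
Your proposal follows the same route as the paper: it applies the template of Proposition~\ref{DtoNCancellation} with the tailored chord-arc curve $\Gamma_\vardiamond$, uses Lemma~\ref{PartOverlapIdLem} to split into main and remainder pieces, substitutes the Fr\'echet-derivative cancellation identity of Corollary~\ref{commcancelidentCor} into the main piece, and controls the rest via Lemmas~\ref{suppBddAwayCommLemma}, \ref{ChordArcOpBdLem}, \ref{EvalSLPBdLem} and Proposition~\ref{DtoNCancellation}. You correctly identify the one extra wrinkle (that $\Nres f$, unlike $f$, is not supported inside $\Gamma\cap\Gamma_\vardiamond$, so a cutoff must be inserted before its restriction to $\Gamma_\vardiamond$ can be identified with the tailored $\vardiamond$-version), which in the paper generates the third error term $\cI^{13}_\res$; with that detail filled in, the argument matches the paper's proof.
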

\begin{proof}
    We begin by assuming we are in the case that $f\in C^\infty_0([-\pi,0])$. For $i=1,2,3$, let us define $a_i, b_i, \Gamma_i, \Gamma^c_i$ exactly as we did in the proof of Proposition~\ref{DtoNCancellation} and define $I_i= X^{-1}(\Gamma_i)$. 
    
As a minor abuse of notation, we drop the arguments $\xi$ and $X$ from maps such as \([\del_t;\Nres](\xi)\), $\cS(X)$, etc. We then have
\begin{align}\label{commcancelbd1}
    \lV [\del_t;\Nres]f\rV_{H^s(S^1)}\leq \lV [\del_t;\Nres]f \rV_{H^s(I_2)}+\lV [\del_t;\cN_+] f \rV_{H^s(I^c_2)}+\lV [\del_t;\cN_-] f \rV_{H^s(I^c_2)}.
\end{align}
By Lemma~\ref{suppBddAwayCommLemma}, for some $m_1$ we get
\begin{align}\label{commcancelbd2}
    \lV [\del_t;\cN_\pm]f\rV_{H^s(I^c_2)}\leq C(M)\lV f\rV_{H^s (S^1,\delta,m_1)},
\end{align}
leaving us just with the task of bounding the term $\lV [\del_t;\Nres]f \rV_{H^s(I_2)}$.

We now use the same tailored chord-arc curve $\Gamma_\vardiamond$ as in the proof of Proposition~\ref{DtoNCancellation}, taking the same cutoff $\chi:\Gamma\to \R$ equal to one on $\Gamma_2$ with $\supp(\chi)\subset \Gamma_3$. For the single layer potential $\slp_\vardiamond$ associated to $\Gamma_\vardiamond$, using Lemma~\ref{curveSwapIdent}, we find
\begin{align}
    ([\del_t;\Nres] f)\circ X^{-1} =&
    \slp^{-1}_\vardiamond\bigg(\left.\Big(\Slp \big\{([\del_t;\Nres]f)\circ X^{-1}\big\}\Big)\right|_{\Gamma_\vardiamond}\bigg)
    \\
    & +
    \slp^{-1}_\vardiamond\bigg(\left.\Big(\Slp\big\{(\chi-1)\left(([\del_t;\Nres]f)\circ X^{-1}\right)\big\}\Big)\right|_{\Gamma_\vardiamond}\bigg) &(x\in \Gamma_2), \\
    =& \sS^1_\res f+ \sS^2_\res f, &
\end{align}
Note then from \eqref{commcancelbd1} and \eqref{commcancelbd2},
\begin{align}\label{commcancelbd3}
\lV [\del_t;\Nres]f\rV_{H^s(S^1)} \leq C(M)\left(\lV\sS^1_\res f\rV_{H^s(\Gamma_2)} +  \lV\sS^2_\res f\rV_{H^s(\Gamma_2)} + \lV f\rV_{H^s (S^1,\delta,m)}\right).
\end{align}
First, we handle $\lV\sS^2_\res f\rV_{H^s(\Gamma_2)}$. Let us denote $\tilde \chi = \chi \circ X$, noting $\tilde \chi=1$ on $I_2$ and has $\supp (\tilde \chi) \subset I_3$. We apply Lemma~\ref{ChordArcOpBdLem} to get a bound for $\slp^{-1}_\vardiamond$ followed by Lemma~\ref{EvalSLPBdLem} to bound the evaluation of $\Slp \{\ldots\}$ along $\Gamma_\vardiamond$, finding
\begin{align}
    \lV \sS^2_\res f\rV_{H^s(\Gamma_2)}&\leq C(M) \left\lV \slp^{-1}_\vardiamond
\left.\Big(\Slp\big\{(\chi-1)([\del_t;\Nres] f)\circ X^{-1}\big\}\Big)\right|_{\Gamma_\vardiamond}
    \right\rV_{H^{s}(\Gamma_\vardiamond)}\\
    &\leq C(M) \left\lV 
    \left.\big(\Slp\big\{(\chi-1)([\del_t;\Nres ]f)\circ X^{-1}\big\}\big)\right|_{\Gamma_\vardiamond}
    \right\rV_{H^{s+1}(\Gamma_\vardiamond)} \label{commcancelbd4}\\
    &\leq C(M)
    \lV (\tilde\chi-1)[\del_t;\Nres] f\rV_{H^s(S^1)} \\
    &\leq C(M)
    \lV [\del_t;\Nres] f\rV_{H^s(I^c_2)}\\
    &\leq C(M)\left(\lV [\del_t;\cN_+] f \rV_{H^s(I^c_2)}+\lV [\del_t;\cN_-]f \rV_{H^s(I^c_2)}\right)\\
    &\leq C(M)\lV f\rV_{H^s(S^1,\delta,m_1)}, 
\end{align}
using \eqref{commcancelbd2} in the last step above.

Now we take care of $\lV\sS^1_\res f\rV_{H^s(\Gamma_2)}$. Using Corollary~\ref{commcancelidentCor} together with similar techniques to those in the proof of Proposition~\ref{DtoNCancellation} and the facts that $\supp (\chi)\subset\Gamma\cap\Gamma_\vardiamond$ and $\supp (f\circ X^{-1})\subset \Gamma \cap \Gamma_\vardiamond$, we find the following holds for $x$ in $\Gamma_\vardiamond$:
\begin{align}
    \Slp \big\{([\del_t;\Nres]f) \circ X^{-1}\big\} &= 
    \bigh_- \Big\{\big( \cS[\del_t;\Nres]f\big)\circ X^{-1}\Big\} \\
    &=
    -\bigh_- \Big\{\big( [\del_t;\cS] \Nres f\big)\circ X^{-1}\Big\}
    +
    2\bigh_- \Big\{\big( [\del_t;\cT]f\big)\circ X^{-1}\Big\}\\
    &=
    -\bigh_- \Big\{\big( [\del_t;\cS] \tilde\chi\Nres f\big)\circ X^{-1}\Big\}
    +2\bigh_- \Big\{\big( [\del_t;\cT]f\big)\circ X^{-1}\Big\}\\
    & \quad
    +\bigh_- \Big\{\big( [\del_t;\cS](\tilde \chi-1)\Nres f\big)\circ X^{-1}\Big\}\\
    &=
    -\big( [\del_t;\cS_\vardiamond] \tilde\chi\Nres f\big)\circ X^{-1}_\diamond
    +2\big( [\del_t;\cT_\vardiamond]f\big)\circ X^{-1}_\diamond \\
    &\quad +\bigh_- \Big\{\big( [\del_t;\cS](\tilde \chi-1)\Nres f\big)\circ X^{-1}\Big\},
%
\end{align}
where $X_\diamond$ is as in Definition~\ref{tailoredCA}, and by $[\del_t;\cS_\vardiamond]$ and $[\del_t;\cT_\vardiamond]$, we mean the analogous expressions to those in Definition~\ref{cScTcommDefns} where we replace $X$ by $X_\diamond$.

Recalling the expression for $\sS^1_\res f$ and applying Lemma~\ref{ChordArcOpBdLem}, we thus find
\begin{align}
    \lV \sS^1_\res f\rV_{H^s(\Gamma_2)} 
        \leq&
     C(M)\bigg(
     \left\lV
     \big( [\del_t;\cS_\vardiamond] \tilde\chi\Nres f\big)\circ X^{-1}_\diamond
     \right\rV_{H^{s+1}(\Gamma_\vardiamond)}
     +
     \left\lV
     \big( [\del_t;\cT_\vardiamond]f\big)\circ X^{-1}_\diamond
     \right\rV_{H^{s+1}(\Gamma_\vardiamond)} \\
     &\qquad\qquad+
     \left\lV
     \bigh_- \Big\{\big( [\del_t;\cS](\tilde \chi-1)\Nres f\big)\circ X^{-1}\Big\}
     \right\rV_{H^{s+1}(\Gamma_\vardiamond)} \bigg)\\
     =&
     C(M)\left(\cI^{11}_\res+\cI^{12}_\res+\cI^{13}_\res\right).
\end{align}
Analogous to the way that one justifies the classical uniform bounds of Lemma~\ref{ChordArcOpBdLem} for admissible chord-arc curves such as $\Gamma_\diamond$, one verifies uniform bounds of the form
\begin{align}
&&\lV[\del_t;\cS_\vardiamond]g\rV_{H^{s+1}(S^1)}
+
\lV[\del_t;\cT_\vardiamond]g\rV_{H^{s+1}(S^1)}
&\leq C(M) \lV g \rV_{H^s(S^1)} &(g :S^1\to \R).
\end{align}
Using this and Proposition~\ref{DtoNCancellation}, for some $m_2$ we obtain
\begin{align}
\cI^{11}_\res+\cI^{12}_\res \leq
C(M)\left(\lV \Nres f \rV_{H^s(S^1)}+\lV f \rV_{H^s(S^1)}
\right)\leq C(M)\lV f \rV_{H^s(S^1,\delta,m_2)}.
\end{align}
Similar to the way we arrived at the bound \eqref{commcancelbd4}, we find the following, using the facts that $f$ is supported in $I_1$ and $\tilde \chi=1$ on $I_2$ together with Lemma~\ref{SuppsBddAwayLem}.
\begin{align}
\cI^{13}_\res &\leq C(M)
\lV
(\tilde \chi-1)\Nres f
\rV_{H^s(S^1)} \\
&\leq C(M) \lV\Nres f\rV_{H^s(I^c_2)} \\
&\leq C(M) \lV f \rV_{H^s(S^1,\delta,m_3)} .
\end{align}
Therefore, taking $m$ to be the largest of the $m_i$ for $i=1,2,3$, we get
\begin{align}
    \lV \sS^1_\res f \rV_{H^s(\Gamma_2)} &\leq C(M) \lV f \rV_{H^s(S^1,\delta,m)},
\end{align}
which, when combined with our bound for $\lV \sS^2_\res f \rV_{H^s(\Gamma_2)}$ and \eqref{commcancelbd3}, gives us \eqref{commcancelbd0}. This concludes the proof in the case that $f$ is supported in $[-\pi,0]$. By using a partition of unity and a similar argument for the case of smooth $f$ with support contained in $S^1\setminus[-3\pi/4,-\pi/4]$, we get the general estimate desired.
\end{proof}
\begin{lemma}\label{commcNpmLipBounds}
For an integer $m\geq0$ the following holds for $\xi$ and $\uxi$ in $\sB^\bbox(\delta)$ with $0<\delta\leq \delta_0$.
\begin{align}\label{commlipbd}
\lV ([\del_t;\cN_\pm](\xi)-[\del_t;\cN_\pm](\uxi) )f\rV_{H^2(S^1)} \leq C \lV \xi-\uxi \rV_{\sH^2} \lV f\rV_{H^3(S^1,\delta,m)}.
\end{align}
In the case $\pm=-$, the weighted norm in the right-hand side can be replaced by the $H^3(S^1)$ norm.
\end{lemma}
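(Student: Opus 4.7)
The plan is to mirror the div-curl strategy already used in Proposition~\ref{LipDirichNeumBd} and in the proof of part (ii) of Proposition~\ref{hEstimateProp}, now applied to the shape-derivative operator $[\del_t;\cN_\pm](\xi) = D\cN_\pm(X)[U]$. Following the div-curl characterization alluded to in Remark~\ref{NrescommDefnRemark}(i), I would first represent $[\del_t;\cN_\pm](\xi)f$ as $N\cdot Y_\pm$ plus a curvature correction $-(N\cdot U_\theta)(\bigtau\cdot V_\pm)/|X_\theta|$, where $V_\pm$ is the div-curl solution giving $\cN_\pm(X)f = N\cdot V_\pm$ (the same one used in Propositions~\ref{LipDirichNeumBd} and~\ref{NresLipBd}), and $Y_\pm$ solves the linearized div-curl system obtained by differentiating the defining system for $V_\pm$ with respect to $X$ in the direction $U$. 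The source terms in the $Y_\pm$-system are products of $\grad U$ and $V_\pm$; the boundary datum at $\psi=0$ is $X_\theta\cdot Y_\pm = U_\theta \cdot V_\pm$, with the matching conditions on the remainder of $\del\Sigma_+(\delta)$ in the $+$ case.

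For the Lipschitz bound, I would form the underlined analogues $\uV_\pm,\uY_\pm$ for $\uxi$ and write out the div-curl system satisfied by $Y_\pm - \uY_\pm$. Its right-hand sides decompose by Leibniz into products of differences, e.g.~$(\grad U - \grad \uU)V_\pm$, $\grad \uU(V_\pm - \uV_\pm)$, $(\grad X_\pm - \grad \uX_\pm)\uY_\pm$, with analogous boundary-data differences. In the case $\pm=-$ the pinch plays no role: Lemma~\ref{tanglDivCurlSys} and the Lipschitz bound on $\cE_-$ from Proposition~\ref{LowExtensionProp}(iv) yield the stated bound with the weighted $H^3$ norm of $f$ replaced by the unweighted one. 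In the case $\pm=+$, I would instead pull everything back to $\Sigma_+(\delta)$ via $X_+=\cE_\delta(X)$ of Proposition~\ref{extensionProp2} and apply the weighted div-curl estimate Proposition~\ref{LagrVacDivCurlEst} to bound $Y_+ - \uY_+$ in $H^{5/2}(\Sigma_+,\delta,0)$, then pass to the trace at $\psi=0$ via Proposition~\ref{standardWeightedSobNormEstims}(i)--(ii). The Lipschitz bound for $V_+ - \uV_+$ needed along the way is exactly what is proved inline in the proof of Proposition~\ref{NresLipBd}, and the weighted product estimate Proposition~\ref{standardWeightedSobNormEstims}(iv) distributes the weight between the ``extension-difference'' factors (controlled in \emph{negatively}-weighted norms via Proposition~\ref{extensionProp2}(vii)) and the ``original-field'' factors (controlled in more singular, positively-weighted norms via Proposition~\ref{hEstimateProp} and the $V_+$-bound).

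The main obstacle will be bookkeeping for weight indices in the $+$ case: the source terms are products whose factors behave qualitatively differently near the pinch, and the repeated applications of Proposition~\ref{LagrVacDivCurlEst} each permit an increase $m\mapsto m'\ge m$. One must choose $m$ large enough to absorb all these losses and to accommodate the extra singular weight imported when $\uY_+$ itself is estimated on the right-hand side. By Remark~\ref{WeightRemark} the precise value of $m$ is immaterial, so this reduces to a finite chase of weight indices rather than a sharp estimate; once the indices are locked in, the bound on $Y_+ - \uY_+$ takes the form $C\,\lV\xi-\uxi\rV_{\sH^2}\lV f\rV_{H^3(S^1,\delta,m)}$, and combining with the tangential correction yields \eqref{commlipbd}.
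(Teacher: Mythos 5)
Your proposal matches the paper's approach: the published proof is a two-sentence sketch that appeals to Remark~\ref{NrescommDefnRemark}(i) (expressing $[\del_t;\cN_\pm](\xi)f$ via an auxiliary div-curl system) and then mimics the Lipschitz argument of Proposition~\ref{NresLipBd}, which is precisely the representation-by-$Y_\pm$ and difference-system strategy you spell out, including the split between the unweighted treatment for $\pm=-$ and the weighted $\Sigma_+(\delta)$ pullback for $\pm=+$. The bookkeeping of weight indices you flag as the main obstacle is indeed what the paper dismisses via Remark~\ref{WeightRemark}, so your route is correct and essentially the same as the paper's.
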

\begin{proof}
Let us recall Remark~\ref{NrescommDefnRemark} (i). By describing $[\del_t;\cN_\pm](\xi)-[\del_t;\cN_\pm](\uxi)$ in terms of the solution to an associated div-curl system and following along with an argument analogous to the proof of the Lipschitz bound for the map $\cN_+(\xi)$ in Proposition~\ref{NresLipBd}, one verifies the Lipschitz bound \eqref{commlipbd}.
\end{proof}
Finally, let us record the ensuing Lipschitz bound for the map $\xi\mapsto [\del_t;\Nres](\xi)$.
\begin{proposition}\label{commNresLipBound}
For an integer $m\geq0$ the following holds for $\xi$ and $\uxi$ in $\sB^\bbox(\delta)$ with $0<\delta\leq \delta_0$.
\begin{align}
\lV ([\del_t;\Nres](\xi)-[\del_t;\Nres](\uxi) )f\rV_{H^2(S^1)} \leq C \lV \xi-\uxi \rV_{\sH^2} \lV f \rV_{H^3(S^1,\delta,m)}.
\end{align}
\end{proposition}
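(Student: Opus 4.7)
The plan is to reduce the bound immediately to the piecewise Lipschitz estimates already recorded in Lemma~\ref{commcNpmLipBounds}. Observe that by Definition~\ref{DtNCommDefns}, we have the linear decomposition
\begin{align}
[\del_t;\Nres](\xi) - [\del_t;\Nres](\uxi) = \bigl([\del_t;\cN_+](\xi) - [\del_t;\cN_+](\uxi)\bigr) + \bigl([\del_t;\cN_-](\xi) - [\del_t;\cN_-](\uxi)\bigr),
\end{align}
so we do not need to exploit any cancellation between the two Dirichlet-to-Neumann commutators. The cancellation property of Proposition~\ref{CommutatorCancelBd} was critical for proving that $[\del_t;\Nres](\xi)$ preserves regularity, but here we are content to let the operator cost one derivative of $f$ (we go from $H^3$ on the right-hand side to $H^2$ on the left), which is exactly the regularity loss each individual commutator $[\del_t;\cN_\pm](\xi)$ already produces.

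The execution is then a one-line triangle inequality: I apply Lemma~\ref{commcNpmLipBounds} to each of the two terms above. In the $\pm=+$ case, the lemma directly gives
\begin{align}
\lV([\del_t;\cN_+](\xi) - [\del_t;\cN_+](\uxi))f\rV_{H^2(S^1)} \leq C\lV\xi - \uxi\rV_{\sH^2}\lV f\rV_{H^3(S^1,\delta,m)},
\end{align}
for some integer $m \geq 0$, while in the $\pm=-$ case the same lemma furnishes the analogous bound with the unweighted norm $\lV f\rV_{H^3(S^1)}$ on the right, which is in turn dominated by the weighted norm $\lV f\rV_{H^3(S^1,\delta,m)}$ (for any $m\geq 0$) since the weight $M_\delta^m$ is bounded below by a positive constant on $S^1$. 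Summing the two contributions and absorbing the constants yields the claimed inequality.

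The main (and only) subtlety is the choice of the common weight index $m$: the integer produced by the $\pm=+$ case of Lemma~\ref{commcNpmLipBounds} is the one that propagates, and we simply use this same $m$ in the statement of Proposition~\ref{commNresLipBound}. Since both constituent bounds are already established, no further estimate, no new div-curl problem, and no appeal to the layer-potential cancellation identity \eqref{indirectCancFmla2} are required at this stage.
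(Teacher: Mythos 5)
Your proof is correct and takes essentially the same approach as the paper: decompose $[\del_t;\Nres]=[\del_t;\cN_+]+[\del_t;\cN_-]$ and apply Lemma~\ref{commcNpmLipBounds} to each summand, with the observation that the weighted $H^3(S^1,\delta,m)$ norm dominates the unweighted $H^3(S^1)$ norm handling the $\pm=-$ case. The paper's proof is the same triangle-inequality argument, stated in one line.
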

\begin{proof}
This follows from Lemma~\ref{commcNpmLipBounds} and the fact that $[\del_t;\Nres](\xi)=[\del_t;\cN_+](\xi)+[\del_t;\cN_-](\xi)$.
\end{proof}

\section{Breakdown of analyticity}\label{analyticbreakdownsection}

\subsection{Nonexistence of analytic splash--squeezes}

In 2D, we are able to prove that analytic splash--squeeze singularities do not exist by using a well-known result on the level sets of harmonic functions of two variables near a critical point, namely that they are given by finitely many curves intersecting transversally at equal angles. The classical proof of this fact relies on the general existence of conformal maps in 2D which are essentially able to convert the level sets of these functions into the level curves of the real part of $z^n$ for some $n\geq 2$.

\begin{proposition}\label{nonanalytsupportingprop}
Consider a pair of smooth arcs $\gamma_1$ and $\gamma_2$ forming a single glancing intersection at the origin. Fix any $r>0$ such that the disc $\{|x|<r\}$ is divided into four regions by $\gamma=\gamma_1\cup\gamma_2$, and define $\sR$ to be the union of the two regions with cusps. Suppose $\phi$ satisfies
\begin{align}
& & \Delta \phi &= 0 & (x \in \sR) , \\
& & \phi&= 0 & (x \in \gamma) .
\end{align}
Then either $\phi$ is identically zero in $\sR$ or $\phi$ is not analytic at the origin.\footnote{Here, to be analytic at a point means that there exists an extension of the function to a neighborhood of this point such that the function is real analytic on said neighborhood.}
\end{proposition}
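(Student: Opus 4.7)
\medskip

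\noindent\textbf{Proof proposal.} The plan is to argue by contradiction: assume $\phi$ is not identically zero on $\sR$ and is analytic at the origin, and derive a contradiction from the rigid local structure of the zero set of a harmonic function of two variables near a zero. By hypothesis, there is a neighborhood $U$ of $0$ and a real analytic function $\tilde\phi$ on $U$ with $\tilde\phi = \phi$ on $U \cap \sR$. The first step is to upgrade $\tilde\phi$ to a harmonic function on a full neighborhood of $0$: since $\Delta\tilde\phi$ is real analytic on the connected open set $U$ and vanishes on the open subset $U \cap \sR$ (a set with interior), the identity principle for real analytic functions forces $\Delta\tilde\phi \equiv 0$ on $U$. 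In particular, $\tilde\phi$ is harmonic on $U$ and, by continuity, it still vanishes on $(\gamma_1 \cup \gamma_2) \cap U$.

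If $\tilde\phi \equiv 0$ on some ball $B_\rho(0) \subset U$, then $\phi = 0$ on the nonempty open subset $B_\rho(0) \cap \sR$ of each connected component of $\sR$; since $\phi$ is harmonic (hence real analytic) on each component, unique continuation gives $\phi \equiv 0$ on $\sR$, contradicting our assumption. So we may assume $\tilde\phi \not\equiv 0$ near $0$, and expand $\tilde\phi = p_n + O(|x|^{n+1})$, where $p_n$ is the first nonvanishing homogeneous term in the Taylor expansion at the origin. Since $\tilde\phi$ is harmonic, $p_n$ is a nonzero harmonic homogeneous polynomial of degree $n \geq 1$, so it can be written as $p_n(x_1,x_2) = \operatorname{Re}(c z^n)$ for $z = x_1 + i x_2$ and some nonzero $c \in \mathbb{C}$.

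The key ingredient is the classical local normal form for a harmonic function at a zero: the harmonic conjugate $\tilde\phi^*$ exists on a simply connected neighborhood of $0$, and the holomorphic function $F = \tilde\phi + i\tilde\phi^*$ has a zero of order exactly $n$ at $0$, so $F(z) = h(z)^n$ for some holomorphic $h$ with $h(0)=0$ and $h'(0) \neq 0$. Hence $h$ is a local biholomorphism and $\tilde\phi = \operatorname{Re}(h^n)$. Since the zero set of $\operatorname{Re}(w^n)$ in the $w$-plane consists of $n$ straight lines through the origin meeting at equal angles $\pi/n$, and $h$ is conformal at $0$, the zero set of $\tilde\phi$ in a neighborhood of the origin is exactly $n$ smooth arcs through $0$ whose tangent directions at $0$ are pairwise distinct (in fact, equiangular with spacing $\pi/n$).

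Now the contradiction is immediate. The arcs $\gamma_1$ and $\gamma_2$ lie in the zero set of $\tilde\phi$ and pass through the origin with a common tangent direction (by the glancing assumption). Two smooth arcs in the zero set sharing a tangent direction at $0$ must belong to the same branch in the normal form above, hence coincide as germs at $0$. This contradicts the hypothesis that $\gamma_1$ and $\gamma_2$ meet only at the origin, completing the proof. The main nontrivial ingredient is the local normal form $\tilde\phi = \operatorname{Re}(h^n)$ for harmonic functions at a zero; everything else is book-keeping with the identity principle and unique continuation. I do not anticipate a serious obstacle, since the normal form is standard and the only care needed is in passing from the partial hypothesis (analyticity on $\sR$ together with the boundary condition) to a genuine harmonic function on a full neighborhood of the origin, which is handled by the identity theorem argument above.
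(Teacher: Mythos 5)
Your proposal is correct and follows essentially the same route as the paper's proof: extend $\phi$ to a harmonic function on a full neighborhood of the origin via real-analyticity of $\Delta\phi$, then invoke the conformal normal form $\tilde\phi=\operatorname{Re}(h^n)$ to conclude that the local zero set consists of arcs through the origin with pairwise distinct tangent directions, which is incompatible with two distinct tangent arcs $\gamma_1,\gamma_2$ lying in it. Your writeup is somewhat more explicit about the identity-principle and unique-continuation bookkeeping, but the underlying argument is the same.
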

\begin{proof}
Suppose we have such a situation, and that $\phi$ is analytic at the origin. There is then a small disc $D$ about this point onto which $\phi$ can be extended such that $\phi$ is analytic with respect to each variable $x_1$ and $x_2$ independently throughout $D$. Since $\phi$ is analytic in $\sR \cup D$, so is $\Delta \phi$, which is zero in $\sR$. It follows $\Delta\phi = 0$ in $D$, and thus $\phi$ is harmonic on the extended domain $\sR\cup D$.

Now let us note
\begin{align}
\gamma \subset \{x:\phi(x) = 0\}
\end{align}
contains a pair of distinct arcs $\gamma_1'$ and $\gamma_2'$ inside $D$ (on which $\phi$ is harmonic) which make angle zero with one another. It is well-known that this is only possible for such a harmonic function $\phi$ if it is identically zero in $D$. A quick sketch of the proof of this fact proceeds as follows. Let $f(z)$ be holomorphic in $D$ with real part given by $\phi$ and $f(0)=0$. Assuming $f(z)$ is not identically zero, the first nonzero term in its power series is given by $a_n z^n$ for some $n\geq 1$. It follows there is a conformal change of coordinates $w=\sC(z)$ such that $f(w)=z^n$ near the origin. This implies the zero set of $\phi$ consists of $n$ analytic arcs meeting at the origin at equal angles of $2\pi/n$.
\end{proof}

\begin{theorem}\label{nonexist2Dtheorem}
Suppose one has a classical solution to the 2D ideal MHD system \eqref{IdealMHD1}--\eqref{IdealMHD6} such that at time $t_\splash$, the interface forms a single self-intersection point, and that at this point it is self-glancing.\footnote{By self-glancing, we mean it is locally given by a pair of tangent curves which do not cross.}  Then at time $t_\splash$, either the external magnetic field $h$ is identically zero in the vacuum $\cV$ or it is not analytic at the point of intersection.
\end{theorem}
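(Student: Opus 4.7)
The plan is to argue in the contrapositive: assume $h$ is analytic at $p_\splash$ at time $t_\splash$, and deduce that $h \equiv 0$ throughout the vacuum $\cV$. Since $h$ is divergence-free and curl-free on $\cV$, on a small disc $D = \{|x - p_\splash| < r\}$ we may introduce a stream function $\uppsi$ on $D \cap \cV$ with $h = \grad^\perp \uppsi$, and $\uppsi$ is then harmonic there. Because the hypothesis gives an analytic extension of $h$ to all of $D$, analytic continuation of $\grad\cdot h = 0$ and $\grad^\perp\cdot h = 0$ forces these identities to hold throughout $D$; as $D$ is simply connected, $\uppsi$ itself then extends as a single-valued, real-analytic, harmonic function to all of $D$ with $h = \grad^\perp\uppsi$. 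The boundary condition $n\cdot h = 0$ along $\Gamma$ translates to $\del_\tau \uppsi = 0$, so $\uppsi$ is constant along each of the two tangent arcs $\gamma_1, \gamma_2 \subset D$ of $\Gamma$ meeting at $p_\splash$.

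By continuity of $\uppsi$ at $p_\splash$, the constant values on $\gamma_1$ and $\gamma_2$ must both equal $c := \uppsi(p_\splash)$. The function $\uppsi - c$ is then harmonic on $D$, vanishes on $\gamma_1\cup\gamma_2$, and is analytic at $p_\splash$. Taking $\sR$ to be the union of the two vacuum cusps $D\cap\cV$ cut out by $\gamma_1\cup\gamma_2$, we are exactly in the setting of Proposition~\ref{nonanalytsupportingprop} applied to $\phi = \uppsi - c$. The non-analyticity alternative is ruled out by construction, so $\uppsi - c \equiv 0$ on $\sR$, and hence $h = \grad^\perp \uppsi = 0$ throughout $\sR$.

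To propagate $h = 0$ from $\sR$ to all of $\cV$, I use that at time $t_\splash$ every connected component of $\cV$ meets $p_\splash$: immediately before the splash $\cV$ was connected, and the single self-intersection point $p_\splash$ is all that separates it into components, so each post-splash component has $p_\splash$ on its boundary and therefore contains one of the two cusps making up $\sR$. Since $h$ is real-analytic in the interior of each component (as $\grad^\perp$ of a harmonic function), real-analytic unique continuation from the open set where $h$ vanishes inside $\sR$ forces $h \equiv 0$ on that component. Running this over both components yields $h \equiv 0$ on $\cV$, as desired.

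The conceptual content is packaged inside Proposition~\ref{nonanalytsupportingprop}, so the remaining work is largely bookkeeping. The one step I would check carefully is the assertion that $\uppsi$ really extends as a single-valued analytic harmonic function to an entire neighborhood of $p_\splash$: this requires both the analytic extension of $h$ across $\Gamma$ (from the hypothesis) and path-independence of the line integral defining $\uppsi$ on $D$ (from $\grad\cdot h = 0$ on $D$ via analytic continuation). Once this is in hand, matching the boundary constants via continuity at $p_\splash$ is automatic, and the rest is a clean invocation of Proposition~\ref{nonanalytsupportingprop} followed by standard unique continuation.
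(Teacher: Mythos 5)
Your argument is correct and follows the same basic route as the paper: reduce to the stream function $\uppsi$, observe it vanishes on both glancing arcs, invoke Proposition~\ref{nonanalytsupportingprop}, and then propagate the vanishing to all of $\cV$ by real-analytic unique continuation. The paper's own proof simply cites the global stream function from Section~\ref{vacuumchamberwallssection}, which is already normalized to be $0$ on all of $\Gamma$ (and since $\Gamma$ remains connected at the splash moment, the two arcs automatically carry the same constant), so the constant-matching step you carry out via continuity at $p_\splash$ is not needed there; your version is a valid local alternative that avoids invoking the global boundary normalization. The one place you go beyond the paper's two-line proof — checking that both connected components of $\cV$ abut $p_\splash$ so that unique continuation reaches all of $\cV$ — is a genuine gap the paper leaves implicit, and your justification of it (each component of $\cV$ is bounded by one of the two sub-loops of $\Gamma$, both of which pass through $p_\splash$) is sound, though it is cleaner to state it as a static geometric fact about $\Gamma(t_\splash)$ rather than via a dynamical argument about the pre-splash vacuum.
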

\begin{proof}
Following the calculations discussed in Section \ref{vacuumchamberwallssection}, we find that the external magnetic field is necessarily given by $h=\grad^\perp \phi$ for some $\phi$ defined in $\cV$ with $\phi=0$ on $\Gamma$. Given this, the result follows from Proposition \ref{nonanalytsupportingprop}.
\end{proof}

\begin{remark}
\hfill
\begin{enumerate}[(i)]
\item
Note that in a hypothetical splash--squeeze formed in a situation where instead the plasma region consists of more than one connected component, say with free boundary $\Gamma$ containing a pair of distinct closed curves $\Gamma_1$ and $\Gamma_2$ which collide to form the splash, we still find $h=\grad^\perp \phi$, but $\phi$ may take on a different constant value on each $\Gamma_i$. While the proof above does not apply in this scenario, note it is trivial to rule out analyticity of $\phi$ at the splash point in such a scenario.
\item
It appears likely that analytic splash--squeezes are impossible in the 3D ideal MHD system as well. It is worth noting that one can show that the statement analogous to that in Proposition \ref{nonanalytsupportingprop} holds for harmonic functions in three dimensions, but this alone is not enough to prove the analogue of Theorem \ref{nonexist2Dtheorem} for 3D ideal MHD.
\end{enumerate}
\end{remark}

\subsection{Forward-in-time splash--squeeze construction in Sobolev spaces}\label{analyticbreakdownsubsection}

In Theorem~\ref{maintheorem}, we give solutions which which have high Sobolev regularity and end in a splash--squeeze singularity at a later time. Now we will show that there exist solutions which are analytic at time $t=0$ and end in a splash--squeeze singularity at time $t=t_\splash$. Since the splash--squeeze is a fundamental obstruction to the analyticity of the solution at time $t_\splash$, it follows that at some time $t_\star$ in $[0,t_\splash]$, analyticity is lost. Sobolev regularity, however, will persist from the initial moment up to $t_\splash$.

Let us reflect on the idea of adapting the strategy of Theorem~\ref{maintheorem} to demonstrate this without significantly modifying the bulk of our machinery.

Regardless of whether we ``run time forward or backward'', to show analytic breakdown, we must produce a solution for which there exists a time $t_a$ at which the solution is analytic and a time $t_\splash$ at which it forms a splash--squeeze. Since our local existence framework is built only for Sobolev spaces, by itself it does nothing to suggest the existence of such a time $t_a$. The only way to guarantee analyticity at some time would be to argue local existence \emph{starting from some analytic initial datum}. On the other hand, the whole point of a time-reversal argument is essentially to argue local existence starting from $t_\splash$, and yet Theorem~\ref{nonexist2Dtheorem} implies $t_a \neq t_\splash$. Therefore, a naive time-reversal argument using only Sobolev-based local existence machinery is destined to fall short of establishing analytic breakdown.

Instead, without revamping our entire Sobolev-based local existence framework to involve analytic norms, we modify details of our iteration scheme to construct solutions starting from analytic non-splash initial data which terminate in a splash--squeeze at a later time $t_\splash$, losing analyticity at some time $t_\star \leq t_\splash$.

Let us outline a few key points regarding the alterations to our strategy.

\begin{itemize}
\item
We discard $\xi_0$, starting with analytic data $\xi_\init$ with a small but positive pinch $\delta_\init$ at time $t=0$. The initial velocities are selected to ensure the two arcs of the interface come closer in positive time. In fact, we show that if $\xi(t)$ starts out as $\xi_\init$ and its acceleration remains bounded, a splash forms at some time $t_\splash$ which has explicit upper bounds.

\item
We devise an artificial interpretation of quantities appearing in the evolution equation for short times past a moment of splash, i.e. when the domain $\Omega(t)$ starts to overlap itself.

\item We prove Sobolev local existence starting from $\xi_\init$ for times $0\leq t \leq T$, where $T> t_\splash$, meaning the solution achieves the splash--squeeze during its interval of existence.

\end{itemize}
Adapting our local existence argument to be able to construct solutions forward in time which \emph{pass through} a splash state\footnote{We sometimes use the term \emph{splash state} to refer to a state vector $\xi_\splash$ (not time-dependent) for which the corresponding interface exhibits a splash-type self-intersection.} requires the most new ideas. The main technical difficulty is proving the iterates produced by a scheme converge, despite having different splash states and mismatched ``splash parameters'', such as differing splash times. This means we must deal with pairs $\xi(t)$ and $\uxi(t)$ at certain times $t$ when their pinches are of different orders of magnitude, though many of our weighted estimates are only designed to compare objects like $\Nres(\xi)|h(\Gamma)|^2$ and $\Nres(\uxi)|h(\underline\Gamma)|^2$ when $\xi$ and $\uxi$ are in the same $\sB^\bbox(\delta)$ class. We will get around this by applying slight time-shifts to the more delicate quantities before making such comparisons.

\subsubsection{Analytic initial data}
Since the terms in our Lagrangian wave system are currently defined for $\xi$ in the state classes $\sB$, $\sB_\dagger$, etc., which are centered around $\xi_0$, rather than the analytic near-splash initial data $\xi_\init$, we will need to define variants on these classes shortly. First, we select $\xi_\init$ for our forward-in-time construction.

\begin{definition}\label{analytInitdata}
\hfill
\begin{enumerate}[(i)]
\item For each $\delta_\init\in (0,T]$, we produce a corresponding $\bX_\init=\bX_\init(\delta_\init)$, $\Omega_\init=\Omega_\init(\delta_\init)$, and $X_\init=X_\init(\delta_\init)$ by following the construction of Definition~\ref{initialData}, except that the associated $\Gamma_\init$ does not intersect itself. Instead we arrange that it forms a pinch of width $\delta_\init$, where the two pinch points $p_\ell$ and $p_r$, satisfying $|p_\ell-p_r|=\delta_\init$, are given by $X_\init(-\pi/2)=p_\ell$ (to the left) and $X_\init(\pi/2)=p_r$ (to the right), both $p_\ell$ and $p_r$ are contained in the disc $\{|x-(0,2)|<10^{-3}\}$, and the tangents at $p_\ell$ and $p_r$ are vertical. Clearly, we may also arrange that $\bX_\init$ is analytic in $\Sigma$, and
\begin{align}\label{bXdelta0cap}
\sup_{\delta_\init\in(0, T]} \left(\lV \bX_\init(\delta_\init) \rV_{C^2(\Sigma)} +\lV (\grad\bX_\init(\delta_\init))^{-1}\rV_{C^1(\Sigma)}\right)<\infty .
\end{align}
\item We define the Lagrangian initial stream function below, for a constant $\underline \nu_a$ (independent of $\delta_\init$):
\begin{align}\label{vbarphidef}
&& \bvarphi_\init(\theta,\psi) &= -\underline \nu_a(1+\psi)\sin(2\theta) &( (\theta,\psi) \in \Sigma).
\end{align}
By using $\bX_\init$ we then define corresponding $\varphi_\init$, $u_\init$, $\bU_\init$, $U_\init$, $b_\init$, $\bB_\init$, and $B_\init$ analogously to our definitions of $\varphi_0$, $u_0$, $\bU_0$, etc. in Definition~\ref{initialData}.

We define $\bom_\init$, $\bj_\init$, $\dot U^*_\init$, $\dot B^*_\init$, $\xi_\init$, $\xi_{\dagger,\init}$, etc. in the same way the corresponding quantities were defined in Definition~\ref{initialxiData}.

\item Let us define $C_{\max}$ below, finite as a consequence of \eqref{bXdelta0cap}:
\begin{align}\label{Cmaxconstdefn}
C_{\max} = \sup_{\delta_\init\in(0, T]}\left(\lV X_\init(\delta_\init)\rV_{C^2(S^1)}+\lV U_\init(\delta_\init) \rV_{C^1(S^1)}\right).
\end{align}

\end{enumerate}
\end{definition}

\begin{remark}
\hfill
\begin{enumerate}[(i)]

\item To produce satisfactory initial data for our main result, Theorem~\ref{maintheorem2}, we may fix any $\delta_\init$ in $(0,T]$ in the above definition. For example, we may take $\delta_\init=T$. Our need to select $\delta_\init$ bounded in terms of $T$ reflects our need to force a splash to happen before the interval of existence is over. In the proofs of Propositions~\ref{solutionOpBds},~\ref{iterationstep1}, and~\ref{ISclaim8} in particular, we explain how to select $T$ so that the argument closes.

\item 
Similarly as in the proof of Lemma~\ref{separationPrepLem}, we select the universal constant $\underline \nu_a$ in such a way that we ensure $|U_\init(\pm\pi/2)|\geq 2$. Note that as a consequence of the definition of $\bvarphi_\init$ and the fact that $\bX_\init$ is conformal, we have that the velocities at $p_\ell$ and $p_r$ point toward the future formation of a splash, with $U_\init(-\pi/2)=(|U_\init(-\pi/2)|,0)$ and $U_\init(\pi/2)=(-|U_\init(\pi/2)|,0)$. This implies the pinch must decrease, with initial rate $\frac{d\delta_\Gamma}{dt}(0)\leq-4$.

\end{enumerate}
\end{remark}

\begin{definition}
We define the class of admissible splash states $\sH^k_\splash$ by
\begin{align}
      \sH^k_\splash
        = \{
            \xi \in\sH^k :
            \Gamma \ \mbox{has a splash point at } \ p_\splash, \ |p_\splash-(0,2)|<10^{-3},\ \lV \xi - \xi_\init \rV_{\sH^{k-1}}\leq r_1
          \},
\end{align}
\end{definition}

Due to our need to compare iterates with mismatched splash parameters, as discussed earlier, we will sometimes use a reparametrization map $\Theta$, with the intent of ``recalibrating'' a given $X(\theta)$ so that the values of $\theta$ at which it realizes a splash point, say, $\theta_\splash\approx \pi/2$ and $\vartheta_\splash\approx-\pi/2$ with $X(\theta_\splash)=X(\vartheta_\splash)$, are mapped to $\pi/2$ and $-\pi/2$. Additionally, we define a class of equivalent initial data produced by reparametrizing $\xi_\init$ in the $\theta$ argument.

\begin{definition}\label{relabelingDefn}
\begin{enumerate}[(i)]
\hfill
\item
Let us define
\begin{align}\label{IpmDefn}
I_\pm=[\pm\pi/2-10^{-2},\pm\pi/2+10^{-2}].
\end{align}
We fix a map $\Theta$ in $C^\infty(S^1\times I_+ \times I_-)$ taking values in $S^1$ such that for each $\theta$ in $I_+$ and $\vartheta$ in $I_-$, $\Theta(\,\cdot\,;\theta,\vartheta)$ is a diffeomorphism on $S^1$ satisfying
\begin{align}\label{ThetaClosetoId}
&& \lV\Theta(\,\cdot\,;\theta,\vartheta)- \operatorname{id}\rV_{H^{k+2}(S^1)} &\leq \epsilon_2 &(\theta \in I_+, \ \vartheta \in I_-),
\end{align}
for a small constant $\epsilon_2$ and the property
\begin{align}
&&&
\begin{aligned}
\Theta(\pi/2;\theta,\vartheta)&=\theta,\\
\Theta(-\pi/2;\theta,\vartheta)&=\vartheta,
\end{aligned}
&(\theta \in I_+, \ \vartheta \in I_-).
\end{align}
\item For our class of reparametrizations of $\xi_\init$ with respect to $\theta$ we define
\begin{align}
\bm{\Xi}_\init = \{(\theta,\psi)\mapsto\xi_\init(\Theta(\theta;\theta_\splash,\vartheta_\splash),\psi) : \theta_\splash\in I_+, \ \vartheta_\splash\in I_-\}. 
\end{align}
\end{enumerate}
\end{definition}

\subsubsection{Generalization of classes to alternate splash states}

We proceed to define several different classes of state vectors $\xi$, some time-dependent, others not. These are variants on the $\sB$, $\sB^\bbox(\delta)$ type classes defined earlier.

In contrast to our class $\sB$ of time-evolving states, featuring splashes which open up as time goes forward, we now define a collection $\sB^\tosplash$ of states \emph{headed toward} eventual splashes, which we refer to as the \emph{to-splash class.} To deal with time-shifts later, it is helpful to consider negative times as well.
\begin{align}
\sB^\tosplash_\dagger &= \{\xi_\dagger\in C^0([-T,T];\sH^k_\dagger): \sup_{t}\lV \xi_\dagger\rV_{\sH^k_\dagger}\leq M, \ \sup_{t}\lV \xi_\dagger-\xi_{\dagger}(0) \rV_{\sH^{k-1}_\dagger} \leq r_0\}, \\
\sB^\tosplash &= \{\xi\in C^0([-T,T];\sH^k):  \sup_{t}\lV \xi\rV_{\sH^k}\leq M, \ \xi_\dagger \in \sB^\tosplash_\dagger,\ \xi(0)\in\bm{\Xi}_\init, \  X_t=U, \  \lV U\rV_{C^1_{t,\theta}}\leq M\}.
\end{align}

Note that the elements of $\sB^\tosplash$ are initially equal to some reparametrization of $\xi_\init$.
\begin{lemma}\label{tosplashXbd1}
For $\xi$ in $\sB^\tosplash$, for sufficiently small $T$ we have for $\epsilon_1$ of \eqref{Xzerobds} and $C_{\max}$ of \eqref{Cmaxconstdefn}
\begin{align}
&&|\del_\theta X(t,\theta) |& \geq \epsilon_1 /2&& (t\in[-T,T],\ \theta\in S^1),\\
&&\lV X(t)\rV_{C^2(S^1)}+\lV U(t)\rV_{C^1(S^1)}&\leq 2 C_{\max}&&(t\in[-T,T]).%
\end{align}
\end{lemma}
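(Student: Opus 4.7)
The plan is to peel everything back to the initial moment, where $\xi(0)$ is an explicit reparametrization of the analytic datum $\xi_\init$, and then propagate the $t=0$ bounds across $[-T,T]$ using the relation $X_t = U$ together with the structural bounds in the definition of $\sB^\tosplash$. Concretely, I will start by writing $\xi(0)=\xi_\init\circ \Theta$ with $\Theta=\Theta(\,\cdot\,;\theta_\splash,\vartheta_\splash)$ for some $\theta_\splash\in I_+$, $\vartheta_\splash\in I_-$. By Sobolev embedding (valid since $k\ge 4$, so $H^{k+2}(S^1)\hookrightarrow C^2(S^1)$), estimate \eqref{ThetaClosetoId} upgrades to $\|\Theta-\mathrm{id}\|_{C^2(S^1)}\le C\epsilon_2$. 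Combining this with $\|X_\init\|_{C^2}+\|U_\init\|_{C^1}\le C_{\max}$ and the lower bound $|\partial_\theta X_\init|\ge\epsilon_1$ inherited from \eqref{Xzerobds}, a straightforward chain-rule calculation yields $\|X(0)\|_{C^2}+\|U(0)\|_{C^1}\le(1+C\epsilon_2)C_{\max}\le \tfrac{3}{2}C_{\max}$ and $|\partial_\theta X(0,\theta)|\ge(1-C\epsilon_2)\epsilon_1\ge\tfrac{3}{4}\epsilon_1$, provided $\epsilon_2$ is taken sufficiently small, which we may arrange once and for all when fixing $\Theta$.

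Next, I would propagate the bounds for $X$ by direct integration. From $X_t=U$ and $\|U\|_{C^1_{t,\theta}}\le M$ we obtain $|\partial_\theta X(t,\theta)-\partial_\theta X(0,\theta)|\le MT$, which combined with the Step 1 bound gives $|\partial_\theta X(t,\theta)|\ge\tfrac{3}{4}\epsilon_1-MT\ge\tfrac{\epsilon_1}{2}$ for $T$ small. For the $C^2$ norm, I use the Sobolev embedding $H^{k+1}(S^1)\hookrightarrow C^2(S^1)$ (again valid since $k\ge 4$) applied to the bound $\|U(s)\|_{H^{k+1}}\le M$, giving $\sup_s\|U(s)\|_{C^2}\le CM$; then $\|X(t)-X(0)\|_{C^2}\le \int_0^t\|U(s)\|_{C^2}\,ds\le CMT$, so $\|X(t)\|_{C^2}\le\tfrac{3}{2}C_{\max}+CMT\le 2C_{\max}$ for $T$ small.

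The main obstacle, and the one requiring a small idea rather than mere integration, is the $C^1$ bound for $U(t)$. The issue is that the definition of $\sB^\tosplash$ only provides $\|\partial_t U\|_{L^\infty}\le M$, not $\|\partial_t\partial_\theta U\|_{L^\infty}$, so we cannot directly commute $\partial_\theta$ with the time integral of $U_t$. Instead I plan to use the Landau–Kolmogorov/Gagliardo–Nirenberg interpolation inequality
\[
\|f\|_{C^1(S^1)}\le C\|f\|_{L^\infty(S^1)}^{1/2}\|f\|_{C^2(S^1)}^{1/2}
\]
applied to $f=U(t)-U(0)$. The $L^\infty$ factor is bounded by $MT$ using $\|\partial_t U\|_{L^\infty}\le M$; the $C^2$ factor is bounded by $2CM$ using $\|U(s)\|_{H^{k+1}}\le M$ together with Sobolev embedding and the triangle inequality. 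This yields $\|U(t)-U(0)\|_{C^1(S^1)}\le C'M\sqrt{T}$, which is at most $C_{\max}/2$ once $T$ is small enough, giving $\|U(t)\|_{C^1(S^1)}\le\tfrac{3}{2}C_{\max}+\tfrac{1}{2}C_{\max}=2C_{\max}$.

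Collecting the three smallness conditions on $T$ (one for the $\partial_\theta X$ lower bound, one for the $C^2$ bound on $X$, one for the $\sqrt{T}$ interpolation step on $U$) and noting that they depend only on the universal constants $\epsilon_1$, $\epsilon_2$, $C_{\max}$, $M$ and the embedding constant $C$, a single sufficiently small choice of $T$ works uniformly for every $\xi\in\sB^\tosplash$. This completes the proof.
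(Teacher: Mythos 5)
Your proof is correct and follows the same basic route as the paper's own (one-line) argument: pull back to $t=0$ where $\xi(0)$ is a benign reparametrization of $\xi_\init$, then propagate forward by the fundamental theorem of calculus and take $T$ small. Where you go beyond the paper's terse invocation of FTC is in identifying the genuine subtlety: the definition of $\sB^\tosplash$ only controls $\|\partial_t U\|_{L^\infty}$ and $\|\partial_\theta U\|_{L^\infty}$ separately, not the mixed derivative $\partial_t\partial_\theta U$, so the $C^1(S^1)$ bound for $U(t)$ cannot be obtained by simply commuting $\partial_\theta$ past the time integral. Your Landau--Kolmogorov step $\|f\|_{C^1}\le C\|f\|_{L^\infty}^{1/2}\|f\|_{C^2}^{1/2}$ applied to $f=U(t)-U(0)$ — with the $L^\infty$ factor controlled by $MT$ via FTC on $\partial_t U$, and the $C^2$ factor uniformly bounded via $H^{k+1}(S^1)\hookrightarrow C^2(S^1)$ — is exactly the right fix, and the inequality does hold cleanly on $S^1$ with no lower-order correction (the periodicity rules out the boundary effects that spoil the interval version). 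A Sobolev interpolation $L^2$--$H^{k+1}\rightarrow H^2\hookrightarrow C^1$ would work equally well and gives an even better power of $T$, but $\sqrt T$ suffices.

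One small slip in the final lines: you bound $\|X(t)\|_{C^2}$ and $\|U(t)\|_{C^1}$ \emph{separately} by $2C_{\max}$, each starting from the baseline $\tfrac32 C_{\max}$ (which is really a bound on the \emph{sum} at $t=0$), whereas the conclusion you need is a bound on the sum $\|X(t)\|_{C^2}+\|U(t)\|_{C^1}$, not on each term. The repair is cosmetic: add the two perturbation estimates $\|X(t)-X(0)\|_{C^2}\le CMT$ and $\|U(t)-U(0)\|_{C^1}\le C'M\sqrt T$, choose $T$ so that the total perturbation is at most $\tfrac12 C_{\max}$, and then add the $t=0$ bound $\|X(0)\|_{C^2}+\|U(0)\|_{C^1}\le\tfrac32 C_{\max}$ once, not twice. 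Your underlying estimates already give this; only the bookkeeping in the last display needs to be reorganized.
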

\begin{proof}
Recalling our definition of $\xi_\init$, this is guaranteed with the fundamental theorem of calculus by ensuring $T$ is sufficiently small, dependent on $\epsilon_1$, $C_{\max}$, and $M$ alone.
\end{proof}
\begin{remark}
Note at first glance it seems there may be some redundancy between the bound above on $\lV U\rV_{C^1(S^1)}$ and the bound $\lV U \rV_{C^1_{t,\theta}}\leq M$ for $\xi$ in $\sB^\tosplash$. The difference is $C_{\max}$ has already been defined, whereas $M$, which bounds $\lV U_t \rV_{L^\infty(S^1)}$, will later be chosen sufficiently large to close the local existence argument.
\end{remark}

Now seek to construct another class, which, like our original class $\sB$, involves states experiencing the splash at time $t=0$, but in a manner consistent with our forward-in-time argument. Let us define the following generalization of $\sB$ which in particular removes the criterion of having a specified initial state:
\begin{align}\label{Bgendefn}
\sB_{gen} = \{\xi\in C^0([-T,T];\sH^k):  \sup_{t}\lV \xi\rV_{\sH^k}\leq 1.1 M, \ \lV\xi_\dagger-\xi_\dagger(0)\rV_{\sH^{k-1}_\dagger}\leq 1.1 r_0,\  X_t=U, \  \lV U\rV_{C^1_{t,\theta}}\leq 1.1 M\}.
\end{align}

After we show with Proposition~\ref{splashtimeprop} that members of $\sB^\tosplash$ indeed necessarily lead to a splash at a time $t_\splash$, we will sometimes work with an associated time-shifted version of a given $\xi\in\sB^\tosplash$, the \emph{from-splash shift} of $\xi$, i.e. the map $t\mapsto \xi(t+t_\splash)$. This time-evolving state instead experiences the splash at $t=0$, with the lead-in to the splash taking place over negative times.

\begin{definition}\label{startatsplashclassdefn}
For a splash state $\xi_\splash$ in $\sH^k_\splash$ and parameter\footnote{One should think of $s$ as a splash time $t_\splash>0$ for an evolving state $\xi$ which eventually forms a splash.} $s$ in $(0,T]$ we define the \emph{start-at-splash classes} $\sB^\skipto(\xi_\splash;s)$ and $\sB^\skipto(\xi_\splash)$ by the following:
\begin{align}
\sB^\skipto(\xi_\splash;s) &= \{ t\mapsto \xi(t+s) :  \ \xi\in \sB_{gen}, \ \xi(s)=\xi_\splash\},\\
\sB^\skipto(\xi_\splash) &= \bigcup_{s\in(0,T]}\sB^\skipto(\xi_\splash;s) .
\end{align}
\end{definition}
\begin{remark}
Note that for a given element in $\xi$ in $\sB^\skipto(\xi_\splash)$, $\xi(t)$ is defined for times $t$ in $[-T-s,T-s]$ for an associated parameter $s$ in $(0,T]$.
\end{remark}

To provide intuition, let us explain the precise connection between our old class $\sB$ and the start-at-splash class defined above. Consider an evolving state $\xi^\revplay=(\dot U^*,\dot B^*, \bom, \bj, X, U)$. If $\xi^\revplay$ is in $\sB$, it starts in the splash state $\xi_0$ at time $t=0$ and opens up as time advances over the interval $[0,T]$. Now let us define the time-reversed version of $\xi^\revplay$ by
\begin{align}
\xi^\play(t) = (\dot U^*(-t), \dot B^*(-t), \bom(-t), \bj(-t), X(-t), -U(-t)).
\end{align}
From the point of view we take in this section, which uses forward-in-time arguments, $\xi^\play$ starts with a separated interface at initial time $t=-T$ and ends up in the splash state $\ol \xi_0$ at $t=0$, where we define
\begin{align}
\ol \xi_0 = (\dot U^*_0, \dot B^*_0, \bom_0,\bj_0, X_0, -U_0).
\end{align}
In fact, if we define $\tilde\sB$ exactly as we define $\sB$, except with $1.1M$ and $1.1r_0$ in place of $M$ and $r_0$, we find
\begin{align}
\xi^\play \in \sB^\play(\ol \xi_0;T) \implies \xi^\revplay|_{[0,T]}\in \tilde \sB .
\end{align}
All the bounds of the previous sections, proved for $\xi$ in $\sB$, are thus essentially trivial to extend to $\sB^\skipto(\ol \xi_0;T)$, as well as to more general $\sB^\skipto(\xi_\splash)$, which we explain later. Let us emphasize here that in the case of defining maps for the class $\sB^\skipto( \xi_\splash)$, we only apply the definitions of the previous sections up to and including the moment of splash. After the splash, when the domain $\Omega(t)=\bX(t,\Sigma)$ begins to overlap itself, we choose artificial definitions for these maps, to be explained later.


Now we make the following definitions of classes  to replace the classes for fixed-time states we defined in Section~\ref{vacandopmapssection}, namely $\sB^\bbox(\delta)$, $\sB^\bbox$, and $\sB^\wbox$. The definitions are the same aside from the replacement of $\xi_0$ by a more general splash state $\xi_\splash$.

\begin{definition}\label{fixedtimeclasses}
For a splash state $\xi_\splash=(\ldots,X_\splash,U_\splash)$ in $\sH^k_\splash$, we define
\begin{align}
    \sB^\bbox(\delta,\xi_\splash) &= \{\xi \in \sH^k_\gp :\lV \xi \rV_{\sH^k} \leq M, \  \lV X - X_\splash \rV_{H^{k+1}(S^1)} \leq C\delta , \ c\delta \leq \delta_\Gamma \leq C\delta \}, \\
    \sB^\bbox(\xi_\splash) &= \bigcup_{\delta \in [0,\delta_0]}\sB^\bbox(\delta,\xi_\splash) ,\\
    \sB^\wbox(\xi_\splash) &= \bigcup_{\delta \in (0,\delta_0]}\sB^\bbox(\delta,\xi_\splash) .
\end{align}
\end{definition}
\subsubsection{Forward-in-time splash dynamics}

\subsubsection*{Proof of eventual splash in the to-splash class}

We now show that if a certain relation holds between the initial pinch $\delta_0$ and the other constants, which we are easily able to arrange, if a state $\xi$ is in the to-splash class $\sB^\tosplash$, it experiences a splash at a time $t_\splash$, with $0<t_\splash\leq T/2$. Let us note that for such functions, even as $t$ passes through $t_\splash$, and the curve becomes self-intersecting, $\xi(t)$ continues to evolve for a short time.

The following lemma gives us some tools to track the evolution of the pinch $\delta_\Gamma(t)$. 
\begin{lemma}\label{pinchlabellemma}
Let $\xi$ be in $\sB^\tosplash$. Define $\underline T= \underline T(\xi)$ by
\begin{align}\label{uTdefn}
\underline T = \min(\{T\}\cup\{t\in[0,T]:\Gamma(t)\ \textrm{\emph{is self-intersecting}}\}).
\end{align}
For $t$ in the interval $[0,\underline T]$, there is a unique pair $(\theta_*(t),\vartheta_*(t))$ in $I_+\times I_-$ (see \eqref{IpmDefn}) which minimizes
\begin{align}
\cQ(t,\theta,\vartheta)&=|X(t,\theta)-X(t,\vartheta)| .
\end{align}
Furthermore, for $t$ in $[0,\underline T]$,
\begin{align}\label{pinchfmla}
\delta_\Gamma(t) =|X(t,\theta_*(t))-X(t,\vartheta_*(t))|,
\end{align}
and the functions $\theta_*(t)$ and $\vartheta_*(t)$ are in $C^1([0,\underline T])$, satisfying the following for some constant $C$ dependent on $C_{\max}$ and $\epsilon_1$, as in Lemma~\ref{tosplashXbd1}, alone:
\begin{align}
\lV \theta_* \rV_{C^1([0,\underline T])}+\lV \vartheta_* \rV_{C^1([0,\underline T])} \leq C.
\end{align}
\end{lemma}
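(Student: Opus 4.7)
The plan has three steps: localize candidate minimizers to a small neighborhood of the initial pinch pair, prove uniqueness there via a Hessian non-degeneracy analysis, and establish $C^1$ regularity through an implicit function argument that exploits a matching $O(\delta_\Gamma)$ cancellation. For Step~1 (localization): by Lemma~\ref{tosplashXbd1} the map $X(t,\cdot)$ stays $O(T)$-close in $C^2(S^1)$ to $X(0,\theta)=X_\init(\Theta(\theta;\theta_\splash,\vartheta_\splash))$, which, by Definition~\ref{analytInitdata}, attains its pinch only at $(\theta_\splash,\vartheta_\splash)\in I_+\times I_-$ with $|X(0,\theta_\splash)-X(0,\vartheta_\splash)|=\delta_\init\le T$. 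The geometry of $X_\init$ yields a constant $c_0>0$ such that $|X(0,\theta)-X(0,\vartheta)|\ge c_0$ whenever $|\theta-\vartheta|\ge 10^{-1}$ and $(\theta,\vartheta)$ lies outside a small neighborhood $\cN$ of $(\theta_\splash,\vartheta_\splash)$. Since $\|U\|_{L^\infty}\le 2C_{\max}$, pairwise distances shift by at most $4C_{\max}t$; for $T$ small enough the lower bound $c_0/2$ persists on $[0,\underline T]$ while $\min_{\cN}\cQ(t,\cdot,\cdot)$ stays $O(T)\ll c_0$. Hence any global minimizer of $\cQ(t,\cdot,\cdot)$ over $\{|\theta-\vartheta|\ge 10^{-1}\}$ lies in $\cN\subset I_+\times I_-$, establishing \eqref{pinchfmla}.

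For Step~2 (Hessian non-degeneracy and uniqueness), set $F(t,\theta,\vartheta)=|X(t,\theta)-X(t,\vartheta)|^2$. At any interior critical point in $\cN$ the chord is normal to both tangents, and since the tangents are nearly parallel at the pinch one has $\del_\theta X\cdot\del_\vartheta X=\varepsilon|X_\theta||X_\vartheta|$ with $\varepsilon=\pm 1$, yielding
\[
\nabla^2_{(\theta,\vartheta)}F = 2\begin{pmatrix}|X_\theta|^2 & -\varepsilon|X_\theta||X_\vartheta| \\ -\varepsilon|X_\theta||X_\vartheta| & |X_\vartheta|^2\end{pmatrix}+O(\delta_\Gamma),
\]
where the $O(\delta_\Gamma)$ correction is diagonal with entries given by the signed curvatures of $\Gamma$ at $X(\theta_*),X(\vartheta_*)$ times $|X_\theta|^2,|X_\vartheta|^2$. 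The leading matrix is positive semidefinite with a one-dimensional null direction corresponding to tangential sliding of both labels, while the curvature correction contributes a positive eigenvalue in that direction, inherited from the uniform curvature of $X_\init$ at $\pm\pi/2$ and the outward orientation of the pinch. Consequently $\det\nabla^2 F\ge c\,\delta_\Gamma(t)>0$ on $\cN$ for $t<\underline T$, giving strict local convexity of $F(t,\cdot,\cdot)$ on $\cN$ and hence uniqueness of the critical point.

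For Step~3 ($C^1$ regularity), differentiating $\nabla F(t,\theta_*(t),\vartheta_*(t))=0$ in $t$ yields $(\dot\theta_*,\dot\vartheta_*)=-(\nabla^2 F)^{-1}\del_t\nabla F$. Although $\|(\nabla^2 F)^{-1}\|\sim\delta_\Gamma^{-1}$, the components of $\del_t\nabla F$ at the critical point take the form
\[
2\bigl(U(\theta)-U(\vartheta)\bigr)\cdot\del_\theta X(\theta)+2\bigl(X(\theta)-X(\vartheta)\bigr)\cdot\del_\theta U(\theta),
\]
together with the $\vartheta$-analogue. Projected onto the Hessian null direction, the first summand vanishes at leading order because $U$ is horizontal and $\del_\theta X$ vertical at the pinch (by the design of $\xi_\init$ in Definition~\ref{analytInitdata}), leaving an $O(\delta_\Gamma)$ remainder; the second summand is manifestly $O(\delta_\Gamma)\|U\|_{C^1}$. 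The two $O(\delta_\Gamma)$ factors exactly match the Hessian degeneracy, so $(\dot\theta_*,\dot\vartheta_*)$ stays uniformly bounded in terms of $\|X\|_{C^2}$, $\|U\|_{C^1}$, and the curvature lower bound at the pinch, giving the claimed $\|\theta_*\|_{C^1([0,\underline T])}+\|\vartheta_*\|_{C^1([0,\underline T])}\le C(C_{\max},\epsilon_1)$.

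The main difficulty is precisely that $\nabla^2 F$ degenerates at rate $\delta_\Gamma\to 0$ as $t\to\underline T$, so a naive implicit function theorem cannot yield bounds uniform in the pinch. The proof hinges on the $O(\delta_\Gamma)$ cancellation in Step~3, which depends essentially on the initial-data alignment (horizontal velocities and vertical tangents at the pinch built into $\xi_\init$) and its preservation by short-time evolution via Lemma~\ref{tosplashXbd1}.
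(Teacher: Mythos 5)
Your framework---localize to $I_+\times I_-$, analyze the Hessian of $F=|X(\theta)-X(\vartheta)|^2$, apply the implicit function theorem---tracks the paper's (terse) route and correctly identifies the central subtlety: $\det\nabla^2 F$ at the critical pair is only of size $\delta_\Gamma(t)$, so $(\nabla^2 F)^{-1}$ has norm $\sim\delta_\Gamma^{-1}$ and a compensating $O(\delta_\Gamma)$ cancellation in $\del_t\nabla F$ is mandatory. But the cancellation you give does not hold uniformly on $[0,\underline T]$. You claim $2(U(\theta_*)-U(\vartheta_*))\cdot\del_\theta X(\theta_*)$ is $O(\delta_\Gamma)$ ``because $U$ is horizontal and $\del_\theta X$ vertical at the pinch.'' That alignment is exact only at $t=0$; for $t>0$ both the velocity at the moving pinch label and the tangent direction there rotate at rates bounded by $\|U\|_{C^1_{t,\theta}}\le M$, so that dot product is generically $O(t)$, not $O(\delta_\Gamma(t))$. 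Since $\delta_\Gamma(t)\approx\delta_\init-\nu_\init t$ decreases to $0$ while $t$ grows toward $t_\splash>0$, the ratio $t/\delta_\Gamma(t)$ blows up near $\underline T$, and the bound you derive on $(\dot\theta_*,\dot\vartheta_*)$ degenerates precisely where the $C^1$ control is needed.

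The cancellation that actually closes the estimate is structural and holds for any velocity field, with no appeal to $\xi_\init$. At any critical pair of $\cQ(t,\cdot,\cdot)$ the two tangents are orthogonal to the chord and hence, in 2D, parallel; write $\del_\theta X(t,\theta_*)=p\,e$ and $\del_\theta X(t,\vartheta_*)=\varepsilon q\,e$ with $e$ a unit vector, $p,q>0$, $\varepsilon=\pm1$. The null direction of the leading Hessian $2\bigl(\begin{smallmatrix} p^2 & -\varepsilon pq \\ -\varepsilon pq & q^2\end{smallmatrix}\bigr)$ is $(q,\varepsilon p)$---not $(1,\varepsilon)$ unless $p=q$---and projecting the dangerous part of $\del_t\nabla F$ onto this direction gives
\[
2\bigl(U(\theta_*)-U(\vartheta_*)\bigr)\cdot\bigl[\,q\,\del_\theta X(t,\theta_*)-\varepsilon p\,\del_\theta X(t,\vartheta_*)\,\bigr]
=2\bigl(U(\theta_*)-U(\vartheta_*)\bigr)\cdot\bigl(pq\,e-pq\,e\bigr)=0,
\]
an exact identity, with the remaining contributions $O(\delta_\Gamma)$ exactly as you computed for the second summand. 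Combined with $p,q\ge\epsilon_1/2$ and the $C^2$ control from Lemma~\ref{tosplashXbd1}, this yields $(\dot\theta_*,\dot\vartheta_*)=O(1)$ with constant depending only on $C_{\max}$ and $\epsilon_1$; substituting this identity for your initial-data alignment argument repairs Step~3.
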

\begin{proof}
The existence and uniqueness of $\theta_*(t)$ and $\vartheta_*(t)$ can be observed by verifying the fact that the curvature of $\Gamma(t)$ remains bounded below, due to closeness of the parametrization $X$ to $X_\init$, by choice of $T$. The identity \eqref{pinchfmla} is immediate, and one verifies $\theta_*(t)$ and $\vartheta_*(t)$ are in $C^1([0,\underline T])$ with the claimed bound by using the implicit function theorem.

\end{proof}

\begin{corollary}\label{kingeolemma1}
Given $\xi$ in $\sB^\tosplash$, for $\underline T$ and $\vartheta_*(t)$ as in Lemma~\ref{pinchlabellemma}, for $t$ in $[0,\underline T]$ let us define
\begin{align}
\bn(t)&=n(t,X(t,\vartheta_*(t)).
\end{align}
Then
\begin{align}\label{deltaspeedfmla}
\frac{d\delta_\Gamma}{dt}(t) = \big(U(t,\theta_*(t))-U(t,\vartheta_*(t))\big) \cdot \bn(t),
\end{align}
Moreover, for a constant $C$ dependent on $C_{\max}$ and $\epsilon_1$ alone,
\begin{align}
\lV \delta_\Gamma \rV_{C^1([0,\underline T])}+\lV\bn\rV_{C^1([0,\underline T])} \leq C,& \label{deltaspeedbd1}\\
\sup_{t\in[0,\underline T]}\left |\frac{d^2 \delta_\Gamma}{dt^2}(t)\right | \leq C M .&\label{deltaspeedbd2}
\end{align}
Additionally,
\begin{align}\label{deltaspeedbd3}
\frac{d\delta_\Gamma}{dt}(0) \leq -4.
\end{align}
\end{corollary}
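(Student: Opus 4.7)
My plan is to treat $\delta_\Gamma(t)$ as the value of the function $\cQ(t,\theta,\vartheta)=|X(t,\theta)-X(t,\vartheta)|$ at the minimizer $(\theta_*(t),\vartheta_*(t))$ supplied by Lemma~\ref{pinchlabellemma}, and to apply the envelope-theorem principle. Writing $\delta_\Gamma(t)=\cQ(t,\theta_*(t),\vartheta_*(t))$ and using that $\theta_*,\vartheta_*\in C^1([0,\underline T])$ minimize $\cQ$, we get $\del_\theta\cQ|_*=\del_\vartheta\cQ|_*=0$, so that
\begin{align}
\tfrac{d\delta_\Gamma}{dt}
= \del_t\cQ\big|_{\theta_*,\vartheta_*}
= \tfrac{X(\theta_*)-X(\vartheta_*)}{|X(\theta_*)-X(\vartheta_*)|}\cdot \big(U(\theta_*)-U(\vartheta_*)\big).
\end{align}
The first-order conditions for the minimization also say that $X(\theta_*)-X(\vartheta_*)$ is orthogonal to $\del_\theta X$ at both $\theta_*$ and $\vartheta_*$: it is a common normal segment between the two arcs across the pinch. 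By the geometry of the near-pinch configuration (the two arcs face each other across the vacuum) this unit normal, pointing from the $\vartheta_*$-side to the $\theta_*$-side, coincides with the outward-to-$\Omega$ normal at $X(\vartheta_*)$, i.e. $\bn(t)$. This identifies the unit vector above with $\bn(t)$ and yields \eqref{deltaspeedfmla}.

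For the $C^1$ bounds in \eqref{deltaspeedbd1}: Lemma~\ref{tosplashXbd1} gives $\|X\|_{C^2(S^1)}+\|U\|_{C^1(S^1)}\le 2C_{\max}$ uniformly in $t$, Lemma~\ref{pinchlabellemma} gives $\|\theta_*\|_{C^1}+\|\vartheta_*\|_{C^1}\le C$, and $|X_\theta|\ge\epsilon_1/2$ prevents singularities in $\bn=X_\theta^\perp/|X_\theta|$. Plugging \eqref{deltaspeedfmla} into the chain rule gives $\|\delta_\Gamma\|_{C^1}\le C(C_{\max},\epsilon_1)$. For $\bn(t)=n(t,X(t,\vartheta_*(t)))$, writing $\bn_t=n_t+n_\theta\,\dot\vartheta_*$ and expanding $n_t=X_{\theta t}^\perp/|X_\theta|-X_\theta^\perp(X_\theta\cdot X_{\theta t})/|X_\theta|^3$ together with $X_{\theta t}=U_\theta$, we see $\bn_t$ is controlled by $\|U\|_{C^1}+\|X\|_{C^2}+\|\vartheta_*\|_{C^1}\le C(C_{\max},\epsilon_1)$, so $\|\bn\|_{C^1}\le C$.

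For \eqref{deltaspeedbd2}, I differentiate \eqref{deltaspeedfmla} once more. The terms involving $U_t(\theta_*)$ and $U_t(\vartheta_*)$ require the $\|U\|_{C^1_{t,\theta}}\le M$ bound built into the definition of $\sB^\tosplash$, which is what brings the factor $M$ into the estimate; all other quantities ($\dot\theta_*,\dot\vartheta_*,U_\theta,\bn,\bn_t$) have already been bounded by $C(C_{\max},\epsilon_1)$. Combining, $|\tfrac{d^2\delta_\Gamma}{dt^2}|\le CM$. Finally, \eqref{deltaspeedbd3} follows from evaluation at $t=0$: for any element of $\bm{\Xi}_\init$ the minimizers $(\theta_*(0),\vartheta_*(0))=(\theta_\splash,\vartheta_\splash)$ correspond to the physical pinch points $p_r=X_\init(\pi/2)$ and $p_\ell=X_\init(-\pi/2)$, at which Definition~\ref{analytInitdata} and its accompanying remark arrange vertical tangents, $\bn(0)=e_1$, $U_\init(\pi/2)=(-|U_\init(\pi/2)|,0)$ and $U_\init(-\pi/2)=(|U_\init(-\pi/2)|,0)$ with $|U_\init(\pm\pi/2)|\ge 2$; substituting into \eqref{deltaspeedfmla} gives $\tfrac{d\delta_\Gamma}{dt}(0)=-|U_\init(\pi/2)|-|U_\init(-\pi/2)|\le -4$.

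The only point that needs slight care is the orientation identification of $\tfrac{X(\theta_*)-X(\vartheta_*)}{|X(\theta_*)-X(\vartheta_*)|}$ with $\bn(t)=n(t,X(t,\vartheta_*(t)))$; this should follow unambiguously from the fact that near a pinch the gap lies inside the vacuum, so the outward-to-$\Omega$ normal at $X(\vartheta_*)$ points across the gap toward $X(\theta_*)$, combined with the closeness of $X(t)$ to $X_\init$ (forced by $T$ small). Apart from this bookkeeping, the argument is essentially a differentiation of the identity $\delta_\Gamma(t)=\cQ(t,\theta_*(t),\vartheta_*(t))$ combined with the minimization identities.
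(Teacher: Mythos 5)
Your argument reproduces the paper's (terse) proof in full detail: you differentiate the identity \eqref{pinchfmla} and exploit the first-order optimality of $(\theta_*,\vartheta_*)$ (the envelope-theorem cancellation) to reduce the total time derivative to the unit-chord direction dotted with $U(\theta_*)-U(\vartheta_*)$, then identify that unit vector with $\bn(t)$ from the orthogonality it inherits and the geometry of the gap. The bounds \eqref{deltaspeedbd1}--\eqref{deltaspeedbd2} then come, exactly as you say, from Lemma~\ref{tosplashXbd1}, Lemma~\ref{pinchlabellemma}, and the $\lV U\rV_{C^1_{t,\theta}}\le M$ requirement built into $\sB^\tosplash$; the $M$-dependence in \eqref{deltaspeedbd2} enters only through $\del_t U$, as you observe. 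This is the same route as the paper.

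One small bookkeeping slip in your last paragraph: it is not quite true that $(\theta_*(0),\vartheta_*(0))=(\theta_\splash,\vartheta_\splash)$. Since $\xi(0)=\xi_\init\circ\tilde\theta$ with $\tilde\theta(\pi/2)=\theta_\splash$ and $\tilde\theta(-\pi/2)=\vartheta_\splash$, the interface is $X(0,\theta)=X_\init(\tilde\theta(\theta))$, and the pinch of $X_\init$ is realized at $\pm\pi/2$, so the minimizers are $\theta_*(0)=\tilde\theta^{-1}(\pi/2)$ and $\vartheta_*(0)=\tilde\theta^{-1}(-\pi/2)$, which are close to $\pm\pi/2$ but not in general equal to $\theta_\splash,\vartheta_\splash$. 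This does not affect the conclusion: what you actually use is $U(0,\theta_*(0))=U_\init(\tilde\theta(\theta_*(0)))=U_\init(\pi/2)$ and $U(0,\vartheta_*(0))=U_\init(-\pi/2)$, together with $\bn(0)=e_1$, and those identifications are correct, so $\frac{d\delta_\Gamma}{dt}(0)=-|U_\init(\pi/2)|-|U_\init(-\pi/2)|\le -4$ stands.
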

\begin{proof}
By using Lemma~\ref{pinchlabellemma}, we obtain the formula \eqref{deltaspeedfmla} from \eqref{pinchfmla}, followed by the bounds \eqref{deltaspeedbd1} and \eqref{deltaspeedbd2}. The bound \eqref{deltaspeedbd3} follows from Definition~\ref{analytInitdata}.
\end{proof}
Now, with Proposition~\ref{splashtimeprop}, for $\xi$ in $\sB^\tosplash$ we give explicit estimates for the pinch $\delta_\Gamma(t)$ as it changes in time and deduce the interface must form a splash at a time $t_\splash$ lying in a certain window.




\begin{proposition}\label{splashtimeprop}
Consider $\xi$ in $\sB^\tosplash$. For $T$ sufficiently small, the following holds.

\begin{enumerate}[(i)]
\item Setting $\nu_\init=-\frac{d\delta_\Gamma}{dt}(0)$, we have the following, for $t$ in $[0,\underline T]$, $\underline T$ as in Lemma~\ref{pinchlabellemma}, and a constant $C$ depending on $C_{\max}$ and $\epsilon_1$ alone:
\begin{align}\label{pinchevolutionbound}
 \delta_\init-\nu_\init t-C M t^2\leq \delta_\Gamma(t)
\leq \delta_\init-\nu_\init t + C M t^2 .
\end{align}

\item There is a time $t_\splash$ with $0<t_\splash \leq T/2$ such that $\delta_\Gamma(t_\splash)=0$, so that in fact the state $\xi(t)$ exhibits a splash at $t=t_\splash$.
\end{enumerate}
\end{proposition}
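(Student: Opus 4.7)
The plan is to treat $\delta_\Gamma(t)$ as a real-valued function on $[0,\underline T]$ and analyze it via a second-order Taylor expansion at $t=0$, since Corollary~\ref{kingeolemma1} has already established that $\delta_\Gamma\in C^1([0,\underline T])$ with initial velocity $-\nu_\init$ and uniform bound $|\frac{d^2\delta_\Gamma}{dt^2}|\le CM$. All of the qualitative information about $\xi$ needed to drive the splash is thus encoded into these two data: the initial closing speed $\nu_\init\ge 4$ coming from \eqref{deltaspeedbd3}, and the quadratic error $CMt^2$ produced by bounded acceleration.

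For part (i), I would apply Taylor's theorem with integral remainder:
\begin{align}
\delta_\Gamma(t)=\delta_\init-\nu_\init t+\int_0^t(t-s)\,\frac{d^2\delta_\Gamma}{ds^2}(s)\,ds\qquad(t\in[0,\underline T]).
\end{align}
Using \eqref{deltaspeedbd2} to estimate the remainder by $\tfrac{1}{2}CMt^2$ then yields the two-sided bound claimed in (i), possibly after enlarging the constant $C$.

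For part (ii), I would combine (i) with the universal lower bound $\nu_\init\ge 4$ from \eqref{deltaspeedbd3} and the constraint $\delta_\init\le T$ from Definition~\ref{analytInitdata}(i). The upper half of (i) then gives
\begin{align}
\delta_\Gamma(t)\le T-4t+CMt^2\qquad(t\in[0,\underline T]).
\end{align}
Choosing $T$ small enough that $CMT\le 1$ makes $CMt^2\le t$ for $t\in[0,T]$, so the right-hand side is bounded by $T-3t$, which is nonpositive at $t=T/3$. Because $\delta_\Gamma(t)>0$ for every $t\in[0,\underline T)$ (by definition of $\underline T$ as the first time of self-intersection, together with the observation that $\delta_\Gamma=0$ if and only if $\Gamma$ self-intersects), this upper bound forces $\underline T\le T/3\le T/2$, and in particular $\underline T<T$. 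Setting $t_\splash=\underline T$ and invoking continuity of $\delta_\Gamma$ at $\underline T$ then gives $\delta_\Gamma(t_\splash)=0$, with $t_\splash>0$ automatic since $\delta_\Gamma(0)=\delta_\init>0$.

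There is no substantial obstacle: the quantitative derivative bounds in Corollary~\ref{kingeolemma1} do all the heavy lifting, and the proposition reduces to elementary Taylor-expansion bookkeeping together with the initial-data calibration $\nu_\init\ge 4$ and $\delta_\init\le T$. The only point requiring a small amount of care is guaranteeing $t_\splash\le\underline T$ (as opposed to $t_\splash$ lying past a hypothetical earlier self-intersection), but this is immediate from the equivalence between $\delta_\Gamma(t)=0$ and self-intersection of $\Gamma(t)$ coming from Definition~\ref{PinchDefns}.
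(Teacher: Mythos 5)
Your proof is correct and follows essentially the same route as the paper: Taylor expansion from Corollary~\ref{kingeolemma1} yields (i), and then the parabolic upper bound together with $\nu_\init\geq 4$ and $\delta_\init\leq T$ forces $\delta_\Gamma$ to vanish before $T/2$ once $T$ is small. The only differences are cosmetic—you linearize the quadratic term via $CMt^2\leq t$ under $CMT\leq 1$ whereas the paper invokes the quadratic formula directly, and you spell out explicitly (via $\underline T$ and the equivalence $\delta_\Gamma=0\Leftrightarrow\Gamma$ self-intersecting from Definition~\ref{PinchDefns}) why the zero of the bounding parabola forces an actual zero of $\delta_\Gamma$, a step the paper leaves implicit.
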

\begin{proof}
Corollary~\ref{kingeolemma1} readily implies (i). To get (ii), note that if the quadratic in $t$ in the upper bound from \eqref{pinchevolutionbound} realizes a zero on the interval $[0,T]$, so must $\delta_\Gamma(t)$. We now proceed to bound this zero above.

Taking the constant $C$ from \eqref{pinchevolutionbound}, assuming $T\leq (C_{\max})^2/(2CM)$, so that $\delta_\init\leq T$ implies
\begin{align}
\varepsilon=\frac{4 CM\delta_\init }{\nu^2_{\init}} \leq \half,
\end{align}
we find the first zero of the quadratic occurs at
\begin{align}\label{splashtimebounds}
\frac{ \nu_\init-\sqrt{\nu^2_\init- 4C M\delta_\init}}{2C M}
=\frac{\nu_\init}{2 CM} \left(1-\sqrt{1-\varepsilon} \right)
\leq \frac{\nu_\init}{2 CM} \varepsilon
= 2\frac{\delta_\init}{\nu_\init}
\leq \frac{T}{2}.
\end{align}
In the last step above, we used that $\delta_\init\leq T$ and $\nu_\init\geq 4$, by \eqref{deltaspeedbd3}. Thus, we get (ii).
\end{proof}

As a consequence of Proposition~\ref{splashtimeprop} and our choice of initial data we get the following.
\begin{corollary}\label{splashguaranteeprop}
Suppose $\xi$ is in $\sB^\tosplash$. For $T$ sufficiently small, at the time $t_\splash$ at which $\xi$ splashes, given by Proposition~\ref{splashtimeprop}, we have the splash state $\xi(t_\splash)=\xi_\splash$ is in $\sH^k_\splash$, and
\begin{align}\label{SplashClassEvo}
&& \xi(t)&\in \sB^\bbox(t_\splash-t,\xi_\splash) & (t\in [0,t_\splash]).
\end{align}
\end{corollary}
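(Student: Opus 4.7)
The plan is to verify the two claims in turn, using the $C^2$-control of $\delta_\Gamma$ obtained in Corollary~\ref{kingeolemma1}, the uniform bounds built into $\sB^\tosplash$, and the explicit splash-time estimate from Proposition~\ref{splashtimeprop}. Throughout, $T$ will be chosen small enough depending on $M$, $C_{\max}$, $\epsilon_1$, $\epsilon_2$, $r_0$, $r_1$.

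\emph{Step 1 (splash state lies in $\sH^k_\splash$).} Since $\xi\in C^0([-T,T];\sH^k)$, we have $\xi(t_\splash)\in\sH^k$, and by Proposition~\ref{splashtimeprop}(ii) the interface $\Gamma(t_\splash)$ self-intersects at a single point $p_\splash = X(t_\splash,\theta_*(t_\splash)) = X(t_\splash,\vartheta_*(t_\splash))$, with $\theta_*,\vartheta_*$ tracked by Lemma~\ref{pinchlabellemma}. By Definition~\ref{analytInitdata}(i), the initial pinch points satisfy $|p_\ell-(0,2)|,|p_r-(0,2)|<10^{-3}$ with uniform margin, and since $|X(t,\theta)-X_\init(\theta)|\le \int_0^t |U|\,d\tau\le tM\le TM/2$ while $t_\splash\le T/2$, the inclusion $|p_\splash-(0,2)|<10^{-3}$ follows for $T$ small. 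For the closeness bound, decompose
\[
\xi(t_\splash)-\xi_\init \;=\; \bigl(\xi(t_\splash)-\xi(0)\bigr) \;+\; \bigl(\xi(0)-\xi_\init\bigr).
\]
The first summand is bounded in $\sH^{k-1}$ by $r_0$ via the definition of $\sB^\tosplash$; the second is the difference between $\xi_\init$ and a $\theta$-reparametrization $\xi_\init(\Theta(\cdot;\theta_\splash^0,\vartheta_\splash^0),\cdot)$ lying in $\bm\Xi_\init$, which is controlled by $C\epsilon_2 \|\xi_\init\|_{\sH^k}$ thanks to \eqref{ThetaClosetoId} and a standard change-of-variable estimate. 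Taking $\epsilon_2$ and $r_0$ small enough gives $\|\xi(t_\splash)-\xi_\init\|_{\sH^{k-1}}\le r_1$, so $\xi_\splash\in\sH^k_\splash$.

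\emph{Step 2 (containment in $\sB^\bbox(t_\splash-t,\xi_\splash)$).} Fix $t\in[0,t_\splash]$. The norm bound $\|\xi(t)\|_{\sH^k}\le M$ is built into $\sB^\tosplash$. That $X(t)\in H^{k+2}_\gp$ follows from Lemma~\ref{tosplashXbd1} combined with a continuity argument for the glancing functional $\cG$: $X_\init$ is $C^1$-close to $X_0$ for $\delta_\init\le T$ small (by construction), $X(t)$ is $C^2$-close to $X_\init$ for $T$ small, and $\cG$ is continuous on $C^2(S^1)$, so $\cG(X(t))\ge \tfrac12 C^0_\gp$. The $H^{k+1}$-proximity of $X(t)$ to $X_\splash$ follows from $X_t=U$ by integrating:
\[
\|X(t)-X_\splash\|_{H^{k+1}(S^1)} \;\le\; \int_t^{t_\splash}\|U(\tau)\|_{H^{k+1}(S^1)}\,d\tau \;\le\; M(t_\splash-t).
\]

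\emph{Step 3 (linear two-sided pinch bound).} This is the main analytic point. From Corollary~\ref{kingeolemma1} we have $\tfrac{d\delta_\Gamma}{dt}(0)=-\nu_\init$ with $\nu_\init\in[4,C']$, and $\|\tfrac{d^2\delta_\Gamma}{dt^2}\|_{L^\infty[0,\underline T]}\le CM$. Taylor expansion at $t=0$ gives, for $T$ small,
\[
\frac{d\delta_\Gamma}{dt}(t) \in \bigl[-\nu_\init-CMt,\;-\nu_\init+CMt\bigr] \subset \bigl[-2\nu_\init,\;-\nu_\init/2\bigr]
\qquad (t\in[0,T/2]),
\]
so $\tfrac{d\delta_\Gamma}{dt}$ stays uniformly negative on $[0,t_\splash]$. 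Since $\delta_\Gamma(t_\splash)=0$, integrating backward from $t_\splash$ yields
\[
\tfrac{\nu_\init}{2}(t_\splash-t) \;\le\; \delta_\Gamma(t) \;=\; -\int_t^{t_\splash}\tfrac{d\delta_\Gamma}{dt}(\tau)\,d\tau \;\le\; 2\nu_\init(t_\splash-t),
\]
which is exactly $c(t_\splash-t)\le\delta_\Gamma(t)\le C(t_\splash-t)$ with $c,C$ uniform in $\delta_\init$. Combining Steps 2 and 3 gives $\xi(t)\in\sB^\bbox(t_\splash-t,\xi_\splash)$.

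The main obstacle is Step 3: the bound \eqref{pinchevolutionbound} from Proposition~\ref{splashtimeprop} is quadratic in $t$ and by itself does not give a linear lower bound on $\delta_\Gamma(t)$ in terms of $t_\splash-t$ (it could in principle degenerate as $t\to t_\splash$). Promoting it to a two-sided linear estimate requires the uniform lower bound $\nu_\init\ge 4$ on the initial closing speed from Definition~\ref{analytInitdata}, the $C^2$-control of $\delta_\Gamma$, and smallness of $T$, which together force $\tfrac{d\delta_\Gamma}{dt}$ to remain uniformly negative on the entire interval $[0,t_\splash]$.
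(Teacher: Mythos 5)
Your proposal is correct and takes essentially the same route as the paper: use Proposition~\ref{splashtimeprop} for the splash time, a short verification that the splash state lies in $\sH^k_\splash$, and a derivative-based argument (via Corollary~\ref{kingeolemma1}) to promote the closing-speed control to a two-sided linear bound $c(t_\splash-t)\le\delta_\Gamma(t)\le C(t_\splash-t)$, which is what the paper's reference to ``a similar argument to the proof of Proposition~\ref{dcXlemma}'' is pointing at.

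One minor imprecision in Step~1 is worth fixing. The definition of $\sB^\tosplash$ only gives $\lV \xi_\dagger(t)-\xi_\dagger(0)\rV_{\sH^{k-1}_\dagger}\le r_0$ for the truncated vector; it does not directly bound $\lV \xi(t_\splash)-\xi(0)\rV_{\sH^{k-1}}$ by $r_0$, since the $\sH^{k-1}$-norm also carries $\lV X(t_\splash)-X(0)\rV_{H^{k+1}}$ and $\lV U(t_\splash)-U(0)\rV_{H^k}$. The $X$-component is handled exactly as you do for the $X_\splash$-proximity, via $X_t=U$ and $\lV U\rV_{H^{k+1}}\le M$, giving $Mt_\splash$. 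For the $U$-component you cannot simply integrate $U_t$ in $H^k$ (the class only controls $\lV U\rV_{C^1_{t,\theta}}$), but you can interpolate between $\lV U(t_\splash)-U(0)\rV_{L^\infty}\le Mt_\splash$ and the uniform bound $\lV U(t)\rV_{H^{k+1}}\le M$ to get a bound that vanishes as $T\to 0$. With that fix, Step~1 goes through, and the rest of the argument is correct.
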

\begin{proof}
Using Proposition~\ref{splashtimeprop} to guarantee the splash state $\xi_\splash=(\ldots,X_\splash,U_\splash)$ occurs before time $T/2$, as long as $T$ is sufficiently small, it is not hard to verify $\xi_\splash\in \sH^k_\splash$. One uses a similar argument to the proof of \eqref{Xshapetbound} of Proposition~\ref{dcXlemma} to verify the closeness of $X_\splash$ to $X$ relative to $t_\splash-t$.
\end{proof}
\begin{remark}
Corollary~\ref{splashguaranteeprop} plays the role of Proposition~\ref{xiInPinchRanget} in our new setting for the forward-in-time existence argument.
\end{remark}
\subsubsection{Splash-centering for the to-splash class}

For $\xi$ in $\sB^\tosplash$ with a corresponding splash time $t_\splash$, we previously mentioned the corresponding from-splash shift given by the map $t\mapsto \xi(t+t_\splash)$, which experiences the splash at $t=0$. A pair of time-shifted evolving states which both experience the splash at time $t=0$ is much more natural to compare than a pair of objects experiencing splashes at different times.

The same is true regarding values of $\theta$ corresponding to splash points. 
Using the map $\Theta$ from Definition~\ref{relabelingDefn}, we may reparametrize $X$ so that the splash point is realized at $\theta=\pi/2$ and $\theta=-\pi/2$ instead of some arbitrary pair of labels $\theta_\splash$ and $\vartheta_\splash$ in $S^1$. To do so requires the lemma below, which follows easily from Lemma~\ref{pinchlabellemma}.
\begin{lemma}
Consider $\xi$ in $\sB^\tosplash$ with splash time $t_\splash$ and splash point $p_\splash$. For the corresponding pair\footnote{We generally use $\theta_\splash$ to refer to the $\theta$ value closer to $\pi/2$ and $\vartheta_\splash$ to refer to the one closer to $-\pi/2$.} $\theta_\splash,\vartheta_\splash$ in $S^1$ such that $X(t_\splash,\theta_\splash)=X(t_\splash,\vartheta_\splash)=p_\splash$, we have $|\theta_\splash-\pi/2|\leq 10^{-2}$ and $|\vartheta_\splash+\pi/2|\leq 10^{-2}$.
\end{lemma}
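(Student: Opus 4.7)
The plan is to identify $(\theta_\splash,\vartheta_\splash)$ with the minimizing pair $(\theta_*(t_\splash),\vartheta_*(t_\splash))$ produced by Lemma~\ref{pinchlabellemma}, which is already guaranteed to lie in $I_+\times I_-$. First, I would note that by Proposition~\ref{splashtimeprop}(ii) the interface is strictly non-self-intersecting on $[0,t_\splash)$, so the quantity $\underline T$ from \eqref{uTdefn} satisfies $\underline T\geq t_\splash$ and Lemma~\ref{pinchlabellemma} applies at $t=t_\splash$. The identity \eqref{pinchfmla} together with $\delta_\Gamma(t_\splash)=0$ then gives $X(t_\splash,\theta_*(t_\splash))=X(t_\splash,\vartheta_*(t_\splash))=p_\splash$, producing one collision pair inside $I_+\times I_-$. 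What remains is to check that the pair $(\theta_\splash,\vartheta_\splash)$ in the statement of the lemma must be this one.

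Second, I would rule out any secondary self-intersection of $\Gamma(t_\splash)$ by a quantitative perturbation of the initial data. Because $\xi\in\sB^\tosplash$, we have $\xi(0)\in\bm{\Xi}_\init$, so $X(0,\theta)=X_\init(\Theta(\theta;\theta^0_\splash,\vartheta^0_\splash))$ for some $\theta^0_\splash\in I_+$, $\vartheta^0_\splash\in I_-$; the bound \eqref{ThetaClosetoId} forces this reparametrization to be $C^1$-small, so $X(0)$ is $C^1$-close to $X_\init$. By construction in Definition~\ref{analytInitdata}, $X_\init$ is non-self-intersecting with a single near-pinch of width $\delta_\init>0$ concentrated at $\pm\pi/2$, curvature bounded by $10$, and speed $|\del_\theta X_\init|\geq \epsilon_1$. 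From these geometric properties one obtains a universal separation constant $c_0>0$ such that
\begin{align}
|X_\init(\theta)-X_\init(\vartheta)|\geq c_0 \quad\text{whenever } |\theta-\vartheta|\geq 10^{-1}\text{ and }(\theta,\vartheta)\notin (I_+\times I_-)\cup(I_-\times I_+).
\end{align}
This separation estimate is preserved under the $C^1$-small reparametrization by $\Theta$ and the short-time flow from Lemma~\ref{tosplashXbd1} (which displaces $X$ by $O(t_\splash)=O(T)$ in $C^1$), so choosing $T$ sufficiently small yields
\begin{align}
|X(t_\splash,\theta)-X(t_\splash,\vartheta)|\geq c_0/2
\end{align}
for any such $(\theta,\vartheta)$ outside $I_+\times I_-$ (up to exchange).

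Third, this lower bound forbids any collision pair $(\theta_\splash,\vartheta_\splash)$ with $X(t_\splash,\theta_\splash)=X(t_\splash,\vartheta_\splash)=p_\splash$ from lying outside $I_+\times I_-$. After possibly swapping the two labels so that $\theta_\splash\in I_+$ and $\vartheta_\splash\in I_-$, the uniqueness clause of Lemma~\ref{pinchlabellemma} identifies $(\theta_\splash,\vartheta_\splash)=(\theta_*(t_\splash),\vartheta_*(t_\splash))$, and the definitions of $I_+$ and $I_-$ in \eqref{IpmDefn} give the desired bounds $|\theta_\splash-\pi/2|\leq 10^{-2}$ and $|\vartheta_\splash+\pi/2|\leq 10^{-2}$.

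The one technical point that requires care is the quantitative separation estimate: one must show that the well-separated chord-distance of $X_\init$ outside the pinch window propagates both through the $\Theta$-reparametrization and through the (at most $T$-long) evolution by the $H^{k+1}$-bounded velocity $U$, yielding a uniform positive lower bound on $|X(t_\splash,\theta)-X(t_\splash,\vartheta)|$ away from $I_+\times I_-$. This is a standard perturbation argument, but it is the only step where the smallness of $T$ relative to the geometric constants of $X_\init$ is genuinely used.
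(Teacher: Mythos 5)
Your proof is correct and takes essentially the same route as the paper, which simply asserts the lemma ``follows easily from Lemma~\ref{pinchlabellemma}.'' You have made explicit what the paper leaves implicit: that the unique minimizing pair from Lemma~\ref{pinchlabellemma} is the collision pair at $t_\splash$, together with the perturbation argument (closeness to $X_\init$ plus the $C^1$ bound on the $\Theta$-reparametrization and the short-time flow) that rules out any secondary self-intersections outside $I_+\times I_-$.
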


By applying the combined time and label ``realignment'' discussed above to a given $\xi$ in $\sB^\tosplash$, we produce the \emph{splash-centered version} of $\xi$.
\begin{definition}\label{splashcenteringDefn}
Consider $\xi$ in $\sB^\tosplash$, which has a splash time $t_\splash$ in $(0,T/2]$, by Proposition~\ref{splashtimeprop}. Let $p_\splash$ be the corresponding splash point.
\begin{enumerate}[(i)]

\item Let $\theta_\splash,\vartheta_\splash$ be the pair of distinct elements of $S^1$ such that $X(\theta_\splash)=X(\vartheta_\splash)=p_\splash$. We use the term \emph{splash labels} to refer to $\theta_\splash$ and $\vartheta_\splash$ and the term \emph{splash parameters} to refer to the splash time $t_\splash$ together with the splash labels $\theta_\splash,\vartheta_\splash$.

\item Letting $\tilde\theta(\theta)=\Theta(\theta;\theta_\splash,\vartheta_\splash)$ ($\Theta$ given in Definition~\ref{relabelingDefn}), we define $\xi^\odot:[-T-t_\splash,T-t_\splash]\to\sH^k$, the \emph{splash-centered version} of $\xi$, by
\begin{align}
&& \xi^\odot(t,\theta,\psi) &= \xi(t+t_\splash,\tilde\theta(\theta),\psi)  &(t\in [-T-t_\splash,T-t_\splash], \ (\theta,\psi)\in\Sigma).\label{splashcenteredref}
\end{align}
For the components of $\xi^\odot$, we use the notation $\xi^\odot=(\dot U^{*\odot},\dot B^{*\odot},\bom^\odot,\bj^\odot,X^\odot,U^\odot)$.
\end{enumerate}
\end{definition}
\begin{remark}
For $\xi$ in $\sB^\tosplash$ with splash state $\xi_\splash$ and splash time $t_\splash$, 
we have for the splash-centered version that $\xi^\odot\in\sB^\play(\xi_\splash;t_\splash)\subset\sB^\play(\xi_\splash)$. To verify this, in particular we use the bound \eqref{ThetaClosetoId}.
\end{remark}
\subsubsection*{Stability of splash parameters}

In Proposition~\ref{ISclaim8}, which justifies convergence in the iteration scheme for our forward-in-time existence argument, we consider solutions $\xi$ and $\uxi$ in $\sB^\tosplash$ to a pair of nearly identical linear wave systems. In the final analysis, we need to verify not only that $\xi$ and $\uxi$ are correspondingly close, but also that their splash times and splash labels are.

For this reason we prove a stability result, Proposition~\ref{splashparamStabilityProp}, which shows that when $\xi$ and $\uxi$ are close to each other, so are the corresponding triples of splash parameters $(t_\splash,\theta_\splash,\vartheta_\splash)$ and $(\underline{t_\splash},\underline{\theta_\splash},\underline{\vartheta_\splash})$. We now state a topological lemma to be used in the proof of this result, guaranteeing the existence of a zero of an $\R^3$-valued vector field under suitable hypotheses.
\begin{lemma}\label{zerolemma}
Fix $q\in\R^3$ and $R>0$. Consider the ball $B_R(q)=\{|y-q|<R\}\subset\R^3$ and a pair of $C^1$ vector fields $Y$, $\underline Y:\overline{B_R(q)}\to\R^3$. Suppose for some $r>0$ that we have
\begin{align}
&&|Y(y)|&\geq r & (y\in \del B_R(q)),\\ \shortintertext{
and we have}
&&| Y(y) - \underline Y(y)|&< r &(y\in \del B_R(q)).
\end{align}
Suppose also that the point $q$ is the unique zero in $B_R(q)$ of the vector field $Y$, which has topological degree $\deg (Y,B_R(q),0)=1$.
\\
Then $\underline Y$ has topological degree $\deg (\underline Y,B_R(q),0)=1$. In particular, $\underline Y$ has a zero in $B_R(q)$.
\end{lemma}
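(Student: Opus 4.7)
\medskip

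The plan is to deduce the conclusion from the homotopy invariance of the Brouwer (topological) degree applied to the straight-line homotopy between $Y$ and $\underline{Y}$. Concretely, for $s\in[0,1]$ I would define
\[
H_s(y)=(1-s)Y(y)+s\,\underline{Y}(y)=Y(y)+s\bigl(\underline{Y}(y)-Y(y)\bigr)\qquad(y\in\overline{B_R(q)}),
\]
and aim to show that $H_s(y)\neq 0$ for all $y\in\partial B_R(q)$ and all $s\in[0,1]$, which is exactly the hypothesis needed for degree-homotopy invariance to apply.

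The key (and essentially only) verification is a triangle inequality on the boundary: for $y\in\partial B_R(q)$ and $s\in[0,1]$,
\[
|H_s(y)|\;\ge\;|Y(y)|-s\,|\underline{Y}(y)-Y(y)|\;\ge\;r-s\,|\underline{Y}(y)-Y(y)|.
\]
When $s<1$ this is bounded below by $r(1-s)>0$ using $|\underline{Y}-Y|<r$ on $\partial B_R(q)$; when $s=1$ the same inequality gives $|H_1(y)|=|\underline{Y}(y)|\ge r-|\underline{Y}(y)-Y(y)|>0$, again by the strict inequality assumed on $\partial B_R(q)$. Both $Y$ and $\underline{Y}$ are $C^1$ (hence continuous) on $\overline{B_R(q)}$, so the homotopy $(s,y)\mapsto H_s(y)$ is continuous, admissible in the degree-theoretic sense.

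I would then invoke homotopy invariance of the degree to conclude
\[
\deg(\underline{Y},B_R(q),0)=\deg(H_1,B_R(q),0)=\deg(H_0,B_R(q),0)=\deg(Y,B_R(q),0)=1.
\]
Since the degree is nonzero, the solvability property of the degree guarantees the existence of at least one $y\in B_R(q)$ with $\underline{Y}(y)=0$. No real obstacle arises; the uniqueness of the zero of $Y$ at $q$ is only invoked through the assumed value of $\deg(Y,B_R(q),0)$, and the $C^1$ regularity is used solely to ensure the classical Brouwer degree is well-defined and enjoys homotopy invariance on $\overline{B_R(q)}$.
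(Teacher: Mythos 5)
Your proof is correct and is exactly the argument the paper has in mind: the paper dismisses the lemma as "a standard argument involving homotopy invariance of the degree," and the straight-line homotopy together with the triangle-inequality boundary estimate is precisely that standard argument.
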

\begin{proof}
The proof of the above amounts to a standard argument involving homotopy invariance of the degree.
\end{proof}
\begin{proposition}\label{splashparamStabilityProp}
Consider $\xi$ and $\uxi$ in $\sB^\tosplash$, with respective splash times $t_\splash$ and $\underline{t_\splash}$ and respective splash labels $\theta_\splash,\vartheta_\splash$ and $\underline{\theta_\splash},\underline{\vartheta_\splash}$. Set
\begin{align}
\epsilon = \sup_{t}\left(\lV U-\underline U\rV_{L^\infty(S^1)}+ \lV X -\uX \rV_{C^1(S^1)} \right).
\end{align}
Then for some universal constants $\tilde \epsilon$ and $\tilde C$, as long as $\epsilon \leq \tilde\epsilon$, we have
\begin{align}
|t_\splash-\underline{t_\splash}| + |\theta_\splash - \underline{\theta_\splash}| + |\vartheta_\splash-\underline{\vartheta_\splash}| \leq \tilde C \epsilon.
\end{align}
\end{proposition}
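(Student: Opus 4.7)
The plan is to recast each splash triple as the unique zero of an $\R^3$-valued $C^1$ vector field near that triple and then apply Lemma~\ref{zerolemma}. I would define
\[
Y(t,\theta,\vartheta) = \begin{pmatrix} X_1(t,\theta) - X_1(t,\vartheta) \\ X_2(t,\theta) - X_2(t,\vartheta) \\ X_\theta(t,\theta)^\perp \cdot X_\theta(t,\vartheta) \end{pmatrix},
\]
and let $\underline Y$ be the analogous field built from $\uX$. The first two components encode the self-intersection relation and the third encodes the glancing (tangency) condition, so $Y(t_\splash,\theta_\splash,\vartheta_\splash) = 0$ and $\underline Y(\underline{t_\splash},\underline{\theta_\splash},\underline{\vartheta_\splash}) = 0$ by construction.

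The heart of the argument will be a uniform-in-$\sB^\tosplash$ nondegeneracy of $DY$ at $(t_\splash,\theta_\splash,\vartheta_\splash)$. From Definition~\ref{analytInitdata}(i), the tangents at the two pinch points of $X_\init$ are vertical, the velocities $U_\init(\pm\pi/2)$ are horizontal of magnitude $\geq 2$ pointing toward one another, and both arcs curve outward with nontrivial curvature; Lemma~\ref{tosplashXbd1} (for $T$ small) should ensure these features persist at the splash time $t_\splash \in (0,T/2]$. A block computation of $DY$ would then show the first two rows give independence in the $\partial_\theta, \partial_\vartheta$ directions via the two (approximately antiparallel, near-vertical) tangents and in the $\partial_t$ direction via the (transverse, near-horizontal) velocity difference $U(t_\splash,\theta_\splash) - U(t_\splash,\vartheta_\splash)$, while the third row, built from the two arcs' curvatures, supplies the remaining transverse direction. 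This should yield a universal lower bound $|\det DY| \geq c_0 > 0$, a universal radius $\tilde R$ on which $(t_\splash,\theta_\splash,\vartheta_\splash)$ is the unique zero of $Y$ with topological degree $\pm 1$, and a universal lower bound $|Y| \geq r := c_1 \tilde R$ on $\partial B_{\tilde R}(t_\splash,\theta_\splash,\vartheta_\splash)$; I would negate a single component of $(Y,\underline Y)$ if necessary so that the degree is $+1$.

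Next I would estimate the perturbation: each component of $Y - \underline Y$ is built polynomially from $X - \uX$, $X_\theta - \uX_\theta$ and the (uniformly $C^1$-bounded) fields $X$, $\uX$, so Lemma~\ref{tosplashXbd1} should give
\[
\sup_{\overline{B_{\tilde R}}} |Y - \underline Y| \leq \tilde C_0 \sup_t \lV X - \uX\rV_{C^1(S^1)} \leq \tilde C_0\,\epsilon.
\]
Setting $\tilde\epsilon = r/(2\tilde C_0)$, Lemma~\ref{zerolemma} produces a zero $q^* = (t^*,\theta^*,\vartheta^*)$ of $\underline Y$ in $B_{\tilde R}$. To identify $q^*$ with $(\underline{t_\splash},\underline{\theta_\splash},\underline{\vartheta_\splash})$, I would argue that $\underline Y$ has a unique zero on $[0,T]\times I_+\times I_-$: by Proposition~\ref{splashtimeprop}, $\underline\Gamma(t)$ is embedded for $t < \underline{t_\splash}$ (so the first two components of $\underline Y$ cannot vanish simultaneously there), glancingly self-tangent exactly once at $t = \underline{t_\splash}$, and for $t > \underline{t_\splash}$ the two Lagrangian arcs have crossed transversally so at any post-splash self-intersection the two tangents make a nonzero angle, forcing the glancing component of $\underline Y$ to be nonzero. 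Hence $q^* = (\underline{t_\splash},\underline{\theta_\splash},\underline{\vartheta_\splash})$, and a final quantitative inverse-function-theorem step at $q^*$ will yield
\[
|(t_\splash,\theta_\splash,\vartheta_\splash) - (\underline{t_\splash},\underline{\theta_\splash},\underline{\vartheta_\splash})| \leq \|D\underline Y^{-1}\|\cdot |\underline Y(t_\splash,\theta_\splash,\vartheta_\splash)| \leq \tilde C\,\epsilon,
\]
using $|\underline Y(t_\splash,\theta_\splash,\vartheta_\splash)| = |(\underline Y - Y)(t_\splash,\theta_\splash,\vartheta_\splash)| \leq \tilde C_0 \epsilon$ and uniform invertibility of $D\underline Y$ at $q^*$ (by $\epsilon$-proximity to $DY$). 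The main obstacle is the uniform-in-$\sB^\tosplash$ nondegeneracy of $DY$, which ultimately rests on the geometric features of Definition~\ref{analytInitdata} preserved by Lemma~\ref{tosplashXbd1}.
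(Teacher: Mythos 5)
Your proposal is correct and follows essentially the same route as the paper: same vector field (the paper's $Z$ is your $Y$ up to a trivial sign in the tangency component), same uniform nondegeneracy of the Jacobian, same Lipschitz bound $\sup_\cY |Y-\underline Y|\lesssim\epsilon$, and the same use of Lemma~\ref{zerolemma}. The only organizational difference is that you apply the degree lemma on a fixed universal ball $B_{\tilde R}$ and then finish with an inverse-function-theorem estimate, whereas the paper applies Lemma~\ref{zerolemma} directly on the ball of radius $R=\frac{\tilde c_1}{\tilde c_2}\epsilon$ (the uniform coercivity $|D_y Z(y)v|\geq\tilde c_2|v|$ on $\cY$ gives $|Z|\geq\tilde c_1\epsilon$ on that shell), so $|q_\splash-\underline{q_\splash}|<R$ is immediate and the extra Taylor-expansion step is unnecessary.
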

\begin{proof}
For $X$ corresponding to $\xi$, let us define the function $Z:[0,T]\times S^2\to\R^3$ by
\begin{align}
Z(t,\theta,\vartheta) = \col{X(t,\theta)-X(t,\vartheta)}{\del_\theta X(t,\theta) \cdot \del_\vartheta X^\perp(t,\vartheta)}.
\end{align}
Similarly, we define the function $\underline Z(t,\theta,\vartheta)$ associated to $\uxi$. Observe that $Z(t,\theta,\vartheta)$ vanishes precisely when one plugs in the triple of splash parameters $q_\splash = (t_\splash,\theta_\splash,\vartheta_\splash)$. Likewise, we denote $\underline{q_\splash}=(\underline{t_\splash},\underline{\theta_\splash},\underline{\vartheta_\splash})$.

Let us use $y$ to denote a general point $(t,\theta,\vartheta)\in[0,T]\times S^2$ and consider the set
\begin{align}
\cY=\{(t,\theta,\vartheta): t\in[0,T], \ |\theta-\pi/2|\leq 10^{-2}, \ |\vartheta+\pi/2|\leq 10^{-2}\}.
\end{align}
Note that for a universal constant $\tilde c_1$,
\begin{align}\label{Zdiffbound}
&& |Z(y)-\underline Z(y)| &< \tilde c_1 \epsilon &(y\in\cY).
\end{align}
We proceed with the strategy of using Lemma~\ref{zerolemma} to bound the location of the unique zero of $\underline Z$, i.e. $\underline{q_\splash}$, in terms of the location of $q_\splash$.

Let us use the notation $D_y=(\del_t,\del_\theta,\del_\vartheta)$. Computing the Jacobian matrix $D_y Z(y)$, we find that the smallest eigenvalue is bounded below by some universal constant $\tilde c_2$. Thus
\begin{align}\label{Zpositivedefbound}
&& |D_y Z(y) v | &\geq \tilde c_2 |v| &(y\in\cY, \ v\in\R^3).
\end{align}
Let us take $\tilde \epsilon$ small enough that $\epsilon\leq \tilde \epsilon$ implies the sphere $\{y:|y-q_\splash|= \frac{\tilde c_1}{\tilde c_2} \epsilon\}$ is contained in $\cY$. Thus from \eqref{Zpositivedefbound} it follows
\begin{align}
&& |Z(y)| & \geq \tilde c_1 \epsilon &\left(|y-q_\splash|= \frac{\tilde c_1}{\tilde c_2} \epsilon \right).
\end{align}
Now we use this in combination with the bound \eqref{Zdiffbound} (which of course holds for $|y-q_\splash|=\frac{\tilde c_1}{\tilde c_2} \epsilon$), and apply Lemma~\ref{zerolemma} to $Z$ and $\underline Z$, taking $q=q_\splash$, $r=\tilde c_1\epsilon$, and $R=\frac{\tilde c_1}{\tilde c_2} \epsilon$. Since $\underline{q_\splash}$ is the unique zero of $\underline Z$, we conclude
\begin{align}
|q_\splash-\underline{q_\splash}|<\frac{\tilde c_1}{\tilde c_2} \epsilon,
\end{align}
from which the claim follows.
\end{proof}

\subsubsection*{Definitions of terms in the splash-centered Lagrangian wave system}

Now we discuss how to extend the definitions of our various maps $H(\xi),\dot H(\xi), P^\pm_\grad(\xi), \dot P^\pm_\grad(\xi)$, etc. to appropriate $\xi$ for our forward-in-time argument. This includes defining most of them for $\xi$ past its splash time, when the corresponding domain $\Omega$ begins to overlap itself. The solution map $S_+(t)$ of the following lemma, whose proof is elementary, is used to produce artificial extensions with appropriate regularity beyond the splash time.

\begin{lemma}\label{Sopdefn}
For $s\geq 0$, and a pair of real-valued functions $f_0,\dot f_0$, with $f_0$ in $H^{s+1}(S^1)$ and $\dot f_0$ in $H^s(S^1)$, consider the system below, which has a unique solution pair $f(t,\theta)$ and $\dot f(t,\theta)$, defined for all $t\geq 0$, $\theta$ in $S^1$:
\begin{align}
f_t &= \dot f ,\\
\dot f_t &= f_{\theta\theta} ,\\
(f,\dot f)(0) &= (f_0 ,\dot f_0).
\end{align}
It follows $(f,\dot f)$ is in $C^0([0,\infty); H^{s+1}(S^1)\times H^s(S^1))$. Moreover, defining the solution map $S_+(t)$ by
\begin{align}
&& S_+(t)(f_0,\dot f_0) &= (f(t),\dot f(t)) &(t\geq 0 ), \\
\intertext{we have}
&&\lV S_+(t)(f_0,\dot f_0) \rV_{H^{s+1}(S^1)\times H^s(S^1)} &= \lV (f_0,\dot f_0) \rV_{H^{s+1}(S^1)\times H^s(S^1)} & (t\geq 0 ).
\end{align}
\end{lemma}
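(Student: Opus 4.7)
The plan is to reduce the PDE system to a family of independent second-order ODEs via Fourier series on $S^1$, and then to establish the claimed norm-preservation through an energy identity. Expanding $f(t,\theta) = \sum_{n\in\Z} \hat f_n(t)\, e^{in\theta}$ and $\dot f(t,\theta) = \sum_{n\in\Z} \hat{\dot f}_n(t)\, e^{in\theta}$, the system decouples into $\hat f_n' = \hat{\dot f}_n$ and $\hat{\dot f}_n' = -n^2 \hat f_n$ for each $n\in\Z$, with unique explicit solutions (trigonometric for $n\neq 0$, affine in $t$ for $n=0$). This immediately yields global existence and uniqueness of $(f,\dot f)$, and the continuity statement $(f,\dot f) \in C^0([0,\infty); H^{s+1}(S^1)\times H^s(S^1))$ then follows from dominated convergence applied to the weighted Fourier series, once a uniform-in-$t$ summable envelope is produced from the initial data.

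The regularity claim is essentially a direct bookkeeping argument on the Fourier side: the mode-by-mode solutions satisfy $|\hat f_n(t)|^2 + |\hat{\dot f}_n(t)|^2/\max(1,n^2) \leq C(|\hat f_n(0)|^2 + |\hat{\dot f}_n(0)|^2/\max(1,n^2))$ with $C$ independent of $t$ on bounded time intervals, so the $H^{s+1}\times H^s$ norm of $(f(t),\dot f(t))$ is controlled by that of the initial data. Strong continuity in time follows by standard approximation of the initial data by trigonometric polynomials, for which continuity is obvious.

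For the conservation identity, the plan is to carry out the energy method in the smooth category and then pass to the limit by density. Differentiating $\del_\theta^j f$ and $\del_\theta^j \dot f$ in $t$ and integrating by parts on $S^1$, one finds for every $j\ge 0$ that
\begin{align}
\frac{d}{dt}\int_{S^1}\!\left[(\del_\theta^j \dot f)^2 + (\del_\theta^{j+1} f)^2\right]d\theta
= 2\!\int_{S^1}\!\del_\theta^j \dot f\,\del_\theta^{j+2} f + \del_\theta^{j+1} f\,\del_\theta^{j+1}\dot f\,d\theta = 0,
\end{align}
so the quantity $\sum_{j=0}^{s}\int_{S^1}\big[(\del_\theta^j \dot f)^2 + (\del_\theta^{j+1} f)^2\big]d\theta$ is conserved. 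Together with conservation of $\int_{S^1} \dot f\,d\theta$ (immediate from $\dot f_t = f_{\theta\theta}$ integrated over $S^1$), this yields conservation of a quantity equivalent to the $H^{s+1}\times H^s$ norm, which is how I would interpret the equality stated in the lemma.

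The main (and essentially only) subtlety will be the zero-frequency mode: the equation for $\hat f_0$ is $\hat f_0'' = 0$, so $\hat f_0(t) = \hat f_0(0) + t\,\hat{\dot f}_0(0)$ and the pure $L^2$ part of $\|f(t)\|_{H^{s+1}}^2$ need not remain equal to its initial value against the usual $\langle\del_\theta\rangle^{s+1}$ norm. The clean way to finish is therefore to work with the (equivalent) energy norm $\|(f,\dot f)\|^2_\star := \|\del_\theta f\|_{H^s}^2 + \|\dot f\|_{H^s}^2 + |\widehat f_0|^2$ in which the above energy identity plus conservation of the zero mode of $\dot f$ gives an honest equality, and to note that this is the norm on $H^{s+1}(S^1)\times H^s(S^1)$ that the lemma is using. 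The remaining verifications (uniqueness by energy-identity applied to the difference of two solutions; strong continuity in $t$) are then completely routine.
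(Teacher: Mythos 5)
The paper offers no proof of this lemma (it is dismissed as ``elementary''), so there is no paper argument to compare against; I can only assess your reasoning on its merits. Your reduction to Fourier modes, the explicit solution of each mode ODE, and the energy identity
\[
\frac{d}{dt}\int_{S^1}\bigl[(\del_\theta^j \dot f)^2 + (\del_\theta^{j+1} f)^2\bigr]\,d\theta = 0
\]
are all correct, and you rightly flag the zero mode as the sole delicate point.

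However, your proposed resolution of the zero-mode issue does not work, and in fact reveals that the equality as written in the lemma cannot be literally correct. You suggest the norm $\|(f,\dot f)\|_\star^2 = \|\del_\theta f\|_{H^s}^2 + \|\dot f\|_{H^s}^2 + |\hat f_0|^2$, invoking conservation of the zero mode of $\dot f$; but that conservation does nothing for the zero mode of $f$ itself, which satisfies $\hat f_0(t) = \hat f_0(0) + t\,\hat{\dot f}_0(0)$ and so grows linearly whenever $\hat{\dot f}_0(0)\neq 0$. Thus $\|\cdot\|_\star$ is \emph{not} conserved along the flow, and indeed no norm on $H^{s+1}(S^1)\times H^s(S^1)$ can make the stated equality true: the data $(f_0,\dot f_0)=(0,1)$ produces $(f(t),\dot f(t))=(t,1)$, whose norm must grow unboundedly in $t$ in any topology equivalent to the product topology. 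The only honest statement is a linear-in-$t$ bound $\|S_+(t)(f_0,\dot f_0)\|_{H^{s+1}\times H^s} \leq C(1+t)\,\|(f_0,\dot f_0)\|_{H^{s+1}\times H^s}$ (with equality in the homogeneous energy $\|\del_\theta f\|_{H^s}^2 + \|\dot f\|_{H^s}^2$, which is a seminorm). That bound is what the paper actually needs in Definition~\ref{keyPinchSensMapsExtDefns} and Propositions~\ref{preAndPostSplashUpperBds} and~\ref{centeredLipBds}, where $t$ ranges over a fixed compact interval, so the lemma as stated appears to be a slight overclaim rather than a substantive error, but your proof should prove the inequality and note the equality cannot hold rather than attempt to rescue it with a modified norm.
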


The key maps are first defined up to, but not past the splash state, with essentially the same definitions as before.
\begin{definition}\label{keyPinchSensMapsExtDefns}

\begin{enumerate}[(i)]
Consider a splash state $\xi_\splash$ in $\sH^k_\splash$.

\item We define the maps below with the designated domains in the same way as we do in the listed corresponding definitions and propositions, but where we replace the class $\sB^\bbox$ by $\sB^\bbox(\xi_\splash)$. 
Doing so results in maps taking values in one of the spaces $H^k(S^1)$ or $H^{k+1}(S^1)$, as listed below.
\begin{align}
 H &: \sB^\bbox(\xi_\splash) \to H^{k+1}(S^1) & (\mbox{Definition~\ref{prelimHmapDefn}}), \\
\dot H &: \sB^\bbox(\xi_\splash) \to H^k(S^1) & (\mbox{Definition~\ref{dotHdefns}}), \\
 P^+_\grad &: \sB^\bbox(\xi_\splash) \to H^{k+1}(S^1) & (\mbox{Definition~\ref{ExtPressureDefnBd}}), \\
\dot P^+_\grad &: \sB^\bbox(\xi_\splash) \to H^k(S^1) & (\mbox{Proposition~\ref{dotPplusdefn}}), \\
\sN &: \sB^\bbox(\xi_\splash) \to H^{k+1}(S^1) & (\mbox{Remark~\ref{dcF1forBcircDefn}, Definition~\ref{finalDefdcF1}}), \\
\dot \sN &: \sB^\bbox(\xi_\splash) \to H^k(S^1) & (\mbox{Remark~\ref{dcF1forBcircDefn}, Definition~\ref{finalDefdcF1}}).
\end{align}
To define the maps $\sN$ and $\dot \sN$ above, we note that one first must define them on $\sB^\wbox(\xi_\splash)$, and, by density, extend to $\sB^\bbox(\xi_\splash)$, as done for $\mathring \cF_\sN$ in Definition~\ref{finalDefdcF1} via Lemma~\ref{sNiExt}.

\item For $\xi$ in the start-at-splash class $\sB^\play(\xi_\splash)$ with associated shift parameter $t_\splash$ (i.e. $\xi$ in $\sB^\play(\xi_\splash;t_\splash)$) and $t\in[-T-t_\splash, 0]$ (i.e. up to the moment of splash at $t=0$, but not past it) we define
\begin{align}
H(\xi)(t) &= H(\xi(t)), & 
P^+_\grad(\xi)(t) &= P^+_\grad(\xi(t)), &
\sN(\xi)(t) &= \sN(\xi(t)), \\
\dot H(\xi)(t) &= \dot H(\xi(t)), & 
\dot P^+_\grad(\xi)(t) &= \dot P^+_\grad(\xi(t)), &
\dot \sN(\xi)(t) &= \dot \sN(\xi(t)).
\end{align}

\item 
For $\xi$ in $\sB^\play(\xi_\splash;t_\splash)$ and $t\in[-T-t_\splash, 0]$, we define $\bU(\xi)(t)$, $\bB(\xi)(t)$, and $\bX(\xi)(t)$ by constructing the solution to \eqref{AugmentedInteriorSystemL} in the same way we did for $\xi$ in $\sB$ for Definition~\ref{UBXdefn}. Likewise, we define $P^-_\grad(\xi)(t)$ and $\dot P^-_\grad(\xi)(t)$ for $t\in[-T-t_\splash, 0]$ just as we do for $\xi$ in $\sB$ (see Definitions~\ref{PminusDefns} and~\ref{dotPintdefn}).

\item
For $\xi$ in $\sB^\play(\xi_\splash;t_\splash)$ and $t\in (0, T-t_\splash]$, using $S_+(t)$ of Lemma~\ref{Sopdefn}, we define for $i=1,2$,
\begin{align}
( H_i(\xi)(t), \dot H_i(\xi)(t)) &= S_+(t)( H_i(\xi)(0), \dot H_i(\xi)(0)),\\
(  (P^\pm_\grad)_i(\xi)(t), (\dot P^\pm_\grad)_i(\xi)(t)) &= S_+(t)(  (P^\pm_\grad)_i(\xi)(0), (\dot P^\pm_\grad)_i(\xi)(0)),\\
(\sN(\xi)(t), \dot \sN(\xi)(t)) &= S_+(t)(\sN(\xi)(0), \dot \sN(\xi)(0)).
\end{align}


\item For $\xi$ in $\sB^\play(\xi_\splash;t_\splash)$ and $t\in[-T-t_\splash,T-t_\splash]$, we define $\cH(\xi)(t)$ by using \eqref{cHaltDefn}. This involves making a slight alteration to the conformal map $\cO(z)$ defined by \eqref{cOdefn}, in which the ray from $z_\star$ taken as the branch cut is replaced by a curve emerging from $z_\star$ which is tangent to the corresponding splash interface $\Gamma_\splash$ at the splash point. After the moment of self-intersection at $t=0$, smooth $\tilde X(t,\theta)$ is defined in the appropriate way by taking $\tilde X(t,\theta)=\cO(X(t,\theta);t,\theta)$ for the appropriate alternate branch $\cO(z;t,\theta)$ of $\cO(z)$ for each $(t,\theta)$ corresponding to a trajectory $X(t,\theta)$ which has crossed the branch cut by time $t$.

For $t\in[-T-t_\splash,T-t_\splash]$ we then define $\bE(\xi)(t)$ by \eqref{bEdefnFmla}, using $\cH(\xi)(t)$ as above. Similarly, we take the corresponding definitions for $[\del_t;\cH](\xi)(t)$ and $[\del_t;\bE](\xi)(t)$ (see Proposition~\ref{bEcommBounds}).

\item For $\xi$ in $\sB^\play(\xi_\splash;t_\splash)$ and $t\in[-T-t_\splash,0]$, we define $\dcF_2(\xi)(t)$ as in Definition~\ref{dcF2Defn}.

For $t\in(0,T-t_\splash]$ we define $\dcF_2(\xi)(t)=\dcF_2(\xi)(0)$.

\item
 For $\xi$ in $\sB^\play(\xi_\splash;t_\splash)$ and $t\in[-T-t_\splash,T-t_\splash]$, we define all the quantities $\bD(\xi)(t)$, $\bJ(\xi)(t)$, $\bR^{ij}(\xi)(t)$, etc. in Definition~\ref{DEcommDefns} in the same way, but in terms of the above maps. 
 
 We do the same to define $\mathring \cF(\xi)$ and $\dcF_1(\xi)$ (see Remark~\ref{dcF1forBcircDefn}) on the time interval $[-T-t_\splash,T-t_\splash]$, and define $\dcF(\xi)=(\dcF_1(\xi),\dcF_2(\xi),0,0,0,0)$, as before. For such $\xi$ we also define
\begin{align}
&&
\begin{aligned}
\cF(\xi) &= -\bE(\xi) P^-_\grad(\xi) - (N(\xi)\cdot P^+_\grad(\xi)+\sN(\xi))e_2,\\
\mathrm{F}(\xi) &= \col{\dcF_1(\xi)}{\dcF_2(\xi)},
\end{aligned}&&(t\in[-T-t_\splash,T-t_\splash]).
\end{align}

\item
For $\xi$ in $\sB^\play(\xi_\splash;t_\splash)$ we define the following operator-valued maps for $t\in[-T-t_\splash,T-t_\splash]$:
\begin{align}
\cA(\xi)(t) &= \bJ(\xi)(t)\del_\theta +\cR(\xi)(t) ,\\
A(\xi)(t) &= J(\xi)(t)\del_\theta + R(\xi)(t),
\end{align}
\end{enumerate}
where $J(\xi)(t)=\begin{pmatrix} \bJ_1(\xi)(t) & 0 \\ 0 & \bJ_2\end{pmatrix}$.
\end{definition}
\begin{remark}
\begin{enumerate}[(i)]
\hfill
\item
Note that above we do not define $\bU(\xi)(t)$, $\bB(\xi)(t)$, and $\bX(\xi)(t)$ themselves for $t>0$, instead simply defining the combination represented by $\dcF_2(\xi)(t)$ for such $t$, since we only need to define objects that appear directly in the Lagrangian wave system.

\item
Using the wave solution operator to extend many of the quantities above to $t>0$ is just one straightforward way to ensure we maintain the crucial identities $\dot H(\xi) = \del_t ( H(\xi) )$, $\dot \sN(\xi) = \del_t ( \sN(\xi) )$, etc., all while preserving the appropriate Sobolev regularity levels. Note that, unlike the method above, simply using Taylor expansions in time to define the extensions to $t>0$ would not result in the correct Sobolev regularities in space.
\end{enumerate}
\end{remark} 

For the forward-in-time argument, we actually derive a system for states of the form $\tilde \xi = \xi \circ \tilde \theta$, instead of only dealing with unprepared $\xi$ taken directly from $\sB^\tosplash$. Many of the objects of Definition~\ref{keyPinchSensMapsExtDefns} appear directly in the equivalent wave system for unmodified $\xi$ in $\sB^\tosplash$, as opposed to the system \eqref{splashcenteredsystem1} for $\tilde \xi$. By using the quantities defined below we get the corresponding objects appearing directly in the ``prepared'' system.

\begin{definition}\label{conjugatedmapdefns}
For $\xi_\splash$ in $\sH^k_\splash$ and any smooth diffeomorphism $\tilde\theta(\theta)$ of $S^1$, we define the following for $\xi$ in $\sB^\play(\xi_\splash)$, $\uxi$ in $\sH^k$, and $V:S^1\to\R^2$, where composition with $\tilde\theta$ or $\tilde\theta^{-1}$ refers specifically to that in the $\theta$ argument:
\[
\begin{aligned}
\tilde \bD(\xi,\tilde\theta)
  &= \big( \bD(\xi\circ \tilde\theta^{-1})\big)\circ \tilde\theta , \\
\tilde \dcF(\xi,\tilde\theta)
  &= \big(\dcF(\xi \circ \tilde\theta^{-1}) \big)\circ \tilde\theta ,\\
\tilde \cF(\xi,\tilde\theta)
  &= \big(\cF(\xi\circ\tilde\theta^{-1})\big)\circ\tilde\theta,\\
\tilde F(\xi,\tilde\theta)
  &= \big(\mathrm{F}(\xi \circ \tilde\theta^{-1}) \big)\circ \tilde\theta ,
\end{aligned}
\qquad
\begin{aligned}
\tilde \cR(\xi,\tilde\theta)\uxi
  &= \big(\cR(\xi \circ \tilde\theta^{-1})[\uxi \circ \tilde\theta^{-1}]  \big)\circ \tilde\theta ,\\
\tilde \cA(\xi,\tilde\theta)\uxi
  &= \big(\cA(\xi \circ \tilde\theta^{-1})[\uxi \circ \tilde\theta^{-1}] \big)\circ \tilde\theta ,\\
\tilde R(\xi,\tilde\theta)\uxi_\dagger
  &= \big(R(\xi \circ \tilde\theta^{-1})[\uxi_\dagger \circ \tilde\theta^{-1}]  \big)\circ \tilde\theta ,\\
\tilde A(\xi,\tilde\theta)\uxi_\dagger
  &= \big(A(\xi \circ \tilde\theta^{-1})[\uxi_\dagger \circ \tilde\theta^{-1}]  \big)\circ \tilde\theta, \\
  \tilde \bE(\xi,\tilde\theta)V
  &= \big(\bE(\xi \circ \tilde\theta^{-1})[V \circ \tilde\theta^{-1}]  \big)\circ \tilde\theta .
\end{aligned}
\]

\end{definition}
\begin{remark}\label{conjugfmlas}
The effect of the above transformations involving the conjugation of certain maps with the diffeomorphism $\tilde \theta^{-1}$ is essentially just that everywhere a $\del_\theta$ appeared in the formula for the original version of a map, it is now replaced by $(\tilde \theta')^{-1}\del_\theta$ in the formula for the conjugated map. For example, in view of the definitions, we find our old expression for $\bD(\xi)$ leads to $\tilde \bD(\xi,\tilde \theta)$ as follows:
\begin{align}
\bD(\xi) = \begin{pmatrix} 1 & 0 \\ 0 & 1 + \frac{|H(\xi)|^2}{|X_\theta|^2}\end{pmatrix}
\quad\rightsquigarrow\quad
\tilde\bD(\xi,\tilde\theta) = \begin{pmatrix} 1 & 0 \\ 0 & 1 + \frac{|H(\xi)|^2}{|(\tilde\theta')^{-1}X_\theta|^2}\end{pmatrix}.
\end{align}
Such a change is harmless in our setting, where we plug in $\tilde \theta(\theta) = \Theta(\theta;\theta_\splash,\vartheta_\splash)$, for $\theta_\splash\in I_+$ and $\vartheta_\splash\in I_-$ (see Definition~\ref{relabelingDefn}), as it is then easily verified $\tilde \theta'$ is uniformly bounded away from zero and the smoothness norms of $\tilde \theta$ are uniformly bounded, independently of the $\xi$ in question. With Proposition~\ref{preAndPostSplashUpperBds} we verify the old bounds carry over to the newly defined maps of Definition~\ref{conjugatedmapdefns}.
\end{remark}

By conjugating with $\tilde \theta^{-1}$, we prepare maps to be evaluated at splash-centered states. This results in maps which have virtually the same Lipschitz bounds as before, such as Proposition~\ref{hEstimateProp} (ii), Proposition~\ref{NresLipBd}, etc., given that we are comparing maps evaluated at splash-centered pairs which, by design, have pinches the same order of magnitude at each time, and for which the interface parametrizations $X(t,\theta)$ and $\uX(t,\theta)$ pinch at the same values of $\theta$.

\subsubsection{Derivation of bounds for the splash-centered system}
Now we discuss bounds for the maps prepared for the splash-centered system.

\begin{proposition}\label{preAndPostSplashUpperBds}
Given $\xi$ in $\sB^\tosplash$ with splash state $\xi_\splash$ in $\sH^k_\splash$ and splash parameters $(t_\splash,\theta_\splash,\vartheta_\splash)$, let us set $\tilde \theta(\theta)= \Theta(\theta;\theta_\splash,\vartheta_\splash)$. Then for $\xi^\odot$ defined by \eqref{splashcenteredref}, we have the following estimates for $t$ in $[-T-t_\splash , T-t_\splash]$:
\begin{align}
\lV\tilde\bD(\xi^\odot,\tilde \theta)(t)\rV_{H^k(S^1)}
+
\lV \tilde\bE(\xi^\odot,\tilde \theta)(t) \rV_{B(H^k(S^1),H^k(S^1))}
+
\lV \tilde\cF(\xi^\odot,\tilde\theta)(t) \rV_{H^k(S^1)}
&\leq C \left(\sup_{[-T-t_\splash,T+t_\splash]}
\lV \xi^\odot \rV_{\sH^{k-1}}\right) ,\\
\lV \tilde R(\xi^\odot,\tilde\theta)(t) \rV_{B(\sH^s_\dagger,\sH^s_\dagger)} + \lV \tilde\dcF(\xi^\odot,\tilde\theta)(t) \rV_{\sH^k} &\leq C(M) ,
\end{align}
for integer $s$ with $0\leq s \leq k$.
\end{proposition}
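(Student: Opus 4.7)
The plan is to reduce the proposition to the previously established upper bounds (Lemmas~\ref{bEboundLemma} and~\ref{bDboundLemma}, Propositions~\ref{BasicRBound},~\ref{PenultBoundConstDepProp}, Corollary~\ref{maindcFBound}) after handling two issues: the conjugation by $\tilde\theta$, and the artificial post-splash extension via $S_+(t)$. First I will verify, using the construction just after Definition~\ref{splashcenteringDefn} together with the bound \eqref{ThetaClosetoId} on $\Theta$, that $\xi^\odot \in \sB^\play(\xi_\splash;t_\splash)$, so that all the maps of Definition~\ref{keyPinchSensMapsExtDefns} are defined at $\xi^\odot$. The bound \eqref{ThetaClosetoId} together with the choice of small enough $\epsilon_2$ guarantees both
\[
c \le \tilde\theta'(\theta) \le C, \qquad \lV \tilde\theta \rV_{H^{k+2}(S^1)} \le C,
\]
uniformly in the splash labels $(\theta_\splash,\vartheta_\splash) \in I_+ \times I_-$.

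Next I would split into pre-splash times $t \in [-T-t_\splash,0]$ and post-splash times $t \in (0,T-t_\splash]$. For pre-splash times the maps of Definition~\ref{keyPinchSensMapsExtDefns} coincide exactly with the ones built in Sections~\ref{lagrangianwavesystemsection}--\ref{auxiliaryestimates} evaluated on the single frozen state $\xi^\odot(t) \in \sB^\bbox(\xi_\splash)$. The bounds on $\bD$, $\bE$, $R$, $\dcF$, and $\cF$ for $\sB^\bbox(\xi_0)$ proved in Lemmas~\ref{bEboundLemma}, \ref{bDboundLemma}, Propositions~\ref{BasicRBound}, \ref{PenultBoundConstDepProp}, and Corollary~\ref{maindcFBound} only used geometric closeness to the reference splash parametrization and the uniform weighted bounds on $h,\dot H, P^\pm_\grad, \dot P^\pm_\grad, \sN, \dot\sN$. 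All of the latter bounds (Propositions~\ref{hEstimateProp},~\ref{dotHEstimateProp},~\ref{PplusbdProp},~\ref{dotPplusSysProp},~\ref{PminusBdProp},~\ref{PminusDotBds}, and Lemma~\ref{sNilemma}) carry over verbatim when $\xi_0$ is replaced by $\xi_\splash$, because only the qualitative near-splash geometry of the interface and the splash-point-centered weight $\mu_\delta$ are involved. Hence on $[-T-t_\splash,0]$ the quantities $\bD(\xi^\odot),\bE(\xi^\odot),\cF(\xi^\odot),R(\xi^\odot),\dcF(\xi^\odot)$ satisfy the analogues of the bounds stated, \emph{before} conjugating with $\tilde\theta$.

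For post-splash times I use Lemma~\ref{Sopdefn}: the extensions of $H,\dot H, P^\pm_\grad,\dot P^\pm_\grad,\sN,\dot \sN$ by the wave semigroup $S_+(t)$ preserve $H^{k+1}\times H^k$ norms exactly. Inspecting Definitions~\ref{DEcommDefns} and~\ref{finalDefdcF1}, the components $\bD,\dot\bD,\bR^{ij},\mathring\cF$ are algebraic expressions in these quantities together with $\bE(\xi^\odot)$, $[\del_t;\bE](\xi^\odot)$, $X^\odot$, and $U^\odot$. The last four objects were extended to $t>0$ in Definition~\ref{keyPinchSensMapsExtDefns}(v) using the (post-splash) conformal map $\cO(\,\cdot\,;t,\theta)$ and remain controlled uniformly in $t \in [-T-t_\splash,T-t_\splash]$ by the same arguments as in Proposition~\ref{bEcommBounds} (the required smoothness follows from $\xi^\odot \in \sB_{gen}$ and the uniform chord–arc property of the desingularized curve $\tilde X$). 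Combining these gives the required time-uniform bounds on the unconjugated maps.

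Finally, to pass to the conjugated versions $\tilde \bD,\tilde\bE,\tilde\cF,\tilde R,\tilde\dcF$, I invoke Remark~\ref{conjugfmlas}: conjugation by $\tilde\theta^{-1}$ only replaces each $\del_\theta$ by $(\tilde\theta')^{-1}\del_\theta$ and composes all functions with $\tilde\theta$. Since $\tilde\theta$ and $(\tilde\theta')^{-1}$ are uniformly bounded in $H^{k+2}(S^1)$ with $\tilde\theta'$ bounded away from zero, composition with $\tilde\theta$ is a uniformly bounded operation on $H^j(S^1)$ and $B(H^j,H^j)$ for $0 \le j \le k+1$ (by a standard chain-rule estimate), independent of the choice of splash labels. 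Applying this to each of the unconjugated bounds produced in the previous paragraph yields the two estimates in the statement. The only mildly delicate step, and the one I would check most carefully, is that the constants in the algebraic estimates for $\tilde\bE$ and $[\del_t;\tilde\bE]$ really do depend only on $\sup_t\|\xi^\odot\|_{\sH^{k-1}}$ (not on the stronger $\sH^k$ norm): this follows from the lower-order part of Lemma~\ref{bEboundLemma} and Proposition~\ref{bEcommBounds}, which already separates the $\sH^{k-1}$-dependent bound on $\bE$ itself from the $M$-dependent bound on its commutators appearing in $R$ and $\dcF$.
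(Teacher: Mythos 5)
Your proposal is correct and matches the paper's argument in all essentials: the split into pre-splash times (where the existing bounds on $\bD$, $\bE$, $R$, $\dcF$, $\cF$ from Sections~\ref{lagrangianwavesystemsection}--\ref{auxiliaryestimates} apply directly, with $\xi_0$ replaced by $\xi_\splash$), post-splash times (where the wave solution operator $S_+(t)$ and the desingularized chord-arc curve $\tilde X = \cO\circ X$ carry the estimates across $t=0$), and the final observation via Remark~\ref{conjugfmlas} that conjugation by the uniformly-controlled diffeomorphism $\tilde\theta$ preserves all the bounds. The only tiny deviation is that the paper estimates $\bE(\xi^\odot)$ via the integral-kernel formula~\eqref{cHaltDefn} uniformly in $t$ rather than invoking Lemma~\ref{bEboundLemma} for $t<0$, but since the two definitions of $\cH$ coincide pre-splash this is an inessential difference in bookkeeping.
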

\begin{proof}
First let us comment that due to the usage of the formula \eqref{cHaltDefn} to define $\cH(\xi^\odot)$, the proof of the analogous bound for $\bE(\xi^\odot)$, and also for $\tilde \bE(\xi^\odot,\tilde\theta)$, is done in a similar manner to the proof of Proposition~\ref{bEcommBounds}, using standard techniques for bounding integral kernel operators for chord-arc curves, such as the associated curve $\tilde\Gamma(t)$ produced from $\Gamma(t)$ with the use of the conformal map $\cO(z)$. The bound \eqref{commUpperBd} for $[\del_t;\bE](\xi^\odot)$ is proved just as the corresponding bound is in the proof of Proposition~\ref{bEcommBounds}.

To bound the maps defined in (i)-(iv) of Definition~\ref{keyPinchSensMapsExtDefns}, one splits the bound into the cases of $t<0$ and $t\geq0$. The proofs for $t<0$ follows the same as before. The proofs for $t\geq 0$ uses the boundedness of the wave solution operator $S_+(t)$ (see Lemma~\ref{Sopdefn}). Using these, we verify the necessary bounds on $\tilde D(\xi^\odot,\tilde\theta)$, $\tilde \cF(\xi^\odot,\tilde\theta)$, $\tilde R(\xi^\odot,\tilde\theta)$, and $\tilde \dcF(\xi^\odot,\tilde\theta)$ without excessive additional work.
\end{proof}
Now we give the main Lipschitz bounds for our new setting. In what follows, we use the notation $[a,b]_*$ to refer to an interval without specifying which endpoint is which, that is,
\begin{align}
[a,b]_* = [\min(a,b),\max(a,b)].
\end{align}
\begin{proposition}\label{centeredLipBds}
Given $\xi,\uxi$ in $\sB^\tosplash$ with respective splash states $\xi_\splash,\uxi_\splash$ in $\sH^k_\splash$ and splash parameter triples $q_\splash=(t_\splash,\theta_\splash,\vartheta_\splash)$, $\underline q= (\underline{t_\splash},\underline{\theta_\splash},\underline{\vartheta_\splash})$, let us set $\tilde \theta(\theta)= \Theta(\theta;\theta_\splash,\vartheta_\splash)$, $\underline{\tilde\theta}(\theta)= \Theta(\theta;\underline{\theta_\splash},\underline{\vartheta_\splash})$, and $t^*_\splash=\min(t_\splash,\underline{t_\splash})$. Additionally, we denote the splash-centered versions $\xi^\odot=(\ldots,X^\odot,U^\odot)$ and $\uxi^\odot=(\ldots,\uX^\odot,\underline{U}^\odot)$ as in~\eqref{splashcenteredref}. Then for integer $s$ with $0\leq s \leq k-1$, the bound below holds for all $t$ in $[-T/2,T/2]$.
\begin{align}
\lV \tilde \bD(\xi^\odot,\tilde\theta)(t)-\tilde \bD(\uxi^\odot,\underline{\tilde\theta})(t)\rV_{H^2(S^1)}
+\lV \tilde \cF(\xi^\odot,\tilde\theta)(t)-\tilde \cF(\uxi^\odot,\underline{\tilde\theta})(t)\rV_{H^2(S^1)}
&\leq C\left(\sup_{[-t^*_\splash,t]_*} \lV\xi^\odot-\uxi^\odot \rV_{\sH^1}+|q-\underline q|\right),\\
\lV \tilde R(\xi^\odot,\tilde\theta)(t)-\tilde R(\uxi^\odot,\underline{\tilde\theta})(t)\rV_{B(\sH^2_\dagger,\sH^2_\dagger)}
+
\lV \tilde \dcF(\xi^\odot,\tilde\theta)-\tilde \dcF(\uxi^\odot,\underline{\tilde\theta})\rV_{\sH^2}
&\leq C\left(\sup_{[-t^*_\splash,t]_*} \lV\xi^\odot-\uxi^\odot \rV_{\sH^2}+|q-\underline q|\right),\\
\lV \tilde \bE(\xi^\odot,\tilde\theta)(t)-\tilde \bE(\uxi^\odot,\underline{\tilde\theta})(t)\rV_{B(H^s(S^1),H^s(S^1))} \leq C\Big(\lV& X^\odot(t)-\uX^\odot(t) \rV_{H^{s+1}(S^1)}+|q-\underline q|\Big) .
\end{align}
Furthermore, we have
\begin{align}
\lV \tilde A(\xi^\odot,\tilde \theta)(t)-\tilde A(\uxi^\odot,\underline{\tilde\theta})(t) \rV_{B(\sH^k_\dagger,\sH^2_\dagger)}&\leq C\left(\sup_{[-t^*_\splash,t]_*}\lV \xi^\odot - \uxi^\odot\rV_{\sH^2} + |q-\underline q|\right).
\end{align}
\end{proposition}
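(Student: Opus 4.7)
My plan is to prove the Lipschitz bounds by reducing, at each fixed time $t \in [-T/2, T/2]$, to the previously established Lipschitz estimates in the pre-splash regime and to the boundedness of the wave solution operator $S_+(t)$ in the post-splash regime. The key observation enabling this is that splash-centering synchronizes the two states: both $\xi^\odot$ and $\uxi^\odot$ splash at $t=0$ with splash labels at $\theta = \pm \pi/2$, so that on the pre-splash interval $[-t^*_\splash, 0]$, both $\xi^\odot(t)$ and $\uxi^\odot(t)$ lie in $\sB^\bbox(\delta(t), \xi_\splash)$ and $\sB^\bbox(\underline\delta(t), \uxi_\splash)$ respectively, with pinches of comparable magnitude (by Corollary \ref{splashguaranteeprop} and Proposition \ref{splashtimeprop}(i)). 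This is precisely the regime where Propositions \ref{hEstimateProp}(ii), \ref{dotHEstimateProp}, \ref{PplusbdProp}, \ref{dotPplusSysProp}, \ref{PminusBdProp}, \ref{PminusDotBds}, \ref{NresLipBd}, \ref{commNresLipBound}, \ref{bEbounds}, \ref{bEcommBounds}, and \ref{finalLipBdSummary} apply.

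For $t \in [-t^*_\splash, 0]$, I would unpack each map via its formula in terms of the underlying ``atomic'' quantities ($H, \dot H, P^\pm_\grad, \dot P^\pm_\grad, \bE, [\del_t;\bE], \Nres, [\del_t;\Nres]$, etc.) and systematically insert and subtract intermediate terms to split each difference into two pieces: (a) a difference coming from $\xi^\odot(t) - \uxi^\odot(t)$ controlled by $\sup_{[-t^*_\splash, t]} \lV \xi^\odot - \uxi^\odot \rV_{\sH^j}$ with the appropriate $j$, and (b) a difference coming from the change in the conjugating diffeomorphism and time-shift, controlled by $|q - \underline q|$. For piece (b), the formula $\tilde\theta(\theta) = \Theta(\theta; \theta_\splash, \vartheta_\splash)$ is $C^\infty$ jointly in all variables by Definition \ref{relabelingDefn}, so $\lV \tilde\theta - \underline{\tilde\theta} \rV_{H^{k+1}(S^1)} \leq C(|\theta_\splash - \underline{\theta_\splash}| + |\vartheta_\splash - \underline{\vartheta_\splash}|)$; the time-shift mismatch $|t_\splash - \underline{t_\splash}|$ enters since at common $t$ we evaluate the original unshifted maps at different physical times $t + t_\splash$ vs $t + \underline{t_\splash}$, and I would absorb this using uniform $t$-continuity of the original maps together with the uniform bounds of Proposition \ref{preAndPostSplashUpperBds} and the upper bound $\lV U \rV_{C^1_{t,\theta}} \leq M$ from $\sB^\tosplash$.

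For $t \in (0, T/2]$, the post-splash extensions of $H, P^\pm_\grad, \sN$ (and their dot versions) are defined by applying $S_+(t)$ to the values at $t = 0$. Since $S_+$ is an isometry on $H^{s+1} \times H^s$ by Lemma \ref{Sopdefn}, and since the maps $\bE, [\del_t;\bE]$ continue to be defined via the integral kernel formula \eqref{cHaltDefn} (which depends only on the surface trajectory $X(t)$ and $U(t)$, extended smoothly past the splash), every post-splash difference is bounded by the corresponding pre-splash difference evaluated at $t = 0$, plus the contribution of $|q - \underline q|$ through the surface trajectories, all of which are handled by the pre-splash analysis. The $\tilde A = \tilde J \del_\theta + \tilde R$ estimate then follows by combining the $\tilde \bD$-bound (which controls $\tilde J$) with the $\tilde R$-bound, where the extra derivative of $\del_\theta$ is absorbed by the $\sH^k_\dagger \to \sH^2_\dagger$ loss of regularity.

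The hard part will be the $\tilde \bE$ estimate, because $\bE$ is defined through the Hilbert-type operator $\cH$ associated to $\Gamma$, and the near-degenerate geometry makes direct comparison via integral kernels delicate when the underlying curves $\Gamma$ and $\underline \Gamma$ have different pinch locations. In the splash-centered setting this is reconciled by the square-root trick of Proposition \ref{bEcommBounds}: after applying the conformal map $\cO$, both curves become admissible chord-arc, with branch cuts chosen smoothly through the respective splash points, so the integral-kernel comparison of \eqref{cHaltDefn} reduces to a standard Cauchy-type estimate on uniformly chord-arc curves. Even so, careful bookkeeping is required because the branch cut for $\cO$ depends on the splash geometry, and the difference in cuts between $\xi^\odot$ and $\uxi^\odot$ contributes a term controlled by $|q - \underline q|$; I would handle this by a homotopy/deformation argument between the two branch cuts, using their smooth dependence on splash data, to extract the claimed linear bound in $|q - \underline q|$.
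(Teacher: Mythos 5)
Your proposal matches the paper's own proof in essentially every respect: both split the time interval at $t=0$, both exploit splash-centering to ensure comparable pinch scales so that the earlier fixed-splash Lipschitz estimates (Propositions~\ref{hEstimateProp}(ii), \ref{NresLipBd}, \ref{bEbounds}, etc.) apply directly, both decompose each difference into a $\lV\xi^\odot-\uxi^\odot\rV$ piece and a $|q-\underline q|$ piece (the paper first replaces $|q-\underline q|$ by $\lV\tilde\theta-\underline{\tilde\theta}\rV_{H^s}$ and then uses the smoothness of $\Theta$, which is exactly what you propose), both handle the post-splash interval via the isometry of $S_+$, both absorb the time-shift mismatch $|t_\splash-\underline{t_\splash}|$ via bounded time derivatives (the paper works this out explicitly for the $\tilde\dcF_2$ example by splitting the integral from $-\underline{t_\splash}$ to $-t_\splash$), and both single out $\tilde\bE$ as the term requiring the integral-kernel/square-root approach from Proposition~\ref{bEcommBounds}. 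Your additional remark about tracking the branch-cut dependence of $\cO$ on the splash data is a detail the paper's proof glosses over, but it is consistent with the paper's route and your proposed handling is sound.

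Proceed with this plan.
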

\begin{proof}
The comments in the proof of Proposition~\ref{preAndPostSplashUpperBds} apply almost verbatim here. Let us add that to prove the Lipschitz bounds above, first one may prove the analogous bounds in which $|q-\underline q|$ is replaced by $\lV \tilde\theta-\underline{\tilde \theta}\rV_{H^s(S^1)}$ for appropriate $s$ in the right-hand side. Trivial bounds on the associated $\tilde \theta$ and $\underline{\tilde\theta}$ are then used to complete the result.

Consider the Lipschitz bound for $\tilde \bD$. First of all, let us note that the maximal interval of time on which both $\xi^\odot$ and $\uxi^\odot$ are defined is $[-T-t_\splash,T-t_\splash]\cap [-T-\underline{t_\splash},T-\underline{t_\splash]}$, which contains the interval $[-T/2,T/2]$ in particular. For such times we find
\begin{align}
\tilde \bD(\xi^\odot,\tilde\theta)-\tilde \bD(\uxi^\odot,\underline{\tilde\theta}) =
\begin{pmatrix}
 0& 0 \\ 0 & \frac{|H(\xi^\odot)|^2}{(\tilde\theta')^{-2}|X^\odot_\theta|^2} - \frac{|H(\uxi^\odot)|^2}{(\underline{\tilde\theta}')^{-2}|\underline X^\odot_\theta|^2}
\end{pmatrix} ,
\end{align}
and thus
\begin{align}
\lV\tilde \bD(\xi^\odot,\tilde\theta)-\tilde \bD(\uxi^\odot,\underline{\tilde\theta})\rV_{H^2(S^1)}
&\leq C\left(\lV X^\odot - \uX^\odot \rV_{H^3(S^1)} + \lV H(\xi^\odot)-H(\uxi^\odot) \rV_{H^2(S^1)} + \lV \tilde \theta - \underline {\tilde \theta }\rV_{H^3(S^1)}\right).
\end{align}
Suppose first $t\leq 0$. Since we are dealing with splash-centered $\xi^\odot$ and $\uxi^\odot$, the pinches of the two interfaces are the same order of magnitude at each such time (by Corollary~\ref{splashguaranteeprop}). Due to this, we find the original proof of the Lipschitz bound for $H$ (Proposition~\ref{hEstimateProp} (ii)) can be carried out essentially as before to deduce
\begin{align}\label{finalHlipbd}
\lV H(\xi^\odot)-H(\uxi^\odot) \rV_{H^2(S^1)}\leq C\left(
    \lV X^\odot - \uX^\odot \rV_{H^3(S^1)}
    +|q-\underline q|
\right) .
\end{align}
We make the comment that in adapting our old arguments to verify \eqref{finalHlipbd}, it is vital that the splash states of $\xi^\odot$ and $\uxi^\odot$, say $\tilde \xi_\splash$ and $\tilde{\uxi}^\odot$, respectively, both have splash labels $\pi/2$ and $-\pi/2$, which we have intentionally arranged. This specific point arises in the appropriate analogue of (vii) of Proposition~\ref{extensionProp2}, involving the construction of the extension operator $\cE_\delta$.

Combining the above estimates with trivial bounds for $\tilde \theta - \underline{\tilde\theta}$ yields
\begin{align}\label{DexampleBd}
\lV\tilde \bD(\xi^\odot,\tilde\theta)-\tilde \bD(\uxi^\odot,\underline{\tilde\theta})\rV_{H^2(S^1)}
\leq C\left( \lV \xi^\odot-\uxi^\odot \rV_{\sH^1}+|q-\underline q|
\right).
\end{align}
For $t>0$, one uses Lemma~\ref{Sopdefn} to bound the differences in the components of $H(\xi^\odot)$ and $H(\uxi^\odot)$ in terms of their differences at $t=0$, reducing this case to our proof of the bound for $t\leq 0$. This gives us \eqref{DexampleBd} for $-T/2\leq t \leq T/2$. 

Let us also consider the example of the Lipschitz bound for the $\tilde \dcF_2$ component of $\tilde \dcF$, starting with the case $t\le 0$. From the construction in the proof of Proposition~\ref{SolnMainDivCurlProp}, we find
\begin{align}
\lV \tilde \dcF_2(\xi^\odot,\tilde\theta)
   -\tilde \dcF_2(\uxi^\odot,\underline{\tilde \theta})
\rV_{H^{5/2}(\Sigma)}
&\le
C\left(
    \lV \bU(\xi^\odot) - \bU(\uxi^\odot)
    \rV_{H^{7/2}(\Sigma)}
    +
    \lV \bB(\xi^\odot) - \bB(\uxi^\odot)
    \rV_{H^{7/2}(\Sigma)}
    + |q - \underline{q}|
\right) \\
&\le
C'\big(
    \lV \bX(\xi^\odot)-\bX(\uxi^\odot)
    \rV_{H^{7/2}(\Sigma)}
    +\lV \bom^\odot - \underline \bom^\odot
    \rV_{H^{5/2}(\Sigma)}
    \\
    &\hspace{1cm}
    +\lV X^\odot-\uX^\odot 
    \rV_{H^4(S^1)}
    +\lV U^\odot - \underline U^\odot
    \rV_{H^3(S^1)}
    +|q-\underline q|
\big)\\
&\le
C''\left(
    \lV \bX(\xi^\odot)-\bX(\uxi^\odot)
    \rV_{H^{7/2}(\Sigma)}
    +\lV \xi^\odot - \uxi^\odot \rV_{\sH^2}
    +|q-\underline q|
\right). \label{dcFtwoLipExample}
\end{align}
Let us suppose without loss of generality $t^*_\splash = t_\splash$. Again, following the proof of Proposition~\ref{SolnMainDivCurlProp}, one establishes
\begin{align}
\lV \bX(\xi^\odot) - \bX(\uxi^\odot) \rV_{H^{7/2}(\Sigma)}
&\le
\lV \bX(\xi^\odot)(-t_\splash)-\bX(\uxi^\odot)(-t_\splash)
\rV_{H^{7/2}(\Sigma)}
+C \int^t_{-t_\splash} 
\lV \xi^\odot(s) -\uxi^\odot(s) \rV_{\sH^2}\, ds , \\
&\le
C'\left(
    \int^{-t_\splash}_{-\underline{t_\splash}}
        \lV \uxi^\odot(s)
        \rV_{\sH^2}
    +\int^t_{-t_\splash} 
        \lV \xi^\odot(s) -\uxi^\odot(s) \rV_{\sH^2}\, ds
\right) \\
&\le
C''\left(
    |q-\underline q|
    +
    \sup_{[-t_\splash,t]_*}
     \lV \xi^\odot -\uxi^\odot \rV_{\sH^2}
\right).
\end{align}
Using this in \eqref{dcFtwoLipExample}, we get
\begin{align}
\lV \tilde \dcF_2(\xi^\odot,\tilde\theta)
   -\tilde \dcF_2(\uxi^\odot,\underline{\tilde \theta})
\rV_{H^{5/2}(\Sigma)}
\le
C\left(
    \sup_{[-t_\splash,t]_*}
    \lV \xi^\odot -\uxi^\odot \rV_{\sH^2}
    +|q-\underline q|
\right) .
\end{align}
For $t> 0$ the bound then trivially holds due to the definition of $\dcF_2$ given above. This results in a satisfactory Lipschitz bound for the $\dcF_2$ component of the $\dcF$ map.

The proofs of Lipschitz bounds for various other maps in our original setting of $\xi$ in $\sB$ (such as in Proposition~\ref{finalLipBdSummary}) are ported over to the new setting in a similar manner. The main exception is that when establishing bounds for maps like $\tilde E(\xi^\odot,\tilde\theta)$, we now use techniques from the proof of Proposition~\ref{bEcommBounds}, as we did in the proof of Proposition~\ref{preAndPostSplashUpperBds}.
\end{proof}

\subsubsection{Forward-in-time local existence in Sobolev spaces}

\subsubsection*{Linearized splash-centered system}
Let us consider a fixed $\uxi$ in $\sB^\tosplash$ with corresponding splash parameters $(\underline{t_\splash},\underline{\theta_\splash},\underline{\vartheta_\splash})$. In addition we denote $\underline{\tilde \theta}(\theta)=\Theta(\theta;\underline{\theta_\splash},\underline{\vartheta_\splash})$, and, for another evolving state $\xi=(\dot U^{*},\dot B^{*}, \bom, \bj,X,U)$ in $\sB^\tosplash$, set
\begin{align}
\tilde \xi &= \xi \circ \underline{\tilde \theta}, \\
\tilde \xi_\dagger &= \xi_\dagger \circ \underline{\tilde \theta} ,\\
(\dot U^{*\sim},\dot B^{*\sim},\tilde \bom, \tilde \bj,\tilde X, \tilde U) &=(\dot U^{*}\circ \underline{\tilde \theta},\dot B^{*}\circ \underline{\tilde \theta}, \bom\circ \underline{\tilde \theta}, \bj\circ \underline{\tilde \theta},X \circ \underline{\tilde \theta},U \circ \underline{\tilde \theta}).
\end{align}
Linearizing the original Lagrangian wave system \eqref{xiSystemRep} and writing it in such a way that $\tilde\xi=(\tilde\xi_\dagger,\tilde X,\tilde U)$ becomes the unknown, by recalling Definition~\ref{conjugatedmapdefns} one arrives at the following, in which $\underline \xi^\odot \in \sB^\play(\underline{\xi_\splash};\underline{t_\splash})$ denotes the splash-centered version of $\uxi$.
\begin{align}
\begin{aligned}
\del_t \tilde \xi_\dagger &= \tilde A( \uxi^\odot,\underline{\tilde\theta})(t-\underline{t_\splash})  \tilde \xi_\dagger +\tilde F(\uxi^\odot,\underline{\tilde\theta})(t-\underline{t_\splash}),\\
\del_t \tilde X &= \tilde U, \\
\del_t \tilde U &=  \big(\tilde\bE( \uxi^\odot,\tilde\utheta)(t-\underline{t_\splash})\big)^{-1}\dot U^{*\sim} ,\\
\tilde\xi(0)&= \xi_\init\circ\underline{\tilde\theta}.
\end{aligned}\label{splashcenteredsystem1}
\end{align}
In each iteration step we solve such a system and verify that the resulting $\tilde\xi$ realizes a splash during its interval of existence. We then produce the new $\xi=\tilde \xi \circ \underline{ \tilde \theta}$, and then use the corresponding $\xi^\odot$ as the replacement for the term $\underline \xi^\odot$ in the next iteration of the system.

\subsubsection*{Existence of solutions}

As in Section~\ref{katosmethodsection}, we use Kato's theory of semigroups to verify there is a well behaved solution operator for the linearized $\tilde\xi_\dagger$ evolution equation, given just the assumption that $\uxi$ is in $\sB^\tosplash$. 
We then integrate to compute the corresponding $\tilde X$ and $\tilde U$ and use the result to get $\xi$ on the time interval $[-T,T]$. We show below without much difficulty that the $\xi$ produced is again in $\sB^\tosplash$.
\subsubsection*{Iteration scheme}
Suppose we have already constructed an iterate $\xi_n$. Below we explain how to define various terms to appear or to play a role in the iterated system for the $(n+1)^\text{st}$ iterate.
\begin{definition}\label{iterationsetupdefn}
Given $\xi_n$ in $\sB^\tosplash$ with splash parameters $q_n=(t_{n},\theta_{n},\vartheta_{n})$, for $\tilde\theta_n(\theta)=\Phi(\theta;\theta_{n},\vartheta_{n})$, consider $\xi^\odot_n(t,\theta,\psi)=\xi_n(t+t_n,\tilde\theta_n(\theta),\psi)$, noting $\xi^\odot_n$ is in $\sB^\play(\tilde\xi_{\splash,n})$ for some splash state $\tilde\xi_{\splash,n}$. Let us then define
\begin{align}
\bD^\odot_n &= \tilde \bD(\xi^\odot_n,\tilde\theta_n), &
\bE^\odot_n &= \tilde \bE(\xi^\odot_n,\tilde \theta_n), &
\dcF^\odot_n &= \tilde \dcF(\xi^\odot_n,\tilde\theta_n),
\end{align}
and analogously define $\cF^\odot_n$, $F^\odot_n$, $A^\odot_n$, $\cA^\odot_n$, etc. in terms of the corresponding maps defined in Definition~\ref{conjugatedmapdefns}, evaluated at $\xi^\odot_n$ and $\tilde\theta_n$.
\end{definition}
\begin{proposition}\label{solutionOpBds}
Under the conditions stated in Definition~\ref{iterationsetupdefn}, there is a solution operator $S^\odot_n(t,s)$ which, for each $s$ in $[-T-t_n,T-t_n]$, maps data $\tilde\xi_{\dagger;s}\in\sH^k_\dagger$ at time $t=s$ to the solution $S^\odot_n(t,s)\tilde\xi_{\dagger;s}=\tilde\xi_\dagger(t)$ to the evolution equation
\begin{align}
&& \frac{d\tilde\xi_{\dagger}}{dt}(t)
&= A^\odot_n(t) \tilde\xi_{\dagger}(t) &(t\in[-T-t_n,T-t_n]),\\
&&\tilde\xi_{\dagger}(t)&=\tilde\xi_{\dagger;s}&(t=s),
\end{align}
and it satisfies the bound for $0\leq j \leq k$
\begin{align}
&& \lV S^\odot_n(t,s)\rV_{B(\sH^j_\dagger,\sH^j_\dagger)} &\leq Ce^{C(M)(t-s)} & (t,s\in[-T-t_n,T-t_n]).
\end{align}
\end{proposition}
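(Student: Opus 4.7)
The plan is to mirror the argument from Proposition~\ref{KatoProp}, applying Theorem~1 of~\cite{kato2} to the family $\{A^\odot_n(t)\}_{t\in[-T-t_n,T-t_n]}$. Recall that $A^\odot_n(t)=J^\odot_n(t)\del_\theta+R^\odot_n(t)$, where $J^\odot_n$ is built from $\bJ^\odot_{1,n}=\begin{pmatrix}0&\bD^\odot_n\\ \bI_{(2\times 2)}&0\end{pmatrix}$ and $\bJ_2$. The first task is to verify the three hypotheses of Kato's theorem, namely: (1) the family $\{A^\odot_n(t)\}_t$ is stable on each $\sH^j_\dagger$ for $0\leq j\leq k$; (2) $[\bm{\langle\grad\rangle}^{k-2},A^\odot_n(t)]$ is uniformly bounded in $B(\sH^k_\dagger,\sH^2_\dagger)$; and (3) $A^\odot_n(t)$ lies in $B(\sH^k_\dagger,\sH^2_\dagger)$ and depends continuously on~$t$ in the operator-norm topology.

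For (1), I would verify stability by deriving standard energy estimates for the frozen-coefficient homogeneous problem. The block-off-diagonal form of $\bJ^\odot_{1,n}$ makes it symmetrizable by $\operatorname{diag}(\bI,\bD^\odot_n)$ after multiplication, and positive-definiteness of $\bD^\odot_n$ is guaranteed for $\xi_n\in\sB^\tosplash$ because $|\tilde X_\theta|\geq c$ (by Lemma~\ref{tosplashXbd1} together with the uniform bound on $\tilde\theta'$ from \eqref{ThetaClosetoId}). The upper bound $\sup_t\lV \bD^\odot_n\rV_{H^k(S^1)}\leq C(M)$ from Proposition~\ref{preAndPostSplashUpperBds}, combined with the operator bound $\lV R^\odot_n(t)\xi_\dagger\rV_{\sH^j_\dagger}\leq C(M)\lV\xi_\dagger\rV_{\sH^j_\dagger}$ of the same proposition, then yields $\lV e^{tA^\odot_n(s)}\rV_{B(\sH^j_\dagger,\sH^j_\dagger)}\leq Ce^{C(M)t}$ uniformly in $s$, which is the stability property.

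For (2), write $[\bm{\langle\grad\rangle}^{k-2},A^\odot_n(t)]=[\langle\del_\theta\rangle^{k-2},\bJ^\odot_{1,n}(t)\del_\theta]\oplus 0_{(2\times 2)}\oplus 0$ on the surface components, and apply the standard commutator estimate $\lV[\langle\del_\theta\rangle^{k-2},f]g\rV_{H^2(S^1)}\leq C\lV f\rV_{H^k(S^1)}\lV g\rV_{H^{k-1}(S^1)}$ to handle the $\bD^\odot_n$ factor. For the $R^\odot_n$ piece, the bound $\lV R^\odot_n(t)\rV_{B(\sH^j_\dagger,\sH^j_\dagger)}\leq C(M)$ for $0\leq j\leq k$ allows one to control $[\bm{\langle\grad\rangle}^{k-2},R^\odot_n(t)]$ in $B(\sH^k_\dagger,\sH^2_\dagger)$ by bounding each factor $\bm{\langle\grad\rangle}^{k-2} R^\odot_n$ and $R^\odot_n \bm{\langle\grad\rangle}^{k-2}$ in the appropriate norm separately.

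For (3), the boundedness $\lV A^\odot_n(t)\rV_{B(\sH^k_\dagger,\sH^2_\dagger)}\leq C(M)$ is immediate from Proposition~\ref{preAndPostSplashUpperBds}. The continuity in $t$ is the step that requires most care and is the main obstacle, because $A^\odot_n$ is defined piecewise across the splash time $t=0$ via the wave-evolution extension $S_+(t)$ from Lemma~\ref{Sopdefn}. On $[-T-t_n,0]$ one obtains continuity from the Lipschitz bounds of Proposition~\ref{centeredLipBds} applied to $\xi^\odot_n$ against itself at nearby times (using that $\xi^\odot_n\in C^0\big([-T-t_n,T-t_n];\sH^k\big)$); on $[0,T-t_n]$, strong continuity of $S_+(t)$ in $t$ yields continuity of $H(\xi^\odot_n)(t)$, $P^\pm_\grad(\xi^\odot_n)(t)$, $\sN(\xi^\odot_n)(t)$, etc., and hence of $\bD^\odot_n(t)$ and $R^\odot_n(t)$. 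Matching across $t=0$ follows because the extension is constructed so that the zeroth-order data and first time derivatives agree at $t=0$. Once (1)–(3) are checked, Theorem~1 of~\cite{kato2} produces $S^\odot_n(t,s)$ strongly continuous in $(t,s)$, and the quantitative estimate $\lV S^\odot_n(t,s)\rV_{B(\sH^j_\dagger,\sH^j_\dagger)}\leq Ce^{C(M)(t-s)}$ follows, exactly as in the proof of Proposition~\ref{KatoProp}, from Theorem~5.1 of~\cite{kato1} together with the stability bound from~(1).
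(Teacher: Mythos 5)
Your proof is correct and mirrors the paper's proof, which simply instructs the reader to follow the argument of Proposition~\ref{KatoProp} using the upper bounds of Proposition~\ref{preAndPostSplashUpperBds} and the Lipschitz bounds of Proposition~\ref{centeredLipBds}; you have supplied exactly the verification (stability via symmetrization, the $\bm{\langle\grad\rangle}^{k-2}$ commutator bound, and the time-continuity of $A^\odot_n$ across the splash time $t=0$ using the $S_+(t)$ extension) that this one-line pointer expects. One small imprecision: positive-definiteness of $\bD^\odot_n$ is automatic since its diagonal entries are $\geq 1$ by its form; the lower bound $|X_\theta|\geq c$ and the boundedness of $\tilde\theta'$ from \eqref{ThetaClosetoId} are instead what control $\bD^\odot_n$ \emph{from above} in $H^k(S^1)$.
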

\begin{proof}
This is verified by following the proof of Proposition~\ref{KatoProp} and using Propositions~\ref{preAndPostSplashUpperBds} and~\ref{centeredLipBds}.
\end{proof}
By using the above, we can get a solution to the problem below, which represents the main piece of our linearized splash-centered system \eqref{splashcenteredsystem1}:
\begin{align}
&& \frac{d\tilde\xi_{\dagger}}{dt}
&= A^\odot_n(t-t_n) \tilde\xi_{\dagger}+F^\odot_n(t-t_n) &(t\in[-T,T]),\\
&&\tilde\xi_{\dagger}(0)&=\tilde\xi_{\dagger,\init}.&
\end{align}
Indeed, we may express the solution as
\begin{align}
\tilde\xi_{\dagger}(t)=S^\odot_n(t-t_n,-t_n)\tilde\xi_{\dagger,\init}+\int^{t-t_n}_{-t_n}S^\odot_n(t-t_n,s) F^\odot_n(s)\,ds.
\end{align}
Now we give the construction of the $(n+1)^\text{st}$ iterate $\xi_{n+1}$, given the previously defined iterate $\xi_n$ together with the corresponding terms $A^\odot_n$, $F^\odot_n$, etc. of Definition~\ref{iterationsetupdefn}.
\begin{definition}\label{subseqIterate}
Suppose we have $\xi_n$ in $\sB^\tosplash$ with splash state $\xi_{\splash,n}$ in $\sH^k_\splash$, splash parameters $(t_n,\theta_n,\vartheta_n)$, and associated $\xi^\odot_n$ in $\sB^\play(\tilde\xi_{\splash,n})$ for corresponding $\tilde\xi_{\splash,n}=\xi_{\splash,n}\circ \tilde \theta_n$. We define $\tilde \xi_{\dagger,n+1}$ to be the unique solution on the interval $[-T,T]$ to the problem
\begin{align}
\frac{d\tilde \xi_{\dagger,n+1}}{dt}
&= A^\odot_n(t-t_{n}) \tilde \xi_{\dagger,n+1} + F^\odot_n(t-t_{n}), \\
\tilde \xi_{\dagger,n+1}(0)&=\xi_{\dagger,\init}\circ \tilde\theta_n.
\end{align}
We then define $\dot U^{*\sim}_{n+1}$, $\dot B^{*\sim}_{n+1}$, $\tilde\bom_\npo$, and $\tilde\bj_\npo$ by $(\dot U^{*\sim}_{n+1},\dot B^{*\sim}_{n+1},\tilde\bom_\npo,\tilde\bj_\npo)=\tilde\xi_{\dagger,n+1}$ and take
\begin{align}
\tilde\xi_{n+1} &=(\tilde\xi_{\dagger,n+1},\tilde X_{n+1},\tilde U_{n+1}), \\ \intertext{
where}
\tilde U_{n+1} &= U_\init \circ \tilde\theta_n + \int^t_0 (\bE^\odot_n(\tau-t_n))^{-1} \dot U^{*\sim}_{n+1}(\tau)\,d\tau,\\
\tilde X_{n+1} &= X_\init \circ \tilde \theta_n + \int^t_0 \tilde U_{n+1}(\tau)\,d\tau.
\end{align}
We then define
\begin{align}
\xi_{n+1}=(\dot U^*_\npo, \dot B^*_\npo, \bom_\npo,\bj_\npo,X_\npo,U_\npo)=\tilde\xi_{n+1}\circ \tilde\theta^{-1}_n.
\end{align}
\end{definition}
\begin{proposition}\label{iterationstep1}
Given $n=0$ or for $n\geq 1$ that $\xi_n$ is in $\sB^\tosplash$ with splash state 
$\xi_{\splash,n}$ in $\sH^k_\splash$, we have the following.

Then for the resulting $\xi_{n+1}$ given by Definition~\ref{subseqIterate}, we have
\begin{align}
\xi_\npo\in\sB^\tosplash , \label{xisplashes}
\end{align}
with $\xi_\npo$ arriving at a splash state $\xi_{\splash,\npo}$ in $\sH^k_\splash$ at a time in $(0,T/2]$. Denote the corresponding splash parameters by $q_\npo=(t_{\npo},\theta_{\npo},\vartheta_{\npo})$. Using this, let us define $\tilde\theta_{n+1}(\theta)=\Theta(\theta,\theta_{\npo},\vartheta_{\npo})$ and
\begin{align}
&& \xi^\odot_\npo(t,\theta,\psi) &= \xi_\npo(t+t_\npo,\tilde\theta_\npo(\theta),\psi)
&&(t\in[-T-t_\npo,T-t_\npo]), \\[0.5em]
&&\xi^{\odot n}_\npo(t,\theta,\psi) &= \xi_\npo(t+t_{n},\tilde\theta_\npo(\theta),\psi)
&&( t\in [-T-t_n,T-t_n]). \label{staggeredshiftntn}
\end{align}
Then, for $\tilde\xi_{\splash,\npo}=\xi_{\splash,\npo}\circ \tilde\theta_{n+1}$, we have
\(
\xi^\odot_\npo\in\sB^\play(\tilde\xi_{\splash,\npo};t_\npo)
\). Moreover,
\begin{align}
&& \sup_t\lV \xi^{\odot}_\npo \rV_{\sH^k}
=\sup_t\lV \xi^{\odot n}_\npo \rV_{\sH^k} &\leq CM . &\label{staggeredbound}
\end{align}
\end{proposition}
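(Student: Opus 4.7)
The plan is to mirror the inductive construction of Proposition~\ref{MainInductiveBoundProp}, with the added bookkeeping required to pass back and forth between the splash-centered coordinates (in which the linearized equation is formulated) and the original coordinates (in which the class $\sB^\tosplash$ is defined). First I would use the semigroup solution operator $S^\odot_n(t,s)$ supplied by Proposition~\ref{solutionOpBds}, together with Duhamel's formula
\[
\tilde\xi_{\dagger,n+1}(t)
  = S^\odot_n(t-t_n,-t_n)\,\xi_{\dagger,\init}\circ\tilde\theta_n
  + \int_{-t_n}^{t-t_n} S^\odot_n(t-t_n,s)\,F^\odot_n(s)\,ds ,
\]
to obtain a bound
\(
\sup_t\|\tilde\xi_{\dagger,n+1}\|_{\sH^k_\dagger}\le C\,e^{C(M)T}\bigl(\|\xi_{\dagger,\init}\|_{\sH^k_\dagger}+TC(M)\bigr) ,
\)
where Proposition~\ref{preAndPostSplashUpperBds} controls $F^\odot_n$ uniformly in terms of $M$. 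Choosing $M$ large (depending on $\|\xi_{\dagger,\init}\|_{\sH^k_\dagger}$) and then $T$ small in terms of $M$, this is absorbed below $M$, and the analogous Lipschitz-in-time argument yields $\sup_t\|\tilde\xi_{\dagger,n+1}-\tilde\xi_{\dagger,n+1}(0)\|_{\sH^{k-1}_\dagger}\le r_0$.

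Next I would recover $\tilde X_{n+1}$ and $\tilde U_{n+1}$ by integrating as in Definition~\ref{subseqIterate} and, exactly as in the proof of Proposition~\ref{MainInductiveBoundProp}, use the relations $\del_\theta \tilde B_{n+1}=(\bD^\odot_n\bE^\odot_n)^{-1}(\dot U^{*\sim}_{n+1}-\cF^\odot_n)$ and $\del_\theta \tilde U_{n+1}=(\bE^\odot_n)^{-1}\dot B^{*\sim}_{n+1}$ (which follow by unwinding the commutators encoded in $R$ and $F$, and which here survive the conjugation by $\tilde\theta_n$ thanks to Remark~\ref{conjugfmlas}) to gain the extra derivative and verify $\|\tilde X_{n+1}\|_{H^{k+2}(S^1)}+\|\tilde U_{n+1}\|_{H^{k+1}(S^1)}\le M$ for small enough $T$. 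The acceleration bound $\|\tilde U_{n+1}\|_{C^1_{t,\theta}}\le M$ is then obtained by estimating the evolution equation for $\del_t\tilde U_{n+1}$ as in \eqref{finalpfbound}. Composing with $\tilde\theta_n^{-1}$, whose $H^{k+2}$ norm is controlled uniformly in $n$ by \eqref{ThetaClosetoId}, transfers every one of these bounds to $\xi_{n+1}$; since the initial datum satisfies $\xi_{n+1}(0)=\xi_\init\in\bm{\Xi}_\init$ and the identity $\del_t X_{n+1}=U_{n+1}$ is built into the construction, this establishes $\xi_{n+1}\in\sB^\tosplash$, that is, \eqref{xisplashes}.

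Having $\xi_{n+1}\in\sB^\tosplash$, Proposition~\ref{splashtimeprop} produces the splash time $t_{n+1}\in(0,T/2]$ and Corollary~\ref{splashguaranteeprop} places the splash state $\xi_{\splash,n+1}$ in $\sH^k_\splash$. Because the construction arranges $\tilde\theta_{n+1}$ so that the splash labels of $X_{n+1}\circ\tilde\theta_{n+1}$ are exactly $\pm\pi/2$, the splash-centered state $\xi^\odot_{n+1}(t)=\xi_{n+1}(t+t_{n+1})\circ\tilde\theta_{n+1}$ satisfies $\xi^\odot_{n+1}(t_{n+1})=\tilde\xi_{\splash,n+1}$, and the bounds transferred in the previous paragraph, upgraded by the slack factors $1.1M,1.1r_0$ in \eqref{Bgendefn}, give $\xi^\odot_{n+1}\in\sB^\play(\tilde\xi_{\splash,n+1};t_{n+1})$. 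Finally, \eqref{staggeredbound} is immediate: both $\xi^\odot_{n+1}$ and $\xi^{\odot n}_{n+1}$ are reparametrizations of the \emph{same} function $t\mapsto \xi_{n+1}(t)\circ\tilde\theta_{n+1}$ by two different global time-shifts, each over a maximal time interval of length $2T$, so the suprema of their $\sH^k$ norms coincide and are both bounded by the $\sup_t\|\xi_{n+1}\|_{\sH^k}$ obtained above.

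The main obstacle will be the same balancing act that drives the proof of Proposition~\ref{MainInductiveBoundProp}: the constant $M$ has to be chosen large enough to dominate the initial data and every factor $C(M)$-independent constant coming from Propositions~\ref{solutionOpBds}, \ref{preAndPostSplashUpperBds}, and Lemmas~\ref{bEboundLemma}--\ref{bDboundLemma}, while $T$ must be simultaneously small enough that the Gronwall-type exponential $e^{C(M)T}$ stays close to $1$, that Lemma~\ref{tosplashXbd1} and Corollary~\ref{kingeolemma1} apply, and that the splash time $t_{n+1}$ provided by \eqref{splashtimebounds} lies in $(0,T/2]$. All of these constraints are compatible, but the ordering of choices matters (first $k$, then $M$ in terms of the initial data, then $T$ in terms of $M$), and the use of time-shifted classes introduces a subtle point: the coefficient $A^\odot_n(t-t_n)$ is evaluated on an interval of length at most $2T$ around $-t_n$, which is exactly the interval on which Proposition~\ref{solutionOpBds} supplies control, so the whole scheme remains consistent for $T$ small.
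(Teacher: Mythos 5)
Your proposal is correct and follows the same route as the paper, which simply defers to the argument of Proposition~\ref{MainInductiveBoundProp} for $\xi_{n+1}\in\sB^\tosplash$, invokes Proposition~\ref{splashtimeprop} (and implicitly Corollary~\ref{splashguaranteeprop}) for the splash time and state, and observes that \eqref{staggeredbound} is a direct consequence of the definitions; you have fleshed out the details accurately. One small slip: from your own formula $\xi^\odot_{n+1}(t)=\xi_{n+1}(t+t_{n+1})\circ\tilde\theta_{n+1}$, the splash occurs at $t=0$, so you should write $\xi^\odot_{n+1}(0)=\tilde\xi_{\splash,n+1}$ rather than $\xi^\odot_{n+1}(t_{n+1})=\tilde\xi_{\splash,n+1}$.
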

\begin{proof}
The fact that $\xi_\npo$ is in $\sB^\tosplash$ is proved in a manner almost identical to the proof of Proposition~\ref{MainInductiveBoundProp}.

We then apply Proposition~\ref{splashtimeprop} to deduce that $\xi_\npo$ realizes a splash state at a time in $(0,T/2]$. By choosing $T$ sufficiently small, we guarantee the splash state is in $\sH^k_\splash$.

The bound \eqref{staggeredbound} follows easily from \eqref{xisplashes} and the definitions of $\xi^\odot_\npo$ and $\xi^{\odot n}_\npo$.
\end{proof}
\begin{corollary}\label{unifbddness}
We have $\xi_n$ is in $\sB^\tosplash$ for all $n\geq 1$, and \eqref{staggeredbound} holds for all $n\geq 0$.
\end{corollary}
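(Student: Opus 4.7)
The plan is to argue by induction on $n$, leveraging Proposition~\ref{iterationstep1} as the sole engine of propagation. The base case $n=0$ is immediate: Proposition~\ref{iterationstep1} explicitly allows $n=0$ as an admissible starting point (no hypothesis on $\xi_0\in\sB^\tosplash$ is required), and its conclusion then yields $\xi_1\in\sB^\tosplash$ together with a splash state $\xi_{\splash,1}\in\sH^k_\splash$ realized at some time $t_1\in(0,T/2]$, plus the bound \eqref{staggeredbound} at level $n=0$.

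For the inductive step, suppose $\xi_n\in\sB^\tosplash$ with splash state $\xi_{\splash,n}\in\sH^k_\splash$. Then the hypotheses of Proposition~\ref{iterationstep1} at level $n$ are satisfied, so applying it produces $\xi_{n+1}\in\sB^\tosplash$ with splash state $\xi_{\splash,n+1}\in\sH^k_\splash$ at a time $t_{n+1}\in(0,T/2]$, as well as the bound \eqref{staggeredbound} at level $n$. This closes the induction, and yields the claim that $\xi_n\in\sB^\tosplash$ for all $n\geq 1$ and that \eqref{staggeredbound} holds for all $n\geq 0$.

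The key point making the argument go through cleanly, as opposed to being a subtle issue, is that the constant $C$ in \eqref{staggeredbound} and all the underlying estimates feeding into Proposition~\ref{iterationstep1} depend only on $M$, the fixed universal data, and the smallness parameters $T$, $\delta_\init$, and $r_1$ chosen once and for all at the outset; in particular, no constant degrades with $n$. Likewise, the requirement that the splash time $t_{n+1}$ of each iterate lies in the window $(0,T/2]$ is uniform in $n$, since it is enforced by Proposition~\ref{splashtimeprop} whose bounds depend only on $\delta_\init$, $\nu_\init$, $C_{\max}$, and $M$. Thus no obstacle arises in closing the induction, and the corollary is an immediate consequence of Proposition~\ref{iterationstep1}.
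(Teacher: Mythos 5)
Your proof is correct and matches the paper's intent: the corollary is stated without proof because it is precisely the induction on $n$ you describe, with Proposition~\ref{iterationstep1} serving as both base case ($n=0$) and inductive step, and with all constants uniform in $n$. Nothing to add.
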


Just as we define $\xi^{\odot n}_\npo$ in \eqref{staggeredshiftntn} Let us similarly define expressions representing the various components of $\xi^{\odot n}_\npo$, for example,
\begin{align}
&& X^{\odot n}_\npo(t,\theta) &= X_\npo(t+t_{n},\tilde\theta_\npo(\theta))  &(t\in[-T-t_\npo,T-t_\npo]),
\end{align}
and analogously we define $U^{\odot n}_\npo$, $\dot U^{*\odot n}_\npo$, $\xi^{\odot n}_{\dagger,\npo}$, etc.


Now we begin the task of verifying that the sequence $\{\tilde \xi_n\}_{n\geq 0 }$ produced above converges.

\begin{lemma}\label{ISclaim1}
For $j=1,2$, the following hold for each $n\geq1$ and $t$ in $[-T/2,T/2]$:
\begin{align}
\mbox{{(i)}}&& \lV (\xi^\odot_\npo-\xi^\odot_n)(t)\rV_{\sH^j}
&\leq
C(\lV(\tilde\xi_\npo-\tilde\xi_n)(t+t^*_n)\rV_{\sH^j}+|q_\npo-q_n|) &(t^*_n=t_n\mbox{ \emph{or} }t_\npo),\\
\mbox{{(ii)}}&&\lV (\xi^\odot_\npo-\xi^\odot_n)(t)\rV_{\sH^j}
&\leq
C(\lV(\xi^{\odot n}_\npo-\xi^{\odot\nmo}_n)(t)\rV_{\sH^j}+|q_\npo-q_n|+|q_n-q_\nmo|),&\phantom{()}\\
\mbox{{(iii)}}&&\lV (\tilde \xi_\npo-\tilde\xi_n)(t) \rV_{\sH^j}
&\leq
C(\lV (\xi^{\odot n}_\npo-\xi^{\odot \nmo}_n)(t-t^*_{n-1})\rV_{\sH^j}+|q_n-q_\nmo|) &(t^*_\nmo = t_\nmo\mbox{ \emph{or} }t_n).
\end{align}
\end{lemma}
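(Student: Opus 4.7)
The plan is to prove each of (i)--(iii) by unpacking the definitions of the three ``frames'' $\xi^\odot_n$, $\xi^{\odot n}_{n+1}$, and $\tilde\xi_n$ and connecting them via add-and-subtract arguments, where each inserted intermediate is a ``pure change-of-frame'' of a single iterate. The essential inputs are: first, that $\xi_n,\xi_{n+1}\in\sB^\tosplash$ provide uniform control $\lV U_n\rV_{C^1_{t,\theta}},\lV U_{n+1}\rV_{C^1_{t,\theta}}\le M$ together with the pointwise bound $\lV \del_t \xi_n\rV_{\sH^j}\le C(M)$ obtained from the iterated equation (with the higher regularities of $X_n,U_n$ as noted in the remark following Proposition~\ref{MainInductiveBoundProp}); and second, the smoothness of $\Theta$ from Definition~\ref{relabelingDefn} which gives
\begin{align*}
\lV \Theta(\,\cdot\,;\theta,\vartheta)-\Theta(\,\cdot\,;\theta',\vartheta')\rV_{H^{k+2}(S^1)}
\le C(|\theta-\theta'|+|\vartheta-\vartheta'|),
\end{align*}
so that $\lV\tilde\theta_{n+1}-\tilde\theta_n\rV_{H^{k+2}(S^1)}\le C|q_{n+1}-q_n|$, uniformly in $n$.

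For (i), fix $t^*_n\in\{t_n,t_{n+1}\}$ and decompose
\begin{align*}
(\xi^\odot_{n+1}-\xi^\odot_n)(t)
 &= \bigl[\xi_{n+1}(t+t_{n+1},\tilde\theta_{n+1})-\xi_{n+1}(t+t^*_n,\tilde\theta_n)\bigr] \\
 &\quad + \bigl[\xi_{n+1}(t+t^*_n,\tilde\theta_n)-\xi_n(t+t^*_n,\tilde\theta_n)\bigr] \\
 &\quad + \bigl[\xi_n(t+t^*_n,\tilde\theta_n)-\xi_n(t+t_n,\tilde\theta_n)\bigr].
\end{align*}
Using the uniform $C^1$ control of $\xi_{n+1},\xi_n$ and the Lipschitz bound on $\Theta$, the first and third brackets are each $\le C|q_{n+1}-q_n|$ in $\sH^j$ for $j=1,2$, while the middle bracket equals $(\tilde\xi_{n+1}-\tilde\xi_n)(t+t^*_n,\cdot)$ up to a composition/chain-rule substitution of the form $\theta\mapsto\tilde\theta_{n-1}^{-1}\tilde\theta_n(\theta)$; that substitution introduces at most another $C|q_{n+1}-q_n|$ error by the same Lipschitz bound on $\Theta$, yielding the claim. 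Statement (ii) is proved by the same technique: both $\xi^{\odot n}_{n+1}$ and $\xi^{\odot\,n-1}_n$ already use the \emph{same} shift structure one step back, so
\begin{align*}
(\xi^\odot_{n+1}-\xi^\odot_n)(t) - (\xi^{\odot n}_{n+1}-\xi^{\odot\,n-1}_n)(t)
\end{align*}
is a sum of ``pure reparametrization'' and ``pure time-shift'' differences in $\xi_{n+1}$ and $\xi_n$, each bounded by $C|q_{n+1}-q_n|$ or $C|q_n-q_{n-1}|$.

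For (iii), from $\tilde\xi_{n+1}=\xi_{n+1}\circ\tilde\theta_n$ and $\tilde\xi_n=\xi_n\circ\tilde\theta_{n-1}$ we have
\begin{align*}
(\tilde\xi_{n+1}-\tilde\xi_n)(t,\theta,\psi)
=\xi_{n+1}(t,\tilde\theta_n(\theta),\psi)-\xi_n(t,\tilde\theta_{n-1}(\theta),\psi),
\end{align*}
and likewise
\begin{align*}
(\xi^{\odot n}_{n+1}-\xi^{\odot\,n-1}_n)(t-t^*_{n-1},\theta,\psi)
=\xi_{n+1}(t-t^*_{n-1}+t_n,\tilde\theta_{n+1},\psi)-\xi_n(t-t^*_{n-1}+t_{n-1},\tilde\theta_n,\psi).
\end{align*}
The difference of the two is again a telescoping sum of pure time-shift and pure relabeling changes, each controlled by $|q_n-q_{n-1}|$. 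I expect the main obstacle to be purely organizational: making sure that every intermediate term lies in a frame where both the uniform $\sH^k$ bound of Corollary~\ref{unifbddness} and the acceleration bound $\lV U\rV_{C^1_{t,\theta}}\le M$ apply (so the $C^1$ norms used are finite uniformly in $n$), and choosing $t^*_n,t^*_{n-1}\in\{t_n,t_{n\pm1}\}$ on the right-hand side so that the ``extra'' $|q_n-q_{n-1}|$ type error produced by the relabeling chain-rule substitution is either absorbed into the $(\tilde\xi_{n+1}-\tilde\xi_n)$ term (since the latter already encodes precisely the mismatch $\tilde\theta_n$ vs.\ $\tilde\theta_{n-1}$) or into $|q_{n+1}-q_n|$ when combined with the surrounding algebra. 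Once the decompositions are fixed this way, each piece is a one-line estimate from uniform smoothness and Lipschitz continuity of $\Theta$.
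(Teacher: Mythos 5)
Your approach is substantively the same as the paper's: telescope through the time-shift and relabeling frames, bound each pure-shift and pure-relabeling piece by the uniform $C^1$ bounds on the iterates together with the Lipschitz dependence of $\Theta$ on its splash parameters, and identify the surviving term with the target. The paper derives~(ii) by combining~(i) and~(iii), whereas you prove~(ii) directly with the analogous telescope; that is a harmless difference, and your argument for~(ii) is fine.

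There is, however, a concrete indexing error in~(i). Your middle bracket is
\[
\xi_{n+1}(t+t^*_n,\tilde\theta_n(\theta))-\xi_n(t+t^*_n,\tilde\theta_n(\theta)),
\]
but under Definition~\ref{subseqIterate} one has $\tilde\xi_m=\xi_m\circ\tilde\theta_{m-1}$, so
\[
(\tilde\xi_{n+1}-\tilde\xi_n)(t+t^*_n,\theta)=\xi_{n+1}(t+t^*_n,\tilde\theta_n(\theta))-\xi_n(t+t^*_n,\tilde\theta_{n-1}(\theta)).
\]
The gap between the two is exactly $\xi_n(t+t^*_n,\tilde\theta_{n-1}(\theta))-\xi_n(t+t^*_n,\tilde\theta_n(\theta))$, i.e.\ the relabeling $\theta\mapsto\tilde\theta_{n-1}^{-1}\tilde\theta_n(\theta)$ you correctly identify. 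But $\tilde\theta_m$ depends only on $q_m$, so this relabeling error is $C|q_n-q_{n-1}|$, \emph{not} $C|q_{n+1}-q_n|$ as you assert; and $|q_n-q_{n-1}|$ does not appear among the allowed error terms in~(i), so the bound you wrote does not close. The analogous slip occurs in~(iii): the mismatch between $\xi^{\odot n}_{n+1}$ (which uses $\tilde\theta_{n+1}$) and $\tilde\xi_{n+1}$ (which uses $\tilde\theta_n$) produces an error of size $|q_{n+1}-q_n|$, so the claim that ``each piece is controlled by $|q_n-q_{n-1}|$'' is wrong for that term.

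Worth noting in your defense: the paper's own brief proof of~(i) and~(iii) silently uses $\xi^\odot_m(t)=\tilde\xi_m(t+t_m)$ as an identity, which under Definition~\ref{subseqIterate} fails by precisely the relabeling term of size $|q_m-q_{m-1}|$ that you are wrestling with; the error terms listed in~(i) and~(iii) should really each carry both $|q_{n+1}-q_n|$ and $|q_n-q_{n-1}|$, which is harmless downstream (the extra increment gets absorbed in Proposition~\ref{ISclaim8} via Lemma~\ref{splashparamdynamicbounds}). But your write-up should nonetheless track which $\tilde\theta_m$ each of $\xi^\odot_m$, $\xi^{\odot\,m-1}_m$, and $\tilde\xi_m$ actually uses and assign the resulting increment to the correct index, rather than bundling every relabeling error into $|q_{n+1}-q_n|$ (in~(i)) or $|q_n-q_{n-1}|$ (in~(iii)).
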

\begin{proof}
The bound (i) in the case $t^*_n=t_n$ (the case $t^*_n=t_\npo$ is similar) can be checked by writing the following and using boundedness of $\del_t\tilde\xi_{n+1}$ and $\del_t\tilde\xi_n$:
\begin{align}
\xi^\odot_{n+1}(t)-\xi^\odot_n(t) = \tilde \xi_{n+1}(t+t_{n})-\tilde\xi_n(t+t_{n})
+
 \tilde \xi_{n+1}(t+t_{\npo})-\tilde\xi_{n+1}(t+t_{n}).
\end{align}
Likewise, the bound (iii) in the case $t^*_\nmo=t_\nmo$ (the case $t^*_\nmo=t_n$ is similar) can be shown from
\begin{align}
\tilde\xi_{n+1}(t)-\tilde\xi_n (t)&= \xi^{\odot n}_\npo(t-t_{\nmo}) -\xi^{\odot\nmo}_n(t-t_{\nmo}) + \tilde\xi_{n+1}(t)-\tilde\xi_{n+1}(t+t_{n}-t_{\nmo}) .
\end{align}
The bound (ii) can be shown by combining (i) in the case that $t^*_n=t_n$ and (iii) in the case that $t^*_\nmo=t_n$.
\end{proof}
\begin{lemma}\label{ISclaim2}
For each $n\geq 1$,
\begin{align}
\lV \xi^{\odot n}_\npo(-t_{n})-\xi^{\odot\nmo}_n(-t_{n})\rV_{\sH^2}\leq C|q_n-q_\nmo|.
\end{align}
\end{lemma}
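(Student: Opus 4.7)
The plan is to exploit the fact that, by the construction in Definition~\ref{subseqIterate}, every iterate satisfies $\xi_m(0)=\xi_\init$: indeed $\tilde\xi_m(0)=\xi_\init\circ\tilde\theta_{m-1}$ by the initial condition for the linearized system, and undoing the reparametrization gives $\xi_m=\tilde\xi_m\circ\tilde\theta_{m-1}^{-1}$, so $\xi_m(0)=\xi_\init$. Unpacking the staggered-shift definition \eqref{staggeredshiftntn} at $t=-t_n$, one obtains
\begin{align*}
\xi^{\odot n}_\npo(-t_n,\theta,\psi) &= \xi_\npo(0,\tilde\theta_\npo(\theta),\psi) = \xi_\init(\tilde\theta_\npo(\theta),\psi),\\
\xi^{\odot \nmo}_n(-t_n,\theta,\psi) &= \xi_n(t_\nmo - t_n,\tilde\theta_n(\theta),\psi).
\end{align*}

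Using $\xi_n(0)=\xi_\init$ to rewrite the first expression as $\xi_n(0,\tilde\theta_\npo(\theta),\psi)$, I would split the difference into a purely temporal piece
\[\xi_n(0,\tilde\theta_n(\theta),\psi) - \xi_n(t_\nmo-t_n,\tilde\theta_n(\theta),\psi)\]
and a purely reparametrization piece
\[\xi_\init\circ\tilde\theta_\npo - \xi_\init\circ\tilde\theta_n.\]
The temporal piece is estimated by the fundamental theorem of calculus and a uniform bound on $\del_t\xi_n$ in $\sH^{k-1}\hookrightarrow\sH^2$ (valid for $k\ge 4$), which follows from the linearized evolution equation for $\tilde\xi_n$ together with the upper bounds on $\tilde A(\xi^\odot_{n-1},\tilde\theta_{n-1})$ and $\tilde F(\xi^\odot_{n-1},\tilde\theta_{n-1})$ from Proposition~\ref{preAndPostSplashUpperBds} and Corollary~\ref{unifbddness}; this yields a bound of $C(M)|t_n-t_\nmo|\le C|q_n-q_\nmo|$. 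The reparametrization piece is bounded using the smoothness of $\Theta$ (Definition~\ref{relabelingDefn}), which gives $\lV\tilde\theta_\npo-\tilde\theta_n\rV_{C^3(S^1)}\le C|q_\npo-q_n|$, combined with the regularity of the analytic initial datum $\xi_\init$.

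The main obstacle is reconciling this with the target right-hand side: the naive decomposition produces $C(|q_n-q_\nmo|+|q_\npo-q_n|)$, whereas the claim features only $|q_n-q_\nmo|$. To remove the extra $|q_\npo-q_n|$ contribution, I would look for a finer matching between $\tilde\theta_\npo$ and $\tilde\theta_n$ at $t=-t_n$ that avoids comparing them directly, most plausibly by recentering the reparametrization piece around $\tilde\theta_\nmo$ (so that the reparametrization error is framed against the previous iterate's splash labels, which are precisely those encoded in $\xi^{\odot\nmo}_n$) and then absorbing the residual into the temporal piece via the same FTC argument. Alternatively, one may invoke the splash-parameter stability of Proposition~\ref{splashparamStabilityProp}, which bounds $|q_\npo-q_n|$ by $\sup_t(\lV U_\npo-U_n\rV_{L^\infty}+\lV X_\npo-X_n\rV_{C^1})$ and can, within the iteration bootstrap, be controlled by the analogous quantities for the previous pair — ultimately tracing back to $|q_n-q_\nmo|$; this is the route most consistent with how Lemma~\ref{ISclaim2} is presumably invoked in the subsequent convergence argument built on Lemma~\ref{ISclaim1}.
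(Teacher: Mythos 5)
Your two observations---that $\xi_m(0)=\xi_\init$ for every iterate, and that the difference at $t=-t_n$ should split into a temporal piece controlled by $|t_n-t_\nmo|$ via the fundamental theorem of calculus and a label-reparametrization piece controlled by the smoothness of $\Theta$ and $\xi_\init$---are exactly the paper's argument. The obstacle you then run into, the spurious $|q_\npo-q_n|$ term, stems from reading \eqref{staggeredshiftntn} literally, with $\tilde\theta_\npo$ in the label slot; that is evidently a typo. The superscript ``$\odot n$'' is meant to indicate centering with the $n$-th iterate's splash parameters in \emph{both} the time shift and the label, i.e.
\[
\xi^{\odot n}_\npo(t,\theta,\psi) = \xi_\npo\bigl(t+t_n,\ \tilde\theta_n(\theta),\ \psi\bigr) = \tilde\xi_\npo(t+t_n,\theta,\psi),
\]
the second equality following from $\xi_\npo=\tilde\xi_\npo\circ\tilde\theta_n^{-1}$ (Definition~\ref{subseqIterate}). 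This is the only reading consistent with the proofs of Lemma~\ref{ISclaim1}(iii) and Lemma~\ref{ISclaim4}, which treat $\xi^{\odot n}_\npo$ as the solution of the linearized evolution driven by $\cA^\odot_n(t)$; that fails under the literal reading, since $\tilde\theta_n^{-1}\circ\tilde\theta_\npo\neq\operatorname{id}$ along the iteration. With the corrected reading, $\xi^{\odot n}_\npo(-t_n)=\tilde\xi_\npo(0)=\xi_\init\circ\tilde\theta_n$ and $\xi^{\odot\nmo}_n(-t_n)=\tilde\xi_n(t_\nmo-t_n)$, so, since $\tilde\xi_n(0)=\xi_\init\circ\tilde\theta_\nmo$,
\[
\xi^{\odot n}_\npo(-t_n)-\xi^{\odot\nmo}_n(-t_n)
=\bigl(\xi_\init\circ\tilde\theta_n-\xi_\init\circ\tilde\theta_\nmo\bigr)
+\bigl(\tilde\xi_n(0)-\tilde\xi_n(t_\nmo-t_n)\bigr),
\]
and $\tilde\theta_\npo$ no longer appears. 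Applying your FTC argument and the $\Theta$-smoothness bound to these two summands gives exactly $C|q_n-q_\nmo|$.

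Your proposed workarounds do not close as stated and, more to the point, are unnecessary. Recentering around $\tilde\theta_\nmo$ only relocates the label mismatch without eliminating the dependence on $\tilde\theta_\npo$, and invoking splash-parameter stability to bound $|q_\npo-q_n|$ is circular: the quantitative version you would need is Lemma~\ref{splashparamdynamicbounds}, which controls $|q_\npo-q_n|$ by $T$ times $\sup_t\lV\tilde\xi_\npo-\tilde\xi_n\rV_{\sH^2}$ plus $|q_n-q_\nmo|$, but $\sup_t\lV\tilde\xi_\npo-\tilde\xi_n\rV_{\sH^2}$ is precisely what the chain of Lemmas~\ref{ISclaim1}--\ref{ISclaim7} feeding into Proposition~\ref{ISclaim8} is constructed to estimate, and it is not yet available at this stage of the argument.
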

\begin{proof}
This can be verified by noting
\begin{align}
\xi^{\odot n}_\npo(-t_{n})-\xi^{\odot\nmo}_n(-t_{n}) = \xi_{\init}\circ \tilde\theta_n - \xi_{\init} \circ \tilde\theta_{n-1} + \tilde \xi_{n}(0)-\tilde\xi_{n}(t_{\nmo}-t_{n}) ,
\end{align}
and then using bounds on the derivatives of $\xi_\init$ and $\tilde\xi_n$.
\end{proof}

Recall our notation $[a,b]_* = [\min(a,b),\max(a,b)]$ used again below.

\begin{lemma}\label{ISclaim3}
For each $n\geq 1$,
\begin{align}
&& \lV \cF^\odot_\npo(t) -
 \cF^\odot_n(t) \rV_\HTW
 &\leq C\left(\sup_{[-t_n,t]_*}\lV \xi^\odot_\npo-\xi^\odot_n\rV_\sHTO+|q_\npo-q_n|\right) & (t \in[-T/2,T/2]).
\end{align}
\end{lemma}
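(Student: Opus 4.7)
The plan is to reduce the statement directly to the Lipschitz bound on $\tilde \cF$ provided by Proposition~\ref{centeredLipBds}. Recall from Definition~\ref{iterationsetupdefn} that $\cF^\odot_n = \tilde\cF(\xi^\odot_n,\tilde\theta_n)$, where $\xi^\odot_n$ is the splash-centered version of $\xi_n$ (Definition~\ref{splashcenteringDefn}) and $\tilde\theta_n(\theta)=\Theta(\theta;\theta_n,\vartheta_n)$ is the relabeling built from the splash labels of $\xi_n$. The analogous identification holds for $\cF^\odot_\npo$ in terms of $\xi^\odot_\npo$, $\tilde\theta_\npo$, and $q_\npo$. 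Thus the quantity to be bounded is literally $\tilde\cF(\xi^\odot_\npo,\tilde\theta_\npo)(t)-\tilde\cF(\xi^\odot_n,\tilde\theta_n)(t)$.

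The first step is to check that Proposition~\ref{centeredLipBds} applies: both $\xi_\npo$ and $\xi_n$ lie in $\sB^\tosplash$ by Corollary~\ref{unifbddness}, so their splash-centered versions $\xi^\odot_\npo$ and $\xi^\odot_n$ are defined with splash triples $q_\npo$ and $q_n$ respectively. Plugging the identifications $\xi^\odot=\xi^\odot_\npo$, $\uxi^\odot=\xi^\odot_n$, $\tilde\theta=\tilde\theta_\npo$, $\underline{\tilde\theta}=\tilde\theta_n$, $q=q_\npo$, $\underline q=q_n$ into the Lipschitz estimate for $\tilde\cF$ in Proposition~\ref{centeredLipBds} (valid for $t\in[-T/2,T/2]$) yields
\begin{align}
\lV \cF^\odot_\npo(t)-\cF^\odot_n(t)\rV_{\HTW}
\le C\left(\sup_{[-t^*_\splash,t]_*} \lV\xi^\odot_\npo-\xi^\odot_n\rV_{\sHTO}+|q_\npo-q_n|\right),
\end{align}
where $t^*_\splash=\min(t_n,t_\npo)$, hence $-t^*_\splash=\max(-t_n,-t_\npo)$.

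The second step is to replace the interval $[-t^*_\splash,t]_*$ by the interval $[-t_n,t]_*$ appearing in the statement. If $t_n\le t_\npo$ then $t^*_\splash=t_n$ and the two intervals coincide, so there is nothing to do. If $t_n>t_\npo$ then $t^*_\splash=t_\npo$ and $-t_\npo>-t_n$, giving $[-t_\npo,t]_*\subset[-t_n,t]_*$; the supremum on the left is therefore bounded by the supremum on the right, and the claim follows.

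No step here is substantial: the heavy lifting (uniform-in-$\delta$ weighted bounds on $H$, $P^\pm_\grad$, $\sN$, their time-derivative counterparts, and the $\bE,\Nres$ commutator cancellations, together with their translation to the conjugated maps $\tilde\cF(\cdot,\tilde\theta)$) has already been absorbed into Proposition~\ref{centeredLipBds}. The only thing to watch is the bookkeeping of the time interval on which both splash-centered iterates are simultaneously defined, which is what the short monotonicity argument above handles.
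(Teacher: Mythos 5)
Your identification $\cF^\odot_n = \tilde\cF(\xi^\odot_n,\tilde\theta_n)$ and the reduction to Proposition~\ref{centeredLipBds} is exactly the paper's route: the paper's own proof is literally the one sentence that the lemma follows from that proposition, so the heart of your argument matches it.

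However, the interval-bookkeeping step contains a false claim. In the case $t_n > t_\npo$ (so $t^*_\splash = t_\npo$ and $-t_\npo > -t_n$), the asserted containment $[-t_\npo,t]_* \subset [-t_n,t]_*$ fails whenever $t < -t_\npo$: there $[-t_\npo,t]_* = [t,-t_\npo]$ has right endpoint $-t_\npo$, which lies strictly to the right of the right endpoint of $[-t_n,t]_*$ (that right endpoint is $t$ when $-t_n\le t<-t_\npo$, and $-t_n<-t_\npo$ when $t<-t_n$). Since $t_\npo\in(0,T/2]$, the bad regime $t\in[-T/2,-t_\npo)$ is nonempty, so the containment cannot be invoked uniformly on $[-T/2,T/2]$. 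The correct handling is the shift-and-absorb move the paper uses elsewhere (e.g.\ in the proof of Proposition~\ref{ISclaim8}): the portion of $[-t^*_\splash,t]_*$ not covered by $[-t_n,t]_*$ has length at most $|t_n-t_\npo|\le|q_\npo-q_n|$, and since $\lV\xi^\odot_\npo-\xi^\odot_n\rV_{\sHTO}$ has a uniformly bounded time derivative (coming from $X_t=U$, $\lV U\rV_{C^1_{t,\theta}}\le M$, and the evolution equations for $\xi_{\dagger,i}$), the sup over $[-t^*_\splash,t]_*$ exceeds the sup over $[-t_n,t]_*$ by at most $C|q_\npo-q_n|$, which is absorbed into the term already present. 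With that replacement your proof is correct and coincides with the paper's approach.
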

\begin{proof}
This follows from Proposition~\ref{centeredLipBds}.
\end{proof}

\begin{lemma}\label{ISclaim4}
For each $n\geq 1$ and $t$ in $[-T/2,T/2]$,
\begin{align}
\lV (\xi^{\odot n}_\npo-\xi^{\odot \nmo}_n)(t) \rV_{\sHTO}
\leq
C\left(T\left(\sup_{[-t_n,t]_*}\lV \xi^\odot_n-\xi^\odot_\nmo\rV_\sHTT+\sup_{[-t_n,t]_*}\lV \xi^{\odot n}_\npo-\xi^{\odot \nmo}_n \rV_{\sHTT}\right)+|q_n-q_\nmo|\right) .
\end{align}
\end{lemma}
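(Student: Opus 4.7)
The plan is to derive linear wave evolution equations satisfied by both $\xi^{\odot n}_\npo$ and $\xi^{\odot \nmo}_n$ on a common time interval containing $[-t_n,t]_*$, form the difference, and close a Duhamel estimate using the initial-time control supplied by Lemma~\ref{ISclaim2}. The factor of $T$ on the right-hand side will arise precisely from the length of the interval of time-integration, while the degradation from $\sHTT$ to $\sHTO$ on the left reflects the loss of one tangential derivative when coefficient-differences from $\bJ$-type operators are hit on the background iterate.

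First, I would exploit the simple identities $\xi^{\odot n}_\npo(t)=\xi^\odot_\npo(t+t_n-t_\npo)$ and $\xi^{\odot \nmo}_n(t)=\xi^\odot_n(t+t_\nmo-t_n)$ together with the construction of Definition~\ref{subseqIterate} to write evolution equations of the form
\begin{align}
\partial_t\xi^{\odot n}_{\dagger,\npo} &= \widetilde A^{(n,\npo)}(t)\,\xi^{\odot n}_{\dagger,\npo} + \widetilde F^{(n,\npo)}(t),\\
\partial_t\xi^{\odot \nmo}_{\dagger,n} &= \widetilde A^{(\nmo,n)}(t)\,\xi^{\odot \nmo}_{\dagger,n} + \widetilde F^{(\nmo,n)}(t),
\end{align}
where $\widetilde A^{(n,\npo)},\widetilde F^{(n,\npo)}$ are the coefficients built (in the conjugated sense of Definition~\ref{conjugatedmapdefns}) from $\xi^\odot_n$ but parametrized via $\tilde\theta_\npo$ instead of $\tilde\theta_n$, and analogously for the pair $(\nmo,n)$. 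Since $\tilde\theta_n^{-1}\circ\tilde\theta_\npo$ differs from the identity by an amount controlled by $|q_\npo-q_n|$, the upper bounds of Proposition~\ref{preAndPostSplashUpperBds} transfer to the parametrization-adjusted coefficients, and Proposition~\ref{centeredLipBds} provides the corresponding Lipschitz bounds with an extra $|q_\npo-q_n|$ error.

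Second, I would form $\zeta=\xi^{\odot n}_{\dagger,\npo}-\xi^{\odot \nmo}_{\dagger,n}$, which satisfies
\[
\partial_t\zeta = \widetilde A^{(\nmo,n)}\,\zeta + (\widetilde A^{(n,\npo)}-\widetilde A^{(\nmo,n)})\,\xi^{\odot n}_{\dagger,\npo} + (\widetilde F^{(n,\npo)}-\widetilde F^{(\nmo,n)}).
\]
The inhomogeneity decomposes into (a) coefficient-source differences coming from $\xi^\odot_n$ versus $\xi^\odot_\nmo$ and (b) parametrization differences coming from $\tilde\theta_\npo$ versus $\tilde\theta_n$. Applying the Lipschitz estimates of Proposition~\ref{centeredLipBds} and using that the $\del_\theta$-derivative in the principal part of $\widetilde A$ falls on $\xi^{\odot n}_{\dagger,\npo}$, whose $\sHTT$ norm is uniformly bounded by Corollary~\ref{unifbddness}, the inhomogeneity is bounded in $\sH^1_\dagger$ by $C(\sup\lV\xi^\odot_n-\xi^\odot_\nmo\rV_{\sHTT}+|q_\npo-q_n|+|q_n-q_\nmo|)$. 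Duhamel's formula with the solution operator of Proposition~\ref{solutionOpBds} (based at time $-t_n$ and integrated up to $t$, with $|t+t_n|\leq T$) then yields
\[
\lV\zeta(t)\rV_{\sH^1_\dagger}\leq C\lV\zeta(-t_n)\rV_{\sH^1_\dagger}+C T\Big(\sup_{[-t_n,t]_*}\lV\xi^\odot_n-\xi^\odot_\nmo\rV_{\sHTT}+|q_\npo-q_n|+|q_n-q_\nmo|\Big),
\]
with $\lV\zeta(-t_n)\rV_{\sH^1_\dagger}\leq C|q_n-q_\nmo|$ by Lemma~\ref{ISclaim2}. The surface components $X$ and $U$ of the full difference are then recovered by integrating the auxiliary equations $X_t=U$ and $U_t=\bE^{-1}\dot U^*$ from time $-t_n$, again using Lemma~\ref{ISclaim2} for the initial difference and the $\sH^1_\dagger$ bound just established for the forcing.

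Finally, to eliminate the spurious $|q_\npo-q_n|$ term (which is absent from the stated right-hand side), I would invoke Proposition~\ref{splashparamStabilityProp} applied to the pair $\xi_\npo,\xi_n$, which bounds $|q_\npo-q_n|$ by the sup of $\lV X_\npo-X_n\rV_{C^1}+\lV U_\npo-U_n\rV_{L^\infty}$, and then use Lemma~\ref{ISclaim1}(ii) to translate this back into $\sup\lV\xi^{\odot n}_\npo-\xi^{\odot\nmo}_n\rV_{\sHTT}+|q_\npo-q_n|+|q_n-q_\nmo|$, absorbing the resulting self-referential $\sHTT$ term into the $T$-multiplied right-hand side on a sufficiently short time interval. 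The main obstacle will be carefully tracking the $\theta$-reparametrization mismatch $\tilde\theta_n^{-1}\circ\tilde\theta_\npo$ versus $\tilde\theta_\nmo^{-1}\circ\tilde\theta_n$ so that the $|q_\npo-q_n|$ contributions reduce, via the stability and triangle-type comparisons of Lemma~\ref{ISclaim1} and Proposition~\ref{splashparamStabilityProp}, to exactly the two supremum terms and the single $|q_n-q_\nmo|$ contribution appearing in the statement.
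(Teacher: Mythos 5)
Your proposal takes a genuinely different route from the paper, and the discrepancy matters. The paper's proof does not invoke the propagator $S^\odot_n$ or Duhamel at all: it simply integrates the evolution equation for the difference $\zeta = \xi^{\odot n}_\npo - \xi^{\odot\nmo}_n$ from $-t_n$ to $t$,
\[
\zeta(t) = \zeta(-t_n) + \int_{-t_n}^{t}\Big(\cA^\odot_n(s)\,\zeta(s) + (\cA^\odot_n - \cA^\odot_{\nmo})(s)\,\xi^{\odot\nmo}_n(s) + \dcF^\odot_n(s)-\dcF^\odot_\nmo(s)\Big)\,ds,
\]
and estimates each term crudely under the integral. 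The self-referential term $T\sup\lV\xi^{\odot n}_\npo - \xi^{\odot\nmo}_n\rV_\sHTT$ then appears \emph{naturally and necessarily}: the operator $\cA^\odot_n = \bJ^\odot_n\del_\theta + \cR^\odot_n$ loses one tangential derivative, so $\lV\cA^\odot_n(s)\zeta(s)\rV_\sHTO \lesssim \lV\zeta(s)\rV_\sHTT$, and integrating over an interval of length $\leq T$ produces exactly that term. The remaining terms give $|q_n-q_\nmo|$ (Lemma~\ref{ISclaim2}) and $T\sup\lV\xi^\odot_n-\xi^\odot_\nmo\rV_\sHTT$ (Proposition~\ref{centeredLipBds}). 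No $|q_\npo-q_n|$ and no propagator are needed. This is precisely the structural distinction between Lemma~\ref{ISclaim4} (direct integration, lower Sobolev index on the left, self-term on the right) and Lemma~\ref{ISclaim5} (Duhamel with $S^\odot_n$, higher index on the left, no self-term).

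Your proposal is essentially the proof of Lemma~\ref{ISclaim5} transplanted here, and consequently it fights against the statement rather than toward it: Duhamel with $S^\odot_n$ removes the self-referential term, which then has to be reconstructed in an ad hoc way. You do this by introducing $|q_\npo - q_n|$ (from the $\tilde\theta_\npo$ versus $\tilde\theta_n$ mismatch in the conjugated coefficients) and then proposing to eliminate it via Proposition~\ref{splashparamStabilityProp} and Lemma~\ref{ISclaim1}(ii). This step is the gap. It is circular (you bound $|q_\npo-q_n|$ back in terms of a quantity that contains $|q_\npo-q_n|$ and a self-referential supremum), requires relating $\lV X_\npo - X_n\rV_{C^1}$ to the \emph{splash-centered staggered} difference $\xi^{\odot n}_\npo - \xi^{\odot\nmo}_n$ (these differ by reparametrizations and time shifts), and is ultimately deferred to ``the main obstacle will be carefully tracking the $\theta$-reparametrization mismatch,'' which concedes it is not resolved. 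The direct-integration route avoids all of this because the only data it needs are $\zeta(-t_n)$, $\cA^\odot_n-\cA^\odot_\nmo$, and $\dcF^\odot_n-\dcF^\odot_\nmo$, none of which ever see $q_\npo$.
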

\begin{proof}
Deriving an evolution equation satisfied by $\xi^{\odot n}_\npo-\xi^{\odot \nmo}_n$, we find
\begin{align}
\xi^{\odot n}_\npo-\xi^{\odot \nmo}_n = (\xi^{\odot n}_\npo -\xi^{\odot\nmo}_n)(-t_{n}) +&\int^t_{-t_{n}}\big((\cA^\odot_n - \cA^\odot_{n-1})(s)\xi^{\odot\nmo}_{n}(s) \\
& +\cA^\odot_{n}(s)(\xi^{\odot n}_\npo-\xi^{\odot\nmo}_n)(s)+ \dcF^\odot_n(s)-\dcF^\odot_{n-1}(s) \big)\,ds.
\end{align}
By using Lemma~\ref{ISclaim2} and the bounds of Propositions~\ref{preAndPostSplashUpperBds} and~\ref{centeredLipBds}, the claim follows.
\end{proof}

\begin{lemma}\label{ISclaim5}
For each $n\geq 1$,
\begin{align}
&& \lV\xi^{\odot n}_{\dagger,\npo}(t)-\xi^{\odot n-1}_{\dagger,n}(t)\rV_{\sH^2_\dagger}
&\leq C\left(T\sup_{[-t_n,t]_*}\lV \xi^\odot_n-\xi^\odot_\nmo\rV_\sHTT+|q_n-q_\nmo|\right) & (t\in[-T/2,T/2]).
\end{align}
\end{lemma}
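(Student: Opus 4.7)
The plan is to apply Duhamel's formula to the difference $\Delta(t) := \xi^{\odot n}_{\dagger,\npo}(t) - \xi^{\odot n-1}_{\dagger, n}(t)$, following the same general template as the proof of Lemma~\ref{ISclaim4} but working in the stronger $\sH^2_\dagger$ norm and closing the estimate with the semigroup $S^\odot_n$ from Proposition~\ref{solutionOpBds}. By definition each $\tilde\xi_{\dagger,\npo}$ satisfies $\del_t\tilde\xi_{\dagger,\npo} = A^\odot_n(t-t_n)\tilde\xi_{\dagger,\npo} + F^\odot_n(t-t_n)$, and analogously for $\tilde\xi_{\dagger,n}$ with the index shifted down by one. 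The splash-shifted objects $\xi^{\odot n}_{\dagger,\npo}$ and $\xi^{\odot n-1}_{\dagger, n}$ are obtained from these by a time translation (by $t_n$, resp.\ $t_\nmo$) and a small reparametrization, and one verifies that each inherits a forced linear wave equation whose leading operator can be taken to be $A^\odot_n$, resp.\ $A^\odot_\nmo$, up to small commutator corrections coming from the change of variables between $\tilde\theta_n$, $\tilde\theta_\nmo$, and $\tilde\theta_\npo$. Consequently, $\Delta$ satisfies an inhomogeneous equation driven by $A^\odot_n$ with forcing of the form $(A^\odot_n-A^\odot_\nmo)\xi^{\odot n-1}_{\dagger,n}+(F^\odot_n-F^\odot_\nmo)$ plus reparametrization errors controllable by $|q_n-q_\nmo|$.

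Applying Duhamel from a base time $\tau_0 = -t_n$ (which lies in the common domain of definition of both terms) yields a representation
\[
\Delta(t) = S^\odot_n(t,-t_n)\,\Delta(-t_n) + \int_{-t_n}^{t} S^\odot_n(t,s)\,\bigl[(A^\odot_n-A^\odot_\nmo)\xi^{\odot n-1}_{\dagger,n}(s) + (F^\odot_n-F^\odot_\nmo)(s) + (\text{reparam.\ errors})\bigr]\,ds.
\]
The initial-value term is controlled by Lemma~\ref{ISclaim2}, giving $\lV\Delta(-t_n)\rV_{\sH^2_\dagger}\leq C|q_n-q_\nmo|$; together with the bound $\lV S^\odot_n(t,s)\rV_{B(\sH^2_\dagger,\sH^2_\dagger)}\leq Ce^{C(M)(t-s)}\leq C'$ from Proposition~\ref{solutionOpBds} this contributes $C|q_n-q_\nmo|$ to the final estimate. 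For the integrand, Proposition~\ref{centeredLipBds} provides $\lV\tilde A(\xi^\odot_n,\tilde\theta_n)-\tilde A(\xi^\odot_\nmo,\tilde\theta_\nmo)\rV_{B(\sH^k_\dagger,\sH^2_\dagger)}\leq C(\sup_{[-t_n,s]_*}\lV\xi^\odot_n-\xi^\odot_\nmo\rV_{\sHTT}+|q_n-q_\nmo|)$, and the uniform $\sH^k_\dagger$-bound on $\xi^{\odot n-1}_{\dagger,n}$ from Corollary~\ref{unifbddness} converts this into a bound on $\lV(A^\odot_n-A^\odot_\nmo)\xi^{\odot n-1}_{\dagger,n}\rV_{\sH^2_\dagger}$ of the same size, with the analogous estimate for $F^\odot_n-F^\odot_\nmo$. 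Integrating over an interval of length at most $T$ and combining yields $\lV\Delta(t)\rV_{\sH^2_\dagger}\leq C|q_n-q_\nmo| + C'T(\sup\lV\xi^\odot_n-\xi^\odot_\nmo\rV_{\sHTT}+|q_n-q_\nmo|)$; absorbing $C'T|q_n-q_\nmo|$ into $C|q_n-q_\nmo|$ for $T$ small gives the claim.

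The main technical obstacle will be cleanly handling the reparametrization mismatch, since $\xi^{\odot n}_{\dagger,\npo}$ and $\xi^{\odot n-1}_{\dagger,n}$ use different splash reparametrizations $\tilde\theta_\npo$ and $\tilde\theta_n$, so neither inherits the clean PDE of $\tilde\xi_{\dagger,\npo}$, $\tilde\xi_{\dagger,n}$ without small conjugation errors involving the diffeomorphisms $\varphi_j := \tilde\theta_j^{-1}\circ\tilde\theta_{j+1}$. Each such correction must be shown to be $O(|q_n-q_\nmo|)$ — in particular not producing a spurious $|q_\npo-q_n|$ term — by exploiting the uniform closeness of $\Theta(\,\cdot\,;\theta_\splash,\vartheta_\splash)$ to the identity (see~\eqref{ThetaClosetoId}) together with the uniform $\sH^k$-bounds on the iterates. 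This is essentially the same cancellation mechanism that underlies the clean $|q_n-q_\nmo|$ bound appearing in Lemma~\ref{ISclaim2}.
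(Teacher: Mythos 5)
Your general route matches the paper exactly: write the difference $\Delta(t)=\xi^{\odot n}_{\dagger,\npo}(t)-\xi^{\odot\nmo}_{\dagger,n}(t)$ via the Duhamel representation driven by $S^\odot_n$, bound the initial-value term $\lV\Delta(-t_n)\rV_{\sH^2_\dagger}$ by Lemma~\ref{ISclaim2}, bound the semigroup by Proposition~\ref{solutionOpBds}, and bound the Lipschitz forcing $(A^\odot_n-A^\odot_\nmo)\xi^{\odot\nmo}_{\dagger,n}+F^\odot_n-F^\odot_\nmo$ by Proposition~\ref{centeredLipBds} together with the uniform bounds of Corollary~\ref{unifbddness}.

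There is, however, a genuine gap in your handling of the reparametrization mismatch. You read \eqref{staggeredshiftntn} literally, so that $\xi^{\odot n}_\npo(t)=\xi_\npo(t+t_n,\tilde\theta_\npo(\cdot),\cdot)=\tilde\xi_\npo(t+t_n)\circ(\tilde\theta_n^{-1}\circ\tilde\theta_\npo)$, and you then assert the induced conjugation corrections from $\varphi_n=\tilde\theta_n^{-1}\circ\tilde\theta_\npo$ can be bounded by $|q_n-q_\nmo|$, ``in particular not producing a spurious $|q_\npo-q_n|$ term.'' That assertion is not justified and would, in fact, be false: $\varphi_n-\operatorname{id}$ is of size $|q_\npo-q_n|$, not $|q_n-q_\nmo|$, and neither the uniform $\sH^k$-bounds nor the uniform closeness of $\Theta$ to the identity change the linear-in-$(q_\npo-q_n)$ scaling of those corrections; a forward reference like $|q_\npo-q_n|$ on the right-hand side would also break the contraction in Proposition~\ref{ISclaim8}.

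What actually closes the argument is that no reparametrization error arises at all. Cross-referencing the proofs of Lemmas~\ref{ISclaim1} and~\ref{ISclaim2} (which tacitly use $\xi^{\odot n}_\npo(-t_n)=\xi_\init\circ\tilde\theta_n$ and $\xi^{\odot n}_\npo(t-t_\nmo)=\tilde\xi_\npo(t-t_\nmo+t_n)$) shows that \eqref{staggeredshiftntn} is to be read with $\tilde\theta_n$ rather than $\tilde\theta_\npo$. Since $\xi_\npo=\tilde\xi_\npo\circ\tilde\theta_n^{-1}$, the reparametrization in the definition of $\xi^{\odot n}_\npo$ then exactly cancels the one built into $\xi_\npo$, yielding $\xi^{\odot n}_{\dagger,\npo}(t)=\tilde\xi_{\dagger,\npo}(t+t_n)$ — a pure time translation — and similarly $\xi^{\odot\nmo}_{\dagger,n}(t)=\tilde\xi_{\dagger,n}(t+t_\nmo)$. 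Each therefore satisfies its linear wave equation with principal operator $A^\odot_n(t)$, resp. $A^\odot_\nmo(t)$, \emph{exactly}, and the Duhamel representation in the paper holds without any conjugation correction terms. The remainder of your estimate then goes through as you describe.
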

\begin{proof}
From an evolution equation satisfied by $\xi^{\odot n}_{\dagger,\npo}-\xi^{\odot \nmo}_{\dagger,n}$ we find
\begin{align}
\xi^{\odot n}_{\dagger,\npo}-\xi^{\odot \nmo}_{\dagger,n} =& S^\odot_n(t,-t_{n})(\xi^{\odot n}_{\dagger,\npo}-\xi^{\odot \nmo}_{\dagger,n})(-t_{n}) \\&+\int^t_{-t_{n}}S^\odot_n(t,s)\big((A^\odot_n - A^\odot_{n-1})(s)\xi^{\odot\nmo}_{\dagger,n}(s)  + F^\odot_n(s)-F^\odot_{n-1}(s) \big)\,ds.
\end{align}
The result is then shown from Proposition~\ref{solutionOpBds}, Proposition~\ref{centeredLipBds}, and Lemma~\ref{ISclaim2}.
\end{proof}
\begin{lemma}\label{ISclaim6}
For each $n\geq 1$ and $t$ in $[-T/2,T/2]$,
\begin{align}
\lV (X^{\odot n}_\npo- X^{\odot\nmo}_n)(t)\rV_{H^4(S^1)}
+\lV (U^{\odot n}_\npo- U^{\odot\nmo}_n)(t)\rV_{H^3(S^1)}
\leq &C
\Bigg(\sup_{[-t_n,t]_*}\lV\xi^\odot_n-\xi^\odot_\nmo\rV_\sHTO \\
&+T\sup_{[-t_n,t]_*}\lV\xi^\odot_n-\xi^\odot_\nmo\rV_\sHTT
+|q_n-q_\nmo|
\Bigg).
\end{align}
\end{lemma}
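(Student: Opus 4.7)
Plan: The approach mirrors the proof of Proposition~\ref{LimitExistence} in the backward-in-time construction, where the analogous bound for $X_\npo - X_n$ in $H^4(S^1)$ and $U_\npo - U_n$ in $H^3(S^1)$ was obtained by combining wave-form identities expressing the top-order surface derivatives in terms of the good unknowns $\dot U^*, \dot B^*$ and the operators $\bD, \bE, \cF$, with the $T$-factor estimate on the $\xi_\dagger$-difference and the Lipschitz bounds on the operators. In the present forward-in-time, splash-centered setting the analogues of the wave-form identities come directly from Definition~\ref{subseqIterate}, and the roles of the $\xi_\dagger$-bound and the operator Lipschitz bounds are played by Lemma~\ref{ISclaim5} and Proposition~\ref{centeredLipBds} respectively.

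The first step is to derive splash-centered wave-form identities analogous to \eqref{DthetaBformula}--\eqref{DthetaUformula}. Unpacking the construction of $\tilde U_\npo$ and $\tilde X_\npo$ in Definition~\ref{subseqIterate} and following the commutator manipulations used in the proof of Proposition~\ref{MainInductiveBoundProp}, one verifies identities of the form $\del_\theta \tilde B_\npo(t) = ((\bD^\odot_n\bE^\odot_n)(t-t_n))^{-1}(\dot U^{*\sim}_\npo(t) - \cF^\odot_n(t-t_n))$ and $\del_\theta \tilde U_\npo(t) = ((\bE^\odot_n)(t-t_n))^{-1}\dot B^{*\sim}_\npo(t)$, where $\tilde B_\npo$ is chosen so that $\del_\theta \tilde X_\npo = \tilde B_\npo$. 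Translating through the time-shift $t\mapsto t+t_n$ and the reparametrization $\theta\mapsto \tilde\theta_n^{-1}(\tilde\theta_\npo(\theta))$ used to form $X^{\odot n}_\npo$, $U^{\odot n}_\npo$, $\dot U^{*\odot n}_\npo$, these become expressions of the schematic form
\begin{align}
\del_\theta^2 X^{\odot n}_\npo(t) &= \mathfrak M^X_n(t)\big(\dot U^{*\odot n}_\npo(t) - \cF^{\odot *}_n(t)\big) + \mathfrak r^X_n(t),\\
\del_\theta U^{\odot n}_\npo(t) &= \mathfrak M^U_n(t)\,\dot B^{*\odot n}_\npo(t) + \mathfrak r^U_n(t),
\end{align}
where $\mathfrak M^X_n(t)$ and $\mathfrak M^U_n(t)$ are uniformly bounded operators on $H^2(S^1)$ built from $(\bD^\odot_n\bE^\odot_n)^{-1}(t-t_n)$ and $(\bE^\odot_n)^{-1}(t-t_n)$ respectively (together with smooth Jacobian factors coming from $\tilde\theta_n^{-1}\circ\tilde\theta_\npo$), $\cF^{\odot*}_n(t)$ is the corresponding translation of $\cF^\odot_n(t-t_n)$, and $\mathfrak r^X_n, \mathfrak r^U_n$ are lower-order reparametrization corrections, uniformly bounded in $H^2(S^1)$ by the norms controlled by Corollary~\ref{unifbddness}.

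The second step is to subtract the $(n-1)$-version of these identities from the $n$-version and estimate the result in $H^2(S^1)$. Each difference splits into a state-difference piece of the form $\mathfrak M_n(t)\,(\dot U^{*\odot n}_\npo - \dot U^{*\odot\nmo}_n)(t)$ (and analogously for $\dot B^*$), bounded in $H^2(S^1)$ by $\lV \xi^{\odot n}_{\dagger,\npo} - \xi^{\odot\nmo}_{\dagger,n}\rV_{\sH^2_\dagger}$ and hence by $C(T\sup\lV\xi^\odot_n - \xi^\odot_\nmo\rV_\sHTT + |q_n-q_\nmo|)$ via Lemma~\ref{ISclaim5}, and an operator-difference piece of the form $(\mathfrak M_n(t) - \mathfrak M_\nmo(t))\dot U^{*\odot\nmo}_n(t)$ (plus analogous $\cF^{\odot*}$ and $\mathfrak r$ differences). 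The operator-difference piece is handled with the splitting
\begin{align}
\mathfrak M_n(t) - \mathfrak M_\nmo(t) = \bigl[\mathfrak M_n(t) - \widetilde{\mathfrak M}_n(t)\bigr] + \bigl[\widetilde{\mathfrak M}_n(t) - \mathfrak M_\nmo(t)\bigr],
\end{align}
where $\widetilde{\mathfrak M}_n(t)$ denotes $\mathfrak M_n(t)$ modified so that the time-shift $t-t_\nmo$ and label map $\tilde\theta_\nmo$ replace $t-t_n$ and $\tilde\theta_n$; the first bracket is controlled by time- and parameter-regularity of $\mathfrak M_n$ together with $|t_n - t_\nmo| + |\theta_n - \theta_\nmo| + |\vartheta_n - \vartheta_\nmo| \le |q_n - q_\nmo|$, while the second bracket is absorbed by the Lipschitz bounds of Proposition~\ref{centeredLipBds}, producing the $\sup\lV\xi^\odot_n - \xi^\odot_\nmo\rV_\sHTO + |q_n - q_\nmo|$ contribution to the right-hand side of the claim.

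Combining the resulting $H^2(S^1)$ bounds on $\del_\theta^2(X^{\odot n}_\npo - X^{\odot\nmo}_n)$ and on $\del_\theta(U^{\odot n}_\npo - U^{\odot\nmo}_n)$ with the lower-order control $\lV X^{\odot n}_\npo - X^{\odot\nmo}_n\rV_{H^3(S^1)} + \lV U^{\odot n}_\npo - U^{\odot\nmo}_n\rV_{H^2(S^1)} \le \lV\xi^{\odot n}_\npo - \xi^{\odot\nmo}_n\rV_\sHTO$ supplied by Lemma~\ref{ISclaim4} then yields the claimed estimate. The main technical obstacle will be keeping this bookkeeping clean: every top-order term carries the data $(t-t_n,\tilde\theta_n,\tilde\theta_\npo)$ on the $n$-side and $(t-t_\nmo,\tilde\theta_\nmo,\tilde\theta_n)$ on the $(n-1)$-side, and each mismatch must be unpacked through the Lipschitz-plus-regularity splitting above so that it is absorbed into $|q_n - q_\nmo|$. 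Once this splitting is set up and the uniform-in-$n$ control supplied by Corollary~\ref{unifbddness} is used for the ``bounded factor'' in each bilinear term, the estimate closes exactly as in the backward case.
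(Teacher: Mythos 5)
Your proposal is correct and follows essentially the same route as the paper: use the splash-centered wave-form identities (the analogues of \eqref{DthetaBformula}--\eqref{DthetaUformula}, with the $(\tilde\theta'_n)^2$ Jacobian factor) to express $\del^2_\theta X^{\odot n}_\npo$ and $\del_\theta U^{\odot n}_\npo$, subtract, and bound the difference via Proposition~\ref{centeredLipBds}, Lemma~\ref{ISclaim5}, and Lemma~\ref{ISclaim3}, then combine with a lower-order estimate and absorb the $T$-factored staggered-difference term.

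The one genuine difference is in how you obtain the lower-order control. You invoke Lemma~\ref{ISclaim4} as a black box to bound $\lVert\xi^{\odot n}_{n+1}-\xi^{\odot n-1}_n\rVert_{\sH^1}$, whereas the paper re-derives the $H^3(S^1)\times H^2(S^1)$ bound on $(X^{\odot n}_{n+1}-X^{\odot n-1}_n, U^{\odot n}_{n+1}-U^{\odot n-1}_n)$ directly from the explicit Duhamel-type integral formulas for $X_{n+1}$ and $U_{n+1}$ (involving $X_\init\circ\tilde\theta_n - X_\init\circ\tilde\theta_{n-1}$, the $\int^t_{-t_n}$ term, and the shift correction $\int^{-t_n}_{-t_{n-1}}$), using Lemma~\ref{ISclaim2} for the initial-data mismatch. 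Your shortcut is legitimate since Lemma~\ref{ISclaim4} is proved earlier and its proof does not rely on the present lemma; both routes produce the same $T\sup\lVert\xi^{\odot n}_{n+1}-\xi^{\odot n-1}_n\rVert_{\sH^2}$ remainder and both require the same implicit $T$-absorption (with the $\dagger$-components handled again by Lemma~\ref{ISclaim5}) that neither proof spells out. One small imprecision: the inequality $\lVert X^{\odot n}_{n+1}-X^{\odot n-1}_n\rVert_{H^3}+\lVert U^{\odot n}_{n+1}-U^{\odot n-1}_n\rVert_{H^2}\le\lVert\xi^{\odot n}_{n+1}-\xi^{\odot n-1}_n\rVert_{\sH^1}$ is simply the definition of the $\sH^1$ norm, not something ``supplied by Lemma~\ref{ISclaim4}''; what Lemma~\ref{ISclaim4} supplies is the subsequent bound on that $\sH^1$ norm.
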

\begin{proof}
Writing out expressions for $X^{\odot n}_\npo - X^{\odot\nmo}_n$ and $U^{\odot n}_\npo- U^{\odot\nmo}_n$, we find
\begin{align}
X^{\odot n}_\npo-X^{\odot\nmo}_n  &=X_\init\circ \tilde\theta_n-X_\init\circ\tilde\theta_\nmo + \int^t_{-t_{n}}(U^{\odot n}_\npo-U^{\odot\nmo}_n)(\tau)\,d\tau
+\int^{-t_{n}}_{-t_{\nmo}}U^{\odot n-1}_n(\tau)\,d\tau.\\
U^{\odot n}_\npo-U^{\odot\nmo}_n &=U_\init\circ \tilde\theta_n-U_\init\circ\tilde\theta_\nmo+\int^t_{-t_{n}}((\bE^\odot_n)^{-1} \dot U^{*\odot n}_\npo  -  (\bE^\odot_\nmo)^{-1}\dot U^{*\odot n-1}_n)(\tau)\,d\tau\\
&\qquad+\int^{-t_{n}}_{-t_{\nmo}}((\bE^\odot_\nmo)^{-1}\dot U^{*\odot n-1}_n)(\tau)\,d\tau.
\end{align}
Now it follows from Lemma~\ref{ISclaim2} and Proposition~\ref{centeredLipBds} that
\begin{align}\label{ISclaim6pfbound}
\lV X^{\odot n}_\npo - X^{\odot\nmo}_n \rV_{H^3(S^1)}
+\lV U^{\odot n}_\npo- U^{\odot\nmo}_n\rV_{H^2(S^1)}
\leq C&
\Bigg(T\bigg(\sup_{[-t_n,t]_*}\lV\xi^{\odot n}_{\npo}-\xi^{\odot\nmo}_{n}\rV_{\sH^2}\\
&\qquad+\sup_{[-t_n,t]_*}\lV\xi^\odot_n-\xi^\odot_\nmo\rV_\sHTT\bigg) +|q_n-q_\nmo|
\Bigg).
\end{align}
One can verify the following identities, which are analogous\footnote{Regarding the appearance of $\tilde\theta'_n$, recall Remark~\ref{conjugfmlas}.} to the identities \eqref{DthetaBformula} and \eqref{DthetaUformula} in the proof of Proposition~\ref{LimitExistence}.
\begin{align}
\del^2_\theta X^{\odot n}_\npo &= (\tilde\theta'_n)^2(\bD^\odot_n \bE^\odot_n)^{-1}(\dot U^{*\odot n}_\npo-\cF^\odot_n),\\
\del_\theta U^{\odot n}_\npo&=\tilde\theta'_n(\bE^\odot_n)^{-1}\dot B^{*\odot n}_\npo.
\end{align}
Using the above and the corresponding equalities in which $n$ is replaced by $n-1$ everywhere, taking differences, and applying two derivatives in $\theta$, we find
\begin{align}
\lV  X^{\odot n}_\npo -  X^{\odot\nmo}_n \rV_{\dot H^4(S^1)}
+\lV  U^{\odot n}_\npo- U^{\odot\nmo}_n\rV_{\dot H^3(S^1)}
\leq C 
&\Bigg(\sup_{[-t_n,t]_*}\lV\xi^\odot_n-\xi^\odot_\nmo\rV_\sHTO+\lV\xi^{\odot n}_{\dagger,\npo}-\xi^{\odot\nmo}_{\dagger,n}\rV_{\sH^2_\dagger}\\
&\quad+\lV \cF^\odot_n-\cF^\odot_{n-1}\rV_\HTW 
+\lV X^\odot_n - X^\odot_{n-1}\rV_{H^3(S^1)}\\
&\quad+|q_n-q_\nmo|
\Bigg).
\end{align}
Using Lemmas~\ref{ISclaim5} and~\ref{ISclaim3} in the right-hand side and combining with \eqref{ISclaim6pfbound}, we get the claim.
\end{proof}

\begin{corollary}\label{ISclaim7}
For each $n\geq 1$ and $t$ in $[-T/2,T/2]$,
\begin{align}
\lV(\xi^{\odot n}_\npo - \xi^{\odot \nmo}_n)(t)\rV_\sHTT
\leq C
\left(\sup_{[-t_n,t]_*}\lV\xi^\odot_n-\xi^\odot_\nmo\rV_\sHTO +T\sup_{[-t_n,t]_*}\lV\xi^\odot_n-\xi^\odot_\nmo\rV_\sHTT +|q_n-q_\nmo|
\right).
\end{align}
\end{corollary}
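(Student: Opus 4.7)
The plan is to derive this bound by simply combining the conclusions of Lemmas~\ref{ISclaim5} and~\ref{ISclaim6}. Recall that the $\sH^2$ norm decomposes as
\[
\lV \xi \rV_{\sH^2} = \lV \xi_\dagger \rV_{\sH^2_\dagger} + \lV X \rV_{H^4(S^1)} + \lV U \rV_{H^3(S^1)},
\]
so it suffices to bound each piece separately by the right-hand side of the claimed inequality.

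First I would quote Lemma~\ref{ISclaim5} to bound $\lV(\xi^{\odot n}_{\dagger,\npo} - \xi^{\odot\nmo}_{\dagger,n})(t)\rV_{\sH^2_\dagger}$ directly by $C(T\sup_{[-t_n,t]_*}\lV \xi^\odot_n - \xi^\odot_\nmo\rV_\sHTT + |q_n - q_\nmo|)$, which is in fact cleaner than what the corollary demands (there is no $\sH^1$-type term contribution). Then I would quote Lemma~\ref{ISclaim6} to bound $\lV(X^{\odot n}_\npo - X^{\odot\nmo}_n)(t)\rV_{H^4(S^1)} + \lV(U^{\odot n}_\npo - U^{\odot\nmo}_n)(t)\rV_{H^3(S^1)}$ by $C(\sup_{[-t_n,t]_*}\lV \xi^\odot_n - \xi^\odot_\nmo\rV_\sHTO + T\sup_{[-t_n,t]_*}\lV \xi^\odot_n - \xi^\odot_\nmo\rV_\sHTT + |q_n - q_\nmo|)$. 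Summing the two estimates and absorbing all constants into a single $C$ yields the stated bound.

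There is no real obstacle; the work has been done in the preceding two lemmas. The only minor verification is notational, namely checking that the definition of the $\sH^2$ norm indeed partitions as claimed into the truncated $\sH^2_\dagger$ part plus the $H^4(S^1)$ and $H^3(S^1)$ norms of $X$ and $U$ respectively, which is immediate from the definition given at the start of Section~\ref{rigorousfmltnsection}. The corollary is therefore essentially a repackaging step that collects the surface/vorticity estimate of Lemma~\ref{ISclaim5} with the geometric estimate of Lemma~\ref{ISclaim6} into a single control of the full $\sH^2$ norm, which is precisely the quantity that will be iterated in the subsequent contraction argument.
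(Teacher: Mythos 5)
Your proposal is exactly the paper's proof: the corollary is obtained by combining Lemma~\ref{ISclaim5} (for the truncated $\sH^2_\dagger$ piece) with Lemma~\ref{ISclaim6} (for the $X$ and $U$ pieces), using the decomposition of $\lV\cdot\rV_{\sH^2}$ into these three parts. Your identification of the notational check and the absorption of constants matches the paper's (terse) argument.
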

\begin{proof}
To get the above, we combine the bounds of Lemmas~\ref{ISclaim5} and~\ref{ISclaim6}.
\end{proof}

\begin{lemma}\label{splashparamdynamicbounds}
For each $n\geq 1$,
\begin{align}
|q_\npo-q_n|\leq C T \left(\sup_{t\in[0,T/2]}\lV \tilde \xi_{n+1}-\tilde \xi_{n}\rV_{\sH^2}+\sup_{t\in[0,T/2]}\lV \tilde \xi_n-\tilde \xi_{n-1}\rV_{\sH^2}+|q_n-q_\nmo|\right).
\end{align}
\end{lemma}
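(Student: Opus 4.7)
The plan is to reduce everything to Proposition~\ref{splashparamStabilityProp} applied to the pair $\xi_{n+1},\xi_n$, after extracting the required factor of $T$ from the built-in common initial condition. The decisive structural fact is that both $\xi_{n+1}$ and $\xi_n$ are built to coincide at $t=0$ with the unmodified analytic datum $\xi_\init$: indeed, Definition~\ref{subseqIterate} gives $\tilde\xi_{n+1}(0)=\xi_\init\circ\tilde\theta_n$, and composing with $\tilde\theta_n^{-1}$ yields $\xi_{n+1}(0)=\xi_\init$; similarly $\xi_n(0)=\xi_\init$. Consequently $X_{n+1}(0)=X_n(0)$ and $U_{n+1}(0)=U_n(0)$, and the relation $X_t=U$ inherited from the class $\sB^\tosplash$ gives, by the fundamental theorem of calculus,
\begin{align*}
\sup_t\lV X_{n+1}-X_n\rV_{C^1(S^1)} &\le T\sup_t\lV U_{n+1}-U_n\rV_{C^1(S^1)},\\
\sup_t\lV U_{n+1}-U_n\rV_{L^\infty(S^1)} &\le T\sup_t\lV \del_t(U_{n+1}-U_n)\rV_{L^\infty(S^1)}.
\end{align*}
Plugging these into Proposition~\ref{splashparamStabilityProp} yields $|q_{n+1}-q_n|\le CT\sup_t\lV\del_t(U_{n+1}-U_n)\rV_{C^1(S^1)}$ provided the right-hand side is small enough to meet the proposition's smallness hypothesis, which will hold for $T$ sufficiently small together with the inductive smallness of consecutive differences.

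Next I would unpack $\del_t U_m$. Composing the identity $\del_t\tilde U_{m}=(\bE^\odot_{m-1}(t-t_{m-1}))^{-1}\dot U^{*\sim}_{m}$ from Definition~\ref{subseqIterate} with $\tilde\theta_{m-1}^{-1}$ and unfolding the definition of $\tilde\bE$ (Definition~\ref{conjugatedmapdefns}) gives the clean identity $\del_t U_m=\bE(\xi_{m-1})^{-1}\dot U^*_m$. Splitting the difference,
\[
\del_t(U_{n+1}-U_n)=\bE(\xi_n)^{-1}(\dot U^*_{n+1}-\dot U^*_n)+\bigl(\bE(\xi_n)^{-1}-\bE(\xi_{n-1})^{-1}\bigr)\dot U^*_n,
\]
and estimating in $H^2(S^1)\hookrightarrow C^1(S^1)$ via Proposition~\ref{bEbounds} and the uniform $\sH^k$ bound on $\xi_n$ (Corollary~\ref{unifbddness}), I obtain
\[
\lV\del_t(U_{n+1}-U_n)\rV_{C^1(S^1)}\le C(M)\bigl(\lV\xi_{n+1}-\xi_n\rV_{\sH^2}+\lV X_n-X_{n-1}\rV_{H^3(S^1)}\bigr).
\]

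The remaining work is to convert these $\xi$-differences into the $\tilde\xi$-differences and splash-parameter differences that appear in the target bound. Using $\xi_m=\tilde\xi_m\circ\tilde\theta_{m-1}^{-1}$, a straightforward add-and-subtract gives, for each $m$,
\[
\lV\xi_{m+1}-\xi_m\rV_{\sH^2}\le C\lV\tilde\xi_{m+1}-\tilde\xi_m\rV_{\sH^2}+C\lV\tilde\xi_m\rV_{\sH^k}\,|q_m-q_{m-1}|\le C\bigl(\lV\tilde\xi_{m+1}-\tilde\xi_m\rV_{\sH^2}+M|q_m-q_{m-1}|\bigr),
\]
where the compositional difference is controlled via the explicit dependence of $\Theta(\cdot\,;\theta_\splash,\vartheta_\splash)$ on $(\theta_\splash,\vartheta_\splash)$ together with the trivial Lipschitz dependence of time shifts. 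Applied with $m=n$ this handles $\xi_{n+1}-\xi_n$ directly. For $\lV X_n-X_{n-1}\rV_{H^3(S^1)}$, I again exploit $X_n(0)=X_{n-1}(0)=X_\init$ and $\del_t X=U$ to write
\[
\lV X_n-X_{n-1}\rV_{H^3(S^1)}\le T\sup_t\lV U_n-U_{n-1}\rV_{H^3(S^1)}\le T\sup_t\lV\xi_n-\xi_{n-1}\rV_{\sH^2},
\]
and then apply the same conversion identity to $\xi_n-\xi_{n-1}$. Collecting, one arrives at a bound of the claimed shape plus a stray $CT^2|q_{n-1}-q_{n-2}|$.

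The main obstacle is absorbing this leftover $|q_{n-1}-q_{n-2}|$ contribution into the $\Delta_{n-1}=\sup_t\lV\tilde\xi_n-\tilde\xi_{n-1}\rV_{\sH^2}$ term. The mechanism is that $\tilde\xi_n$ and $\tilde\xi_{n-1}$ carry different reparametrizations at the initial time:
\[
\tilde\xi_n(0)-\tilde\xi_{n-1}(0)=\xi_\init\circ\tilde\theta_{n-1}-\xi_\init\circ\tilde\theta_{n-2}.
\]
Since $\del_\theta X_\init(\pm\pi/2)$ is nonzero (it is the left-to-right unit tangent scaled by $|\del_\theta X_\init|\ge\epsilon_1$, by~\eqref{Xzerobds}), the right-hand side is bounded below by a positive multiple of $|(\theta_{n-1},\vartheta_{n-1})-(\theta_{n-2},\vartheta_{n-2})|$, which captures two of the three components of $q_{n-1}-q_{n-2}$. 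The $t$-component is recovered by applying Proposition~\ref{splashparamStabilityProp} with the roles of $n,n-1$ and $n-1,n-2$ and exploiting that a $t_\splash$-shift of an iterate produces a nontrivial change in $X(t)$ at any fixed $t>0$ since $U_\init(\pm\pi/2)$ is bounded away from zero. This absorbs $|q_{n-1}-q_{n-2}|\le C\Delta_{n-1}$, and completes the bound asserted in the lemma.
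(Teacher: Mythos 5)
Your first two steps --- applying Proposition~\ref{splashparamStabilityProp} to $(\xi_{n+1},\xi_n)$ and then extracting the factor $T$ via the shared initial datum and the relation $X_t=U$ --- match the first two displays in the paper's proof. From there the routes diverge and yours has a genuine gap.

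The trouble starts with the estimate
\[
\lV\del_t(U_{n+1}-U_n)\rV_{C^1(S^1)}\le C(M)\bigl(\lV\xi_{n+1}-\xi_n\rV_{\sH^2}+\lV X_n-X_{n-1}\rV_{H^3(S^1)}\bigr),
\]
which you deduce from Proposition~\ref{bEbounds}. The supremum runs over $[0,T/2]$, and the iterates splash at times $t_n,\,t_{n-1}\le T/2$, so for part of this interval $\bE(\xi_n)(t)$ and $\bE(\xi_{n-1})(t)$ are not the purely geometric operators that Proposition~\ref{bEbounds} addresses: past the splash time they are the artificial extensions of Definition~\ref{keyPinchSensMapsExtDefns}(v), whose conformal map $\cO(z)$ has a branch cut pinned to the respective splash interfaces $\Gamma_{\splash,n}$ and $\Gamma_{\splash,n-1}$. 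Comparing $\bE(\xi_n)^{-1}$ with $\bE(\xi_{n-1})^{-1}$ therefore incurs an extra contribution from the difference of splash states, which is exactly the $|q_n-q_{n-1}|$ term that the $\tilde\bE$-Lipschitz bound of Proposition~\ref{centeredLipBds} carries on its right-hand side. Once that term is included, it lands directly in the claimed bound and there is nothing left over to absorb. The paper applies Proposition~\ref{centeredLipBds} (not Proposition~\ref{bEbounds}) and then uses Lemma~\ref{ISclaim1}(i) to turn $X^\odot_n-X^\odot_{n-1}$ into $\tilde\xi_n-\tilde\xi_{n-1}$ plus $|q_n-q_{n-1}|$; the index that comes out is $(n,n-1)$, not $(n-1,n-2)$.

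Your route instead converts $\lV X_n-X_{n-1}\rV$ via the reparametrization identities $X_m=\tilde X_m\circ\tilde\theta_{m-1}^{-1}$, which produces the remainder $T^2|q_{n-1}-q_{n-2}|$, a term the lemma does not contain. The closing paragraph that tries to absorb it does not go through. The lower bound $\lV\tilde\xi_n(0)-\tilde\xi_{n-1}(0)\rV\ge c\,|(\theta_{n-1},\vartheta_{n-1})-(\theta_{n-2},\vartheta_{n-2})|$ captures only the two label components of $q_{n-1}-q_{n-2}$; the splash-time component $|t_{n-1}-t_{n-2}|$ is not encoded in the initial reparametrizations $\tilde\theta_{n-1},\tilde\theta_{n-2}$ at all, since $\tilde\xi_n$ and $\tilde\xi_{n-1}$ are evaluated at the same physical time. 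The appeal to Proposition~\ref{splashparamStabilityProp} also runs the wrong way: that proposition upper-bounds $|q_{n-1}-q_{n-2}|$ in terms of $\xi_{n-1}-\xi_{n-2}$ differences, which are not the quantities available, and it cannot furnish the lower bound on a $\tilde\xi$-difference that your absorption mechanism would require.
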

\begin{proof}
We use Proposition~\ref{splashparamStabilityProp} to get the bound
\begin{align}
|q_\npo-q_n| \leq  C \sup_{t\in[0,T/2]}(\lV X_{n+1}-X_n\rV_{H^2(S^1)}+\lV U_{n+1}-U_n\rV_{H^1(S^1)}).
\end{align}
Now, since $X_{n+1}=X_n$ and $U_{n+1}=U_n$ at $t=0$,
\begin{align}
\lV X_{n+1}-X_n\rV_{H^2(S^1)}+\lV U_{n+1}-U_n\rV_{H^1(S^1)} \leq C T \sup_{t\in[0,T/2]} \lV \del_t U_{n+1}-\del_t U_n \rV_{H^2(S^1)}.
\end{align}
Meanwhile, 
by writing out expressions for $\del_t U_{n+1}$ and $\del_t U_n$ in terms of $U^{*\sim}_{n+1}$ and $U^{*\sim}_n$ and applying Proposition~\ref{centeredLipBds} and Lemma~\ref{ISclaim1} (i), one verifies
\begin{align}
\sup_{t\in[0,T/2]}\lV \del_t U_{n+1} - \del_t U_n \rV_{H^2(S^1)} 
&\leq C\left(\sup_{t\in[0,T/2]}\lV \tilde \xi_{n+1}-\tilde\xi_{n}\rV_{\sH^2} + \sup_{t\in[0,T/2]}\lV \tilde \xi_n-\tilde \xi_{n-1}\rV_{\sH^2} + |q_n-q_\nmo|\right).
\end{align}
Combining the above bounds gives the claim.
\end{proof}

\begin{proposition}\label{ISclaim8}
For each $n\geq 3$,
\begin{align}\label{ISclaim8bound}
\sup_{[0,T/2]}\lV\tilde\xi_\npo-\tilde\xi_n\rV_\sHTT +|q_\npo-q_n|
\leq&
CT\Bigg(\sup_{[0,T/2]}\lV \tilde\xi_n-\tilde\xi_\nmo\rV_\sHTT+\sup_{[0,T/2]}\lV\tilde\xi_\nmo-\tilde\xi_{n-2}\rV_\sHTT\\
&\qquad+|q_n-q_\nmo|+|q_\nmo-q_{n-2}|+|q_{n-2}-q_{n-3}|\Bigg),
\end{align}
and the sequences $\{\tilde\xi_n\}_{n\geq 0}$ and $\{q_n\}_{n\geq 0}$ converge in $C^0([0,T/2];\sH^k)$ and $\R^3$, respectively. We denote the limit of $\{\tilde\xi_n\}_{n\geq 0}$ by $\tilde \xi$, and the limit of $\{q_n\}_{n\geq 0}$ by $q=(t_\splash,\theta_\splash,\vartheta_\splash)$.
\end{proposition}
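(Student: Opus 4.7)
The plan is to derive the recursion \eqref{ISclaim8bound} by carefully chaining the estimates of Lemmas \ref{ISclaim1}, \ref{ISclaim3}--\ref{ISclaim6}, Corollary \ref{ISclaim7}, and Lemma \ref{splashparamdynamicbounds} in a specific order, and then to invoke a standard elementary argument to conclude convergence of $\{\tilde\xi_n\}$ and $\{q_n\}$ for $T$ sufficiently small. Writing $a_n = \sup_{[0,T/2]}\lV\tilde\xi_\npo-\tilde\xi_n\rV_{\sH^2}$ and $d_n = |q_\npo-q_n|$, the goal of the main step is a bound of the form $a_n + d_n \le CT\,(a_\nmo + a_{n-2} + d_\nmo + d_{n-2} + d_{n-3})$.

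Concretely, I would begin with Lemma~\ref{ISclaim1}(iii) (at $j=2$) to bound $a_n$ by the staggered-shift difference $\sup\lV\xi^{\odot n}_\npo - \xi^{\odot\nmo}_n\rV_{\sH^2}$ plus $d_\nmo$. I would then invoke Corollary~\ref{ISclaim7} to dominate the staggered-shift $\sH^2$ difference by the splash-centered $\sH^1$ difference $\sup\lV\xi^\odot_n-\xi^\odot_\nmo\rV_{\sH^1}$ plus $T$ times the splash-centered $\sH^2$ difference $\sup\lV\xi^\odot_n-\xi^\odot_\nmo\rV_{\sH^2}$ plus $d_\nmo$. Next, Lemma~\ref{ISclaim1}(ii) (applied at both $j=1$ and $j=2$) passes from the splash-centered differences $\xi^\odot_n - \xi^\odot_\nmo$ back to staggered-shift differences $\xi^{\odot\nmo}_n - \xi^{\odot n-2}_\nmo$ (with index shift by one), paying only the splash-parameter penalties $d_\nmo+d_{n-2}$. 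For the resulting $\sH^2$ piece I would invoke Corollary~\ref{ISclaim7} once more (at the shifted index), while for the $\sH^1$ piece I would crucially invoke Lemma~\ref{ISclaim4}, which controls the staggered-shift $\sH^1$ difference by a factor $T$ times the corresponding $\sH^2$ differences plus the splash-parameter penalty. A final use of Lemma~\ref{ISclaim1}(i) (at $j=1,2$) rewrites $\xi^\odot_\nmo - \xi^\odot_{n-2}$ as $\tilde\xi_\nmo-\tilde\xi_{n-2}$, i.e. as $a_{n-2}$, with a penalty $d_{n-2}$. After absorbing the $T\cdot O(M)$ terms using the uniform bound of Corollary~\ref{unifbddness}, and replacing occurrences of $d_\nmo$, $d_{n-2}$ via Lemma~\ref{splashparamdynamicbounds} where necessary to pull out $T$ factors, this produces an inequality of the form $a_n \le CT(a_\nmo + a_{n-2}) + CT(d_\nmo + d_{n-2} + d_{n-3})$. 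Plugging this into Lemma~\ref{splashparamdynamicbounds} (applied to $d_n$) then yields the same bound for $d_n$, and summing gives \eqref{ISclaim8bound}.

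Once the recursion \eqref{ISclaim8bound} is in hand, convergence is standard: set $\varepsilon_n = a_n + d_n + d_\nmo + d_{n-2}$ so as to absorb the multiple $d$-terms on the right. One then gets $\varepsilon_n \le C'T\,(\varepsilon_\nmo + \varepsilon_{n-2})$, and for $T$ small enough that the dominant root of $x^2 - C'Tx - C'T = 0$ lies strictly inside the unit disc, this forces $\varepsilon_n \to 0$ geometrically. Consequently both $\{\tilde\xi_n\}_{n\ge 0}$ and $\{q_n\}_{n\ge 0}$ are Cauchy in $C^0([0,T/2];\sH^2)$ and in $\R^3$, respectively, and one may define $\tilde\xi$ and $q = (t_\splash,\theta_\splash,\vartheta_\splash)$ as the limits.

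The main obstacle I anticipate is the careful bookkeeping in the chain above: each $\sH^2$ bound must be paired with the Corollary~\ref{ISclaim7} route (which exchanges $\sH^2$ control for $T$-smallness plus an $\sH^1$ remainder) and each $\sH^1$ bound with the Lemma~\ref{ISclaim4} route (whose $T$-smallness sits in front of the full $\sH^2$ norm); swapping these would lose a derivative and the recursion would fail to close. A second delicate point is how the splash-parameter differences $d_n$ are treated: they couple back into the $\sH^2$ estimates through Lemma~\ref{ISclaim1} and through Lemma~\ref{splashparamdynamicbounds}, and the substitutions must be made in exactly the right order so that $a_{n-3}$ does not appear (only $d_{n-3}$), which is why the restriction $n\ge 3$ is needed and why the last substitution using Lemma~\ref{splashparamdynamicbounds} is applied to $d_\nmo$ and $d_{n-2}$ rather than unwinding all the way.
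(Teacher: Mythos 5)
Your proposal follows essentially the same route as the paper's proof: the same chain of lemmas (ISclaim1(iii), ISclaim7, ISclaim1(ii), ISclaim4 with ISclaim7 again, ISclaim1(i), then $\splash$paramdynamicbounds) in the same order, with the correct $\sH^1/\sH^2$ pairing and the same rationale for why $n\geq 3$ and only $d_{n-3}$ (not $a_{n-3}$) appears. Two points your writeup leaves implicit that the paper addresses explicitly: first, after the final Lemma~\ref{ISclaim1} application the supremum runs over a time interval shifted by $t_\nmo - t_n$, and to land on $\sup_{[0,T/2]}$ one must replace $\tau + t_\nmo - t_n$ by $\tau$ using bounds on $\del_t\tilde\xi_\nmo$ and $\del_t\tilde\xi_{n-2}$, incurring an $O(|t_n-t_\nmo|)$ error that folds into the $|q_n - q_\nmo|$ term; this reconciliation is needed for your $a$'s and $d$'s to live over a common interval. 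Second, the geometric-decay argument only yields Cauchyness in $C^0([0,T/2];\sH^2)$, whereas the proposition asserts convergence in $C^0([0,T/2];\sH^k)$; the upgrade uses the uniform $\sH^k$ bound of Corollary~\ref{unifbddness} and a Banach--Alaoglu/weak-compactness argument as in Proposition~\ref{improvedxireg}, which you should state rather than leave to the reader. Both are minor and easily repaired, so the proposal is substantively correct.
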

\begin{proof}
Consider $t$ in $[0,T/2]$. We use (iii) of Lemma~\ref{ISclaim1}, followed by Corollary~\ref{ISclaim7} to bound
\begin{align}
\lV\tilde\xi_\npo-\tilde\xi_n\rV_\sHTT &\leq
C
\left(\sup_{[-t_n,t-t_n]}\lV\xi^\odot_n-\xi^\odot_\nmo\rV_\sHTO +T\sup_{[-t_n,t-t_n]}\lV\xi^\odot_n-\xi^\odot_\nmo\rV_\sHTT+|q_n-q_\nmo|
\right).
\end{align}
Applying (ii) of Lemma~\ref{ISclaim1}, we get
\begin{align}
\lV\tilde\xi_\npo-\tilde\xi_n\rV_\sHTT &\leq
C
\left(\sup_{[-t_n,t-t_n]}\lV\xi^{\odot n-1}_n-\xi^{\odot n-2}_{n-1}\rV_\sHTO +T\sup_{[-t_n,t-t_n]}\lV\xi^\odot_n-\xi^\odot_\nmo\rV_\sHTT+|q_n-q_\nmo|+|q_\nmo-q_{n-2}|
\right).
\end{align}
With the use of Lemma~\ref{ISclaim4} and Corollary~\ref{ISclaim7} one can then show
\begin{align}
\lV\tilde\xi_\npo-\tilde\xi_n\rV_\sHTT 
&\leq
C
\left(T\left(\sup_{[-t_n,t-t_n]}\lV\xi^\odot_n-\xi^\odot_\nmo\rV_\sHTT + \sup_{[-t_n,t-t_n]}\lV\xi^\odot_\nmo-\xi^\odot_{n-2}\rV_\sHTT\right)+|q_n-q_\nmo|+|q_\nmo-q_{n-2}|
\right). \\ \intertext{Now, by applying Lemma~\ref{ISclaim1} and recalling $t\leq T/2$, we find}
\lV\tilde\xi_\npo-\tilde\xi_n\rV_\sHTT&\leq C
\Bigg(T\left(\sup_{[0,T/2]}\lV\tilde\xi_n-\tilde\xi_\nmo\rV_\sHTT + \sup_{\tau\in[0,T/2]}\lV(\tilde\xi_\nmo-\tilde\xi_{n-2})(\tau+t_{n-1}-t_n)\rV_\sHTT \right)\\
&\qquad\qquad+|q_n-q_\nmo|+|q_\nmo-q_{n-2}|
\Bigg).
\end{align}
Now, in the supremum in $\tau$ over $[0,T/2]$, by using bounds on the time derivatives of $\tilde \xi_\nmo$ and $\tilde \xi_{n-2}$, we essentially replace $\tau+t_{n-1}-t_n$ by $\tau$ at the price of an $O(|t_n-t_\nmo|)$ error. This yields
\begin{align}
\lV\tilde\xi_\npo-\tilde\xi_n\rV_\sHTT&\leq C'
\Bigg(T\left(\sup_{[0,T/2]}\lV\tilde\xi_n-\tilde\xi_\nmo\rV_\sHTT + \sup_{[0,T/2]}\lV\tilde\xi_\nmo-\tilde\xi_{n-2}\rV_\sHTT \right)+|q_n-q_\nmo|+|q_\nmo-q_{n-2}|
\Bigg),
\end{align}

One then uses Lemma~\ref{splashparamdynamicbounds} to conclude that \eqref{ISclaim8bound} holds. It easily follows that, for a good choice of $T$, the sequence $\{(\tilde\xi_n,q_n)\}_{n\geq 1}$ converges in $C^0([0,T/2];\sH^2)\times \R^3$. To deduce that the $\tilde\xi_n$ in fact converge in $C^0([0,T/2];\sH^k)$, one uses the uniform boundedness implied by Corollary~\ref{unifbddness} of the sequence in a stronger norm with the help of a similar argument as in the proof of Proposition~\ref{improvedxireg}.
\end{proof}
\begin{proposition}\label{mainpropbreakdown}
For $t_\splash$, $\theta_\splash$, and $\vartheta_\splash$ as defined by Proposition~\ref{ISclaim8}, we take $\tilde\theta(\theta)=\Phi(\theta,\theta_\splash,\vartheta_\splash)$. Let us define for $t\in[0,t_\splash]$
\begin{align}
\xi_\star = \tilde\xi\circ \tilde\theta^{-1}.
\end{align}
It follows that $\xi_\star$ solves the system \eqref{xiSystemRep}, but where we replace $T$ by $t_\splash$ and $\xi_0$ by $\xi_\init$. Moreover, we have $\xi_\star(t_\splash)$ in $\sH^k_\splash$, so that $\xi_\star$ exhibits a splash state at time $t=t_\splash$.
\end{proposition}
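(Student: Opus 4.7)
The plan is to pass to the limit in the iteration scheme that produced $\{\tilde \xi_n\}_{n\ge 0}$ and $\{q_n\}_{n\ge0}$. By Proposition~\ref{ISclaim8} we already have $\tilde\xi_n\to\tilde\xi$ in $C^0([0,T/2];\sH^k)$ together with $q_n=(t_n,\theta_n,\vartheta_n)\to q=(t_\splash,\theta_\splash,\vartheta_\splash)$ in $\R^3$. I will first extract the equation satisfied by the limit $\tilde\xi$, then conjugate by $\tilde\theta^{-1}$ to obtain the stated equation for $\xi_\star$, and finally check that the splash state at $t=t_\splash$ lies in $\sH^k_\splash$.

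First I would examine the defining system for $\tilde\xi_{n+1}$ in Definition~\ref{subseqIterate}, which can be written in integrated form as
\begin{align}
\tilde\xi_{\dagger,n+1}(t)
 &= \xi_{\dagger,\init}\circ\tilde\theta_n
   + \int_0^t \bigl(A^\odot_n(s-t_n)\tilde\xi_{\dagger,n+1}(s)+F^\odot_n(s-t_n)\bigr)\,ds,\\
\tilde U_{n+1}(t)
 &= U_\init\circ\tilde\theta_n+\int_0^t (\bE^\odot_n(\tau-t_n))^{-1}\dot U^{*\sim}_{n+1}(\tau)\,d\tau,\\
\tilde X_{n+1}(t)
 &= X_\init\circ\tilde\theta_n+\int_0^t \tilde U_{n+1}(\tau)\,d\tau.
\end{align}
Here the coefficients $A^\odot_n$, $F^\odot_n$, $\bE^\odot_n$ depend on $\xi^\odot_n$ and $\tilde\theta_n$ as in Definition~\ref{iterationsetupdefn}. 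The uniform upper bounds of Proposition~\ref{preAndPostSplashUpperBds} guarantee that each term is controlled in the appropriate norms, uniformly in $n$, so all integrals are well-defined on $[0,T/2]\supset[0,t_\splash]$.

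Next I would pass to the limit in these integral identities. The Lipschitz bounds of Proposition~\ref{centeredLipBds}, together with the continuous dependence $\tilde\theta_n\to\tilde\theta$ in $H^{k+2}(S^1)$ (which follows from $q_n\to q$ and the smoothness of $\Theta$ in its splash-parameter arguments), imply that for every $t\in[0,t_\splash]$
\begin{align}
A^\odot_n(t-t_n)\,\eta \to \tilde A(\xi^\odot,\tilde\theta)(t-t_\splash)\,\eta,\qquad
F^\odot_n(t-t_n) \to \tilde F(\xi^\odot,\tilde\theta)(t-t_\splash),
\end{align}
in the appropriate weaker norms ($B(\sH^k_\dagger,\sH^2_\dagger)$ for the first, $\sH^2$ for the second), where $\xi^\odot(\tau)=\tilde\xi(\tau+t_\splash)$. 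An identical argument applies to the $\bE^\odot_n$ factor. Combining these with the dominated convergence theorem and the uniform $\sH^k$ bound on $\{\tilde\xi_n\}$, I obtain that $\tilde\xi$ satisfies the self-consistent nonlinear system
\begin{align}
\tilde\xi_\dagger(t) &= \xi_{\dagger,\init}\circ\tilde\theta
 +\int_0^t\bigl(\tilde A(\xi^\odot,\tilde\theta)(s-t_\splash)\tilde\xi_\dagger(s)+\tilde F(\xi^\odot,\tilde\theta)(s-t_\splash)\bigr)\,ds,\\
\tilde X_t &= \tilde U,\qquad
\tilde U_t = (\tilde\bE(\xi^\odot,\tilde\theta)(t-t_\splash))^{-1}\dot U^{*\sim}.
\end{align}
Setting $\xi_\star=\tilde\xi\circ\tilde\theta^{-1}$ and using the definitions in Definition~\ref{conjugatedmapdefns} to undo the conjugation by $\tilde\theta$, the system becomes
\begin{align}
\del_t\xi_\star = \bJ(\xi_\star)\del_\theta\xi_\star + \cR(\xi_\star)\xi_\star + \dcF(\xi_\star),
\end{align}
with $\xi_\star(0)=\xi_{\init}\circ\tilde\theta\circ\tilde\theta^{-1}=\xi_\init$ as required.

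Finally, to identify the splash at $t_\splash$, I recall that each iterate $\xi_n$ lies in $\sB^\tosplash$ and realizes its splash at $t_n$ with labels $\theta_n,\vartheta_n$ lying in $I_\pm$, with splash point close to $(0,2)$. Taking limits in the identity $X_n(t_n,\theta_n)=X_n(t_n,\vartheta_n)$, using $\tilde\xi_n\to\tilde\xi$ and $q_n\to q$ together with the embedding $\sH^k\hookrightarrow C^1$, yields $X_\star(t_\splash,\theta_\splash)=X_\star(t_\splash,\vartheta_\splash)$; the same limiting argument propagates the bounds $|X_n(t_n,\theta_n)-(0,2)|\le 10^{-3}$ and $\lV\xi_n(t_n)-\xi_\init\rV_{\sH^{k-1}}\le r_1$ to $\xi_\star(t_\splash)$, giving $\xi_\star(t_\splash)\in\sH^k_\splash$. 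The main obstacle in this plan is the passage to the limit in the pinch-sensitive coefficients $A^\odot_n$ and $F^\odot_n$, because the iterates have mismatched splash times and labels and their interface parametrizations pinch at different $\theta$-values; this is exactly the difficulty addressed by the splash-centering construction and its Lipschitz bounds in Proposition~\ref{centeredLipBds}, so the heavy technical lifting has already been done and the limit passage here reduces to a routine (though careful) argument.
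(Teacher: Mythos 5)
Your plan matches the paper's proof in substance: the paper simply points back to the limit-passage argument of Proposition~\ref{improvedxireg} (pass to the limit in the integrated iteration scheme, using the uniform $\sH^k$ bound and weak compactness to upgrade the $\sH^2$ convergence), then notes that the conjugation by $\tilde\theta^{-1}$ in Definition~\ref{conjugatedmapdefns} undoes itself to yield \eqref{xiSystemRep}, and asserts the splash-state membership is straightforward. You have merely spelled out the same steps more explicitly, in particular naming the role of $q_n\to q$ and the Lipschitz bounds of Proposition~\ref{centeredLipBds} in handling the limit of the pinch-sensitive coefficients, and supplying the $C^1$-limit argument for $\xi_\star(t_\splash)\in\sH^k_\splash$.
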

\begin{proof}
Similar to the proof of the assertion in Proposition~\ref{improvedxireg} that the $\xi$ produced by Proposition~\ref{LimitExistence} solves the system \eqref{xiSystemRep}, one verifies that the $\tilde \xi$ produced by Proposition~\ref{ISclaim8} solves the system \eqref{splashcenteredsystem1} in which $\uxi$ and $\underline{\tilde\theta}$ are replaced by $\xi$ and $\tilde\theta$, respectively. One can then verify from the form of the terms in Definition~\ref{conjugatedmapdefns} that $\xi_\star$ solves \eqref{xiSystemRep}. It is straightforward to show $\xi_\star(t_\splash)$ is in $\sH^k_\splash$.
\end{proof}
\begin{proposition}\label{mainproposition}
Let us produce the corresponding solution to the ideal MHD equations from $\xi_\star$ analogously to the way we produce a solution from $\xi$ in Proposition~\ref{maintheoremforward}. We denote the new solution by $(u_\star(t,x),b_\star(t,x),p_\star(t,x),h_\star(t,y),\Gamma_\star(t))$, with corresponding plasma region $\Omega_\star(t)$ and vacuum region $\cV_\star(t)$, defined for $t$ in $[0,t_\splash]$, $x$ in $\overline{\Omega_\star(t)}$, and $y$ in $\overline{\cV_\star(t)}$. The solution at time $t=0$, which we denote by $(u_\init(x),b_\init(x),p_\init(x),h_\init(y),\Gamma_\init)$, is analytic in the associated domain $\overline{\Omega_\init}\times\overline{\cV_\init}$. Moreover, the solution exhibits a splash--squeeze singularity at $t=t_\splash$ and retains Sobolev regularity for all times $t$ in $[0,t_\splash]$.
\end{proposition}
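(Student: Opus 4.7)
The plan is to apply the procedure of Proposition~\ref{maintheoremforward} verbatim to the solution $\xi_\star$ of the Lagrangian wave system supplied by Proposition~\ref{mainpropbreakdown}. That is, I would set $(\bU_\star,\bB_\star,\bX_\star)=(\bU(\xi_\star),\bB(\xi_\star),\bX(\xi_\star))$ via Definition~\ref{UBXdefn}, push back to Eulerian coordinates to obtain $u_\star,b_\star$ in $\Omega_\star(t)=\bX_\star(t,\Sigma)$, take $h_\star=h(\Gamma_\star(t))$ from Definition~\ref{formalhdefn}, and define $p_\star=p_-+\tfrac12\bigh_-|h_\star|^2$. Because $\xi_\star$ solves~\eqref{xiSystemRep} on $[0,t_\splash]$ with $\xi_\star(0)=\xi_\init$ (up to a harmless $\theta$-reparametrization from the class $\bm{\Xi}_\init$, which commutes with the div-curl and pressure systems), the entire error-vanishing argument of Section~\ref{backtoorigsystemsubsection} should transplant without change: the error vector $\Xi_\star$ defined as in Definition~\ref{Xidefn} will satisfy a symmetrizable system of the form~\eqref{XiFinalSysProved} with $\Xi_\star(0)=0$, and the fixed-point argument then forces $\Xi_\star\equiv 0$. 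From this point on, identifying $(u_\star,b_\star,h_\star,\Gamma_\star)$ as a genuine solution to \eqref{IdealMHD1}--\eqref{IdealMHD6} on $[0,t_\splash]$ proceeds exactly as in Proposition~\ref{maintheoremforward}.

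Persistence of Sobolev regularity on $[0,t_\splash]$ is then immediate from $\xi_\star\in C^0([0,t_\splash];\sH^k)$: the div-curl bounds of Proposition~\ref{SolnMainDivCurlProp} yield $u_\star,b_\star\in C^0([0,t_\splash];H^{k+1}(\Omega_\star(t)))$, while the uniform weighted estimates of Proposition~\ref{hEstimateProp} give $h_\star\in C^0([0,t_\splash];H^{k+1}(\cV_\star(t)))$, both remaining bounded as $t\uparrow t_\splash$. For the splash--squeeze at $t=t_\splash$, Proposition~\ref{mainpropbreakdown} supplies $\xi_\star(t_\splash)\in\sH^k_\splash$, so $\Gamma_\star(t_\splash)$ exhibits a single glancing self-intersection. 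Applying Lemma~\ref{hTanDirLemma} to $\Gamma_\star(t_\splash)$ (which has pinch zero) then forces $h_\star(t_\splash)$ to vanish exactly at the splash point and nowhere else in $\overline{\cV_\star(t_\splash)}$, delivering the squeeze.

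Analyticity at $t=0$ is essentially built into the initial data: $\xi_\init$ is constructed from the explicit analytic Lagrangian stream function $\bvarphi_\init$ of~\eqref{vbarphidef} together with the analytic conformal parametrization $\bX_\init$ of Definition~\ref{analytInitdata}, and $\delta_\init>0$ ensures $\cV_\init$ is a bounded domain with analytic boundary $\Gamma_\init\cup\mathcal{W}$. Consequently $u_\init$ and $b_\init$ are real-analytic on $\overline{\Omega_\init}$; standard analytic elliptic theory applied to the Dirichlet problem for the stream function of $h_\init$ (see Remark~\ref{hdefnremark}) will then yield real-analyticity of $h_\init$ on $\overline{\cV_\init}$.

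The main technical obstacle I expect is verifying that every identity underlying $\Xi_\star\equiv 0$---in particular Lemmas~\ref{ProjectionIdentityLemma}, \ref{keycancellationident}, \ref{WPartialsLem} and Propositions~\ref{bVstarProp}, \ref{BtMinusUthetaLinearProp}---remains valid in the forward-in-time setting. The original derivations depend only on $X_t=U$, the defining maps of Sections~\ref{vacandopmapssection} and \ref{timederivdefnsection} (all of which are available on $\sB^\tosplash$ by Proposition~\ref{preAndPostSplashUpperBds}), and the pressure and div-curl systems~\eqref{AugmentedInteriorSystemL} and \eqref{InteriorPressureGradSystem}; since none of these structural ingredients cares about the direction of time or about starting from a splash state, the entire chain should go through with $\xi$ replaced by $\xi_\star$ throughout, with only the role of Definition~\ref{initialxiData} played now by the analytic initial data of Definition~\ref{analytInitdata}.
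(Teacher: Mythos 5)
Your proposal is correct and follows essentially the same route as the paper: produce the Eulerian solution from $\xi_\star$ exactly as in Proposition~\ref{maintheoremforward}, observe that the error-vector argument of Section~\ref{backtoorigsystemsubsection} applies verbatim (it uses only the algebraic/elliptic structure of the div-curl and pressure systems plus $X_t=U$, none of which cares about whether the pinch opens or closes in time), and read off the analyticity at $t=0$ from the explicit analytic construction of $\xi_\init$ in Definition~\ref{analytInitdata}. The paper's own proof is in fact a two-sentence citation of exactly these two points, so your version is a correct, somewhat more elaborated rendering of the same argument; the additional remarks you make about the $\theta$-reparametrization and about Lemma~\ref{hTanDirLemma} giving the nonvanishing of $h_\star$ at $t_\splash$ are accurate and align with how the paper sets up the supporting lemmas.
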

\begin{proof}
The proof that the solution produced from $\xi_\star$ solves ideal MHD is the same as that of the corresponding statement in Proposition~\ref{maintheoremforward}. Analyticity at time $t=0$ is a direct consequence from our construction of $\xi_\init$.
\end{proof}

\subsection{Analytic local existence and singularity formation}
Observe that by Theorem~\ref{nonexist2Dtheorem}, at the moment of splash, $h_\star(t_\splash,x)$ is not analytic at the splash point. 
Let us now prove that the solution produced by the above proposition remains analytic for a short time, before necessarily losing analyticity at some positive time~$t_\star$ not larger than~$t_\splash$. This amounts to proving a local well-posedness result for short times in analytic norms, in particular when the initial interface is not self-intersecting.

\begin{proposition}\label{analyticlwp}
Take the solution $(u_\star(t,x),b_\star(t,x),p_\star(t,x),h_\star(t,y),\Gamma_\star(t))$ as defined in Proposition~\ref{mainproposition}. Then for some $t_a>0$, this solution is analytic with respect to $t$, $x$, and $y$ for each $t\in[0,t_a)$ and each $x$ and $y$ in the corresponding domains $\overline{\Omega_\star(t)}$ and $\overline{\cV_\star(t)}$.
\end{proposition}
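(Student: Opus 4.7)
The plan is to invoke a Cauchy--Kowalevski-type theorem in a scale of Banach spaces of analytic functions, applied to the Lagrangian wave system \eqref{xiSystemRep} with initial datum $\xi_\init$, and then match the resulting analytic solution with the Sobolev solution $\xi_\star$ already furnished by Proposition~\ref{mainproposition} via uniqueness. First I would note that since $\Gamma_\init$ has pinch $\delta_\init>0$ and the Sobolev solution $\xi_\star$ lies in $C^0([0,t_\splash];\sH^k)$ with continuous (in time) interface, there is some $t_1\in(0,t_\splash]$ on which the pinch of $\Gamma_\star(t)$ remains bounded below by $\tfrac12 \delta_\init$; on the corresponding space-time region the vacuum is uniformly chord-arc and all the pinch-sensitive maps of Sections~\ref{vacandopmapssection}--\ref{timederivdefnsection} reduce to their classical, non-weighted analogues. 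This removes the only genuinely delicate feature of our framework from the analytic construction.

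Next I would set up a standard scale $\{X_\rho\}_{0<\rho\le \rho_0}$ of complex Banach spaces of functions holomorphic on tubes of width $\rho$ around $S^1$ and $\Sigma$ in $\C$ and $\C^2$, respectively, with the product scale $\sX_\rho$ for the state vector $\xi$ modeled on $\sH^k$. The initial datum $\xi_\init$, built from the conformal $\bX_\init$, the polynomial Lagrangian stream function $\bvarphi_\init$, and the elementary expressions in Definitions~\ref{analytInitdata} and~\ref{initialxiData}, belongs to $\sX_{\rho_0}$ for some $\rho_0>0$. The plan is then to verify on a ball around $\xi_\init$ in $\sX_\rho$ the following Ovsyannikov/Nirenberg hypotheses for the right-hand side $\bJ(\xi)\del_\theta\xi+\cR(\xi)\xi+\dcF(\xi)$: for every $\rho'<\rho\le\rho_0$,
\begin{align}
\|\bJ(\xi)\del_\theta\eta+\cR(\xi)\eta+\dcF(\xi)-\bJ(\uxi)\del_\theta\eta-\cR(\uxi)\eta-\dcF(\uxi)\|_{\sX_{\rho'}}
\le \frac{C}{\rho-\rho'}\big(\|\xi-\uxi\|_{\sX_\rho}+\|\eta\|_{\sX_\rho}\big),
\end{align}
together with the analogous one-sided bound on $\bJ\del_\theta\xi+\cR\xi+\dcF$ itself. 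The single factor of $\del_\theta$ is precisely the loss of one derivative that the $1/(\rho-\rho')$ factor absorbs, and the nonlocal but order-zero operators $\bE$, $\bE^{-1}$, $\cN_\pm$, $\Nres$, $[\del_t;\bE]$, $[\del_t;\Nres]$, the harmonic extensions $\bigh_\pm$, and the div-curl solvers of Section~\ref{plasmacentricmapssection} all reduce, for uniformly chord-arc interfaces, to operators given by integral kernels holomorphic in the parametrizations. Each such operator therefore maps the analytic scale to itself and satisfies the required Lipschitz bounds by classical arguments (Cauchy integral formula plus standard single/double-layer potential estimates as in Lemma~\ref{ChordArcOpBdLem} and Proposition~\ref{bEcommBounds}). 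With these hypotheses in hand, the abstract Cauchy--Kowalevski theorem of Nirenberg and Safonov (as cited in the introduction) produces a $t_a\in(0,t_1]$ and a unique solution $\xi_{\mathrm{an}}\in C^1([0,t_a);\sX_{\rho(t)})$ with $\rho(t)$ decreasing linearly to some positive value at $t_a$.

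Then I would match $\xi_{\mathrm{an}}$ with $\xi_\star$. Any solution in the analytic scale is in particular in $C^0([0,t_a);\sH^k)$, so by the Sobolev uniqueness for \eqref{xiSystemRep}, which follows from the same energy/iteration argument used in Section~\ref{solvinglagwavesystemsection} applied to the difference of two solutions with equal initial data, we have $\xi_{\mathrm{an}}=\xi_\star$ on $[0,t_a)$. Consequently $\xi_\star(t)$ is analytic in $a=(\theta,\psi)$ for each $t\in[0,t_a)$, and the Cauchy--Kowalevski construction provides joint analyticity in $(t,a)$. To transfer this back to the physical variables $(t,x)$ and $(t,y)$, I would use that the Lagrangian map $\bX_\star(t,\cdot)=\bX(\xi_\star)(t,\cdot)$ is then an analytic bijection from $\Sigma$ onto $\Omega_\star(t)$ with analytic inverse (its complex Jacobian is nondegenerate by Proposition~\ref{JacobianIdent}), giving joint real-analyticity of $u_\star$, $b_\star$, and $p_\star$ on the closure of the space-time plasma domain. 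For $h_\star$ on the vacuum side, analyticity in $y$ at each time follows from $h_\star=\grad^\perp\uppsi$ with $\uppsi$ solving a Dirichlet problem in an analytic domain, while joint $(t,y)$-analyticity follows from the analyticity in time of the moving interface $\Gamma_\star(t)$ and standard hodograph-type arguments (straightening $\Gamma_\star(t)$ with an analytic change of variables and then invoking analytic dependence on parameters for elliptic equations).

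The main obstacle I anticipate is precisely the verification of the Ovsyannikov-type bound for the operator-valued maps $\bE$, $\Nres$, $[\del_t;\bE]$, $[\del_t;\Nres]$, $\bP^-_\grad$, and $P^+_\grad$ in the analytic scale: although each of these has a clean integral-kernel/layer-potential representation and acts on a chord-arc curve, one must check carefully that the harmonic-extension operators $\bigh_\pm$, together with the conformal ``square-root trick'' map $\cO$ of \eqref{cOdefn} used to represent $\cH$, can be recast so that their kernels extend holomorphically to a complex tube around the real interface parametrization, with norms that behave like $C/(\rho-\rho')$ when one loses one derivative. Once this is done, however, the remaining ingredients, namely uniqueness, the $C^0([0,t_\splash];\sH^k)$-regularity of $\xi_\star$, and the positivity of the initial pinch, are all already available from the previous sections.
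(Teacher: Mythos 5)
Your plan is broadly sound in spirit but takes a genuinely different (and considerably more costly) route than the paper. The paper deliberately does \emph{not} apply Cauchy--Kowalevski to the good-unknowns system \eqref{xiSystemRep}. Instead it goes back to the original Lagrangian formulation $\bU_t=\bB_\theta - \bP_\grad$, $\bB_t=\bU_\theta$, $\bX_t=\bU$ (system \eqref{finalLagrSystem}) with $\bP_\grad$ defined by a single div-curl problem \eqref{PressureGradSystem}. The reason is exactly the phenomenon that your proposal flags as ``the main obstacle'': all the machinery of Sections~\ref{lagrangianwavesystemsection}--\ref{vacuuminterfaceestimatessection} --- the good unknowns $(\dot U^*,\dot B^*)$, the $\bE$-projection, the $\Nres$ cancellation, the commutator maps --- was introduced precisely to eliminate a one-derivative loss that would wreck a Sobolev-based iteration. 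But a Cauchy--Kowalevski theorem in a scale of analytic spaces tolerates a loss of one derivative automatically, via the factor $C/(\rho-\rho')$. So once you are in the analytic setting there is no reason to reformulate at all, and doing so is counterproductive: you would need to verify Ovsyannikov-type bounds for $\bE$, $\bE^{-1}$, $[\del_t;\bE]$, $\cN_\pm$, $\Nres$, $[\del_t;\Nres]$, $\bigh_\pm$, the conformal ``square-root'' map $\cO$, and the div-curl solvers, all on a complex tube. In contrast, the paper only needs to verify analyticity of the single map $(\bU,\bB,\bX)\mapsto\bP_\grad(\bU,\bB,\bX)$, defined by one Lax--Milgram-type problem, and of the scalar $H(X)$ appearing in its boundary data. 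Both approaches start from the same analytic initial datum and both invoke the Nirenberg--Safonov abstract theorem, so your core idea is aligned with the paper's; but your choice of which system to apply it to trades a simple verification for a long one that you would then still need to carry out carefully (for instance, making sure that the density argument used to extend $\mathring\cF_\sN$ from $\sB^\wbox$ to $\sB^\bbox$ in Lemma~\ref{sNiExt}, and the artificial post-splash definitions in Section~\ref{analyticbreakdownsubsection}, are compatible with holomorphic extension --- a point you would at least want to remark is moot since the pinch stays bounded below). Your final steps (matching with $\xi_\star$ by Sobolev uniqueness, then transferring analyticity through the Lagrangian map and a hodograph-type argument for $h_\star$) are fine and parallel what the paper leaves implicit.
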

\begin{proof}
Consider the ideal MHD equations taken with the initial data described as above together with associated initial Lagrangian labeling system $\bX_\init(a)$ and corresponding initial interior velocity $\bU_\init(a)$ and magnetic field $\bB_\init(a)$, each defined for $a$ in $\Sigma$. Let us note that $\bX_\init(a)$, $\bU_\init(a)$, and $\bB_\init(a)$ are analytic in $\Sigma$.

We now return to the first Lagrangian formulation of the ideal MHD system we derived in Section \ref{maglagvariablessection} (as opposed to the Lagrangian wave system \eqref{XiSystem}), which we show is amenable to an application of a Cauchy-Kowalevskaya-type theorem for such initial data. This formulation is given as follows:
\begin{align}\label{finalLagrSystem}
&&
\begin{aligned}
\bU_t &= \bB_\theta + \bP_\grad , \\
\bB_t &= \bU_\theta , \\
\bX_t &= \bU ,
\end{aligned}
&& (a\in\Sigma),
\end{align}
where we take initial data $(\bU_\init,\bB_\init,\bX_\init)$, and $\bP_\grad(t,a)=(\grad p)(t,\bX(t,a))$, in which $p(t,x)$ is the solution to the system \eqref{PressureSystem}. The equivalent div-curl system for $\bP_\grad$ in Lagrangian variables is given by
\begin{align}
\label{PressureGradSystem}
\begin{aligned}
(a \in \Sigma) \qquad &
\left\{
    \begin{aligned}
        \grad \cdot (\, \cof (\grad \bX^t) \bP_\grad ) &= 2\bU_\psi \cdot \bU^\perp_\theta - 2 \bB_\psi \cdot \bB^\perp_\theta \\
        \grad^\perp\cdot (\, \grad \bX^t \bP_\grad )  &= 0 \, \Lcm
    \end{aligned}
\right. \\
(\psi = 0)  \qquad & \ \big\{ \, \bX_\theta \cdot \bP_\grad = H\cdot H_\theta , \\
(\psi = -1) \qquad & \ \big\{ \, \bX^\perp_\theta \cdot \bP_\grad = 0,
\end{aligned}
\end{align}
where (dropping the $t$ variable momentarily) $X(\theta)=\bX(\theta,0)$ and $H=H(X)$ is defined by
\begin{align}
&& H(X)(\theta) &= h(X)(X(\theta)) & (\theta\in S^1),
\end{align}
in which $h(X)$ solves the problem \eqref{ExternalFieldEqDef1}--\eqref{ExternalFieldEqDef3} in the corresponding vacuum region $\cV(X)$.

Aiming to apply Theorem 1.1 of \cite{safonov}, we define the analytic extension domain $\Sigma_r\subset (\C/(2\pi\Z))\times\C$ below, for $r>0$:
\begin{align}
\Sigma_r &= \{a+z:a\in \Sigma, \ z=(z_1,z_2), \ |z_1|,|z_1|<r\}.
\end{align}

We then define the following norms for $f$ defined on $\Sigma_r$ and integer $j\geq 0$:
\begin{align}
\lV f \rV_{H^j_r(\Sigma)} = \sup_{|z_1|,|z_2|<r}\lV f((\cdot) + z) \rV_{H^j(\Sigma)}.
\end{align}
Now we define the spaces for $r>0$
\begin{align}
H^j_r(\Sigma) = \{  f:\overline{\Sigma_r}\to\C^2 \ \mbox{such that} \ \lV f \rV_{H^j_r(\Sigma)}<\infty \ \mbox{and} \ f \ \mbox{is analytic in} \ \Sigma_r \}.
\end{align}

Note that $H^j_r(\Sigma)$ fulfills the following property, which is required for the one-parameter family of Banach spaces in Theorem 1.1 of \cite{safonov}. 
\begin{align}\label{BanachScaleProp}
&& H^j_r(\Sigma) &\subset H^j_{r'}(\Sigma),\hspace{1.5cm} \lV \cdot \rV_{H^j_{r'}(\Sigma)} \leq \lV \cdot \rV_{H^j_r(\Sigma)} & (0< r'\leq r).
\end{align}
To consider triples $(\bU,\bB,\bX)$, let us define the associated analytic spaces and norms by the following, for positive integer $j$:
\begin{align}
\mathpzc{H}^j_r &= (H^j_r(\Sigma))^2\times H^{j+1}_r(\Sigma) , \\
\lV (\bU,\bB,\bX) \rV_{\mathpzc{H}^j_r}
& = \lV \bU \rV_{H^j_r(\Sigma)}
+\lV \bB \rV_{H^j_r(\Sigma)}
+\lV \bX \rV_{H^{j+1}_r(\Sigma)}.
\end{align}
Clearly the spaces $\mathpzc{H}^j_r$ satisfy the analogous property stated for $H^j_r(\Sigma)$ in \eqref{BanachScaleProp}.

Now we verify that in the evolution equation \eqref{finalLagrSystem} the right-hand side can essentially be thought of as a function of $(\bU,\bB,\bX)$ which loses no more than one derivative.

We remark that the inequality below holds for real-valued $\bX$:
\begin{align}
\lV H(X) \rV_{H^{9/2}(S^1)}\leq C \left(\lV \bX \rV_{H^5(\Sigma)}\right) .
\end{align}
It is not hard to verify the analytic dependence of $H(X)$ on $X$, meaning that we may extend $H$ analytically to be defined for certain $X$ taking values in $\C^2$. In particular, where $\tilde X(\theta;z) =  \bX(\theta+z_1,z_2)$, we have that for each $\theta$ in $S^1$, $H(\tilde X(\cdot;z))(\theta)$ is well-defined and analytic in $z=(z_1,z_2)$ in $\{|z_1|,|z_2|<r\}$, given $\bX$ in $H^5_r(\Sigma)$ and $\lV \bX-\bX_\init \rV_{H^5_r(\Sigma)}\leq \epsilon_1$ for $r\leq R_0$. We then find
\begin{align}
\sup_{|z_1|,|z_2|<r}\lV H(\tilde X(\cdot;z))\rV_{H^{9/2}(S^1)}\leq C \left(\lV \bX \rV_{H^5_r(\Sigma)}\right) .
\end{align}

Now let us consider $\bP_\grad$ solving the system \eqref{PressureGradSystem} as a map acting on $\bU$, $\bB$, and $\bX$. Suppose $(\bU,\bB,\bX)$ is in $\mathpzc H^4_r$ for $r\leq R_0$, with $\lV(\bU,\bB,\bX)\rV_{\mathpzc H^4_r}\leq M_0$ and $\lV \bX-\bX_\init \rV_{H^5_r(\Sigma)}\leq \epsilon_1$. For each $a$ in $\Sigma$ and $z=(z_1,z_2)$ in $\{|z_1|,|z_2|<r\}$ let us define $\tilde \bP_\grad(\bU,\bB,\bX)(a;z)$ to be the $\C^2$-valued solution to the system \eqref{PressureGradSystem} in which we replace $\bU(a)$ by $\bU(a+z)$, $\bB(a)$ by $\bB(a+z)$, and $X(\theta)$ by $\tilde X(\theta;z)$. We have existence and uniqueness of such a solution as a consequence of Lax-Milgram (with a similar justification as in the comments given below \eqref{InteriorPressureGradSystem} on the existence and uniqueness of $\bP^-_\grad$). One can verify that for each fixed $a$ in $\Sigma$, $\tilde\bP_\grad(\bU,\bB,\bX)(a;z)$ is analytic with respect to $z$. In fact, one finds that we can define an analytic extension of $\bP_\grad(\bU,\bB,\bX)(a)=\tilde\bP_\grad(\bU,\bB,\bX)(a;0)$ from $a$ in $\Sigma$ to the complexified domain $\Sigma_r$, satisfying the following:
\begin{align}
&&\bP_\grad(\bU,\bB,\bX)(a+z) &=\tilde \bP_\grad(\bU,\bB,\bX)(a;z) &(a\in \Sigma, \ |z_1|,|z_2|<r).
\end{align}
Moreover, one can check that $\bP_\grad(\bU,\bB,\bX):\Sigma_r\to\C^2$ satisfies the estimate below:
\begin{align}
\lV \bP_\grad(\bU,\bB,\bX)\rV_{H^4_{r}(\Sigma)} 
&\leq C( \lV \bX \rV_{H^5_r(\Sigma)})
\left(
    \lV \bU \rV_{H^4_{r}(\Sigma)}+\lV \bB \rV_{H^4_{r}(\Sigma)}
    + \sup_{|z_1|,|z_2|<r}\lV H(\tilde X(\cdot;z)) \rV_{H^{9/2}(S^1)}
\right), \\
&\leq C'
\left(
    \lV \bU \rV_{H^4_r(\Sigma)}+\lV \bB \rV_{H^4_r(\Sigma)}
    + \lV \bX \rV_{H^5_r(\Sigma)}
\right).
\end{align}

Defining $\mathfrak{F}$ on $\mathpzc H^4_r$ by
\begin{align}
\mathfrak{F}(\bU,\bB,\bX)=\begin{pmatrix}
\bB_\theta + \bP_\grad(\bU,\bB,\bX) \\
\bU_\theta \\
\bU
\end{pmatrix},
\end{align}
and using the Cauchy integral formula together with the above estimate, we are able to verify for any $r\leq R_0$ and $(\bU,\bB,\bX)$ in $\mathpzc H^4_r$ with $\lV (\bU,\bB,\bX)\rV_{\mathpzc H^4_r}\leq M_0$ and $\lV \bX-\bX_\init \rV_{H^5_r(\Sigma)}\leq \epsilon_1$ that
\begin{align}
&&\lV\mathfrak{F}(\bU,\bB,\bX)\rV_{\mathpzc H^4_{r'}}&\leq \frac{C}{r-r'} \lV (\bU,\bB,\bX)\rV_{\mathpzc H^4_r}
&(0<r'<r).
\end{align}
Similarly, one establishes an estimate of the form
\begin{align}
&&\lV\mathfrak{F}(\bU,\bB,\bX)-\mathfrak{F}(\underline{\bU},\underline{\bB},\underline{\bX})\rV_{\mathpzc H^4_{r'}}&\leq \frac{C}{r-r'} \lV (\bU,\bB,\bX)-(\underline{\bU},\underline{\bB},\underline{\bX})\rV_{\mathpzc H^4_r}
&(0<r'<r).
\end{align}
This allows one to apply Theorem 1.1 of \cite{safonov}, which, when taking the system \eqref{finalLagrSystem} together with the analytic initial data $(\bU_\init,\bB_\init,\bX_\init)$, implies the existence and uniqueness of a solution $(\bU,\bB,\bX)(t,a)$ analytic with respect to $t$ and $a$ up to a small positive time. From this, it is not hard to deduce the analyticity of the corresponding solution to ideal MHD in Eulerian coordinates, as claimed in the statement of the proposition. 
\end{proof}

We now combine the previous two propositions together with the analytic obstruction result of Theorem~\ref{nonexist2Dtheorem} to prove the statement of our main theorem.
\begin{theorem}\label{maintheorem2}
There exists a solution to the ideal MHD system $(u_\star,b_\star,p_\star,h_\star,\Gamma_\star)$ (given by Proposition~\ref{mainproposition}) on the time interval $[0,t_\splash]$ which exhibits a splash--squeeze singularity at time $t=t_\splash$ and retains Sobolev regularity over the entire interval of existence $[0,t_\splash]$. Furthermore, there is a time $t_\star>0$ with $t_\star\leq t_\splash$ such that the solution is analytic in $[0,t_\star)$, but the solution fails to be analytic at time $t=t_\star$.
\end{theorem}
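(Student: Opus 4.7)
The plan is to assemble the theorem from three ingredients already established: the forward-in-time existence result Proposition~\ref{mainproposition}, the short-time analytic well-posedness Proposition~\ref{analyticlwp}, and the nonexistence-of-analytic-splash--squeezes result Theorem~\ref{nonexist2Dtheorem}. Parts (1) (splash--squeeze at $t_\splash$) and (2) (persistence of Sobolev regularity on $[0,t_\splash]$) are direct quotations of Proposition~\ref{mainproposition}, so the entire substance of the argument is the analyticity dichotomy in part (3).

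First I would fix the solution $(u_\star,b_\star,p_\star,h_\star,\Gamma_\star)$ produced by Proposition~\ref{mainproposition}, with analytic initial datum and splash time $t_\splash>0$, and set
\[
t_\star \;=\; \sup\bigl\{\,t\in[0,t_\splash]\,:\,(u_\star,b_\star,p_\star,h_\star,\Gamma_\star)\text{ is analytic on }[0,t)\,\bigr\}.
\]
By Proposition~\ref{analyticlwp}, the solution is analytic on a nontrivial interval $[0,t_a)$ with $t_a>0$, so $t_\star\geq t_a>0$. The definition of $t_\star$ as a supremum guarantees analyticity on $[0,t_\star)$, so the only remaining point is to show $t_\star\leq t_\splash$ and that analyticity genuinely fails at $t_\star$ itself.

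The key observation is the following: suppose, toward contradiction, that the solution were analytic up to and including time $t_\splash$. Then in particular $h_\star(t_\splash,\cdot)$ would be analytic in a neighborhood of the splash point $p_\splash$ in $\overline{\cV_\star(t_\splash)}$. By Proposition~\ref{mainproposition}, the interface $\Gamma_\star(t_\splash)$ develops a glancing self-intersection at $p_\splash$, and by construction (and the properties of $h(\Gamma)$ reviewed in Lemma~\ref{hTanDirLemma} and Remark~\ref{hdefnremark}) the field $h_\star$ is \emph{not} identically zero in either connected component of the pinched vacuum region at the moment of splash. Theorem~\ref{nonexist2Dtheorem} then applies and forces $h_\star(t_\splash,\cdot)$ to be non-analytic at $p_\splash$, which contradicts the supposed analyticity. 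Hence the solution cannot be analytic at time $t_\splash$, so $t_\star\leq t_\splash$; and by the very definition of $t_\star$ as the supremum of analyticity times, the solution necessarily fails to be analytic at $t=t_\star$ itself (otherwise one could apply Proposition~\ref{analyticlwp} restarted from time $t_\star$ to extend analyticity past $t_\star$, contradicting the supremum).

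The main conceptual obstacle — indeed the main novelty of the whole paper — is already packaged in Theorem~\ref{nonexist2Dtheorem} and in the nonvanishing of $h_\star$ at the pinch, so once one commits to the supremum definition of $t_\star$ the argument is a short combination of the three inputs. The one place that warrants mild care is the appeal to Proposition~\ref{analyticlwp} at time $t_\star$: I would verify that the hypotheses of that proposition apply to the solution state at any earlier time $t<t_\star$ (where the interface is non-self-intersecting and the data are analytic), so that if the solution were analytic at $t_\star$ one could indeed re-run analytic local well-posedness on an interval $[t_\star,t_\star+\epsilon)$, contradicting the maximality of $t_\star$. This closes the dichotomy and yields the three claimed properties.
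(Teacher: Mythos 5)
Your proposal is correct and follows exactly the same route as the paper: the paper's own proof of Theorem~\ref{maintheorem2} is a one-line assembly of Proposition~\ref{mainproposition}, Proposition~\ref{analyticlwp}, and Theorem~\ref{nonexist2Dtheorem}, and your write-up just spells out that assembly with the supremum definition of $t_\star$. The one place where you correctly flag a need for care --- restarting the analytic local well-posedness from times just below $t_\star$ (where the interface is still non-self-intersecting) to rule out analyticity at $t_\star$ when $t_\star<t_\splash$ --- is indeed the only subtlety not made explicit in the paper either; you might also note that the case $t_\star=t_\splash$ is handled directly by Theorem~\ref{nonexist2Dtheorem} rather than by the restart argument, which cannot apply there since the interface has already self-intersected.
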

\begin{proof}
This follows immediately from Proposition~\ref{mainproposition}, Proposition~\ref{analyticlwp}, and Theorem~\ref{nonexist2Dtheorem}.
\end{proof}
\appendix

\section{Glossary of notation}

\subsubsection*{Eulerian setting}

\begin{description}[leftmargin=3.6em,labelwidth=2.8em]

  \item[$\Omega$]
    Plasma region $\Omega=\Omega(t)$, with free upper boundary $\Gamma(t)$ and fixed floor boundary $\{x_2=0\}$.

  \item[$\Gamma$]
    plasma--vacuum interface $\Gamma=\Gamma(t)$.

  \item[$\cW$]
    Vacuum wall, the union $\cW_1\cup\cW_2$ of the two fixed boundaries $\cW_1$ and $\cW_2$ (see Figure~\ref{OpenedSplash1}).

  \item[$\cV$]
    Vacuum region $\cV=\cV(t)$, bounded by the interface $\Gamma(t)$ and wall $\cW$.

  \item[$\delta_\Gamma$]
    Pinch of $\Gamma$,
    the distance between the nearly touching left and right branches of $\Gamma$.

  \item[$u(t,x)$]
    Velocity field in $\Omega(t)$.

  \item[$b(t,x)$]
    Magnetic field in $\Omega(t)$.

  \item[$h(t,x)$]
    Magnetic field in $\cV(t)$.

  \item[$\omega(t,x)$]
    Vorticity 
    \(
      \omega(t,x) = \nabla^\perp\!\cdot u(t,x)
    \).

  \item[$j(t,x)$]
    Current 
    \(
      j(t,x) = \nabla^\perp\!\cdot b(t,x)
    \).
     
  \item[$\tau(t,x)$]
   Unit tangent pointing left-to-right along $\Gamma(t)$.
   
  \item[$n(t,x)$]
   Unit normal on the interface $\Gamma(t)$ pointing outward from the plasma region $\Omega(t)$.

\end{description}
\subsubsection*{Lagrangian coordinate setting}

\begin{description}[leftmargin=3.6em,labelwidth=2.8em]

  \item[$S^1$]
    The periodic interval $S^1 = [-\pi,\pi]$ with endpoints identified.

  \item[$\Sigma$]
    Lagrangian label domain for the plasma
    \(
      \Sigma = S^1\times[-1,0],
    \)
    with generic label $a = (\theta,\psi)\in\Sigma$.

  \item[$\bX(t,a)$]
    Lagrangian trajectory map
    \[
      \bX : [0,T]\times\Sigma \to \R^2,\qquad
      \partial_t\bX(t,a) = u(t,\bX(t,a)),\quad
      \bX(0,a) = \bX_0(a),
    \]
    where $\bX_0$ is the initial label-to-position map
    (see Section~\ref{maglagvariablessection}).

  \item[$\bU,\bB,\bom,\bj$]
    Interior Lagrangian fields
    \[
      \bU = u \circ \bX,\qquad
      \bB = b \circ \bX,\qquad
      \bom = \omega \circ \bX,\qquad
      \bj = j \circ \bX.
    \]

  \item[$X,U,B,H$]
    Lagrangian traces on the interface $\{\psi=0\}$:
    \[
      X(t,\theta) = \bX(t,\theta,0),\quad
      U(t,\theta) = \bU(t,\theta,0),\quad
      B(t,\theta) = \bB(t,\theta,0),\quad
       H(t,\theta) = h(t,X(t,\theta)).
    \]

  \item[$\bigtau,N$]
    Tangent and normal along $\Gamma$ in Lagrangian variables:
    \[
      \bigtau(t,\theta) = \tau(t,X(t,\theta)) = \frac{X_\theta(t,\theta)}{|X_\theta(t,\theta)|},
      \qquad
      N(t,\theta) = n(t,X(t,\theta)) = \frac{X_\theta^\perp(t,\theta)}{|X_\theta(t,\theta)|}.
    \]

\end{description}

\subsubsection*{Dirichlet-to-Neumann maps}

\begin{description}[leftmargin=3.6em,labelwidth=2.8em]

  \item[$\Omega_\pm$]
    For interface $\Gamma$, $\Omega_-$ is the region below $\Gamma$ and
    $\Omega_+$ is the region above $\Gamma$
    (Definition~\ref{harmonicextdefn}).
    
  \item[$\bigh_\pm$]
    Harmonic extension operators into $\Omega_\pm$ (Definition~\ref{harmonicextdefn}).

  \item[$\bign_\pm$]
    Interior and exterior Dirichlet-to-Neumann operators
    associated to interface $\Gamma$
    (Definition~\ref{DirichToNeumDefs}):
    \[
      \bign_\pm f = \partial_n(\bigh_\pm f) \qquad (x\in\Gamma, \ f:\Gamma\to\R).
    \]

  \item[$\cN_\pm$]
    Lagrangian Dirichlet-to-Neumann operators:
    \[
      \cN_\pm F(\theta)
        = \big(\bign_\pm(F\circ X^{-1})\big)\big(X(\theta)\big) \qquad (\theta\in S^1, \ F:S^1\to\R).
    \]

  \item[$\Nres$]
    Residual Dirichlet-to-Neumann operator $\Nres = \cN_+ + \cN_-$ (a zero-order operator).

\end{description}
\subsubsection*{Coefficients and common zero-order operators}

\begin{description}[leftmargin=3.6em,labelwidth=2.8em]

  \item[$\bm{\sigma}$]
    Div-curl system coefficient $\bm{\sigma}(\psi) = |\bB_0(0,\psi)|^2$ (see \eqref{vortEvoSystem} and \eqref{OriginalLagrDivCurlSys}).

  \item[$\bD$]
    Coefficient matrix (see \eqref{bDfirstRef} and Definition~\ref{DEcommDefns}):
    \[
      \bD
        = \begin{pmatrix}
            1 & 0 \\
            0 & 1 + \dfrac{|H|^2}{|B|^2}
          \end{pmatrix}.
    \]

  \item[$\cH$]
    Hilbert transform associated to interface $\Gamma$; acts on functions on $S^1$ (see Proposition~\ref{bEDefns}).

  \item[$\bE$]
    Projector for $V:S^1\to\R^2$ (see
    \eqref{bEopintro} and Proposition~\ref{bEDefns}):
    \[
      \bE V
        = \begin{pmatrix}
            1 & \cH \\
            0 & 1
          \end{pmatrix}
          \left[\,\col{\bigtau}{N} V\,\right].
    \]

\end{description}

\subsubsection*{State vector and product function spaces}

\begin{description}[leftmargin=3.6em,labelwidth=2.8em]

  \item[$\xi$]
    Full state vector
    $(\dot U^*, \dot B^*, \bom, \bj, X, U)$.
    
  \item[$\xi_\dagger$]
    Truncated state vector $(\dot U^*, \dot B^*, \bom, \bj)$.

  \item[$\sH^s$]
    State space 
    \((H^s(S^1))^2
          \times (H^{s+1/2}(\Sigma))^2
          \times H^{s+2}(S^1)
          \times H^{s+1}(S^1)
    \), with norm
    \[
      \begin{aligned}
      \lV \xi \rV_{\sH^s}
        &= \lV \dot U^* \rV_{H^s(S^1)}
         + \lV \dot B^* \rV_{H^s(S^1)}
         + \lV \bom \rV_{H^{s+1/2}(\Sigma)}
         + \lV \bj \rV_{H^{s+1/2}(\Sigma)} \\
        &\qquad
         + \lV X \rV_{H^{s+2}(S^1)}
         + \lV U \rV_{H^{s+1}(S^1)}.
      \end{aligned}
    \]

  \item[$\sH^s_\dagger$]
    Truncated state space
    \(
      (H^s(S^1))^2
          \times (H^{s+1/2}(\Sigma))^2.
    \)



  \item[$\sH^k_\gp$]
    Subset of $\sH^k$ of glancing-permitted states (see \eqref{sHsgp}).

  \item[$\sH^k_\splash$]
    Admissible splash states:
    \[
      \sH^k_\splash
        = \{
            \xi \in\sH^k :
            \Gamma \ \mbox{has a splash point at } \ p_\splash, \ |p_\splash-(0,2)|<10^{-3},\ \lV X - X_\init \rV_{H^{k+1}(S^1)}\leq r_1
          \},
    \]

\end{description}

\subsubsection*{Classes for fixed-time states}

\begin{flalign*}
& \sB^\bbox(\delta)
  = \big\{
      \xi \in \sH^k_\gp :
      \lV \xi \rV_{\sH^k} \leq M,\ 
      \lV X - X_0 \rV_{H^{k+1}(S^1)} \leq C\delta,\ 
      c\delta \leq \delta_\Gamma \leq C\delta
    \big\}, &&
\\[0.5em]
& \sB^\bbox
  = \bigcup_{\delta \in [0,\delta_0]}\sB^\bbox(\delta) , &&
\\[0.5em]
& \sB^\wbox
  = \bigcup_{\delta \in (0,\delta_0]}\sB^\bbox(\delta) , &&
\end{flalign*}

The classes $\sB^\bbox(\delta,\xi_\splash)$, $\sB^\bbox(\xi_\splash)$, and $\sB^\wbox(\xi_\splash)$
are defined analogously for a generic splash state $\xi_\splash$ (see Definition~\ref{fixedtimeclasses}).

\subsubsection*{Classes of states with opening splashes (backward-in-time approach)}
\begin{flalign*}
& \sB_\dagger
  = \big\{
      \xi_\dagger \in C^0([0,T];\sH^k_\dagger) :
      \sup_{t} \lV \xi_\dagger \rV_{\sH^k_\dagger} \leq M,\ 
      \sup_{t} \lV \xi_\dagger - \xi_{\dagger,0} \rV_{\sH^{k-1}_\dagger} \leq r_0
    \big\}, &&
\\[0.5em]
& \sB
  = \big\{
      \xi \in C^0([0,T];\sH^k) :
      \sup_{t} \lV \xi \rV_{\sH^k} \leq M,\ 
      \xi_\dagger \in \sB_\dagger,\ 
      \xi(0)=\xi_0,\ 
      X_t = U,\ 
      \lV U \rV_{C^1_{t,\theta}} \leq M
    \big\}. &&
\end{flalign*}

\subsubsection*{Classes of states with closing splashes (forward-in-time approach)}
To-splash classes:
\begin{flalign*}
& \sB^\tosplash_\dagger
  = \big\{
      \xi_\dagger \in C^0([-T,T];\sH^k_\dagger) :
      \sup_{t} \lV \xi_\dagger \rV_{\sH^k_\dagger} \leq M,\ 
      \sup_{t} \lV \xi_\dagger - \xi_\dagger(0) \rV_{\sH^{k-1}_\dagger} \leq r_0
    \big\}, &&
\\[0.5em]
& \sB^\tosplash
  = \big\{
      \xi \in C^0([-T,T];\sH^k) :
      \sup_{t} \lV \xi \rV_{\sH^k} \leq M,\ 
      \xi_\dagger \in \sB^\tosplash_\dagger,\ 
      \xi(0)\in\bm{\Xi}_\init,\ 
      X_t = U,\ 
      \lV U \rV_{C^1_{t,\theta}} \leq M
    \big\}. &&
\end{flalign*}
Generalized class:
\begin{flalign*}
& \sB_{gen}
  = \big\{
      \xi \in C^0([-T,T];\sH^k) :
      \sup_{t} \lV \xi \rV_{\sH^k} \leq 1.1 M,\ 
      \lV \xi_\dagger - \xi_\dagger(0) \rV_{\sH^{k-1}_\dagger} \leq 1.1 r_0,\ 
      X_t = U,\ 
      \lV U \rV_{C^1_{t,\theta}} \leq 1.1 M
    \big\}. &&
\end{flalign*}
Start-at-splash classes for splash time $t_\splash$ and splash state $\xi_\splash$:
\begin{flalign*}
& \sB^\skipto(\xi_\splash; t_\splash)
  = \big\{
      t \mapsto \xi(t + t_\splash) :
      \xi \in \sB_{gen},\ 
      \xi(t_\splash) = \xi_\splash
    \big\}, &&
\\[0.5em]
& \sB^\skipto(\xi_\splash)
  = \bigcup_{0 < s \leq T} \sB^\skipto(\xi_\splash; s). &&
\end{flalign*}

\subsubsection*{Auxiliary domains}

\begin{description}[leftmargin=3.6em,labelwidth=2.8em]

  \item[$\Sigma_-$]
    Lower comparison domain $\Sigma_- = \{ (\theta,\psi) : \theta\in S^1,\ \psi\leq 0 \}$; flattened domain for comparing vector fields pulled back from $\Omega_-$.
    
  \item[$\Sigma_+(\delta)$]
    Upper comparison domain
    \(
      \Sigma_+(\delta)
        = \{ (\theta,\psi) : \theta\in S^1,\ 0<\psi<\delta+\cos^2\theta \}
    \); 
    domain corresponding to $\Omega_+$ for vector field comparisons (see \eqref{SigmaPlusDefn}).


  \item[$\Sigma^\circ_+(\delta)$]
    Vacuum comparison domain 
    \(
      \Sigma^\circ_+(\delta)
        = \{
            a\in\Sigma_+(\delta) :
            |a-a_\star|>10^{-2},\ |a-a_\infty|>10^{-2}
          \}
    \)\quad 
    (see Section~\ref{vacuuminterfaceestimatessection} and Proposition~\ref{extensionProp2}).

\end{description}
\subsubsection*{Weighted Sobolev norms}

\begin{description}[leftmargin=3.6em,labelwidth=2.8em]

    \item[$\lV \cdot \rV_{Y}$]
  
for $Y = H^s(S^1,\delta,m)$, $H^s(\Gamma ,\delta,m)$, $H^s(\Sigma_+,\delta,m)$, $H^s(\cV,\delta,m)$, etc. (see Section~\ref{weightedspacessection}).
\end{description}

\subsubsection*{Layer potentials}

\begin{description}[leftmargin=3.6em,labelwidth=2.8em]

    \item[$\Slp,\slp,\Dlp,\dcal{T}$]
    
(see \eqref{newtpotdefn}--\eqref{pottheorylim}).

\end{description}

\bibliographystyle{plain}
\addcontentsline{toc}{section}{Bibliography}
\bibliography{refs}

@article{kato1,
  author = {Kato, T.},
  title = {Linear evolution equations of ``hyperbolic'' type},
  journal = {J. Fac. Sci. Univ. Tokyo},
  volume = {17},
  pages = {241--258},
  year = {1970}
}

@article{kato2,
  author = {Kato, T.},
  title = {Linear evolution equations of ``hyperbolic'' type, {II}},
  journal = {J. Math. Soc. Japan},
  volume = {25},
  pages = {648--666},
  year = {1973}
}

@article{hongMHDsplash,
      author  = {Hong, Guangyi and Luo, Tao and Zhao, Zhonghao},
  title   = {On the splash singularity for the free--boundary problem of the viscous and non--resistive incompressible {MHD} equations in {3D}},
  journal = {J. Differential Equations},
  volume  = {375},
  pages   = {1--55},
  year    = {2025},
}

@article{haoMHDsplash,
title = {Splash singularity for the free boundary incompressible viscous {MHD}},
journal = {Journal of Differential Equations},
volume = {379},
pages = {26-103},
year = {2024},
issn = {0022-0396},
doi = {https://doi.org/10.1016/j.jde.2023.10.001},
url = {https://www.sciencedirect.com/science/article/pii/S002203962300637X},
author = {Chengchun Hao and Siqi Yang}
}

@article{GuMHDtension,
title = {Local well-posedness of the free-boundary incompressible magnetohydrodynamics with surface tension},
journal = {Journal de Mathématiques Pures et Appliquées},
volume = {182},
pages = {31-115},
year = {2024},
issn = {0021-7824},
doi = {https://doi.org/10.1016/j.matpur.2023.12.009},
url = {https://www.sciencedirect.com/science/article/pii/S0021782423001605},
author = {Xumin Gu and Chenyun Luo and Junyan Zhang},
keywords = {Free-boundary problem, Magnetohydrodynamics, Surface tension, Well-posedness}
}

@article{XieLuoALE,
title = {A regularity result for the incompressible inviscid magnetohydrodynamics equations in the arbitrary {L}agrangian-{E}ulerian coordinates},
journal = {Journal of Mathematical Analysis and Applications},
volume = {527},
number = {1, Part 1},
pages = {127409},
year = {2023},
issn = {0022-247X},
doi = {https://doi.org/10.1016/j.jmaa.2023.127409},
url = {https://www.sciencedirect.com/science/article/pii/S0022247X23004122},
author = {Binqiang Xie and Ting Luo},
keywords = {Free surface, Magnetohydrodynamics equations, Low regularity, Arbitrary Lagrangian-Eulerian coordinates}
}

@article{nosplash4,
  author = {Bulut, A.},
  title = {Non-existence of splash singularities for the two-fluid {E}uler--{N}avier-{S}tokes system},
  journal = {arXiv:2109.15289}
}

@article{muskatsplash,
  author = {Castro, A. and C{\'o}rdoba, D. and Fefferman, C. and Gancedo, F.},
  title = {Splash singularities for the one-phase {M}uskat problem in stable regimes},
  journal = {Arch. Rational Mech. Anal.},
  volume = {222},
  number = {1},
  pages = {213--243},
  year = {2016}
}

@article{splash2,
  author = {Castro, A. and C{\'o}rdoba, D. and Fefferman, C. and Gancedo, F. and Gomez-Serrano, J.},
  title = {Finite time singularities for the free boundary incompressible {E}uler equations},
  journal = {Ann. Math.},
  volume = {178},
  number = {3},
  pages = {1061--1134},
  year = {2013}
}

@article{splashtension,
  author = {Castro, A. and C{\'o}rdoba, D. and Fefferman, C. and Gancedo, F. and G{\'o}mez-Serrano, J.},
  title = {Finite time singularities for water waves with surface tension},
  journal = {J. Math. Phys.},
  volume = {53},
  pages = {115622},
  year = {2012}
}

@article{splashstability,
  author = {Castro, A. and C{\'o}rdoba, D. and Fefferman, C. and Gancedo, F. and G{\'o}mez-Serrano, J.},
  title = {Structural stability for the splash singularities of the water waves problem},
  journal = {Discrete Contin. Dyn. Syst. Ser. A},
  volume = {34},
  number = {12},
  pages = {4997--5043},
  year = {2014}
}

@article{naviersplash1,
  author = {Castro, A. and C{\'o}rdoba, D. and Fefferman, C. and Gancedo, F. and G{\'o}mez-Serrano, J.},
  title = {Splash singularities for the free boundary {N}avier-{S}tokes equations},
  journal = {Ann. PDE},
  volume = {5},
  number = {12},
  year = {2019}
}

@article{stationarysplash1,
  author = {C{\'o}rdoba, D. and Enciso, A. and Grubic, N.},
  title = {On the existence of stationary splash singularities for the {E}uler equations},
  journal = {Adv. Math.},
  volume = {288},
  pages = {922--941},
  year = {2016}
}

@article{stationarysplash2,
  author = {C{\'o}rdoba, D. and Enciso, A. and Grubic, N.},
  title = {Self-intersecting interfaces for stationary solutions of the two-fluid {E}uler equations},
  journal = {Ann. PDE},
  volume = {7},
  number = {1},
  pages = {1--40},
  year = {2021}
}

@article{nosquirt,
  author = {C{\'o}rdoba, D. and Gancedo, F.},
  title = {Absence of squirt singularities for the multi-phase {M}uskat problem},
  journal = {Comm. Math. Phys.},
  volume = {299},
  number = {2},
  pages = {561--575},
  year = {2010}
}

@article{shkoller1,
  author = {Coutand, D. and Shkoller, S.},
  title = {On the Finite-Time Splash and Splat Singularities for the 3-{D} Free-Surface {E}uler Equations},
  journal = {Commun. Math. Phys.},
  volume = {325},
  pages = {143--183},
  year = {2014}
}

@article{nosplash3,
  author = {Coutand, D. and Shkoller, S.},
  title = {On the impossibility of finite-time splash singularities for vortex sheets},
  journal = {Arch. Rational Mech. Anal.},
  volume = {221},
  number = {2},
  pages = {987--1033},
  year = {2016}
}

@article{naviersplash2,
  author = {Coutand, D. and Shkoller, S.},
  title = {On the splash singularity for the free-surface of a {N}avier-{S}tokes fluid},
  journal = {Ann. Inst. Henri Poincare (C) Anal. Non Lineaire},
  volume = {36},
  number = {2},
  pages = {475--503},
  year = {2019}
}

@article{viscosplash,
  author = {Di Iorio, E. and Marcati, P. and Spirito, S.},
  title = {Splash singularity for a free-boundary incompressible viscoelastic fluid model},
  journal = {Adv. Math.},
  volume = {368},
  pages = {107124},
  year = {2020}
}

@article{oldroydsplash,
  author = {Di Iorio, E. and Marcati, P. and Spirito, S.},
  title = {Splash singularities for a general {O}ldroyd model with finite {W}eissenberg number},
  journal = {Arch. Rational Mech. Anal.},
  volume = {235},
  number = {3},
  pages = {1589--1660},
  year = {2020}
}

@article{nosplash2,
  author  = {Fefferman, Charles and Ionescu, Alexandru D. and Lie, Victor},
  title   = {On the absence of splash singularities in the case of two--fluid interfaces},
  journal = {Duke Math. J.},
  volume  = {165},
  number  = {3},
  pages   = {417--462},
  year    = {2016},
  doi     = {10.1215/00127094-3166629}
}

@article{nosplash1,
  author = {Gancedo, F. and Strain, R. M.},
  title = {Absence of splash singularities for surface quasi-geostrophic sharp fronts and the {M}uskat problem},
  journal = {Proc. Natl. Acad. Sci. USA},
  volume = {111},
  number = {2},
  pages = {635--639},
  year = {2014}
}

@article{mhdexistence,
  author = {Sun, Y. and Wang, W. and Zhang, Z.},
  title = {Well-posedness of the plasma--vacuum interface problem for ideal incompressible {MHD}},
  journal = {Arch. Rational Mech. Anal.},
  volume = {234},
  pages = {81--113},
  year = {2019}
}

@article{currentvortexsheet,
author = {Sun, Y. and Wang, W. and Zhang, Z.},
title = {Nonlinear Stability of the Current-Vortex Sheet to the Incompressible {MHD} Equations},
journal = {Commun. Pure Appl. Math.},
volume = {71},
number = {2},
pages = {356-403},
year = {2018}
}

@article{UnifEstFBViscMHD,
author = {Lee, Donghyun},
title = {Uniform estimate of viscous free-boundary magnetohydrodynamics with zero vacuum magnetic field},
journal = {SIAM J. Math. Anal.},
volume = {49},
number = {4},
pages = {2710-2789},
year = {2017}
}

@article{LeeLagrViscLimMHD,
  title={Initial value problem for the free boundary magnetohydrodynamics with zero magnetic boundary condition},
  author={Donghyung Lee},
  journal={Commun. Math. Sci},
  volume = {16},
  year={2018}
}

@article{KiselevLuo2023,
  author  = {Kiselev, Alexander and Luo, Xiaoyutao},
  title   = {On nonexistence of splash singularities for the $\alpha$-{SQG} patches},
  journal = {J. Nonlinear Sci.},
  volume  = {33},
  number  = {37},
  pages   = {1--27},
  year    = {2023},
  doi     = {10.1007/s00332-023-09893-2}
}

@article{JeonZlatos2024,
  author  = {Jeon, Junekey and Zlato{\v{s}}, Andrej},
  title   = {An improved regularity criterion and absence of splash-like singularities for g--{SQG} patches},
  journal = {Anal. PDE},
  volume  = {17},
  number  = {3},
  pages   = {1005--1018},
  year    = {2024},
  doi     = {10.2140/apde.2024.17.1005}
}

@article{CoutandShkoller2019NS,
  author  = {Coutand, Daniel and Shkoller, Steve},
  title   = {On the splash singularity for the free--surface of a {N}avier--{S}tokes fluid},
  journal = {Ann. Inst. H. Poincar{\'e} C Anal. Non Lin{\'e}aire},
  volume  = {36},
  number  = {2},
  pages   = {475--503},
  year    = {2019},
  doi     = {10.1016/j.anihpc.2018.06.004}
}

@article{CordobaPernasCastano2017,
  author  = {C{\'o}rdoba, Diego and Pernas--Casta{\~n}o, Tania},
  title   = {Non--splat singularity for the one--phase {M}uskat problem},
  journal = {Trans. Amer. Math. Soc.},
  volume  = {369},
  number  = {1},
  pages   = {711--754},
  year    = {2017},
  doi     = {10.1090/tran/6688}
}

@article{CordobaEncisoGrubic2023AngledCrested,
  author  = {C{\'o}rdoba, Diego and Enciso, Alberto and Grubic, Nastasia},
  title   = {Finite--time singularity formation for angled--crested water waves},
  journal = {Duke Math. J.},
  volume  = {in press}
}

@misc{Agrawal2023UniformGravity,
  author  = {Agrawal, Siddhant},
  title   = {Uniform in gravity estimates for {2D} water waves},
  howpublished = {arXiv:2301.04599},
  year    = {2023}
}

@article{KiselevRyzhikYaoZlatos2016,
  title        = {Finite time singularity for the modified {SQG} patch equation},
  author       = {Alexander Kiselev and Lenya Ryzhik and Yao Yao and Andrej Zlatoš},
  journal      = {Annals of Mathematics},
  volume       = {184},
  number       = {3},
  pages        = {909--948},
  year         = {2016},
}

@article{Coutand2019ARMA,
  author  = {Coutand, Daniel},
  title   = {Finite--time singularity formation for incompressible {E}uler moving interfaces in the plane},
  journal = {Arch. Ration. Mech. Anal.},
  volume  = {232},
  number  = {1},
  pages   = {337--387},
  year    = {2019},
  doi     = {10.1007/s00205-018-1322-5}
}

@article{CastroCordobaFeffermanGancedo2013Breakdown,
  author  = {Castro, {\'A}ngel and C{\'o}rdoba, Diego and Fefferman, Charles and Gancedo, Francisco},
  title   = {Breakdown of smoothness for the {M}uskat problem},
  journal = {Arch. Ration. Mech. Anal.},
  volume  = {208},
  number  = {3},
  pages   = {805--909},
  year    = {2013},
  doi     = {10.1007/s00205-013-0616-x}
}

@misc{Zlatos2024MuskatII,
  author  = {Zlato{\v{s}}, Andrej},
  title   = {The {2D} {M}uskat problem {II}: stable regime small data singularity on the half--plane},
  howpublished = {arXiv:2401.14660},
  year    = {2024}
}

@article{SunWangZhang2019,
  author  = {Sun, Yongzhong and Wang, Wei and Zhang, Zhifei},
  title   = {Well--posedness of the plasma--vacuum interface problem for ideal incompressible {MHD}},
  journal = {Arch. Ration. Mech. Anal.},
  volume  = {234},
  number  = {1},
  pages   = {81--113},
  year    = {2019},
  doi     = {10.1007/s00205-019-01386-5}
}

@article {GancedoPatel,
    AUTHOR = {Gancedo, Francisco and Patel, Neel},
     TITLE = {On the local existence and blow-up for generalized {SQG} patches},
   JOURNAL = {Ann. PDE},
  FJOURNAL = {Annals of PDE. Journal Dedicated to the Analysis of Problems from Physical Sciences},
    VOLUME = {7},
      YEAR = {2021},
    NUMBER = {1},
     PAGES = {Paper No. 4, 63},
      ISSN = {2524-5317,2199-2576},
   MRCLASS = {35Q31 (35Q35 37G40)},
  MRNUMBER = {4235799},
       DOI = {10.1007/s40818-021-00095-1},
       URL = {https://doi.org/10.1007/s40818-021-00095-1}
}

@misc{jeon2025wellposednessfinitetimesingularity,
      title={Well-posedness and finite time singularity for touching g--{SQG} patches on the plane}, 
      author={Junekey Jeon and Andrej Zlatos},
      year={2025},
      eprint={2509.01687},
      archivePrefix={arXiv},
      primaryClass={math.AP},
howpublished = {arXiv:2509.0168},
      url={https://arxiv.org/abs/2509.01687}
}

@article{Nirenberg72,
  author  = {Nirenberg, Louis},
  title   = {An abstract form of the nonlinear {C}auchy--{K}owalewski theorem},
  journal = {Journal of Differential Geometry},
  volume  = {6},
  year    = {1972},
  pages   = {561--576}
}

@article{safonov,
author = {Safonov, M. V.},
title = {The abstract {C}auchy--{K}ovalevskaya theorem in a weighted {B}anach space},
journal = {Communications on Pure and Applied Mathematics},
volume = {48},
number = {6},
pages = {629-637},
year = {1995}
}

@misc{zlatos2023localregularityfinitetime,
      title={Local regularity and finite time singularity for the generalized {SQG} equation on the half-plane}, 
      author={Andrej Zlatos},
      year={2023},
      eprint={2305.02427},
      archivePrefix={arXiv},
      primaryClass={math.AP},
      url={https://arxiv.org/abs/2305.02427}, 
  howpublished = {arXiv:2305.02427},
}

@book{taylorPDE2,
  author    = {Michael E. Taylor},
  title     = {Partial Differential Equations II: Qualitative Studies of Linear Equations},
  series    = {Applied Mathematical Sciences},
  volume    = {116},
  edition   = {3},
  publisher = {Springer},
  year      = {2023},
  isbn      = {978-3-031-33699-7},
  doi       = {10.1007/978-3-031-33700-0}
}

@article{MazyaShaposhnikova,
  author  = {Vladimir Maz'ya and Tatyana Shaposhnikova},
  title   = {Higher Regularity in the Classical Layer Potential Theory for Lipschitz Domains},
  journal = {Indiana Univ. Math. J.},
  volume  = {54},
  number  = {1},
  year    = {2005},
  pages   = {99--142},
  doi     = {10.1512/iumj.2005.54.2668},
  issn    = {0022-2518}
}

@book{majdaconservationlaws,
  author    = {Andrew J. Majda},
  title     = {Compressible Fluid Flow and Systems of Conservation Laws in Several Space Variables},
  series    = {Applied Mathematical Sciences},
  volume    = {53},
  publisher = {Springer-Verlag},
  address   = {New York},
  year      = {1984}
}

@book{lanneswaterwaves,
  author    = {David Lannes},
  title     = {The Water Waves Problem: Mathematical Analysis and Asymptotics},
  publisher = {American Mathematical Society},
  series    = {Mathematical Surveys and Monographs},
  volume    = {188},
  year      = {2013},
  address   = {Providence, RI},
  isbn      = {978-0-8218-9470-5}
}
\end{document}